\newcommand{\arXiv}[1]{\href{http://arxiv.org/abs/#1}{\texttt{arXiv:\nolinkurl{#1}}}}
\newcommand{\googlebooks}[1]{(preview at \href{http://books.google.com/books?id=#1}{google books})}
\definecolor{dark-red}{rgb}{0.7,0.25,0.25}
\definecolor{dark-blue}{rgb}{0.15,0.15,0.55}
\definecolor{medium-blue}{rgb}{0,0,.8}
\definecolor{DarkGreen}{RGB}{0,150,0}
\theoremstyle{plain}
\newtheorem{thm}{Theorem}[section]
\newtheorem*{thm*}{Theorem}
\newtheorem*{cor*}{Corollary}
\newtheorem{cor}[thm]{Corollary}
\newtheorem{lem}[thm]{Lemma}
\newtheorem{quest}[thm]{Question}
\newtheorem*{quest*}{Question}
\theoremstyle{definition}
\newtheorem{defn}[thm]{Definition}
\newtheorem{assumption}[thm]{Assumption}
\newtheorem{nota}[thm]{Notation}
\newtheorem{note}[thm]{Note}
\newtheorem{ex}[thm]{Example}
\newtheorem{rem}[thm]{Remark}
\DeclareMathOperator{\coev}{coev}
\DeclareMathOperator{\End}{End}
\DeclareMathOperator{\Epi}{Epi}
\DeclareMathOperator{\fdim}{fdim}
\DeclareMathOperator{\Hom}{Hom}
\DeclareMathOperator{\Gr}{Gr}
\DeclareMathOperator{\ev}{ev}
\DeclareMathOperator{\id}{id}
\DeclareMathOperator{\OP}{op}
\DeclareMathOperator{\spann}{span}
\DeclareMathOperator{\Tr}{Tr}
\DeclareMathOperator{\tr}{tr}
\newcommand{\be}{\begin{enumerate}}
\newcommand{\ee}{\end{enumerate}}
\newcommand{\N}{\mathbb{N}}
\newcommand{\Z}{\mathbb{Z}}
\newcommand{\F}{\mathbb{F}}
\newcommand{\R}{\mathbb{R}}
\newcommand{\C}{\mathbb{C}}
\renewcommand{\P}{\mathbb{P}}
\newcommand{\I}{\infty}
\newcommand{\set}[2]{\left\{#1 \middle| #2\right\}}
\newcommand{\cA}{\mathcal{A}}
\newcommand{\cB}{\mathcal{B}}
\newcommand{\cC}{\mathcal{C}}
\newcommand{\cF}{\mathcal{F}}
\newcommand{\cG}{\mathcal{G}}
\newcommand{\cH}{\mathcal{H}}
\newcommand{\cL}{\mathcal{L}}
\newcommand{\cM}{\mathcal{M}}
\newcommand{\cN}{\mathcal{N}}
\newcommand{\cP}{\mathcal{P}}
\newcommand{\cQ}{\mathcal{Q}}
\newcommand{\cS}{\mathcal{S}}
\newcommand{\cV}{\mathcal{V}}
\newcommand{\cW}{\mathcal{W}}
\newcommand{\PA}{{\mathcal{P}\hspace{-.1cm}\mathcal{A}}}
\DeclareMathOperator{\Pro}{{Pro}}
\newcommand{\CF}{{\mathcal{CF}}}
\DeclareMathOperator{\Bim}{{Bim}}
\DeclareMathOperator{\Rep}{{Rep}}
\newcommand{\op}{^{\OP}}
\newcommand{\noshow}[1]{}
\newcommand{\MR}[1]{}
\newcommand{\bluecup}{\textcolor{\cupcolor}{\cup}}
\tikzstyle{shaded}=[fill=red!10!blue!20!gray!30!white]
\tikzstyle{unshaded}=[fill=white]
\tikzstyle{empty box}=[circle, draw, thick, fill=white, opaque, inner sep=2mm]
\tikzstyle{annular}=[scale=.7, inner sep=1mm, baseline]
\tikzstyle{rectangular}=[scale=.75, inner sep=1mm, baseline=-.1cm]
\newcommand{\cupcolor}{DarkGreen}
\newcommand{\alphacolor}{blue}
\newcommand{\betacolor}{red}
\newcommand{\Mbox}[2]{
\begin{tikzpicture}[baseline=-.1cm]
	\draw (0,0)--(0,.8);
	\draw[thick, unshaded] (-.4, -.4) -- (-.4, .4) -- (.4, .4) -- (.4,-.4) -- (-.4, -.4);
	\node at (.2,.6) {{\scriptsize{$#1$}}};
	\node at (0,0) {$#2$};
\end{tikzpicture}
}
\newcommand{\nbox}[3]{
\begin{tikzpicture}[baseline=-.1cm]
	\draw (0,-.8)--(0,.8);
	\draw[thick, unshaded] (-.4, -.4) -- (-.4, .4) -- (.4, .4) -- (.4,-.4) -- (-.4, -.4);
	\node at (.2,.6) {{\scriptsize{$#1$}}};
	\node at (0,0) {$#2$};
	\node at (.2,-.6) {{\scriptsize{$#3$}}};
\end{tikzpicture}
}
\newcommand{\ColorNBox}[3]{
\begin{tikzpicture}[baseline=-.1cm]
	\draw[thick, #3] (0,-.8)--(0,0);
	\draw (0,0)--(0,.8);
	\draw[thick, unshaded] (-.4, -.4) -- (-.4, .4) -- (.4, .4) -- (.4,-.4) -- (-.4, -.4);
	\node at (.2,.6) {{\scriptsize{$#1$}}};
	\node at (0,0) {$#2$};
\end{tikzpicture}
}
\newcommand{\widenbox}[3]{
\begin{tikzpicture}[baseline=-.1cm]
	\draw (0,-.8)--(0,.8);
	\draw[thick, unshaded] (-.65, -.4) -- (-.65, .4) -- (.65, .4) -- (.65,-.4) -- (-.65, -.4);
	\node at (.25,.6) {{\scriptsize{$#1$}}};
	\node at (0,0) {$#2$};
	\node at (.25,-.6) {{\scriptsize{$#3$}}};
\end{tikzpicture}
}
\newcommand{\rotateccw}[5]{
\begin{tikzpicture}[baseline=-.1cm]
	\clip (-1.1,-.9)--(-1.1,.9)--(1.1,.9)--(1.1,-.9);	
	\draw (0,.4) arc (180:0:.4cm)--(.8,-.8);
	\draw (0,-.4) arc (0:-180:.4cm)--(-.8,.8);
	\draw[thick, unshaded] (-.4, -.4) -- (-.4, .4) -- (.4, .4) -- (.4,-.4) -- (-.4, -.4);
	\node at (0,0) {$#5$};
	\node at (-1,.6) {{\scriptsize{$#1$}}};
	\node at (-.2,.6) {{\scriptsize{$#2$}}};
	\node at (.2,-.6) {{\scriptsize{$#3$}}};
	\node at (1,-.6) {{\scriptsize{$#4$}}};
\end{tikzpicture}
}
\newcommand{\rotatecw}[5]{
\begin{tikzpicture}[baseline=-.1cm]
	\clip (-1.1,-.9)--(-1.1,.9)--(1.1,.9)--(1.1,-.9);	
	\draw (0,.4) arc (0:180:.4cm)--(-.8,-.8);
	\draw (0,-.4) arc (-180:0:.4cm)--(.8,.8);
	\draw[thick, unshaded] (-.4, -.4) -- (-.4, .4) -- (.4, .4) -- (.4,-.4) -- (-.4, -.4);
	\node at (0,0) {$#5$};
	\node at (-1,-.6) {{\scriptsize{$#1$}}};
	\node at (-.2,-.6) {{\scriptsize{$#2$}}};
	\node at (.2,.6) {{\scriptsize{$#3$}}};
	\node at (1,.6) {{\scriptsize{$#4$}}};
\end{tikzpicture}
}
\newcommand{\trace}[4]{
\begin{tikzpicture}[baseline = -.1cm]
	\draw (0,.4) arc (180:0:.4cm)--(.8,-.4) arc (0:-180:.4cm);
	\filldraw[unshaded,thick] (-.4,.4)--(.4,.4)--(.4,-.4)--(-.4,-.4)--(-.4,.4);
	\node at (1,0) {{\scriptsize{$#1$}}};
	\node at (-.2,.6) {{\scriptsize{$#2$}}};
	\node at (0,0) {$#3$};
	\node at (-.2,-.6) {{\scriptsize{$#4$}}};
\end{tikzpicture}
}
\newcommand{\traceop}[4]{
\begin{tikzpicture}[baseline = -.1cm]
	\draw (0,.4) arc (0:180:.4cm)--(-.8,-.4) arc (-180:0:.4cm);
	\filldraw[unshaded,thick] (-.4,.4)--(.4,.4)--(.4,-.4)--(-.4,-.4)--(-.4,.4);
	\node at (-1,0) {{\scriptsize{$#1$}}};
	\node at (.2,.6) {{\scriptsize{$#2$}}};
	\node at (0,0) {$#3$};
	\node at (.2,-.6) {{\scriptsize{$#4$}}};
\end{tikzpicture}
}
\newcommand{\PAMultiply}[5]{
\begin{tikzpicture}[baseline=.5cm]
	\draw (0,2)--(0,-.8);
	\filldraw[unshaded,thick] (-.4,.4)--(.4,.4)--(.4,-.4)--(-.4,-.4)--(-.4,.4);
	\draw[thick, unshaded] (-.4, .8) -- (-.4, 1.6) -- (.4, 1.6) -- (.4,.8) -- (-.4, .8);
	\node at (.2,1.8) {{\scriptsize{$#1$}}};
	\node at (0,1.2) {$#2$};
	\node at (.2,.6) {{\scriptsize{$#3$}}};
	\node at (0,0) {$#4$};
	\node at (.2,-.6) {{\scriptsize{$#5$}}};
\end{tikzpicture}
}
\newcommand{\ColorMultiply}[3]{
\begin{tikzpicture}[baseline=.5cm]
	\draw (0,2)--(0,1);
	\draw [thick, #3] (0,1)--(0,-.8);
	\filldraw[unshaded,thick] (-.4,.4)--(.4,.4)--(.4,-.4)--(-.4,-.4)--(-.4,.4);
	\draw[thick, unshaded] (-.4, .8) -- (-.4, 1.6) -- (.4, 1.6) -- (.4,.8) -- (-.4, .8);
	\node at (0,1.2) {$#1$};
	\node at (0,0) {$#2$};
\end{tikzpicture}
}
\newcommand{\PAMultiplyThree}[7]{
\begin{tikzpicture}[baseline=-.1cm]
	\draw (0,1.8)--(0,-2);
	\filldraw[unshaded,thick] (-.4,.4)--(.4,.4)--(.4,-.4)--(-.4,-.4)--(-.4,.4);
	\draw[thick, unshaded] (-.4, .8) -- (-.4, 1.6) -- (.4, 1.6) -- (.4,.8) -- (-.4, .8);
	\draw[thick, unshaded] (-.4, -.8) -- (-.4, -1.6) -- (.4, -1.6) -- (.4,-.8) -- (-.4, -.8);
	\node at (.2,1.8) {{\scriptsize{$#1$}}};
	\node at (0,1.2) {$#2$};
	\node at (.2,.6) {{\scriptsize{$#3$}}};
	\node at (0,0) {$#4$};	
	\node at (.2,-.6) {{\scriptsize{$#5$}}};
	\node at (0,-1.2) {$#6$};
	\node at (.2,-1.8) {{\scriptsize{$#7$}}};
\end{tikzpicture}
}
\newcommand{\ColorMultiplyThree}[5]{
\begin{tikzpicture}[baseline=-.1cm]
	\draw (0,1.8)--(0,1);
	\draw[thick, #4] (0,1)--(0,-1);
	\draw[thick, #5] (0,-1)--(0,-2);
	\filldraw[unshaded,thick] (-.4,.4)--(.4,.4)--(.4,-.4)--(-.4,-.4)--(-.4,.4);
	\draw[thick, unshaded] (-.4, .8) -- (-.4, 1.6) -- (.4, 1.6) -- (.4,.8) -- (-.4, .8);
	\draw[thick, unshaded] (-.4, -.8) -- (-.4, -1.6) -- (.4, -1.6) -- (.4,-.8) -- (-.4, -.8);
	\node at (0,1.2) {$#1$};
	\node at (0,0) {$#2$};	
	\node at (0,-1.2) {$#3$};
\end{tikzpicture}
}
\newcommand{\MInnerProduct}[3]{
\begin{tikzpicture}[baseline=-.1cm]
	\draw [thick, #2] (0,.6)--(0, -.4);
	\draw [thick, #2] (0, -.4) .. controls ++(270:.4cm) and ++(270:.4cm) .. (1.2,-.4);
	\draw[thick, #2] (1.2,-.4) -- (1.2,.6);
	\draw[thick, unshaded] (1.6, -.4) -- (1.6, .4) -- (.8, .4) -- (.8, -.4) -- (1.6, -.4);
	\draw[thick, unshaded] (-.4, -.4) -- (-.4, .4) -- (.4, .4) -- (.4, -.4) -- (-.4, -.4);
	\node at (1.2, 0) {$#3$};
	\node at (0, 0) {$#1$};
\end{tikzpicture}
}
\newcommand{\MInnerProductOp}[3]{
\begin{tikzpicture}[baseline=-.1cm]
	\draw[thick, #2] (0,-.6)--(0, .4);
	\draw[thick, #2] (1.2,.4) -- (1.2,-.6);
	\draw[thick, #2] (0, .4) .. controls ++(90:.4cm) and ++(90:.4cm) .. (1.2,.4);
	\draw[thick, unshaded] (1.6, -.4) -- (1.6, .4) -- (.8, .4) -- (.8, -.4) -- (1.6, -.4);
	\draw[thick, unshaded] (-.4, -.4) -- (-.4, .4) -- (.4, .4) -- (.4, -.4) -- (-.4, -.4);
	\node at (1.2, 0) {$#3$};
	\node at (0, 0) {$#1$};
\end{tikzpicture}
}
\begin{document}


\title{Free Probability, Planar Algebras, and the Multicolored Guionnet-Jones-Shlyakhtenko Construction}
\author{Michael Aaron Hartglass}
\degreesemester{Spring}
\degreeyear{2013}
\degree{Doctor of Philosophy}
\chair{Professor Vaughan F.R. Jones}
\othermembers{Professor Marc Rieffel \\
  Professor Ori Ganor}
\numberofmembers{3}
\prevdegrees{B.A. Mathematics with Honors (University of Rochester) 2007
  }
\field{Mathematics}
\campus{Berkeley}




\maketitle
\copyrightpage

\begin{abstract}

This dissertation consists of three papers I have written or helped write during my time at UC Berkeley.  The three papers all fall under the common theme of exploring connections between free probability and planar algebras.

In Chapter \ref{chap:graph}, an amalgamated free product algebra, $\cN(\Gamma)$, is constructed for any connected, weighted, loopless graph $\Gamma$, and its isomorphism class can be nicely  read off based on the weighting data on the graph.  This construction was heavily influenced by an algebra appearing in \cite{MR2807103}, and it is used, along with some standard embedding arguments, to show that the factors of Guionnet, Jones, and Shlyakhtenko appearing in \cite{MR2732052} are isomorphic to $L(\F_{\I})$ when the planar algebra is infinite depth.

Chapter \ref{chap:universal} is the paper ``Rigid $C^{*}-$tensor categories of bimodules over interpolated free group factors" \cite{MR123525} which was co-authored with Arnaud Brothier and David Penneys.  In this paper, we establish an unshaded planar algebra structure (with multiple colors of strings) which can be used to model a countably generated rigid $C^{*}-$tensor category, $\cC$.  We use this to construct a category of bifinite bimodules over a $II_{1}$ factor $M_{0}$, and we show that this category is equivalent to $\cC$.  Finally, we use the work in Chapter \ref{chap:graph} to show that the factor $M_{0}$ is isomorphic to $L(\F_{\I})$.

Chapter \ref{chap:FC} is a note regarding multishaded planar algebras.  The problem studied consists of placing the ``Fuss-Catalan" potential on a specific kind of subfactor planar algebra $\cQ$, which is the standard invariant for and inclusion $N \subset M$ with an intermediate subfactor $P$.  This potential is best understood by augmenting $\cQ$, forming a bigger multishaded planar algebra $\cP$.  The isomorphism classes of the algebras $M_{\alpha}$ associated to $\cP$ will be computed explicitly.  While the isomorphism class of the von Neumann algebras $N_{k}^{\pm}$ associated to $\cQ$ are still not yet known, they will be shown to be contained in a free group factors and contain a free group factors. This potential is shown to yield a nice free product expression for the law of $\cup$, an element which plays a critical role in understanding algebras that arise from this construction.  Many of the ideas in this chapter influenced the work in the Chapter \ref{chap:universal}.

\end{abstract}

\begin{frontmatter}

\begin{dedication}
\null\vfil
\begin{center}
This dissertation is dedicated to Douglas Knowles.
\end{center}

\vfil\null
\end{dedication}

\tableofcontents

\begin{acknowledgements}
Vaughan F.R. Jones:  Thank you for your positive attitude, guidance, and constant encouragement.  At many times, it seemed like you believed in me much more than I believed in myself.  In addition, thank you for funding me for countless trips to Nashville as well as other places.  These trips greatly enhanced my mathematical career. Finally, thank you for not being upset after losing so many things in my forays into Bodega Bay!

Thank you to the subfactor and operator algebra community for providing a nurturing environment, as well as many outstanding times together.  In particular, I would like to thank Dave Penneys, James Tener, Arnaud Brothier, Scott Morrison, Noah Snyder, Emily Peters, Dietmar Bisch, Dimitri Shlyakhtenko, Dan Voiculescu, Marc Rieffel, Andre Kornell, Adam Merberg, Weihua Liu, Jamie Mingo, Ken Dykema, and Yasuyuki Kawahigashi.

Thank you to all of my past and present colleagues in the Berkeley math department who became some of the closest friends I have made, and who have shared many good times with me:  Ivan Ventura, Morgan Brown, Kelty Allen, Ian herbert, Cynthia Vinzant, Brad Frohele, Rob Bayer, Kiril Datchev, Doug Rizzolo, Taryn Flock, Ralph Morrison, and Jeff Galkowski.

I am very thankful for the various musical groups I have been involved in during my time in the bay area:  The Math Department Orchestra, West Coundy Winds, Oakland Civic Orchestra, and especially the UC Berkeley Symphony Orchestra.  You have all taught me that there is far more to life than mathematics.  A special thanks to Don Howe for helping improve my bass trombone playing to levels I thought I could not attain.

I feel very blessed to be supported by such a great family.  To my parents, Becky and Bill, thank you for always stressing education and hard work, as well as encouraging me to broaden my interests to places other than math.  I could not ask for more supportive parents.  To my brother, David, thank you for all of the fun times growing up and beyond.  It has been a real treat to see my younger grow into a successful adult.

Finally, an immense thank you to Sarah Knowles.  Sarah, I could not have asked for a better partner.  Thank you for being so supportive of me during the last three years, especially during the times that were not the easiest for you.  I see nothing but better days ahead.

\end{acknowledgements}

\end{frontmatter}

\pagestyle{headings}

\chapter[Introduction]{Introduction} \label{chap:intro}

\section{Subfactors and planar algebras}

In \cite{MR696688}, Jones initiated the study of subfactor theory.  The objects of interest are inclusions $M_{0} \subset M_{1}$ of finite index $II_{1}$ factors $M_{0}$ and $M_{1}$.   From this, he iterated the basic construction to produce a tower of finite index $II_{1}$ factors
$$
M_{0} \subset M_{1} \subset \cdots \subset M_{n} \subset \cdots,
$$
and studied subfactors by examining their standard invariant. The standard invariant consists of two sequences of finite dimensional vector spaces, $P_{n, +} = M_{0}' \cap M_{n}$ and $P_{n, -} = M_{1}' \cap M_{n}$ \cite{MR696688}.  By 1995, the standard invariant became axiomatized by Ocneanu's \underline{paragroups} \cite{MR996454}, and Popa's \underline{$\lambda-$lattices} \cite{MR1334479}.   Around the same time, Jones discovered that operations and objects in subfactor theory (such as the Jones projections, traces, and conditional expectations) possess a very natural planar structure.  He used these observations to invent subfactor \underline{planar algebras}. Before talking about subfactor planar algebras, we will give a definition of a shaded planar tangle:
\begin{defn}\label{defn:tangle}

A \underline{shaded planar tangle}, $T$ consists of the following data:
\begin{enumerate}[(1)]
\item
a rectangle $D_0(T)\subset \R^2$,
\item
Finitely many disjoint rectangles $\{D_{i}: i \in I\}$ in the interior of $D_0(T)$ ($I$ may be empty),
\item
Finitely many disjoint smooth arcs in $D_0(T)$ called the strings of $T$ which do not meet the interior of any $D_{i}$.
The boundary points of a string of $T$ (if it has any) lie in the boundaries of the $D_i$, and they meet these boundaries transversally (if they meet the boundaries at all).
\item A choice of shading of the regions of $T$.  Each region can be either shaded or unshaded, but an unshaded region can only border a shaded region or vice versa.  Notice that this means that the number of strings intersecting any box must be even.
\end{enumerate}
The boundaries of the strings divide the boundaries of the rectangles into \underline{intervals}. For each rectangle, there is a distinguished interval denoted $\star$. The intervals of $D_i(T)$ are divided by the \underline{marked points} of $D_i(T)$, i.e., the points at which the strings meet the boundary of $D_i(T)$. Starting at $\star$, the marked points on each rectangle are number clockwise.

If we have two planar tangles $S,T$ satisfying the following \underline{boundary conditions}:
\begin{itemize}
\item
some internal rectangle $D_i(S)$ agrees with $D_0(T)$,
\item
the marked points of $D_i(S)$ agree with the marked points of $D_0(T)$,
\item
the distinguished interval of $D_i(S)$ agrees with the distinguished interval of $T$, and
\item
the label from $\cL$ of each marked point of $D_i(S)$ agrees with the label of each marked point from $D_0(T)$,
\end{itemize}
then we may compose $S$ and $T$ to get the planar tangle $S\circ_{D_{i}} T$ by taking $S$ union the interior of $D_0(T)$, removing the boundary of $D_i(S)$, and smoothing the strings.
\end{defn}

An example of such a tangle is:
$$
\begin{tikzpicture}[baseline = .5cm]
	\draw[thick] (-2,-.8)--(-2,2)--(.8,2)--(.8,-.8)--(-2,-.8);
	\draw (.8,1.4) arc (-90:-180:.6cm);
	\draw (-.2,.4) arc (0:90:.6cm);
    \draw (.2, .4) arc (0:90:1cm);
	\draw (-.4,-.3) arc (270:90:.3cm);
	\draw (-1,2)--(-1,1)--(-2,1);
	\draw (-1.4,2)--(-1.4,1.4)--(-2,1.4);
	\draw (0,-.8)--(0,0)--(.8,0);
	\draw (-1.4,0) circle (.3cm);
	\filldraw[unshaded,thick] (-1.6,.8)--(-1.6,1.6)--(-.8,1.6)--(-.8,.8)--(-1.6,.8);
	\filldraw[unshaded,thick] (-.4,.4)--(.4,.4)--(.4,-.4)--(-.4,-.4)--(-.4,.4);
	\node at (-.5,-.5) {$\star$};
	\node at (-1.6,.6) {$\star$};
	\node at (-2.2,-.6) {$\star$};
    \node at (-1.8, 1.75) {\scriptsize{$+$}};
    \node at (-1.2, 1.75) {\scriptsize{$-$}};
    \node at (0, 1.5) {\scriptsize{$+$}};
    \node at (.6, 1.75) {\scriptsize{$-$}};
    \node at ( -.1, .8) {\scriptsize{$-$}};
    \node at ( .6, -.6) {\scriptsize{$-$}};
    \node at ( -.9, .3) {\scriptsize{$+$}};
    \node at ( -1.4, 0) {\scriptsize{$-$}};
    \node at ( -.55, 0) {\scriptsize{$-$}};
    \node at ( -1.8, 1.2) {\scriptsize{$-$}};
\end{tikzpicture}
$$
where a $+$ indicates an unshaded region and a $-$ indicates a shaded region.  Loosely speaking, a shaded planar algebra consists of a family of vector spaces having which exhibit a natural action by planar diagrams.  In most definitions of planar algebras (such as \cite{math.QA/9909027}), the rectangles are taken to be smoothly embedded subdisks in the plane, and the action of planar tangles is invariant under an orientation preserving diffeomorphism.  Therefore, one should picture the rectangles as having their corners smoothed out.  In future drawings, the $\star$ on a box will be placed at the bottom-left of a box unless otherwise indicated.

We now define a subfactor planar algebra.
\begin{defn} \label{defn:SPA}
\begin{itemize}
A shaded planar algebra $\cP$ comes equipped with the following data:
\item
a collection of vector spaces $(P_{2n, +})_{n \in \N}$ and $(P_{2n, -})_{n \in \N}$
\item
an action of planar tangles by multilinear maps, i.e., for each planar $(2n, \epsilon)$ tangle $T$, whose rectangles $D_i(T)$ are $(2n_{i}, \epsilon_{i})$-rectangles, there is a multilinear map
$$
Z_T\colon \prod_{i\in I} P_{2n_{i}, \epsilon_{i}}\to P_{2n, \epsilon}
$$
satisfying the following axioms:
\begin{enumerate}
\item[\underline{\text{Isotopy}:}]
If $\theta$ is an orientation preserving diffeomorphism of $\R^2$, then $Z_{\theta(T)}=Z_T$.  That is, let $T^{0}$ be the interior of $T$, and let $f \in \prod_{D \subset T^{0}}P_{2n_{D}, \epsilon_{D}}$.  Then
$$
Z_{\theta(T)}(f_{\theta}) = Z_{T}(f)
$$
where $f_{\theta}(\theta(D)) = f(D)$.

\item[\underline{\text{Naturality}:}]
For $S,T$ composable tangles, $Z(S\circ_D T) = Z(S)\circ_D Z(T)$, where the composition on the right hand side is the composition of multilinear maps.
\end{enumerate}
\end{itemize}
Moreover, $\cP$ is called a \underline{subfactor planar algebra} if $\cP$ is
\begin{itemize}
\item
\underline{evaluable}, i.e., $\dim(P_{2n, \pm})<\infty$ for all $n$ and $P_{0, \pm} \cong \C$ via the map that sends the empty diagram to $1\in\C$. Hence, by naturality, there is a scalar $\delta \in \C$ such that any labeled diagram containing a closed loop is equal to the same diagram without the closed loop multiplied by $\delta$.

\item $\cP$ is \underline{unital} \cite{JonesPAnotes}:  Let $S$ be a shaded planar $2n, \epsilon$ tangle with no input disks.  Then, there is an element $Z(S) \in P_{2n,\epsilon}$ so that the following holds:

Let $T$ be a tangle with a nonempty set of internal disks such that $S$ can be glued into the internal disk $D^{S}$ of $T$.  Then
    $$
    Z(T \circ S) = Z(T) \circ Z_{S}.
    $$
    Here $(Z(T) \circ Z_{S})(f) = \tilde{f}$ where
    $$
    \tilde{f}(D) = \begin{cases} f(D) \text{ if } D \neq D^{S} \\ Z(S) \text{ if } D = D^{S} \end{cases}
    $$

\item \underline{involutive}, i.e.  there is a map $*\colon P_{2n, \pm}\to P_{2n, \pm}$ with $*\circ *=\id$ which is compatible with the reflection of tangles, i.e., if $T$ is a planar tangle acting on $x_{1}, ..., x_{n}$, and $\varphi$ is an orientation \underline{reversing} diffeomorphism of $\R^{2}$, then
$$
Z(T)(f)^*=Z(\varphi(T))(\overline{f})
$$
where $\overline{f}(\varphi(D)) = f(D)^{*}$
\item
\underline{spherical}, i.e., for all $\alpha\in\Lambda$ and all $x\in P_{2n, \pm}$, we have
$$
\tr(x)
=
\trace{n}{n}{x}{n}
=
\traceop{n}{n}{x}{n}\,.
$$
\item
\underline{positive}, i.e., the map $\langle\cdot,\cdot\rangle \colon P_{2n, \pm} \times P_{2n, \pm} \to P_{0, \pm}\cong\C$ given by
$$
\langle x,y\rangle =
\begin{tikzpicture}[baseline = -.1cm]
	\draw (0,0)--(1.2,0);	
	\filldraw[unshaded,thick] (-.4,.4)--(.4,.4)--(.4,-.4)--(-.4,-.4)--(-.4,.4);
	\filldraw[unshaded,thick] (.8,.4)--(1.6,.4)--(1.6,-.4)--(.8,-.4)--(.8,.4);
	\node at (0,0) {$x$};
	\node at (1.2,0) {$y^*$};
	\node at (.6,.2) {{\scriptsize{$2n$}}};
\end{tikzpicture}
$$
is a positive definite inner product. Hence $\delta >0$.  It further follows from the work of Jones \cite{MR696688} that
$$
\delta \in \{ 2\cos(\pi/n) : n = 3, 4, 5, ...\} \cup [2, \I)
$$
\end{itemize}
\end{defn}
Jones showed that the relative commutants $P_{2n, +} = M_{0}'\cap M_{n}$ and $P_{2n, -} = M_{1}'\cap M_{n+1}$ admit the structure of a subfactor planar algebra.  In Chapters \ref{chap:universal} and \ref{chap:FC}, we will define notions of planar algebras similar to the one presented above.
\begin{rem}
The spaces $P_{2n, \pm}$ as above are traditionally referred to as $P_{n, \pm}$ in the literature.  We adopt this new numbering convention as we will be considering unshaded planar algebras in Chapter \ref{chap:universal} which can admit actions by planar tangles with \emph{odd} numbers of strings.
\end{rem}

The positive definiteness assures that each $P_{2n, \pm}$ is a finite dimensional $C^{*}$ algebra where the multiplication is given by
$$
x \cdot y = \PAMultiply{n}{y}{n}{x}{n}
$$
and non-normalized trace $\Tr$ is given by
$$
\Tr(x) = \trace{n}{n}{x}{n}.
$$
Of critical importance is the study of minimal projections in these algebras via a \underline{principal graph}, $\Gamma$ and \underline{dual principal graph} $\Gamma'$.  To begin, we consider the algebras $P_{2n, +}$.  Let $p \in P_{2j, +}$ and $q \in P_{2k, +}$.  We say that $p$ and $q$ are equivalent if there is a $u \in P_{j+k, +}$ satisfying
$$
\PAMultiply{j}{u}{k}{u^{*}}{j} = p \, \, \, \text{ and } \, \, \, \PAMultiply{k}{u^{*}}{j}{u}{k} = q.
$$
Notice that for equivalence of $p$ and $q$ to make sense, we must have $k + j$ even.  We then form $\Gamma$ as follows:
\begin{itemize}
\item The vertices $v$ of $\Gamma$ are in a one to one correspondence with equivalence classes $[p_{v}]$ of minimal projections in the algebras $P_{2n, +}$.

\item There are $n$ edges connecting $v$ and $w$ if the following condition is met:  Suppose $p \in P_{2j, +}$ is a minimal projection equivalent to $p_{v}$.  Then the element
$$
i(p) = \begin{tikzpicture}[baseline = 0cm]
    \draw(0, .8)--(0, -.8);
    \draw(.6, .8)--(.6, -.8);
    \node at (.2, .6) {$\scriptsize{j}$};
    \node at (.2, -.6) {$\scriptsize{j}$};
    \filldraw[thick, unshaded](.4, .4)--(.4, -.4)--(-.4, -.4)--(-.4, .4)--(.4, .4);
    \node at (0, 0) {$p$};
\end{tikzpicture}
$$
is a projection $P_{2(j+1), +}$.  The number $n$ will be the maximum $k$ such that there exist orthogonal projections $q_{1}, ..., q_{k} \in P_{2(j+1), +}$ which are equivalent to $p_{w}$ and satisfy $\sum_{i=1}^{k}q_{i} \leq i(p)$.  This number is independent of the representative of $p_{v}$.  Also, if there are $n$ edges connecting $v$ to $w$, then there are also $n$ edges connecting $w$ to $v$ \cite{math.QA/9909027}.  See Chapter \ref{chap:FC} for an explanation of this fact for the multishaded case.
\end{itemize}
It follows from this construction that $\Gamma$ is a connected, unoriented, bipartite graph.  If we weight each vertex $v$ by $\Tr(p_{v})$, then it follows that the weights satisfy the Perron-Frobenius condition, i.e.
$$
\delta \Tr(p_{v}) = \sum_{w \sim v} n_{v, w}\Tr(p_{w}).
$$
We write $w \sim v$ if $w$ is connected to $v$ and we let $n_{v, w}$ be the number of edges connecting $v$ and $w$.  We will mark the vertex corresponding to the empty diagram with a *.

By looking at the algebras $P_{2n, -}$, one can similarly construct the dual principal graph of $\cP$.  It is straightforward to check that a $180^{\circ}$ rotation is an anti $*-$isomorphism between $P_{2n, +}$ and $P_{2n, -}$ for $n$ odd.  It follows that the odd vertices of $\Gamma$ and $\Gamma'$ are in bijective correspondence.

 We say that $\cP$ is \underline{finite depth} is $\Gamma$ is finite and \underline{infinite depth} if $\Gamma$ is infinite.  Note that if $\Gamma$ is finite, then there are only finitely many odd vertices of $\Gamma'$.  Since the algebras $P_{2n, -}$ are finite dimensional for each $n$, it follows from the construction of the edges that $\Gamma'$ must be finite as well.
\section{Free probability}

Around the same time as the beginning of subfactor theory, Voiculescu developed an analogue of probability called \underline{free probability}(see the book \cite{MR1217253}).  At the heart of free probability is the concept of free independence, which we will now define
\begin{defn}
Let $M$ be a von Neumann algebra with faithful normal state $\phi$, and $(M_{\iota})_{\iota\in I}$ a family of unital von Neumann subalgebras of $M$.  Then we say that the algebras $M_{\iota}$ are freely independent (or \underline{free}) if the following condition holds:
$$
\phi(x_{\iota_{1}}\cdots x_{\iota_{n}}) = 0 \text{ whenever } \iota_{1} \neq \iota_{2} \neq \cdots \neq \iota_{n} \text{ and } \phi(x_{\iota_{j}}) = 0 \text{ for all } j.
$$
We say that elements $(x_{\iota})_{\iota \in I}$ are free if the corresponding von Neumann algebras they generate are free.
\end{defn}
In the rest of the thesis, $\phi$ will always be assumed to be tracial.  Given any collection $(M_{\iota})_{\iota \in I}$ of von Neumann algebras with faithful states $\phi_{\iota}$, one can form a von Neumann algebra $M$ with faithful state $\phi$ which is generated by isomorphic images of the $M_{\iota}$ such that $\phi|_{M_{\iota}} = \phi_{\iota}$ and the family $(M_{\iota})_{\iota \in I}$ is free.  $M$ is unique up to spatial isomorphism, and the state $\phi$ is uniquely determined by the states $\phi_{\iota}$.  See \cite{MR1217253} for more details.

Voiculescu noticed strong parallels between commutative probability and free probability theory.  By realizing that the semicircular law should play the same role in free probability as the gaussian law plays in commutative probability, he established a central limit theorem for additive free convolution which mimics the central limit theorem in probability \cite{MR1217253}.  Building on the proof of Wigner's semicircular law \cite{MR0077805}, Voiculescu proved that a family of independent self adjoint Gaussian random matrices converges in joint distribution to a free family of semicircular elements \cite{MR1094052}.  Therefore, one can think of free probability as a random matrix limit of commutative probability.

\section{Connection between subfactors and free probability}

In the 1990's, evidence started to surface that the combinatorial structure of free probability is closely related to non-crossing diagrams.  One of the key observations is that if $x$ is a self adjoint random variable having the semicircular law of radius 2 with respect to some state $\phi$, then $\phi(x^{2n}) = C_{n}$, the $n^{th}$ Catalan number.  The Catalan numbers count many things including the numbers of noncrossing pair partitions of an ordered set of $2n$ elements.

It was shown by Speicher \cite{MR1268597} that additive free convolution of noncommutative random variables is easily described in terms of free cumulants which are naturally defined according to noncrossing partitions.  A combinatorial proof of the free central limit theorem boils down to counting noncrossing pair partitions in the large $N$ limit (as opposed to all pair partitions for the commutative central theorem) \cite{MR1030725}.  An explicit use of planar diagrams in free probability and random matrix theory with regards to second-order freeness was considered by Mingo, Speicher, and others \cite{MR2216446,MR2294222, MR2302524}.   Furthermore, Voiculescu's theorem on the asymptotic freeness of independent gaussian random matrices can be recast as a combinatorial proof involving counting certain noncrossing partitions \cite{MR2266879}.

These observations helped lead Guionnet, Jones, and Shlyakhtenko (GJS) to find connections between subfactors, planar algebras, free probability, and random matrix theory \cite{MR2732052}.  In \cite{MR2732052}, they constructed graded algebras $\Gr_{n}(\cP^{\pm})$ associated to a subfactor planar algebra $\cP$ and endowed them with a trace $\tr$ emanating from all Temperley-Lieb (planar) diagrams.  They were able to model $\tr$ by a random matrices associated to the principal graph, $\Gamma$ of $\cP$ and were able to construct a Jones tower of factors $M_{n}^{+} = \Gr(\cP_{2n, +})''$ whose standard invariant is $\cP$, thus recovering Popa's reconstruction theorem \cite{MR1334479}.  It was also shown that in the case where $\cP$ is finite depth (has a finite principal graph), the factors $M_{n}^{+}$ are interpolated free group factors, whose parameter depends on data in the planar algebra.

In Chapter \ref{chap:graph}, it will be shown that for infinite depth, the factors $M_{n}^{+}$ are all isomorphic to $L(\F_{\I})$.  In Chapter \ref{chap:universal}, we examine the GJS construction with regards to unshaded planar algebras, and obtain a universality result regarding countably generated rigid $C^{*}-$tensor categories and bimodules over $L(\F_{\I})$.  In the final chapter, we will examine the situation when the trace involving all Temperley-Lieb diagrams is replaced by the trace involving all Fuss-Catalan diagrams.

\subsection{Planarity and freeness}

The algebras $\Gr_{0}(\cP)$ in the following chapters are presented as variations of the following idea:  We set 
$$
\Gr_{0}(\cP) = \bigoplus_{n=0}^{\infty} P_{n}
$$
with multiplication given by
$$
x \cdot y =
\Mbox{}{x}
\,
\Mbox{}{y}
$$
and trace, $\tr$ given by 
$$
\tr(x) = \begin{tikzpicture}[baseline=.5cm]
	\draw (0,0)--(0,.8);
	\filldraw[unshaded,thick] (-.4,.4)--(.4,.4)--(.4,-.4)--(-.4,-.4)--(-.4,.4);
	\draw[thick, unshaded] (-.7, .8) -- (-.7, 1.6) -- (.7, 1.6) -- (.7,.8) -- (-.7, .8);
	\node at (0,0) {$x$};
	\node at (0,1.2) {$\Sigma TL$};
\end{tikzpicture}
$$
where $\sum TL$ is the sum of all planar string diagrams with no strings emanating out of the top of the box or the sides.  Assuming for the moment that $\tr$ is positive definite, we have the following lemma:
\begin{lem}\label{lem:freeplanar} 
Suppose $x_{1}, ..., x_{n}$ are elements in $\cP$ with the following property:  For every tangle, $T$,  with at least one string connecting two distinct internal disks $D_{1}$ and $D_{2}$, we have $Z(T)(f) = 0$ whenever $f(D_{1}) = x_{i_{1}}^{\epsilon_{1}}$, $f(D_{2}) = x_{i_{2}}^{\epsilon_{2}}$ with $i_{1} \neq i_{2}$ and $\epsilon_{j}$ is either nothing or a *.  Then $\{x_{1}, ..., x_{n}\}$ are free in $M_{0} = \Gr_{0}(\cP)''$.
\end{lem}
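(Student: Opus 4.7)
The plan is to verify freeness via Speicher's cumulant criterion: the unital $*$-subalgebras $A_i$ generated by $x_i$ are free iff every ``mixed'' free cumulant vanishes. I will identify the Speicher cumulants of monomials in the $x_i$'s with certain sums of tangle-evaluations, and then show that the mixed ones are killed by the hypothesis.

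First I would define, for $z_1, \dots, z_m$ chosen from $\{x_i, x_i^*\}_{i=1,\dots,n}$,
\[
\kappa_m(z_1, \dots, z_m) \;=\; \sum_{T \text{ connected}} Z(T)(z_1, \dots, z_m),
\]
where $T$ ranges over the scalar tangles with $m$ input disks in a row (the tangles that close off $z_1 \cdots z_m$ via a TL diagram on top in the definition of $\tr$), and ``connected'' means the box-graph of $T$ (vertices $\{1,\dots,m\}$, edge whenever some TL string joins two boxes) is connected. Then I would establish the moment-cumulant formula: any scalar tangle $T$ contributing to $\tr(z_1 \cdots z_m)$ has a box-graph whose connected components form a non-crossing partition $\pi(T)$ of $\{1,\dots,m\}$; the TL strings of $T$ split accordingly into connected sub-matchings, one per block; and since $Z$ is multiplicative on planarly disjoint scalar sub-tangles, grouping by $\pi(T)$ yields
\[
\tr(z_1 \cdots z_m) \;=\; \sum_{\pi \in NC(m)} \prod_{V \in \pi} \kappa_{|V|}(z_V).
\]
M\"obius inversion on $NC(m)$ then characterizes the $\kappa_m$ uniquely and identifies them with Speicher's free cumulants on monomials in the $x_i$'s.

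The decisive step is to show that $\kappa_m(z_1, \dots, z_m) = 0$ whenever the labels of the $z_j$'s are not all equal. For $m \geq 2$, a connected graph whose vertex coloring uses at least two colors must contain an edge with differently-colored endpoints, so every connected scalar tangle contains a string joining two input disks with different labels $i_a \neq i_b$, whence $Z(T)(z_1, \dots, z_m) = 0$ by the hypothesis of the lemma. Extending by multilinearity gives $\kappa_m(a_1, \dots, a_m) = 0$ whenever $a_j \in A_{\ell_j}$ and the $\ell_j$'s are not constant, so Speicher's criterion yields freeness of the $*$-algebras $A_i$; freeness then passes to the generated von Neumann algebras by weak density. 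I expect the main obstacle to be the factorization step in the moment-cumulant derivation: one must check carefully that a TL matching with prescribed block partition decomposes as a disjoint union of connected sub-matchings and that $Z$ factors across blocks, which amounts to the standard correspondence between non-crossing pair partitions and non-crossing set partitions together with multiplicativity of $Z$ on planarly disjoint scalar tangles.
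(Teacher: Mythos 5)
Your proof is correct, but it takes a genuinely different route from the paper's. The paper verifies the trace characterization of freeness directly: it takes centered polynomials $p_1,\dots,p_m$ with $\tr(p_j)=0$ and colors alternating, and shows $\tr(p_1\cdots p_m)=0$ by noting that any contributing Temperley--Lieb diagram either joins two boxes from consecutive $p_j$'s (giving zero by the hypothesis), or else the coarsened non-crossing partition it induces on $\{1,\dots,m\}$ has a singleton interval block $\{j\}$, so the within-$p_j$ sub-tangles sum to $\tr(p_j)=0$ and kill that term. You instead build the full moment--cumulant machinery out of the tangle picture: define $\kappa_m$ as the sum over connected scalar tangles, establish the moment--cumulant formula from the factorization of $Z$ over blocks of the non-crossing box-graph partition, identify these with Speicher's cumulants by M\"obius inversion, and then observe that a connected box-graph with at least two colors must have a bichromatic edge, which the hypothesis makes vanish. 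Each approach has something to recommend it: the paper's argument is shorter and invokes no cumulant theory, while yours makes the structural reason for freeness transparent -- free cumulants really are sums over connected planar diagrams -- and, once the framework is set up, the vanishing of mixed cumulants is immediate. One phrase to sharpen: ``extending by multilinearity'' is not the right justification for passing from the generators $x_i, x_i^*$ to arbitrary elements of the $*$-algebras $A_i$, since cumulants of products are not multilinear extensions of cumulants of the letters; what you want is the generators version of the cumulant criterion (Nica--Speicher, Theorem 11.20 and its $*$-variant), which rests on the formula for free cumulants with products as entries. With that reference in place, the argument is complete.
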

Loosely speaking, this lemma says that whenever an $x_{i}$ is connected by at least one string to $x_{j}$ for $i \neq j$, then $x_{i}$ and $x_{j}$ are free.

\begin{proof}
Suppose $p_{j}$ is a (non-commutative) polynomial in $x_{i_{j}}$ and $x_{i_{j}}^{*}$ with $\tr(p_{j}) = 0$, and consider the element $p_{1}\cdots p_{m}$ for some $m$.  Suppose further that $i_{k} \neq i_{k+1}$ for$1 \leq k \leq m$.  We need to show that $\tr(p_{1}\cdots p_{m}) = 0$.  By definition, we have:
$$
\tr(x) = \begin{tikzpicture}[baseline=.5cm]
	\draw (0,0)--(0,.8);
    \draw (1.4, 0)--(1.4, .8);
	\filldraw[unshaded,thick] (-.4,.4)--(.4,.4)--(.4,-.4)--(-.4,-.4)--(-.4,.4);
	\draw[thick, unshaded] (-.7, .8) -- (-.7, 1.6) -- (2.1, 1.6) -- (2.1,.8) -- (-.7, .8);
	\node at (0,0) {$p_{1}$};
	\node at (.7,1.2) {$\Sigma TL$};
    \node at (.7, 0) {$\cdots$};
    \filldraw[unshaded, thick]  (1,.4)--(1.8,.4)--(1.8,-.4)--(1,-.4)--(1,.4);
    \node at (1.4, 0) {$p_{m}$};
\end{tikzpicture}\, . 
$$
By assumption on the variables $x_{i}$, any element of $TL$ which connects a summand of $p_{i}$ to a summand of $p_{i+1}$ gives a zero contribution.  Therefore, by planarity, every contributing $TL$ element must pair some summand of a $p_{j}$ with itself.  Summing over all of these  partitions, and using $\tr(p_{j}) = 0$ for all $j$ gives the result.
\end{proof} 

The idea in the sections that follow is to find elements $x_{i}$ satisfying this property which generate $M_{0}$ in an appropriate sense.  In practice, we will pass to an infinite amplification of $M_{0}$ as it will be more natural to find such elements there. 
\chapter[GJS in infinite depth]{Free Product von Neumann Algebras Associated to Graphs and the Guionnet, Jones, Shlyakhtenko Construction in Infinite Depth} \label{chap:graph}

\section{Introduction}

In \cite{MR696688}, Jones initiated the study of modern subfactor theory.  Given a finite index $II_{1}$ subfactor $M_{0} \subset M_{1}$, one computes its standard invariant: two towers $(M_{0}' \cap M_{j}: j \geq 0)$  and $(M_{1}' \cap M_{j}: j \geq 1)$ of finite dimensional von Neumann algebras \cite{MR696688}. The standard invariant has been axiomatized by Ocneanu's paragroups \cite{MR996454}, Popa's $\lambda-$lattices \cite{MR1334479}, and Jones' subfactor planar algebras \cite{math/1007.1158}. Popa showed that given a standard invariant $\cP$, we can reconstruct a $II_{1}$ subfactor $M_{0} \subset M_{1}$ whose standard invariant is $\cP$ \cite{MR1334479}. Guionnet, Jones, and Shlyakhtenko \cite{MR2732052} give a planar-algebraic proof of the above result.  Moreover, if $\cP$ is finite depth with loop paramater $\delta > 1$, they showed that $M_{k}$, the $k^{th}$ factor in the Jones tower, is isomorphic to $L(\F(1 + 2\delta^{-2k}(\delta - 1)I))$ where $I$ is the global index of $\cP$ \cite{MR2807103}.   Kodiyalam and Sunder also obtained this formula when $\cP$ is depth 2 \cite{MR2574313, MR2557729}.  In this paper, we prove the following theorem:

\begin{thm*}
If $\cP$ is infinite depth, then every factor in the construction of \cite{MR2732052} is isomorphic to $L(\F_{\I})$.
\end{thm*}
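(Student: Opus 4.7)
The plan is to encode the structure of the GJS factors through an auxiliary von Neumann algebra $\cN(\Gamma)$ attached to any connected, weighted, loopless graph $\Gamma$, and then specialize to the case where $\Gamma$ is the principal graph of $\cP$. Set $D = \bigoplus_{v \in V(\Gamma)} \C\, p_v$ with $\tr(p_v)$ equal to the normalized Perron-Frobenius weight at $v$, and for each edge $e = (v,w)$ introduce a $D$-semicircular element $s_e$ supported on the $2 \times 2$ corner $(p_v + p_w)\cdot(p_v + p_w)$ and satisfying $p_v s_e p_v = p_w s_e p_w = 0$. Define $\cN(\Gamma)$ to be the von Neumann algebra generated by $D$ and all $s_e$, where the $s_e$ are taken $D$-free over $D$; equivalently, $\cN(\Gamma)$ is the amalgamated free product over $D$ of the edge algebras $A_e = (D|_{\{p_v,p_w\}}, s_e)''$.

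The first step is to compute $\cN(\Gamma)$ when $\Gamma$ is finite. By adapting Shlyakhtenko's theory of operator-valued semicirculars and Dykema's free-dimension calculus for amalgamated free products of ``diagonal plus matrix'' algebras, I would show that (up to a summand of hyperfinite corners that can be absorbed) $\cN(\Gamma) \cong L(\F_t)$ for a rational function $t$ of the Perron-Frobenius weights and edge multiplicities of $\Gamma$; specializing to the finite principal graph of a finite-depth subfactor planar algebra must recover the formula $t = 1 + 2\delta^{-2k}(\delta - 1) I$ of \cite{MR2807103}. For infinite $\Gamma$, write $\Gamma = \bigcup_n \Gamma_n$ as an increasing union of finite connected subgraphs containing the base vertex; then $\cN(\Gamma_n) \hookrightarrow \cN(\Gamma_{n+1}) \hookrightarrow \cN(\Gamma)$, and the free-dimension parameters $t_n \to \I$. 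The ultraweak closure of $\bigcup_n \cN(\Gamma_n)$ is then $L(\F_{\I})$ by Voiculescu-Dykema-R\u{a}dulescu approximation.

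The second step is to identify (corners of) $M_k^{\pm}$ with (corners of) $\cN(\Gamma)$ for $\Gamma$ the principal, resp.\ dual principal, graph. The random matrix model in \cite{MR2732052} already hints at this: the trace on $\Gr_n(\cP^{\pm})$ is computed by summing over loops on $\Gamma$ with Temperley-Lieb pairings, which suggests organizing generators of (an amplification of) $M_k^{\pm}$ by edges of $\Gamma$. I would construct such edge generators explicitly and verify, via Lemma \ref{lem:freeplanar}, that they are $D$-free $D$-semicirculars: two distinct edge generators correspond to elements of $\cP$ for which any planar tangle connecting them non-trivially contributes zero, so the freeness hypothesis is met. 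A dimension/generation count then shows that these generators produce all of $M_k^{\pm}$, giving an isomorphism $M_k^{\pm} \cong p\, \cN(\Gamma)\, p$ for a suitable projection $p \in D$.

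The main obstacle will be the second step, specifically the rigorous translation from the planar-algebra multiplication on $\Gr_n(\cP^{\pm})$ (composition of boxes with strings) to the path-concatenation multiplication governing $\cN(\Gamma)$, together with the verification that edge generators satisfy the Lemma \ref{lem:freeplanar} hypothesis. Matching traces, checking $D$-valued moments, and handling the base-vertex projection carefully will be the bulk of the technical work. Once that identification is in place, the theorem is immediate: infinite depth of $\cP$ means $\Gamma$ is infinite, hence $\cN(\Gamma) \cong L(\F_{\I})$ by the first step, and since every finite corner of $L(\F_{\I})$ remains isomorphic to $L(\F_{\I})$, we conclude $M_k^{\pm} \cong L(\F_{\I})$ for all $k$.
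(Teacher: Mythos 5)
Your plan has the right architecture — attach a graph von Neumann algebra $\cN(\Gamma)$ to the principal graph, identify corners of the GJS factors with corners of $\cN(\Gamma)$, and exhaust $\Gamma$ by finite connected subgraphs. This is exactly the paper's route. But there is a genuine gap in the inductive-limit step, and that gap is precisely where the bulk of the technical work in the paper lies.

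You write that the free-dimension parameters $t_n \to \infty$ and then conclude that the ultraweak closure of $\bigcup_n \cN(\Gamma_n)$ is $L(\F_\I)$ ``by Voiculescu--Dykema--R\u{a}dulescu approximation.'' There is no such theorem that applies at this level of generality. It is \emph{not} true that an increasing chain of interpolated free group factors $L(\F_{t_n})$ with $t_n\to\infty$ automatically has $L(\F_\I)$ as its inductive limit; the free-dimension parameter is only an invariant of the isomorphism class, not of how one factor sits inside another. What one actually needs is Dykema's theorem on \emph{standard embeddings}: if $\phi_n\colon L(\F_{s_n})\hookrightarrow L(\F_{s_{n+1}})$ is a standard embedding for each $n$, then the inductive limit along the $\phi_n$ is $L(\F_s)$ with $s=\lim s_n$. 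Establishing that the cut-down inclusions $p_*\cN(\Gamma_k)p_*\hookrightarrow p_*\cN(\Gamma_{k+1})p_*$ are standard embeddings is the substance of the paper's Theorem~\ref{thm:H1} and the long chain of intermediate algebras in Lemmas~\ref{lem:H2} and~\ref{lem:H3}; without it the limit statement simply does not follow. Your proposal omits this entirely.

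Two secondary remarks. First, the atomic summand of $\cN(\Gamma_k)$ is not a ``hyperfinite corner that can be absorbed'' — the paper shows $\cN(\Gamma_k)\cong L(\F_{t_k})\oplus A_k$ with $A_k$ finite-dimensional \emph{abelian}, supported on the leaf vertices $B(\Gamma_k)$, and one must check that the base projection $p_*$ sits inside the free group factor summand before the cut-down argument makes sense; the atoms are not discarded by general position but by tracking the central support of $p_*$. Second, the identification $M_0\cong p_*\cN(\Gamma)p_*$ does not need to be reproved via Lemma~\ref{lem:freeplanar} as you suggest; the paper cites it directly from \cite{MR2807103}, where it is established using the $\cA_+$-valued semicircular element $X$ and its compressions to partial isometries indexed by edges of $\Gamma$. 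Reconstructing this from planar freeness is a legitimate but strictly heavier alternative. The essential missing ingredient in your argument is the standard-embedding machinery; the rest is organizational.
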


 Using this theorem, we recover a diagrammatic proof of a result of Popa and Shlyakhtenko for $\cP$ infinite depth \cite{MR2051399}:

 \begin{cor*}
 Every infinite depth subfactor planar algebra is the standard invariant of $\cN \subset \cM$ where $\cN, \cM \cong L(\F_{\I})$.
 \end{cor*}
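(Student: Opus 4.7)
The plan is to combine the main theorem of the chapter with the reconstruction result of Guionnet, Jones, and Shlyakhtenko in a completely direct way; nothing new needs to be proved for the corollary itself beyond reading off the output of those two inputs. So this will be a one-paragraph ``observation''-style proof rather than a substantive argument.

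First, I apply the GJS construction from \cite{MR2732052} to the given infinite depth subfactor planar algebra $\cP$. This produces a Jones tower of $II_1$ factors
\[
M_0 \subset M_1 \subset M_2 \subset \cdots
\]
together with the statement, proved in \cite{MR2732052}, that the standard invariant of the inclusion $M_0\subset M_1$ is isomorphic to $\cP$ (i.e., $M_0'\cap M_j = P_{2j,+}$ and $M_1'\cap M_j = P_{2j,-}$ as planar algebras). This is precisely Popa's reconstruction theorem, obtained diagrammatically by GJS; I am simply invoking it.

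Next, I invoke the main theorem of this chapter: since $\cP$ is infinite depth, every factor $M_k$ in the GJS tower is isomorphic to $L(\F_\infty)$. In particular $M_0\cong L(\F_\infty)$ and $M_1\cong L(\F_\infty)$. Setting $\cN := M_0$ and $\cM := M_1$ then gives an inclusion $\cN\subset\cM$ with $\cN,\cM\cong L(\F_\infty)$ whose standard invariant is (isomorphic to) $\cP$, which is exactly the statement of the corollary.

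Since the corollary is a one-line consequence of two already-stated results, there is no genuine obstacle here; all of the mathematical content sits inside the main theorem (which requires the graph von Neumann algebra machinery of Chapter \ref{chap:graph} and the identification of the GJS factors with an amalgamated free product algebra $\cN(\Gamma)$ built from the principal graph). The only minor point worth being careful about when writing it out is making sure the identification of the standard invariant of $M_0\subset M_1$ with $\cP$, as established in \cite{MR2732052}, is cited for \emph{both} the even and odd parts $P_{2n,\pm}$ rather than only one of them, so that the full planar algebra structure (not merely the principal graph) is recovered.
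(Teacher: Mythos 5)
Your proposal is correct and matches the paper's approach exactly: apply the GJS reconstruction to obtain a Jones tower $M_0\subset M_1\subset\cdots$ with standard invariant $\cP$, then invoke the chapter's main theorem to conclude $M_0\cong M_1\cong L(\F_\infty)$, and set $\cN=M_0$, $\cM=M_1$. The paper presents this corollary in precisely this fashion, as an immediate consequence of the two cited results.
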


\subsection{Outline of the proof}

To prove the above theorem, we will bootstrap the proof from \cite{MR2807103} of the finite-depth case to the infinite-depth case using standard embedding tricks.

\paragraph {The GJS Construction:}

      We will recall the construction of Guionnet, Jones and Shlyakhtenko. For more details, see \cite{MR2732052} and \cite{MR2807103}, and Chapters \ref{chap:universal} and \ref{chap:FC}.  The placement of the $\star$ below is how the construction first appeared in \cite{MR2732052} and \cite{MR2807103}.  The author has since found it more convenient to place the $\star$ at the bottom of the box.  Let $\cP = (P_{n, \pm})_{n \geq 0}$ be a subfactor planar algebra with loop parameter $\delta > 1$.  Set $\Gr_{k}(\cP^{+})= \bigoplus_{n \geq 0} P_{k,n}^{+}$ where $P^{+}_{k,n} = \cP_{2k+2n, +}$ and an element of $P^{+}_{k,n}$ is represented as
       $$
    \begin{tikzpicture}
       \draw (-.5, -.5) -- (-.5, .5) -- (.5, .5) -- (.5, -.5) -- (-.5, -.5);
       \draw (-1, -1) -- (-1, 1) -- (1, 1) -- (1, -1) -- (-1, -1);
       \draw[ultra thick] (0, 1) -- (0, .5);
       \draw[ultra thick] (1, 0) -- (.5, 0);
       \draw[ultra thick] (-1, 0) -- (-.5, 0);
       \node at (0, 0) {$x$};
       \node at (-.75, .25) {$k$};
       \node at (.75, .25) {$k$};
       \node at (.30, .75) {$2n$};
       \node at (-.5, .6) {$\star$};
       \node at (-1, 1.1) {$\star$};
       \end{tikzpicture}
       $$
       where the $\star$ is always in an unshaded region and a thick string with a $j$ next to it denotes $j$ parallel strings.  If $x \in P_{k,n}^{+}$ and $y \in P_{k,m}^{+}$ then define a multiplication $\wedge_{k}$ by
       $$
        x \wedge_{k} y =
       \begin{tikzpicture} [baseline = 0 cm]
       \draw (-1.5, -.5) -- (-1.5, .5) -- (-.5, .5) -- (-.5, -.5) -- (-1.5, -.5);
       \draw[ultra thick] (-1, 1) -- (-1, .5);
       \draw[ultra thick] (0, 0) -- (-.5, 0);
       \draw[ultra thick] (-2, 0) -- (-1.5, 0);
       \node at (-1, 0) {$x$};
       \node at (-1.75, .25) {$k$};
       \node at (-.7, .75) {$2n$};
       \node at (-1.5, .6) {$\star$};
       \draw (.5, -.5) -- (.5, .5) -- (1.5, .5) -- (1.5, -.5) -- (.5, -.5);
       \draw[ultra thick] (1, 1) -- (1, .5);
       \draw[ultra thick] (2, 0) -- (1.5, 0);
       \draw[ultra thick] (0, 0) -- (.5, 0);
       \node at (1, 0) {$y$};
       \node at (1.75, .25) {$k$};
       \node at (1.30, .75) {$2m$};
       \node at (.5, .6) {$\star$};
       \draw (-2, -1) -- (2,-1)--(2,1)--(-2, 1)--(-2, -1);
       \node at (-2, 1.1) {$\star$};
       \end{tikzpicture}
       $$
        which is an element in $P_{k, m + n}^{+}$.  One can endow $Gr_{k}(P^{+})$ with the following trace:  if $x \in P^{+}_{k, n}$ then
        $$
        \tr(x) = \delta^{-k} \cdot \begin{tikzpicture}[baseline=.5cm]
	\draw (.5,.5) --(-.5,.5) -- (-.5,-.5) -- (.5,-.5) -- (.5, .5) ;
    \draw (.7,2)--(-.7,2)--(-.7,1)--(.7, 1)--(.7,2);
\draw[ultra thick] (-.5, 0) arc (90:270: .5cm) -- (.5, -1) arc(-90:90: .5cm);
\draw[ultra thick] (0, .5) -- (0, 1);
\node at (0, 0) {$x$};
\node at (-.5, .6) {$\star$};
\node at (-.7, 2.1) {$\star$};
\node at (0, 1.5) {$\sum TL$};
\node at (0, -.75) {$k$}; \end{tikzpicture}
$$
 where $\sum TL$ denotes the sum of all Temperely-Lieb diagrams, i.e. all planar pairings of the $2n$ strings on top of $x$.  This trace is positive definite, and one can form the von Neumann algebra $M_{k}$ which is the strong closure of $Gr_{k}(P^{+})$ acting on $L^{2}(Gr_{k}(P^{+}))$ by left multiplication (under $\wedge_{k}$).  It is shown that $M_{k}$ is a $II_{1}$ factor.  Moreover one can view $x \in M_{k}$ as an element in $M_{k+1}$ as follows:
 $$
    \begin{tikzpicture} [baseline = 0 cm]
       \draw (-.5, -.5) -- (-.5, .5) -- (.5, .5) -- (.5, -.5) -- (-.5, -.5);
       \draw[ultra thick] (0, 1) -- (0, .5);
       \draw[ultra thick] (1, 0) -- (.5, 0);
       \draw[ultra thick] (-1, 0) -- (-.5, 0);
       \node at (0, 0) {$x$};
       \node at (-.75, .25) {$k$};
       \node at (.75, .25) {$k$};
       \node at (-.5, .6) {$\star$};
       \node at (-1, 1.1) {$\star$};
       \draw[thick] (-1, -1) -- (-1, 1) -- (1, 1) -- (1, -1) -- (-1, -1);
       \draw (-1, -.65) -- (1, -.65);
    \end{tikzpicture} \, .
    $$
   With this identification, $M_{k}$ is a von Neumann subalgebra of $M_{k+1}$ and $M_{0} \subset M_{1} \subset \cdots \subset M_{k} \subset \cdots$ is a Jones tower of $II_{1}$ factors with standard invariant $\cP$.

   To identify the isomorphism type of the $M_{k}$, we look at the semi-finite algebra  $$V_{+} = \bigoplus_{k + l + m \textrm{ even }} P^{+}_{k,l,m}$$ where $P^{+}_{k,l,m} = P_{k+l+m, +}$ and is spanned by boxes of the form
   $$
    \begin{tikzpicture} [baseline = 0 cm]
       \draw (-.5, -.5) -- (-.5, .5) -- (.5, .5) -- (.5, -.5) -- (-.5, -.5);
       \draw[ultra thick] (0, 1) -- (0, .5);
       \draw[ultra thick] (1, 0) -- (.5, 0);
       \draw[ultra thick] (-1, 0) -- (-.5, 0);
       \node at (0, 0) {$x$};
       \node at (-.75, .25) {$k$};
       \node at (.75, .25) {$l$};
       \node at (.25, .75) {$m$};
       \node at (-.5, -.7) {$\star$};
       \end{tikzpicture} \, .
       $$
       The element $x$ above is identified with the following element of $P_{k+2p, l+2q, m}^{+}$:
       \begin{equation}
       \label{r}
  \delta^{-(p+q)/2}  \begin{tikzpicture}[baseline = -.5cm]
       \draw (-.5, -.5) -- (-.5, .5) -- (.5, .5) -- (.5, -.5) -- (-.5, -.5);
       \draw[ultra thick] (0, 1) -- (0, .5);
       \draw[ultra thick] (1, 0) -- (.5, 0);
       \draw[ultra thick] (-1, 0) -- (-.5, 0);
       \draw[thick] (-1,1)--(-1,-2)--(1, -2)--(1,1)--(-1,1);
       \draw (-1, -.5) arc(90:-90 : .2cm);
       \draw (-1, -1.5) arc(90:-90 : .2cm);
       \draw (1, -0.5) arc(90:270 : .2cm);
       \draw (1, -1.5) arc(90:270 : .2cm);
       \node at (0, 0) {$x$};
       \node at (-.75, .25) {$k$};
       \node at (.75, .25) {$l$};
       \node at (.25, .75) {$m$};
       \node at (-1, -2.2) {$\star$};
       \node at (-.85, -1.1) {\vdots};
       \node at (.85, -1.1) {\vdots};
       \node at (-.7, -1.1) {$p$};
       \node at (.7, -1.1) {$q$};
       \node at (-.5, -.7) {$\star$};
       \end{tikzpicture}
       \end{equation}
        where there are $p$ cups on the left and $q$ cups on the right.  In the future chapters, it is convenient not to make such an identification, but this approach to the semifinite algebra was the original one that appeared in \cite{MR2807103}.  Under these identifications, $V_{+}$ completes to a semifinite von Neumann algebra, $\cM_{+}$ where the multiplication is given by
        $$
        \left(\begin{tikzpicture} [baseline = 0cm]
       \draw (-.5, -.5) -- (-.5, .5) -- (.5, .5) -- (.5, -.5) -- (-.5, -.5);
       \draw[ultra thick] (0, 1) -- (0, .5);
       \draw[ultra thick] (1, 0) -- (.5, 0);
       \draw[ultra thick] (-1, 0) -- (-.5, 0);
       \node at (0, 0) {$x$};
       \node at (-.75, .25) {$k$};
       \node at (.75, .25) {$l$};
       \node at (.25, .75) {$m$};
       \node at (-.5, -.7) {$\star$};
       \end{tikzpicture}\right) \cdot \left(\begin{tikzpicture}[baseline = 0cm]
       \draw (-.5, -.5) -- (-.5, .5) -- (.5, .5) -- (.5, -.5) -- (-.5, -.5);
       \draw[ultra thick] (0, 1) -- (0, .5);
       \draw[ultra thick] (1, 0) -- (.5, 0);
       \draw[ultra thick] (-1, 0) -- (-.5, 0);
       \node at (0, 0) {$y$};
       \node at (-.75, .25) {$k'$};
       \node at (.75, .25) {$l'$};
       \node at (.25, .75) {$m'$};
       \node at (-.5, -.7) {$\star$};
       \end{tikzpicture}\right) = \delta_{l, k'} \begin{tikzpicture}[baseline = 0cm]
       \draw (-1.5, -.5) -- (-1.5, .5) -- (-.5, .5) -- (-.5, -.5) -- (-1.5, -.5);
       \draw[ultra thick] (-1, 1) -- (-1, .5);
       \draw[ultra thick] (0, 0) -- (-.5, 0);
       \draw[ultra thick] (-2, 0) -- (-1.5, 0);
       \node at (-1, 0) {$x$};
       \node at (-1.75, .25) {$k$};
       \node at (-.7, .75) {$n$};
       \node at (-1.5, -.7) {$\star$};
       \draw (.5, -.5) -- (.5, .5) -- (1.5, .5) -- (1.5, -.5) -- (.5, -.5);
       \draw[ultra thick] (1, 1) -- (1, .5);
       \draw[ultra thick] (2, 0) -- (1.5, 0);
       \draw[ultra thick] (0, 0) -- (.5, 0);
       \node at (1, 0) {$y$};
       \node at (1.75, .25) {$l'$};
       \node at (1.30, .75) {$m'$};
       \node at (0, .3) {$l$};
       \node at (.5, -.7) {$\star$};
       \draw (-2, -1) -- (2,-1)--(2,1)--(-2, 1)--(-2, -1);
       \node at (-2, -1.2) {$\star$};
       \end{tikzpicture}
       $$
         where we have assumed that we have added enough cups as in diagram \eqref{r} so that $l$ and $k'$ are either the same or differ by 1.  The trace on $\cM_{+}$ is given by
         $$
         \Tr(x) = \begin{tikzpicture}[baseline=.5cm]
	\draw (.5,.5) --(-.5,.5) -- (-.5,-.5) -- (.5,-.5) -- (.5, .5) ;
    \draw (.7,2)--(-.7,2)--(-.7,1)--(.7, 1)--(.7,2);
\draw[ultra thick] (-.5, 0) arc (90:270: .5cm) -- (.5, -1) arc(-90:90: .5cm);
\draw[ultra thick] (0, .5) -- (0, 1);
\node at (0, 0) {$x$};
\node at (-.5, -.7) {$\star$};
\node at (-.7, 2.1) {$\star$};
\node at (0, 1.5) {$\sum TL$};
\node at (0, -.75) {$k$}; \end{tikzpicture}
$$
 provided that the number of strings on the left and right of $x$ have the same parity and is zero otherwise.  It is easy to check that the identification in diagram \eqref{r} respects both the trace and multiplication.

The algebras $M_{2k}$ above are a compression of $\cM_{+}$ by the projection $p_{2k}^{+}$ where for general $n$,
$$
p_{n}^{+} = \begin{tikzpicture}[baseline = 0cm]
       \draw[thick] (-.5, -.5) -- (-.5, .5) -- (.5, .5) -- (.5, -.5) -- (-.5, -.5);
       \draw[ultra thick] (-.5, 0) -- (.5, 0);
              \node at (0, .2) {$n$};
       \node at (-.5, -.7) {$\star$};
       \end{tikzpicture}
       $$
        Similarly, we can consider a semifinite von Neumann algebra $\cM_{-}$ generated by the $P_{n, -}$'s (where the $\star$ is in a \emph{shaded} region and on the bottom of the box), and if we define projections $p_{n}^{-}$, then $M_{2k+1}$ is the compression of $\cM_{-}$ by $p_{2k+1}^{-}$.

        A diagrammatic argument shows that $\cM_{+}$ is generated by
        $$
        \cA_{+} = \displaystyle \left(\bigcup_{k, \ell} P^{+}_{k, \ell, 0}\right)^{''} \textrm{ and } X = \textrm{s}-\lim_{k \rightarrow \infty} \begin{tikzpicture}[baseline = 0cm]
       \draw[thick] (-.7, -.7) -- (-.7, .7) -- (.7, .7) -- (.7, -.7) -- (-.7, -.7);
       \draw (-.7, .4)--(0, .4) arc(-90:0 : .3cm);
       \draw[ultra thick] (-.7, 0) -- (.7, 0);
       \node at (0, -.3) {$2k$};
       \node at (-.7, -.9) {$\star$};
       \node at (-.7, -.9) {$\star$};
       \end{tikzpicture}
         +
        \begin{tikzpicture} [baseline = 0cm]
       \draw[thick] (-.7, -.7) -- (-.7, .7) -- (.7, .7) -- (.7, -.7) -- (-.7, -.7);
       \draw (.7, .4)--(0, .4) arc(-90:-180 : .3cm);
       \draw[ultra thick] (-.7, 0) -- (.7, 0);
       \node at (0, -.3) {$2k$};
       \node at (-.7, -.9) {$\star$};
       \end{tikzpicture}
       $$
where the limit above is in the strong operator topology. This element is an $\cA_{+}$-valued semicircular element in the sense of \cite{MR1704661} and is used in the calculation of the isomorphism class of the algebras $M_{k}$

   \paragraph{The finite depth case:}  Let $\Gamma$ denote the principal graph of $\cP$ with edge set $E(\Gamma)$ and initial vertex *.  Let $\ell^{\infty}(\Gamma)$ as the von Neumann algebra of bounded functions on the vertices of $\Gamma$ and endow $\ell^{\infty}(\Gamma)$ with a trace $\tr$ such that $\tr(p_{v}) = \mu_{v}$, where $p_{v}$ is the delta function at $v$ and $\mu_{v}$ is the entry corresponding to a fixed Perron-Frobenius eigenvector for $\Gamma$ with $\mu_* = 1$.  From \cite{MR2807103}, $M_{0} = p_{*}\cN(\Gamma)p_{*}$ where $\cN(\Gamma)$ is the von Neumann algebra generated by $(\ell^{\I}(\Gamma), \tr)$ and $\ell^{\infty}(\Gamma)$-valued semicircular elements $\{X_{e}: e \in E(\Gamma)\}$ which are compressions of $X$ by partial isometries in $\cA_{+}$ and are free with amalgamation over $\ell^{\infty}(\Gamma)$.  Each $X_{e}$ is supported under $p_{v} + p_{w}$, where $e$ connects $v$ and $w$, and we have $X_{e} = p_{v}X_{e}p_{w} + p_{w}X_{e}p_{v}$.  Assuming that $\mu_v \geq \mu_w$, the scalar-valued distribution of $X_{e}^{2}$ in $(p_v + p_w)\cN(\Gamma)(p_v + p_w)$ is free-Poisson with an atom of size $\frac{\mu_v - \mu_w}{\mu_v + \mu_w}$ at 0.  Therefore,
  $$
  vN(\ell^{\infty}(\Gamma), X_{e}) = L(\Z) \otimes M_{2}(\C) \oplus \C \oplus \ell^{\I}(\Gamma \setminus \{v, w\})
  $$
   with $p_w = (1\otimes e_{1,1})\oplus 0\oplus 0$ and $p_v = (1\otimes e_{2,2})\oplus 1\oplus 0$, where $\{e_{i,j}: 1 \leq i,j \leq 2\}$ is a system of matrix units for $M_{2}(\C)$.  If $\Gamma$ is finite, Dykema's formulas for computing certain amalgamated free products \cite{MR1201693, MR2765550} show that $\cN(\Gamma)$ is an interpolated free group factor and the compression formula gives the result for $M_{0}$.  Since $M_{2n}$ is a $\delta^{2n}-$amplification of $M_{0}$, the result holds for $M_{2n}$.  The factor $M_{1}$ is a compression of $\cN(\Gamma^{*})$ with $\Gamma^{*}$ the dual principal graph of $\cP$.  Applying the same analysis to $\Gamma^{*}$ gives the formula for the $M_{2n+1}$'s.

   \paragraph{The infinite depth case:}

    We similarly define $\cN(\Gamma)$ for an arbitrary connected, loopless (not necessarily bipartite) graph $\Gamma$.  If $\Gamma$ is finite, we show that $\cN(\Gamma) \cong L(\F_{t}) \oplus A$ where $A$ is finite-dimensional and abelian ($A$ can possibly be $\{0\}$).  Furthermore, if $p_{\Gamma}$ is the identity of $L(\F_{t})$ and $\Gamma'$ is a finite graph containing $\Gamma$, then the inclusion $p_{\Gamma}\cN(\Gamma)p_{\Gamma} \rightarrow p_{\Gamma}\cN(\Gamma')p_{\Gamma}$ is a standard embedding of interpolated free group factors (see Definition \ref{defn:Dyk} and Remark \ref{rem:Dyk} below).  Therefore, if $\cP$ is infinite depth with principal graph $\Gamma$, we write $\Gamma$ as an increasing union of finite graphs $\Gamma_{k}$ where $\Gamma_{k}$ is $\Gamma$ truncated at depth $k$.  Since standard embeddings are preserved by cut-downs, the inclusion $p_{*}\cN(\Gamma_{k})p_{*} \rightarrow p_{*}\cN(\Gamma_{k+1})p_{*}$ is a standard embedding.  As $M_{0}$ is the inductive limit of the $p_{*}\cN(\Gamma_{k})p_{*}$'s, it is an interpolated free group factor where the parameter is the limit of the parameters for the $p_{*}\cN(\Gamma_{k})p_{*}$'s, which is $\I$.  Since the factors $M_{2k}$ are amplifications of $M_{0}$, $M_{2k} \cong L(\F_{\infty})$.  Applying the same analysis to $\Gamma^{*}$ (the dual principal graph of $\cP$) shows that $M_{2k+1} \cong L(\F_{\infty})$.

 \paragraph{Organization:}  Section \ref{sec:preliminaries} covers some preliminary material on interpolated free group factors, free dimension, and standard embeddings.  Section \ref{sec:VNGraph} introduces $\cN(\Gamma)$ and establishes both its structure and how it includes into $\cN(\Gamma')$ for $\Gamma$ a subgraph of $\Gamma'$.  Section \ref{sec:GJSInfinite} provides the proof that the factors $M_{k}$ above are all isomorphic to $L(\F_{\I})$.

\paragraph{Acknowledgements:}  The author would like to thank Arnaud Brothier, Vaughan Jones, David Penneys, and Dimitri Shlyakhtenko for their conversations and encouragement.  The author was supported by NSF Grant DMS-0856316 and DOD-DARPA grants HR0011-11-1-0001 and HR0011-12-1-0009.

\section{Preliminaries}\label{sec:preliminaries}

Dykema \cite{MR1256179} and R\u{a}dulescu \cite{MR1258909} independently developed interpolated free group factors $L(\F_{t})$ for $1 < t \leq \I$.  These coincide with the usual free group factors when $t \in \N \cup \{\infty\}$ and they satisfy
$$
L(\F_{t})*L(\F_{s}) = L(\F_{s+t}) \textrm{ and } L(\F_{t})_{\gamma} = L(\F(1 + \gamma^{-2}(t-1))),
$$
where $M_{\gamma}$ is the $\gamma-$amplification of the $II_{1}$ factor $M$. It is known that either the interpolated free group factors are all isomorphic or they are pairwise non-isomorphic \cite{MR1256179, MR1258909}.

\begin{nota} \label{nota:vNA} Throughout this paper, we will be concerned with finite von Neumann algebras $(\cM, tr)$ which can be written in the form
$$
 \cM = \overset{p_{0}}{\underset{\gamma_{0}}{\cM_{0}}} \oplus \bigoplus_{j \in J} \overset{p_{j}}{\underset{\gamma_{j}}{L(\F_{t_{j}})}} \oplus  \bigoplus_{k \in K} \overset{q_{k}}{\underset{\alpha_{k}}{M_{n_{k}}(\C)}}
 $$
 where $\cM_{0}$ is a diffuse hyperfinite von Neumann algebra, $L(\F_{t_{j}})$ is an interpolated free group factor with parameter $t_{j}$, $M_{n_{k}}(\C)$ is the algebra of $n_{k} \times n_{k}$ matrices over the scalars, and the sets $J$ and $K$ are at most finite and countably infinite respectively.  We use $p_{j}$ to denote the projection in $L(\F_{t_{j}})$ corresponding to the identity of $L(\F_{t_{j}})$ and $q_{k}$ to denote a minimal projection in $M_{n_{k}}(\C)$. The projections $p_{j}$ and $q_{k}$ have traces $\gamma_{j}$ and $\alpha_{k}$ respectively.  Let $p_{0}$ be the identity in $\cM_{0}$ with trace $\gamma_{0}$.   We write $\overset{p,q}{M_{2}}(\C)$ to mean $M_{2}(\C)$ with a choice of minimal orthogonal projections $p$ and $q$. \end{nota}

If the interpolated free group factors turn out to be non-isomorphic, it is desirable to be able to calculate which interpolated free group factors appear in amalgamated free products.  To help facilitate this calculation, Dykema defined the notion of free dimension.  In general, one has
$$
\fdim(\cM_{1} \underset{D}* \cM_{2}) = \fdim(\cM_{1}) + \fdim(\cM_{2}) - \fdim(D)
$$
whenever $\cM_{1}$ and $\cM_{2}$ are of the form of Notation \ref{nota:vNA} and $D$ is finite dimensional \cite{MR1201693, MR1363079,MR2765550,1110.5597}.  In general, for the algebra $\cM$ in Notation \ref{nota:vNA}, we have $$\fdim(\cM) = 1 + \sum_{j \in J}\gamma_{j}^{2}(t_{j} - 1) - \sum_{k \in K}\alpha_{k}^{2}.$$
Notice that this includes the special case $\fdim(L(\F_{t})) = t$.

Of course if the interpolated free group factors are isomorphic, then the free dimension is not well defined; however, the only purpose of the free dimension is to determine the parameter of interpolated free group factors which show up in amalgamated free products.  Therefore all the lemmas below will remain valid if all references to free dimension are removed.

Of critical importance will be the notion of a \emph{standard embedding} of interpolated free group factors \cite{MR1201693}.  This is a generalization of a mapping $\F_{n} \rightarrow \F_{m}$ for $m > n$ sending the $n$ generators of $\F_{n}$ onto $n$ of the $m$ generators for $\F_{m}$.

\begin{defn} \label{defn:Dyk}

Let $1 < r < s$ and $\phi : L(\F_{r}) \rightarrow L(\F_{s})$ be a von Neumann algebra homomorphism.  We say that $\phi$ is a standard embedding if there exist nonempty sets $S \subset S'$, a family of projections $\{p_{s'}: s' \in S'\}$ with $p_{s'} \in R$ (the hyperfinite $II_{1}$ factor), a free family $\{X^{s'}: s' \in S'\}$ of semicircular elements which are also free from $R$, and isomorphisms
 $$
 \alpha: L(\F_{r}) \rightarrow (R \cup \{p_{s}X^{s}p_{s}\}_{s \in S})'' \textrm{ and } \beta : L(\F_{s}) \rightarrow (R \cup \{p_{s'}X^{s'}p_{s'}\}_{s' \in S'})''
 $$
 such that $\phi = \beta^{-1} \circ \iota \circ \alpha$ where $\iota: (R \cup \{p_{s}X^{s}p_{s}\}_{s \in S})'' \rightarrow (R \cup \{p_{s'}X^{s'}p_{s'}\}_{s' \in S'})''$ is the canonical inclusion.  We will write $A \overset{s.e.}{\hookrightarrow} B$ to mean that the inclusion of $A$ into $B$ is a standard embedding.
\end{defn}

\begin{rem} \label{rem:Dyk} Dykema in \cite{MR1201693} and \cite{MR1363079} shows the following useful properties of standard embeddings which we will use extensively in this paper.

\begin{itemize}

\item[(1)] If $A$ is an interpolated free group factor, the canonical inclusion $A \rightarrow A * \cM$ is a standard embedding whenever $\cM$ is of the form in Notation \ref{nota:vNA}.
\item[(2)] A composite of standard embeddings is a standard embedding.
\item[(3)] If $A_{n} = L(\F_{s_{n}})$ with $s_{n} < s_{n+1}$ for all $n$ and $\phi_{n}: A_{n} \overset{s.e.}{\hookrightarrow} A_{n+1}$, then the inductive limit of the $A_{n}$ with respect to $\phi_{n}$ is $L(\F_{s})$ where $s = \displaystyle \lim_{n \rightarrow \infty}s_{n}$.
\item[(4)] If $t > s$  then $\phi: L(\F_{s}) \overset{s.e.}{\hookrightarrow} L(\F_{t})$ if and only if for any nonzero projection $p \in L(\F_{s})$, $\phi|_{pL(\F_{s})p}: pL(\F_{s})p \overset{s.e.}{\hookrightarrow} \phi(p)L(\F_{t})\phi(p)$.

\end{itemize}

\end{rem}

Our work will rely heavily on the following two lemmas.
\begin{lem}  [\cite{1110.5597}] \label{lem:DR1}

Let $\cN = (\overset{p}{M_{n}(\C)} \oplus B) \underset{D}{*} C$ and $\cM = (M_{n}(\C) \otimes A \oplus B) \underset{D}{*} C$ where $A$, $B$ and $C$ are finite von Neumann algebras and $D$ is a finite dimensional abelian von Neumann algebra.  Let $E$ be the trace-preserving conditional expectation of $\cM$ onto $D$.  Assume $p$ lies under a minimal projection in $D$ and $E|_{M_{n}(\C) \otimes A} = E|_{M_{n}(\C)} \otimes \tr_{A}$.  Then $p\cM p = p\cN p * A$ and the central support of $p$ in $\cM$ is the same as that in $\cN$.

\end{lem}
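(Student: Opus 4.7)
The plan is to construct an explicit trace-preserving $\ast$-isomorphism $\Phi \colon p\cN p * A \to p\cM p$. The inclusion $x \mapsto x \otimes 1_A$ of $M_n(\C)$ into $M_n(\C) \otimes A$ is unital and trace-preserving and fixes $D$, so it induces a trace-preserving inclusion $\iota \colon \cN \hookrightarrow \cM$ that is the identity on $C$. Compressing by $p$ yields $p \cN p \hookrightarrow p \cM p$. Separately, the map $\alpha(a) = p \otimes a$ gives a unital trace-preserving $\ast$-embedding $A \hookrightarrow p \cM p$; since $p$ is minimal in $D$ (and, as the setup ensures, in $M_n(\C)$), we have $p M_n(\C) p = \C p$, so $p(M_n(\C) \otimes A)p = \C p \otimes A \cong A$. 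The two subalgebras $\iota(p\cN p)$ and $\alpha(A)$ share only scalars, and by the universal property of the free product we obtain $\Phi$.

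\emph{Freeness (the main obstacle).} The core technical step is to verify that $\iota(p\cN p)$ and $\alpha(A)$ are free in $(p\cM p, \tr/\tr(p))$. Let $E \colon \cM \to D$ be the given trace-preserving conditional expectation; its compression $E_p \colon p\cM p \to \C p$ agrees with the normalized trace since $p$ is minimal in $D$. For an alternating centered word $w = x_0\, \alpha(a_1)\, x_1 \cdots \alpha(a_m)\, x_m$ with $\tr_A(a_j) = 0$ and each $x_i$ centered in $p\cN p$, the hypothesis $E|_{M_n(\C) \otimes A} = E|_{M_n(\C)} \otimes \tr_A$ gives $E(p \otimes a_j) = \tr_A(a_j)\, p = 0$. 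The strategy is to push $E$ across the word, exploiting the $D$-valued freeness of $M_n(\C) \otimes A \oplus B$ and $C$ in $\cM$ together with the tensor factorization of $E$ on $M_n(\C) \otimes A$, reducing $E_p(w)$ to a sum of terms each containing a vanishing factor $\tr_A(a_j)$. I anticipate this reduction, which requires a careful induction on word length and bookkeeping of iterated conditional expectations, to be the main difficulty of the proof.

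\emph{Surjectivity and central support.} Any element of $\cM$ is a strong limit of linear combinations of alternating words in $M_n(\C) \otimes A \oplus B$ and $C$. Each entry $m \in M_n(\C) \otimes A$ decomposes as $\sum (u_i \otimes 1)(1 \otimes a_i)$ with $u_i \in M_n(\C)$; choosing matrix units $\{e_{jk}\}$ for $M_n(\C) \subset \cN$ with $p = e_{11}$, we rewrite $1 \otimes a_i = \sum_j (e_{j1} \otimes 1)(p \otimes a_i)(e_{1j} \otimes 1)$. Compressing the full word by $p$ on both sides, every $A$-ingredient appears as a factor $\alpha(a_i) = p \otimes a_i$ sandwiched between elements of $p\cN p$, showing that $\iota(p\cN p)$ and $\alpha(A)$ generate $p\cM p$ and hence $\Phi$ is surjective. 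The central support claim follows from the fact that $\Phi$ preserves the amalgamated-free-product structure over $D$: the two-sided ideal generated by $p$ in $\cM$ is controlled by the same $D$-$C$ amalgamation data as in $\cN$, so the central supports coincide.
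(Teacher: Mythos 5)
First, a point of context: the paper does not prove this lemma at all — it is quoted verbatim from \cite{1110.5597} (Dykema--Redelmeier) and used as a black box throughout Chapters~\ref{chap:graph} and~\ref{chap:universal}. So there is no ``paper's own proof'' to compare your attempt against; what you have written is an attempt at reproving a cited result.

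As for your sketch on its own merits: the overall architecture is the correct one and matches the standard approach in Dykema's line of work. The embedding $\iota\colon\cN\hookrightarrow\cM$ induced by $x\mapsto x\otimes 1_A$ is correctly identified as $D$-fixing and trace-compatible precisely because of the hypothesis $E|_{M_n(\C)\otimes A}=E|_{M_n(\C)}\otimes\tr_A$, and $\alpha(A)=p\otimes A$ is the right copy of $A$ inside $p\cM p$. The surjectivity step via matrix units $1\otimes a=\sum_j(e_{j1}\otimes 1)(p\otimes a)(e_{1j}\otimes 1)$ is also sound. However, the one step you flag as ``the main difficulty'' — verifying that $\iota(p\cN p)$ and $\alpha(A)$ are free with respect to the normalized trace on $p\cM p$ — \emph{is} the lemma; everything else is routine. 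A sketch that says ``push $E$ across the word'' is not yet a proof: the letters $x_i\in p\cN p$ are themselves strong limits of compressed alternating words in $(M_n(\C)\oplus B)$ and $C$, so one must pass to a weakly dense $*$-subalgebra of $p\cN p$ spanned by $p$ and elements of the form $p\,u_1v_1u_2\cdots p$ (with $u_i$, $v_i$ centered), insert the centered $p\otimes a_j$, and then do a case analysis on whether the adjacent letters land in $M_n(\C)\oplus B$ or in $C$. In the case where two $M_n(\C)\oplus B$-letters flank $p\otimes a_j$, the three collapse to a single letter $(m_r p m_1')\otimes a_j\oplus 0$, and it is precisely the hypothesis $E|_{M_n(\C)\otimes A}=E|_{M_n(\C)}\otimes\tr_A$ together with $\tr_A(a_j)=0$ that makes this coalesced letter centered — so the word stays alternating and $E$ kills it. You never carry out this reduction, and it is not a formality. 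Also, note that ``share only scalars'' does not give you the universal property of the free product; you need freeness, which is the very thing at issue.

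The central support assertion is the second genuine gap. Saying ``the two-sided ideal generated by $p$ is controlled by the same $D$--$C$ amalgamation data'' is not an argument. The standard way to settle this is to observe that the central support of $p$ in $\cM$ equals the supremum of projections in $D$ equivalent in $\cM$ to subprojections under $p$, and then use the free product structure (i.e., the already-established isomorphism $p\cM p\cong p\cN p*A$ applied to corners of $\cM$, or the explicit alternating-word description of $\cM$) to show no new equivalences arise beyond those already present in $\cN$. As written, this step is asserted rather than proved.
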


\begin{lem}  [\cite{1110.5597}] \label{lem:DR2}

Let $\cN = (\overset{p}{\underset{\gamma}{M_{m}(\C)}} \oplus \overset{q}{\underset{\gamma}{M_{n-m}(\C)}} \oplus B) \underset{D}{*} C$ and $\cM = (\underset{\gamma}{M_{n}(\C)} \oplus B) \underset{D}{*} C$ with $B$, $C$, $D$ as in Lemma \ref{lem:DR1}.  Assume $p$ and $q$ sit under minimal projections in $D$ and $p$ is equivalent to $q$ in $\cN$.  Then $p\cM p = p\cN p * L(\Z)$ and the central support of $p$ in $\cM$ is the same as that in $\cN$.

\end{lem}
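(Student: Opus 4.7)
The plan is to construct a Haar unitary $U \in p\cM p$ that is $*$-free from $p\cN p$ and that, together with $p\cN p$, generates all of $p\cM p$. Since $L(\Z)$ is the von Neumann algebra generated by a single Haar unitary, this realizes $p\cM p = p\cN p * L(\Z)$. The extra information in $\cM$ beyond $\cN$ lives in the off-diagonal matrix units of $M_n(\C)$ connecting the $p$- and $q$-blocks; combined with the hypothesis $p \sim q$ in $\cN$, these manufacture the needed Haar unitary.

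To build the generator at the level of a minimal projection, fix minimal projections $p_0 \le p$ and $q_0 \le q$ in the matrix subalgebras $M_m(\C), M_{n-m}(\C) \subset M_n(\C) \subset \cM$, of equal trace. Let $v \in M_n(\C)$ be the off-diagonal matrix-unit partial isometry with $v^*v = p_0$ and $vv^* = q_0$, and, using the equivalence of $p$ and $q$ in $\cN$, pick a partial isometry $w \in \cN$ with $w^*w = p_0$ and $ww^* = q_0$. Set $u := w^*v \in p_0\cM p_0$; a direct check shows that $u$ is a unitary in this corner. The central technical claim is that $u$ is a Haar unitary and is $*$-free from $p_0\cN p_0$ inside $p_0\cM p_0$. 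Intuitively, $v$ lies in the matrix factor $M_n(\C)\oplus B$ of the amalgamated free product, while $w$ is built from the other ingredient $C$ together with $M_m(\C)\oplus M_{n-m}(\C)\oplus B$; after compression by $p_0$, the amalgamated freeness over $D$ descends to scalar freeness between $u$ and $p_0\cN p_0$, and the Haar distribution of $u$ arises from the freeness of the resulting phases.

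The next step is to lift the identification $p_0\cM p_0 = p_0\cN p_0 * L(\Z)$ from $p_0$ up to $p$. Matrix units for $M_m(\C) \subset p\cN p$ give identifications $p\cM p \cong M_m(\C)\otimes p_0\cM p_0$ and $p\cN p \cong M_m(\C)\otimes p_0\cN p_0$. Using the Dykema-style matrix amplification technique from \cite{1110.5597}, one spreads $u$ across the matrix units to produce a Haar unitary $U \in p\cM p$ that is free from $p\cN p$ at the full $p$-level. The central-support statement is then a byproduct of the construction: $U$ is built from ingredients already sitting under the central support of $p$ in $\cN$, so enlarging $\cN$ to $\cM$ cannot expand that support.

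The chief obstacle is the scalar freeness of $u$ from $p_0\cN p_0$ inside $p_0\cM p_0$, which requires a careful moment computation tracking the $D$-valued conditional expectations and the placement of $p_0, q_0$ under minimal projections of $D$. The equivalence hypothesis is essential here: the partial isometry $w \in \cN$ absorbs the source and range of the off-diagonal $v \in M_n(\C)$, letting the off-diagonal piece factor cleanly against the $\cN$-structure. The amplification step from $p_0$ to $p$ is a secondary subtlety, since matrix amplification does not in general distribute over free products, and the lift must exploit the explicit form of the Haar unitary $u$ produced above together with the parallel amplifications of $p_0\cN p_0$ and $p_0\cM p_0$.
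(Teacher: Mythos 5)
The paper cites this lemma from \cite{1110.5597} without supplying a proof, so there is no in-paper argument to compare against; what follows evaluates your proposal on its own. Your central device is the right one: take $v\in M_n(\C)$ an off-diagonal matrix-unit partial isometry with $v^*v=p$, $vv^*=q$, take $w\in\cN$ witnessing $p\sim q$, and argue that $u=w^*v$ is a Haar unitary in $p\cM p$ that is $*$-free from $p\cN p$. That is indeed how results of this shape are proved.

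Two issues, though. First, a reading of the notation: per Notation \ref{nota:vNA} the $p$ and $q$ decorating $M_m(\C)$ and $M_{n-m}(\C)$ denote \emph{minimal} projections (each of trace $\gamma$), not the identities of those blocks, so $p_0=p$, $q_0=q$, and the amplification from a ``$p_0$-level'' to a ``$p$-level'' that you describe is simply not there to do. This is fortunate, because the amplification step you sketch would not go through. Under the identification $p\cM p\cong M_m(\C)\otimes p_0\cM p_0$ (in the case you were imagining, where $p$ is the identity of $M_m(\C)$), the amplified Haar unitary $1\otimes u$ is \emph{not} free from $M_m(\C)\otimes p_0\cN p_0$: the alternating word $(e_{12}\otimes 1)(1\otimes u)(e_{21}\otimes 1)(1\otimes u^*)$ has trace $\tr(e_{11})=1/m\neq 0$ even though all four factors are centered, so freeness fails. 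More generally, $M_m(\C)\otimes(A*L(\Z))$ and $(M_m(\C)\otimes A)*L(\Z)$ need not coincide as tracial von Neumann algebras, so matrix amplification does not distribute over scalar free products the way that step requires.

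Second, and more substantively, the claim that $u=w^*v$ is a Haar unitary and is $*$-free from $p\cN p$ is exactly the content of the lemma, and you leave it at the level of an intuitive remark. Your description of why it should hold --- $v$ sits in the $(M_n(\C)\oplus B)$-leg, $w$ sits in $\cN$, compression by a minimal projection of $D$ reduces $D$-amalgamated freeness to scalar freeness, and the hypothesis $p\sim q$ in $\cN$ lets $w^*$ absorb the $D$-compatibility of $v$ --- correctly identifies where the hypotheses get used, but a proof needs to actually run the alternating-word moment computation through the amalgamated free-product structure. Without that, the proposal is a reduction, not a proof; the central-support assertion is likewise left at a one-line heuristic.
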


 Note that if $A$, $B$ and $C$ are in the form in Notation \ref{nota:vNA}, and if $\cN$ is an interpolated free group factor, then the proofs of the above lemmas in \cite{1110.5597} show that $p\cN p \rightarrow p\cM p$ of Lemmas \ref{lem:DR1} and \ref{lem:DR2} are standard embeddings.  This implies $\cN \overset{s.e.}{\hookrightarrow} \cM$ by Remark \ref{rem:Dyk}.

 Also, we will use Dykema's results for free products of finite-dimensional, abelian, von Neumann algebras.  More precisely, we will use the following theorem, which appears in \cite{MR1201693}

\begin{thm} \label{thm:Dyk}
We have the following formulas for free products:
\begin{itemize}
\item Assume
$$
1 \geq \alpha_{1} \geq \beta_{1} \geq \beta_{2} \geq \alpha_{2} \geq 0 \text{ with } \alpha_{1} + \alpha_{2} = 1 = \beta_{1} + \beta_{2}.
$$
Then
$$
\left(\overset{p_{1}}{\underset{\alpha_{1}}{\C}} \oplus \overset{p_{2}}{\underset{\alpha_{2}}{\C}}\right) * \left(\overset{q_{1}}{\underset{\beta_{1}}{\C}} \oplus \overset{q_{2}}{\underset{\beta_{2}}{\C}}\right) = \left( \underset{2\alpha_{2}}{(M_{2}(\C)\otimes L(\Z))} \oplus \overset{p_{1}\wedge q_{1}}{\underset{\alpha_{1} - \beta_{1}}{\C}} \oplus  \overset{p_{1}\wedge q_{2}}{\underset{\alpha_{1} - \beta_{2}}{\C}}\right)
$$

\item Assume
$$
1 \geq \alpha_{1} \geq \cdots \geq \alpha_{n} \geq 0 \text{ and } 1 \geq \beta_{1} \geq \cdots \geq \beta_{m} \geq 0
$$
with $n, m \geq 2$, $n + m \geq 5$, and $\sum_{i=1}^{n} \alpha_{i} = 1 = \sum_{j=1}^{m}\beta_{j}$.  Set $B = \{ (i, j): \alpha_{i} + \beta_{j} > 1\}$.  Then
$$
\left(\overset{p_{1}}{\underset{\alpha_{1}}{\C}} \oplus \cdots \oplus \overset{p_{n}}{\underset{\alpha_{n}}{\C}}\right) * \left(\overset{q_{1}}{\underset{\alpha_{1}}{\C}} \oplus \cdots \oplus \overset{q_{m}}{\underset{\beta_{m}}{\C}}\right) = L(\F_{t}) \oplus\left(\bigoplus_{(i, j) \in B}\overset{p_{i}\wedge q_{j}}{\underset{\alpha_{i} + \beta_{j} - 1}{\C}}\right)
$$
where $t$ is calculated using free dimension.

\item Assume that $A$ is either a diffuse hyperfinite finite von Neumann algebra or an interpolated free group factor.  Let $\alpha_{i}$ and $\beta_{j}$ be as above, except that $i + j$ need not exceed 4 and $\sum_{i=1}^{n} \alpha_{i} < 1$.  Then
$$
\left(A \oplus \overset{p_{1}}{\underset{\alpha_{1}}{\C}} \oplus \cdots \oplus \overset{p_{n}}{\underset{\alpha_{n}}{\C}}\right) * \left(\overset{q_{1}}{\underset{\alpha_{1}}{\C}} \oplus \cdots \oplus \overset{q_{m}}{\underset{\beta_{m}}{\C}}\right) = L(\F_{t}) \oplus\left(\bigoplus_{(i, j) \in B}\overset{p_{i}\wedge q_{j}}{\underset{\alpha_{i} + \beta_{j} - 1}{\C}}\right)
$$
where $t$ is calculated using free dimension.
\end{itemize}
\end{thm}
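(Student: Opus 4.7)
My plan is to prove the three formulas in sequence, reducing the general atomic case to the two-projection case of the first bullet, and then handling the diffuse case by inductive limits of atomic ones. The guiding principle is to first locate the atomic part of the free product, which is forced by trace overlap ($p_i \wedge q_j \neq 0$ whenever $\alpha_i + \beta_j > 1$), and then to show that the residual continuous piece is a factor whose free dimension is computed by the additivity identity $\fdim(M_1 \underset{D}{*} M_2) = \fdim(M_1) + \fdim(M_2) - \fdim(D)$ with $D = \C$.

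For the first bullet, I would work directly with two free projections $p := p_1$ and $q := q_1$ of traces $\alpha_1$ and $\beta_1$. The spectral distribution of $pqp$ inside $pMp$ is the free multiplicative convolution of two Bernoulli laws and, under the ordering $\alpha_1 \geq \beta_1 \geq \beta_2 \geq \alpha_2$, it consists of an atom at $1$ (producing one of the $\C$ summands via $p_1 \wedge q_1$), an atom at $0$ (producing the other $\C$ summand via $p_1 \wedge q_2$), and a continuous piece supported on a proper subinterval of $(0,1)$. Arranging the four corners $\{p_i q_j p_i\}$ of the continuous part into a system of $2 \times 2$ matrix units, and identifying the diagonal with $L^{\infty}$ of the spectral measure, the continuous summand becomes $M_2(\C) \otimes L(\Z)$ of total trace $2\alpha_2$.

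For the second bullet, my plan is to induct on $n + m$ with the first bullet as the base case. At each step I would split off one additional atom from one of the factors: writing the larger factor as a free product of a smaller atomic algebra with one extra $\C$ summand over a common finite-dimensional abelian subalgebra, and compressing by a minimal projection, I can invoke Lemma \ref{lem:DR2} together with Remark \ref{rem:Dyk} to replace the free product by one with one fewer $\C$ summand at the cost of free-multiplying the continuous part by $L(\Z)$. Atoms $p_i \wedge q_j$ appear exactly when $\alpha_i + \beta_j > 1$, because then $p_i^{\perp}$ and $q_j^{\perp}$ are together too small to span the identity; free-dimension bookkeeping then pins down the parameter $t$. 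The third bullet follows the same template, with the additional observation that the diffuse summand $A$ contributes no atoms (it has no minimal projections) and absorbs into the continuous factor via $L(\F_s) * L(\F_t) = L(\F_{s+t})$, or via the hyperfinite absorption identity $R * L(\F_t) = L(\F_{t+1})$.

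The main obstacle is verifying that the continuous summand is genuinely an interpolated free group factor rather than just some factor of the correct free dimension. My approach would be to exhibit an explicit standard generator system inside the continuous part: the off-diagonal spectral elements $p_i q_j p_i - \tau(p_i q_j p_i)\, p_i$ can be symmetrized and rescaled into a $*$-free family of semicircular elements which, together with the diagonal subalgebra generated by the $p_i$'s, satisfy Dykema's characterization of $L(\F_t)$ via standard generators. This step is where the underlying random-matrix content of Voiculescu's theorem enters, and where matching the explicit atom formula against the free-dimension computation is the most delicate bookkeeping.
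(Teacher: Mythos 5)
The paper does not prove Theorem~\ref{thm:Dyk} at all: the sentence preceding the statement says explicitly that it ``appears in \cite{MR1201693}'' and it is invoked throughout as a black box, together with the free-dimension identity and the standard-embedding facts. There is therefore no internal proof to compare your outline against.

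As an outline of an independent proof, your treatment of the first bullet is on target --- the distribution of $pqp$ for two free projections has atoms at $1$ and $0$ identified with $p_1\wedge q_1$ and $p_1\wedge q_2$, and the continuous corners organize into $M_2(\C)\otimes L(\Z)$; this is how the result is actually obtained. Your induction for the second and third bullets, however, does not close as written. Lemma~\ref{lem:DR2} governs the passage from $\overset{p}{M_m(\C)}\oplus\overset{q}{M_{n-m}(\C)}\oplus B$ to $M_n(\C)\oplus B$ inside a fixed amalgamated free product, i.e.\ gluing two matrix blocks along an equivalence of minimal projections. The inductive step you need is different: you must split a one-dimensional summand $\overset{p_n}{\underset{\alpha_n}{\C}}$ of one free factor into $\overset{p_n'}{\underset{\alpha_n'}{\C}}\oplus\overset{p_{n+1}}{\underset{\alpha_{n+1}}{\C}}$, and the algebras $\bigoplus\C$ in Theorem~\ref{thm:Dyk} contain no matrix blocks for Lemmas~\ref{lem:DR1} or~\ref{lem:DR2} to act on. (The paper's own Lemmas~\ref{lem:H1}--\ref{lem:H3} do run this kind of induction, but only because the construction of $\cN(\Gamma)$ manufactures $M_2(\C)\otimes L(\Z)$ summands in each free factor, which is exactly what makes the DR-lemmas applicable --- and those lemmas themselves, coming from \cite{1110.5597}, presuppose the results you are trying to prove.) Finally, your last paragraph defers the whole weight of the argument to ``exhibit[ing] an explicit standard generator system''; establishing that the continuous summand is an interpolated free group factor rather than some other factor of the right free dimension is precisely the substance of Dykema's argument in \cite{MR1201693}, and asserting it as one step leaves the proof essentially open.
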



\section{A von Neumann algebra associated to a finite connected graph}\label{sec:VNGraph}

Let $\Gamma$ be a connected, loopless, finite graph with edge set $E(\Gamma)$ and vertex set $V(\Gamma)$.  Assume further that each vertex $v \in V(\Gamma)$ is weighted by a real constant $\gamma_{v} > 0$ with $\sum_{v \in \Gamma} \gamma_{v} = 1$ (the weighting does \emph{not} have to be the Perron-Frobenius weighting.  Consider the abelian von Neumann algebra $(\ell^{\infty}(\Gamma), \tr)$ where $\tr$ is defined as follows: Let $p_{v}$ be the indicator function on the vertex $v$.  Then $\tr(p_{v})$ will be $\gamma_{v}$.  We construct a finite von Neumann algebra associated to $\Gamma$ (also see \cite{MR2772347}).

\begin{defn}\label{defn:vNGraph}
Let $\Gamma$ be as above, $e$ be an edge in $\Gamma$ connecting the vertices $v$ and $w$, and assume $\gamma_{v} \geq \gamma_{w}$.  Define
$$
\cA_{e} = \underset{2\gamma_{w}}{M_{2}(\C) \otimes L(\Z)} \oplus \underset{\gamma_{v} - \gamma_{w}}{\overset{p^{e}_{v}}{\C}} \oplus \ell^{\infty}(\Gamma \setminus \{v, w\})
$$
 where the trace on $M_{2}(\C) \otimes L(\Z)$ is $\tr_{M_{2}(\C)} \otimes \tr_{L(\Z)}$.  $\cA_{e}$ includes $\ell^{\infty}(\Gamma)$ by setting \begin{align*}p_{w} &= 1 \otimes e_{1, 1} \oplus 0 \oplus 0 \textrm{ and } \\ p_{v} &= 1 \otimes e_{2, 2} \oplus 1 \oplus 0.\end{align*}   Therefore, the trace preserving conditional expectation $E_{e}: \cA_{e} \rightarrow \ell^{\infty}(\Gamma)$ has the property $E_{e}|_{M_{2} \otimes L(Z)} = E_{e}|_{M_{2}} \otimes \tr|_{L(\Z)}$. We define $\cN(\Gamma)$, the von Neumann algebra associated to $\Gamma$, by $$
 \cN(\Gamma) = \underset{{\ell^{\infty}(\Gamma)}}{*} (\cA_{e}, E_{e})_{e \in E(\Gamma)}.
 $$
\end{defn}

\begin{rem} If $\Gamma$ is an infinite graph with a weighting that is not $\ell^{1}$, then we can still define $\cN(\Gamma)$ as in \ref{defn:vNGraph} although it will be a semifinite algebra.  Given $e \in E(\Gamma)$ connecting vertices $v$ and $w$, the compressed algebra $(p_{v} + p_{w})\cA_{e}(p_{v} + p_{w})$ is still finite, and if $E_{e}: \cA_{e} \rightarrow \ell^{\infty}(\Gamma)$ is the (tracial-weight) preserving conditional expectation, then $E_{e}$ is clearly normal on $(p_{v} + p_{w})\cA_{e}(p_{v} + p_{w})$ and is the identity on $(1 - p_{v} - p_{w})\cA_{e}(1 - p_{v} - p_{w})$.  Therefore one can take the algebraic free product $Q$ of $(\cA_{e})_{e\in E(\Gamma)}$ with amalgamation over $\ell^{\I}(\Gamma)$ and represent it on $L^{2}(Q, Tr \circ \underset{\ell^{\infty}(\Gamma)}{*}E_{e})$ to obtain $\cN(\Gamma)$.

 \end{rem}

 \begin{defn} \label{defn:H2} Let $v, w \in V(\Gamma)$ We write $v \sim w$ if $v$ and $w$ are connected by at least 1 edge in $\Gamma$ and denote $n_{v, w}$ be the number of edges joining $v$ and $w$.  We set $\alpha^{\Gamma}_{v} = \sum_{w\sim v} n_{v, w}\gamma_{w}$, and define $B(\Gamma) = \{ v \in V(\Gamma) : \gamma_{v} > \alpha^{\Gamma}_{v}\}$.  \end{defn}

 For the rest of this section, we assume $\Gamma$ is finite. We show that $\cN(\Gamma)$ is the direct sum of an interpolated free group factor and a finite dimensional abelian algebra.  More precisely, we prove the following theorem, which has a direct analogy with Theorem \ref{thm:Dyk}:

\begin{thm} \label{thm:H1}

Let $\Gamma$ and $\Gamma'$ be connected, finite, loopless, and weighted graphs with at least 2 edges.  Then $\cN(\Gamma) \cong \overset{p^{\Gamma}}{L(\F_{t_{\Gamma}})} \oplus \underset{{v \in B(\Gamma)}}{\bigoplus} \overset{r_{v}^{\Gamma}}{\underset{\gamma_{v} - \alpha^{\Gamma}_{v}}{\C}}$ where $r_{v}^{\Gamma} \leq p_{v}$ and $t_{\Gamma}$ is such that this algebra has the appropriate free dimension.  Furthermore, if $\Gamma$ is a subgraph of $\Gamma'$, then $p^{\Gamma}\cN(\Gamma)p^{\Gamma} \overset{s.e.}{\hookrightarrow} p^{\Gamma}\cN(\Gamma')p^{\Gamma}$.

\end{thm}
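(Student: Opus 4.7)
The plan is to prove both the structural decomposition and the standard-embedding claim together by induction on $n = |E(\Gamma)|$, with Lemmas \ref{lem:DR1} and \ref{lem:DR2} as the main workhorses. The key observation is that, by associativity of the amalgamated free product, one can write $\cN(\Gamma) = \cN(\Gamma \setminus e_0) *_{\ell^\infty(\Gamma)} \cA_{e_0}$ for any chosen edge $e_0$; provided the inductive hypothesis puts $\cN(\Gamma \setminus e_0)$ in the form $L(\F_{t_0}) \oplus (\textrm{finite abelian})$, the algebra $\cA_{e_0} = M_2(\C) \otimes L(\Z) \oplus \C \oplus \ell^\infty$ fits exactly the hypotheses of Lemma \ref{lem:DR1} (to absorb the new $L(\Z)$ into the free-group-factor block) and Lemma \ref{lem:DR2} (to fuse matrix blocks whose minimal-projection supports have been identified by the amalgamation).

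For the base case $n = 2$, $\Gamma$ is either a length-two path or a pair of parallel edges, and I would compute $\cA_{e_1} *_{\ell^\infty(\Gamma)} \cA_{e_2}$ directly: strip off the two copies of $L(\Z)$ with Lemma \ref{lem:DR1}, identify the resulting finite-dimensional free product via Theorem \ref{thm:Dyk}, then re-inflate with Lemma \ref{lem:DR1} to recover the $L(\F_{t_\Gamma}) \oplus (\textrm{abelian})$ form, matching the atoms and free dimension to the claimed formula by a direct check. For the inductive step, if $\Gamma$ contains a cycle I pick $e_0$ on a cycle, so $\Gamma \setminus e_0$ is still connected with $n-1$ edges; if $\Gamma$ is a tree I pick $e_0$ to be a leaf edge at leaf $v_0$, so that $\cN(\Gamma \setminus e_0) = \cN(\Gamma'_0) \oplus \C p_{v_0}$ where $\Gamma'_0$ is a connected tree on $|V(\Gamma)| - 1$ vertices (extending the inductive statement to allow weights not summing to $1$, or rescaling harmlessly). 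In either case the amalgamated free product $\cN(\Gamma \setminus e_0) *_{\ell^\infty(\Gamma)} \cA_{e_0}$ is handled by one application of Lemma \ref{lem:DR1} followed by any necessary applications of Lemma \ref{lem:DR2}, and additivity of free dimension yields the value of $t_\Gamma$.

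For the standard-embedding assertion, I would use the fact noted immediately after Lemma \ref{lem:DR2} that each individual application of Lemma \ref{lem:DR1} or \ref{lem:DR2} induces a standard embedding of the $p^\Gamma$-corners, together with Remark \ref{rem:Dyk}(2) on composites of standard embeddings and Remark \ref{rem:Dyk}(4) on preservation under cut-downs. Given $\Gamma \subseteq \Gamma'$, I would build $\Gamma'$ from $\Gamma$ by adding the extra vertices (which contribute only abelian direct summands $\C p_v$) and then the extra edges one at a time; at each stage the compression by $p^\Gamma$ of the current $\cN$ into the next is a standard embedding, and composing them all gives the required $p^\Gamma \cN(\Gamma) p^\Gamma \overset{s.e.}{\hookrightarrow} p^\Gamma \cN(\Gamma') p^\Gamma$.

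The main obstacle is the combinatorial bookkeeping inside each inductive step: one must trace which atoms of $\cN(\Gamma_0)$ sit under which minimal projections of $\ell^\infty(\Gamma)$, decide which atoms are merged into the new matrix block via Lemma \ref{lem:DR2} and which survive into $\cN(\Gamma)$, and verify that the total trace of the survivors at each vertex $v$ is exactly $\gamma_v - \alpha^\Gamma_v$. The clean characterization $B(\Gamma) = \{v : \gamma_v > \alpha^\Gamma_v\}$, rather than some more complicated condition, is precisely what this bookkeeping is designed to produce, and getting it right---especially in the tree-leaf case, where one must also track the effect of a rescaling of the inductive hypothesis on the non-normalized sub-tree---is the main technical challenge.
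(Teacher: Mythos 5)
Your proposal is correct and follows essentially the same approach as the paper: the paper's Lemma \ref{lem:H1} is your base case with two edges, and the paper's Lemmas \ref{lem:H2} and \ref{lem:H3} are exactly your two inductive steps (adding/removing an edge within the existing vertex set vs.\ adding/removing a leaf vertex with its edge), proved via repeated application of Lemmas \ref{lem:DR1} and \ref{lem:DR2} together with Theorem \ref{thm:Dyk}, with the standard-embedding assertion handled as you describe through Remark \ref{rem:Dyk}. The only difference is cosmetic — you phrase the induction as removing an edge from $\Gamma$ while the paper phrases it as adding an edge to go from $\Gamma$ to $\Gamma'$ — and you correctly flag the rescaling issue in the leaf case and the bookkeeping of atoms under $p_v$, which is precisely where the paper spends most of its effort.
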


 Notice that since we are assuming that all vertices have positive weight, it follows that $p_{v}p^{\Gamma}\neq 0$ for all $v \in \Gamma$.  We will prove Theorem \ref{thm:H1} in a series of lemmas.

\begin{lem} \label{lem:H1}

Let $\Gamma$ be a finite, connected, weighted, loopless graph with 2 edges.  Then $\cN(\Gamma)$ is of the form in Theorem \ref{thm:H1}.

\end{lem}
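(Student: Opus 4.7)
A finite connected loopless graph with exactly two edges is either a path $P_3$ on three vertices $u, v, w$ (with $e_1 = \{u, v\}$ and $e_2 = \{v, w\}$) or a double-edge graph with two vertices $v_1, v_2$ joined by two parallel edges. In either case I would write $\cN(\Gamma) = \cA_{e_1} *_{\ell^\infty(\Gamma)} \cA_{e_2}$ as in Definition \ref{defn:vNGraph}, and reduce the computation of this free product to Theorem \ref{thm:Dyk} by peeling apart the $M_2(\C) \otimes L(\Z)$-pieces of each $\cA_{e_i}$.

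First I would apply Lemma \ref{lem:DR1} twice, with $p$ chosen to be a minimal projection of $\ell^\infty(\Gamma)$ lying under the $M_2(\C) \otimes L(\Z)$-piece of the relevant edge, to strip off the $L(\Z)$ tensor factors, obtaining
$$
p\, \cN(\Gamma)\, p \;=\; p\, \bigl(\cA'_{e_1} *_{\ell^\infty(\Gamma)} \cA'_{e_2}\bigr)\, p \;*\; L(\F_2),
$$
where $\cA'_e$ denotes the ``tensor-free'' version of $\cA_e$ obtained by replacing $M_2(\C) \otimes L(\Z)$ with $M_2(\C)$. The hypothesis of Lemma \ref{lem:DR1} that the relevant conditional expectation factors as $E|_{M_2} \otimes \tr$ is built directly into Definition \ref{defn:vNGraph}. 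Then I would apply Lemma \ref{lem:DR2} twice to split each remaining $M_2(\C)$ into its diagonal $\C \oplus \C$, producing a second free $L(\F_2)$ and reducing the inner amalgamated free product to one between finite-dimensional abelian algebras. The required equivalence hypothesis --- that the two diagonal minimal projections of each split $M_2$ remain equivalent in the split algebra --- is witnessed by matrix units in the other edge's $M_2$, using that the two edges always share a common vertex of $\Gamma$ and hence a common minimal projection of $\ell^\infty(\Gamma)$.

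After these four reductions, the inner algebra is a free product of finite-dimensional abelian algebras amalgamated over $\ell^\infty(\Gamma)$. Cutting down to each minimal central projection of $\ell^\infty(\Gamma)$ turns this into an honest free product over $\C$ of abelian algebras, which I compute with Theorem \ref{thm:Dyk}; this yields $L(\F_r) \oplus A$ with $A$ finite-dimensional and abelian. A direct inspection identifies the atoms of $A$ with the ``excess'' vertices $v \in B(\Gamma)$, each having the expected trace $\gamma_v - \alpha_v^\Gamma$. Finally, reassembling the cut-down with the extracted $L(\F_4)$ via the free-dimension identities $\fdim(\cM_1 *_D \cM_2) = \fdim(\cM_1) + \fdim(\cM_2) - \fdim(D)$ and $L(\F_t)_\gamma = L(\F(1 + \gamma^{-2}(t-1)))$, and using the central-support conclusion of Lemmas \ref{lem:DR1} and \ref{lem:DR2} to recover $\cN(\Gamma)$ from $p \cN(\Gamma) p$, gives the interpolated free group factor parameter $t_\Gamma$ predicted by Theorem \ref{thm:H1}.

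The main obstacle is the sub-case analysis for the path $P_3$: the location of the extra $\C p^e$ summand inside $\cA_e$ depends on which endpoint of $e$ is heavier, so the concrete form of the abelian free product --- and hence the identification of atoms with $B(\Gamma)$ --- branches on the weight ordering among $u, v, w$. Handling each sub-case and verifying the equivalence hypothesis of Lemma \ref{lem:DR2} in each is mechanical but requires careful bookkeeping.
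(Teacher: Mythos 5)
Your proposal uses the same toolkit as the paper (Lemmas~\ref{lem:DR1}, \ref{lem:DR2}, Theorem~\ref{thm:Dyk}), but runs it in the opposite direction: you strip $\cN(\Gamma)$ down to an abelian amalgamated free product, compute that via Theorem~\ref{thm:Dyk}, and reassemble, whereas the paper starts from the abelian free product $\cN_0$, computes it, and builds up through $\cN_1, \cN_2, \cN_3$ to $\cN(\Gamma)$. That directional difference is not cosmetic: it is where your argument has a gap.

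The equivalence hypothesis of Lemma~\ref{lem:DR2} is not, as you claim, witnessed by matrix units in the other edge's $M_2(\C)$. Take the two-vertex double-edge graph ($\gamma_v \ge \gamma_w$). The $M_2$ block of $\cA_{e_1}$ has diagonals $p_w$ and $p_v^{I}$, while the $M_2$ block of $\cA_{e_2}$ has diagonals $p_w$ and $q_v^{I}$, with $p_v^{I}$ and $q_v^{I}$ being \emph{distinct} subprojections of $p_v$. When you split $\cA'_{e_1}$'s $M_2$ via Lemma~\ref{lem:DR2}, the hypothesis requires $p_w \sim p_v^{I}$ in the split algebra $\cN$, and the matrix units in $\cA'_{e_2}$ only give $p_w \sim q_v^{I}$ there; the missing link $p_v^{I} \sim q_v^{I}$ has to be extracted from the abelian free product via Theorem~\ref{thm:Dyk}. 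This is exactly the step the paper does first, and it is why the build-up order works: the paper establishes the chain $p_w \sim p_v^{I}$ (matrix unit in $\cN_1$) and $p_v^{I}\sim q_v^{I}$ (Dyk in $\cN_0$) to conclude $p_w\sim q_v^{I}$ before invoking Lemma~\ref{lem:DR2}. Your order has to verify a Dyk-level equivalence before doing the Dyk computation, which is circular as written. For the path $P_3$ the situation is worse --- the two $M_2$'s involve disjoint vertex pairs up to the middle vertex, so the ``other edge's matrix units'' don't touch the projections you need --- and in the sub-case where the middle vertex is not heaviest (the paper's Case~2b), the two $M_2$ blocks do not even share a common minimal projection of $\ell^\infty(\Gamma)$, so the displayed equality $p\,\cN(\Gamma)\,p = p(\cA'_{e_1}*_D\cA'_{e_2})p * L(\F_2)$ cannot be taken at a single cut-down; you would need separate compressions together with central support arguments, as the paper in fact does. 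The high-level strategy is right and the lemmas you invoke are the correct ones, but the route of verifying their hypotheses has to go through the abelian computation first, which forces the paper's build-up ordering.
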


\begin{proof}

 Set $D = \ell^{\infty}(\Gamma)$.  There are two overlying cases to consider.  One where $\Gamma$ has 2 vertices and the other where $\Gamma$ has 3 vertices.

   \emph{\underline{Case 1}}: Assume that $\Gamma$ has 2 vertices $v, w$ and 2 edges $e_{1}$ and $e_{2}$ connecting $v$ and $w$ and without loss of generality assume $\gamma_{v} \geq \gamma_{w}$.  We obtain the desired formula for $\cN(\Gamma)$ by examining the following sequence of inclusions:

 \begin{align*}
 \cN_{0} &= \left(\overset{p_{w}}{\underset{\gamma_{w}}{\C }}\oplus \overset{p_{v}^{I}}{\underset{\gamma_{w}}{\C}} \oplus \underset{\gamma_{v} - \gamma_{w}}{\C}\right) \underset{D}* \left(\overset{p_{w}}{\underset{\gamma_{w}}{\C }}\oplus \overset{q_{v}^{I}}{\underset{\gamma_{w}}{\C}} \oplus \underset{\gamma_{v} - \gamma_{w}}{\C}\right)\\
 \cap&\\
     \cN_{1} &= \left(\overset{p_{w}, p_{v}^{I}}{\underset{\gamma_{w}}{M_{2}(\C)}} \oplus \underset{\gamma_{v} - \gamma_{w}}{\C}\right) \underset{D}{*} \left(\overset{p_{w}}{\underset{\gamma_{w}}{\C }}\oplus \overset{q_{v}^{I}}{\underset{\gamma_{w}}{\C}} \oplus \underset{\gamma_{v} - \gamma_{w}}{\C}\right)\\
      \cap&\\
       \cN_{2} &= \left(\overset{p_{w}, p_{v}^{I}}{\underset{\gamma_{w}}{M_{2}(\C)}} \oplus \underset{\gamma_{v} - \gamma_{w}}{\C}\right) \underset{D}{*} \left(\overset{p_{w}, q_{v}^{I}}{\underset{\gamma_{w}}{M_{2}(\C)}} \oplus \underset{\gamma_{v} - \gamma_{w}}{\C}\right)\\
        \cap&\\
        \cN_{3} &= \left(\underset{2\gamma_{w}}{M_{2}(\C) \otimes L(\Z)} \oplus \underset{\gamma_{v} - \gamma_{w}}{\C}\right) \underset{D}{*} \left(\overset{p_{w}}{\underset{\gamma_{w}}{M_{2}(\C)}} \oplus \underset{\gamma_{v} - \gamma_{w}}{\C}\right)\\
         \cap&\\
          \cN(\Gamma) &= \left(\underset{2\gamma_{w}}{M_{2}(\C) \otimes L(\Z)} \oplus \underset{\gamma_{v} - \gamma_{w}}{\C}\right) \underset{D}{*} \left(\underset{2\gamma_{w}}{M_{2}(\C) \otimes L(\Z)} \oplus \underset{\gamma_{v} - \gamma_{w}}{\C}\right),
          \end{align*}
        where $p_{v}$ decomposes as $(1 \otimes e_{2, 2}) \oplus 1$ in $\cA_{e_{1}}$ and $\cA_{e_{2}}$ with $p_{v}^{I} = 1 \otimes e_{2,2}$ in $\cA_{e_{1}}$ and $q_{v}^{I} = 1 \otimes e_{2,2}$ in $\cA_{e_{2}}$.   From Lemma \ref{lem:DR1} and \cite{MR1201693}, we see that
        $$
        p_{v}\cN_{0}p_{v} = \overset{p_{v}^{I}}{\underset{\frac{\gamma_{w}}{\gamma{v}}}{\C}} \oplus \underset{\frac{\gamma_{v} - \gamma_{w}}{\gamma{v}}}{\C}*\overset{q_{v}^{I}}{\underset{\frac{\gamma_{w}}{\gamma{v}}}{\C}} \oplus \underset{\frac{\gamma_{v} - \gamma_{w}}{\gamma{v}}}{\C} = \begin{cases} \underset{2\frac{\gamma_{v} - \gamma_{w}}{\gamma_{v}}}{M_{2}(\C) \otimes L(\Z)} \oplus \overset{p_{v}^{I} \wedge q_{v}^{I}}{\underset{\frac{2\gamma_{w} - \gamma_{v}}{\gamma_{v}}}{\C}} & \textrm{if } 2\gamma_{w} \geq\gamma_{v}\\ \underset{\frac{2\gamma_{w}}{\gamma_{v}}}{M_{2}(\C) \otimes L(\Z)} \oplus \overset{(p_{v} - p_{v}^{I})\wedge(p_{v} - q_{v}^{I})}{\underset{\frac{\gamma_{v} - 2\gamma_{w}}{\gamma_{v}}}{\C}} & \textrm{if }\gamma_{v} > 2\gamma_{w} \end{cases}
        $$
        where in the first algebra, the identity element copy of $\C$ is $p_{v}^{I} \wedge q_{v}^{I}$ and and in the second algebra, the identity of the copy of $\C$ is orthogonal to both $p_{v}^{I}$ and $q_{v}^{I}$.

     \emph{\underline{Case 1a}}:  Assume $2\gamma_{w} \geq \gamma_{v}$.  As $p_{v}\wedge q_{v}$ is minimal and central in $\cN_{0}$, we see that
      $$
      \cN_{1} = \underset{3(\gamma_{v} - \gamma_{w})}{M_{3}(\C)\otimes L(\Z)} \oplus \underset{2\gamma_{w} - \gamma_{v}}{\overset{p_{v}^{I} \wedge q_{v}^{I}}{M_{2}(\C)}}.
      $$
       By \cite{MR1201693}, the projections $p_{v}^{I}$ and $q_{v}^{I}$ are equivalent in $\cN_{0}$, so it follows that $p_{w}$ is equivalent to $q_{v}^{I}$ in $\cN_{1}$.  Therefore by Lemma \ref{lem:DR2},
        $$
        p_{w}\cN_{2}p_{w} = p_{w}\cN_{1}p_{w} * L(\Z) = (\underset{\frac{\gamma_{v} - \gamma_{w}}{\gamma_{w}}}{L(\Z)} \oplus \underset{\frac{2\gamma_{w} - \gamma_{v}}{\gamma_{w}}}{\C}) * L(\Z),
        $$
         which is an interpolated free group factor $L(\F_{t})$ by Theorem \ref{thm:Dyk}.  As the central support of $p_{w}$ in $\cN_{2}$ is 1, it follows that $\cN_{2}$ is also an interpolated free group factor $L(\F_{t_{1}})$.  To finish up this case, we write
         \begin{align*}
         \cN_{2} \subset \cN_{3} &= \left(\underset{2\gamma_{w}}{M_{2}(\C) \otimes L(\Z)} \oplus \underset{\gamma_{v} - \gamma_{w}}{\C}\right) \underset{D}{*} \left(\overset{p_{w}}{\underset{\gamma_{w}}{M_{2}(\C)}} \oplus \underset{\gamma_{v} - \gamma_{w}}{\C}\right)  \textrm{ and }\\
      \cN_{3} \subset \cN(\Gamma) &= \left(\underset{2\gamma_{w}}{M_{2}(\C) \otimes L(\Z)} \oplus \underset{\gamma_{v} - \gamma_{w}}{\C}\right) \underset{D}{*} \left(\underset{2\gamma_{w}}{M_{2}(\C) \otimes L(\Z)} \oplus \underset{\gamma_{v} - \gamma_{w}}{\C}\right),
      \end{align*}
       and use Lemma \ref{lem:DR1} twice, as well as the amplification formula for interpolated free group factors to obtain that $\cN(\Gamma)$ is an interpolated free group factor.

      \emph{\underline{Case 1b}}: The case $\gamma_{v} > 2\gamma_{w}$ for $\cN_{0}$ is treated exactly the same as the first with only the caveat that the central support of $p_{w}$ in $\cN_{1}$ is a projection of trace $3\gamma_{w}$, so $\cN_{1}$, and thus $\cN_{2}$, $\cN_{3}$, and $\cN(\Gamma)$, have a minimal central projection of trace $\gamma_{v} - 2\gamma_{w}$.\\

 \emph{\underline{Case 2}}: Next we consider the case where $\Gamma$ has 3 vertices $v_{1}$, $v_{2}$, and $v_{3}$ with weights $\gamma_{1}$, $\gamma_{2}$, and $\gamma_{3}$ respectively, where $v_{2}$ is connected to $v_{1}$ by $e_{1}$ and to $v_{3}$ by $e_{2}$.  There are two sub-cases to consider.  The first is when $\gamma_{2} \geq \gamma_{1} \geq \gamma_{3}$, and the second is when $\gamma_{1} > \gamma_{2}$ and $\gamma_{1} \geq \gamma_{3}$.

 \emph{\underline{Case 2a}}: We examine the following sequence of inclusions:
  \begin{align*}
  \cN_{0} &= \left(\overset{p_{v_{1}}}{\underset{\gamma_{1}}{\C}} \oplus \overset{p^{I}_{2}}{\underset{\gamma_{1}}{\C}} \oplus \overset{p^{II}_{2}}{\underset{\gamma_{2} - \gamma_{1}}{\C}} \oplus \overset{p_{v_{3}}}{\underset{\gamma_{3}}{\C}}\right) \underset{D}{*} \left(\overset{p_{v_{1}}}{\underset{\gamma_{1}}{\C}} \oplus \overset{q^{I}_{2}}{\underset{\gamma_{2} - \gamma_{3}}{\C}} \oplus \overset{q^{II}_{2}}{\underset{\gamma_{3}}{\C}} \oplus \overset{p_{v_{3}}}{\underset{\gamma_{3}}{\C}}\right)\\
  \cap& \\
  \cN_{1} &= \left(\overset{p_{v_{1}}, p_{2}^{I}}{\underset{\gamma_{1}}{M_{2}(\C)}} \oplus \overset{p_{2}^{II}}{\underset{\gamma_{2} - \gamma_{1}}{\C}} \oplus \overset{p_{v_{3}}}{\underset{\gamma_{3}}{\C}}\right) \underset{D}{*} \left(\overset{p_{v_{1}}}{\underset{\gamma_{1}}{\C}} \oplus \overset{q_{2}^{I}}{\underset{\gamma_{2} - \gamma_{3}}{\C}} \oplus \overset{q_{2}^{II},p_{v_{3}}}{\underset{\gamma_{3}}{M_{2}(\C)}}\right)\\
  \cap &\\
  \cN_{2} &= \left(\underset{2\gamma_{1}}{M_{2}(\C)\otimes L(\Z)} \oplus \overset{p_{2}^{II}}{\underset{\gamma_{2} - \gamma_{1}}{\C}} \oplus \overset{p_{v_{3}}}{\underset{\gamma_{3}}{\C}}\right) \underset{D}{*} \left(\overset{p_{v_{1}}}{\underset{\gamma_{1}}{\C}} \oplus \overset{q_{2}^{I}}{\underset{\gamma_{2} - \gamma_{3}}{\C}} \oplus \overset{q_{2}^{II}, p_{v_{3}}}{\underset{\gamma_{3}}{M_{2}(\C)}}\right)\\
  \cap &\\
   \cN(\Gamma) &= \left(\underset{\gamma_{1}}{M_{2}(\C)\otimes L(\Z)} \oplus \overset{p_{2}^{II}}{\underset{\gamma_{2} - \gamma_{1}}{\C}} \oplus \overset{p_{v_{3}}}{\underset{\gamma_{3}}{\C}}\right) \underset{D}{*} \left(\overset{p_{v_{1}}}{\underset{\gamma_{1}}{\C}} \oplus \overset{q_{2}^{I}}{\underset{\gamma_{2} - \gamma_{3}}{\C}} \oplus \underset{2\gamma_{3}}{M_{2}(\C)\otimes L(\Z)}\right),
  \end{align*}
  where $p_{v_{2}}$ decomposes as $1 \otimes e_{22} \oplus 1 \oplus 0$ in $\cA_{e_{1}}$ and $0 \oplus 1 \oplus 1 \otimes e_{1, 1}$ in $\cA_{e_{2}}$.  We set $p_{2}^{I}$ and $p_{2}^{II}$ as the summands of $p_{v_{2}}$ supported in the diffuse and atomic parts of $\cA_{e_{1}}$ respectively and $q_{2}^{I}$ and $q_{2}^{II}$ as the summands of $p_{v_{2}}$ supported in the atomic and diffuse parts of $\cA_{e_{2}}$ respectively.  As above,
    $$
    p_{v_{2}}\cN_{0}p_{v_{2}} = \overset{p^{I}_{2}}{\underset{\frac{\gamma_{1}}{\gamma_{2}}}{\C}} \oplus \overset{p^{II}_{2}}{\underset{\frac{\gamma_{2} - \gamma_{1}}{\gamma_{2}}}{\C}} * \overset{q^{I}_{2}}{\underset{\frac{\gamma_{2} - \gamma_{3}}{\gamma_{2}}}{\C}} \oplus \overset{q^{II}_{2}}{\underset{\frac{\gamma_{3}}{\gamma_{2}}}{\C}} = \begin{cases} \underset{2\frac{\gamma_{2} - \gamma_{1}}{\gamma_{2}}}{M_{2}(\C) \otimes L(\Z)} \oplus \overset{p_{2}^{I} \wedge q_{2}^{I}}{\underset{\frac{\gamma_{1} - \gamma_{3}}{\gamma_{2}}}{\C}} \oplus \overset{p_{2}^{I} \wedge q_{2}^{II}}{\underset{\frac{\gamma_{1} - \gamma_{2} +  \gamma_{3}}{\gamma_{2}}}{\C}} & \textrm{ if } \gamma_{2} \leq \gamma_{1} + \gamma_{3} \\ \underset{2\frac{\gamma_{3}}{\gamma_{2}}}{M_{2}(\C) \otimes L(\Z)} \oplus \overset{p_{2}^{I} \wedge q_{2}^{I}}{\underset{\frac{\gamma_{1} - \gamma_{3}}{\gamma_{2}}}{\C}} \oplus \overset{p_{2}^{II} \wedge q_{2}^{I}}{\underset{\frac{\gamma_{2} - \gamma_{1} -  \gamma_{3}}{\gamma_{2}}}{\C}} & \textrm{ if } \gamma_{2} > \gamma_{1} + \gamma_{3} \end{cases}.
    $$

 \emph{\underline{Case 2a(i)}}: Assume $\gamma_{2} \leq \gamma_{1} + \gamma_{3}$.  Since the two new matrix units in $\cN_{1}$ introduce equivalences between $p_{v_{1}}$ and $p_{2}^{I}$ and between $q_{2}^{II}$ and $p_{v_{3}}$ respectively, we see that $\cN_{1}$ has the same number of summands as $p_{v_{2}}\cN_{0}p_{v_{2}}$, but with suitable amplifications.  Explicitly, we find that
   $$
   \cN_{1} = \underset{4(\gamma_{2} - \gamma_{1})}{M_{4}(\C)\otimes L(\Z)} \oplus \overset{p_{2}^{I}\wedge q_{2}^{I}}{\underset{\gamma_{1} - \gamma_{3}}{M_{2}(\C)}} \oplus \overset{p_{2}^{I} \wedge q_{2}^{II}}{\underset{\gamma_{1} + \gamma_{3} - \gamma_{2}}{M_{3}(\C)}}
   $$
    where the central support of $p_{v_{1}}$ is the identity.  By applying Lemma \ref{lem:DR1} and applying the same reasoning as case 1,
    we see that $\cN_{2}$ is an interpolated free group factor.  Applying Lemma \ref{lem:DR1} again shows that $\cN(\Gamma)$ is an interpolated free group factor.

 \emph{\underline{Case 2a(ii)}}: Assume $\gamma_{2} > \gamma_{1} + \gamma_{3}$.  This case is treated in the same way as above except that in $\cN_{1}$, $q_{2}^{I} \wedge p_{2}^{II}$ with trace $\gamma_{2} - \gamma_{3} - \gamma_{1}$ is minimal and central, so it is minimal and central in $\cN(\Gamma)$.

 \emph{\underline{Case 2b}}: Now let $\gamma_{1}$ be the largest weight. First assume $\gamma_{3} \geq \gamma_{2}$.  We consider the algebra
 $$
 \cN_{1} = \left(\overset{p_{1}^{I}}{\underset{\gamma_{1} - \gamma_{2}}{\C}}  \oplus \overset{p_{v_{2}}}{\underset{\gamma_{2}}{M_{2}(\C)}} \oplus \overset{p_{v_{3}}}{\underset{\gamma_{3}}{\C}}\right) \underset{D}{*} \left(\overset{p_{v_{1}}}{\underset{\gamma_{1}}{\C}} \oplus \overset{p_{v_{2}}}{\underset{\gamma_{2}}{M_{2}(\C)}} \oplus \overset{p_{3}^{I}}{\underset{\gamma_{3} - \gamma_{2}}{\C}}\right),
 $$
 where the projections orthogonal to $p_{v_{2}}$ in each copy of $M_{2}(\C)$ sit under $p_{i}$ and $p_{i}^{I} \leq p_{v_{i}}$ for $i = 1$ or 3.  It follows that $\cN_{1} = \overset{p_{1}^{I}}{\underset{\gamma_{1} - \gamma_{2}}{\C}} \oplus \overset{p_{v_{2}}}{\underset{\gamma_{2}}{M_{3}(\C)}} \oplus \overset{p_{3}^{I}}{\underset{\gamma_{3} - \gamma_{2}}{\C}}$, so tensoring each copy of $M_{2}(\C)$ with $L(\Z)$ and using the standard arguments as above show that
 $$
 \cN(\Gamma)  = \overset{p_{1}^{I}}{\underset{\gamma_{1} - \gamma_{2}}{\C}} \oplus \underset{3\gamma_{2}}{L(\F_{t})} \oplus \overset{p_{3}^{I}}{\underset{\gamma_{3} - \gamma_{2}}{\C}}.
 $$
 Finally, if $\gamma_{2} > \gamma_{3}$ then we consider
 $$
 \cN_{1} = \left(\overset{p_{1}^{I}}{\underset{\gamma_{1} - \gamma_{2}}{\C}}  \oplus \overset{p_{v_{2}}}{\underset{\gamma_{2}}{M_{2}(\C)}} \oplus \overset{p_{v_{3}}}{\underset{\gamma_{3}}{\C}}\right) \underset{D}{*} \left(\overset{p_{v_{1}}}{\underset{\gamma_{1}}{\C}} \oplus \overset{p_{2}^{I}}{\underset{\gamma_{2} - \gamma_{3}}{\C}} \oplus \overset{p_{v_{3}}}{\underset{\gamma_{3}}{M_{2}(\C)}}\right) = \overset{p_{1}^{I}}{\underset{\gamma_{1} - \gamma_{2}}{\C}} \oplus \underset{\gamma_{3}}{M_{3}(\C)} \oplus \underset{\gamma_{2} - \gamma_{3}}{M_{2}(\C)},
 $$
 where the central support of $p_{v_{2}}$ is $1 - p_{1}^{I}$.  Therefore, tensoring each copy of $M_{2}(\C)$ with $L(\Z)$ gives $\cN(\Gamma) = \overset{p_{1}^{I}}{\underset{\gamma_{1} - \gamma_{2}}{\C}} \oplus \underset{2\gamma_{2} + \gamma_{3}}{L(\F_{t})}$ as desired.
\end{proof}

We now inductively assume that for some $\Gamma$, $\cN(\Gamma)$ has the form as described in Theorem \ref{thm:H1}.\\

\begin{lem} \label{lem:H2}

  Suppose $\Gamma'$ is a graph obtained from $\Gamma$ by adding an edge $e$ connecting two vertices $v$ and $w$ of $\Gamma$ (so that in particular $\Gamma$ and $\Gamma'$ have the same underlying set of vertices with the same weighting).  Assume that
  $$
  \cN(\Gamma) = \overset{p^{\Gamma}}{L(\F_{t_{\Gamma}})} \oplus \underset{{v \in B(\Gamma)}}{\bigoplus} \overset{r_{v}^{\Gamma}}{\underset{\gamma_{v} - \alpha^{\Gamma}_{v}}{\C}}
  $$
  as in Theorem \ref{thm:H1}.  Then
  $$
  \cN(\Gamma') = \overset{p^{\Gamma'}}{L(\F_{t_{\Gamma'}})} \oplus \underset{{v \in B(\Gamma')}}{\bigoplus} \overset{r_{v}^{\Gamma'}}{\underset{\gamma_{v} - \alpha^{\Gamma'}_{v}}{\C}}
  $$
  where $p^{\Gamma} \leq p^{\Gamma'}$, $r_{v}^{\Gamma'} \leq r_{v}^{\Gamma}$, and $p^{\Gamma}\cN(\Gamma)p^{\Gamma} \overset{s.e.}{\hookrightarrow} p_{\Gamma}\cN(\Gamma')p_{\Gamma}$.

\end{lem}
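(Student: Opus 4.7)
The plan is to realize $\cN(\Gamma') = \cN(\Gamma) *_D \cA_e$, where $D = \ell^\infty(\Gamma)$, and pass through intermediate algebras mirroring the chain-of-inclusions argument in Case 1 of Lemma \ref{lem:H1}, with $\cN(\Gamma)$ (whose structure is given by the inductive hypothesis) playing the role of one of the two copies of $\cA_{e_i}$ used there. Assume without loss of generality $\gamma_v \geq \gamma_w$, and introduce
\begin{align*}
\cB_0 &= \overset{p_w}{\underset{\gamma_w}{\C}} \oplus \overset{p_v^e}{\underset{\gamma_w}{\C}} \oplus \underset{\gamma_v - \gamma_w}{\C} \oplus \ell^{\infty}(\Gamma \setminus \{v, w\}), \\
\cB_1 &= \overset{p_w,\, p_v^e}{\underset{\gamma_w}{M_2(\C)}} \oplus \underset{\gamma_v - \gamma_w}{\C} \oplus \ell^{\infty}(\Gamma \setminus \{v, w\}),
\end{align*}
so that $D \subset \cB_0 \subset \cB_1 \subset \cA_e$ as subalgebras compatible with the trace and the conditional expectation onto $D$. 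Set $\cN_i = \cN(\Gamma) *_D \cB_i$, producing the chain $\cN(\Gamma) \subset \cN_0 \subset \cN_1 \subset \cN(\Gamma')$. The step $\cN_0 \hookrightarrow \cN_1$ will be an instance of Lemma \ref{lem:DR2} (merging two atoms to form $M_2(\C)$), and $\cN_1 \hookrightarrow \cN(\Gamma')$ an instance of Lemma \ref{lem:DR1} with $A = L(\Z)$.

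The first step is to compute $\cN_0$. Since $\cB_0$ refines $D$ only by splitting $p_v$ into $p_v^e$ and $p_v - p_v^e$, the passage from $\cN(\Gamma)$ to $\cN_0$ amounts to freely adjoining a single projection $p_v^e$ of trace $\gamma_w$ under $p_v$, amalgamated over $D$. Using the inductive description of $\cN(\Gamma)$ and the compression technique of Case 1 of Lemma \ref{lem:H1} (namely, that compression of a $D$-amalgamated free product by a minimal projection of $D$ in this setting reduces to an ordinary free product of the compressions), one finds that for $u \notin \{v, w\}$ the corner $p_u \cN_0 p_u$ is unchanged, while for $u = v$ one gets
$$
p_v \cN_0 p_v \cong \bigl(p_v L(\F_{t_\Gamma}) p_v \oplus r_v^\Gamma \C\bigr) \,*\, \bigl(\overset{p_v^e}{\underset{\gamma_w/\gamma_v}{\C}} \oplus \underset{(\gamma_v - \gamma_w)/\gamma_v}{\C}\bigr),
$$
with the $r_v^\Gamma$ summand present only when $v \in B(\Gamma)$. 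Applying the appropriate bullet of Theorem \ref{thm:Dyk} produces an interpolated free group factor plus an atom of trace $\gamma_v - \alpha_v^{\Gamma'}$ precisely when $v \in B(\Gamma')$, and the parallel analysis at $w$ gives the corresponding structure there. Reassembling shows $\cN_0$ has the form of Theorem \ref{thm:H1}.

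Next, to apply Lemma \ref{lem:DR2} to $\cN_0 \hookrightarrow \cN_1$, I need $p_w$ and $p_v^e$ equivalent in $\cN_0$; this follows from Dykema's equivalence criterion for projections in free products of abelian algebras (as cited in Case 1 of Lemma \ref{lem:H1}), since the non-atomic portions of both projections of common trace $\gamma_w$ are absorbed into the diffuse $M_2(\C) \otimes L(\Z)$-summand emerging in $(p_v + p_w) \cN_0 (p_v + p_w)$. Then Lemma \ref{lem:DR1} with $A = L(\Z)$ takes $\cN_1$ to $\cN(\Gamma')$ by inflating the new $M_2(\C)$ to $M_2(\C) \otimes L(\Z)$. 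Atomic summands persist through both steps (yielding $r_u^{\Gamma'} \leq r_u^\Gamma$), while the free-group-factor parameter grows to $t_{\Gamma'}$ as forced by the additivity of free dimension, $p^\Gamma \leq p^{\Gamma'}$.

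Finally, for the standard embedding $p^\Gamma \cN(\Gamma) p^\Gamma \overset{s.e.}{\hookrightarrow} p^\Gamma \cN(\Gamma') p^\Gamma$, I chain three standard embeddings: the first inclusion $\cN(\Gamma) \hookrightarrow \cN_0$ restricts on each corner $p_v p^\Gamma$ to an inclusion of the form $A \hookrightarrow A * \cM$ with $A$ an interpolated free group factor and $\cM$ finite-dimensional, hence standard by Remark \ref{rem:Dyk}(1), and this extends to $p^\Gamma$ by Remark \ref{rem:Dyk}(4); the remaining two inclusions are standard by the remarks immediately following Lemmas \ref{lem:DR1} and \ref{lem:DR2}; and Remark \ref{rem:Dyk}(2) composes them. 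The main obstacle is the case analysis in the application of Theorem \ref{thm:Dyk} and in verifying the $p_w \sim p_v^e$ equivalence, particularly in the boundary case $\gamma_v \leq \alpha_v^\Gamma + \gamma_w$ where the old atom $r_v^\Gamma$ is fully absorbed into the free factor part of $\cN_0$; the structure of the reduction remains the same in each case but the bookkeeping requires care.
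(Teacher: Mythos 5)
Your overall strategy—passing through a chain $\cN(\Gamma) \subset \cN_0 \subset \cN_1 \subset \cN(\Gamma')$ using the Dykema--Redelmeier lemmas and Theorem~\ref{thm:Dyk}—is the right skeleton, but the specific realization breaks down at the $\cN_0 \hookrightarrow \cN_1$ step. To apply Lemma~\ref{lem:DR2} there you must verify that $p_w$ and $p_v^e$ are equivalent projections in $\cN_0$, and this can genuinely fail rather than being a bookkeeping matter. With your single-piece $\cB_0$, the corner $p_w\cN_0 p_w$ retains an atomic central summand below $r_w^\Gamma$ whenever $w\in B(\Gamma)$, while $p_v^e$ (a freely adjoined projection of trace $\gamma_w$ under $p_v$) carries an atom of trace $\max(0,\gamma_w-\alpha_v^\Gamma)$ at $r_v^\Gamma\wedge p_v^e$ by the third bullet of Theorem~\ref{thm:Dyk}. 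These two atomic pieces live in different central summands and generically have different traces ($\gamma_w-\alpha_w^\Gamma$ versus $\gamma_w-\alpha_v^\Gamma$), so the central traces of $p_w$ and $p_v^e$ need not agree and no partial isometry implementing the equivalence exists. Your appeal to "Dykema's equivalence criterion" only handles the non-atomic portions and does not resolve the mismatch in the atomic parts.

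The paper's proof avoids this by subdividing both $p_w$ and $p_v$ (not just $p_v$) into $n$ orthogonal pieces of trace $\gamma_w/n$ each, with $n$ chosen large enough that $\tfrac{1}{n} + \tfrac{\gamma_w-\alpha_w^\Gamma}{\gamma_w} < 1$ and $\tfrac{\gamma_w}{n\gamma_v} + \tfrac{\gamma_v-\alpha_v^\Gamma}{\gamma_v} < 1$. The first condition forces $p_w\cN_2 p_w = L(\Z/n\Z) * p_w\cN_1 p_w$ to become a single interpolated free group factor (the atom under $p_w$ is absorbed), and the second keeps the small pieces $p_{v,k}$ from spawning new atoms against $r_v^\Gamma$. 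Only then do all the pieces $p_{w,k}$, $p_{v,k}$ live in the factor summand of $\cN_2$, which is precisely what makes them equivalent and lets Lemma~\ref{lem:DR2} (and subsequently Lemma~\ref{lem:DR1} for the $L(\Z)$-inflation) apply across a chain of $n$ intermediate algebras. The subdivision is therefore not a refinement of your approach but an essential ingredient that supplies the equivalence your chain lacks; without it the proof does not close.
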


\begin{proof}

  We use the convention that if the term $\overset{p}{\underset{\alpha}{\C}}$ appears where $\alpha \leq 0$ then this term is identically zero.  All parts of the proof below are valid if this modification is made.

  Set $D = \ell^{\infty}(\Gamma') = \ell^{\infty}(\Gamma)$ and let the new edge $e$ connect $v$ to $w$ with $\gamma_{v} \geq \gamma_{w}$.  We examine the following sequence of inclusions:

\begin{align*}
\cN(\Gamma) \subset \cN_{1} &= \cN(\Gamma) \underset{D}{*} \left(\overset{p_{w}}{\underset{\gamma_{w}}{\C}} \oplus \left(\bigoplus_{k=1}^{n} \overset{p_{v, k}}{\underset{\gamma_{w}/n}{\C}} \oplus \overset{p_{v}^{I}}{\underset{\gamma_{v} - \gamma_{w}}{\C}}\right) \oplus \ell^{\I}(\Gamma \setminus \{v, w\})\right)\\
\cap &\\
\cN_{2} &= \cN(\Gamma) \underset{D}{*} \left(\bigoplus_{k=1}^{n} \overset{p_{w, k}}{\underset{\gamma_{w}/n}{\C}} \oplus  \left(\bigoplus_{k=1}^{n} \overset{p_{v, k}}{\underset{\gamma_{w}/n}{\C}} \oplus \overset{p_{v}^{I}}{\underset{\gamma_{v} - \gamma_{w}}{\C}}\right) \oplus \ell^{\I}(\Gamma \setminus \{v, w\})\right)\\
\cap &\\
\cN_{3} &= \cN(\Gamma) \underset{D}{*} \left(\bigoplus_{k=1}^{n}\overset{p_{w, k}, p_{v, k}}{\underset{\gamma_{w}/n}{M_{2}(\C)}} \oplus \overset{p_{v}^{I}}{\underset{\gamma_{v} - \gamma_{w}}{\C}} \oplus \ell^{\I}(\Gamma \setminus \{v, w\})\right)\\
\cap &\\
\cN(\Gamma') &= \cN(\Gamma) \underset{D}{*} \left(L(\Z) \otimes M_{2}(\C) \oplus \overset{p_{v}^{I}}{\underset{\gamma_{v} - \gamma_{w}}{\C}} \oplus \ell^{\I}(\Gamma \setminus \{v, w\})\right).
\end{align*}
The projections $p_{w, k}$ are an orthogonal family with trace $\gamma_{w}/n$ in $\cA_{e}$ whose sum is $p_{w}$.  The projection $p_{v}$ decomposes as $\sum_{k=1}^{n}p_{v, k} + p_{v}^{I}$ with  $p_{v}^{I}$ supported in the atomic part of $\cA_{e}$ and the $p_{v, k}$ are an orthogonal family of projections with trace $\gamma_{w}/n$ supported in the diffuse part of $\cA_{e}$. The positive integer $n$ is chosen large enough such that $\frac{1}{n} + \frac{\gamma_{w} - \alpha^{\Gamma}_{w}}{\gamma_{w}} < 1$ and $\frac{\gamma_{w}}{n\gamma_{v}} + \frac{\gamma_{v} - \alpha^{\Gamma}_{v}}{\gamma_{v}} < 1$.  From the induction hypothesis,
$$
p_{v}\cN(\Gamma)p_{v} = \overset{p^{\Gamma}_{v}}{L(\F_{t_{v}})} \oplus \overset{r_{v}^{\Gamma}}{\underset{\frac{\gamma_{v} - \alpha_{v}}{\gamma_{v}}}{\C}}, \textrm{ and }p_{w}\cN(\Gamma)p_{w} = \overset{p^{\Gamma}_{w}}{L(\F_{t_{w}})} \oplus \overset{r_{w}^{\Gamma}}{\underset{\frac{\gamma_{w} - \alpha_{w}}{\gamma_{w}}}{\C}},
$$
with $p^{\Gamma}_{u} = p^{\Gamma}p_{u}$ for any vertex $u$.   From Lemma \ref{lem:DR1},
$$
p_{v}\cN_{1}p_{v} = \left(\overset{p^{\Gamma}_{v}}{L(\F_{t_{v}})} \oplus \overset{r_{v}^{\Gamma}}{\underset{\frac{\gamma_{v} - \alpha^{\Gamma}_{v}}{\gamma_{v}}}{\C}}\right) * \left(\bigoplus_{k=1}^{n} \overset{p_{v, k}}{\underset{\frac{\gamma_{w}}{n\gamma_{v}}}{\C}} \oplus \overset{p_{v}^{I}}{\underset{\frac{\gamma_{v} - \gamma_{w}}{\gamma_{v}}}{\C}}\right) = L(\F_{t_{v,1}}) \oplus \overset{p_{v}^{I}\wedge r_{v}^{\Gamma}}{\underset{\frac{\gamma_{v} - \alpha^{\Gamma'}_{v}}{\gamma_{v}}}{\C}}.
$$
 Lemma \ref{lem:DR1} applied to the inclusion
 $$
 \left(\overset{p^{\Gamma}_{v}}{\C} \oplus \overset{r_{v}^{\Gamma}}{\underset{\frac{\gamma_{v} - \alpha_{v}}{\gamma_{v}}}{\C}}\right) * \left(\bigoplus_{k=1}^{n} \overset{p_{v, k}}{\underset{\frac{\gamma_{w}}{n\gamma_{v}}}{\C}} \oplus \overset{p_{v}^{I}}{\underset{\frac{\gamma_{v} - \gamma_{w}}{\gamma_{v}}}{\C}}\right) \rightarrow \left(\overset{p^{\Gamma}_{v}}{L(\F_{t_{v}})} \oplus \overset{r_{v}^{\Gamma}}{\underset{\frac{\gamma_{v} - \alpha_{v}}{\gamma_{v}}}{\C}}\right) * \left(\bigoplus_{k=1}^{n} \overset{p_{v, k}}{\underset{\frac{\gamma_{w}}{n\gamma_{v}}}{\C}} \oplus \overset{p_{v}^{I}}{\underset{\frac{\gamma_{v} - \gamma_{w}}{\gamma_{v}}}{\C}}\right),
 $$
  shows that the inclusion $\displaystyle L(\F_{t_{v}}) = p^{\Gamma}_{v}\cN(\Gamma)p^{\Gamma}_{v} \rightarrow p^{\Gamma}_{v}\cN_{1}p^{\Gamma}_{v}$
  is equivalent to the canonical inclusion
  $$
   L(\F_{t_{v}}) \rightarrow L(\F_{t_{v}}) * p^{\Gamma}_{v}\left[\left(\overset{p'_{v}}{\C} \oplus \underset{\frac{\gamma_{v} - \alpha_{v}}{\gamma_{v}}}{\C}\right) * \left( \bigoplus_{k=1}^{n} \overset{p_{v, k}}{\underset{\frac{\gamma_{w}}{n\gamma_{v}}}{\C}} \oplus \overset{p_{v}^{I}}{\underset{\frac{\gamma_{v} - \gamma_{w}}{\gamma_{v}}}{\C}}\right)\right]p^{\Gamma}_{v}
   $$
so $p^{\Gamma}_{v}\cN(\Gamma)p^{\Gamma}_{v}\overset{s.e.}{\hookrightarrow} p^{\Gamma}_{v}\cN_{1}p^{\Gamma}_{v}$.  From Remark \ref{rem:Dyk}, $p^{\Gamma}\cN(\Gamma)p^{\Gamma} \overset{s.e.}{\hookrightarrow} p^{\Gamma}\cN_{1}p^{\Gamma}$ as well.

  By Lemma \ref{lem:DR1} we have
$$
p_{w}\cN_{2}p_{w} = L(\Z/n\Z) * p_{w}\cN_{1}p_{w} = L(\Z/n\Z) * \left(\overset{p_{w}^{\cN_{1}}}{L(\F_{t_{w}})} \oplus \overset{r_{w}^{\cN_{1}}}{\underset{\gamma_{w}}{\C}}\right),
$$
 where $p_{w}^{\cN_{1}} = p_{w}p^{\cN_{1}}$ with $p^{\cN_{1}}$ the central support of $p_{\Gamma}$ in $\cN_{1}$ (note $p_{w}^{\cN_{1}} \geq p_{w}^{\Gamma}$ so $r_{w}^{\cN_{1}} \leq r_{w}^{\Gamma}$ which implies $\delta_{w} \leq \gamma_{w} - \alpha^{\Gamma}_{w}$).  From these observations, it follows that $p_{w}\cN_{2}p_{w}$ is an interpolated free group factor (since $n$ was chosen such that $\frac{\gamma_{w}}{n\gamma_{v}} + \frac{\gamma_{v} - \alpha_{v}}{\gamma_{v}} < 1$) and the arguments used in the inclusion $\cN(\Gamma) \rightarrow \cN_{1}$ imply $p_{w}^{\cN_{1}}\cN_{1}p_{w}^{\cN_{1}} \overset{s.e.}{\hookrightarrow} p_{w}^{\cN_{1}}\cN_{2}p_{w}^{\cN_{1}}$. Therefore $p_{w}^{\Gamma}\cN_{1}p_{w}^{\Gamma} \overset{s.e.}{\hookrightarrow} p_{w}^{\Gamma}\cN_{2}p_{w}^{\Gamma}$ so $p^{\Gamma}\cN_{1}p^{\Gamma} \overset{s.e.}{\hookrightarrow} p^{\Gamma}\cN_{2}p^{\Gamma}$.  Also, observe that since the projections $p_{v, k}$ and $p_{w, k}$ lie in the interpolated free group factor summand of $\cN_{2}$, they are equivalent in $\cN_{2}$. We now define algebras $\cN_{2, j}$ for $j = 0,...,n$ so that

$$
\cN_{2} = \cN_{2, 0} \subset \cN_{2, 1} \subset \cN_{2, 2} \subset \cdots \subset \cN_{2, n} = \cN_{3} \textrm{ where }
$$ $$
\cN_{2, j} = \left(\bigoplus_{k=j+1}^{n}  \overset{p_{w, k}}{\underset{\gamma_{w}/n}{\C}} \oplus \bigoplus_{k=1}^{j} \overset{p_{w, k}, p_{v, k}}{\underset{\gamma_{w}/n}{M_{2}(\C)}} \oplus \bigoplus_{k=j+1}^{n}\overset{p_{v, k}}{\underset{\gamma_{w}/n}{\C}} \oplus \overset{p_{v}^{I}}{\underset{\gamma_{v} - \gamma_{w}}{\C}} \oplus \ell^{\I}(\Gamma \setminus \{v, w\})\right) \underset{D}{*} \cN(\Gamma).
$$

Let $p^{\cN_{2}}$ be the central support of $p_\Gamma$ in $\cN_{2}$.  Applying Lemma \ref{lem:DR2} to the inclusion
$$
p_{w, j+1}\cN_{2, j}p_{w, j+1} \rightarrow p_{w, j+1}\cN_{2,j+1}p_{w, j+1} = p_{w, j+1}\cN_{2, j}p_{w, j+1} * L(\Z)
$$
shows that this inclusion is a standard embedding, so it follows from Remark \ref{rem:Dyk} that $p^{\cN_{2}}\cN_{2, j}p^{\cN_{2}} \overset{s.e.}{\hookrightarrow} p^{\cN_{2}}\cN_{2, j+1}p^{\cN_{2}}$, implying $p^{\Gamma}\cN_{2, j}p^{\Gamma} \overset{s.e.}{\hookrightarrow} p^{\Gamma}\cN_{2, j+1}p^{\Gamma}$ for all $j$.  Inductively,
$$
\cN_{3} = \left(\overset{p^{\cN_{2}}}{L(\F_{t_{3}})} \oplus \overset{p_{v}^{I}\wedge r_{v}^{\Gamma}}{\underset{\gamma_{v} - \alpha_{v}^{\Gamma'}}{\C}} \bigoplus_{u \in L(\Gamma)\setminus\{v, w\}} \underset{\gamma_{u} - \alpha^{\Gamma'}_{u}}{\overset{r^{\Gamma}_{u}}{\C}}\right)
$$
and $p^{\Gamma}\cN_{2}p^{\Gamma} \overset{s.e.}{\hookrightarrow} p^{\Gamma}\cN_{3}p^{\Gamma}$.  To finish, we look at the sequence of algebras
$$
\cN_{3} = \cN_{3, 0} \subset \cN_{3, 1} \subset ... \subset \cN_{3, n} = \cN(\Gamma') \textrm{ where }
$$ $$\cN_{3, j} = \left(\bigoplus_{k=1}^{j} \overset{p_{w, k} + q_{w, k}}{M_{2}(\C) \otimes L(\Z)} \oplus \bigoplus_{k=j+1}^{n} \overset{p_{w, k}, p_{v, k}}{M_{2}(\C)} \oplus \overset{p_{v}^{I}}{\underset{\gamma_{v} - \gamma_{w}}{\C}} \bigoplus \ell^{\infty}(\Gamma \setminus \{v, w\})\right) \underset{D}{*} \cN(\Gamma).
$$
Lemma \ref{lem:DR1} implies that the inclusion
$$
p_{w, j+1}\cN_{3, j}p_{w, j+1} \rightarrow p_{w, j+1}\cN_{3, j+1}p_{w, j+1} = p_{w, j+1}\cN_{3, j}p_{w, j+1} * L(\Z)
$$
is a standard embedding, so by Remark \ref{rem:Dyk}, $p^{\cN_{2}}\cN_{3, j}p^{\cN_{2}} \overset{s.e.}{\hookrightarrow} p^{\cN_{2}}\cN_{3, j+1}p^{\cN_{2}}$ and thus $p^{\Gamma}\cN_{3, j}p^{\Gamma} \overset{s.e.}{\hookrightarrow} p^{\Gamma}\cN_{3, j+1}p^{\Gamma}$.  Therefore the inclusion $p^{\Gamma}\cN_{3}p^{\Gamma} \rightarrow p^{\Gamma}\cN(\Gamma')p^{\Gamma}$ is standard since it is a composite of standard embeddings.  This implies $\cN(\Gamma')$ has the desired formula and $p^{\Gamma}\cN(\Gamma)p^{\Gamma} \overset{s.e.}{\hookrightarrow} p^{\Gamma}\cN(\Gamma')p^{\Gamma}$.\end{proof}
We again assume that $\cN(\Gamma)$ is in the form of Theorem \ref{thm:H1}.

\begin{lem} \label{lem:H3}

 Let $\Gamma'$ be a weighted graph obtained from $\Gamma$ by adding a vertex $v$ and an edge $e$ connecting $v$ to $w \in V(\Gamma)$ woth weighting $\gamma_{v}$, and assume $\cN(\Gamma) = \overset{p^{\Gamma}}{L(\F_{t_{\Gamma}})} \oplus \underset{{v \in B(\Gamma)}}{\bigoplus} \overset{r_{v}^{\Gamma}}{\underset{\gamma_{v} - \alpha^{\Gamma}_{v}}{\C}}$ with notation as in Theorem \ref{thm:H1}.  Then
 $$
 \cN(\Gamma') = \overset{p^{\Gamma'}}{L(\F_{t_{\Gamma'}})} \oplus \underset{{u \in B(\Gamma')}}{\bigoplus} \overset{r_{u}^{\Gamma'}}{\underset{\gamma_{u} - \alpha^{\Gamma'}_{u}}{\C}}
 $$ where $p^{\Gamma} \leq p^{\Gamma'}$, $r_{u}^{\Gamma'} \leq r_{u}^{\Gamma}$ for all $u$, and $p_{\Gamma}\cN(\Gamma)p_{\Gamma} \overset{s.e.}{\hookrightarrow} p_{\Gamma}\cN(\Gamma')p_{\Gamma}$.
\end{lem}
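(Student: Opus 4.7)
The plan is to mimic the inductive argument of Lemma \ref{lem:H2}, adjusting only for the fact that $v$ is a brand-new vertex rather than a second endpoint already present in $\Gamma$. Set $D = \ell^\infty(\Gamma)$ and $D' = \ell^\infty(\Gamma') = D \oplus \C p_v$, and begin by enlarging $\cN(\Gamma)$ to $\widetilde{\cN}(\Gamma) := \cN(\Gamma) \oplus \C p_v$ (the weight on $p_v$ is $\gamma_v$; regard this semifinitely or rescale). This is the natural algebra containing both $\cN(\Gamma)$ and $D'$, and one has $\cN(\Gamma') = \widetilde{\cN}(\Gamma) *_{D'} \cA_e$. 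Crucially, $p_v \widetilde{\cN}(\Gamma) p_v = \C$, so $v$ initially contributes only a trivial corner.

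Assume $\gamma_v \geq \gamma_w$ (the opposite case is symmetric). Decompose $p_w \in \cA_e$ and the diffuse part of $p_v \in \cA_e$ each into $n$ equal projections $p_{w,k}$ and $p_{v,k}$ of trace $\gamma_w/n$, choosing $n$ large enough that the standard Dykema dimension inequalities hold, in particular $\tfrac{1}{n} + \tfrac{\gamma_w - \alpha_w^\Gamma}{\gamma_w} < 1$. Then interpolate through the chain
\begin{align*}
\widetilde{\cN}(\Gamma) \subset \cN_1 &= \widetilde{\cN}(\Gamma) \underset{D'}{*} \Bigl( \overset{p_w}{\C} \oplus \bigoplus_{k} \overset{p_{v,k}}{\C} \oplus \overset{p_v^I}{\C} \oplus \ell^\infty(\Gamma' \setminus \{v, w\}) \Bigr), \\
\cN_1 \subset \cN_2 &= \widetilde{\cN}(\Gamma) \underset{D'}{*} \Bigl( \bigoplus_{k} \overset{p_{w,k}}{\C} \oplus \bigoplus_{k} \overset{p_{v,k}}{\C} \oplus \overset{p_v^I}{\C} \oplus \ell^\infty(\Gamma' \setminus \{v, w\}) \Bigr), \\
\cN_2 \subset \cN_3 &= \widetilde{\cN}(\Gamma) \underset{D'}{*} \Bigl( \bigoplus_{k} \overset{p_{w,k}, p_{v,k}}{M_2(\C)} \oplus \overset{p_v^I}{\C} \oplus \ell^\infty(\Gamma' \setminus \{v, w\}) \Bigr), \\
\cN_3 \subset \cN(\Gamma') &= \widetilde{\cN}(\Gamma) \underset{D'}{*} \cA_e.
\end{align*}
At each step, compress to $p_v$ or $p_{w,k}$ and apply Lemma \ref{lem:DR1} or \ref{lem:DR2} to compute the corner and verify the standard-embedding property on $p^\Gamma$-corners, precisely as in Lemma \ref{lem:H2}. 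The cut-down at $p_v$ in $\cN_1$ is trivial since $p_v \widetilde{\cN}(\Gamma) p_v = \C$; the cut-down $p_w \cN_2 p_w = (p_w^\Gamma L(\F_{t_w}) \oplus r_w^\Gamma \C) * L(\Z/n\Z)$ is an interpolated free group factor by Theorem \ref{thm:Dyk} and the choice of $n$; the remaining two steps apply Lemma \ref{lem:DR1} to the matrix-unit introductions and Lemma \ref{lem:DR2} to the $L(\Z)$-amplifications.

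Finally, track the atomic summands: since $v$ has only $w$ as a neighbor in $\Gamma'$, we have $\alpha_v^{\Gamma'} = \gamma_w$, and $r_v^{\Gamma'} := p_v^I$ of trace $\gamma_v - \gamma_w$ survives precisely when $\gamma_v > \gamma_w$, i.e.\ $v \in B(\Gamma')$; otherwise $p_v^I$ is absorbed into the free group factor summand. At $w$, $\alpha_w^{\Gamma'} = \alpha_w^\Gamma + \gamma_v$, and $r_w^{\Gamma'} \leq r_w^\Gamma$ survives of trace $\gamma_w - \alpha_w^{\Gamma'}$ when this is positive, else is absorbed. For $u \neq v, w$ the atomic pieces are untouched, so $r_u^{\Gamma'} = r_u^\Gamma$. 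Composing the standard-embedding steps via Remark \ref{rem:Dyk}(2) yields $p^\Gamma \cN(\Gamma) p^\Gamma \overset{s.e.}{\hookrightarrow} p^\Gamma \cN(\Gamma') p^\Gamma$. The main obstacle will be the transitional case $\gamma_w - \alpha_w^\Gamma < \gamma_v$, where $w$ leaves $B(\Gamma')$ upon the addition of $v$: one must show that the former atom $r_w^\Gamma$ merges cleanly into the interpolated free group factor summand without breaking the standard-embedding property at $p^\Gamma$, parallel to the dichotomy handled in Cases 1a versus 1b and 2a(i) versus 2a(ii) of Lemma \ref{lem:H1}.
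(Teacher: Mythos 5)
Your proposal follows the same strategy as the paper: enlarge $\cN(\Gamma)$ to $\cN(\Gamma)\oplus \C p_v$, express $\cN(\Gamma')$ as an amalgamated free product of this against $\cA_e$, interpolate through a chain that decomposes projections and introduces matrix units, then apply Lemmas \ref{lem:DR1} and \ref{lem:DR2} and track atoms. The extra intermediate step you insert (refining $p_v$ before $p_w$) is harmless since $p_v\widetilde{\cN}(\Gamma)p_v=\C$ makes that step a trivial refinement, and your computation $p_w\cN_2 p_w = p_w\cN(\Gamma)p_w * L(\Z/n\Z)$ and the condition on $n$ both match the paper's Case 1.

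Two imprecisions worth flagging. First, the case $\gamma_w\geq\gamma_v$ is \emph{not} symmetric to $\gamma_v\geq\gamma_w$, because $v$ and $w$ play asymmetric roles ($v$ is new, $w$ is in $\Gamma$). In the paper's Case 2, it is $p_w$ that splits as $\bigoplus_k p_{w,k}\oplus p_w^I$ with the $p_{w,k}$ of trace $\gamma_v/n$, the element $p_v^I$ does not exist, and the condition on $n$ becomes $\frac{\gamma_w-\alpha_w^\Gamma}{\gamma_w}+\frac{\gamma_v}{n\gamma_w}<1$; the atom at $w$ ends up having trace $\gamma_w-\gamma_v-\alpha_w^\Gamma$ coming from $p_w^I$ rather than from a prior $r_w^\Gamma$ alone. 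Your write-up treats $p_v^I$ as present in both cases, which is not correct. Second, what you call the ``main obstacle'' --- the transitional case where $w$ drops out of $B(\Gamma')$ --- is not an obstacle at all. The paper (and your own Lemma \ref{lem:H2} template) adopts the convention that any summand $\overset{p}{\underset{\alpha}{\C}}$ with $\alpha\leq 0$ is read as zero, and the Dykema--Redelmeier lemmas absorb such atoms into the free group factor automatically; no special argument about the standard-embedding surviving is needed beyond what Remark \ref{rem:Dyk}(4) already provides, since the cut-down by $p^\Gamma$ never sees the atomic summands.
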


Notice that the natural inclusion $\cN(\Gamma) \rightarrow \cN(\Gamma')$ is not unital, but the compressed inclusion $p_{\Gamma}\cN(\Gamma)p_{\Gamma} \rightarrow p_{\Gamma}\cN(\Gamma')p_{\Gamma}$ is.

\begin{proof}

  Just as in the proof of Lemma \ref{lem:H2}, if the term $\overset{p}{\underset{\alpha}{\C}}$ appears where $\alpha \leq 0$ then this term is identically zero.

   Set $D = \ell^{\infty}(\Gamma')$.  We rescale all of the weights on $\Gamma$ such that all of the weights on $\Gamma'$ sum to 1.  We have 2 cases: when $\gamma_{v} > \gamma_{w}$ and when $\gamma_{w} \geq \gamma_{v}$.

  \emph{\underline{Case 1, $\gamma_{v} > \gamma_{w}$}}:  We look at the following sequence of inclusions:

   \begin{align*}\cN(\Gamma) \oplus \overset{p_{v}}{\underset{\gamma_{v}}{\C}} \subset \cN_{1} &= \left(\cN(\Gamma) \oplus \overset{p_{v}}{\underset{\gamma_{v}}{\C}}\right) \underset{D}{*} \left(\ell^{\infty}(\Gamma' \setminus \{v, w\}) \oplus \bigoplus_{k=1}^{n} \overset{p_{w, k}}{\underset{\gamma_{w}/n}{\C}} \oplus \left(\bigoplus_{k=1}^{n} \overset{p_{v, k}}{\underset{\gamma_{w}/n}{\C}} \oplus \overset{p_{v}^{I}}{\underset{\gamma_{v} - \gamma_{w}}{\C}}\right)\right)\\
   \cap &\\
  \cN_{2} &= \left(\cN(\Gamma) \oplus \overset{p_{v}}{\underset{\gamma_{v}}{\C}}\right) \underset{D}{*} \left(\ell^{\infty}(\Gamma' \setminus \{v, w\}) \oplus \bigoplus_{k=1}^{n}\overset{p_{w, k}, p_{v, k}}{\underset{\gamma_{w}/n}{M_{2}(\C)}} \oplus \overset{p_{v}^{I}}{\underset{\gamma_{v} - \gamma_{w}}{\C}}\right)\\
  \cap &\\
    \cN(\Gamma') &= \left(\cN(\Gamma) \oplus \overset{p_{v}}{\underset{\gamma_{v}}{\C}}\right) \underset{D}{*} \left(\ell^{\infty}(\Gamma' \setminus \{v, w\}) \oplus \underset{2\gamma_{w}}{L(\Z) \otimes M_{2}(\C)} \oplus \overset{p_{v}^{I}}{\underset{\gamma_{v} - \gamma_{w}}{\C}}\right).
    \end{align*}
    The projections $p_{w, k}$ are an orthogonal family with trace $\gamma_{w}/n$ in $\cA_{e}$ whose sum is $p_{w}$.  In $\cA_{e}$, $p_{v}$ decomposes as $\sum_{k=1}^{n}p_{v, k} + p_{v}^{I}$ with  $p_{v}^{I}$ supported in the atomic part of $\cA_{e}$, and the $p_{v, k}$ are an orthogonal family of projections with trace $\gamma_{w}/n$ supported in the diffuse part of $\cA_{e}$.
   By the inductive hypothesis,
    $$
    p_{w}\cN(\Gamma)p_{w} = \overset{p^{\Gamma}_{w}}{\underset{\frac{\alpha^{\Gamma}_{w}}{\gamma_{w}}}{L(\F_{t_{w}})}} \oplus \overset{r_{w}^{\Gamma}}{\underset{\frac{\gamma_{w} - \alpha^{\Gamma}_{w}}{\gamma_{w}}}{\C}},
    $$
     with $p_{w}^{\Gamma} = p_{w}p^{\Gamma}$.  We choose $n$ large enough such that $\frac{1}{n} + \frac{\gamma_{w} - \alpha_{w}}{\gamma_{w}} < 1$, i.e., so that $p_{w}\cN(\Gamma)p_{w} * L(\Z / n\Z)$ is an interpolated free group factor.
  From Lemma \ref{lem:DR1},
  $$
  p_{w}\cN_{1}p_{w} = p_{w}\cN(\Gamma)p_{w} * \left(\bigoplus_{k=1}^{n} \overset{p_{w, k}}{\underset{1/n}{\C}}\right) = \left(\overset{p'_{w}}{\underset{\frac{\alpha_{w}}{\gamma_{w}}}{L(\F_{t_{w}})}} \oplus \underset{\frac{\gamma_{w} - \alpha_{w}}{\gamma_{w}}}{\C}\right) * \left(\bigoplus_{k=1}^{n} \overset{p_{w, k}}{\underset{1/n}{\C}}\right),
  $$
   so it is an interpolated free group factor, and applying Lemma $\ref{lem:DR1}$ again, we see that
   $$
   p^{\Gamma}_{w}\cN_{1}p^{\Gamma}_{w} = p^{\Gamma}_{w}\cN(\Gamma)p^{\Gamma}_{w} * p^{\Gamma}_{w}\left[\left(\overset{p^{\Gamma}_{w}}{\underset{\frac{\alpha_{w}}{\gamma_{w}}}{\C}} \oplus \overset{r_{w}^{\Gamma}}{\underset{\frac{\gamma_{w} - \alpha_{w}}{\gamma_{w}}}{\C}}\right) * \left(\bigoplus_{k=1}^{n} \overset{p_{w, k}}{\underset{1/n}{\C}}\right)\right]p^{\Gamma}_{w}
   $$
   with the inclusion $p^{\Gamma}_{w}\cN(\Gamma)p^{\Gamma}_{w} \rightarrow p'_{w}\cN_{1}p'_{w}$ the canonical one.  Therefore $\displaystyle p^{\Gamma}_{w}\cN(\Gamma)p^{\Gamma}_{w} \overset{s.e.}{\hookrightarrow} p^{\Gamma}_{w}\cN_{1}p^{\Gamma}_{w}$, so it follows that $p^{\Gamma}\cN(\Gamma)p^{\Gamma} \overset{s.e.}{\hookrightarrow} p^{\Gamma}\cN_{1}p^{\Gamma}$ as well.  It is clear that $p_{v}^{I}$ will be a minimal central projection in $\cN_{2}$, and since the projections $p_{v, k}$ lie under the minimal projection $p_{v} \in \cN(\Gamma) \oplus \overset{p_{v}}{\underset{\gamma_{v}}{\C}}$, it follows that
   $$
   \cN_{2} = L(\F_{t_{2}}) \oplus \overset{p_{v}^{I}}{\underset{\gamma_{v} - \gamma_{w}}{\C}} \oplus \bigoplus_{u \in B(\Gamma)\setminus\{w\}} \underset{\gamma_{u} - \alpha^{\Gamma}_{u}}{\overset{r^{\Gamma}_{u}}{\C}},
   $$
   where $L(\F_{t_{2}})$ is an amplification of $p^{\Gamma}\cN_{1}p^{\Gamma}$.  Hence  $p^{\Gamma}\cN_{1}p^{\Gamma} = p^{\Gamma}\cN_{2}p^{\Gamma}$.  As a final step, we tensor each copy of $M_{2}(\C)$ with $L(\Z)$ to obtain $\cN(\Gamma')$ and apply Lemma \ref{lem:DR1} and Remark \ref{rem:Dyk} $n$ times as in the proof of Lemma \ref{lem:H2} to conclude that
   $$
   \cN(\Gamma') = L(\F_{t_{3}}) \oplus \overset{p_{v}^{I}}{\underset{\gamma_{v} - \gamma_{w}}{\C}} \oplus \bigoplus_{u \in B(\Gamma)\setminus\{w\}} \underset{\gamma_{u} - \alpha^{\Gamma'}_{u}}{\overset{r^{\Gamma}_{u}}{\C}}
   $$
   and $p^{\Gamma}\cN_{2}p^{\Gamma} \overset{s.e.}{\hookrightarrow} p^{\Gamma}\cN(\Gamma')p^{\Gamma}$.  Therefore $p^{\Gamma}\cN(\Gamma)p^{\Gamma} \overset{s.e.}{\hookrightarrow} p^{\Gamma}\cN(\Gamma')p^{\Gamma}$ and $\cN(\Gamma')$ has the desired form.

       \emph{\underline{Case 2, $\gamma_{w} \geq \gamma_{v}$}}:  We look at a sequence of inclusions similar to those in the previous case:
    \begin{align*}
     \cN(\Gamma) \oplus \overset{p_{v}}{\underset{\gamma_{v}}{\C}} \subset \cN_{1} &= \left(\cN(\Gamma) \oplus \overset{p_{v}}{\underset{\gamma_{v}}{\C}}\right) \underset{D}{*} \left(\ell^{\infty}(\Gamma' \setminus \{v, w\}) \oplus \left(\bigoplus_{k=1}^{n} \overset{p_{w, k}}{\underset{\gamma_{v}/n}{\C}} \oplus \overset{p_{w}^{I}}{\underset{\gamma_{w} - \gamma_{v}}{\C}}\right) \oplus \bigoplus_{k=1}^{n} \overset{p_{v, k}}{\underset{\gamma_{v}/n}{\C}}\right)\\
    \cap &\\
    \cN_{2} &= \left(\cN(\Gamma) \oplus \overset{p_{v}}{\underset{\gamma_{v}}{\C}}\right) \underset{D}{*} \left(\ell^{\infty}(\Gamma' \setminus \{v, w\})\oplus \bigoplus_{k=1}^{n} \overset{p_{w, k}, p_{v, k}}{\underset{\gamma_{v}/n}{M_{2}(\C)}} \oplus \overset{p_{w}^{I}}{\underset{\gamma_{w} - \gamma_{v}}{\C}}\right)\\
    \cap &\\
    \cN(\Gamma') &= \left(\cN(\Gamma) \oplus \overset{p_{v}}{\underset{\gamma_{v}}{\C}}\right) \underset{D}{*} \left(\ell^{\infty}(\Gamma' \setminus \{v, w\})\oplus \underset{2\gamma_{v}}{L(\Z)\otimes M_{2}(\C)} \oplus \overset{p_{w}^{I}}{\underset{\gamma_{w} - \gamma_{v}}{\C}}\right).
    \end{align*}
  The projections $p_{v, k}$ are an orthogonal family with trace $\gamma_{v}/n$ in $\cA_{e}$ whose sum is $p_{v}$.  In $\cA_{e}$, $p_{w}$ decomposes as $\sum_{k=1}^{n}p_{w, k} + p_{w}^{I}$ where $p_{w}^{I}$ is supported in the atomic part of $\cA_{e}$, and the $p_{w, k}$ are an orthogonal family of projections with trace $\gamma_{v}/n$ supported in the diffuse part of $\cA_{e}$.  We choose $n$ large enough so that $\frac{\gamma_{w} - \alpha_{w}}{\gamma_{w}} + \frac{\gamma_{v}}{n\gamma_{w}} < 1$.   Observe by the condition on $n$ that $p_{w}\cN_{1}p_{w} = \overset{p^{\cN_{1}}_{w}}{L(\F_{t'_{1}})} \oplus \underset{\gamma_{w} - \alpha_{w} - \gamma_{v}}{\C}$ where the copy of $\C$ is orthogonal to each $p_{w, k}$.  Therefore as in the proof of Lemma \ref{lem:H2} $p_{\Gamma}\cN(\Gamma)p_{\Gamma} \overset{s.e.}{\hookrightarrow} p_{\Gamma}\cN_{1}p_\Gamma$.  We next look at
   $$
   \cN_{1} \subset \cN_{2} = \left(\cN(\Gamma) \oplus \overset{p_{v}}{\underset{\gamma_{v}}{\C}}\right) \underset{D}{*} \left(\ell^{\infty}(\Gamma' \setminus \{v, w\})\oplus \bigoplus_{k=1}^{n} \overset{p_{w, k}, p_{v, k}}{\underset{\gamma_{v}/n}{M_{2}(\C)}} \oplus \overset{p_{v}^{I}}{\underset{\gamma_{w} - \gamma_{v}}{\C}}\right).
   $$
    Since the $p_{v,k}$ lie under the minimal central projection $p_{v} \in \cN(\Gamma) \oplus \overset{p_{v}}{\underset{\gamma_{v}}{\C}}$, the arguments above imply
    $$
    \cN_{2} = L(\F_{t_{2}}) \oplus \underset{\gamma_{w} - \gamma_{v} - \alpha_{w}}{\C} \oplus \bigoplus_{u \in L(\Gamma)\setminus\{w\}} \underset{\gamma_{u} - \alpha_{u}}{\overset{r^{\Gamma}_{u}}{\C}}
    $$
    and $p^{\Gamma}\cN_{1}p^{\Gamma} = p^{\Gamma}\cN_{2}p^{\Gamma}$.  To finish, we tensor each copy of $M_{2}(\C)$ with $L(\Z)$ and apply Lemma \ref{lem:DR1} and Remark \ref{rem:Dyk} $n$ times as in the end of the proof of Lemma \ref{lem:H2} to obtain
    $$
    \cN(\Gamma') = L(\F_{t_{3}}) \oplus \underset{\gamma_{w} - \gamma_{v} - \alpha_{w}}{\C} \oplus \bigoplus_{u \in L(\Gamma)\setminus\{w\}} \underset{\gamma_{u} - \alpha_{u}}{\overset{r^{\Gamma}_{u}}{\C}}
    $$
     with the inclusion $p^{\Gamma}\cN(\Gamma)p^{\Gamma} \rightarrow p^{\Gamma}\cN(\Gamma')p^{\Gamma}$ standard. \end{proof}

\begin{proof}[Proof of Theorem \ref{thm:H1}]

Note that if $\Gamma'$ and $\Gamma$ are connected, loopless, finite graphs, then $\Gamma'$ can be constructed form $\Gamma$ by considering the steps in Lemmas $\ref{lem:H2}$ and $\ref{lem:H3}$.  Therefore, we can deduce Theorem $\ref{thm:H1}$ by observing that the composite of standard embeddings is a standard embedding and that standard embeddings are preserved by cut-downs by projections. \end{proof}


\section{The GJS construction in infinite depth} \label{sec:GJSInfinite}

    Recall that the vertices on a principal graph for $M_{0} \subset M_{1}$ represent isomophism classes of irreducible $M_{0}-M_{0}$ and $M_{0}-M_{1}$ subbimodules  of tensor products of $X = _{M_{0}}L^{2}(M_{1})_{M_{1}}$ and its dual, $X^{*} = _{M_{1}}L^{2}(M_{1})_{M_{0}}$.  Assume $\Gamma$ is the principal graph for an infinite-depth subfactor.  If $*$ is the depth-0 vertex of $\Gamma$, then the factor $M_{0}$ as in the introduction is isomorphic to $p_{*}\cN(\Gamma)p_{*}$.    $\cN(\Gamma)$ is now a semifinite algebra where the weighting $\gamma$ on $\ell^{\I}(\Gamma)$ corresponds to the bimodule dimension obtained by identifying each vertex with an irreducible bimodule as above.  Under this identification, $\gamma_* = 1$ and $\delta\cdot\gamma_{v} = \sum_{w \sim v}n_{v, w}\gamma_{w}$ where $\delta = [M_{1}:M_{0}]^{1/2}$.  To circumvent the difficulty of dealing with a semifinite algebra, we realize that $M_{0}$ is an inductive limit of the algebras $p_{*}\cN(\Gamma_{k})p_{*}$ where $\Gamma_{k}$ is $\Gamma$ truncated at depth $k$.  To aid our computation of the isomorphism class of $M_{0}$, we have the following lemma, whose proof is a routine calculation and is identical to that in $\cite{MR2807103}$.

\begin{lem} \label{lem:GJS2}

The free dimension of $\cN(\Gamma_{k})$ is
$$
1 + \frac{1}{\Tr(F_{k})^{2}}\left( -\sum_{v \in \Gamma_{k}} \gamma_{v}^{2} + \sum_{v \in \Gamma_{k}}\sum_{w\sim v} n_{v, w}\gamma_{v}\gamma_{w}\right)
$$  where $w \sim v$ means $w$ is connected to $v$ in $\Gamma_{k}$, $F_{k} = \sum_{u \in \Gamma_{k}} p_{u}$, and $\Tr$ is the trace on the semifinite algebra $\cN(\Gamma)$.

\end{lem}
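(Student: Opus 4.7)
The plan is to compute $\fdim(\cN(\Gamma_k))$ directly from its description as an iterated amalgamated free product
\[
\cN(\Gamma_k) = \underset{D}{*}(\cA_e, E_e)_{e \in E(\Gamma_k)}
\]
over $D = \ell^\infty(\Gamma_k)$, using the additivity formula $\fdim(\cN \underset{D}{*} \cM) = \fdim(\cN) + \fdim(\cM) - \fdim(D)$ together with the expression $\fdim(\cM) = 1 + \sum_j \gamma_j^2(t_j - 1) - \sum_k \alpha_k^2$ recalled in Section~\ref{sec:preliminaries}. Because those formulas assume a tracial state of total mass $1$, one first normalizes by setting $\tilde\gamma_v = \gamma_v / \Tr(F_k)$; the factor of $\Tr(F_k)^{-2}$ in the statement of the lemma will fall out at the end when these rescaled weights are converted back.

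The heart of the calculation is $\fdim(\cA_e)$ for a single edge $e$ joining vertices $v$ and $w$ (with $\gamma_v \geq \gamma_w$). The summand $M_2(\C) \otimes L(\Z)$ is a diffuse hyperfinite $II_1$ factor, so it sits inside the $\cM_0$ part of Dykema's decomposition and contributes only the baseline constant $1$; the atomic summand $\overset{p_v^e}{\C}$ of rescaled trace $\tilde\gamma_v - \tilde\gamma_w$ contributes $-(\tilde\gamma_v - \tilde\gamma_w)^2$; and the remaining abelian summand $\ell^\infty(\Gamma_k \setminus \{v,w\})$ contributes $-\sum_{u \notin \{v,w\}} \tilde\gamma_u^2$. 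Expanding $(\tilde\gamma_v - \tilde\gamma_w)^2 = \tilde\gamma_v^2 + \tilde\gamma_w^2 - 2\tilde\gamma_v\tilde\gamma_w$ collapses these pieces into the symmetric expression
\[
\fdim(\cA_e) = 1 - \sum_{u \in \Gamma_k}\tilde\gamma_u^2 + 2\tilde\gamma_v\tilde\gamma_w,
\]
while the entirely analogous computation for $D$ gives $\fdim(D) = 1 - \sum_u \tilde\gamma_u^2$.

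Writing $E$ for the total number of edges of $\Gamma_k$ (counted with multiplicity) and iterating the free-product formula over the edges then yields
\[
\fdim(\cN(\Gamma_k)) = \sum_{e \in E(\Gamma_k)}\fdim(\cA_e) - (E-1)\fdim(D) = 1 - \sum_u \tilde\gamma_u^2 + \sum_{e = \{v,w\}} 2\tilde\gamma_v\tilde\gamma_w,
\]
since the $E$ copies of $1 - \sum_u \tilde\gamma_u^2$ in the first sum telescope against $(E-1)\fdim(D)$ to leave a single copy. Rewriting the sum over unoriented edges as a double sum over ordered pairs of adjacent vertices absorbs the factor of $2$ into $\sum_v \sum_{w \sim v} n_{v,w}\tilde\gamma_v\tilde\gamma_w$, and undoing the rescaling $\tilde\gamma = \gamma/\Tr(F_k)$ produces the stated $\Tr(F_k)^{-2}$ prefactor.

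The step most likely to require care — though it is really just bookkeeping — is verifying that the diffuse hyperfinite piece $M_2(\C) \otimes L(\Z)$ in each $\cA_e$ contributes nothing beyond the baseline $1$ in Dykema's free dimension formula. This is what makes the $1$'s and the $\sum \tilde\gamma_u^2$ terms cancel so cleanly between $\cA_e$ and $D$, leaving only the edge-weight term that records the graph structure.
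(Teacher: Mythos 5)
Your computation is correct and is exactly the routine calculation the paper defers to in \cite{MR2807103}: you normalize the weights, apply Dykema's free-dimension formula to each $\cA_e$ and to $D = \ell^\infty(\Gamma_k)$, iterate the additivity rule $\fdim(\cM_1 \underset{D}{*}\cM_2)=\fdim(\cM_1)+\fdim(\cM_2)-\fdim(D)$ over the edges, and unwind the normalization at the end. One small terminological slip worth flagging: $M_2(\C)\otimes L(\Z)$ is a type $I$ von Neumann algebra with diffuse center, not a $II_1$ factor, but it is diffuse and hyperfinite, which is all Notation~\ref{nota:vNA} requires of the $\cM_0$ summand, so the bookkeeping (it contributes only the baseline $1$) is unaffected.
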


\begin{thm} \label{thm:H2}

Let $\cP$ be an infinite depth subfactor planar algebra.  Then the factor $M_{0}$ in the construction of \cite{MR2732052} is isomorphic to $L(\F_{\I})$.

\end{thm}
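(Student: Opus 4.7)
The plan is to realise $M_0 = p_* \cN(\Gamma) p_*$ as an inductive limit of cut-downs $p_* \cN(\Gamma_k) p_*$, where $\Gamma_k$ denotes $\Gamma$ truncated at depth $k$, and then apply the machinery of Sections \ref{sec:preliminaries} and \ref{sec:VNGraph}. Since $\cN(\Gamma)$ is obtained as the strong-operator closure of the algebraic amalgamated free product of the $\cA_e$ over $\ell^\infty(\Gamma)$, the subalgebras $\cN(\Gamma_k)$ sit naturally inside $\cN(\Gamma)$, the union $\bigcup_k \cN(\Gamma_k)$ is SOT-dense there, and consequently $M_0$ is the SOT-closure of $\bigcup_k p_* \cN(\Gamma_k) p_*$.

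For each $k \geq 1$, Theorem \ref{thm:H1} gives $\cN(\Gamma_k) = L(\F_{t_k}) \oplus \bigoplus_{v \in B(\Gamma_k)} \overset{r_v^{\Gamma_k}}{\underset{\gamma_v - \alpha_v^{\Gamma_k}}{\C}}$ with identity $p^{\Gamma_k}$ of the free-group-factor summand. A crucial check is that $p_* \leq p^{\Gamma_k}$: every neighbour of $*$ has depth $1$, hence lies in $\Gamma_k$, so Perron--Frobenius yields $\alpha_*^{\Gamma_k} = \delta > 1 = \gamma_*$, placing $*$ outside $B(\Gamma_k)$. Thus $p_* \cN(\Gamma_k) p_* \cong L(\F_{s_k})$ for some $s_k$. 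Theorem \ref{thm:H1} also asserts $p^{\Gamma_k} \cN(\Gamma_k) p^{\Gamma_k} \overset{s.e.}{\hookrightarrow} p^{\Gamma_k} \cN(\Gamma_{k+1}) p^{\Gamma_k}$; the cut-down property (Remark \ref{rem:Dyk}(4)) propagates this down to $p_* \cN(\Gamma_k) p_* \overset{s.e.}{\hookrightarrow} p_* \cN(\Gamma_{k+1}) p_*$. Remark \ref{rem:Dyk}(3) then identifies $M_0$ with $L(\F_s)$, where $s = \lim_k s_k$ (in particular $\{s_k\}$ is non-decreasing).

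The main obstacle is showing $s = \infty$. For this I would compute $s_k$ explicitly. Since $p_*$ has normalised trace $1/\Tr(F_k)$ in $\cN(\Gamma_k)$ and $p_* \leq p^{\Gamma_k}$, the amplification formula gives $s_k - 1 = \Tr(F_k)^2 \tau(p^{\Gamma_k})^2 (t_k - 1)$. Combining this with $\fdim(\cN(\Gamma_k)) = 1 + \tau(p^{\Gamma_k})^2(t_k - 1) - \sum_{v \in B(\Gamma_k)} \tau(r_v^{\Gamma_k})^2$ and Lemma \ref{lem:GJS2}, a short calculation produces
$$s_k - 1 = \sum_{v \in V(\Gamma_k)} \gamma_v\bigl(\alpha_v^{\Gamma_k} - \gamma_v\bigr) + \sum_{v \in B(\Gamma_k)} \bigl(\gamma_v - \alpha_v^{\Gamma_k}\bigr)^2.$$
Every vertex $v$ whose full $\Gamma$-neighbourhood already lies in $\Gamma_k$ (i.e.\ every $v$ of depth $\leq k-1$) contributes $(\delta - 1)\gamma_v^2$ to the first sum by Perron--Frobenius, while the negative part coming from depth-$k$ boundary vertices in $B(\Gamma_k)$ is bounded in absolute value by $\tfrac{1}{4}\sum_{v:\,\mathrm{depth}=k}\gamma_v^2$. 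Because $\cP$ is infinite depth one has $\delta \geq 2$ and $\sum_{v \in \Gamma} \gamma_v^2 = \infty$ (for instance, on $A_\infty$ this sum grows cubically in $k$); together with the monotonicity of $s_k$, the interior contribution dominates any boundary cancellation along the exhaustion and forces $s_k \to \infty$. Hence $M_0 \cong L(\F_\infty)$.
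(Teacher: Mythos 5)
Your high-level strategy is the same as the paper's: realize $M_0$ as an inductive limit of the cut-downs $p_*\cN(\Gamma_k)p_*$, check $p_*\leq p^{\Gamma_k}$ via Perron--Frobenius, deduce that each $p_*\cN(\Gamma_k)p_*$ is an interpolated free group factor and that the connecting maps are standard embeddings, and identify the limit as $L(\F_s)$ with $s=\lim s_k$. Your derived formula $s_k-1 = \sum_{v}\gamma_v(\alpha_v^{\Gamma_k}-\gamma_v) + \sum_{v\in B(\Gamma_k)}(\gamma_v-\alpha_v^{\Gamma_k})^2$ is also correct, matching the paper's identity combined with the amplification by $\Tr(p^{\Gamma_k})/\Tr(p_*)$.

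The gap is in the final estimate. You bound the ``negative'' contribution of the depth-$k$ vertices in $B(\Gamma_k)$ by $\tfrac{1}{4}\sum_{\mathrm{depth}=k}\gamma_v^2$ via AM--GM (indeed each such $v$ contributes $\alpha_v^{\Gamma_k}(\alpha_v^{\Gamma_k}-\gamma_v)\geq -\gamma_v^2/4$), so your lower bound reads
$$
s_k-1 \;\geq\; (\delta-1)\sum_{\mathrm{depth}<k}\gamma_v^2 \;-\; \tfrac14\sum_{\mathrm{depth}=k}\gamma_v^2.
$$
But this quantity need not tend to $+\infty$: if the depth-$k$ slice $A_k=\sum_{\mathrm{depth}=k}\gamma_v^2$ grows fast enough compared to the cumulative sum (e.g.\ $A_k$ proportional to $c^k$ with $c>4(\delta-1)+1$), the right-hand side stays bounded, and monotonicity of $s_k$ alone only guarantees convergence to \emph{some} $s\in(1,\infty]$, not $s=\infty$. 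You also invoke ``$\sum_{v\in\Gamma}\gamma_v^2=\infty$'' by analogy with $A_\infty$, but this requires justification: the paper's actual route is to observe that each $\gamma_v$ is a bimodule dimension, hence $\gamma_v\geq 1$, which makes $\sum_{v\in\Gamma_{k-2}}\gamma_v^2\geq|\Gamma_{k-2}|\to\infty$ immediate. What the paper does instead of AM--GM is a finer regrouping of the sum over vertices (absorbing the negative $B(\Gamma_k)$-terms into the depth-$(k-1)$ contributions and introducing the auxiliary set $B'(\Gamma_k)$ of depth-$k$ vertices \emph{not} in $B(\Gamma_k)$) so that every remaining term is nonnegative, yielding the clean majorization $s_k-1\geq(\delta-1)\sum_{v\in\Gamma_{k-2}}\gamma_v^2$. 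To close your argument you would either need to reproduce this regrouping or else establish a genuine growth constraint on the $A_k$'s coming from Perron--Frobenius; the crude $1/4$-bound by itself does not rule out the bad cancellation.
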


\begin{proof}

For a given $k$, we write
$$
\cN(\Gamma_{k}) = \overset{p_{k}}{\underset{\underset{w \not\in B(\Gamma_{k})}{\sum\gamma_{w}} + \underset{v \in B(\Gamma_{k})}{\sum \alpha^{\Gamma_{k}}_{v}}}{L(\F_{t_{k}})}} \oplus \bigoplus_{v \in B(\Gamma_{k})} \overset{p^{\Gamma_{k}}_{v}}{\underset{\gamma_{v} - \alpha^{\Gamma_{k}}_{v}}{\C}}.
$$
The free dimension of this algebra is
$$
1 + (t_{k}-1)\left(\frac{\sum_{w \not\in B(\Gamma_{k})}\gamma_{w} + \sum_{v \in B(\Gamma_{k})}\gamma_{v}}{\Tr(F_{k})}\right)^{2} - \frac{\sum_{v\in B(\Gamma_{k})}(\gamma_{v} - \alpha^{\Gamma_{k}}_{v})^{2}}{\Tr(F_{k})^{2}},
$$
so by Lemma \ref{lem:GJS2}, we have the equation
$$
(t_{k}-1)\left(\sum_{w \not\in B(\Gamma_{k})}\gamma_{w} + \sum_{v \in B(\Gamma_{k})}\alpha^{\Gamma_{k}}_{v}\right)^{2} =  \sum_{u \in \Gamma_{k}}\sum_{w\sim u} n_{u, w}\gamma_{u}\gamma_{w} -\sum_{u \in \Gamma_{k}}\gamma_{u}^{2} + \sum_{v\in B(\Gamma_{k})}(\gamma_{v} - \alpha^{\Gamma_{k}}_{v})^{2}.
$$
Observe that in $\Gamma_{k}$, the vertices up to depth $k-1$ are connected to all of their neighbors in $\Gamma$, so by the Perron-Frobenius condition and the fact that $\delta > 1$, none of these vertices are in $B(\Gamma_{k})$.  If we let $B'(\Gamma_{k})$ be the vertices $v$ at depth $k$ with $\gamma_{v} \leq \sum_{w\sim v}n_{v, w}\gamma_{w}$, then the right hand side of the above equality becomes

\begin{align*}
(\delta - 1)\sum_{v \in \Gamma_{k-2}}\gamma_{v}^{2} &+ \sum_{v \in \Gamma_{k-1}\setminus\Gamma_{k-2}}\gamma_{v} \left(-\gamma_{v} + \sum_{\substack{ w\not\in B(\Gamma_{k}) \\ w \sim{v}}} n_{v, w}\gamma_{w} + \sum_{\substack{w \in B(\Gamma_{k}) \\ w \sim v}} \alpha^{\Gamma_{k}}_{w} \right) \\  &+ \sum_{v \in B'(\Gamma_{k})} \gamma_{v}\left(-\gamma_{v} + \sum_{w \sim v}n_{v,w}\gamma_{w}\right)
\end{align*}
where we have used $\alpha^{\Gamma_{k}}_{v}  = \sum_{w\sim v}n_{v, w}\gamma_{w}$.  This quantity majorizes $(\delta - 1)\sum_{v \in \Gamma_{k-2}}\gamma_{v}^{2}$. Since the bimodule dimensions of any irreducible sumbimodule of $(X \otimes _{M_{1}} X^{*})^{\otimes_{M_{0}}^{n}}$ and $(X \otimes _{M_{1}} X^{*})^{\otimes_{M_{0}}^{n}} \otimes_{M_{0}} X$ are bounded below by 1, $\gamma_{v} \geq 1$ for all $v \in \Gamma$ so we conclude that
$$
(t_{k}-1)\left(\sum_{w \not\in L(\Gamma_{k})}\gamma_{w} + \sum_{v \in L(\Gamma_{k})}\alpha^{\Gamma_{k}}_{v}\right)^{2} \rightarrow \infty
$$
as $k \rightarrow \infty$.  From the amplification formula, $p_{*}\cN(\Gamma_{k})p_{*} = L(\F_{t'_{k}})$ where
$$
t'_{k} = 1 + (t_{k}-1)\left(\sum_{w \not\in L(\Gamma_{k})}\gamma_{w} + \sum_{v \in L(\Gamma_{k})}\alpha^{\Gamma_{k}}_{v}\right)^{2}.
$$  Hence $p_{\Gamma_{k}}\cN(\Gamma_{k})p_{\Gamma_{k}} \overset{s.e.}{\hookrightarrow} p_{\Gamma_{k}}\cN(\Gamma_{k+1})p_{\Gamma_{k}}$ so by Remark \ref{rem:Dyk}, $p_{*}\cN(\Gamma_{k})p_{*} \overset{s.e.}{\hookrightarrow} p_{*}\cN(\Gamma_{k+1})p_{*}$.  As $p_{*}\cN(\Gamma)p_{*}$ is the inductive limit of the $p_{*}\cN(\Gamma_{k})p_{*}$, it follows that $p_{*}\cN(\Gamma)p_{*} = L(\F_{t})$ where $t = \lim t'_{k} = \infty$. \end{proof}

\begin{cor} The factors $M_{k}$ are isomorphic to $L(\F_{\I})$.

\end{cor}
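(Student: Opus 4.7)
The plan is to leverage Theorem \ref{thm:H2} (which established $M_0 \cong L(\F_\I)$) together with the amplification formula for interpolated free group factors and a duality argument using the dual principal graph. First I would split into even and odd indices. For the even case, recall from the outline in the introduction that $M_{2k}$ arises as the compression of the semifinite algebra $\cM_+$ by the projection $p_{2k}^+$, whereas $M_0$ is the compression by $p_0^+$; a direct trace computation shows that $M_{2k}$ is a $\delta^{2k}$-amplification of $M_0$. Applying Theorem \ref{thm:H2}, we have $M_0 \cong L(\F_\I)$, and the amplification formula
$$L(\F_t)_\gamma \cong L\bigl(\F(1 + \gamma^{-2}(t-1))\bigr)$$
with $t = \I$ immediately gives $M_{2k} \cong L(\F_\I)$ for every $k \geq 0$.

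For the odd case, the idea is to apply the entire machinery to the dual principal graph $\Gamma^{*}$ in place of $\Gamma$. First I would observe that $\Gamma^{*}$ is itself infinite: indeed, as noted earlier in the paper, if $\Gamma^{*}$ were finite then by the bijection between odd vertices and finite-dimensionality of the $P_{2n,-}$ one would force $\Gamma$ to be finite, contradicting the infinite depth hypothesis. Running the construction of Section \ref{sec:VNGraph} on the weighted graph $\Gamma^{*}$ (with Perron-Frobenius weights), one obtains a semifinite amalgamated free product $\cN(\Gamma^{*})$, and by the same outline described for $\cM_+$, the factor $M_1$ is a compression of $\cN(\Gamma^{*})$ by $p_*^-$ (the depth-$0$ vertex on the dual side). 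The proof of Theorem \ref{thm:H2} goes through verbatim with $\Gamma$ replaced by $\Gamma^{*}$: truncate $\Gamma^{*}$ at depth $k$, apply Theorem \ref{thm:H1} to the truncations $\Gamma^{*}_k$, use the free-dimension bound of Lemma \ref{lem:GJS2} together with the Perron-Frobenius relation to conclude that the parameter $t'_k$ diverges, and invoke Remark \ref{rem:Dyk}(3) on the standard embeddings $p_*\cN(\Gamma^{*}_k)p_* \overset{s.e.}{\hookrightarrow} p_*\cN(\Gamma^{*}_{k+1})p_*$ guaranteed by Theorem \ref{thm:H1}. This yields $M_1 \cong L(\F_\I)$.

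Finally, just as in the even case, $M_{2k+1}$ is a $\delta^{2k}$-amplification of $M_1$ (it is the compression of $\cM_-$ by $p_{2k+1}^-$, while $M_1$ is the compression by $p_1^-$), so once again the amplification formula with parameter $\I$ gives $M_{2k+1} \cong L(\F_\I)$ for all $k \geq 0$.

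I do not expect a real obstacle here: every ingredient has already been assembled. The only point requiring a line of verification is that the dual principal graph is also infinite depth, and the amplification identification $M_{2k} = (M_0)_{\delta^{2k}}$ (respectively for odd indices); both follow from the semifinite-algebra setup recalled in the introduction. In other words, the corollary is really a clean packaging of Theorem \ref{thm:H2} plus symmetry plus the $L(\F_\I)_\gamma \cong L(\F_\I)$ identity.
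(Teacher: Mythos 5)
Your proposal is correct and follows essentially the same route as the paper's proof: the even case via amplification of $M_0$ using Theorem \ref{thm:H2} and the amplification formula, and the odd case by running the entire argument (truncations, free dimension computation, standard embeddings) on the dual principal graph $\Gamma^{*}$. The only addition you make is to explicitly verify that $\Gamma^{*}$ is infinite depth, which is a fair point to spell out and is justified by the observation about odd vertices recorded earlier in the paper.
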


\begin{proof}  If $k$ is even, then $M_{k}$ is an amplification of $M_{0}$ so it follows for $M_{k}$.  If $k$ is odd, then $M_{k}$ are cut-downs of $\cN(\Gamma')$ with $\Gamma'$ the dual principal graph of $\cP$.  Applying the same analysis as in Theorem \ref{thm:H2} shows that $M_{k} \cong L(F_{\infty})$. \end{proof} 
\chapter{Universality of $L(\F_{\I})$ for Rigid $C^{*}-$Tensor Categories}\label{chap:universal}

\section{Introduction}

Jones initiated the modern theory of subfactors in his breakthrough paper \cite{MR696688} in which he classified the possible values for the index of a $II_1$ subfactor to the range $\set{4\cos^2(\pi/n)}{n\geq 3}\cup [4,\infty]$, and he found a subfactor of the hyperfinite $II_1$-factor $R$ for each allowed index.

A finite index subfactor $N\subset M$ is studied by analyzing its standard invariant, i.e., two rigid $C^*$-tensor categories of $N-N$ and $M-M$ bimodules and the module categories of $N-M$ and $M-N$ bimodules which arise from the Jones tower. The standard invariant has been axiomatized in three similar ways, each emphasizing slightly different structure: Ocneanu's paragroups \cite{MR996454,MR1642584}, Popa's $\lambda$-lattices \cite{MR1334479}, and Jones' planar algebras \cite{math.QA/9909027}.

In \cite{MR1198815,MR1334479,MR1887878}, Popa starts with a $\lambda$-lattice $A_{\bullet,\bullet}=(A_{i,j})$ and constructs a $II_1$-subfactor whose standard invariant is $A_{\bullet,\bullet}$. Hence for each subfactor planar algebra $P_\bullet$, there is some subfactor whose planar algebra is $P_\bullet$. However, the following question remains unanswered:

\begin{quest}\label{quest:WhichP}
For which subfactor planar algebras $\cP$ is there a subfactor of $R$ whose planar algebra is $\cP$?
\end{quest}

Using his reconstruction theorems, Popa gave a positive answer to Question \ref{quest:WhichP} for (strongly) amenable subfactor planar algebras \cite{MR1278111}.

In \cite{MR2051399}, Popa and Shlyakhtenko were able to identify the factors in certain cases of Popa's reconstruction theorems. Using this, they gave a positive answer to Question \ref{quest:WhichP} for $L(\F_\infty)$, i.e., every subfactor planar algebra arises as the standard invariant of some subfactor $N\subset M$ such that $N,M$ are both isomorphic to $L(\F_\infty)$. This theorem was reproduced by Hartglass \cite{1208.2933} using the reconstruction results of Guionnet-Jones-Shlyakhtenko-Walker (GJSW) \cite{MR2732052,MR2645882,MR2807103} which produce subfactors of interpolated free group factors.

It is natural to extend these questions to rigid $C^*$-tensor categories, i.e.,
\begin{quest}\label{quest:WhichC}
For which rigid $C^*$-tensor categories $\cC$ is there a category $\cC_{bim}$ of bifnite bimodules over $R$ such that $\cC_{bim}$ is equivalent to $\cC$?
\end{quest}

As in the subfactor case, Hayashi and Yamagami gave a positive result for amenable rigid $C^*$-tensor categories \cite{MR1749868} (amenability for $C^*$-tensor categories was first studied by Hiai and Izumi \cite{MR1644299}). Moreover, given a rigid $C^*$-tensor category $\cC$, Yamagami constructed a category of bifite bimodules $\cC_{bim}$ over an amalgamated free product $II_1$-factor such that $\cC_{bim}$ is equivalent to $\cC$ \cite{MR1960417}. However, one can show these factors have property $\Gamma$, so they are not interpolated free group factors (we briefly sketch this in Appendix \ref{sec:Appendix}).

In this paper, we give a result analogous to Popa and Shlyakhtenko's results for $L(\F_\infty)$ for countably generated rigid $C^*$-tensor categories, which answers part of Question 9 in \cite[Section 6]{MR2681261}. Recall that a rigid $C^*$-tensor category $\cC$ is generated by a set of objects $\cS$ if for every $Y\in\cC$, there are $X_1,\dots, X_n\in \cS$ such that
$$
\cC(X_1\otimes\cdots \otimes X_n,Y)\neq (0),
$$
i.e., $Y$ is (isomorphic to) a sub-object of $X_1\otimes\cdots \otimes X_n$.

\begin{thm}\label{thm:Main}
Every countably generated rigid $C^*$-tensor category can be realized as a category of bifinite bimodules over $L(\F_\infty)$.
\end{thm}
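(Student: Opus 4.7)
The plan is to build $\cC$ from a combinatorial model, pass to a von Neumann algebraic model via an unshaded multicolored Guionnet-Jones-Shlyakhtenko (GJS) construction, and finally identify the resulting factor with $L(\F_\I)$ using the graph machinery of Chapter \ref{chap:graph}. First, since $\cC$ is countably generated, pick a countable generating set $\cS = \{X_i\}_{i \in I}$ closed under duals and containing the unit. I would then encode $\cC$ as an unshaded planar algebra $\cP$ whose strings are colored by elements of $\cS$ (with orientation encoding duality) and whose boxes of colored type $(\underline{c_1}, \dots, \underline{c_n})$ are the morphism spaces $\cC(\mathbf{1}, X_{c_1} \otimes \cdots \otimes X_{c_n})$. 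The planar action is built from the rigid $C^*$-tensor structure: composition of morphisms via vertical stacking, tensor product via horizontal juxtaposition, and the evaluation/coevaluation maps giving cups and caps for each color. One has to check that this multicolored planar algebra is well-defined, positive (using the $C^*$-structure), spherical (using the spherical traces from duality), and evaluable on each colored loop, with loop parameters $\delta_i = \dim(X_i)$.

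Second, I would run the multicolored/unshaded analogue of the GJS construction on $\cP$. Define the graded algebra $\Gr_0(\cP) = \bigoplus_w P_w$, where $w$ ranges over words in the colors, with multiplication given by horizontal concatenation of boxes and trace given by summing over all colored planar pairings on top (the multicolored Temperley-Lieb sum). Using the positivity of $\cP$ and Lemma \ref{lem:freeplanar}, this trace is positive definite, so one obtains a $II_1$ factor $M_0$ as the GNS completion. Exactly as in the shaded case, one constructs a semifinite companion $\cM$, identifies distinguished projections $p_{\underline{c}}$, and defines for each color $i$ a colored $\cA_+$-valued semicircular element $X_i$ from the colored cup-plus-cap limit; these generate $\cM$ over an abelian subalgebra $\ell^\I(\Gamma)$, where $\Gamma$ is now the \emph{fusion graph} of $\cC$ with respect to the generators, weighted by dimensions of simples of $\cC$. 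The category of bifinite bimodules over $M_0$ generated by the $X_i$-bimodules is then built in the standard GJS way (via cut-down bimodules between corners $p_{\underline{c}} \cM p_{\underline{c'}}$); the argument that this category is equivalent to $\cC$ is a diagrammatic chase showing the intertwiner spaces agree with $\cP$ by construction, combined with the usual faithfulness of the planar-algebra-to-bimodule-category functor.

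Third, and this is where Chapter \ref{chap:graph} does the heavy lifting, I would identify the isomorphism class of $M_0$. The fusion graph $\Gamma$ of a countably generated rigid $C^*$-tensor category is a connected, loopless, weighted graph (with the Perron-Frobenius-like weights being the categorical dimensions), and every simple object appears at some finite depth from the unit. Writing $\Gamma$ as an increasing union $\bigcup_k \Gamma_k$ of finite connected subgraphs containing $*$, Theorem \ref{thm:H1} gives $p_* \cN(\Gamma_k) p_* \cong L(\F_{t_k}) \oplus A_k$ with $A_k$ finite dimensional and $p_* \cN(\Gamma_k) p_* \overset{s.e.}{\hookrightarrow} p_* \cN(\Gamma_{k+1}) p_*$. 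Exactly as in the proof of Theorem \ref{thm:H2}, the free dimension of the interpolated free group factor summand grows without bound because $\cC$ has infinitely many isomorphism classes of simples (the countable-generation and rigid $C^*$ hypotheses ensure this in the only interesting case; the finite case follows from the finite-depth analysis in \cite{MR2807103}), so by Remark \ref{rem:Dyk}(3), the inductive limit $M_0 = p_* \cN(\Gamma) p_*$ is isomorphic to $L(\F_\I)$.

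The main obstacle I anticipate is the first and second steps: setting up the multicolored unshaded planar algebra cleanly enough that the GJS construction and the bimodule identification go through without ambiguity, particularly because unshaded planar tangles admit odd numbers of strings and one must track orientations/duals carefully so that the cups and caps correctly implement the evaluation/coevaluation in $\cC$. The argument that the resulting bimodule category is actually equivalent to $\cC$ (as opposed to merely containing it) requires showing fullness and essential surjectivity, and this is where the colored Temperley-Lieb trace being the ``right'' trace is critical. Once that is in place, the identification $M_0 \cong L(\F_\I)$ is a direct application of the results already proved in Chapter \ref{chap:graph}.
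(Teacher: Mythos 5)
Your high-level strategy matches the paper's (planar algebra from $\cC$, multicolored GJS construction, graph-theoretic identification of the factor), but there are two substantive gaps, one of which is a genuine error.

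First, your claim that the fusion graph $\Gamma$ is \emph{loopless} is false, and this is not a cosmetic point. For a general rigid $C^*$-tensor category there is no bipartite structure forcing the fusion graph to be loopless: a simple $Z$ has a loop in $\cF_\cC(Y)$ whenever $Y$ contains a simple summand of $\overline Z\otimes Z$, which happens constantly. In fact the paper deliberately makes things worse by taking $\cL=\set{X\oplus\overline X}{X\in\cS}$ with $\cS$ ranging over \emph{all} isomorphism classes of objects, so that every vertex of $\cF_\cC(\cL)$ carries self-loops. Because of this you cannot apply Theorem \ref{thm:H1} directly; that result is stated and proved only for loopless graphs. The paper instead proves a separate extension (Theorem \ref{thm:standardembedding}, via Lemmas \ref{lem:basecase} and \ref{lem:inductivestep}) in which a loop at $v$ contributes an $L(\Z)\oplus\ell^\infty(\Gamma\setminus\{v\})$ summand to the amalgamated free product, and then uses a further technical ingredient (Lemma \ref{lem:star}) to push the free-group parameters to $\infty$. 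Your proposal, as written, skips this entire extension and applies a theorem whose hypotheses fail.

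Second, you propose oriented strings colored by $\cS$ itself, with ``orientation encoding duality,'' and you correctly flag that tracking orientations is delicate. The paper sidesteps the issue entirely by choosing symmetrically self-dual generators $X\oplus\overline X$, which makes the planar algebra unoriented and makes every $\cF_\cC(Y)$ an unoriented graph compatible with the $\cN(\Gamma)$ machinery of Chapter \ref{chap:graph}. This is exactly the trade-off that introduces the loops: unoriented strings for free, at the cost of a graph the loopless theory cannot handle. You would need either (a) to rebuild the $\cN(\Gamma)$ theory for oriented graphs and oriented GJS data, or (b) to adopt the self-dual generating set and then supply the loop extension yourself. As it stands, neither branch is addressed, so the reduction to Chapter \ref{chap:graph} does not go through.

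Finally, your treatment of the equivalence between the diagrammatic bimodule category and the Connes-fusion bimodule category (the paper's $\Bim\simeq\CF$, Theorem \ref{thm:BimCF}) is under-specified: the paper needs the explicit computation of $M$-central vectors in $\cH_{\alpha,\beta}$ (Corollary \ref{cor:central}), the Connes-fusion unitary $H_\alpha\otimes_M H_\beta\cong H_{\alpha\beta}$ (Lemma \ref{lem:TensorProduct}), and dual/adjoint compatibility. You acknowledge this step exists, but calling it a ``diagrammatic chase'' undersells that it is where bifiniteness and full faithfulness actually get proved.
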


\begin{rem}
Note that when $\cC$ is finitely generated, we can prove Theorem \ref{thm:Main} using \cite{MR2051399} by adapting the technique in \cite[Theorem 4.1]{1112.4088}. We provide a sketch of the proof in Appendix \ref{sec:Appendix}, where we also point out some difficulties of using the results of \cite{MR2051399} when $\cC$ is not finitely generated  (see also \cite[Section 4]{MR1960417}).

Hence we choose to use planar algebra technology to prove Theorem \ref{thm:Main} since it offers the following advantages. First, the same construction works for both the finitely and infinitely generated cases. Second, planar diagrams arise naturally in the study of tensor categories, and a reader familiar with the diagrams may benefit from a planar algebraic approach.
Third, we get an elegant description of the bimodules over $L(\F_\I)$ directly from the planar algebra (see Section \ref{sec:MoreBimodules}).
\end{rem}

There are three steps to the proof of Theorem \ref{thm:Main}.
\begin{enumerate}[(1)]
\item
Given a countably generated $C^*$-tensor category $\cC$, we get a factor planar algebra $\cP$ such that the $C^*$-tensor category $\Pro$ of projections of $\cP$ is equivalent to $\cC$.

A \underline{factor planar algebra} (called a fantastic planar algebra in \cite{1208.3637}) is an unshaded, spherical, evaluable $C^*$-planar algebra.

This step is well known to experts; we give most of the details in Section \ref{sec:TCandPA}.

\item
Given a factor planar algebra $\cP$, we construct a $II_1$-factor $M$ and two rigid $C^*$-tensor categories of bifinite bimodules over $M$:
\begin{itemize}
\item
$\Bim$, built entirely from $\cP$ and obviously equivalent to $\Pro$, and
\item
$\CF$, formed using Connes' fusion and linear operators.
\end{itemize}
These categories are defined in Definitions \ref{defn:Bim} and \ref{defn:CF}. We then show $\Bim\simeq \CF$ in Theorem \ref{thm:BimCF}.

We use results of GJSW to accomplish this step in Section \ref{sec:GJS}. Along the way, we adapt Brothier's treatment \cite{1202.1298} of GJSW results \cite{MR2732052,MR2645882} for unshaded planar algebras.

\item
We show $M\cong L(\F_\I)$.

This last step is similar to results of GJS \cite{MR2807103} and Chapter \ref{chap:graph}.
\end{enumerate}

One can use similar analysis as in the proof of Theorem \ref{thm:Main} to prove the following theorem:

\begin{thm}\label{thm:Finite}
Suppose that in addition, $\cC$ has finitely many isomorphism classes of simple objects, i.e., $\cC$ is a unitary fusion category. Picking an object $X\in\cC$ which generates $\cC$, then $\cC$ can be realized as a category of bifinite bimodules over $L(\F_t)$ with
$$
t=1+\dim(\cC)(\dim(X\oplus\overline{X})-1)=1+\dim(\cC)(2\dim(X)-1),
$$
where $\dim(\cC)$ is the Frobenius-Perron dimension of $\cC$.
\end{thm}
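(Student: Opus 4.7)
The plan is to follow the same three-step strategy used to prove Theorem \ref{thm:Main}, but to make the last step quantitative by computing the parameter of the interpolated free group factor explicitly. Taking the generator to be $X\oplus\overline{X}$ so that the planar algebra has a single self-dual string type, I first produce a factor planar algebra $\cP$ whose category of projections $\Pro(\cP)$ is equivalent to $\cC$. Running the multicolored GJS construction of Section \ref{sec:GJS} then produces a $II_{1}$-factor $M$ together with a bimodule category $\CF\simeq\Pro(\cP)\simeq\cC$. Both of these steps are unchanged from the proof of Theorem \ref{thm:Main}; the only new content is identifying $M$ as an interpolated free group factor with the stated parameter.

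The role played by the principal graph in Chapter \ref{chap:graph} is now taken by the \emph{fusion graph} $\Gamma$ of $X\oplus\overline{X}$: the vertices are isomorphism classes of simple objects of $\cC$ (with a distinguished vertex $*$ for the unit), and between $v$ and $w$ we put $n_{v,w}=\dim\Hom(v\otimes(X\oplus\overline{X}),\,w)$ edges. Weighting each vertex by its Frobenius--Perron dimension $\gamma_{v}=\dim(v)$ makes $\gamma_{*}=1$ and the Perron--Frobenius eigenvalue equal to $\delta=\dim(X\oplus\overline{X})=2\dim(X)$. Mimicking the diagrammatic identification of Chapter \ref{chap:graph} in the multicolored unshaded setting identifies our $M$ with the compression $p_{*}\cN(\Gamma)p_{*}$ of the graph algebra of Section \ref{sec:VNGraph}. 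Because $\cC$ is a unitary fusion category, $\Gamma$ is a \emph{finite} graph; and because $\cC$ is already closed under tensoring with $X\oplus\overline{X}$, every vertex satisfies $\alpha_{v}^{\Gamma}=\delta\gamma_{v}>\gamma_{v}$, so $B(\Gamma)=\emptyset$. Theorem \ref{thm:H1} therefore gives $\cN(\Gamma)\cong L(\F_{t_{\Gamma}})$ with no finite-dimensional summand, eliminating the limiting step that was needed in the infinite-depth proof of Theorem \ref{thm:H2}.

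It remains to extract the parameter. Lemma \ref{lem:GJS2} together with the Perron--Frobenius identity $\sum_{w\sim v}n_{v,w}\gamma_{w}=\delta\gamma_{v}$ gives
\[
t_{\Gamma}\;=\;1+\frac{1}{(\sum_{v}\gamma_{v})^{2}}\Bigl(-\sum_{v}\gamma_{v}^{2}+\sum_{v}\gamma_{v}\cdot\delta\gamma_{v}\Bigr)\;=\;1+\frac{(\delta-1)\dim(\cC)}{(\sum_{v}\gamma_{v})^{2}},
\]
using $\sum_{v}\gamma_{v}^{2}=\sum_{v}\dim(v)^{2}=\dim(\cC)$. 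Since $p_{*}$ has normalized trace $1/\sum_{v}\gamma_{v}$ in $\cN(\Gamma)$, Dykema's compression formula for interpolated free group factors then yields
\[
M\;=\;p_{*}\cN(\Gamma)p_{*}\;\cong\;L(\F_{t}),\qquad t\;=\;1+(t_{\Gamma}-1)\Bigl(\sum_{v}\gamma_{v}\Bigr)^{2}\;=\;1+(\delta-1)\dim(\cC),
\]
which is precisely $1+\dim(\cC)(2\dim(X)-1)$.

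The main obstacle is the step I have sketched very briefly above: establishing the identification $M\cong p_{*}\cN(\Gamma)p_{*}$ in the multicolored setting. This requires reproducing the strategy of Chapter \ref{chap:graph} for an unshaded factor planar algebra with generator $X\oplus\overline{X}$, producing from the planar-algebraic semifinite algebra $\cM$ an edge-indexed free family of $\ell^{\infty}(\Gamma)$-valued semicircular elements whose joint distribution matches that of the generators of $\cN(\Gamma)$. Once this graph-theoretic identification is in hand, the parameter computation above is a routine finite-graph specialization of the arguments in Section \ref{sec:GJSInfinite}, with the limit $k\to\infty$ replaced by a single application of Theorem \ref{thm:H1} and Lemma \ref{lem:GJS2}.
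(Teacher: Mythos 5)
Your proposal follows exactly the route the paper sketches in Remark~\ref{rem:Finite}: take $\cL=\{X\oplus\overline{X}\}$, run Section~\ref{sec:free1} to exhibit $M$ as the $p_{*}$-corner of $\cN(\cF_{\cC}(X\oplus\overline{X}))$, then compute free dimension and amplify. The one slip is a citation mismatch: the fusion graph $\cF_{\cC}(X\oplus\overline{X})$ is finite but generally \emph{has self-loops} (whenever some simple $v$ occurs in $v\otimes(X\oplus\overline{X})$), so Theorem~\ref{thm:H1} and Lemma~\ref{lem:GJS2}, which are stated for loopless graphs, do not apply verbatim; you should instead invoke the loop-allowing Theorem~\ref{thm:standardembedding} and redo the free-dimension bookkeeping so that each loop at $v$ contributes $\gamma_{v}^{2}$ rather than $2\gamma_{v}^{2}$ — which it does, since $\cN_{e}=L(\Z)\oplus\ell^{\infty}(\Gamma\setminus\{v\})$ for a loop adds exactly $\gamma_{v}^{2}$ to the free dimension. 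With that convention the Perron--Frobenius substitution $\sum_{w\sim v}n_{v,w}\gamma_{w}=\delta\gamma_{v}$ (loops counted once) gives the same $t_{\Gamma}=1+(\delta-1)\dim(\cC)/(\sum_{v}\gamma_{v})^{2}$, and the compression yields $t=1+(2\dim X-1)\dim(\cC)$ as you state.
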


\begin{rem}

Note that a version of Theorem \ref{thm:Finite} can be obtained by using \cite[Theorem 4.1]{1112.4088} together with \cite{MR2807103}.  If $Z_{1},\dots, Z_{n}$ are representatives for the simple objects in $\cC$ and $Y = \bigoplus_{k=1}^{n} Z_{k}$, then one obtains the factor $L(\F_s)$ for
$$
s = 1+\dim(\cC)(\dim(Y)-1),
$$
which will be a different parameter than what we obtained in Theorem \ref{thm:Finite}.
\end{rem}

On the other end of the spectrum, one should also note that there has been interesting work on rigid $C^*$-tensor categories of bimodules over $II_1$-factors by Vaes, Falgui\`{e}res, and Raum \cite{0811.1764,1112.4088}. Given a rigid $C^*$-tensor category $\cC$ which is either $\Rep(G)$ for $G$ a compact quantum group \cite{0811.1764} or a unitary fusion category \cite{1112.4088}, they construct a $II_1$-factor $M$ whose category $\Bim(M)$ of bifinite bimodules is \underline{exactly} $\cC$ (up to equivalence). Their results can be interpreted as rigidity results in contrast to the universality of $L(\F_\infty)$ to rigid $C^*$-tensor categories.

\paragraph{Acknowledgements.} We would like to thank Vaughan Jones, Scott Morrison, Noah Snyder, and Stefaan Vaes for many helpful conversations. The majority of this work was completed at the 2012 NCGOA on Conformal field theory and von Neumann algebras at Vanderbilt University and the 2012 Subfactors in Maui conference. The authors would like to thank Dietmar Bisch, Vaughan Jones, James Tener, and the other organizers for those opportunities. The authors were supported by DOD-DARPA grants HR0011-11-1-0001 and HR0011-12-1-0009. Michael Hartglass and David Penneys were also supported by NSF Grant DMS-0856316.  Arnaud Brothier was also supported by ERC Starting Grant VNALG-200749.

\section{Tensor categories and planar algebras}\label{sec:TCandPA}

We briefly recall how to go back and forth between rigid $C^*$-tensor categories and factor planar algebras.
The contents of this subsection are well known to experts.
Our treatment follows \cite{MR2559686,JonesPAnotes,1207.1923,MR2811311,MR1960417, MR2681261}.

\begin{nota}
Categories will be denoted with the letter $\cC$ unless otherwise specified. We write $X\in\cC$ to mean $X$ is an object in $\cC$, and we write $\cC(X\to Y)$ or $\cC(X,Y)$ for the set of morphisms from $X$ to $Y$ in $\cC$.
\end{nota}

\subsection{Factor planar algebras and principal graphs}

We briefly recall the definition of an unshaded factor planar algebra where the strings are labelled.
For more details, see \cite{JonesPAnotes}.

\begin{defn}
Given a set $\cL$, the \underline{planar operad with string labels $\cL$}, denoted $\P_{\cL}$, is the set of all planar tangles whose strings are labelled by elements of $\cL$, e.g.,
$$
\begin{tikzpicture}[baseline = .5cm]
	\draw[thick] (-2,-.8)--(-2,2)--(.8,2)--(.8,-.8)--(-2,-.8);
	\draw (.8,1.4) arc (-90:-180:.6cm);
	\draw (0,.4) arc (0:90:.8cm);
	\draw (-.4,-.2) arc (270:90:.2cm);
	\draw (-1,2)--(-1,1)--(-2,1);
	\draw (-1.4,2)--(-1.4,1.4)--(-2,1.4);
	\draw (0,-.8)--(0,0)--(.8,0);
	\draw (-1.4,0) circle (.3cm);
	\filldraw[unshaded,thick] (-1.6,.8)--(-1.6,1.6)--(-.8,1.6)--(-.8,.8)--(-1.6,.8);
	\filldraw[unshaded,thick] (-.4,.4)--(.4,.4)--(.4,-.4)--(-.4,-.4)--(-.4,.4);
	\node at (-1.6,-.4) {{\scriptsize{$a$}}};
	\node at (-1.6,1.8) {{\scriptsize{$b$}}};
	\node at (-1.8,1.6) {{\scriptsize{$b$}}};
	\node at (-.8,1.8) {{\scriptsize{$c$}}};
	\node at (-1.8,.8) {{\scriptsize{$c$}}};
	\node at (.4,1.4) {{\scriptsize{$a$}}};
	\node at (0,1) {{\scriptsize{$d$}}};
	\node at (-.6,.4) {{\scriptsize{$d$}}};
	\node at (.6,.2) {{\scriptsize{$b$}}};
	\node at (.2,-.6) {{\scriptsize{$c$}}};
	\node at (-.5,-.5) {$\star$};
	\node at (-1.6,.6) {$\star$};
	\node at (-2.2,-.6) {$\star$};
\end{tikzpicture}
\text{ where }a,b,c,d\in\cL.
$$
A \underline{planar tangle} $T$ consists of the following data:
\begin{enumerate}[(1)]
\item
a rectangle $D_0(T)\subset \R^2$,
\item
Finitely many disjoint rectangles $D_1(T),\dots ,D_s(T)$ in the interior of $D_0(T)$ ($s$ may be zero),
\item
Finitely many disjoint smooth arcs in $D_0(T)$ called the strings of $T$ which do not meet the interior of any $D_1(T),\dots, D_s(T)$.
The boundary points of a string of $T$ (if it has any) lie in the boundaries of the $D_i(T)$, and they meet these boundaries transversally (if they meet the boundaries at all).
\end{enumerate}
The boundaries of the strings divide the boundaries of the rectangles into \underline{intervals}. For each rectangle, there is a distinguished interval denoted $\star$. The intervals of $D_i(T)$ are divided by the \underline{marked points} of $D_i(T)$, i.e., the points at which the strings meet the boundary of $D_i(T)$. Starting at $\star$, the marked points on each rectangle are number clockwise.

Each string is labelled by an element from $\cL$, which induces a labeling on the marked points of the $D_i(T)$. Reading clockwise around the boundary of $D_i(T)$ starting at $\star$, we get a word $\alpha_i\in \Lambda$, the set of all finite words on $\cL$. We call $D_i(T)$ an $\alpha_i$-rectangle, and we call such a planar tangle $T$ a \underline{planar $\alpha_0$-tangle}.

If we have two planar tangles $S,T$ satisfying the following \underline{boundary conditions}:
\begin{itemize}
\item
some internal rectangle $D_i(S)$ agrees with $D_0(T)$,
\item
the marked points of $D_i(S)$ agree with the marked points of $D_0(T)$,
\item
the distinguished interval of $D_i(S)$ agrees with the distinguished interval of $T$, and
\item
the label from $\cL$ of each marked point of $D_i(S)$ agrees with the label of each marked point from $D_0(T)$,
\end{itemize}
then we may compose $S$ and $T$ to get the planar tangle $S\circ_i T$ by taking $S$ union the interior of $D_0(T)$, removing the boundary of $D_i(S)$, and smoothing the strings.
\end{defn}

\begin{rem}
When we draw a planar tangle, we will often suppress the external rectangle, which is assumed to be large. If we omit the $\star$, it is always assumed the $\star$ is in the \underline{lower left corner}. Finally, if $a_1,\dots,a_k\in\cL$ and  $\alpha=a_1\dots a_k\in\Lambda$, we draw one string labelled $\alpha$ rather than $k$ parallel strings labelled $a_1,\dots ,a_k$ where we always read the strands from \underline{left to right} and \underline{top to bottom}.

For each word $\alpha=a_1\cdots,a_n\in\Lambda$, we write $\overline{\alpha}=a_n\cdots a_1$ for the word in the reverse order.
\end{rem}

\begin{defn} \label{defn:planar}
A \underline{planar algebra $\cP$ with string labels $\cL$} is
\begin{itemize}
\item
a collection of vector spaces $(P_\alpha)_{\alpha\in\Lambda}$ (recall $\Lambda$ is the set of finite words on $\cL$)
\item
an action of planar tangles by multilinear maps, i.e., for each planar $\alpha$-tangle $T$, whose rectangles $D(T)$ are $\alpha_D$-rectangles, there is a multilinear map
$$
Z_T\colon \prod_{D \subset T_{0}} P_{\alpha_D}\to P_{\alpha}
$$
satisfying the following axioms:
\begin{enumerate}
\item[\underline{\text{Isotopy}:}]
If $\theta$ is an orientation preserving diffeomorphism of $\R^2$, then $Z_{\theta(T)}=Z_T$.  This means that 
$$
Z_{\theta(T)}(f_{\theta}) = Z_{T}(f)
$$
where $f_{\theta}(\theta(D)) = f(D)$.
\item[\underline{\text{Naturality}:}]
For $S,T$ composable tangles, $Z(S\circ_D T) = Z(S)\circ_D Z(T)$, where the composition on the right hand side is the composition of multilinear maps.
\end{enumerate}
\end{itemize}
Moreover, $\cP$ is called a \underline{factor planar algebra} if $\cP$ is
\begin{itemize}
\item
\underline{evaluable}, i.e., $\dim(P_\alpha)<\infty$ for all $\alpha\in\Lambda$ and $P_\emptyset\cong \C$ via the map that sends the empty diagram to $1\in\C$. Hence, by naturally, to each $a\in\cL$, there is a scalar $\delta_a\in\C$ such that any labelled diagram containing a closed loop labelled $a$ is equal to the same diagram without the closed loop multiplied by $\delta_a$. We use the notation $\delta_{\alpha}=\delta_{a_1}\cdots \delta_{a_n}$ if $\alpha = a_1\cdots a_n$.
\item
\underline{involutive}, i.e., for each $\alpha\in\Lambda$, there is a map $*\colon P_\alpha\to P_{\overline{\alpha}}$ with $*\circ *=\id$ which is compatible with the reflection of tangles, i.e., if $T$ is a planar tangle labelled by $\alpha_1,\dots,\alpha_s$, then
$$
T(\alpha_1,\dots, \alpha_n)^*=T^*(\alpha_1^*,\dots,\alpha_n^*)
$$
where $T^*$ is the reflection of $T$.
\item \underline{unital} \cite{JonesPAnotes}:  Let $S$ be a shaded planar $\alpha_{0}$ tangle with no input disks.  Then, there is an element $Z(S) \in P_{\alpha_{0}}$ so that the following holds:

Let $T$ be a tangle with a nonempty set of internal disks such that $S$ can be glued into the internal disk $D^{S}$ of $T$.  Then
    $$
    Z(T \circ S) = Z(T) \circ Z_{S}.
    $$
    Here $(Z(T) \circ Z_{S})(f) = \tilde{f}$ where
    $$
    \tilde{f}(D) = \begin{cases} f(D) \text{ if } D \neq D^{S} \\ Z(S) \text{ if } D = D^{S} \end{cases}
    $$
    
\item \underline{spherical}, i.e., for all $\alpha\in\Lambda$ and all $x\in P_{\alpha\overline{\alpha}}$, we have
$$
\tr(x)
=
\trace{\overline{\alpha}}{\alpha}{x}{\alpha}
=
\traceop{\overline{\alpha}}{\alpha}{x}{\alpha}\,.
$$
\item
\underline{positive}, i.e., for every $\alpha\in\Lambda$, the map $\langle\cdot,\cdot\rangle \colon P_\alpha\times P_\alpha\to P_\emptyset\cong\C$ given by
$$
\langle x,y\rangle =
\begin{tikzpicture}[baseline = -.1cm]
	\draw (0,0)--(1.2,0);	
	\filldraw[unshaded,thick] (-.4,.4)--(.4,.4)--(.4,-.4)--(-.4,-.4)--(-.4,.4);
	\filldraw[unshaded,thick] (.8,.4)--(1.6,.4)--(1.6,-.4)--(.8,-.4)--(.8,.4);
	\node at (0,0) {$x$};
	\node at (1.2,0) {$y^*$};
	\node at (.6,.2) {{\scriptsize{$\alpha$}}};
\end{tikzpicture}
$$
is a positive definite inner product. Hence for all $a\in\cL$, $\delta_a>0$.
\end{itemize}
\end{defn}

\begin{nota}
For each $\alpha,\beta\in\Lambda$, we write $P_{\alpha\to\beta}$ to denote the box space $P_{\alpha\overline{\beta}}$ of elements of the form
$$
\nbox{\alpha}{x}{\beta}\,.
$$
\end{nota}

\begin{rem}\label{rem:multiply}
Note that for each $\alpha\in\Lambda$, the multiplication tangle
$$
\PAMultiply{\alpha}{}{\alpha}{}{\alpha}
$$
makes $P_{\alpha\to\alpha}$ into an associative algebra. If $\cP$ is a factor planar algebra, then the multiplication tangle makes $P_{\alpha\to\alpha}$ a finite dimensional $C^*$-algebra.
\end{rem}

\begin{defn}
Suppose $\cP$ is a factor planar algebra. A \underline{projection} in $\cP$ is an element $p\in P_{\alpha\to\alpha}$ satisfying $p=p^2=p^*$ where the multiplication is as in Remark \ref{rem:multiply}. A projection $p\in P_{\alpha\to\alpha}$ is called \underline{simple} if it is a minimal projection in $P_{\alpha\to\alpha}$. Since $P_{\alpha\to \alpha}$ is a finite dimensional $C^*$-algebra, every projection is the sum of finitely many simple projections. This property is called \underline{semi-simpicity}.

Given a projection $p\in P_{\alpha\to\alpha}$, the dual projection $\overline{p}\in P_{\overline{\alpha}\to\overline{\alpha}}$ is obtained by
$$
\overline{p}
=
\rotateccw{\overline{\alpha}}{\alpha}{\alpha}{\overline{\alpha}}{p}
=
\rotatecw{\overline{\alpha}}{\alpha}{\alpha}{\overline{\alpha}}{p}
\,.
$$

Projections $p\in P_{\alpha\to\alpha}$ and $q\in P_{\beta\to\beta}$ are \underline{isomorphic} or \underline{equivalent}, denoted $p\simeq q$, if there is a $u\in P_{\alpha\to\beta}$ such that
$$
u^*u
=
\PAMultiply{\alpha}{u}{\beta}{u^*}{\alpha}
=
p
\text{ and }
uu^*
=
\PAMultiply{\beta}{u^*}{\alpha}{u}{\beta}
=
q.
$$

Given a projection $p\in P_{\alpha\to\alpha}$ and a $\beta\in \Lambda$, we can form the projection
$$
p\otimes \id_{\beta} =
\nbox{\alpha}{p}{\alpha}\,
\begin{tikzpicture}[baseline=-.1cm]
	\draw (0,-.8)--(0,.8);
	\node at (.2,0) {{\scriptsize{$\beta$}}};
\end{tikzpicture}
\in P_{\alpha\beta\to\alpha\beta}.
$$
The \underline{principal graph  of $\cP$ with respect to $\beta\in\Lambda$}, denoted $\Gamma_\beta$, is the graph whose vertices are the isomorphism classes of simple projections in $\cP$, and if $p\in P_{\alpha\to\alpha}$ and $q\in P_{\alpha\beta\to\alpha\beta}$ are simple projections, then the vertices $[p]$ and $[q]$ are connected by $\dim(qP_{\alpha\beta\to\alpha\beta}(p\otimes\id_{\beta}))$ edges.

The \underline{principal graph of $\cP$}, denoted $\Gamma$, is the push out of the $\Gamma_b$ for $b\in\cL$ over the isomorphism classes of simple projections, i.e., the vertices are the same as before, and the edge set is the union of the edge sets of the $\Gamma_b$ for $b\in \cL$.

Since $\cP$ is factor and $\cL$ is countable, $\Gamma$ has countably many vertices, although it may not be locally finite.
However, $\Gamma_b$ is always locally finite for $b\in \cL$.

Given a vertex $[p]$ of $\Gamma$, the number $\tr(p)$ is independent of the choice of representative of $[p]$. The vector $(\tr(p))_{[p]\in V(\Gamma)}$ defines a \underline{Frobenius-Perron weight vector} on the vertices of $\Gamma$ satisfying the following equation for each $b\in\cL$:
$$
\delta_b \tr(p) = \sum_{[q]\in V(\Gamma_b)} n_{[p],[q]}^b \tr(q)
$$
where $n_{[p],[q]}^b$ is the number of edges connecting $[p]$ and $[q]$ in $\Gamma_b$.
\end{defn}

\subsection{Rigid $C^*$-tensor categories and fusion graphs}
We briefly recall the definition of a rigid $C^*$-tensor category.

\begin{defn}
A \underline{rigid $C^*$-tensor category} is a pivotal, spherical, positive/unitary, rigid, semisimple, linear ( Vect-enriched) monoidal category such that $\End(1)\cong\C$.
\end{defn}

We now unravel this definition and state many properties that follow. The interested reader should see \cite{MR1960417, MR2681261} for more details. As we go through the properties, we will also go through the well-known graphical calculus used for strict tensor categories. We will immediately see that we get a factor planar algebra from a rigid $C^*$-tensor category.

We start with an abelian category $\cC$ together with
\begin{itemize}
\item
a bifunctor $\otimes\colon \cC\times \cC\to \cC$ which is associative up to a natural isomorphism (the pentagon axiom is satisfied), and
\item
a unit object $1\in\cC$ which is a left and right identity for $\otimes$ up to natural isomorphism (the triangle identity is satisfied).
\end{itemize}

\begin{rem}
Recall that a tensor category is called strict if the above natural isomorphisms are identities, i.e., for each $X,Y,Z\in\cC$, we have
\begin{align*}
(X\otimes Y)\otimes Y &= X(\otimes (Y\otimes Z)\text{ and}\\
1\otimes X &= X= X\otimes 1.
\end{align*}
Since each tensor category is equivalent to a strict tensor category by Theorem 7.2.1 in \cite{MR1712872}, our tensor categories will be assumed to be strict unless otherwise stated. Note that even with all the properties we want, we can still restrict our attention to strict categories.
\end{rem}

First, since $\cC$ is  Vect-enriched, for each $X,Y\in\cC$, $\cC(X\to Y)$ is a finite dimensional complex vector space. The morphisms in $\cC$ are drawn as boxes with strings emanating from the top and bottom. The strings are labelled by the objects, and the diagram is read from \underline{top to bottom}. For example,
$$
f
=
\nbox{X}{f}{Y}
\in\cC(X\to Y),
$$
and the identity morphism $\id_X$ is denoted by the horizontal strand labelled $X$. We compose morphisms by vertical concatenation
$$
\widenbox{X}{f\circ g}{Z}
=
\PAMultiply{X}{g}{Y}{f}{Z}
\in \cC(X\to Z),
$$
and we tensor morphisms by horizontal concatenation:
$$
\widenbox{\hspace{.8cm}X_1\otimes Y_1}{f_1\otimes f_2}{\hspace{.8cm}X_2\otimes Y_2}
=
\nbox{X_1}{f_1}{Y_1}
\,
\nbox{X_2}{f_2}{Y_2}
\in\cC(X_1\otimes X_2\to Y_1\otimes Y_2).
$$

Since $\cC$ is rigid, for each $X\in\cC$, there is a \emph{dual} or \emph{conjugate} $\overline{X}\in\cC$, and there is a natural isomorphism $\overline{\overline{X}}\cong X$. Along with the dual object, we have an \emph{evaluation map} $\ev_X\colon \overline{X}\otimes X\to 1$ and a \emph{coevaluation} map $\coev_X\colon 1\to X\otimes \overline{X}$ such that the diagram
$$
\xymatrix{
&X\otimes \overline{X}\otimes X \ar[dr]^{1\otimes \ev_X }\\
X\ar[ur]^{\coev_{X}\otimes 1}\ar[dr]_{1\otimes \coev_{\overline{X}}}\ar[rr]^{\id_X} && X\\
& X\otimes \overline{X}\otimes X \ar[ur]_{\ev_{\overline{X}}\otimes 1}
}
$$
commutes. The evaluation is denoted by a cap, and we a draw a cup for the coevaluation:
$$
\ev_X=
\begin{tikzpicture}[baseline=-.1cm]
	\draw[dotted] (0,-.4)--(0,0);
	\draw (-.4,.4) arc (-180:0:.4cm);
	\node at (.6,.4) {{\scriptsize{$X$}}};
	\node at (-.6,.4) {{\scriptsize{$\overline{X}$}}};
	\node at (0,-.6) {$1$};
\end{tikzpicture}
\text{ and }
\coev_X=
\begin{tikzpicture}[baseline=.1cm]
	\draw[dotted] (0,.4)--(0,0);
	\draw (-.4,-.4) arc (180:0:.4cm);
	\node at (-.6,-.4) {{\scriptsize{$X$}}};
	\node at (.6,-.4) {{\scriptsize{$\overline{X}$}}};
	\node at (0,.6) {$1$};
\end{tikzpicture}\,.
$$
The diagram above commuting is sometimes referred to as the \emph{zig-zag relation}, since it is the straightening of the kinked string:
$$
\begin{tikzpicture}[baseline=-.1cm]
	\draw (0,-.8)--(0,.8);
	\node at (.2,0) {{\scriptsize{$X$}}};
\end{tikzpicture}
=
\begin{tikzpicture}[baseline=-.1cm]
	\draw (0,.4) arc (180:0:.4cm)--(.8,-1.2);
	\draw (0,-.4)--(0,.4);
	\draw[dotted] (.4,.8)--(.4,1.2);
	\draw[dotted] (-.4,-.8)--(-.4,-1.2);
	\draw (0,-.4) arc (0:-180:.4cm)--(-.8,1.2);
	\node at (.2,0) {{\scriptsize{$\overline{X}$}}};
	\node at (1,-.6) {{\scriptsize{$X$}}};
	\node at (-.6,.6) {{\scriptsize{$X$}}};
\end{tikzpicture}
=
\begin{tikzpicture}[baseline=-.1cm]
	\draw (0,.4) arc (0:180:.4cm)--(-.8,-1.2);
	\draw (0,-.4)--(0,.4);
	\draw[dotted] (-.4,.8)--(-.4,1.2);
	\draw[dotted] (.4,-.8)--(.4,-1.2);
	\draw (0,-.4) arc (-180:0:.4cm)--(.8,1.2);
	\node at (-.2,0) {{\scriptsize{$\overline{X}$}}};
	\node at (-1,-.6) {{\scriptsize{$X$}}};
	\node at (.6,.6) {{\scriptsize{$X$}}};
\end{tikzpicture}\,.
$$
In general, we don't draw a string connected to the trivial object $1\in\cC$. For each $X,Y\in\cC$, $\overline{X\otimes Y}$ is naturally isomorphic to $\overline{Y}\otimes \overline{X}$, and the diagram
$$
\xymatrix{
\overline{X\otimes Y} \otimes (X\otimes Y)\ar[d]  \ar[rr]^(.6){\ev_{X\otimes Y}}&& 1\\
\overline{Y}\otimes \overline{X}\otimes X\otimes Y\ar[rr]^(.6){1\otimes \ev_X\otimes 1} && \overline{Y}\otimes Y\ar[u]^{\ev_Y}\\
}
$$
commutes, and similarly for the $\coev$'s. This diagram just means that we can write one cap labelled $X\otimes Y$ and its dual instead of two separate caps labelled $X$ and $Y$ and their duals:
$$
\begin{tikzpicture}[baseline=-.1cm]
	\draw (-.4,.4) arc (-180:0:.4cm);
	\draw (-.8,.4) arc (-180:0:.8cm);
	\node at (.6,.4) {{\scriptsize{$X$}}};
	\node at (-.6,.4) {{\scriptsize{$\overline{X}$}}};
	\node at (1,.4) {{\scriptsize{$Y$}}};
	\node at (-1,.4) {{\scriptsize{$\overline{Y}$}}};
\end{tikzpicture}
=
\begin{tikzpicture}[baseline=-.1cm]
	\draw (-.4,.4) arc (-180:0:.4cm);
	\node at (.9,.4) {{\scriptsize{$X\otimes Y$}}};
	\node at (-.9,.4) {{\scriptsize{$\overline{X\otimes Y}$}}};
\end{tikzpicture},
$$
and similarly for the cups.

The \emph{pivotality} axiom in $\cC$ requires that for all $f\in \cC(X\to Y)$,
$$
(\ev_{Y}\otimes \id_{\overline{X}})\circ(\id_{\overline{Y}}\otimes f\otimes \id_{\overline{X}})\circ(\id_{\overline{Y}} \otimes \coev_X)
=(\id_X\otimes \ev_{Y})\circ(\id_{\overline{Y}}\otimes f\otimes \id_{\overline{X}})\circ(\coev_{\overline{X}}\otimes\id_{\overline{Y}}).
$$
The equation above has an elegant representation in diagrams:
$$
\rotateccw{\overline{Y}}{X}{Y}{\overline{X}}{f}
=
\rotatecw{\overline{X}}{Y}{X}{\overline{Y}}{f}\,.
$$
For $f\in \cC(X\to Y)$, the above diagram defines a dual map $\overline{f}\in \cC(\overline{Y}\to \overline{X})$, and $\overline{\overline{f}}=f$.

The evaluations and coevaluations together with pivotality allow us to define a \emph{left} and \emph{right trace} on $\End_\cC(X)$:
\begin{align*}
\tr_L(f)&=\ev_{X}\circ (\id_{\overline{X}}\otimes f)\circ \coev_{\overline{X}}
=
\traceop{\overline{X}}{X}{f}{X}
\in \End(1)\cong \C\text{ and}\\
\tr_R(f)&=\ev_{\overline{X}}\circ (f\otimes \id_{\overline{X}})\circ \coev_{X}
=
\trace{\overline{X}}{X}{f}{X}
\in \End(1)\cong \C.
\end{align*}
Similarly, for each $X\in\cC$, there are numbers $d^L_{X}$ and $d^R_{X}$ which are the left and right traces of the identity morphism respectively, and $d_{X}^L=d_{\overline{X}}^R$ and $d_{X}^R=d_{\overline{X}}^L$

\emph{Sphericality} means that these two traces are equal, and we denote the common number by $\tr(f)$. The sphericality allows us to perform isotopy on closed diagrams as if they were drawn on a sphere. Hence $\dim(X):=d_X^L=d_X^R$ and $\dim(X)=\dim(\overline{X})$ for all $X\in \cC$.

The \emph{positivity} or \emph{unitarity} of $\cC$ means there is a contravariant functor $*\colon \cC\to \cC$ which is the identity on all objects, and on morphisms, it is anti-linear, involutive ($*\circ *=\id_{\cC}$), monoidal ($(f\circ g)^*=g^*\circ f^*$ for composable $f,g$), and positive ($f^*\circ f=0$ implies $f=0$). We require $*$ to be compatible with the duality ($\overline{f}^*=\overline{f^*}$) and with the evaluations and coevaluations (for all $X\in\cC$, $\coev_X=\ev_{\overline{X}}^*$). On diagrams, we perform $*$ by reflecting the diagram, keeping the labels on the strings, and placing a $*$ on all morphisms.

For all $X,Y\in\cC$, we now have that $\cC(X\to Y)$ is a Banach space with positive definite inner product
$$
\langle f,g\rangle = \tr(g^*f)=
\begin{tikzpicture}[baseline=.5cm]
	\draw (0,1.6) arc (180:0:.4cm) -- (.8,-.4) arc (0:-180:.4cm);
	\draw (0,.4)--(0,.8);
	\filldraw[unshaded,thick] (-.4,.4)--(.4,.4)--(.4,-.4)--(-.4,-.4)--(-.4,.4);
	\draw[thick, unshaded] (-.4, .8) -- (-.4, 1.6) -- (.4, 1.6) -- (.4,.8) -- (-.4, .8);
	\node at (-.2,1.8) {{\scriptsize{$X$}}};
	\node at (0,1.2) {$f$};
	\node at (-.2,.6) {{\scriptsize{$Y$}}};
	\node at (0,0) {$g^*$};
	\node at (-.2,-.6) {{\scriptsize{$X$}}};
	\node at (1,.6) {{\scriptsize{$\overline{X}$}}};
\end{tikzpicture}.
$$
The inner product makes $\End_{\cC}(X)$ a finite dimensional $C^*$-algebra, so in particular, all projections are sums of finitely many simple projections, and $\cC$ is \emph{semi-simple}, i.e., every exact sequence in $\cC$ splits. This also means that any object in $\cC$ can be written as a finite direct sum of simple objects. Recall that $X\in\cC$ is \emph{simple} if $\dim(\End_{\cC}(X))=1$. Thus if $X,Y$ are non-isomorphic simple objects, $\cC(X,Y)=(0)$. This means that for each simple $X,Y,Z\in \cC$, there are non-negative integers $N_{X,Y}^Z $ such that $X\otimes Y = \bigoplus_{Z\in\cC} N_{X,Y}^Z Z$, i.e.,
$$
N_{X,Y}^Z = \dim(\cC(X\otimes Y\to Z)).
$$
Moreover, we have \emph{Frobenius reciprocity}, i.e., for each $X,Y,Z\in\cC$, there are natural isomorphisms
$$
\cC(X\otimes Y\to Z)\cong \cC(X\to Z\otimes \overline{Y}) \cong \cC(Y\to \overline{X}\otimes Z)
$$
which are implemented by the evaluation and coevaluation maps:
$$
\begin{tikzpicture}[baseline=-.1cm]
	\draw (0,0)--(0,-.8);
	\draw (-.2,.8)--(-.2,0);
	\draw (.2,.8)--(.2,0);
	\draw[thick, unshaded] (-.4, -.4) -- (-.4, .4) -- (.4, .4) -- (.4, -.4) -- (-.4, -.4);
	\node at (0, 0) {$f$};
	\node at (-.4,.6) {\scriptsize{$X$}};
	\node at (.4,.6) {\scriptsize{$Y$}};
	\node at (.2,-.6) {\scriptsize{$Z$}};
\end{tikzpicture}
\leftrightarrow
\begin{tikzpicture}[baseline=-.1cm]
	\draw (0,0)--(0,-.8);
	\draw (-.2,.8)--(-.2,0);
	\draw (.2,.4) arc (180:0:.2cm) -- (.6,-.8);
	\draw[thick, unshaded] (-.4, -.4) -- (-.4, .4) -- (.4, .4) -- (.4, -.4) -- (-.4, -.4);
	\node at (0, 0) {$f$};
	\node at (-.4,.6) {\scriptsize{$X$}};
	\node at (.4,.8) {\scriptsize{$Y$}};
	\node at (.8,-.6) {\scriptsize{$\overline{Y}$}};
	\node at (.2,-.6) {\scriptsize{$Z$}};
\end{tikzpicture}
\leftrightarrow
\begin{tikzpicture}[baseline=-.1cm]
	\draw (0,0)--(0,-.8);
	\draw (.2,.8)--(.2,0);
	\draw (-.2,.4) arc (0:180:.2cm) -- (-.6,-.8);
	\draw[thick, unshaded] (-.4, -.4) -- (-.4, .4) -- (.4, .4) -- (.4, -.4) -- (-.4, -.4);
	\node at (0, 0) {$f$};
	\node at (-.4,.8) {\scriptsize{$X$}};
	\node at (-.8,-.6) {\scriptsize{$\overline{X}$}};
	\node at (.4,.8) {\scriptsize{$Y$}};
	\node at (.2,-.6) {\scriptsize{$Z$}};
\end{tikzpicture}.
$$
Hence, for all simple $X,Y,Z\in\cC$, we have $N_{X,Y}^Z=N_{Z,\overline{Y}}^X=N_{\overline{X},Z}^{Y}$.

\begin{defn}
An object $X\in \cC$ has a \underline{self-duality} if there is an invertible $\varphi\in \cC(X,\overline{X})$, which must satisfy certain compatibility axioms. We would like this $\varphi$ to allow us to define evaluation and coevaluation maps $X\otimes X\to 1$ and $1\to X\otimes X$, i.e, they are adjoint to each other, satisfy the zig-zag relation, and give a positive scalar for $\dim(X)$ when composed in the natural way. We define these maps by
$$
\begin{tikzpicture}[baseline=-.1cm]
	\draw (0,.6) -- (0,-.2) arc (-180:0:.4cm) -- (.8,.6);
	\filldraw[unshaded,thick] (-.25,.25)--(.25,.25)--(.25,-.25)--(-.25,-.25)--(-.25,.25);
	\node at (-.2,.5) {{\scriptsize{$X$}}};
	\node at (0,0) {$\varphi$};
	\node at (-.15,-.5) {{\scriptsize{$\overline{X}$}}};
	\node at (1,0) {{\scriptsize{$X$}}};
\end{tikzpicture}
\text{ and }
\begin{tikzpicture}[baseline=.1cm]
	\draw (0,-.6) -- (0,.2) arc (180:0:.4cm) -- (.8,-.6);
	\filldraw[unshaded,thick] (-.25,.25)--(.25,.25)--(.25,-.25)--(-.25,-.25)--(-.25,.25);
	\node at (-.2,-.5) {{\scriptsize{$X$}}};
	\node at (0,0) {$\varphi^*$};
	\node at (-.15,.5) {{\scriptsize{$\overline{X}$}}};
	\node at (1,0) {{\scriptsize{$X$}}};
\end{tikzpicture}
$$
respectively. Since $X$ is naturally isomorphic to $\overline{\overline{X}}$, $\overline{\varphi}$ is naturally in $\cC(X,\overline{X})$. Therefore, the compatibility requirements are that $\varphi$ must satisfy $\varphi\overline{\varphi^*}=\id_X$ and $\tr(\varphi\varphi^*)=\dim(X)$.
However, to be able to draw these diagrams naively by just a cup and a cap without the label $\varphi$, we must have that each of these maps is preserved by rotation:
$$
\begin{tikzpicture}[baseline=-.1cm]
	\draw (0,.6) -- (0,-.2) arc (-180:0:.4cm) -- (.8,.6) arc (180:0:.3cm) -- (1.4,-.4) .. controls ++(270:.6cm) and ++(270:.6cm) .. (-.6,-.4)--(-.6,.6);
	\filldraw[unshaded,thick] (-.25,.25)--(.25,.25)--(.25,-.25)--(-.25,-.25)--(-.25,.25);
	\node at (-.2,.5) {{\scriptsize{$X$}}};
	\node at (0,0) {$\varphi$};
	\node at (-.15,-.5) {{\scriptsize{$\overline{X}$}}};
	\node at (1,0) {{\scriptsize{$X$}}};
	\node at (1.6,0) {{\scriptsize{$\overline{X}$}}};
	\node at (-.8,0) {{\scriptsize{$X$}}};
\end{tikzpicture}
=
\begin{tikzpicture}[baseline=-.1cm]
	\draw (0,.6) -- (0,-.2) arc (0:-180:.4cm) -- (-.8,.6);
	\filldraw[unshaded,thick] (-.25,.25)--(.25,.25)--(.25,-.25)--(-.25,-.25)--(-.25,.25);
	\node at (.2,.5) {{\scriptsize{$X$}}};
	\node at (0,0) {$\varphi$};
	\node at (.15,-.5) {{\scriptsize{$\overline{X}$}}};
	\node at (-1,0) {{\scriptsize{$X$}}};
\end{tikzpicture}
=
\begin{tikzpicture}[baseline=-.1cm]
	\draw (0,.6) -- (0,-.2) arc (-180:0:.4cm) -- (.8,.6);
	\filldraw[unshaded,thick] (-.25,.25)--(.25,.25)--(.25,-.25)--(-.25,-.25)--(-.25,.25);
	\node at (-.2,.5) {{\scriptsize{$X$}}};
	\node at (0,0) {$\overline{\varphi}$};
	\node at (-.15,-.5) {{\scriptsize{$\overline{X}$}}};
	\node at (1,0) {{\scriptsize{$X$}}};
\end{tikzpicture}
=
\begin{tikzpicture}[baseline=-.1cm]
	\draw (0,.6) -- (0,-.2) arc (-180:0:.4cm) -- (.8,.6);
	\filldraw[unshaded,thick] (-.25,.25)--(.25,.25)--(.25,-.25)--(-.25,-.25)--(-.25,.25);
	\node at (-.2,.5) {{\scriptsize{$X$}}};
	\node at (0,0) {$\varphi$};
	\node at (-.15,-.5) {{\scriptsize{$\overline{X}$}}};
	\node at (1,0) {{\scriptsize{$X$}}};
\end{tikzpicture}
$$
i.e., the \underline{Frobenius-Schur indicator} \cite{MR2381536} of the evaluation must be equal to $+1$. This tells us that $\varphi=\overline{\varphi}$, and $\varphi$ is unitary ($\varphi^*\varphi=\id_X$ and $\varphi\varphi^*=\id_{\overline{X}}$). A self-duality satisfying this extra axiom is called a \underline{symmetric self-duality}.
\end{defn}

\begin{assumption}\label{assume:Countable}
We assume that our rigid $C^*$-tensor category $\cC$ is countably generated, i.e., there is a countable set $\cS$ of objects in $\cC$ such that for each $Y\in\cC$, there are $X_1,\dots, X_n\in \cS$ such that
$$
\cC(X_1\otimes\cdots \otimes X_n,Y)\neq (0),
$$
i.e., $Y$ is (isomorphic to) a sub-object of $X_1\otimes\cdots \otimes X_n$.

To perform the calculations needed to prove Theorem \ref{thm:Main}, we want the planar algebra $\cP$ associated to $\cC$ in Definition \ref{defn:PAfromTC} to be non-oriented, have a non-oriented fusion graph, and have all loop parameters greater than 1.

Hence given a countable generating set $\cS$, we work with the generating set $\cL=\set{X\oplus \overline{X}}{X\in\cS}$. Note that the objects in $\cL$ are \underline{not} simple, but they are symmetrically self-dual and have dimension greater than 1.
\end{assumption}

\begin{defn}
The \underline{fusion graph of $\cC$ with respect to $Y\in\cC$}, denoted $\cF_\cC(Y)$, is the oriented graph whose vertices are the isomorphism classes of simple objects of $\cC$, and between simple objects $X,Z\in\cC$, there are $N_{X,Y}^Z=\dim(\cC(X\otimes Y\to Z))$ oriented edges pointing from $X$ to $Z$. Note that if $Y$ is self-dual, then by semi-simplicity, we have $N_{X,Y}^Z=N_{Z,Y}^X$, and we may ignore the orientation of the edges.

The \underline{fusion graph of $\cC$ with respect to $\cL$} (with $\cL$ as in Assumption \ref{assume:Countable}), denoted $\cF_\cC(\cL)$, is the push out of the $\cF_\cC(Y)$ over the isomorphism classes of simple objects $Y\in\cL$, i.e., the vertices are the same as before, and the edge set is the union of the edge sets of the $\cF_\cC(Y)$ for $Y\in\cL$. If $e$ is an edge in $\cF_\cC(\cL)$ which comes from an edge in $\cF_\cC(Y)$, then we color $e$ by $Y$.

Since $\cL$ is countable, $\cF_\cC(\cL)$ has countably many vertices, although it may not be locally finite.
However, $\cF_\cC(X)$ is always locally finite for $X\in \cC$.

Given a vertex $[X]$ of $\cF_\cC(\cL)$, the number $\dim(X)$ is independent of the choice of representative of $[X]$.
Again, we get a \underline{Frobenius-Perron weight vector} on the vertices of $\cF_\cC(\cL)$, given by $(\dim(X))_{[X]\in V(\cF_\cC(\cL))}$, which satisfies the following equation for each $Y\in\cL$:
$$
\dim(X) \dim(Y) = \sum_{[Z]\in V(\cF_\cC(Y))}N_{X,Y}^Z \dim(Z).
$$
\end{defn}

For convenience, we will identify words on $\cL$ with their products, i.e., the word $\alpha= X_1X_2\dots X_n$ is identified with $X_1\otimes X_2\otimes \cdots \otimes X_n$.

\begin{defn}\label{defn:PAfromTC}
To get a factor planar algebra $\PA(\cC)_\bullet$, for each word $\alpha$ on $\cL$, let $\PA(\cC)_{\alpha} = \cC(\alpha\to 1)$, whose elements are represented diagrammatically as
$$
\Mbox{\alpha}{f}
$$
Frobenius reciprocity allows us to identify $\PA(\cC)_\alpha$ with $\cC(\beta\to\gamma)$ where $\alpha=\beta\overline{\gamma}$:
$$
\Mbox{\alpha}{f}
=
\nbox{\beta}{f}{\gamma}\,.
$$

We may now interpret any planar tangle in $\mathbb{P}_\cL$ labelled by morphisms of $\cC$ as one morphism in $\cC$ in the usual way. First, isotope the tangle so that each string travels transversally to each horizontal line, except at finitely many critical points. Then isotope the tangle so that each labelled rectangle and each critical point occurs at a different vertical height, and read the diagram from bottom to top to see what the morphism is. The zig-zag relation, Frobenius-reciprocity, pivotality, and symmetric self-dualities of the objects in $\cL$ ensure that the answer is well-defined.
\end{defn}

\subsection{From planar algebras to tensor categories}

Given a factor planar algebra $\cP$, we obtain its $C^*$-tensor category $\Pro({\cP})$ of projections as described in \cite{MR2559686}. We briefly recall the construction here.

\begin{defn}
Let $\Pro(\cP)$ (abbreviated $\Pro$ when $\cP$ is understood) be the rigid $C^*$-tensor category given as follows.
\begin{enumerate}
\item[\text{\underline{Objects:}}]
The objects of $\Pro$ are formal finite direct sums of \underline{projections} in $\cP$, i.e., all $p\in P_{\alpha\to\alpha}$ satisfying $p=p^2=p^*$ for all words $\alpha$ on $\cL$. The trivial object is the empty diagram.

\item[\text{\underline{Tensor:}}]
We tensor objects in $\Pro$ by horizontal concatenation; e.g., if $p\in P_{\alpha\to\alpha}$ and $q\in P_{\beta\to\beta}$, then $p\otimes q\in P_{\alpha\beta\to\alpha\beta}$ is given by
$$
\widenbox{\alpha\beta}{p\otimes q}{\alpha\beta}
=
\nbox{\alpha}{p}{\alpha}
\,
\nbox{\beta}{q}{\beta}
\in P_{\alpha\beta\to\alpha\beta}.
$$
Note that the simple objects in $\Pro$ are the simple projections in $\cP$.

We extend the tensor product to direct sums of projections linearly.

\item[\text{\underline{Morphisms:}}]
The morphisms in $\Pro$ are matrices of intertwiners between the projections. If $p\in P_{\alpha\to\alpha}$ and $q\in P_{\beta\to\beta}$, then elements in $\Pro(p,q)=qP_{\alpha\to \beta}p$ are all $x\in P_{\alpha\to\beta}$ such that $x=qxp$, i.e.,
$$
\nbox{\alpha}{x}{\beta}
=
\PAMultiplyThree{\alpha}{p}{\alpha}{x}{\beta}{q}{\beta}\,.
$$
We compose morphisms by vertical concatenation of elements in the planar algebra. If we have $x\in\Pro(p\to q)$ and $y\in\Pro(q\to r)$, then the composite $xy$ is given by
$$
\nbox{}{xy}{}
=
\PAMultiply{}{x}{}{y}{}
\in\Pro(p\to r).
$$
Composition of matrices of morphisms occurs in the usual way.

\item[\text{\underline{Tensoring:}}]
We tensor morphisms by horizontal concatenation. If $x\in\Pro(p_1\to q_1)$ and $y\in\Pro(p_2\to q_2)$, then the tensor product $x\otimes y$ is given by
$$
\widenbox{}{x\otimes y}{}
=
\nbox{}{x}{}
\,
\nbox{}{y}{}\,.
$$
The tensor product of matrices of intertwiners is the tensor product of matrices followed by tensoring of morphisms.

\item[\text{\underline{Duality:}}]
The \emph{duality} operation on objects and morphisms is rotation by $\pi$
$$
\nbox{}{\overline{p}}{}
=
\rotateccw{}{}{}{}{p}
=
\rotatecw{}{}{}{}{p}.
$$
The evaluation and coevaluation maps are given by the caps and cups between the projections in the obvious way.

\item[\text{\underline{Adjoint:}}]
The \emph{adjoint} operation in $\Pro$ is
the identity on objects. The adjoint of a $1$-morphism
is the same as the adjoint operation in the planar algebra $\cP$.
If $x\in\Pro(p\to q)$ where $p\in P_{\alpha\to \alpha}$ and $q\in P_{\beta\to \beta}$, then consider $x\in P_{\alpha\to \beta}$, take the adjoint, which is an element in $P_{\beta\to \alpha}$, and consider the result $x^*$ as an element in $\Pro(q\to p)$.

For matrices of intertwiners, the adjoint is the $*$-transpose.
\end{enumerate}
\end{defn}

\begin{ex}
We copy the example from \cite{MR2559686} as it is highly instructional. If $p,q\in P_{\alpha\to\alpha}$ are orthogonal, then if we define the matrix
$$
u=
\begin{pmatrix}
p & q
\end{pmatrix}\in \Pro(\id_\alpha\oplus\id_\alpha\to \id_\alpha),
$$
we get an isomorphism $p\oplus q = u^*u \simeq uu^* = p+q$.
\end{ex}

\begin{rem}
Note that $\Pro$ is strict. For any projection $p\in P_{\alpha\to\alpha}$, $p\otimes 1_{\Pro}=1_{\Pro}\otimes p = p$ since $1_{\Pro}$ is the empty diagram. For all projections $p,q,r\in \cP$,
$$
(p\otimes q)\otimes r
=
\nbox{}{p}{}
\,
\nbox{}{q}{}
\,
\nbox{}{r}{}
=
p\otimes (q\otimes r).
$$
\end{rem}

The following theorem is well-known to experts, and one can easily work it out from the definitions. See part (ii) of the remark on page 10 of \cite{1207.1923} for more details.

\begin{thm}\label{thm:PAandTC}\mbox{}
\begin{enumerate}[(1)]
\item
Let $\cC$ be a strict rigid $C^*$-tensor category. Then $\Pro(\PA(\cC)_\bullet)$ is equivalent to $\cC$.
\item
Let $\cP$ be a factor planar algebra. Then $\PA(\Pro(\cP))_\bullet$=$\cP$.
\end{enumerate}
\end{thm}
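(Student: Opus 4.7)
The plan is to construct explicit equivalences in both directions, both of which are essentially tautological once the definitions are unwound. Part (2) is a direct identification, while Part (1) requires producing a functor $F\colon \Pro(\PA(\cC)_\bullet) \to \cC$ and checking it is fully faithful, essentially surjective, and monoidal.

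For Part (2), observe that for a word $\alpha = X_1 \cdots X_n$ on $\cL$, the object $\alpha \in \Pro(\cP)$ is by construction the horizontal concatenation of identity strings, i.e., the identity projection $\id_\alpha \in P_{\alpha \to \alpha}$. Then
\[
\PA(\Pro(\cP))_\alpha = \Pro(\cP)(\id_\alpha \to 1_{\Pro}) = 1_{\Pro} \cdot P_{\alpha \to \emptyset} \cdot \id_\alpha = P_\alpha.
\]
The planar operad action, involution, sphericality, and positivity of $\PA(\Pro(\cP))_\bullet$ are inherited verbatim from $\cP$ via the construction of $\Pro$, giving the asserted equality of factor planar algebras.

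For Part (1), use Frobenius reciprocity to identify $\PA(\cC)_{\alpha \to \alpha}$ with $\End_{\cC}(\alpha)$. Since $\cC$ is semisimple, every projection $p \in \End_{\cC}(\alpha)$ has an image $\im(p) \in \cC$; set $F(p) := \im(p)$, extending linearly to direct sums of projections. For a morphism $x \in \Pro(p, q) = q\,\cC(\alpha, \beta)\,p$, define $F(x) \in \cC(\im(p), \im(q))$ by factorization through images. Fully faithfulness is then immediate: $\Pro(p, q) = q\,\cC(\alpha, \beta)\,p \cong \cC(\im(p), \im(q))$ via the restriction-corestriction isomorphism furnished by semisimplicity. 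Essential surjectivity uses Assumption \ref{assume:Countable}: every simple $Y \in \cC$ embeds into some $X_1 \otimes \cdots \otimes X_n$ with $X_i \in \cL$, so $Y$ is the image of the corresponding minimal projection in $\PA(\cC)_{X_1 \cdots X_n \to X_1 \cdots X_n}$, and general objects are finite direct sums of simples. Monoidality follows because horizontal concatenation in $\Pro$ and $\otimes$ in $\cC$ are both compatible with taking images (by exactness of $\otimes$), giving a natural isomorphism $F(p \otimes q) = \im(p \otimes q) \cong \im(p) \otimes \im(q) = F(p) \otimes F(q)$.

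The only nontrivial point, and the main bookkeeping obstacle, is checking that $F$ preserves the duality, pivotal, and $*$-structure, which in $\Pro$ are implemented by $\pi$-rotation of diagrams and by cups and caps for evaluation and coevaluation. This is where the choice of symmetric self-dualities for the objects in $\cL$ pays off: the Frobenius-Schur indicator $+1$ condition guarantees that the unshaded, unoriented strings in $\PA(\cC)_\bullet$ are well defined, and that $\pi$-rotation of a morphism labelled by $f \in \cC(\alpha, \beta)$ represents the intended composite of $\ev$, $\coev$, and $f$ via pivotality. Sphericality and positivity of the inner product on $\PA(\cC)_\bullet$ translate directly to sphericality of the trace and positivity of the $*$-structure in $\cC$, so no further computation is required.
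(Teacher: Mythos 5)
The paper does not actually prove Theorem~\ref{thm:PAandTC}: it states that ``the following theorem is well-known to experts, and one can easily work it out from the definitions,'' and refers to part~(ii) of the remark on page 10 of \cite{1207.1923}. Your proof fills the gap that the paper leaves implicit, and it is the standard argument one expects; I do not see any errors of substance.

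Two small points that you should make explicit to call the proof complete. First, in Part~(2) the equality $\PA(\Pro(\cP))_\bullet = \cP$ only makes sense once you fix the generating set of string labels for the planar algebra built from $\Pro(\cP)$. Definition~\ref{defn:PAfromTC} requires a choice of generating set $\cL$; the natural choice here is $\cL' = \set{\id_a}{a \in \cL}$ for the original set $\cL$ of string labels of $\cP$, under the obvious identification of $\cL'$ with $\cL$. With that choice, your computation $\PA(\Pro(\cP))_\alpha = \Pro(\cP)(\id_\alpha \to \id_\emptyset) = \id_\emptyset P_{\alpha \to \emptyset}\id_\alpha = P_\alpha$ is correct and the planar operad action, $*$-structure, and trace agree by construction. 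Without specifying $\cL'$, the left-hand side is not a single planar algebra. Second, in Part~(1), the functor $F(p)=\im(p)$ depends on a choice of images for each projection, so $F$ is monoidal only up to a coherent natural isomorphism, not strictly — your writing with $\cong$ indicates you know this, but since $\Pro$ is strict and $\cC$ is assumed strict, a reader might expect a strict monoidal functor; it is worth one sentence to note that strong monoidal is what you actually get and is enough for an equivalence of rigid $C^*$-tensor categories. Finally, for the $*$-structure and duality: you are right that the symmetric self-duality assumption on the objects in $\cL$ (Frobenius--Schur indicator $+1$) is precisely what lets unoriented cups and caps in $\PA(\cC)_\bullet$ represent $\ev$ and $\coev$ and makes the $\pi$-rotation well defined; once $F$ is fully faithful, monoidal, and $*$-preserving, compatibility with the pivotal structure is automatic since the pivotal and rigid structures on a semisimple category are determined by the $*$-structure up to canonical isomorphism.
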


\begin{cor}
Suppose that
\begin{itemize}
\item
$\cC=\Pro(\cP)$ and $\cP$ has a countable set of string labels $\cL$, or
\item
$\cP=\PA(\cC)_\bullet$ and $\cC$ has countable generating set $\cL$ of symmetrically self-dual objects.
\end{itemize}
Then we may identify the fusion graph $\Gamma$ of $\cP$ with the fusion graph $\cF_\cC(\cL)$ of $\cC$.
\end{cor}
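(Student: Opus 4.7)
By Theorem \ref{thm:PAandTC}, the two hypotheses are equivalent in the sense that $\cC\simeq \Pro(\PA(\cC)_\bullet)$ in the first case and $\cP=\PA(\Pro(\cP))_\bullet$ in the second, so it suffices to exhibit a single identification that works in either setup; I will take $\cC=\Pro(\cP)$ without loss of generality. The plan is simply to unravel the definitions of the two graphs, check that vertices match and that edges match color by color in $\cL$, and observe that the Frobenius--Perron weights agree because the categorical dimension of a simple projection $p\in P_{\alpha\to\alpha}$ computed in $\Pro$ is given by the closed-diagram trace $\tr(p)$ used to weight $\Gamma$.

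First, the vertices. By construction of $\Pro$, the simple objects of $\cC=\Pro(\cP)$ are precisely the simple projections of $\cP$, and two simple projections $p,q$ are isomorphic in $\Pro$ exactly when they are equivalent in $\cP$ (witnessed by the same partial isometry $u\in P_{\alpha\to\beta}$ appearing in both definitions). Hence the vertex sets of $\Gamma$ and of $\cF_{\cC}(\cL)$ are canonically in bijection.

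Next, fix $b\in\cL$ and a pair of simple projections $p\in P_{\alpha\to\alpha}$ and $q\in P_{\alpha b\to\alpha b}$. On the planar-algebra side the number of edges between $[p]$ and $[q]$ in $\Gamma_b$ is $\dim(qP_{\alpha b\to\alpha b}(p\otimes \id_b))$ by definition. On the categorical side, the morphism space $\Pro(p\otimes b\to q)$ is literally this same vector space $qP_{\alpha b\to\alpha b}(p\otimes\id_b)$, so its dimension is $N_{[p],[q]}^{b}$. Thus the $b$-colored edges of $\Gamma_b$ and of $\cF_\cC(b)$ coincide. Taking the push-out over $b\in\cL$ on each side (which is how both $\Gamma$ and $\cF_\cC(\cL)$ are defined) then gives an identification of the full edge sets, compatible with the coloring by $\cL$. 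Finally, the Perron--Frobenius weight $\tr(p)$ assigned to a vertex of $\Gamma$ coincides with $\dim(p)$ computed in $\Pro(\cP)$ via the spherical trace, so the eigenvector data also match. This establishes the desired identification.
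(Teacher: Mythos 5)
Your proof is correct, and it is the intended argument: the paper gives no explicit proof of this corollary, since it follows by unravelling the definitions exactly as you do, noting that $\Pro(p\otimes\id_b\to q)=qP_{\alpha b\to\alpha b}(p\otimes\id_b)$ is the literal morphism space used on both sides to count $b$-colored edges, and that $\tr(p)=\dim(p)$ for the weights. One very small point of phrasing: when you write $\cF_{\cC}(b)$ you mean $\cF_{\cC}(\id_b)$, the fusion graph with respect to the (generally non-simple but symmetrically self-dual) object $\id_b\in\Pro(\cP)$; and it is this self-duality that lets you ignore the orientation on the categorical side and match it with the unoriented $\Gamma_b$, which is worth saying explicitly.
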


\section{GJS results for factor planar algebras}\label{sec:GJS}

Given a subfactor planar algebra $\cP$, GJSW constructed a subfactor $N\subset M$ whose planar algebra is $\cP$ \cite{MR2732052,MR2645882}. Moreover, they identified the factors as interpolated free group factors \cite{MR2807103}.

Suppose we have a factor planar algebra $\cP$ with a countable set of string labels $\cL$ such that for each $c\in\cL$, $\delta_c> 1$. (One can assume $\cP$ is the factor planar algebra associated to a rigid $C^*$-tensor category $\cC$ with generating set $\cL$ as in Assumption \ref{assume:Countable}.) We mimic the construction of GJSW to obtain a factor $M_{0}$ and rigid $C^*$-tensor categories $\Bim$ and $\CF$ of bifinite bimodules over $M_0$ such that $\Pro$ is equivalent to $\Bim$ and $\CF$.

\begin{rem}
Recall that when we suppress the $\star$ of an input rectangle, it is assumed that $\star$ is in the \underline{lower-left} corner.
Recall that if a string is labelled by the word $\alpha\in \Lambda$, it is read either \underline{top to bottom} or \underline{left to right}.
\end{rem}

\subsection{The graded algebras and their orthogonalized pictures} \label{sec:graded}

To start, we set $\Gr_{0}(P) = \bigoplus_{\alpha \in \Lambda} P_{\alpha}$ where $\Lambda$ denotes the set of all finite sequences of colorings for strings and endow $\Gr_{0}(\cP)$  with a multiplication $\wedge$ which satisfies
$$
x \wedge y =
\Mbox{\alpha}{x}
\,
\Mbox{\beta}{y}
$$
where $x \in P_{\alpha}$ and $y \in P_{\beta}$. We endow $\Gr_{0}(P)$ with the following trace:
\begin{equation}\label{r}
\tr(x) =
\begin{tikzpicture}[baseline=.5cm]
	\draw (0,0)--(0,.8);
	\filldraw[unshaded,thick] (-.4,.4)--(.4,.4)--(.4,-.4)--(-.4,-.4)--(-.4,.4);
	\draw[thick, unshaded] (-.7, .8) -- (-.7, 1.6) -- (.7, 1.6) -- (.7,.8) -- (-.7, .8);
	\node at (0,0) {$x$};
	\node at (0,1.2) {$\Sigma CTL$};
	\node at (.2,.6) {{\scriptsize{$\alpha$}}};
\end{tikzpicture}
\end{equation}
where $x \in P_{\alpha}$ and $\sum CTL$ denotes the sum of all colored Temperely-Lieb diagrams, i.e. all planar ways of pairing the colors on top of $x$ in a way which respects the word $\alpha$.

\begin{lem} \label{lem:PositiveBounded}
The inner product on $\Gr_{0}(\cP)$ given by $\langle x, y \rangle = \tr(y^{*}x)$ is positive definite.  Furthermore, left and right multiplication by elements in $\Gr_{0}(\cP)$ is bounded with respect to this inner product
\end{lem}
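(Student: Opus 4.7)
The plan is to adapt the Guionnet--Jones--Shlyakhtenko strategy from \cite{MR2732052,MR2645882} and its unshaded variant developed by Brothier \cite{1202.1298} to this multicolored setting. The central technical device is an orthogonalization of the graded pieces which diagonalizes the Temperley--Lieb sum appearing in the definition \eqref{r} of the trace, reducing positivity of $\tr$ to positivity of the planar-algebra inner products on each $P_\alpha$, and then bounding multiplication operators by a Haagerup-type decomposition into a finite sum of elementary operators.

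For positive definiteness, I would construct, inductively on the length of the word $\alpha \in \Lambda$, a linear automorphism $x \mapsto \widetilde{x}$ of each $P_\alpha$ (viewed inside $\Gr_0(\cP)$) with the following property: if $x \in P_\alpha$ and $y \in P_\beta$, then $\tr(\widetilde{y}^*\widetilde{x}) = 0$ whenever $\alpha \neq \beta$, and otherwise $\tr(\widetilde{y}^*\widetilde{x})$ equals a positive scalar (a product of the loop parameters $\delta_c$) times the planar algebra inner product $\langle x, y \rangle_{P_\alpha}$. The $\widetilde{x}$ are built by subtracting off all contractions of the top strands of $x$ via cups, much as one orthogonalizes monomials in a free Fock-space model; respecting the string colors makes the recursion depend on the word $\alpha$ but not substantively on its length. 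Since $\cP$ is a factor planar algebra, each of the inner products $\langle \cdot,\cdot \rangle_{P_\alpha}$ is positive definite, and positive definiteness of $\tr$ on $\Gr_0(\cP)$ follows immediately.

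For boundedness, I would fix $x \in P_\alpha$ and control the operator norm of the left multiplication $L_x : y \mapsto x \wedge y$ on $L^2(\Gr_0(\cP), \tr)$. Computing $\langle x \wedge y, x \wedge y' \rangle$ involves a sum over Temperley-Lieb diagrams connecting the strings of $x$, $x^*$, $y$, and $(y')^*$; I would regroup this sum according to the number $k$ of strands of $y$ (respectively $y'$) that are paired with strands of $x^*$ (respectively $x$) through the bottom of the diagram. This yields a decomposition $L_x = \sum_k L_x^{(k)}$ indexed by finitely many values of $k$ (bounded by the length of $\alpha$), and each $L_x^{(k)}$ has its norm controlled in the standard way by applying Cauchy--Schwarz on the $P_\gamma$-valued inner product obtained after the partial contractions, using the positive definiteness of the planar algebra inner products and the loop parameters $\delta_c$. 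Summing the finitely many estimates yields a bound on $\|L_x\|$; right multiplication is handled symmetrically. The main obstacle is carrying out the orthogonalization cleanly while tracking the color constraints --- since strands of different colors cannot connect, the combinatorics of the recursion depend nontrivially on $\alpha$ --- but the verification is then mechanical.
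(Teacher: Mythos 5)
Your proposal is essentially the same strategy the paper uses. Your automorphism $x\mapsto\widetilde{x}$ is (up to a sign conventions/Mobius inversion) the inverse of the map $\Phi\colon\Gr_0(\cP)\to F_0(\cP)$ that the paper builds explicitly by summing over epimorphism colored Temperley--Lieb diagrams, and the paper's Lemma then deduces positive definiteness exactly as you do, by transporting the problem to the orthogonal picture $F_0(\cP)$ where the sesquilinear form becomes the planar-algebra inner product. Your boundedness argument is also the same: in the orthogonal picture the product $x\star w$ is automatically a finite sum indexed by the contracted suffix $\gamma$ of $\alpha$, and each term is controlled by the inequality $\tr(x^*xww^*)\leq\|x^*x\|_{P_{\overline{\gamma}\to\overline{\gamma}}}\|w\|_{L^2}^2$ via positivity in the finite-dimensional $C^*$-algebras $P_{\overline{\gamma}\to\overline{\gamma}}$. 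The only presentational slip is in your boundedness paragraph: you describe the decomposition as counting strands of $y$ connected to $x^*$ ``through the bottom'' while working in the $\wedge$-picture, where all strands exit through the top; the decomposition you want lives naturally in the $\star$-picture, which you have already set up via orthogonalization, so it is cleaner to carry out the estimate there (as the paper does) rather than in the $\wedge$-picture.
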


The proof of the above lemma will closely follow the orthogonalization approach in \cite{MR2645882}.  To begin, we define a new algebraic structure $\star$ on $\Gr_{0}(\cP)$ defined as follows.  Suppose $x \in P_{\alpha}$ and $y \in P_{\beta}$.  Then by letting $|\alpha|$ denote the length of $\alpha$, we have
$$
x \star y =
\sum_{
\substack{
\gamma \text{ s.t.}\\
\alpha=\alpha'\gamma\\
\beta=\overline{\gamma}\beta'
}}
\begin{tikzpicture}[baseline = .6cm]
	\draw (-.2,0)--(-.2,1);	
	\draw (1.4,0)--(1.4,1);
	\draw (.2,.4) arc (180:0:.4);	
	\filldraw[unshaded,thick] (-.4,.4)--(.4,.4)--(.4,-.4)--(-.4,-.4)--(-.4,.4);
	\filldraw[unshaded,thick] (.8,.4)--(1.6,.4)--(1.6,-.4)--(.8,-.4)--(.8,.4);
	\node at (0,0) {$x$};
	\node at (1.2,0) {$y$};
	\node at (-.4,.6) {{\scriptsize{$\alpha'$}}};
	\node at (1.6,.6) {{\scriptsize{$\beta'$}}};
	\node at (.6,1) {{\scriptsize{$\gamma$}}};
\end{tikzpicture}
$$
where it is understood that if a string connects two different colors, then that term in the sum is zero. We let $F_{0}(\cP)$ be the vector space $\Gr_{0}(\cP)$ endowed with the multiplication $\star$.  Given $x$ in $F_0(\cP)$, let $x_{\emptyset}$ denote the component of $x$ in $P_{\emptyset} \cong \C$.  We define a trace $\tr_{F}$ on $F_{0}(\cP)$ by $\tr_{F}(x) = x_\emptyset$.  Since $\cP$ is a $C^{*}$-planar algebra, the sesquilinear form
$$
\langle x, y \rangle
= \tr_{F_0(\cP)}(x\star y^{*})
=
\begin{tikzpicture}[baseline = -.1cm]
	\draw (0,0)--(1.2,0);	
	\filldraw[unshaded,thick] (-.4,.4)--(.4,.4)--(.4,-.4)--(-.4,-.4)--(-.4,.4);
	\filldraw[unshaded,thick] (.8,.4)--(1.6,.4)--(1.6,-.4)--(.8,-.4)--(.8,.4);
	\node at (0,0) {$x$};
	\node at (1.2,0) {$y^*$};
\end{tikzpicture}
$$
is a positive definite inner product.

Set $\Epi(CTL)$ to be the set of colored Temperely-Lieb boxes with strings at the top and bottom where any string touching the top of the box must be through.  One can argue exactly as in section 5 of \cite{MR2645882} that the map $\Phi: \Gr_{0}(\cP)\rightarrow F_{0}(\cP)$ given by
$$
\Phi(x) = \sum_{E \in \Epi(CTL)}
\begin{tikzpicture}[baseline=.7cm]
	\draw (0,0)--(0,2);
	\filldraw[unshaded,thick] (-.4,.4)--(.4,.4)--(.4,-.4)--(-.4,-.4)--(-.4,.4);
	\draw[thick, unshaded] (-.4, .8) -- (-.4, 1.6) -- (.4, 1.6) -- (.4,.8) -- (-.4, .8);
	\node at (0,0) {$x$};
	\node at (0,1.2) {$E$};
\end{tikzpicture}
$$
is a bijection with the property that $\Phi(x \wedge y) = \Phi(x) \star \Phi(y)$, $\Phi(x^{*}) = \Phi(x)^{*}$ and $\tr(x) = \tr_{F}(\Phi(x))$.  Hence $\star$ is an associative multiplication, $F_{0}(\cP)$ and $\Gr_{0}(\cP)$ are isomorphic as $*$-algebras, and the inner product on $\Gr_{0}(\cP)$ is positive definite.

We now prove that left multiplication by $x \in F_{0}(\cP)$ is bounded (this will closely follow arguments in \cite{1202.1298}).   We may assume $x \in P_{\alpha}$ for a fixed word $\alpha$.  For fixed words $\beta$ and $\gamma$ such that $\alpha = \overline{\beta}\gamma$, the element
$$
\begin{tikzpicture}[baseline = -.1cm]
	\draw (-.8,0)--(2,0);	
	\filldraw[unshaded,thick] (-.4,.4)--(.4,.4)--(.4,-.4)--(-.4,-.4)--(-.4,.4);
	\filldraw[unshaded,thick] (.8,.4)--(1.6,.4)--(1.6,-.4)--(.8,-.4)--(.8,.4);
	\node at (0,0) {$x^*$};
	\node at (1.2,0) {$x$};
	\node at (-.6,.2) {{\scriptsize{$\gamma$}}};
	\node at (.6,.2) {{\scriptsize{$\beta$}}};
	\node at (1.8,.2) {{\scriptsize{$\gamma$}}};
\end{tikzpicture}
$$
is positive in the finite dimensional $C^{*}$ algebra $P_{\overline{\gamma}\to\overline{\gamma}}$, since for any $y \in P_{\overline{\gamma}\to\overline{\gamma}}$,
$$
\langle x^*x\cdot y, y \rangle_{P_{\overline{\gamma}\to\overline{\gamma}}}
=
\begin{tikzpicture}[baseline = .4cm]
	\draw (-.4,0)--(1.6,0) arc (-90:90:.5cm) -- (-.4,1) arc (90:270:.5cm);	
	\filldraw[unshaded,thick] (-.4,1.4)--(.4,1.4)--(.4,.6)--(-.4,.6)--(-.4,1.4);
	\filldraw[unshaded,thick] (.8,1.4)--(1.6,1.4)--(1.6,.6)--(.8,.6)--(.8,1.4);
	\filldraw[unshaded,thick] (-.4,.4)--(.4,.4)--(.4,-.4)--(-.4,-.4)--(-.4,.4);
	\filldraw[unshaded,thick] (.8,.4)--(1.6,.4)--(1.6,-.4)--(.8,-.4)--(.8,.4);
	\node at (0,0) {$x^*$};
	\node at (1.2,0) {$x$};
	\node at (0,1) {$y^*$};
	\node at (1.2,1) {$y$};
	\node at (-.6,.2) {{\scriptsize{$\gamma$}}};
	\node at (.6,.2) {{\scriptsize{$\beta$}}};
	\node at (.6,1.2) {{\scriptsize{$\overline{\gamma}$}}};
	\node at (1.8,.2) {{\scriptsize{$\gamma$}}};
	\node at (0,-.55) {$\star$};
	\node at (1.2,-.55) {$\star$};
	\node at (0,1.55) {$\star$};
	\node at (1.2,1.55) {$\star$};
\end{tikzpicture}
=
\left\|
\begin{tikzpicture}[baseline = .3cm]
	\draw (-.2,0)--(-.2,1);	
	\draw (1.4,0)--(1.4,1);
	\draw (.2,.4) arc (180:0:.4);	
	\filldraw[unshaded,thick] (-.4,.4)--(.4,.4)--(.4,-.4)--(-.4,-.4)--(-.4,.4);
	\filldraw[unshaded,thick] (.8,.4)--(1.6,.4)--(1.6,-.4)--(.8,-.4)--(.8,.4);
	\node at (0,0) {$x$};
	\node at (1.2,0) {$y$};
	\node at (-.4,.6) {{\scriptsize{$\overline{\beta}$}}};
	\node at (1.6,.6) {{\scriptsize{$\gamma$}}};
	\node at (.6,1) {{\scriptsize{$\gamma$}}};
\end{tikzpicture}
\right\|^2_{L^2(F_0(\cP))}\geq 0.
$$
Given $x$ and $w$ with $x \in P_{\alpha}$ with $\alpha=\overline{\beta}\gamma$, $x \star w$ is a sum of terms of the form
$$
\begin{tikzpicture}[baseline = .3cm]
	\draw (-.2,0)--(-.2,1);	
	\draw (1.4,0)--(1.4,1);
	\draw (.2,.4) arc (180:0:.4);	
	\filldraw[unshaded,thick] (-.4,.4)--(.4,.4)--(.4,-.4)--(-.4,-.4)--(-.4,.4);
	\filldraw[unshaded,thick] (.8,.4)--(1.6,.4)--(1.6,-.4)--(.8,-.4)--(.8,.4);
	\node at (0,0) {$x$};
	\node at (1.2,0) {$w$};
	\node at (-.4,.6) {{\scriptsize{$\overline{\beta}$}}};
	\node at (.6,1) {{\scriptsize{$\gamma$}}};
\end{tikzpicture}\,,
$$
and we see that the 2-norm of the above diagram is
\begin{equation} \label{eqn:TwoNorm}
\begin{tikzpicture}[baseline = .4cm]
	\draw (-.4,0)--(1.6,0) arc (-90:90:.5cm) -- (-.4,1) arc (90:270:.5cm);	
	\filldraw[unshaded,thick] (-.4,1.4)--(.4,1.4)--(.4,.6)--(-.4,.6)--(-.4,1.4);
	\filldraw[unshaded,thick] (.8,1.4)--(1.6,1.4)--(1.6,.6)--(.8,.6)--(.8,1.4);
	\filldraw[unshaded,thick] (-.4,.4)--(.4,.4)--(.4,-.4)--(-.4,-.4)--(-.4,.4);
	\filldraw[unshaded,thick] (.8,.4)--(1.6,.4)--(1.6,-.4)--(.8,-.4)--(.8,.4);
	\node at (0,0) {$x^*$};
	\node at (1.2,0) {$x$};
	\node at (0,1) {$w^*$};
	\node at (1.2,1) {$w$};
	\node at (-.6,.2) {{\scriptsize{$\gamma$}}};
	\node at (.6,.2) {{\scriptsize{$\beta$}}};
	\node at (1.8,.2) {{\scriptsize{$\gamma$}}};
	\node at (0,-.55) {$\star$};
	\node at (1.2,-.55) {$\star$};
	\node at (0,1.55) {$\star$};
	\node at (1.2,1.55) {$\star$};
\end{tikzpicture}
=
\tr(x^*xww^*)
\leq \|x^*x\|_{P_{\overline{\gamma}\to\overline{\gamma}}} \| ww^*\|_{L^2(F_0(\cP))}
\end{equation}
where $\|\cdot\|_{P_{\overline{\gamma}\to\overline{\gamma}}}$ is the operator norm in the $C^*$-algebra $P_{\overline{\gamma}\to\overline{\gamma}}$.
Hence using Equation \eqref{eqn:TwoNorm} repeatedly, we have
$$
\|x \star w\|_{L^{2}(F_{0}(P))} \leq \left(\sum_{\alpha=\overline{\beta}\gamma} \|x\|_{P_{\overline{\gamma}\to\overline{\gamma}}}\right) \cdot \|w\|_{L^{2}(F_{0}(\cP))},
$$
and thus left multiplication is bounded on $F_0(\cP)$. The boundedness of right multiplication is similar.

Since the multiplication is bounded, we can represent $F_{0}(\cP)$ on $L^{2}(F_{0}(\cP))$ acting by left multiplication.  We denote $M_{0} = F_{0}(\cP)''$.  We also use $M_{0}$ to denote $\Gr_{0}(\cP)''$ acting on $L^{2}(\Gr_{0}(\cP))$, but it will be clear from context which picture we are using.  Of course, from the discussion above, both von Neumann algebras are isomorphic.

Given $\alpha \in \Lambda$, we draw a \textcolor{\alphacolor}{\alphacolor} string for a string labelled $\alpha$. We will provide the $\alpha$ label only when it is possible to confuse $\alpha$ and $\overline{\alpha}$.

We define the graded algebra $\Gr_{\alpha}(\cP) = \bigoplus_{\beta \in \Lambda} P_{\overline{\alpha} \beta \alpha}$ with multiplication $\wedge_{\alpha}$ by
$$
x \wedge_{\alpha} y =
\begin{tikzpicture}[baseline = -.1cm]
	\draw [thick, \alphacolor] (-.8,0)--(2,0);	
	\draw (0,0)--(0,.8);
	\draw (1.2,0)--(1.2,.8);
	\filldraw[unshaded,thick] (-.4,.4)--(.4,.4)--(.4,-.4)--(-.4,-.4)--(-.4,.4);
	\filldraw[unshaded,thick] (.8,.4)--(1.6,.4)--(1.6,-.4)--(.8,-.4)--(.8,.4);
	\node at (0,0) {$x$};
	\node at (1.2,0) {$y$};
	\node at (.2,.6) {{\scriptsize{$\beta$}}};
	\node at (1.4,.6) {{\scriptsize{$\gamma$}}};
\end{tikzpicture}
$$
for $x\in P_{\overline{\alpha}\beta\alpha}$ and $x\in P_{\overline{\alpha}\gamma\alpha}$, and trace
$$
\tr(x) = \frac{1}{\delta^{\alpha}}
\begin{tikzpicture}[baseline=.3cm]
	\draw (0,0)--(0,.8);
	\draw [thick, \alphacolor] (.4,0) arc (90:-90:.4cm) -- (-.4,-.8) arc (270:90:.4cm);
	\filldraw[unshaded,thick] (-.4,.4)--(.4,.4)--(.4,-.4)--(-.4,-.4)--(-.4,.4);
	\draw[thick, unshaded] (-.7, .8) -- (-.7, 1.6) -- (.7, 1.6) -- (.7,.8) -- (-.7, .8);
	\node at (0,0) {$x$};
	\node at (0,1.2) {$\Sigma CTL$};
	\node at (.2,.6) {{\scriptsize{$\beta$}}};
	\node at (.6,.2) {{\scriptsize{$\alpha$}}};
	\node at (-.6,.2) {{\scriptsize{$\alpha$}}};
	\node at (0,-.6) {{\scriptsize{$\overline{\alpha}$}}};
\end{tikzpicture}.
$$
\begin{note}\label{note:ReverseMultiplication}
Be warned that the multiplication $\wedge_{\alpha}$ in the GJSW diagrams when restricted to $P_{\overline{\alpha} \to \overline{\alpha}}$ is in the opposite order with the multiplication in the introduction!
\end{note}
The inner product
$$
\langle \cdot, \cdot \rangle_{F_\alpha(\cP)}=
\begin{tikzpicture}[baseline = -.1cm]
	\draw [thick, \alphacolor] (-.4,0)--(1.6,0) arc (90:-90:.4cm) -- (-.4,-.8) arc (270:90:.4cm);	
	\draw (0, .4) .. controls ++(90:.4cm) and ++(90:.4cm) .. (1.2,.4);
	\filldraw[unshaded,thick] (-.4,.4)--(.4,.4)--(.4,-.4)--(-.4,-.4)--(-.4,.4);
	\filldraw[unshaded,thick] (.8,.4)--(1.6,.4)--(1.6,-.4)--(.8,-.4)--(.8,.4);
	\node at (0,0) {};
	\node at (1.2,0) {};
	\node at (-.6,.2) {{\scriptsize{$\alpha$}}};
\end{tikzpicture}
$$
makes $P_{\overline{\alpha}\beta\alpha}\perp P_{\overline{\alpha}\gamma\alpha}$ for $\beta\neq \gamma$. We get a trace on $F_\alpha(\cP)$ by $\tr_{F_\alpha(\cP)}(x)=\langle x, 1_\alpha\rangle_{F_\alpha(\cP)}$ where $1_\alpha$ the the horizontal strand labelled $\alpha$.

The multiplication $\star_\alpha$ given by
$$
x \star_\alpha y =
\sum_{
\substack{
\kappa \text{ s.t.}\\
\beta=\beta'\kappa\\
\gamma=\overline{\kappa}\gamma'
}}
\begin{tikzpicture}[baseline = .6cm]
	\draw [thick, \alphacolor] (-.8,0)--(2,0);
	\draw (-.2,0)--(-.2,1);	
	\draw (1.4,0)--(1.4,1);
	\draw (.2,.4) arc (180:0:.4);	
	\filldraw[unshaded,thick] (-.4,.4)--(.4,.4)--(.4,-.4)--(-.4,-.4)--(-.4,.4);
	\filldraw[unshaded,thick] (.8,.4)--(1.6,.4)--(1.6,-.4)--(.8,-.4)--(.8,.4);
	\node at (0,0) {$x$};
	\node at (1.2,0) {$y$};
	\node at (-.4,.6) {{\scriptsize{$\beta'$}}};
	\node at (1.6,.6) {{\scriptsize{$\gamma'$}}};
	\node at (.6,1) {{\scriptsize{$\kappa$}}};
\end{tikzpicture}
$$
for $x\in P_{\overline{\alpha}\beta\alpha}$ and $x\in P_{\overline{\alpha}\gamma\alpha}$ makes $F_{\alpha}(P)$ isomorphic as a $*$-algebra to $\Gr_{\alpha}(\cP)$, preserving the inner product.  The same techniques as above with heavier notation show that the inner product on $F_{\alpha}(\cP)$ (hence $\Gr_{\alpha}(\cP)$) is positive definite and that left and right multiplication in $F_{\alpha}(\cP)$ (hence $\Gr_{\alpha}(\cP)$) is bounded.  We can therefore form the von Neumann algebra $M_{\alpha} = (F_{\alpha}(\cP))''$ acting by left multiplication on $L^{2}(F_{\alpha})(\cP)$ (or $(\Gr_{\alpha}(\cP))''$ acting by left multiplication on $L^{2}(\Gr_{\alpha})(\cP)$).  Again, it will be clear from context which picture we are considering.

\subsection{Factorality of $M_{\alpha}$} \label{sec:factor}

In this section, we aim to prove the following theorem:  \begin{thm} \label{thm:Factor}

The algebra $M_{\alpha}$ is a $II_{1}$ factor. We have an embedding $\iota_{\alpha}: M_0 \hookrightarrow M_{\alpha}$ which is the extension of the map $F_0(\cP)\to F_\alpha(\cP)$ given by
$$
\iota_{\alpha}
\left(
\Mbox{}{x}
\right)
=
\begin{tikzpicture}[baseline = 0cm]
	\draw (0,0)--(0,.8);
	\draw [thick, \alphacolor] (-.8,-.6)--(.8,-.6);
	\filldraw[unshaded,thick] (-.4,.4)--(.4,.4)--(.4,-.4)--(-.4,-.4)--(-.4,.4);
	\node at (0,0) {$x$};
\end{tikzpicture},
$$
and $\iota_{\alpha}(M_{0})' \cap M_{\alpha} = P_{\overline{\alpha}\to\overline{\alpha}}\op\cong P_{\alpha\to \alpha}$.
\end{thm}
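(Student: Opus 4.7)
The plan is to follow the orthogonalized-picture arguments of GJSW \cite{MR2645882} adapted to the multicolored factor planar algebra setting of \cite{1202.1298}, in four steps.

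First, I would verify that $\iota_\alpha\colon F_0(\cP)\to F_\alpha(\cP)$ is a trace-preserving $*$-homomorphism. Given $x\in P_\beta$ and $y\in P_\gamma$, the diagram for $\iota_\alpha(x\wedge y)$ places $x,y$ side-by-side with an $\alpha$-cable underneath, while $\iota_\alpha(x)\wedge_\alpha \iota_\alpha(y)$ reconnects the two $\alpha$-cables on the bottom into one; the two pictures are isotopic. Compatibility with $*$ is similar. Trace preservation follows since the $\delta^{-\alpha}$ normalization in the $F_\alpha$-trace exactly cancels the $\delta^\alpha$ coming from closing off the new bottom $\alpha$-cable into a loop, so the $\Sigma CTL$ sums on the two sides agree term-by-term. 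Faithfulness of the extension, and hence that $\iota_\alpha$ extends to a normal embedding $M_0\hookrightarrow M_\alpha$, follows because the inner product on $F_\alpha(\cP)$ is positive definite (Section \ref{sec:graded}).

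Second, to show $P_{\overline{\alpha}\to\overline{\alpha}}^{\OP}\subseteq \iota_\alpha(M_0)'\cap M_\alpha$, I would embed $z\in P_{\overline{\alpha}\to\overline{\alpha}}$ into $F_\alpha(\cP)$ as an element living only on the bottom $\alpha$-strings. Isotoping the cables: in any product $\iota_\alpha(x)\cdot z$, the box $x$ sits entirely above the bottom $\alpha$-cable and never interacts with $z$, so the product equals $z\cdot \iota_\alpha(x)$. The appearance of the opposite algebra structure is exactly Note \ref{note:ReverseMultiplication}, since $\wedge_\alpha$ restricted to the $P_{\overline\alpha\to\overline\alpha}$ summand reverses order.

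Third, and this will be the main obstacle, I would establish the reverse inclusion. Work in the orthogonalized picture: $L^2(F_\alpha(\cP))=\bigoplus_\beta P_{\overline{\alpha}\beta\alpha}$ orthogonally, so any $y\in\iota_\alpha(M_0)'\cap M_\alpha$ decomposes as a Hilbert-space sum $y=\sum_\beta y_\beta$ with $y_\beta\in P_{\overline{\alpha}\beta\alpha}$. I would compute $[\iota_\alpha(x),y]$ in the $\star_\alpha$-picture for carefully chosen $x\in F_0(\cP)$: namely, using elements built from a single colored strand and pair-creation/annihilation boxes, one can produce commutators whose projection onto a fixed $P_{\overline{\alpha}\gamma\alpha}$ is a non-degenerate planar operation applied to $y_\beta$. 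Setting this projection to zero for sufficiently many such $x$ and using a non-degeneracy argument (as in \cite{MR2645882}) forces $y_\beta=0$ for every non-empty $\beta$, so $y\in P_{\overline{\alpha}\to\overline{\alpha}}^{\OP}$.

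Finally, for factoriality of $M_\alpha$, it suffices to show $Z(M_\alpha)\subseteq P_{\overline{\alpha}\to\overline{\alpha}}^{\OP}$ is trivial. Since $P_{\overline\alpha\to\overline\alpha}$ is a finite-dimensional $C^*$-algebra, it is enough to exhibit, for each non-scalar $z\in P_{\overline\alpha\to\overline\alpha}$, an element of $M_\alpha$ not commuting with $z$. I would produce such an element directly inside $F_\alpha(\cP)$: take $w\in P_{\overline\alpha\alpha\overline\alpha\alpha}$ (or a comparable box acting on the bottom $\alpha$-cable, e.g., a cup-cap element re-entangling the bottom strands) that, under $\wedge_\alpha$, implements a non-trivial action on $P_{\overline\alpha\to\overline\alpha}^{\OP}$; a central element $z$ must in particular commute with all such $w$, and a direct diagrammatic check will show this forces $z\in \C\cdot 1$. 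Combined with the trace from Section \ref{sec:graded} being finite and faithful, $M_\alpha$ is a $II_1$ factor.
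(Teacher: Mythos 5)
Your Steps 1, 2, and 4 are sound and match the paper's structure, but your Step~3 as described would not close out, and it is precisely there that the paper's real technical work lives.

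The difficulty is that the $\star_\alpha$-multiplication is not graded: when you compute $[\iota_\alpha(x),y]$ with $y=\sum_\beta y_\beta$ and project onto a fixed $P_{\overline{\alpha}\gamma\alpha}$, what you obtain is not ``a non-degenerate planar operation applied to $y_\beta$'' for a single $\beta$ but rather a linear combination of terms coming from several different $y_\beta$'s. Demanding that these projections vanish gives you \emph{recurrence relations} among the components of $y$, not direct vanishing of individual $y_\beta$, and a ``non-degeneracy'' reading of the tangles does not by itself break those cancellations. The mechanism in \cite{MR2645882} and \cite{1202.1298}, which the paper adapts, is quite different from what you describe. One fixes a single color $c$, sets $A = W^*(\bluecup_\alpha)$ where $\bluecup_\alpha = \iota_\alpha(\cup_c)$, and first proves the intermediate statement $A'\cap M_\alpha = A\,P_{\overline{\alpha}\to\overline{\alpha}}^{\OP}$. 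This step is where the spectral analysis enters: one decomposes $L^2(M_\alpha)$ as an $A$-$A$ bimodule (Lemma~\ref{lem:decomposition}), constructs unitaries identifying the left and right actions of $\frac{\bluecup_\alpha - 1}{\sqrt{\delta_c}}$ on the orthogonal complement of $L^2(A P_{\overline{\alpha}\to\overline{\alpha}}^{\OP})$ with $(s+s^*)\otimes 1 \otimes 1$ and $1\otimes 1\otimes (s+s^*)$ (Lemma~\ref{lem:unitary}), proves via Weyl--von~Neumann and Kato--Rosenblum that $s+s^*$ and $s+s^*-\delta_c^{-1/2}e_{\xi_0}$ are unitarily equivalent (Lemma~\ref{lem:essential} and the lemma after it), and then uses that $s+s^*$ has no eigenvalues to rule out any $A$-central vector in that complement (Lemma~\ref{lem:shift}). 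Only after this reduction does the commutator calculation you have in mind become tractable: knowing $y\in A P_{\overline{\alpha}\to\overline{\alpha}}^{\OP}$, one writes $y$ as an $\ell^2$-sum of cup-chains decorated by elements of $P_{\overline{\alpha}\to\overline{\alpha}}^{\OP}$, commutes with a single double-cup $z\in M_0$, and obtains the recurrence $x_{n+1}=-\sqrt{\delta_c}\,x_n$, which $\ell^2$-summability (not non-degeneracy) forces to terminate (Lemma~\ref{lem:RelativeCommutant}).

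So the missing idea in your proposal is the passage through $A'\cap M_\alpha$ and the operator-theoretic input (shift operator spectral theory, absolute continuity, Kato--Rosenblum) that controls it. Without it, the commutator conditions alone give an infinite family of intertwined relations on the $y_\beta$ that you have no tool to close. If you want to rescue your Step~3 you should insert exactly this chain: $A$-$A$ bimodule decomposition $\Rightarrow$ shift-operator picture $\Rightarrow$ no $A$-central vectors off $A P_{\overline{\alpha}\to\overline{\alpha}}^{\OP}$ $\Rightarrow$ $\ell^2$-recurrence argument.
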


Throughout this subsection, we use the orthogonal picture $M_{\alpha} = (F_{\alpha}(\cP))''$.  Pick a specific color, $c\in\cL$, which we will denote by the color \textcolor{\cupcolor}{green}, and recall $\delta_c>1$.
Let $A$ be the abelian von Neumann subalgebra of $M_{\alpha}$ generated by the cup element
$$
\bluecup_{\alpha}
 =
 \begin{tikzpicture} [baseline = 0cm]
 	\draw[thick, \alphacolor]  (-.4, -.2) -- (.4, -.2);
	\draw[\cupcolor, thick] (-.25, .4) arc(-180:0: .25cm);
 	\draw[thick] (-.4, -.4) -- (-.4, .4) -- (.4, .4) -- (.4, -.4) -- (-.4, -.4);
\end{tikzpicture}\, .
$$
We will obtain the factorality of $M_{\alpha}$ be first examining $L^{2}(M_{\alpha})$ as an $A-A$ bimodule.

To begin, assuming $|\gamma| \geq 2$, we set
$$
V_{\gamma} = \left\{x \in P_{\overline{\alpha} \gamma \alpha} :
 \begin{tikzpicture} [baseline = 0cm]
 	\draw [thick, \alphacolor] (-.9, 0) -- (.9,0);
	\draw (.3,.4)--(.3,.8);
	\draw[thick, \cupcolor] (-.3, .4) arc (180:0: .2cm);
 	\draw[thick, unshaded] (-.5, -.4) -- (-.5, .4) -- (.5, .4) -- (.5, -.4) -- (-.5, -.4);
	\node at (-.1, .75)  {{\scriptsize{$c$}}};
	\node at (0,0) {$x$};
\end{tikzpicture}
=
 \begin{tikzpicture} [baseline = 0cm]
 	\draw [thick, \alphacolor] (-.9, 0) -- (.9,0);
	\draw (-.3,.4)--(-.3,.8);
	\draw[thick, \cupcolor] (-.1, .4) arc (180:0: .2cm);
 	\draw[thick, unshaded] (-.5, -.4) -- (-.5, .4) -- (.5, .4) -- (.5, -.4) -- (-.5, -.4);
	\node at (.1, .75)  {{\scriptsize{$c$}}};
	\node at (0,0) {$x$};
\end{tikzpicture}
= 0 \right\},
$$
$V = \oplus_{\gamma \in \Lambda} V_{\gamma}$ and $\cV$ the $A-A$ bimodule generated by $V$.  For $b\in\cL$, let $\cV_{b}$ be the $A-A$ bimodule generated by elements of the form
$$
\begin{tikzpicture}[baseline = 0cm]
	\draw (0,0)--(0,.8);
	\draw [thick, \alphacolor] (-.8,0)--(.8,0);
	\filldraw[unshaded,thick] (-.4,.4)--(.4,.4)--(.4,-.4)--(-.4,-.4)--(-.4,.4);
	\node at (0,0) {$x$};
	\node at (.2,.6) {{\scriptsize{$b$}}};
\end{tikzpicture},
$$
and let $W$ be the $A-A$ bimodule generated by $P_{\alpha\to\alpha}$.
We claim that we have the following decomposition:

\begin{lem} \label{lem:decomposition}
 As $A-A$ bimodules, $L^{2}(M_{\alpha}) = \displaystyle W\oplus \cV\oplus  \bigoplus_{b \in \cL} \cV_{b}$.
\end{lem}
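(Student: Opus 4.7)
I work in the orthogonalized picture $F_\alpha(\cP)$, in which boxes $P_{\bar\alpha\gamma\alpha}$ for distinct $\gamma\in\Lambda$ are mutually orthogonal, yielding the graded Hilbert space decomposition $L^2(M_\alpha)=\bigoplus_{\gamma\in\Lambda}L^2_\gamma$ with $L^2_\gamma$ the closure of $P_{\bar\alpha\gamma\alpha}$. My plan is to show each $L^2_\gamma$ splits orthogonally into pieces lying inside $W$, $\cV$, and the various $\cV_b$, and then sum over $\gamma$. The core computation evaluates the action of $\bluecup_\alpha$ on $x\in P_{\bar\alpha\gamma\alpha}$ via $\star_\alpha$: the definition of $\star_\alpha$ expresses this as a sum over allowed contractions $\kappa\in\{\emptyset,c,cc\}$, and after simplifying the $\kappa=c$ term via the zig-zag relation, one finds that $\bluecup_\alpha\star_\alpha x$ decomposes into at most three orthogonal pieces in distinct graded components: $U_L(x)\in L^2_{cc\gamma}$ (a $c$-cup placed above $x$'s top on the left), a copy of $x$ itself in $L^2_\gamma$ (present only when $\gamma$ starts with $c$), and $L_L(x)\in L^2_{\gamma''}$ (the cap-off of $x$'s leftmost two top strings, present only when $\gamma=cc\gamma''$). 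A symmetric statement holds for right multiplication.

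Because these three contributions lie in distinct graded components, standard linear combinations of powers of $\bluecup_\alpha$ acting via $\star_\alpha$ isolate each of the operators $U_L$, $U_R$, $L_L$, $L_R$ as bounded operators whose images lie in the same $A$-bimodule as their inputs; in particular $W$, $\cV$, and each $\cV_b$ are invariant under all four operators. The identities $L_LU_L=L_RU_R=\delta_c\id$ coming from closed $c$-loops then make $p_L=\delta_c^{-1}U_LL_L$ and $p_R=\delta_c^{-1}U_RL_R$ commuting orthogonal projections on $P_{\bar\alpha\gamma\alpha}$, giving a four-fold orthogonal direct sum decomposition whose $(0,0)$-eigenspace is exactly $V_\gamma$ and whose other nonzero summands are the $U_L$-, $U_R$-, and $U_LU_R$-images of boxes $P_{\bar\alpha\gamma^*\alpha}$ with strictly shorter top words $\gamma^*$. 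I then run strong induction on $|\gamma|$: the bases $|\gamma|=0$ and $|\gamma|=1$ follow immediately from the definitions of $W$ and $\cV_b$, and for $|\gamma|\geq 2$ the four-fold decomposition expresses $L^2_\gamma$ as the orthogonal sum of $L^2(V_\gamma)\subset\cV$ and three pieces lying in $A$-bimodules generated by strictly shorter boxes, so the inductive hypothesis plus the $U_L/U_R$-invariance of $W,\cV,\cV_b$ yields $L^2_\gamma\subset W\oplus\cV\oplus\bigoplus_{b\in\cL}\cV_b$. Summing over $\gamma$ gives the claimed containment.

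For pairwise orthogonality of the three subspaces globally, observe that the iterated four-fold decomposition uniquely labels every vector of each $L^2_\gamma$ by a core belonging to $P_{\bar\alpha\alpha}$, some $P_{\bar\alpha b\alpha}$, or some $V_{\gamma_0}$; since $U_L$ and $U_R$ are isometries up to the factor $\sqrt{\delta_c}$ (computed by evaluating the closed $c$-loop in the inner product) with orthogonal outputs on orthogonal inputs, the three subspaces $W$, $\cV$, and the $\cV_b$'s are pairwise orthogonal in $L^2(M_\alpha)$. I anticipate the main technical obstacle to be the careful bookkeeping needed for this orthogonality step, i.e.\ verifying that cores of different type promoted into the same $L^2_\gamma$ by identical outer cup patterns always produce orthogonal contributions in the iterated eigenspace decomposition, which reduces by induction on $|\gamma|$ to the (definitionally clear) orthogonality of the base cores $P_{\bar\alpha\alpha}$, the $P_{\bar\alpha b\alpha}$'s, and the $V_{\gamma_0}$'s.
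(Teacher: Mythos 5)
Your approach — decompose $\bluecup_\alpha\star_\alpha$ on each graded component $L^2_\gamma$ into the degree-raising piece $U_L$, a degree-preserving piece, and the degree-lowering cap $L_L$, form the commuting projections $p_L,p_R$ whose joint kernel is $V_\gamma$, and induct on $|\gamma|$ — is exactly the strategy the paper invokes (it cites Brothier's Proposition 2.1 in \cite{1202.1298} and says to induct on word length rather than strand count). One step, however, is wrong as stated: you claim linear combinations of powers of $\bluecup_\alpha$ isolate $U_L, U_R, L_L, L_R$ and from this deduce that $W$, $\cV$, and each $\cV_b$ are invariant under these operators. But $\bluecup_\alpha$ is self-adjoint, while $U_L$ strictly raises the grading (mapping $L^2_\gamma$ into the orthogonal subspace $L^2_{cc\gamma}$), so $U_L$ cannot equal $p(\bluecup_\alpha)\star_\alpha\,\cdot\,$ for any polynomial or bounded Borel function $p$: already on $P_{\overline\alpha\alpha}$ one is forced to $p=\mathrm{id}$, and then on $L^2_c$ one gets $\bluecup_\alpha\star_\alpha\xi = U_L\xi+\xi\neq U_L\xi$.

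The invariance you need is true, but should be obtained differently: show directly that each of $W$, $\cV_b$, $\cV$ equals $\overline{\spann}\set{U_L^mU_R^n\xi}{m,n\geq 0,\ \xi\ \text{in the respective core}}$. The inclusion $\subseteq$ holds because $\bluecup_\alpha^{\,k}\star_\alpha\xi\star_\alpha\bluecup_\alpha^{\,l}$ expands, via your three-term decomposition and the fact that $L_L,L_R$ annihilate the cores $P_{\overline\alpha\alpha}$, $P_{\overline\alpha b\alpha}$, $V_\gamma$, into a finite linear combination of $U_L^mU_R^n\xi$; the reverse inclusion holds because, conversely, each $U_L^mU_R^n\xi$ is a Chebyshev-style combination of $\bluecup_\alpha^{\,k}\star_\alpha\xi\star_\alpha\bluecup_\alpha^{\,l}$. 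With this description in hand, $U_L$- and $U_R$-invariance of $W,\cV,\cV_b$ is immediate, and the rest of your argument — the four-fold decomposition of $L^2_\gamma$ for $|\gamma|\geq 2$, the induction, and the orthogonality via $U_L^*=L_L$, $U_R^*=L_R$ together with annihilation of the cores — goes through as you outline.
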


\begin{proof}
The proof is exactly the same as \cite{1202.1298} Proposition 2.1 except that we induct on the length of a word in $\Lambda$ as opposed to the number of strands of a single color.
\end{proof}

Notice that we can decompose $V$ further as $V = V_{cc} \oplus V_{co} \oplus V_{oc} \oplus V_{oo}$.  Here, $V_{cc}$ consists of elements in $V$ whose leftmost and rightmost strings are colored $c$, $V_{co}$ consists of elements of $V$ whose leftmost string is colored $c$ and rightmost string is colored differently than $c$, $V_{oc}$ consists of elements of $V$ whose leftmost string is colored differently than $c$ and whose rightmost string is colored $c$, and $V_{oo}$ consists of the elements in $V$ whose leftmost and rightmost strings are colored other than $c$.  We set $\cV_{cc}$ the $A-A$ bimodule generated by $V_{cc}$ and we define $\cV_{co}$, $\cV_{oc}$, and $\cV_{oo}$ analogously.

Let $(V_{cc})_{n}$ be the subspace of $V_{cc}$ spanned by boxes with a word of length $n$ on top and let $\{\zeta_{n, i}\}$ be an orthonormal basis for $(V_{cc})_{n}$.  It straightforward to see that the set
$$
\left\{
\zeta_{n, i}^{l, r} = \delta_{c}^{-(l + r)/2}\,
\begin{tikzpicture} [baseline = -.1 cm]
	\draw (0,.4) -- (0, .7);
	\draw [thick, \cupcolor] (-1.8, .7) arc(-180:0: .15cm);
	\draw [thick, \cupcolor] (-.8, .7) arc(-180:0: .15cm);
	\draw [thick, \cupcolor] (.4, .7) arc(-180:0: .15cm);
	\draw [thick, \cupcolor] (1.4, .7) arc(-180:0: .15cm);
	\draw [thick, \alphacolor] (-2, 0) -- (2, 0);
	\filldraw[unshaded,thick] (-.4,.4)--(.4,.4)--(.4,-.4)--(-.4,-.4)--(-.4,.4);
	\node at (0, 0) {$\zeta_{n,i}$};
	\node at (-1.1, .6) {{\scriptsize{$\cdots$}}};
	\node at (1.1, .6)  {{\scriptsize{$\cdots$}}};
	\node at (1.1, .4)  {{\scriptsize{$r$}}};
	\node at (-1.1, .4)  {{\scriptsize{$l$}}};
	\draw [thick] (-2, .7) -- (2, .7) -- (2, -.7) -- (-2, -.7) -- (-2, .7);
\end{tikzpicture}\,
\colon
n, \, l, \, r \in \N
\right\}
 $$
is an orthonormal basis for $\cV_{cc}$.  There are similar orthonormal bases for $\cV_{co}$, $\cV_{oc}$ and $\cV_{oo}$. Let $B$ denote $V_{cc}$, $V_{co}$, $V_{oc}$, $V_{oo}$ or $P_{\overline{\alpha} b \alpha}$ for $b \neq c$ and let $\cB$ denote $\cV_{cc}$, $\cV_{co}$, $\cV_{oc}$, $\cV_{oo}$, or $\cV_{b}$.   Let $\pi,\rho$ denote the left, right representation of $M_{0}$ on $L^{2}(M_{0})$ respectively. We have the following lemma whose proof is straightforward:

\begin{lem} \label{lem:unitary}
Let $\phi: \cB \rightarrow \ell^{2}(\N) \otimes B \otimes \ell^{2}(\N)$ be defined on the orthonormal basis of $\cB$ by
$$
\phi\left(\delta_{c}^{-(l + r)/2} \,
\begin{tikzpicture} [baseline = -.1 cm]
	\draw (0,.4) -- (0, .7);
	\draw [thick, \cupcolor] (-1.8, .7) arc(-180:0: .15cm);
	\draw [thick, \cupcolor] (-.8, .7) arc(-180:0: .15cm);
	\draw [thick, \cupcolor] (.4, .7) arc(-180:0: .15cm);
	\draw [thick, \cupcolor] (1.4, .7) arc(-180:0: .15cm);
	\draw [thick, \alphacolor] (-2, 0) -- (2, 0);
	\filldraw[unshaded,thick] (-.4,.4)--(.4,.4)--(.4,-.4)--(-.4,-.4)--(-.4,.4);
	\node at (0, 0) {$v$};
	\node at (-1.1, .6) {{\scriptsize{$\cdots$}}};
	\node at (1.1, .6)  {{\scriptsize{$\cdots$}}};
	\node at (1.1, .4)  {{\scriptsize{$r$}}};
	\node at (-1.1, .4)  {{\scriptsize{$l$}}};
	\draw [thick] (-2, .7) -- (2, .7) -- (2, -.7) -- (-2, -.7) -- (-2, .7);
\end{tikzpicture}\,
\right)
= \xi_{l} \otimes v \otimes \xi_{r}
$$
with $v \in B_{n}$, where $\set{\xi_i}{i\in\N}$ is the usual orthonormal basis of $\ell^2(\N)$.  Then $\phi$ extends to a unitary operator and we have the following representations of $\bluecup_{\alpha}$:

\begin{itemize}
\item
If $\cB = \cV_{cc}$, $\cV_{co}$, $\cV_{oc}$ or $\cV_{oo}$ then
$$
\phi \cdot \pi\left(\frac{\bluecup_{\alpha} - 1}{\delta_{c}^{1/2}}\right)\cdot \phi^{*} = x \otimes 1 \otimes 1 \text{ where }
$$
$x$ is $s + s^{*}$ if the top leftmost color in for boxes in $B$ is $c$ and $s + s^{*} - \delta_{c}^{-1/2}e_{\xi_{0}}$ if the this color differs from $c$.  Similarly,
$$
\phi \cdot \rho\left(\frac{\bluecup_{\alpha} - 1}{\delta_{c}^{1/2}}\right)\cdot \phi^{*} = 1 \otimes 1 \otimes x \text{ where }
$$
$x$ is $s + s^{*}$ if the top rightmost color in for boxes in $B$ is $c$ and $s + s^{*} - \delta_{c}^{-1/2}e_{\xi_{0}}$ if the this color differs from $c$.
\item
If $\cB = \cV_{b}$ and $b \neq c$ then
\begin{align*}
\phi\cdot \pi\left(\frac{\bluecup_{\alpha} - 1}{\delta_{c}^{1/2}}\right)\cdot \phi^{*}
&= (s + s^{*} - \delta_{c}^{-1/2}e_{\xi_{0}}) \otimes 1 \otimes 1 \textrm{ and }\\
\phi\cdot \rho\left(\frac{\bluecup_{\alpha} - 1}{\delta_{c}^{1/2}}\right)\cdot \phi^{*}
&= 1 \otimes 1 \otimes (s + s^{*} - \delta_{c}^{-1/2}e_{\xi_{0}}).
\end{align*}
\item
(See \cite{1202.1298}) If $\cB = \cV_{c}$ then there is a unitary (which is \underline{not} $\phi$) $v: \cV_{c} \rightarrow \ell^{2}(\cN) \otimes L^{2}(P_{\overline{\alpha}c\alpha}) \otimes \ell^{2}(\N)$ such that
\begin{align*}
v \cdot \pi\left(\frac{\bluecup_{\alpha} - 1}{\delta_{c}^{1/2}}\right)\cdot v^{*} &= (s+s^{*}) \otimes 1 \otimes 1 \textrm{ and }\\
v\cdot \rho\left(\frac{\bluecup_{\alpha} - 1}{\delta_{c}^{1/2}}\right)\cdot v^{*} &= 1 \otimes 1 \otimes (s + s^{*}).
\end{align*}
\end{itemize}
Here, $s$ is the unilateral shift operator on $\ell^{2}(\N)$ and $e_{\xi_{0}}$ is the orthogonal projection in $\cB(\ell^{2}(\N))$ onto the one-dimensional space spanned by $\xi_{0}$.
\end{lem}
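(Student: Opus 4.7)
The plan has three parts: confirm the displayed vectors are orthonormal and span $\cB$; conclude that $\phi$ extends to a unitary; and compute $\phi\pi(\bluecup_\alpha)\phi^*$ and $\phi\rho(\bluecup_\alpha)\phi^*$ by expanding the $\star_\alpha$-product against a basis vector.

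For orthonormality of $\{\zeta^{l,r}_{n,i}\}\subset \cV_{cc}$, I would evaluate the pairing $\langle\zeta^{l,r}_{n,i},\zeta^{l',r'}_{n',i'}\rangle_{F_\alpha(\cP)}$ as a closed diagram: mismatched cup counts force a partial cap on some $\zeta_{n,i}\in V$, which vanishes by the defining property of $V$, whereas matched counts produce $l+r$ closed $c$-loops giving a factor $\delta_c^{l+r}$ that exactly cancels the two normalizations $\delta_c^{-(l+r)/2}$ and leaves $\langle\zeta_{n,i},\zeta_{n',i'}\rangle=\delta_{n,n'}\delta_{i,i'}$. Spanning follows from Lemma~\ref{lem:decomposition} together with the next computation, which shows that repeated left and right actions of $\bluecup_\alpha$ on an element of $V_{cc}$ reach every cup-augmentation. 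The cases $\cV_{co},\cV_{oc},\cV_{oo},\cV_b$ are handled identically; for $\cV_c$ the partial-cap condition is vacuous on a length-one top word, so $V_c=P_{\overline{\alpha}c\alpha}$ and one defines $v$ in the same spirit as $\phi$ but takes the middle factor to simply be $L^2(P_{\overline{\alpha}c\alpha})$ rather than choose an auxiliary orthonormal basis.

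The operator identities come from expanding
\[
\bluecup_\alpha\star_\alpha \zeta^{l,r}_{n,i}=\sum_{\kappa\in\{\emptyset,\,c,\,cc\}}\bigl(\bluecup_\alpha\star_\alpha \zeta^{l,r}_{n,i}\bigr)_\kappa.
\]
The $\kappa=\emptyset$ term appends a bare cup on the left and becomes $\delta_c^{1/2}\zeta^{l+1,r}_{n,i}$ after re-absorbing normalization. When $l\geq 1$, the leftmost top string is the left endpoint of a cup (hence always colored $c$); the $\kappa=c$ term then isotopes the internal cap of $\bluecup_\alpha$ together with the leftmost cup of $\zeta^{l,r}_{n,i}$ into a single arc and returns $\zeta^{l,r}_{n,i}$, while the $\kappa=cc$ term glues both top $c$-strings of $\bluecup_\alpha$ to that leftmost cup, creating a closed $c$-loop (factor $\delta_c$) and deleting the cup, giving $\delta_c^{1/2}\zeta^{l-1,r}_{n,i}$. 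At $l=0$ the leftmost top string is the leftmost top string of $\zeta_{n,i}$ itself, so the $\kappa=c$ term survives only when that color is $c$ (i.e.\ $\zeta_{n,i}\in V_{cc}\cup V_{co}$), in which case the resulting hook isotopes to a straight line and recovers $\zeta^{0,r}_{n,i}$; the $\kappa=cc$ term at $l=0$, whenever it is defined at all, is exactly the left partial cap from the definition of $V$ and therefore vanishes. Collecting these contributions, subtracting $\pi(1)=\id$, and dividing by $\delta_c^{1/2}$ yields $\zeta^{l,r}_{n,i}\mapsto \zeta^{l+1,r}_{n,i}+\zeta^{l-1,r}_{n,i}$ (with $\zeta^{-1,r}=0$) when the leftmost top color of $\zeta_{n,i}$ is $c$, which under $\phi$ is $(s+s^*)\otimes 1\otimes 1$; when that color differs from $c$, the $l\geq 1$ action is unchanged, but at $l=0$ the $\kappa=c$ contribution is killed by color mismatch, producing instead $\zeta^{0,r}_{n,i}\mapsto \zeta^{1,r}_{n,i}-\delta_c^{-1/2}\zeta^{0,r}_{n,i}$ and hence the rank-one correction $-\delta_c^{-1/2}e_{\xi_0}$ on the first tensor factor. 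The right action is computed symmetrically from $\zeta^{l,r}_{n,i}\star_\alpha\bluecup_\alpha$, and the $\cV_c$ case has no correction because the single-string top of $V_c$ makes the $l=0$ (and $r=0$) partial-cap obstruction vacuous.

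The main place requiring genuine care is the boundary behavior at $l=0$ (and symmetrically $r=0$): one must track precisely which $\kappa$'s contribute, whether the resulting diagram is isotopic to a basis vector or instead to the partial cap killed by the definition of $V$, and how the color constraint on the leftmost top string of $\zeta_{n,i}$ interacts with the cup augmentation. Once this bookkeeping is in place, the Jacobi-matrix form of the answer is forced by the structure of $\star_\alpha$.
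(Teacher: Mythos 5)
Your analysis of $\cV_{cc},\cV_{co},\cV_{oc},\cV_{oo}$, and $\cV_b$ is correct and is the natural argument (the paper does not supply a proof, remarking only that it is straightforward). You correctly identify that orthogonality of the $\zeta^{l,r}_{n,i}$ reduces to the vanishing of partial caps on elements of $V$ plus the cancellation of $\delta_c$-loop factors against the normalizations, and your $\kappa\in\{\emptyset,c,cc\}$ expansion of $\bluecup_\alpha\star_\alpha\zeta^{l,r}_{n,i}$, with the careful bookkeeping at $l=0$ to see whether the $\kappa=c$ term survives and whether the $\kappa=cc$ term is killed by the defining property of $V$, is exactly what produces $s+s^*$ or its rank-one correction.

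However, your treatment of the $\cV_c$ case is wrong, and in a way that misses the whole reason the paper flags it separately. You write that ``the $\cV_c$ case has no correction because the single-string top of $V_c$ makes the $l=0$ (and $r=0$) partial-cap obstruction vacuous,'' and that one ``defines $v$ in the same spirit as $\phi$.'' This gets the logic backwards. Because the partial-cap condition is vacuous when the top word has length one, the $\kappa=cc$ contribution at $l=0$ does \emph{not} vanish when $r\geq 1$: tracing the strings shows that $\bluecup_\alpha$'s cup threads through $w$'s top string and the first right cup, and after isotopy one obtains $\delta_c^{-1/2}\zeta^{0,r-1}_{n,i}$. Thus on $\cV_c$,
\begin{equation*}
\bluecup_\alpha\star_\alpha\zeta^{0,r}_{n,i}=\delta_c^{1/2}\zeta^{1,r}_{n,i}+\zeta^{0,r}_{n,i}+\delta_c^{-1/2}\zeta^{0,r-1}_{n,i},
\end{equation*}
so the left action at $l=0$ leaks into the right cup index $r$. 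In the $\phi$-picture this cross-term is $e_{\xi_0}\otimes 1\otimes s^*$ on $\ell^2(\N)\otimes L^2(P_{\overline{\alpha}c\alpha})\otimes \ell^2(\N)$ (up to a scalar), which is not of the form $x\otimes 1\otimes 1$. This is precisely why the lemma emphasizes that the unitary $v$ for $\cV_c$ is \underline{not} $\phi$: Brothier's construction in \cite{1202.1298} builds a genuinely different unitary to absorb this coupling and arrive at $(s+s^*)\otimes 1\otimes 1$. You should replace your sentence about the $\cV_c$ case with a pointer to that argument rather than claiming $v$ is $\phi$ in disguise.
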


We now show that the operators $s+s^{*}$ and $s + s^{*} - \delta_{c}^{-1/2}e_{\xi_{0}}$ are unitary equivalent in $\cB(\ell^{2}(\N))$.  We begin with the following lemma
\begin{lem} \label{lem:essential}
The spectra of $s+s^{*}$ and $y = s + s^{*} - \delta_{c}^{-1/2}e_{\xi_{0}}$ are the same.
\end{lem}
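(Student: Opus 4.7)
The plan is to invoke Weyl's theorem on the invariance of essential spectrum under compact (in fact rank-one) perturbation, and then separately rule out discrete spectrum outside $[-2,2]$ for the perturbed operator $y$.

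First I would recall that $s+s^{*}$ is the standard Jacobi matrix on $\ell^{2}(\N)$, whose spectrum is $[-2,2]$ (its spectral measure at $\xi_{0}$ is the standard semicircular law on $[-2,2]$). Since $e_{\xi_{0}}$ is rank one and self-adjoint, Weyl's theorem gives
$$
\sigma_{\text{ess}}(y) = \sigma_{\text{ess}}(s+s^{*}) = [-2,2].
$$
Hence $\sigma(y) \supseteq [-2,2]$, and it remains only to verify that $y$ has no eigenvalues $\lambda$ with $|\lambda|>2$.

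Next I would carry out the eigenvalue analysis. Writing $\xi = \sum_{n\geq 0} a_{n}\xi_{n}$ and using $(s+s^{*})\xi_{0}=\xi_{1}$ and $(s+s^{*})\xi_{n}=\xi_{n-1}+\xi_{n+1}$ for $n\geq 1$, the equation $y\xi = \lambda\xi$ becomes
$$
a_{1} - \delta_{c}^{-1/2} a_{0} = \lambda a_{0}, \qquad a_{m-1}+a_{m+1}=\lambda a_{m}\ (m\geq 1).
$$
For $|\lambda|>2$ write $\lambda = t+t^{-1}$ with $0<|t|<1$, so that the general solution of the recursion is $a_{m}=A t^{m}+B t^{-m}$; membership in $\ell^{2}(\N)$ forces $B=0$, i.e.\ $a_{m}=At^{m}$. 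Substituting into the boundary equation yields $At-\delta_{c}^{-1/2}A = (t+t^{-1})A$, hence $t^{-1}=-\delta_{c}^{-1/2}$, i.e.\ $t=-\delta_{c}^{1/2}$. Since $\delta_{c}>1$ by our standing assumption, this gives $|t|>1$, contradicting $|t|<1$. Therefore no eigenvalue of $y$ lies outside $[-2,2]$.

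Combining the two steps, $\sigma(y)=\sigma_{\text{ess}}(y)=[-2,2]=\sigma(s+s^{*})$, which is the claim. The only genuinely delicate point is the eigenvalue calculation, but it reduces to a two-term recursion and the condition $\delta_{c}>1$ is precisely what rules out an $\ell^{2}$-eigenvector at a putative eigenvalue outside $[-2,2]$; everything else is a standard application of Weyl's theorem.
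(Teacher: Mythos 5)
Your proposal is correct and follows essentially the same route as the paper: invoke the stability of essential spectrum under finite-rank perturbation, observe $\sigma_{\text{ess}}(s+s^*)=[-2,2]$, and then rule out eigenvalues of $y$ outside $[-2,2]$ by solving the two-term recursion. The only cosmetic difference is that your parametrization $\lambda = t + t^{-1}$ with $0<|t|<1$ handles the two cases $\lambda>2$ and $\lambda<-2$ simultaneously and reduces the boundary condition to the clean identity $t^{-1}=-\delta_c^{-1/2}$, whereas the paper treats the two signs separately; both arguments hinge, as you note, on $\delta_c>1$.
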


\begin{proof}
Since the operators differ by a finite rank operator, by the Weyl-von Neumann Theorem (see \cite{MR1335452} p.523), they have the same essential spectrum.  The operator $s+s^{*}$ has essential spectrum $[-2, 2]$ and since the complement of the essential spectrum in the spectrum is an isolated set of eigenvalues, we just need to show that $y$ has no eigenvalues outside of $[-2, 2]$ since $y$ is self adjoint and must have real spectrum.

To this end, let $\lambda > 2$ and $\xi = \sum_{j} x_{j}\xi_{j}$.  If $\xi$ is an eigenvector of $y$ with eigenvalue $\lambda$ then we have the equations
$$
x_{1} = \left(\lambda + \delta_{c}^{-1/2}\right)x_{0} \textrm{ and } \lambda x_{n+1} = x_{n} + x_{n+2}
$$
for $n \geq 0$. The characteristic equation for this linear recurrence is $x^{2} - \lambda x + 1 = 0$ which has roots
$$
l = \frac{\lambda + \sqrt{\lambda^{2} - 4}}{2} \textrm{ and } r = \frac{\lambda - \sqrt{\lambda^{2} - 4}}{2}.
$$
This implies the existence of constants $C$ and $D$ with $x_{n} = C\cdot l^{n} + D \cdot r^{n}$ for all $n$.  Since the sequence $x_{n}$ is $\ell^{2}$, $C = 0$ meaning $x_{n} = x_{0}\cdot r^{n}$ for all $n$.  However, this means $x_{1} = (\lambda + 1/\sqrt{\delta_{c}})x_{0}$ and $x_{1} = rx_{0}$ must both be satisfied.  This can only happen if $x_{0} = 0$ since $\lambda + 1/\sqrt{\delta_{c}} > 1$ and $r < 1$.  This implies $\xi = 0$.\\

Now let $\lambda < -2$.  With $l$ and $r$ as above we must have $x_{n} = C \cdot l^{n} + D \cdot r^{n}$.  This time we must have $D = 0$ and we obtain the equation $x_{n} = x_{0}l^{n}$.  This gives us the two equations
$$
x_{1} = \left(\lambda + \delta_{c}^{-1/2}\right)x_{0} \text{ and }  x_{1} = l x_{0}
$$
which implies $(\lambda - \frac{1}{2}(\lambda + \sqrt{\lambda^{2} - 4}) + \delta_{c}^{-1/2})x_{0} = 0$.  This forces $x_{0}$ to be 0 since by the choice of $\lambda$, $\lambda - \frac{1}{2}(\lambda + \sqrt{\lambda^{2} - 4}) < -1$ and $|\delta_{c}^{-1/2}| < 1$.  Therefore $\xi=0$ and $y$ has no eigenvectors.
\end{proof}

For any self-adjoint operator $a \in B(\cH)$, we set $\sigma(a)$ to be the spectrum of $a$.  Given $\xi \in \cH$, a Radon measure $\mu_{\xi}$ on the real line is induced by the formula $\mu_{\xi}(f) = \langle f(a)\xi,  \xi \rangle$ for any bounded continuous $f$.  We set $\cH_{ac}$ the Hilbert space of vectors $\xi$ where $\mu_{\xi}$ is absolutely continuous with respect to the Lebesgue measure, $\cH_{sc}$ the Hilbert space of vectors $\xi$ where $\mu_{\xi}$ is singular with respect to the Lebesgue measure, and $\cH_{pp}$ the Hilbert space of vectors $\xi$ where $\mu_{\xi}$ has purely atomic measure.  We define the absolutely continuous spectrum of $a$ as the spectrum of $a$ on $\cH_{ac}$ and denote it as  $\sigma_{ac}(a)$.  We define $\sigma_{cc}(a)$ and $\sigma_{pp}(a)$ in a similar manner.

We say $a$ has uniform multiplicity 1 if there is a measure $\mu$ on the spectrum of $a$ and a unitary $w: \cH \rightarrow L^{2}(\sigma(a), \mu)$ such that for any $f \in L^{2}(\sigma(a), \mu)$, $waw^{*}(f)(x) = xf(x)$.  In this case $\sigma_{ac}(a)$, $\sigma_{cc}(a)$ and $\sigma_{pp}(a)$ form a partition of $\sigma(a)$.  See \cite{MR1335452} for more details.

\begin{lem}

$y$ is unitary equivalent to $s + s^{*}$.

\end{lem}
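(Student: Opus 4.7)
The plan is to compare the two self-adjoint operators via spectral multiplicity theory. First I would verify that both $s+s^{*}$ and $y=s+s^{*}-\delta_{c}^{-1/2}e_{\xi_{0}}$ have uniform multiplicity one. For $s+s^{*}$ this is classical: the vector $\xi_{0}$ is cyclic because $(s+s^{*})^{n}\xi_{0}$ has non-trivial component along $\xi_{n}$, so by induction the orbit of $\xi_{0}$ spans a dense subspace. For $y$, the same triangular argument applies: $y$ still acts as a tridiagonal (Jacobi) matrix in the basis $(\xi_{n})_{n\geq 0}$ with all off-diagonal entries equal to $1$, differing from $s+s^{*}$ only in the $(0,0)$ entry (which becomes $-\delta_{c}^{-1/2}$). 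Hence $y^{n}\xi_{0}$ again has a nontrivial $\xi_{n}$-component, so $\xi_{0}$ is cyclic for $y$ as well.

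By the spectral theorem, two self-adjoint operators of uniform multiplicity one are unitarily equivalent iff their scalar spectral measures lie in the same measure class. So the goal reduces to proving $\mu_{\xi_{0}}^{y}$ lies in the same measure class as $\mu_{\xi_{0}}^{s+s^{*}}$, which is the semicircle law $\tfrac{1}{2\pi}\sqrt{4-x^{2}}\,\chi_{[-2,2]}(x)\,dx$. I would do this through the Cauchy (Stieltjes) transform $G_{y}(z)=\langle(z-y)^{-1}\xi_{0},\xi_{0}\rangle$. Since $y$ is a rank-one perturbation of $H_{0}=s+s^{*}$ with perturbing operator $V=-\delta_{c}^{-1/2}e_{\xi_{0}}$, the standard rank-one perturbation formula gives
\[
G_{y}(z)=\frac{G_{H_{0}}(z)}{1+\delta_{c}^{-1/2}G_{H_{0}}(z)},
\]
where $G_{H_{0}}(z)=\tfrac{1}{2}(z-\sqrt{z^{2}-4})$ with the standard branch. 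Taking boundary values as $z=x+i\varepsilon\to x\in(-2,2)$ shows $G_{H_{0}}(x+i0)$ has nonvanishing imaginary part $\tfrac{1}{2}\sqrt{4-x^{2}}>0$.

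The key step is then to show the denominator $1+\delta_{c}^{-1/2}G_{H_{0}}(z)$ does not vanish on the real axis in a way that would produce atoms or singular continuous spectrum. The previous lemma already rules out eigenvalues outside $[-2,2]$, and the nonvanishing imaginary part of $G_{H_{0}}(x+i0)$ on $(-2,2)$ prevents the denominator from having real zeros there. Hence $\operatorname{Im}G_{y}(x+i0)$ exists, is strictly positive, and is continuous on $(-2,2)$, which by the Stieltjes inversion formula shows $\mu_{\xi_{0}}^{y}$ is purely absolutely continuous on $(-2,2)$ with an everywhere-positive density, and has no mass outside $[-2,2]$ by Lemma \ref{lem:essential}. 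Thus $\mu_{\xi_{0}}^{y}$ and the semicircle law are mutually absolutely continuous, and the two operators are unitarily equivalent.

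The main obstacle is the denominator analysis: in principle one must verify that $1+\delta_{c}^{-1/2}G_{H_{0}}$ has no real zero in $(-2,2)$ (which would create an embedded eigenvalue or singularity) and no zero at $\pm 2$ of a delicate order. For $\delta_{c}>1$ this follows from $|G_{H_{0}}(x+i0)|\leq 1$ for $x\in[-2,2]$ combined with the positive imaginary part, so $|1+\delta_{c}^{-1/2}G_{H_{0}}(x+i0)|$ stays bounded away from zero; this is essentially the same computation used in Lemma \ref{lem:essential} to rule out eigenvalues, applied now on the continuous spectrum.
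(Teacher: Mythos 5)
Your argument is correct and takes a genuinely different route from the paper at the decisive step. Both proofs open the same way, constructing the orthogonal polynomials $p_n$ to show $\xi_0$ is cyclic and hence that $y$ has uniform multiplicity one. The paper then invokes the Kato--Rosenblum theorem to identify the absolutely continuous parts of $y$ and $s+s^*$ up to unitary equivalence, and concludes from $\sigma(y)=[-2,2]=\sigma_{ac}(y)$ that $\sigma_{sc}=\sigma_{pp}=\emptyset$. You instead compute the scalar spectral measure directly via the Aronszajn--Krein rank-one perturbation formula for the Cauchy transform, take boundary values, and use Stieltjes inversion to show $\mu^{y}_{\xi_0}$ is purely absolutely continuous with continuous positive density on $(-2,2)$ and no mass at $\pm2$ (using $\delta_c^{-1/2}<1$ so the denominator is bounded away from zero). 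This buys you something the paper's version leaves implicit: the claim that $\sigma_{ac},\sigma_{sc},\sigma_{pp}$ ``partition'' the spectrum is false as a statement about closed supports --- singular spectrum can be embedded inside $[-2,2]$ --- and the previous lemma only rules out eigenvalues outside $[-2,2]$. Your boundary-value computation rules out all embedded singular spectrum explicitly, which is exactly the point the paper glosses over. The one cosmetic slip is a sign: with the convention $G(z)=\int (z-x)^{-1}d\mu(x)$ one has $\operatorname{Im}G_{H_0}(x+i0)=-\tfrac12\sqrt{4-x^2}<0$, and the density is recovered via $-\tfrac1\pi\operatorname{Im}G$; this does not affect the argument.
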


\begin{proof}

We first show that $y$ has uniform multiplicity $1$.  To this end, consider the following sequence of polynomials:
\begin{align*}
&p_{0}(x) = 1 \\
&p_{1}(x) = x + \delta_{c}^{-1/2}\\
&p_{n+2}(x) = xp_{n+1} - p_{n}(x) \textrm{ for } n \geq 0
\end{align*}

By induction, it is straightforward to check that $p_{n}(y)(\xi_{0}) = \xi_{n}$ and hence the map $w: \cH \rightarrow L^{2}(\sigma(a), \mu_{\xi_{0}})$ given by $w(\xi_{n}) = p_{n}$ is a unitary satisfying $wyw^{*}(f)(x) = xf(x)$ for all $f \in L^{2}(\sigma(a), \mu_{\xi_{0}})$.  By the Kato-Rosenblum theorem (see \cite{MR1335452} p.540), if $a$ and $b$ are self adjoint operators and $b$ is trace class then the absolutely continuous parts of $b$ and $a+b$ are unitary equivalent.  This implies that the absolutely continuous parts of $y$ and $s + s^{*}$ are unitary equivalent.  The absolutely continuous spectrum of $s+s^{*}$ is $[-2, 2] = \sigma(s + s^{*})$ so the same must hold for $y$.  As was discussed in the previous lemma, $y$ has no spectral values outside $[-2, 2]$ and from above, the spectrum of $y$ is partitioned into $\sigma_{ac}$, $\sigma_{cc}$ and $\sigma_{pp}$.  Therefore $\sigma_{pp} = \sigma_{cc} = \emptyset$ and $\sigma_{ac} = \sigma$ showing the unitary equivalence.
\end{proof}

Therefore by conjugating by a unitary, we may assume that $L^{2}(M_{\alpha}) \cong L^{2}(A) \oplus \ell^{2}(\N) \otimes \cH \otimes \ell^{2}(\N)$ for some Hilbert space $\cH$ where on the second summand, $\pi\left(\frac{\cup - 1}{\sqrt{\delta_{c}}}\right)$ acts as $(s + s^{*}) \otimes 1 \otimes 1$ and $\rho\left(\frac{\cup - 1}{\sqrt{\delta_{c}}}\right)$ acts as $1 \otimes 1 \otimes (s+s^{*})$. The lemmas below follow \cite{MR2645882} but we supply proofs for the readers' convenience.

\begin{lem} \label{lem:shift}
$A' \cap M_{\alpha} = AP_{\overline{\alpha}\to\overline{\alpha}}\op$.
\end{lem}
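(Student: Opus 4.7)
The plan is to leverage the $A$-$A$ bimodule decomposition of $L^2(M_\alpha)$ from Lemma \ref{lem:decomposition}, together with the explicit unitary models in Lemma \ref{lem:unitary} and the continuity of the spectral distribution of $\bluecup_\alpha$ (established by the Kato--Rosenblum argument above), which shows $\bluecup_\alpha$ has uniform multiplicity one with nonatomic spectral measure $\mu$ supported on an affine image of $[-2,2]$.

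First I would verify the easy inclusion $AP_{\overline{\alpha}\to\overline{\alpha}}\op \subseteq A'\cap M_\alpha$ by a direct diagrammatic check: for any $y \in P_{\overline{\alpha}\to\overline{\alpha}}\op$, the box representing $y$ has empty top word, so the two pictures representing $y \wedge_\alpha \bluecup_\alpha$ and $\bluecup_\alpha \wedge_\alpha y$ are planar isotopic (the cup sits disjointly above $y$ on either side), hence $y$ commutes with $\bluecup_\alpha$ and therefore with $A$.

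For the reverse inclusion, I would show that every $A$-central vector in $L^2(M_\alpha)$ lies in the summand $W$. Given $x \in A'\cap M_\alpha$, the vector $\widehat{x} = x\Omega$ satisfies $\pi(a)\widehat{x} = \rho(a)\widehat{x}$ for every $a \in A$. On each non-$W$ summand $\cV_{cc}, \cV_{co}, \cV_{oc}, \cV_{oo}$ or $\cV_b$ ($b \neq c$), Lemma \ref{lem:unitary} provides a unitary identification with $\ell^2(\N)\otimes B \otimes \ell^2(\N)$ under which the left and right action of $(\bluecup_\alpha - 1)/\sqrt{\delta_c}$ become $T_L \otimes 1 \otimes 1$ and $1 \otimes 1 \otimes T_R$ respectively, with $T_L, T_R \in \{s+s^*,\, s+s^* - \delta_c^{-1/2} e_{\xi_0}\}$. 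Since the rank-one perturbation is unitarily equivalent to $s+s^*$, we may apply the spectral theorem to further identify $\ell^2(\N) \cong L^2([-2,2],\mu)$ where $\mu$ is the (nonatomic) spectral measure. Each non-$W$ summand then becomes $L^2(\mu) \otimes \cH \otimes L^2(\mu)$, and an $A$-central vector, viewed as an $\cH$-valued kernel $\xi(x,y)$, must satisfy $(x-y)\xi(x,y) = 0$ $\mu\otimes\mu$-almost everywhere. Because the diagonal has $\mu\otimes\mu$-measure zero, $\xi = 0$. The main subtlety here is tracking the various affine corrections in the four cases of Lemma \ref{lem:unitary}, but they are all unitarily equivalent to $s+s^*$, so a single diagonal-support argument suffices.

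It remains to identify the $A$-central elements of $W$. Since $W$ is by definition the $A$-$A$ bimodule generated by $P_{\overline{\alpha}\to\overline{\alpha}}\op$, and Step 1 shows that $P_{\overline{\alpha}\to\overline{\alpha}}\op$ commutes with $A$, the left and right $A$-actions on $W$ coincide, so every vector of $W$ is $A$-central. Intersecting with $M_\alpha$ yields the von Neumann subalgebra generated by the mutually commuting algebras $A$ and $P_{\overline{\alpha}\to\overline{\alpha}}\op$, which is exactly $AP_{\overline{\alpha}\to\overline{\alpha}}\op$ (a spatial tensor product, up to multiplication inside $M_\alpha$). Combining all the steps gives $A'\cap M_\alpha = AP_{\overline{\alpha}\to\overline{\alpha}}\op$, as claimed.
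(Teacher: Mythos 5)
Your proof is correct and follows essentially the same approach as the paper: you use the $A$-$A$ bimodule decomposition together with the unitary models from Lemma~\ref{lem:unitary} to reduce the hard inclusion to showing that no nonzero $A$-central vector lives in the complement of $W$, and then invoke the absence of point spectrum of $s+s^*$. The only cosmetic difference is that the paper phrases the last step by viewing $\xi\in\ell^2(\N)\otimes\ell^2(\N)$ as a Hilbert--Schmidt operator commuting with $s+s^*$, whereas you pass to $L^2(\mu\otimes\mu,\cH)$ via the spectral theorem and use that the diagonal is $\mu\otimes\mu$-null for nonatomic $\mu$ --- two equivalent formulations of the same fact.
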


\begin{proof}
First, note that $AP_{\overline{\alpha}\to\overline{\alpha}}\op\subseteq A'\cap M_\alpha$ in the obvious way
$$
\begin{tikzpicture} [baseline = -.1cm]
	\draw [thick, \cupcolor] (-1.4, .8) arc(180:360: .3cm);
	\draw [thick, \alphacolor] (-1.4, 0)--(.8, 0);
	\filldraw[thick, unshaded] (-.4, -.4) -- (-.4, .4) -- (.4, .4) -- (.4, -.4) -- (-.4, -.4);
	\node at (0,0) {$x$};
\end{tikzpicture}
=
\begin{tikzpicture} [baseline = -.1cm]
	\draw [thick, \cupcolor] (-.3, .8) arc(180:360: .3cm);
	\draw [thick, \alphacolor] (-.8, 0)--(.8, 0);
	\filldraw[thick, unshaded] (-.4, -.4) -- (-.4, .4) -- (.4, .4) -- (.4, -.4) -- (-.4, -.4);
	\node at (0,0) {$x$};
\end{tikzpicture}
=
\begin{tikzpicture} [baseline = -.1cm]
	\draw [thick, \cupcolor] (.8, .8) arc(180:360: .3cm);
	\draw [thick, \alphacolor] (-.8, 0)--(.8, 0);
	\filldraw[thick, unshaded] (-.4, -.4) -- (-.4, .4) -- (.4, .4) -- (.4, -.4) -- (-.4, -.4);
	\node at (0,0) {$x$};
\end{tikzpicture}
$$
where we need the ``op" since multiplication in the GJSW picture happens in the opposite order (see Note \ref{note:ReverseMultiplication}).

If $A' \cap M_{\alpha}$ were larger than $AP_{\overline{\alpha}\to\overline{\alpha}}\op$ then by looking at the orthogonal compliment of $AP_{\overline{\alpha}\to\overline{\alpha}}\op$ in $L^{2}(M_{\alpha})$, there is a nonzero vector $\xi \in \ell^{2}(\N) \otimes \ell^{2}(\N)$ with $((s+s^{*}) \otimes 1)\xi = (1 \otimes s+s^{*})\xi$.  Viewing $\ell^{2}(\N) \otimes \ell^{2}(\N)$ as the Hilbert Schmidt operators on $\ell^{2}(\N)$ this means $\xi(s+s^{*}) = (s+s^{*})\xi$ which is impossible since $s+s^{*}$ has no eigenvalues.
\end{proof}

We will realize $M_{0}$ as a unital subalgebra of $M_{\alpha}$ via the map $\iota_{\alpha}$.

\begin{lem} \label{lem:RelativeCommutant}
$M_{0}' \cap M_{\alpha} = P_{\overline{\alpha}\to\overline{\alpha}}\op\cong P_{\alpha\to \alpha}$ as an algebra.
\end{lem}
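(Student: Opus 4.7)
The containment $P_{\overline{\alpha}\to\overline{\alpha}}^{\OP} \subseteq M_0' \cap M_\alpha$ is immediate from the diagrammatic representation: any element of $P_{\overline{\alpha}\to\overline{\alpha}}^{\OP}$ sits \emph{on} the distinguished $\alpha$-strand while $\iota_\alpha(M_0)$ sits strictly \emph{above} it, so the two regions interact only through the through-strand and multiplying in either order produces the same picture. The real content of the lemma is the reverse inclusion.

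The plan is to bootstrap from Lemma \ref{lem:shift} and then refine. Since $\iota_\alpha(\bluecup) = \bluecup_\alpha$ by construction, the abelian algebra $A = \{\bluecup_\alpha\}''$ is already contained in $\iota_\alpha(M_0)$. Applying Lemma \ref{lem:shift} therefore gives
\[
M_0' \cap M_\alpha \;\subseteq\; A' \cap M_\alpha \;=\; A \cdot P_{\overline{\alpha}\to\overline{\alpha}}^{\OP}.
\]
Next I would show that $A$ and $P_{\overline{\alpha}\to\overline{\alpha}}^{\OP}$ sit in \emph{tensor position} inside $M_\alpha$: they commute (their supports occupy disjoint horizontal regions once one reads $a \cdot p$ as $a$ stacked above $p$ along the $\alpha$-strand), and the trace factors as $\tau(ap) = \tau(a)\tau(p)$, which follows from the colored Temperley-Lieb expansion of the trace in \eqref{r} separating into independent contributions from the region above the $\alpha$-line and the region on it. Consequently, any $x \in M_0' \cap M_\alpha$ admits a unique expansion $x = \sum_i a_i e_i$ with $a_i \in A$ and $\{e_i\}$ an orthonormal basis of the finite-dimensional $C^*$-algebra $P_{\overline{\alpha}\to\overline{\alpha}}^{\OP}$.

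Given $y \in \iota_\alpha(M_0)$, the relation $xy=yx$ combined with $y e_i = e_i y$ (from the easy direction) rearranges to $\sum_i [a_i,y]\, e_i = 0$. Applying the same tensor-position reasoning to the commuting pair $\iota_\alpha(M_0)$ and $P_{\overline{\alpha}\to\overline{\alpha}}^{\OP}$ shows that the multiplication map $\iota_\alpha(M_0) \otimes_{\mathrm{alg}} P_{\overline{\alpha}\to\overline{\alpha}}^{\OP} \to M_\alpha$ is injective, so each $[a_i,y]=0$. Hence every $a_i \in M_0 \cap M_0' = Z(M_0)$. To conclude $a_i \in \mathbb{C}$, I would invoke the same chain of reasoning in the base case $\alpha = \emptyset$: there $P_{\emptyset} \cong \mathbb{C}$ by evaluability, forcing $M_0' \cap M_0 = \mathbb{C}$, so $M_0$ is a factor. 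Therefore each $a_i$ is scalar and $x \in P_{\overline{\alpha}\to\overline{\alpha}}^{\OP}$.

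The main obstacle I anticipate is rigorously establishing the tensor-position property, since the entire decomposition argument rests on the unique-expansion claim $x = \sum_i a_i e_i$. Verifying $\tau(ap) = \tau(a)\tau(p)$ amounts to a careful diagrammatic calculation using that closing up the composite diagram $ap$ with a $CTL$ element factors through closings of $a$ (above the $\alpha$-line) and $p$ (on the $\alpha$-line) separately. The bootstrapping of factorality through the $\alpha = \emptyset$ case is delicate but is absorbed into the same inductive framework and does not introduce a genuine logical gap.
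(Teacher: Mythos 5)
Your reduction to $A \cdot P_{\overline{\alpha}\to\overline{\alpha}}^{\OP}$ via Lemma \ref{lem:shift} and your unique-expansion step $x = \sum_i a_i e_i$ are both sound, and your worry about the tensor-position property is actually misplaced --- the trace does factor, since in the orthogonal picture any product $y \star p$ with $y \in \iota_\alpha(M_0)$ of positive degree and $p \in P_{\overline{\alpha}\to\overline{\alpha}}^{\OP}$ has vanishing $\tr_F$, so faithfulness of the inner product gives the injectivity you need. The commutation argument then correctly yields $a_i \in \iota_\alpha(Z(M_0))$.

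The genuine gap is in the last step, and it is not merely ``delicate.'' You claim that running the same chain of reasoning at $\alpha = \emptyset$ ``forces $M_0' \cap M_0 = \mathbb{C}$'' because $P_\emptyset \cong \mathbb{C}$. Trace through it: Lemma \ref{lem:shift} gives $Z(M_0) \subseteq A' \cap M_0 = A$, you write $x = a_1 \cdot 1$ with $a_1 \in A$, and commutation with all $y \in M_0$ gives $a_1 \in Z(M_0)$ --- which is exactly what you started with. Nothing forces $a_1$ to be scalar; the argument is circular at the base case. This matters because factorality of $M_0$ is \emph{not} available at this point in the paper's development: Theorem \ref{thm:Factor} (``$M_\alpha$ is a $II_1$ factor'') is proved \emph{after} and \emph{using} Lemma \ref{lem:RelativeCommutant}, so you cannot invoke it here without a vicious circle.

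What the paper does instead, and what your argument is missing, is a self-contained mechanism for killing the $A$-coefficients that does not presuppose factorality. The paper writes $x = \sum_{n \geq 0} \delta_c^{-n/2}\, x_n \star (\text{$n$-cup})$ with $x_n \in P_{\overline{\alpha}\to\overline{\alpha}}^{\OP}$ (the dual of your expansion, with the orthonormal basis on the $A$-side) and commutes against a \emph{single} concrete element $z \in M_0$, the doubly nested cup. The commutator decomposes orthogonally and yields the recursion $x_{n+1} = -\sqrt{\delta_c}\, x_n$ for $n \geq 1$, which is incompatible with $\ell^2$-summability unless $x_n = 0$ for all $n \geq 1$. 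This gives the result directly, with no appeal to $Z(M_0) = \mathbb{C}$. To repair your proof, you would need to supply an analogous concrete computation for the coefficients $a_i \in A$ rather than reducing to a factorality statement that is logically downstream of the lemma you are trying to prove.
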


\begin{proof}

If $x \in M_{0}' \cap M_{\alpha}$ then by Lemma \ref{lem:shift}, $x \in AP_{\overline{\alpha}\to\overline{\alpha}}\op$, so we write $x$ as an $\ell^{2}$ sum
$$
x = \sum_{n=0}^{\infty}\frac{1}{\sqrt{\delta_{c}^{n}}}\cdot x_{n}
\star
\,
\begin{tikzpicture} [baseline = -.1cm]
	\draw [thick, \cupcolor] (-.8, .4) arc(180:360: .2cm);
	\draw [thick, \cupcolor] (.8, .4)  arc(0:-180: .2cm);
	\node at (0, .3) {{\scriptsize{$\cdots$}}};
	\node at (0,.1) {{\scriptsize{$n$}}};
	\draw [thick, \alphacolor] (-1, -.2)--(1, -.2);
	\draw [thick] (-1, -.4) -- (-1, .4) -- (1, .4) -- (1, -.4) -- (-1, -.4);
\end{tikzpicture}
\, ,
$$
where $x_{n} \in P_{\overline{\alpha}\to\overline{\alpha}}\op$ for all $n$.
Consider the following elements of $M_{0}$:
$$
z =
\begin{tikzpicture} [baseline = -.1cm]
	\draw [thick, \cupcolor] (-.3, .4) arc(180:360: .3cm);
	\draw [thick, \cupcolor] (-.15, .4) arc(180:360: .15cm);
	\draw [thick, \alphacolor] (-.4, -.1)--(.4, -.1);
	\draw[thick] (-.4, -.4) -- (-.4, .4) -- (.4, .4) -- (.4, -.4) -- (-.4, -.4);
\end{tikzpicture} \,,
l_{n}
=  \frac{1}{\sqrt{\delta_{c}^{n}}}\,
\begin{tikzpicture} [baseline = -.1cm]
	\draw [thick, \cupcolor] (-.8, .4) arc(180:360: .2cm);
	\draw [thick, \cupcolor] (.8, .4)  arc(0:-180: .2cm);
	\draw [thick, \cupcolor] (-1.8, .4) arc(180:360: .4cm);
	\draw [thick, \cupcolor](-1.6, .4) arc(180:360: .2cm);
	\draw [thick, \alphacolor] (-2, -.2)--(1, -.2);
	\node at (0, .3) {{\scriptsize{$\cdots$}}};
	\node at (0,.1) {{\scriptsize{$n$}}};
	\draw [thick] (-2, -.4) -- (-2, .4) -- (1, .4) -- (1, -.4) -- (-2, -.4);
\end{tikzpicture}\,,
\text{ and }
r_{n} = \frac{1}{\sqrt{\delta_{c}^{n}}} \,
\begin{tikzpicture} [baseline = -.1cm]
	\draw [thick, \cupcolor] (.4, .4) arc(-180:0: .2cm);
	\draw [thick, \cupcolor] (-.8, .4)  arc(-180:0: .2cm);
	\draw [thick, \cupcolor] (1, .4) arc(180:360: .4cm);
	\draw [thick, \cupcolor](1.2, .4) arc(180:360: .2cm);
	\draw [thick, \alphacolor] (2, -.2)--(-1, -.2);
	\node at (0, .3) {{\scriptsize{$\cdots$}}};
	\node at (0,.1) {{\scriptsize{$n$}}};
	\draw [thick] (2, -.4) -- (2, .4) -- (-1, .4) -- (-1, -.4) -- (2, -.4);
\end{tikzpicture}\,.
$$
By a direct diagrammatic computation, $z\star x - x\star z$ is the orthogonal sum
$$
\sum_{n=1}^{\infty} (x_{n} + \frac{1}{\sqrt{\delta_{c}}}\cdot x_{n+1})\star(l_{n} - r_{n}).
$$
For this to be zero, we must have $x_{n+1} = -\sqrt{\delta_{c}}\cdot x_{n}$ for $n \geq 1$.  Since the sum for $x$ must be $\ell^{2}$, this implies $x_{n} = 0$ for $n \geq 1$, i.e., $x \in P_{\overline{\alpha}\to\overline{\alpha}}\op$.

Obviously the rotation by 180 degrees gives the isomorphism $P_{\overline{\alpha}\to\overline{\alpha}}\op\cong P_{\alpha\to \alpha}$.
\end{proof}

\begin{proof} [Proof of Theorem \ref{thm:Factor}]
If $x$ were in the center of $M_{\alpha}$ then we know that $x \in P_{\overline{\alpha}\to\overline{\alpha}}\op$.  The element $x$ must commute with
$$
\begin{tikzpicture}[baseline = -.1cm]
	\draw [thick, \alphacolor] (-.4, .1) arc(-90:0: .3cm);
	\draw [thick, \alphacolor] (.4, .1) arc(-90:-180: .3cm);
	\draw[thick] (-.4, -.4) -- (-.4, .4) -- (.4, .4) -- (.4, -.4) -- (-.4, -.4);
\end{tikzpicture}\,,
$$
giving the equation
$$
\begin{tikzpicture} [baseline = -.1cm]
	\draw[thick, \alphacolor]  (-1,0) -- (-.4, 0) arc(-90:0: .3cm) -- (-.1,.4);
	\draw[thick, \alphacolor] (1, 0) -- (.4, 0) arc(-90:-180: .3cm)--(.1,.4);
	\draw[thick] (-1, -.4) -- (-1, .4) -- (1, .4) -- (1, -.4) -- (-1, -.4);
	\draw[thick, unshaded] (-.8, -.2) -- (-.8, .2) -- (-.4, .2) -- (-.4, -.2) -- (-.8, -.2);
	\node at (-.6, 0) {$x$};
\end{tikzpicture}
=
\begin{tikzpicture} [baseline = -.1cm]
	\draw[thick, \alphacolor]  (-1,0) -- (-.4, 0) arc(-90:0: .3cm) -- (-.1,.4);
	\draw[thick, \alphacolor] (1, 0) -- (.4, 0) arc(-90:-180: .3cm)--(.1,.4);
	\draw[thick] (-1, -.4) -- (-1, .4) -- (1, .4) -- (1, -.4) -- (-1, -.4);
	\draw[thick, unshaded] (.8, -.2) -- (.8, .2) -- (.4, .2) -- (.4, -.2) -- (.8, -.2);
	\node at (.6, 0) {$x$};
\end{tikzpicture}  \, .
$$
Joining the leftmost strings to the top implies that $x$ is a scalar multiple of the identity.
\end{proof}

\subsection{Jones' Towers Associated to $M_{0}$}\label{sec:towers}

If $\alpha$ and $\beta$ are in $\Lambda$ such that $\alpha = \beta\gamma $ with $\gamma \in \Lambda$ then we have a unital, trace-preserving inclusion $\iota: M_{\beta} \rightarrow M_{\alpha}$ given by
$$
\iota(x) =
\begin{tikzpicture}[baseline = -.1cm]
	\draw (0,0)--(0,.8);
	\draw (-.8,0)--(.8,0);
	\draw (-.8,-.6)--(.8,-.6);
	\filldraw[unshaded,thick] (-.4,.4)--(.4,.4)--(.4,-.4)--(-.4,-.4)--(-.4,.4);
	\node at (0,-.75) {{\scriptsize{$\gamma$}}};
	\node at (-.6,.2) {{\scriptsize{$\beta$}}};
	\node at (.6,.2) {{\scriptsize{$\beta$}}};
	\node at (0,0) {$x$};
\end{tikzpicture}\,.
$$
When we write $M_{\beta} \subset M_{\alpha}$ we mean that $M_{\beta}$ is included into $M_{\alpha}$ in the manner described above.  We have the following theorem:

\begin{thm} \label{thm:tower}
The following is a Jones' tower of factors:
$$
M_{0} \subset M_{\alpha} \subset M_{\alpha\overline{\alpha}} \subset \cdots \subset M_{(\alpha\overline{\alpha})^n\alpha} \subset M_{(\alpha\overline{\alpha})^{n+1}} \subset \cdots .
$$
Moreover $[M_{\alpha}:M_{0}] = \delta_{\alpha}^{2}$.
\end{thm}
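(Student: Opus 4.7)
The plan is to verify that this is a Jones tower by producing an explicit Jones projection at each step, following the standard GJSW template. By Theorem \ref{thm:Factor}, each $M_\beta$ is already known to be a $II_{1}$ factor, so the work is in identifying $M_{\alpha\overline{\alpha}}$ as the Jones basic construction of $M_0 \subset M_\alpha$ (and then iterating, since every subsequent inclusion $M_{(\alpha\overline{\alpha})^n\alpha} \subset M_{(\alpha\overline{\alpha})^{n+1}}$ has the same form up to relabeling of $\alpha$ and $\overline{\alpha}$).

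First I would define the candidate Jones projection $e \in M_{\alpha\overline{\alpha}}$ as $\delta_\alpha^{-1}$ times the cup-cap diagram that pairs off the $\alpha\overline{\alpha}$-strings on top with those on the bottom via a central cap-then-cup on one copy of $\alpha$. A direct diagrammatic check, using only that a closed loop labeled $\alpha$ evaluates to $\delta_\alpha$, shows $e^2 = e = e^*$ and $\tau(e) = \delta_\alpha^{-2}$.

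Next I would verify the Markov property. The trace-preserving conditional expectation $E_\alpha\colon M_\alpha \to M_0$ is implemented diagrammatically by capping off the outer $\alpha$-strings of a labeled box with an appropriate normalizing factor of $\delta_\alpha^{-2}$. For $x \in M_\alpha$, sandwiching $\iota_\alpha(x)$ between two copies of $e$ inside $M_{\alpha\overline{\alpha}}$ surrounds the $\alpha$-blocks of $x$ by closed loops which precisely reproduce $E_\alpha(x)$ adjacent to one remaining copy of $e$, yielding $e \cdot \iota_\alpha(x) \cdot e = E_\alpha(x) \cdot e$.

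The main obstacle is the generation step: proving that $\iota_\alpha(M_\alpha)$ and $e$ together generate $M_{\alpha\overline{\alpha}}$ as a von Neumann algebra. Following the approach in \cite{MR2732052, MR2645882, 1202.1298}, I would establish this diagrammatically by showing that any basis element of $P_{\overline{\alpha\overline{\alpha}}\beta\alpha\overline{\alpha}}$ can be realized as a product of the form $\iota_\alpha(x_1) \cdot e \cdot \iota_\alpha(x_2)$, where the cup inside $e$ provides the mechanism needed to route the extra $\overline{\alpha}$-block past $\alpha$-blocks belonging to $M_\alpha$. Once these ingredients are in place, the standard characterization of the basic construction identifies $M_{\alpha\overline{\alpha}} = \langle \iota_\alpha(M_\alpha), e\rangle$ and forces $[M_\alpha : M_0] = \tau(e)^{-1} = \delta_\alpha^2$. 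The same argument, with the roles of $\alpha$ and $\overline{\alpha}$ interchanged at alternating steps, then yields the full tower.
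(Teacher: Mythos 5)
Your proposal is structurally correct and follows the standard GJSW template, but it takes a genuinely different route from the paper at the one step that matters most: identifying $M_{\alpha\overline{\alpha}}$ with the basic construction. Both you and the paper define the same Jones projection $e_0$, verify the Markov property $e_0 x e_0 = E_{M_0}(x)e_0$ diagrammatically, and lean on Theorem~\ref{thm:Factor} for factorality. The divergence is in how one shows $M_{\alpha\overline{\alpha}} = \langle M_\alpha, e_0\rangle$. The paper \emph{avoids} proving generation directly: it first shows that the subfactor $(M_\alpha, e_0)'' \subseteq M_{\alpha\overline{\alpha}}$ is the basic construction of $M_0 \subset M_\alpha$ (via a relative-commutant computation and an isometry argument showing $x \mapsto \delta_\alpha x e_0$ intertwines $E_{M_0}$ with left multiplication by $e_0$), then applies the identical recipe one step higher in the tower to compute $[M_{\alpha\overline{\alpha}}:M_\alpha] = \delta_\alpha^2$, and finally invokes multiplicativity of Jones index to get $[M_{\alpha\overline{\alpha}} : (M_\alpha, e_0)''] = 1$, forcing equality. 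You propose instead to prove generation head-on, which buys directness and is closer to the original GJSW argument. However, your precise claim --- that \emph{every} basis element of $P_{\overline{\alpha\overline{\alpha}}\beta\alpha\overline{\alpha}}$ is a single product $\iota_\alpha(x_1)\,e\,\iota_\alpha(x_2)$ --- is too strong: the identity $1_{\alpha\overline{\alpha}}$ is a basis element of $P_{\alpha\overline{\alpha}\to\alpha\overline{\alpha}}$ but cannot equal any such product since $e$ is a proper projection. What you actually need (and what the diagrammatics do give) is that $\iota_\alpha(M_\alpha)$ together with $\iota_\alpha(M_\alpha)\,e\,\iota_\alpha(M_\alpha)$ spans a weakly dense $*$-subalgebra of $M_{\alpha\overline{\alpha}}$. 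With that correction, the Pimsner--Popa characterization (using factorality of $M_{\alpha\overline{\alpha}}$ to get full central support of $e$) yields the basic construction and $[M_\alpha : M_0] = \tau(e)^{-1} = \delta_\alpha^2$ as you say, so the overall approach is sound.
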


\begin{proof}
This proof closely follows Section 4 in \cite{MR2645882}.  We will show that $M_{\alpha\overline{\alpha}}$ is the basic construction of $M_{0} \subset M_{\alpha}$.  The proof for higher steps in the tower is the same but with heavier notation.  To begin, set
$$
e_{0} =  \frac{1}{\delta_{\alpha}}
\begin{tikzpicture} [baseline = -.1cm]
	\draw [thick, \alphacolor] (-.4, .25) arc(90:-90: .25cm);
	\draw [thick, \alphacolor] (.4, .25) arc(90:270: .25cm);
	\draw[thick] (-.4, -.4) -- (-.4, .4) -- (.4, .4) -- (.4, -.4) -- (-.4, -.4);
\end{tikzpicture}
\in
P^{op}_{\alpha\overline{\alpha}\to \alpha\overline{\alpha}}\subset
M_{\alpha\overline{\alpha}}
$$
then $e_{0}$ is a projection.  It is a straightforward diagrammatic computation to show that if $x \in M_{\alpha}$ then $e_{0}xe_{0} = E_{M_{0}}(x)$ with $E_{M_{0}}$ the trace preserving conditional expectation.

 We now claim that $(M_{\alpha}, e_{0})''$ is a $II_{1}$ factor.  Indeed, if $y$ were in the center of $(M_{\alpha}, e_{0})''$ it would have to commute with $M_{0}$, implying that $y \in P_{\alpha\overline{\alpha}\to\alpha\overline{\alpha}}$.  The element $y$ also has to commute with $M_{\alpha}$, in particular it has to commute with the element
$$
\begin{tikzpicture}[baseline = -.1cm]
	\draw [thick, \alphacolor] (-.4, -.1) -- (.4, -.1);
	\draw [thick, \alphacolor] (-.4, .1) arc(-90:0: .3cm);
	\draw [thick, \alphacolor] (.4, .1) arc(-90:-180: .3cm);
	\node at (0, -.25) {\scriptsize{$\overline{\alpha}$}};
	\draw[thick] (-.4, -.4) -- (-.4, .4) -- (.4, .4) -- (.4, -.4) -- (-.4, -.4);
\end{tikzpicture}\,.
$$
This implies $y$ must be of the form
$$
\begin{tikzpicture} [baseline = 0cm]
	\draw [thick, \alphacolor] (-1, .3) -- (1, .3);
	\node at (0, -.4) {$y$};
	\draw [thick, \alphacolor] (-1, -.4) -- (-.4, -.4);
	\draw [thick, \alphacolor] (.4, -.4) -- (1, -.4);
	\node at (.7, -.6) {\scriptsize{$\overline{\alpha}$}};
	\node at (-.7, -.6) {\scriptsize{$\overline{\alpha}$}};
	\draw[thick] (-.4, -.8) -- (-.4, 0) -- (.4, 0) -- (.4, -.8) -- (-.4, -.8);
\end{tikzpicture}\,,
$$
but commuting with $e_{0}$ forces this diagram to be a scalar multiple of the identity.

One then observes that if $z \in (M_{\alpha}, e_{0})''$, then $ze_{0} = (\delta^{\alpha})^{2}E_{M_{\alpha}}(ze_{0})e_{0}$.  This is done by realizing that as a von Neumann algebra, $(M_{\alpha}, e_{0})''$ is generated by $M_{\alpha}$ and $M_{\alpha}e_{0}M_{\alpha}$ and hence one can assume $z$ is in either of these spaces.  The equality then becomes a straightforward diagrammatic check.  From this, one can deduce that the map $x \mapsto \delta^{u}xe_{0}$ from $M_{\alpha}$ to $(M_{\alpha}, e_{0})''$ is a surjective isometry intertwining $E_{M_{0}}$ on $L^{2}(M_{\alpha})$ and left multiplication by $e_{0}$.  From this, we deduce that $(M_{\alpha}, e_{0})''$ is the basic construction of $M_{0} \subset M_{\alpha}$ and evaluating $[M_{\alpha}:M_{0}]$ is a matter of calculating the trace of $e_{0}$.

The same arguments applied to $M_{\alpha} \subset M_{\alpha\overline{\alpha}}$ implies that $[M_{\alpha\overline{\alpha}}: M_{\alpha}] = (\delta^{\alpha})^{2}$ hence $[M_{\alpha\overline{\alpha}}: (M_{\alpha}, e_{0})''] = 1$ i.e. $M_{\alpha\overline{\alpha}} = (M_{\alpha}, e_{0})''$.
\end{proof}

As an aside, since $M_{0}' \cap M_{\beta} = P_{\beta\to\beta}$ for any $\beta$, it follows that the sequence of vector spaces $P_{0}, \, P_{\alpha\to\alpha}, \, P_{\alpha\overline{\alpha}\to\alpha\overline{\alpha}},\cdots$ forms a subfactor planar algebra.

\subsection{Some bifinite bimodules over $M$} \label{sec:MoreBimodules}

\begin{nota}
We now use the notation $M$ instead of $M_0$ as it makes the rest of this section easier on the eyes.
\end{nota}

In this subsection and the next, we define a category $\cC_{bim}$ of bifinite bimodules over $M$.  To begin, we use $\cP$ to construct another von Neumann algebra $\cM_{\alpha, \beta}$ which contains the factors $M_{\alpha}$ and $M_{\beta}$ as corners/cut-downs.  Let $\Gr_{\alpha, \beta}(P) = \bigoplus_{\gamma \in \Lambda} P_{\overline{\alpha}\gamma\beta}$.  If $x \in P_{\overline{\alpha}\gamma\beta}$, we view $x$ as
$$
\begin{tikzpicture}[baseline=-.1cm]
	\draw [thick, \alphacolor]  (-.8,0)--(.4,0);
	\draw [thick, \betacolor] (.4, 0)--(.8, 0);
	\draw (0,.4)--(0,.8);
	\draw[thick, unshaded] (-.4, -.4) -- (-.4, .4) -- (.4, .4) -- (.4, -.4) -- (-.4, -.4);
	\node at (0, 0) {$x$};
	\node at (.2,.6) {\scriptsize{$\gamma$}};
\end{tikzpicture}\,.
$$
where we now draw \textcolor{\betacolor}{\betacolor} strings for strings labelled $\beta$.
There is a sesquilinear form $\langle \cdot, \cdot \rangle$ on $Gr_{\alpha, \beta}(P)$ given by
$$
\langle x, y \rangle =
\begin{tikzpicture} [baseline = .3cm]
	\draw [thick, \alphacolor] (-.4, 0) arc(90:270: .4cm) -- (1.6, -.8) arc(-90:90: .4cm);
	\draw [thick, \betacolor]( .4, 0)--(.8, 0);
	\draw (0,.4)--(0,.8);
	\draw (1.2,.4)--(1.2,.8);
	\draw[thick, unshaded] (-.4,.8) -- (-.4, 1.6) -- (1.6,1.6) -- (1.6,.8) -- (-.4,.8);
	\draw[thick, unshaded] (-.4, -.4) -- (-.4, .4) -- (.4, .4) -- (.4, -.4) -- (-.4, -.4);
	\draw[thick, unshaded] (.8, -.4) -- (.8, .4) -- (1.6, .4) -- (1.6, -.4) -- (.8, -.4);
	\node at (0, 0) {$x$};
	\node at (1.2, 0) {$y^{*}$};
	\node at (.6, 1.2) {$\sum CTL$};
\end{tikzpicture} \, .
$$
As a vector space, $\Gr_{\alpha, \alpha}(\cP) = \Gr_{\alpha}(\cP)$.  For $\alpha \neq \beta$, we form the $*$-algebra $\cG_{\alpha, \beta}$ which is generated by the vector spaces $\Gr_{\alpha}(\cP)$, $\Gr_{\beta}(\cP)$, $\Gr_{\alpha, \beta}(\cP)$ and $\Gr_{\beta, \alpha}(\cP)$, under the multiplication
$$
\begin{tikzpicture}[baseline=-.1cm]
	\draw (-.8,0)--(.8,0);
	\draw (0,.4)--(0,.8);
	\draw[thick, unshaded] (-.4, -.4) -- (-.4, .4) -- (.4, .4) -- (.4, -.4) -- (-.4, -.4);
	\node at (0, 0) {$x$};
	\node at (-.6, .2) {\scriptsize{$\kappa$}};
	\node at (.2,.6) {\scriptsize{$\gamma$}};
	\node at (.6, .2) {\scriptsize{$\theta$}};
\end{tikzpicture}
\wedge
\begin{tikzpicture}[baseline=-.1cm]
	\draw (-.8,0)--(.8,0);
	\draw (0,.4)--(0,.8);
	\draw[thick, unshaded] (-.4, -.4) -- (-.4, .4) -- (.4, .4) -- (.4, -.4) -- (-.4, -.4);
	\node at (0, 0) {$x$};
	\node at (-.6, .2) {\scriptsize{$\omega$}};
	\node at (.2,.6) {\scriptsize{$\gamma'$}};
	\node at (.6, .2) {\scriptsize{$\chi$}};
\end{tikzpicture}
= \delta_{\omega,\theta}
\begin{tikzpicture} [baseline = -.1cm]
	\draw (0,.4)--(0,.8);
	\draw (1.2,.4)--(1.2,.8);
	\draw (-.8,0)--(2,0);
	\draw[thick, unshaded] (-.4, -.4) -- (-.4, .4) -- (.4, .4) -- (.4, -.4) -- (-.4, -.4);
	\draw[thick, unshaded] (.8, -.4) -- (.8, .4) -- (1.6, .4) -- (1.6, -.4) -- (.8, -.4);
	\node at (0, 0) {$x$};
	\node at (1.2, 0) {$y^{*}$};
	\node at (-.6,.2) {\scriptsize{$\kappa$}};
	\node at (.2,.6) {\scriptsize{$\gamma$}};
	\node at (.6,.2) {\scriptsize{$\theta$}};
	\node at (1.4,.6) {\scriptsize{$\gamma'$}};
	\node at (1.8,.2) {\scriptsize{$\chi$}};
\end{tikzpicture}\,,
 $$
 where $\kappa,\theta,\omega,\chi\in\{\alpha,\beta\}$. There is also a (non-normalized) trace on $\cG_{\alpha, \beta}$ given by
$$
\tr(x) =
\begin{tikzpicture}[baseline=.3cm]
	\draw (0,0)--(0,.8);
	\draw (.4,0) arc (90:-90:.4cm) -- (-.4,-.8) arc (270:90:.4cm);
	\filldraw[unshaded,thick] (-.4,.4)--(.4,.4)--(.4,-.4)--(-.4,-.4)--(-.4,.4);
	\draw[thick, unshaded] (-.7, .8) -- (-.7, 1.6) -- (.7, 1.6) -- (.7,.8) -- (-.7, .8);
	\node at (0,0) {$x$};
	\node at (0,1.2) {$\Sigma CTL$};
\end{tikzpicture}
$$
if $x \in \Gr_{\alpha}(\cP)$ or $\Gr_{\beta}(\cP)$, and is zero otherwise.  Just as in the case for the algebras $\Gr_{\alpha}(\cP)$ one can show (by orthogonalizing) that the trace is positive definite and that multiplication is bounded.  Therefore, one can form the von Neumann algebra $\cM_{\alpha, \beta} = \cG_{\alpha, \beta}''$ acting on $L^{2}(\cG_{\alpha, \beta})$ by left and right multiplication.  Set
$$
p_{\alpha} =
\begin{tikzpicture} [baseline = -.1cm]
	\draw [thick, \alphacolor] (-.4, 0) -- (.4, 0);
	\node at (0, .2) {\scriptsize{$\alpha$}};
	\draw[thick] (-.4,.4)--(.4,.4)--(.4,-.4)--(-.4,-.4)--(-.4,.4);
\end{tikzpicture}
\text{ and }
p_{\beta} =
\begin{tikzpicture} [baseline = -.1cm]
	\draw [thick, \betacolor] (-.4, 0) -- (.4, 0);
	\node at (0, .2) {\scriptsize{$\beta$}};
	\draw[thick] (-.4,.4)--(.4,.4)--(.4,-.4)--(-.4,-.4)--(-.4,.4);
\end{tikzpicture} \, .
$$
We see that $p_{\gamma}\cM_{\alpha, \beta}p_{\gamma} = M_{\gamma}$ (with the non-normalized trace) for $\gamma = \{\alpha,\beta\}$ so $L^{2}(\cM_{\alpha, \beta})$ is naturally an $M_{\alpha} \oplus M_{\beta}$ bimodule.  Hence we may consider $L^{2}(\cM_{\alpha, \beta})$ as an $M-M$ bimodule via the embedding $x \mapsto \iota_{\alpha}(x) \oplus \iota_{\beta}(x) \in M_{\alpha} \oplus M_{\beta}$.  Under this identification, we define $\cH_{\alpha, \beta}$ to be the the $M-M$ bimodule
$$
\cH_{\alpha, \beta} = p_{\alpha}\wedge L^{2}(\cM_{\alpha, \beta})\wedge p_{\beta}.
$$

As mentioned above, we can give $\cG_{\alpha,\beta}$ an orthogonalized inner product and multiplication exactly as in Section \ref{sec:graded}.  We use the notation $\cF_{\alpha,\beta}$ to denote the vector space $\cG_{\alpha,\beta}$ with the orthogonalized inner product and multiplication $\star$.  The vector space $p_{\alpha} \star L^{2}(\cF_{\alpha,\beta})\star p_{\beta}$ is naturally an $M-M$ bimodule which is isomorphic to $p_{\alpha}\wedge L^{2}(\cM_{\alpha, \beta})\wedge p_{\beta}$.  Our first lemma is proven by making use of the orthogonal picture:

\begin{lem} \label{lem:central}
The vector space of $M-M$ central vectors of $p_{\alpha} \star L^{2}(\cF_{\alpha,\beta}) \star p_{\beta}$ is the vector space $P_{\overline{\alpha} \to \overline{\beta}}$ (rotating the GJS diagrams 90 degrees clockwise).
\end{lem}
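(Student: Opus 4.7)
The plan is to prove the two inclusions separately, with the forward direction being a planar isotopy argument and the reverse direction being a spectral argument modeled on Lemmas \ref{lem:shift}--\ref{lem:RelativeCommutant}.

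For the forward direction, I would take $\xi \in P_{\overline{\alpha}\beta}$ and embed it into the bimodule as a box with the $\alpha$-strand exiting the bottom-left, the $\beta$-strand exiting the bottom-right, and no top strings. For any $m \in F_{0}(\cP)$ with top word $\kappa$, both $\iota_{\alpha}(m)\star\xi$ and $\xi\star\iota_{\beta}(m)$ reduce in the orthogonalized picture to a single no-contraction term, since $\xi$ has no top strings to cap off against $\kappa$. The two resulting diagrams differ only by the horizontal position of the box $m$ (to the left of $\xi$ versus to the right of $\xi$). However, since $m$ does not attach to the through-strand in either $\iota_{\alpha}(m)$ or $\iota_{\beta}(m)$ (the strand merely passes underneath the box $m$), the box $m$ can be slid along the through-strand past $\xi$ by a planar isotopy, with its $\alpha$-footing to the left of $\xi$ continuously becoming a $\beta$-footing to the right of $\xi$. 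By the isotopy axiom of the planar algebra, the two diagrams are equal, so $\xi$ is $M$-$M$ central.

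For the reverse direction, I would fix a color $c \in \cL$ with $\delta_{c} > 1$ (guaranteed by our standing assumption) and let $A \subset M$ denote the abelian von Neumann algebra generated by $\bluecup$. Since $M$-$M$ centrality implies $A$-$A$ centrality, it suffices to show that $A$-$A$ central vectors of $p_{\alpha}\star L^{2}(\cF_{\alpha,\beta})\star p_{\beta}$ lie in $P_{\overline{\alpha}\beta}$. Decompose this bimodule as an $A$-$A$ bimodule following Lemma \ref{lem:decomposition}, organizing the orthonormal basis by whether the leftmost and rightmost top strings are colored $c$ or not. By the analog of Lemma \ref{lem:unitary} (together with Lemma \ref{lem:essential} to absorb the rank-one perturbations), each non-trivial sector becomes unitarily equivalent to $\ell^{2}(\N) \otimes \cH \otimes \ell^{2}(\N)$ for some multiplicity space $\cH$, with $\pi\bigl(\tfrac{\bluecup-1}{\sqrt{\delta_{c}}}\bigr)$ acting as $(s+s^{*})\otimes 1\otimes 1$ and $\rho\bigl(\tfrac{\bluecup-1}{\sqrt{\delta_{c}}}\bigr)$ as $1\otimes 1\otimes (s+s^{*})$. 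Viewing an $A$-$A$ central component as a Hilbert--Schmidt operator intertwining the two actions of $s+s^{*}$ and invoking the fact that $s+s^{*}$ has no eigenvalues (exactly as in the final step of Lemma \ref{lem:shift}), any such intertwiner must vanish on every sector that carries a non-trivial top string. Hence $A$-$A$ central vectors are supported in the zero-top-string summand, which is $P_{\overline{\alpha}\beta}$.

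The main obstacle is carrying out the $A$-$A$ bimodule decomposition in the off-diagonal case $\alpha\neq\beta$: one must index the orthonormal basis simultaneously by a top word $\gamma\in\Lambda$ and by the left/right $c$-color patterns, while keeping track of the two distinct boundaries $\alpha$ and $\beta$. Once this bookkeeping is in place, the spectral arguments of Section \ref{sec:factor} transfer with only notational changes, and the identification of $P_{\overline{\alpha}\beta}$ with $P_{\overline{\alpha}\to\overline{\beta}}$ via a ninety-degree clockwise rotation of the box is purely cosmetic.
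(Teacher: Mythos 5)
Your forward direction is essentially right: since $\xi \in P_{\overline{\alpha}\to\overline{\beta}}$ carries no top strings, each of $\iota_\alpha(m)\star\xi$ and $\xi\star\iota_\beta(m)$ is a single no-contraction diagram with $m$ floating above the through-strand, and $m$ slides over the top of $\xi$ by an isotopy precisely because the region above $\xi$ is unobstructed.

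The gap is in the reverse direction. The $A$-$A$ spectral argument you describe kills only the sectors of the form $\ell^2(\N)\otimes\cH\otimes\ell^2(\N)$; it does not touch the remaining summand $W$, the $A$-$A$ bimodule generated by $P_{\overline{\alpha}\to\overline{\beta}}$, which is \emph{not} of that form --- it is the $\|\cdot\|_2$-closure of $A\cdot P_{\overline{\alpha}\to\overline{\beta}}$, and since $A$ is abelian and commutes with $P_{\overline{\alpha}\to\overline{\beta}}$ on both sides, \emph{all} of $W$ is $A$-$A$ central. In particular $W$ strictly contains $P_{\overline{\alpha}\to\overline{\beta}}$: it also contains $\bluecup^n\star\xi$ for $\xi\in P_{\overline{\alpha}\to\overline{\beta}}$ and $n\geq 1$. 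So the correct conclusion of your $A$-argument is that the $A$-$A$ central vectors are $W$, not $P_{\overline{\alpha}\beta}$; the phrase ``supported in the zero-top-string summand'' misreads the decomposition of Lemma \ref{lem:decomposition}, whose distinguished summand is $W$, not the zero-top-string space. To finish you must repeat the second half of Lemma \ref{lem:RelativeCommutant}: expand a candidate $M$-$M$ central vector as an $\ell^2$-convergent series of cup-powers times boxes, commute it against a non-$A$ element of $M$ (the nested double-cup $z$ in that proof), and derive the recursion forcing all higher cup-coefficients to vanish. This is exactly the step the paper invokes with ``one repeats the argument in Lemma \ref{lem:RelativeCommutant},'' and your proposal omits it.
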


\begin{proof}
As in Section \ref{sec:factor}, we let $A$ be the von Neumann subalgebra of $M$ generated by $\bluecup$ and let $W$ be the $A-A$ bimodule generated by $P_{\overline{\alpha} \to \overline{\beta}}$. With the same approach as \ref{sec:factor}, we see that as an $A-A$ bimodule, we have
$$
\cH_{\alpha, \beta} \cong W \bigoplus \left(\ell^{2}(\N) \otimes \cH \otimes \ell^{2}(\N)\right)
$$
for $\cH$ an auxiliary Hilbert space.  The operator $\bluecup$ acts on the left of the second factor by $(s + s^{*}) \otimes 1 \otimes 1$  and on the right by $1 \otimes 1 \otimes (s + s^{*})$.  Therefore the proof of Lemma \ref{lem:shift} applies here and we see that the $A-A$ central vectors of $L^{2}(\cF_{\alpha, \beta})$ are exactly $W$.  To finish the proof, one repeats the argument in Lemma \ref{lem:RelativeCommutant}.
\end{proof}

\begin{cor} \label{cor:central}
The vector space of $M-M$ central vectors of $p_{\alpha} \wedge L^{2}(\cF_{\alpha,\beta}) \wedge p_{\beta}$ is the vector space $P_{\overline{\alpha} \to \overline{\beta}}$ (rotating the GJS diagrams 90 degrees clockwise).
\end{cor}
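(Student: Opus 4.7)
The plan is to leverage the orthogonalization map $\Phi$ already established in Section \ref{sec:graded} to transport Lemma \ref{lem:central} from the $\star$-picture to the $\wedge$-picture. First, I would extend $\Phi$ from $\Gr_0(\cP) \cong F_0(\cP)$ to a $*$-algebra isomorphism $\Phi \colon \cG_{\alpha,\beta} \to \cF_{\alpha,\beta}$ defined by the same formula (summing over stackings by $E \in \Epi(CTL)$ on top of each homogeneous element). Since $\Phi$ preserves the trace and intertwines the inner products, it extends to a unitary $\Phi \colon L^2(\cM_{\alpha,\beta}) \to L^2(\cF_{\alpha,\beta})$, and its restrictions to $p_\alpha \cM_{\alpha,\beta} p_\alpha = M_\alpha$ and $p_\beta \cM_{\alpha,\beta} p_\beta = M_\beta$ are the usual orthogonalization isomorphisms of the factors.

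Second, I would verify that $\Phi$ intertwines the left and right $M = M_0$ actions. In the $\wedge$-picture, $M$ acts on $p_\alpha \wedge L^2(\cM_{\alpha,\beta}) \wedge p_\beta$ via the embedding $\iota_\alpha \oplus \iota_\beta$ followed by $\wedge$-multiplication; in the $\star$-picture it acts analogously via $\star$-multiplication. Because $\Phi(x \wedge y) = \Phi(x) \star \Phi(y)$ and $\Phi$ is the identity on $\iota_\alpha(M_0)$ and $\iota_\beta(M_0)$ (since these elements have no strings emerging from the top of the GJSW box, leaving only the empty diagram in $\Epi(CTL)$ to sum over), $\Phi$ gives a unitary $M$-$M$ bimodule isomorphism between $p_\alpha \wedge L^2(\cM_{\alpha,\beta}) \wedge p_\beta$ and $p_\alpha \star L^2(\cF_{\alpha,\beta}) \star p_\beta$.

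Third, I would apply Lemma \ref{lem:central} to identify the space of $M$-$M$ central vectors in the $\star$-picture with $P_{\overline{\alpha} \to \overline{\beta}}$. Pulling this back via $\Phi^{-1}$ gives the desired description of central vectors in the $\wedge$-picture. The key point that makes the identification literal (rather than merely up to isomorphism) is that $\Phi$ acts as the identity on the $\gamma = \emptyset$ summand $P_{\overline{\alpha}\beta} \subset \Gr_{\alpha,\beta}(\cP)$: elements there have no strands on top of the input box, so the only admissible stacking is by the empty diagram. Hence the central vectors in both pictures are given by the same subspace $P_{\overline{\alpha} \to \overline{\beta}}$ viewed inside the graded algebra as the component with empty word on top.

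There is no substantive obstacle here; the work is entirely bookkeeping to check that the orthogonalization intertwines the bimodule actions and fixes the relevant subspace pointwise. The only mild subtlety is ensuring that the $*$-isomorphism $\Phi$ on the algebraic level extends to a bounded bimodule map on the $L^2$-completion, which follows from the boundedness argument in Lemma \ref{lem:PositiveBounded} applied in the $\cG_{\alpha,\beta}$ setting.
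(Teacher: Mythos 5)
Your approach is essentially the one the paper intends: the Corollary follows from Lemma \ref{lem:central} by transporting across the $M$-$M$ bimodule isomorphism between $p_\alpha \star L^2(\cF_{\alpha,\beta}) \star p_\beta$ and $p_\alpha \wedge L^2(\cM_{\alpha,\beta}) \wedge p_\beta$ (stated just before the Lemma), and you correctly observe that the zero-strand component $P_{\overline{\alpha}\to\overline{\beta}}$ is fixed pointwise by the orthogonalization map, so the identification of central vectors is literal.

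There is, however, an incorrect justification in your second step. You assert that $\Phi$ is the identity on $\iota_\alpha(M_0)$ and $\iota_\beta(M_0)$ because ``these elements have no strings emerging from the top of the GJSW box.'' That is false: for $x \in P_\gamma$ with $\gamma \neq \emptyset$, the element $\iota_\alpha(x)$ is drawn as the box $x$ with the $\gamma$ strands still emerging from its top and an $\alpha$ strand running beneath, so the full $\Epi(CTL)$ sum applies and $\Phi(\iota_\alpha(x)) \neq \iota_\alpha(x)$ in general. What you actually need, and what is true, is that $\Phi$ \emph{commutes} with $\iota_\alpha$ and $\iota_\beta$ — the extra $\alpha$-strand passes under the box and does not interfere with stacking $E \in \Epi(CTL)$ on top — together with the multiplicativity $\Phi(u \wedge v) = \Phi(u) \star \Phi(v)$. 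This shows $\Phi$ intertwines the two $M$-$M$ actions once the two realizations of $M_0$ (as $\Gr_0(\cP)''$ with $\wedge$ and as $F_0(\cP)''$ with $\star$) are identified via $\Phi$ itself rather than via the underlying vector-space identity. With that repair, the rest of your argument — including the crucial remark that $\Phi$ restricts to the identity on $P_{\overline{\alpha}\to\overline{\beta}}$ so the identification is literal — is sound and matches the paper.
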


For the rest of this section, we use the non-orthogonalized picture for $\cH_{\alpha, \beta}$.  We first write down some straightforward isomorphisms between these bimodules:

 \begin{lem} \label{lem:rotate}
 Let $\alpha, \beta, \gamma \in \Lambda$. Then as $M-M$ bimodules, $ \cH_{\alpha\beta,\gamma}\cong \cH_{\alpha, \overline{\beta}\gamma}$.
\end{lem}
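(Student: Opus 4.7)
The plan is to exhibit an explicit rotation-style bijection $\Phi\colon \Gr_{\alpha\beta,\gamma}(\cP) \to \Gr_{\alpha,\overline{\beta}\gamma}(\cP)$ defined by bending the $\overline{\beta}$-strand of the left boundary up, over the top, and down to the right boundary. Concretely, given $x \in P_{\overline{\beta}\,\overline{\alpha}\,\zeta\,\gamma}$ viewed as a generator of $\Gr_{\alpha\beta,\gamma}(\cP)$, I would define $\Phi(x) \in P_{\overline{\alpha}\,(\beta\zeta)\,\overline{\beta}\,\gamma}$ by the planar tangle which inserts a new $\beta$-$\overline{\beta}$ cap so that the original left-boundary $\overline{\beta}$-strand travels around the top of the box and reappears as the $\overline{\beta}$-portion of the new right boundary, while extending $\zeta$ by the fresh $\beta$-strand on top. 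The inverse $\Phi^{-1}$ is constructed by the opposite bending.

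The verifications proceed in four short steps. First, $\Phi$ clearly restricts to a linear bijection on each homogeneous summand, since bending a labeled string is reversible in the planar operad. Second, $\Phi$ is trace- and inner-product-preserving: both inner products are defined by closing a diagram via $\sum CTL$ on the top and identifying the two bottom boundaries via the appropriate cups, and bending one group of strands around before closing the diagram is an isotopy of the resulting closed labeled surface, so by the isotopy axiom of the planar algebra the two numbers coincide. Third, and most importantly, $\Phi$ intertwines the left and right $M$-actions. On the left, $y \in M_0 \subset M_{\alpha\beta}$ acts by $\iota_{\alpha\beta}(y)$, i.e.\ by horizontally pre-pending a box with $\alpha\beta$-strands coming out of both bottom-corners; after horizontal concatenation with $x$, the middle pair of $\alpha\beta$-strands is smoothed out. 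Applying $\Phi$ to the resulting element bends only the leftmost $\overline{\beta}$-strand of $\iota_{\alpha\beta}(y)$ around the top, and an isotopy (pushing the $\beta$-strand of $y$ around while leaving the $\alpha$-strand alone) identifies this diagram with $\iota_{\alpha}(y)$ acting on $\Phi(x)$. The argument for the right action is trivial since the bending takes place entirely on the left, away from the right boundary; thus $\Phi$ commutes with $\iota_{\gamma}$ acting on the right.

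Fourth, $\Phi$ preserves the cut-down structure: it sends $p_{\alpha\beta}\wedge L^2(\cM_{\alpha\beta,\gamma}) \wedge p_\gamma$ onto $p_\alpha \wedge L^2(\cM_{\alpha,\overline{\beta}\gamma})\wedge p_{\overline{\beta}\gamma}$, because rotating the $\beta$-strand around exactly converts a word of the form $\overline{\alpha\beta}\,\zeta\,\gamma$ into the form $\overline{\alpha}\,(\beta\zeta)\,\overline{\beta}\gamma$ and respects left/right projections onto the appropriate identity diagrams.

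The only real subtlety — as opposed to routine diagrammatics — is verifying that $\Phi$ extends continuously from $\Gr$ to $L^2$, but this is immediate from step two, since an inner-product-preserving map of dense subspaces extends uniquely to a Hilbert-space isometry, and the intertwining property then propagates to the closure because the $M-M$ actions are bounded (Lemma \ref{lem:PositiveBounded} applied to $\cG_{\alpha\beta,\gamma}$ and $\cG_{\alpha,\overline{\beta}\gamma}$). Hence $\Phi$ descends to an $M-M$ bimodule isomorphism $\cH_{\alpha\beta,\gamma}\cong \cH_{\alpha,\overline{\beta}\gamma}$.
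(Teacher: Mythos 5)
Your map $\Phi$ does not exist as a planar tangle, and even if it did it would not be a bijection. You claim $\Phi(x)\in P_{\overline{\alpha}(\beta\zeta)\overline{\beta}\gamma}$ for $x\in P_{\overline{\beta}\,\overline{\alpha}\,\zeta\,\gamma}$: the target word has $|\beta|$ more letters than the source word, so any planar tangle from the source box to the target box with a single input rectangle must cap off pairs of output strings, and there is no consistent way to pair the extra $|\beta|$ letters together (they sit at one $\beta$ on top and one $\overline{\beta}$ on the right — capping them uses up $2|\beta|$ output letters and leaves the input's $\overline{\beta}$ with nowhere to go). Beyond the string count, the deeper problem is the grading: you send the $\zeta$-homogeneous component of $\Gr_{\alpha\beta,\gamma}$ into the $\beta\zeta$-component of $\Gr_{\alpha,\overline{\beta}\gamma}$. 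As $\zeta$ ranges over $\Lambda$, the words $\beta\zeta$ only cover those $\zeta'\in\Lambda$ that begin with $\beta$, so every other homogeneous summand of $\Gr_{\alpha,\overline{\beta}\gamma}$ — including $P_{\overline{\alpha}\,\overline{\beta}\gamma}$ itself ($\zeta'=\emptyset$) — is missed entirely. The assertion that ``$\Phi$ clearly restricts to a linear bijection on each homogeneous summand'' is therefore false.

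The root cause is the choice to rotate the $\beta$-band over the \emph{top} of the box. The $\zeta$-strands exit the top, so there is no planar path from the lower-left corner to the right boundary that stays above the box — any such path must either cross the $\zeta$-strands (forbidden) or exit through the top boundary (which is exactly what forces the spurious $\beta$ onto the top and wrecks both the grading and the string count). The paper's proof instead bends the $\beta$-band around the \emph{bottom}, which is unobstructed because nothing exits below the box. This is a genuine planar isotopy, with no caps inserted, and it carries $P_{\overline{\beta}\,\overline{\alpha}\,\zeta\,\gamma}$ linearly and bijectively onto the $\zeta$-component on the other side; since the rotation happens entirely below the box it visibly commutes with the left action (via $\iota_\alpha$ and $\iota_{\alpha\beta}$) and the right action (via $\iota_\gamma$ and $\iota_{\overline{\beta}\gamma}$). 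Your steps two through four (unitarity via isotopy of closed diagrams, intertwining, extension to $L^2$) are the right outline, but they are built on the wrong tangle; replace the over-the-top bend with the under-the-bottom bend and discard the cap, and the argument goes through.
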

 \begin{proof}
 The map
 $$
\begin{tikzpicture}[baseline=-.1cm]
	\draw (0,0)--(.8,0);
	\draw[thick, \alphacolor] (-.8,.2)--(0,.2);
	\draw[thick, \betacolor] (-.8,-.2)--(0,-.2);
	\draw (0,.4)--(0,.8);
	\draw[thick, unshaded] (-.4, -.4) -- (-.4, .4) -- (.4, .4) -- (.4, -.4) -- (-.4, -.4);
	\node at (0, 0) {$x$};
	\node at (.6, .2) {\scriptsize{$\gamma$}};
\end{tikzpicture}
 \longmapsto
\begin{tikzpicture}[baseline=-.1cm]
	\draw (0,0)--(.8,0);
	\draw[thick, \alphacolor] (-.8,.2)--(0,.2);
	\draw[thick, \betacolor] (-.4,-.2) arc(90:270:.2cm)--(.8,-.6);
	\draw (0,.4)--(0,.8);
	\draw[thick, unshaded] (-.4, -.4) -- (-.4, .4) -- (.4, .4) -- (.4, -.4) -- (-.4, -.4);
	\node at (0, 0) {$x$};
	\node at (.6, .2) {\scriptsize{$\gamma$}};
\end{tikzpicture}
$$
is a unitary operator intertwining the left and right $M$-actions.
\end{proof}

\begin{defn}
By Lemma \ref{lem:rotate}, we now define the bimodule $H_\alpha:=\cH_{\alpha,\emptyset}\cong \cH_{\emptyset,\overline{\alpha}}$. We draw elements of $H_\alpha$ with strings emanating from the bottom instead of the sides
$$
\ColorNBox{\gamma}{x}{\alphacolor}\,,
$$
and the left and right $M$-actions are given by the obvious diagrams. For $\xi\in p_{\alpha}\wedge \cM_{\alpha, \emptyset}\wedge p_{\emptyset}\cong p_{\emptyset} \wedge \cM_{\emptyset,\overline{\alpha}}\wedge p_{\overline{\alpha}}$ and $x,y\in M$,
$$
x\xi y =
\Mbox{}{x}
\,
\ColorNBox{}{\xi}{\alphacolor}
\,
\Mbox{}{y}
\,.
$$
The inner product on $H_\alpha$ is given by
\begin{equation}\label{eqn:InnerProduct}
\langle x, y \rangle_{H_\alpha} =
\begin{tikzpicture} [baseline = .3cm]
	\draw (0,.8)--(0, -.4);
	\draw (1.2,-.4) -- (1.2,.8);
	\draw[thick, \alphacolor] (0, -.4) .. controls ++(270:.4cm) and ++(270:.4cm) .. (1.2,-.4);
	\draw[thick, unshaded] (-.4,.8) -- (-.4, 1.6) -- (1.6,1.6) -- (1.6,.8) -- (-.4,.8);
	\draw[thick, unshaded] (-.4, -.4) -- (-.4, .4) -- (.4, .4) -- (.4, -.4) -- (-.4, -.4);
	\draw[thick, unshaded] (.8, -.4) -- (.8, .4) -- (1.6, .4) -- (1.6, -.4) -- (.8, -.4);
	\node at (0, 0) {$\overline{y^*}$};
	\node at (1.2, 0) {$x$};
	\node at (.6, 1.2) {$\sum CTL$};
	\node at (1.4,-.6) {\scriptsize{$\alpha$}};
	\node at (-.2,-.6) {\scriptsize{$\overline{\alpha}$}};
\end{tikzpicture}
\end{equation}
where $\overline{y^*}$ is the rotation of $y^*$ by 180 degrees.
\end{defn}

Our goal will now be to show that $H_\alpha \otimes_M H_\beta\cong H_{\alpha\beta}$. In diagrams:
$$
\ColorNBox{}{}{\alphacolor}
\,
\ColorNBox{}{}{\betacolor}
\leftrightarrow
\begin{tikzpicture}[baseline=-.1cm]
	\draw (0,0)--(0,.8);
	\draw[thick,\alphacolor] (-.2,-.8)--(-.2,0);
	\draw[thick,\betacolor] (.2,-.8)--(.2,0);
	\draw[thick, unshaded] (-.4, -.4) -- (-.4, .4) -- (.4, .4) -- (.4, -.4) -- (-.4, -.4);
	\node at (0, 0) {};
\end{tikzpicture}
$$
We will need the following lemma, whose purpose is to ``close up" the space between the boxes.

\begin{lem} \label{lem:cup}
The $\alpha-$cup element
$\,
\begin{tikzpicture} [baseline=-.1cm]
	\filldraw[unshaded,thick] (-.4,.4)--(.4,.4)--(.4,-.4)--(-.4,-.4)--(-.4,.4);
	\draw[thick, \alphacolor] (-.3, .4) arc(180:360: .3cm);
\end{tikzpicture}
$
is positive and invertible in $M$.
\end{lem}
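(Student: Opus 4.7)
My plan is to prove the lemma in two steps: self-adjointness of the $\alpha$-cup element (which I will denote $T$) is immediate from the symmetry of the defining diagram under the reflection that implements the $*$-operation of the planar algebra, so $T = T^* \in M$. It then suffices to analyze the spectrum of $T$ and show it is contained in an interval $[c_1,c_2]\subset (0,\infty)$.

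The core of the proof would be a spectral decomposition of $L^2(M)$ under the action of the abelian von Neumann subalgebra $A\subset M$ generated by $T$ (together with the ``single-color'' cups, if needed). I would mimic the approach of Lemmas \ref{lem:decomposition}, \ref{lem:unitary}, and \ref{lem:essential}, but carried out inside $M_0$ rather than $M_\alpha$: decompose $L^2(M)$ as an $A$-$A$ bimodule into summands of the form $L^2(A) \oplus \bigl(\ell^2(\N)\otimes \cH \otimes \ell^2(\N)\bigr)$ for suitable auxiliary Hilbert spaces $\cH$, indexed by the ways a boundary word can begin/end relative to the letters of $\alpha$. On each summand one identifies $T$ with an explicit self-adjoint operator built from shift operators $s,s^*$ and rank-one perturbations, after which spectral calculus (in the spirit of Lemma \ref{lem:essential}) computes the spectrum. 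The assumption $\delta_c > 1$ for every $c \in \cL$ enters crucially as a Perron-Frobenius-type estimate guaranteeing the spectrum stays strictly positive and bounded away from $0$.

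An alternative, probably cleaner, route is to exhibit $T$ directly as $v^*\!\star v$ for a suitable ``half-cup'' element $v\in F_0(\cP)$, perhaps built from the straight-strand element $1_\alpha \in P_\alpha$ after projecting onto the appropriate summand; positivity would then be automatic and invertibility would follow by verifying that $v$ has dense range in the GNS representation, again using $\delta_c>1$. The main obstacle in either route is the explicit identification of the spectral model or the factorizing element $v$: unlike the GJSW analysis of Lemma \ref{lem:unitary}, which was performed for $\bluecup_\alpha \in M_\alpha$ with a particular $c$-coloured cup and normalization, here one has a fully $\alpha$-coloured multi-strand cup living in $M_0$, so one must carefully track how the nested structure interacts with the $\star$-multiplication of $F_0(\cP)$ and the associated Perron-Frobenius weights.
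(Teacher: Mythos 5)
Your second route is much closer in spirit to what the paper actually does, but there is a genuine gap: you would like to write the $\alpha$-cup as a single square $v^*\! \star v$ with $v$ ``the half-cup,'' and argue invertibility from a dense-range condition, but for $|\alpha|>1$ the nested cup is not a single such square in any way that makes the invertibility transparent. The missing idea is an \emph{induction on $|\alpha|$}. The paper peels off one color at a time: writing $\alpha=\beta c$, it expresses the $\alpha$-cup as a three-fold product $z^*\wedge x\wedge z$ in the semifinite algebra $\cM_{\emptyset,c}$, where $z$ is the half $c$-cap and $x$ is the $\beta$-cup sitting inside $M_c$ (i.e.\ the $\beta$-cup with one extra horizontal $c$-strand below). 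Positivity is immediate from this form. Invertibility then follows from two quadratic-form estimates: $z^*\wedge z$ is invertible (it is the $c$-cup, a free-Poisson element with spectrum bounded away from $0$ when $\delta_c>1$, the base case $|\alpha|=1$ from \cite{MR2732052}), and $x$ is positive invertible by the inductive hypothesis, so with $r=x^{1/2}$ one has $\langle z^*\wedge x\wedge z\wedge\xi,\xi\rangle=\|r\wedge z\wedge\xi\|_2^2\geq k_r k_z\|\xi\|_2^2$. No spectral decomposition of $L^2(M)$ is needed.

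Your first route (a full $A$-$A$ bimodule decomposition of $L^2(M)$ and a shift-operator model for the multi-strand $\alpha$-cup) would in principle work but is substantially heavier than necessary, and as you note yourself the explicit spectral model for a nested multi-color cup is not at all straightforward to write down --- the single-color analysis of Lemmas \ref{lem:decomposition}--\ref{lem:essential} does not transfer verbatim, because the $\alpha$-cup does not act diagonally on a natural orthonormal basis of cup-paddings in the way the single $c$-cup does. If you want to salvage your plan, I would recommend abandoning route (1) entirely, keeping the factorization intuition of route (2), and making the induction on the length of $\alpha$ explicit: factor one color at a time rather than trying to exhibit a single half-cup $v$ for all of $\alpha$ at once. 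Also note that the paper works with the $\wedge$-multiplication here and passes to the semifinite algebra $\cM_{\emptyset,c}$ to make sense of the intermediate element $x\in M_c$; working purely inside $F_0(\cP)$ with the $\star$-multiplication as you propose would require some additional bookkeeping to interpret the intermediate element.
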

\begin{proof}
We induct on the size of $\alpha$.  If $|\alpha| = 1$ then it follows from \cite{MR2732052} that the operator is a free-poisson element whose spectrum is supported away from 0 hence the lemma holds for this case.  Suppose that the lemma holds for some color pattern $\beta$ and suppose $\alpha = \beta c$ for a fixed color $c$, denoted by a \textcolor{\cupcolor}{green} string.  It follows that the element
$$
x =
\begin{tikzpicture} [baseline=-.1cm]
	\filldraw[unshaded,thick] (-.4,.4)--(.4,.4)--(.4,-.4)--(-.4,-.4)--(-.4,.4);
	\draw[thick, \betacolor] (-.2, .4) arc (180:360: .2cm);
	\draw[thick, \cupcolor] (-.4,-.15)--(.4,-.15);
\end{tikzpicture}
$$
is positive invertible in $M_{c}$.  Let $z \in \cM_{\emptyset,c}$ be the element
$$
z =
\begin{tikzpicture} [baseline=-.1cm]
	\filldraw[unshaded,thick] (-.4,.4)--(.4,.4)--(.4,-.4)--(-.4,-.4)--(-.4,.4);
	\draw[thick, \cupcolor] (-.4, 0) arc(-90:0: .4cm);
\end{tikzpicture}
$$
Then the $\alpha-$cup element has the form $z^{*}\wedge x\wedge z \in \cM_{\emptyset, c}$ so it immediately follows that it is positive.  Note that $z^{*}\wedge z$ is invertible so there is a positive constant $k_{z}$ so that $\langle z^{*}\wedge z \wedge \xi, \xi \rangle \geq k_{z}\|\xi\|_{2}^{2}$ for all $\xi \in L^{2}(M)$.  Letting $r$ be the positive square root of $x$, there is a strictly positive constant $k_{r}$ so that for all $\eta \in L^{2}(M_{c})$, $\langle r\wedge\eta, r\wedge\eta \rangle \geq k_{r} \|\eta\|_{2}^{2}$.  Therefore, for all $\xi \in L^{2}(M)$,
\begin{align*}
\langle z^{*}\wedge x\wedge z \wedge \xi, \xi \rangle
&= \langle r\wedge z \wedge \xi, r\wedge z \wedge \xi \rangle
\geq k_{r} \langle z\wedge \xi, z\wedge \xi \rangle \\
 &= k_{r} \langle z^{*}\wedge z \wedge\xi, \xi \rangle \geq k_{r}k_{z}\|\xi\|_{2}^{2}
\end{align*}
implying invertibility.
\end{proof}

\begin{defn}
Recall from \cite{MR561983,MR1424954,bimodules} that given a bimodule $\sb{M}K_M$, a vector $\xi\in K$ is called \underline{right $M$-bounded} if the map $L(\xi)^0\colon M\to K$ given by $m\mapsto \xi m$ extends to a bounded linear operator $L(\xi)\colon L^2(M)\to K$. There is a similar definition of a left $M$-bounded vector, and if $K$ is bifinite, then the sets of left and right $M$-bounded vectors agree. For such a bimodule $K$, we denote the set of left/right $M$-bounded vectors by $D(K)$.
\end{defn}

\begin{ex}
For all $\alpha,\beta\in\Lambda$, we have
$D(H_{\alpha\beta})=D(\cH_{\alpha,\overline{\beta}})=p_\alpha \wedge\cM_{\alpha,\overline{\beta}}\wedge p_{\overline{\beta}}$.
\end{ex}

\begin{lem} \label{lem:TensorProduct}
As $M - M$ bimodules, the Connes' fusion $H_\alpha\otimes_M H_\beta\cong H_{\alpha\beta}$.
\end{lem}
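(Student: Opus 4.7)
My plan is to build a natural horizontal-concatenation map and show it is an isometric bimodule isomorphism. For right-$M$-bounded vectors $\xi \in D(H_\alpha) = p_\alpha \wedge \cM_{\alpha,\emptyset} \wedge p_\emptyset$ and $\eta \in D(H_\beta) = p_\beta \wedge \cM_{\beta,\emptyset} \wedge p_\emptyset$, I define
\[
\Phi(\xi \otimes \eta) = \xi \cdot \eta \in D(H_{\alpha\beta})
\]
by juxtaposing the boxes $\xi$ and $\eta$ in the plane and reading the two adjacent bottom strings as a single $\alpha\beta$-string. Associativity of diagrammatic juxtaposition makes $\Phi$ balanced over $M$, so it descends to the algebraic relative tensor product, and $M$-$M$-equivariance is immediate.

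The core of the proof is the isometry. The standard Connes fusion formula gives
\[
\langle \xi_1 \otimes \eta_1, \xi_2 \otimes \eta_2 \rangle_{H_\alpha \otimes_M H_\beta} = \langle \langle \xi_2, \xi_1\rangle_M \cdot \eta_1, \eta_2\rangle_{H_\beta},
\]
where $\langle \xi_2, \xi_1\rangle_M \in M$ is the right-$M$-valued inner product on $H_\alpha$. A direct reading of \eqref{eqn:InnerProduct} identifies $\langle \xi_2, \xi_1\rangle_M$ with the element of $M$ obtained by joining the bottom $\alpha$-strings of $\xi_1$ and the rotated $\xi_2^*$ through an $\alpha$-cup. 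Substituting this into the $H_\beta$-inner product yields a planar diagram in which $\xi_1, \eta_1, \eta_2^*, \xi_2^*$ appear in a row, closed off at the bottom by a $\beta$-cup nested inside an $\alpha$-cup and topped by $\sum CTL$. But a $\beta$-cup nested inside an $\alpha$-cup is, by definition, the $\alpha\beta$-cup, and the corresponding top pairing collapses to the $\sum CTL$ for the combined top word of $\xi_1 \cdot \eta_1$. This is precisely the diagram \eqref{eqn:InnerProduct} produces when applied directly to $\langle \Phi(\xi_1 \otimes \eta_1), \Phi(\xi_2 \otimes \eta_2)\rangle_{H_{\alpha\beta}}$. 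Hence $\Phi$ extends to an isometric $M$-$M$-bimodule embedding.

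For surjectivity, since the image is a closed $M$-$M$-sub-bimodule of $H_{\alpha\beta}$, it suffices to show it contains a dense subspace of $D(H_{\alpha\beta})$. A generic single-box vector $x \in P_{\overline{\gamma}\alpha\beta}$ can be written in the form $\xi \cdot m \cdot \eta = \Phi(\xi \otimes (m\eta))$ for appropriate $\xi \in D(H_\alpha)$, $\eta \in D(H_\beta)$, and $m \in M$, by splitting $x$ vertically into a left $\alpha$-piece, a middle $M$-piece, and a right $\beta$-piece, and then using the invertibility of the $\alpha$-cup (Lemma \ref{lem:cup}) to eliminate auxiliary cups introduced by the splitting. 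Taking linear combinations of such products exhausts a dense subspace of $H_{\alpha\beta}$.

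The principal obstacle will be this density/decomposition step: the isometry itself reduces to the single diagrammatic identity that nested $\alpha$- and $\beta$-cups form the composite $\alpha\beta$-cup, but producing an explicit decomposition of an arbitrary single-box vector as a concatenation $\xi \cdot m \cdot \eta$ and controlling the approximation requires careful use of the cup-invertibility.
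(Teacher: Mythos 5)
Your map, your isometry computation (nested $\alpha$- and $\beta$-cups form the $\alpha\beta$-cup), and your invocation of the Connes fusion $M$-valued inner product formula all match the paper, so the first two thirds of the proof are correct.

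The surjectivity step is where you have a genuine gap. You assert that a generic single-box vector can be ``split vertically'' into a left $\alpha$-piece, a middle $M$-piece, and a right $\beta$-piece, with auxiliary cups eliminated via Lemma \ref{lem:cup}. But a single box $x\in P_{\overline\alpha\gamma\overline\beta}$ carries an arbitrary morphism inside it and is not a tangle you can cut; there is no canonical way to factor it as $\xi\wedge\eta$ with $\xi\in D(H_\alpha)$, $\eta\in D(H_\beta)$. The natural attempt to make your idea precise --- cap off the $\overline\beta$-string to define $\xi = x\wedge w_\beta$, then recover $x$ via $\xi\wedge w_\beta^*\wedge (w_\beta w_\beta^*)^{-1}$ --- fails, because while $w_\beta^* w_\beta$ is invertible in $M$ (this is what Lemma \ref{lem:cup} actually gives), the element $w_\beta w_\beta^*$ lives in $M_{\overline\beta}$ and has range support a \emph{proper} subprojection of $p_{\overline\beta}$ (its trace is $\Tr(p_\emptyset)=1<\delta_\beta=\Tr(p_{\overline\beta})$), hence is not invertible there. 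Capping one side of $x$ therefore loses information, and you cannot eliminate the auxiliary cup.

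The paper's proof addresses exactly this obstruction: instead of a single cap, it takes the polar parts $v,\tilde v$ of the $\alpha$- and $\beta$-caps, which have right support $p_\emptyset$ but only partial left support, and then chooses finite families of partial isometries $x_1,\dots,x_n\in M_\alpha$ and $y_1,\dots,y_n\in M_{\overline\beta}$ so that $z=\sum_i x_i v\tilde v y_i$ is a partial isometry in $D(H_{\alpha\beta})$ with \emph{full} left support $p_\alpha$. Every $r\in D(H_{\alpha\beta})$ then equals $z(z^*r)$ with $z^*r\in M_{\overline\beta}$, and $z(z^*r)=\sum_i (x_i v)(\tilde v y_i z^*r)$ is manifestly in the image of $u$. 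You would need to replace your splitting heuristic with this (or an equivalent) partial-isometry covering argument to complete the proof.
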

\begin{proof}
By  \cite{MR1424954}, $H_\alpha\otimes_M H_\beta$ is the completion of the algebraic tensor product
$
D(H_\alpha) \odot D(H_\beta)
$
(first modding out by vectors of length zero) under the semi-definite inner product given by
$$
\langle \xi_{1} \otimes \eta_{1}, \xi_{2} \otimes \eta_{2} \rangle_{H_\alpha \otimes_M H_\beta}
= \langle  \langle \xi_2|\xi_1 \rangle_M \cdot \eta_{1}, \eta_{2} \rangle_{H_\beta}
$$
where $\langle \xi_2|\xi_1 \rangle_M$ is the unique element in $M$ satisfying
$$
\langle \xi_{1}x, \xi_{2} \rangle_{H_\alpha} = \tr(x\langle \xi_2|\xi_1 \rangle_M) \text{ for all }x\in M.
$$
We define a map $u$ by the linear extension of
\begin{align*}
D(H_\alpha) \odot D(H_\beta)
&\longrightarrow \cH_{\alpha,\overline{\beta}}\cong H_{\alpha\beta}
\text{ by}\\
\ColorNBox{}{\xi}{\alphacolor}
\otimes
\ColorNBox{}{\eta}{\betacolor}
&\longmapsto
\ColorNBox{}{\xi}{\alphacolor}
\,
\ColorNBox{}{\eta}{\betacolor}
\,.
\end{align*}
Obviously the map $u$ intertwines the left and right $M$-actions and is $M$-middle linear. We will show it is isometric with dense range, and thus $u$ has a unique extension to a $M-M$ bilinear unitary giving the desired isomorphism.

First, it is not hard to check directly that the $M$-valued inner product of the $M$-bounded vectors $\xi_{1}, \xi_{2} \in D(H_\alpha)$ is given by
$$
\langle \xi_2|\xi_1\rangle_M
=
\begin{tikzpicture}[baseline=-.1cm]
	\draw (0,.6)--(0, -.4);
	\draw[thick, \alphacolor] (0, -.4) .. controls ++(270:.4cm) and ++(270:.4cm) .. (1.2,-.4);
	\draw (1.2,-.4) -- (1.2,.6);
	\draw[thick, unshaded] (1.6, -.4) -- (1.6, .4) -- (.8, .4) -- (.8, -.4) -- (1.6, -.4);
	\draw[thick, unshaded] (-.4, -.4) -- (-.4, .4) -- (.4, .4) -- (.4, -.4) -- (-.4, -.4);
	\node at (1.2, 0) {$\xi_1$};
	\node at (-.2,-.6) {\scriptsize{$\overline{\alpha}$}};
	\node at (1.4,-.6) {\scriptsize{$\alpha$}};
	\node at (0, 0) {$\overline{\xi_2^*}$};
\end{tikzpicture}
\, .
$$
Hence if
$$
\zeta = \sum_{i=1}^{n} \xi_{i} \otimes \eta_{i} \in
D(H_\alpha) \odot D(H_\beta),
$$
we immediately have
\begin{equation}\label{eqn:InnerProductForTensorProduct}
\|\zeta\|_{2}^{2}
= \sum_{i,j=1}^n \langle  \langle \xi_j|\xi_i \rangle_M \cdot \eta_{i}, \eta_{j} \rangle_{H_\beta}
=
\begin{tikzpicture} [baseline = .3cm]
	\draw (0,.8)--(0, -.4);
	\draw (1.2,-.4) -- (1.2,.8);
	\draw[thick, \alphacolor] (0, -.4) .. controls ++(270:.4cm) and ++(270:.4cm) .. (1.2,-.4);
	\draw (-1.2,.8)-- (-1.2,-.4);
	\draw (2.4,-.4)--(2.4,.8);
	\draw[thick, \betacolor] (-1.2,-.4) .. controls ++(270:1cm) and ++(270:1cm) .. (2.4,-.4);
	\draw[thick, unshaded] (-1.6,.8) -- (-1.6, 1.6) -- (2.8,1.6) -- (2.8,.8) -- (-1.6,.8);
	\draw[thick, unshaded] (-.4, -.4) -- (-.4, .4) -- (.4, .4) -- (.4, -.4) -- (-.4, -.4);
	\draw[thick, unshaded] (-.8, -.4) -- (-.8, .4) -- (-1.6, .4) -- (-1.6, -.4) -- (-.8, -.4);
	\draw[thick, unshaded] (.8, -.4) -- (.8, .4) -- (1.6, .4) -- (1.6, -.4) -- (.8, -.4);
	\draw[thick, unshaded] (2, -.4) -- (2, .4) -- (2.8, .4) -- (2.8, -.4) -- (2, -.4);
	\node at (-1.2, 0) {$\overline{\eta_j^*}$};
	\node at (0, 0) {$\overline{\xi_j^*}$};
	\node at (1.2, 0) {$\xi_i$};
	\node at (2.4, 0) {$\eta_i$};
	\node at (.6, 1.2) {$\sum CTL$};
\end{tikzpicture} \, .
\end{equation}
Since
$$
u(\zeta) =
\sum_{i=1}^n
\ColorNBox{}{\xi_i}{\alphacolor}
\,
\ColorNBox{}{\eta_i}{\betacolor}
\,,
$$
we see that $\|u(\zeta)\|_{2}^{2}$ is the same diagram as in \eqref{eqn:InnerProductForTensorProduct}.  Hence $u$ can be extended to be an isometry on $H_{\alpha}\otimes_M H_\beta$.

For the rest of this proof we will only use the $\wedge$ multiplication, so to clean up notation we will omit the $\wedge$. We must show the image of $u$ is dense in $H_{\alpha\beta}$. We may assume $\delta_\alpha\leq \delta_\beta$, and the proof is similar if $\delta_\beta <\delta_\alpha$.
Set
$$
w =
\begin{tikzpicture} [baseline=-.1cm]
	\filldraw[unshaded,thick] (-.4,.4)--(.4,.4)--(.4,-.4)--(-.4,-.4)--(-.4,.4);
	\draw[thick,\alphacolor] (-.4, 0) arc(-90:0: .4cm);
\end{tikzpicture}\,.
$$
Then $w^{*}w$ is positive and invertible in $M$, so if $v$ is the polar part of $w$, then
$v \in D(H_\alpha)$
with right support $p_{\emptyset}$ and left support $e$ under $p_{\alpha}$.
Choose partial isometries $x_{1}, ..., x_{n}$ in $M_{\alpha}$ such that $x_{1}, ..., x_{n-1}$ have right support $e$ and orthogonal left supports.
Choose the partial isometry $x_{n}$ so that its left support is $1 - \sum_{i=1}^{n-1}x_{i}x_{i}^{*}$ and its right support is under $e$.

Similarly, there is a partial isometry $\tilde{v} \in D(H_\beta)$ whose right support is $f \leq p_{\overline{\beta}}$ and whose left support is $p_{\emptyset}$.
Choose partial isometries $y_{1}, ..., y_{n} \in M_{\overline{\beta}}$ such that $y_{1}, ..., y_{n-1}$ have left support $f$ and orthogonal right supports.
Choose the partial isometry $y_{n}$ so its right support is orthogonal to $\sum_{i=1}^{n-1}y_{i}^{*}y_{i}$ and its left support is the right support of $x_{n}v\tilde{v}$ (we can do this since $\delta_{\alpha} \leq \delta_{\overline{\beta}}$).

It now follows that $z = \sum_{i=1}^{n}x_{i}v\tilde{v}y_{i}$ is a partial isometry in $D(H_{\alpha\beta})$ with full left support $p_{\alpha}$.  Note that every element, $r$ in $D(H_{\alpha\beta})$ is of the form $zx$ for $x \in M_{\overline{\beta}}$ since we simply choose $x = z^{*}r$.  Since $zx = \sum_{i=1}^{n}(x_{i}v)\cdot(\tilde{v}y_{i}x)$ is in the image of $u$, we are finished.
\end{proof}

\begin{lem} \label{lem:bifinite}
As an $M-M$ bimodule, $H_\alpha$ is bifinite. Thus by Lemma \ref{lem:rotate}, so is $\cH_{\alpha,\beta}$.
\end{lem}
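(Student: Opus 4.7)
The plan is to show that $H_\alpha$ is finitely generated both as a right $M$-module and as a left $M$-module. The right-$M$-generation is carried out by explicitly producing a finite generating set, essentially extracting the construction already used in the proof of Lemma \ref{lem:TensorProduct}; the left-$M$-generation then follows formally by applying the same argument with $\alpha$ replaced by $\overline{\alpha}$ and using the identification of $H_{\overline{\alpha}}$ with the conjugate bimodule $\overline{H_\alpha}$.

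For the right-$M$-generation, let $w \in D(H_\alpha) = p_\alpha\wedge \cM_{\alpha,\emptyset}\wedge p_\emptyset$ be the ``cup'' element obtained by joining the left-hand $\alpha$-boundary to the top $\alpha$-boundary with a single $\alpha$-string. A short diagrammatic computation, using the formula for the $M$-valued inner product $\langle \cdot\,|\,\cdot\rangle_M$ derived at the start of the proof of Lemma \ref{lem:TensorProduct} together with the inner product formula \eqref{eqn:InnerProduct}, identifies $\langle w\,|\,w\rangle_M$ with the $\alpha$-cup element of Lemma \ref{lem:cup}, which is positive and invertible in $M$. Consequently the polar decomposition of the right-multiplication map $L(w)\colon L^2(M)\to H_\alpha$ yields a partial isometry $v\in D(H_\alpha)$ with right support $p_\emptyset$ and left support some projection $e\leq p_\alpha$ in $M_\alpha$.

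Since $M_\alpha$ is a $II_1$ factor with finite identity $p_\alpha$ by Theorem \ref{thm:Factor}, we may choose finitely many partial isometries $x_1,\dots,x_n\in M_\alpha$ whose right supports lie under $e$ and whose left supports $\{x_i x_i^*\}$ are orthogonal and sum to $p_\alpha$. Then each $x_iv\in D(H_\alpha)$, and for an arbitrary $\xi\in H_\alpha$ one has
$$
\xi \;=\; p_\alpha\wedge\xi \;=\; \sum_{i=1}^{n} x_ix_i^*\wedge\xi \;=\; \sum_{i=1}^{n} (x_iv)\wedge(v^*x_i^*\wedge\xi),
$$
and $v^*x_i^*\wedge\xi \in p_\emptyset\, L^2(\cM_{\alpha,\emptyset})\,p_\emptyset = L^2(M)$. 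Thus $\{x_iv\}_{i=1}^n$ generates $H_\alpha$ densely as a right $M$-module, and so $\dim(H_\alpha)_M \leq n < \infty$.

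Applying the identical argument with $\alpha$ replaced by $\overline{\alpha}$ gives $\dim(H_{\overline{\alpha}})_M<\infty$. Composing the involution $\ast$ on $\cM_{\alpha,\emptyset}$ with the rotation of Lemma \ref{lem:rotate} produces an anti-unitary $M$-$M$ bimodular identification of $H_{\overline{\alpha}}$ with the conjugate bimodule $\overline{H_\alpha}$, which interchanges left and right $M$-dimensions. Therefore $\dim({}_M H_\alpha) = \dim(H_{\overline{\alpha}})_M < \infty$, and $H_\alpha$ is bifinite. The main pieces of bookkeeping are the diagrammatic identification of $\langle w\,|\,w\rangle_M$ with the $\alpha$-cup element, and the recognition of $H_{\overline{\alpha}}$ as $\overline{H_\alpha}$; both are straightforward given the materials already developed.
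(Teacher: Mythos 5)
Your proof is correct, and it is close in spirit to the paper's argument but differs in two points of bookkeeping. Both arguments start from the same place: the polar part $v$ of the cup element $w$ gives a partial isometry in $D(H_\alpha)$ with $v^*v = p_\emptyset$ and left support $e\leq p_\alpha$; this is exactly what the paper extracts from the proof of Lemma~\ref{lem:TensorProduct}. Where you diverge: the paper observes that $v^*v=p_\emptyset$ forces every $r\in D(H_\alpha)$ to equal $(rv^*)\wedge v$ with $rv^*\in M_\alpha$, so $v$ is cyclic for the left $M_\alpha$-action, and then invokes $[M_\alpha:M]<\infty$ as a black box to conclude $\dim_{M-}(H_\alpha)<\infty$; the right dimension is then dispatched with a ``similarly.'' You instead unwind the index statement directly, using the partial isometries $x_1,\dots,x_n\in M_\alpha$ (which already appear at the end of the proof of Lemma~\ref{lem:TensorProduct}) to produce the explicit finite right-$M$-generating set $\{x_iv\}$, getting the tight bound $\dim_{-M}(H_\alpha)\leq n$. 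For the opposite dimension you then pass to $H_{\overline\alpha}$ and identify it with the contragredient $\overline{H_\alpha}$, whereas the paper leaves the symmetric argument implicit (one would instead use the right $M_{\overline\alpha}$-action on $\cH_{\emptyset,\overline{\alpha}}$, which is available by Lemma~\ref{lem:rotate} and Theorem~\ref{thm:tower}). The identification $H_{\overline\alpha}\cong\overline{H_\alpha}$ you invoke is essentially the map $\psi_p$ that the paper proves only later, in Lemma~\ref{lem:DualPreserving}; your construction of it is the same and is self-contained, so there is no circularity, but it is worth noting that the paper chose an ordering that avoids needing that identification here. Net effect: your argument is marginally more constructive (explicit generators, sharper dimension bound), while the paper's is terser because it outsources the finiteness to the already-established index computation.
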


\begin{proof}
By the proof of Lemma \ref{lem:TensorProduct}, there is a partial isometry $v \in p_{\alpha}\wedge \cM_{\alpha, \emptyset}\wedge p_{\emptyset}$ with right support $p_{\emptyset}$.  The argument at the end of Lemma \ref{lem:TensorProduct} shows that every element of $p_{\alpha}\wedge\cM_{\alpha, \emptyset}\wedge p_{\emptyset}$ is of the form $x\wedge v$ for $x \in M_{\alpha}$.  Therefore $v$ is cyclic for the left action of $M_{\alpha}$ on $\cH_{\alpha,\emptyset}\cong H_\alpha$, and since $[M_{\alpha}:M] < \infty$, $\dim_{M-}(H_{\alpha})<\infty$. Similarly $\dim_{-M}(H_{\alpha})<\infty$.
\end{proof}

\subsection{The categories $\Bim(\cP)$ and $\CF(\cP)$} \label{sec:CategoriesOfBimodules}

We now define two rigid $C^*$-tensor categories $\Bim(\cP)$ and $\CF(\cP)$ whose objects are bifinite $M-M$ bimodules. We show given a factor planar algebra $\cP$, we have equivalences $\Pro(\cP)\cong \Bim(\cP)\cong \CF(\cP)$.

\begin{defn}
If $p$ is a projection in $P_{\alpha\to \alpha}$, we define $H_p = \cH_{\emptyset, \overline{\alpha}}\wedge p$, which is a bifinite $M-M$ bimodule. Note that elements of $H_p$ are obtained from elements in $H_\alpha$ by putting a $p$ on the bottom. Thus the linear span of elements of the form
$$
\ColorMultiply{x}{p}{\alphacolor}
$$
where $x\in p_{\emptyset}\wedge \cM_{\emptyset, \overline{\alpha}}\wedge p$ forms a dense subset of $H_p$. Recall that elements of $M$ act on the left and right as in the non-orthogonal picture. These actions clearly do not affect the $p$ on the bottom.
\end{defn}

\begin{defn}\label{defn:Bim}
Let $\Bim(\cP)$ (abbreviated $\Bim$) be the strict rigid $C^*$-tensor category defined as follows.
\begin{enumerate}
\item[\text{\underline{Objects:}}] The objects of $\Bim$ are finite direct sums of the bimodules $H_p$ for the projections $p\in P_{\alpha\to \alpha}$ for $\alpha\in \Lambda$. Note that the unit object $1_{\Bim}=H_\emptyset\cong L^2(M)$.
\item[\text{\underline{Tensor:}}]
For $p\in P_{\alpha\to \alpha}$ and $q\in P_{\beta\to \beta}$, we define $H_p\otimes_{\Bim} H_q=H_{p\otimes q}$ where $p\otimes q$ is the tensor product in $\cP$. The tensor product is extended to direct sums linearly.

Note that since the tensor product of projections in $\cP$ is strict, so is the tensor product in $\Bim$.
\item[\text{\underline{Morphisms:}}]
For $p\in P_{\alpha\to \alpha}$ and $q\in P_{\beta\to \beta}$, we define $\Bim(H_p\to H_q)=qP_{\alpha\to \beta}p$ and composition is the usual composition in $\cP$. Morphisms between direct sums are matrices of such maps.
 \item[\text{\underline{Tensoring:}}]
For $x_i\in \Bim(H_{p_i}\to H_{q_i})=q_iP_{\alpha_i\to \beta_i}p_i$ for $i=1,2$, we define $x_1\otimes_{\Bim} x_2$ as the tensor product of morphisms in $\cP$. Similarly for matrices of such maps.
\item[\text{\underline{Duality:}}]
The dual of $H_p$ is $H_{\overline{p}}$. The evaluation map $\ev_{H_p}\colon H_{\overline{p}}\otimes_{\Bim} H_p \to H_\emptyset=1_{\Bim}$ is given by the $\alpha$-cup with projections $\overline{p},p$ on top:
$$
\MInnerProduct{\overline{p}}{\alphacolor}{p}
$$
Of course, the coevalutation map $1_{\Bim}\to H_p\otimes H_{\overline{p}}$ is given by the adjoint of $\ev_{H_{\overline{p}}}$, which is the $\alpha$-cap with projections underneath:
$$
\MInnerProductOp{p}{\alphacolor}{\overline{p}}\,.
$$
One easily checks that the necessary relations hold.

The dual map is extended to direct sums linearly.
\item[\text{\underline{Adjoint:}}]
The adjoint map $*$ is the identity on all objects, and the adjoint of a morphism $x\in \Bim(H_p\to H_q)=qP_{\alpha\to \beta}p$ is the adjoint in the planar algebra $x^*\in pP_{\beta\to \alpha}q=\Bim(H_q\to H_p)$. The adjoint of a matrix of maps is the $*$-transpose of the matrix.
\end{enumerate}
\end{defn}

\begin{thm}
The map $\Pro\to \Bim$ by $p\mapsto H_p$ and the identity on morphisms is an equivalence of categories.
\end{thm}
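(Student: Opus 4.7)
The plan is to verify that the assignment $F\colon \Pro\to\Bim$ given by $p\mapsto H_p$ on simple objects, extended to formal direct sums by $\bigoplus_i p_i\mapsto \bigoplus_i H_{p_i}$, and the identity on morphisms, satisfies the three standard conditions: it is a well-defined strict monoidal $*$-functor preserving dualities, it is fully faithful, and it is essentially surjective. Since both categories were built directly from the same planar algebra $\cP$, all three should reduce to essentially tautological checks, and no analytic work is needed beyond what has already been established.

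First I would unpack well-definedness. On morphisms we have $\Pro(p\to q)=qP_{\alpha\to\beta}p=\Bim(H_p\to H_q)$ by Definition \ref{defn:Bim}, and both composition rules are vertical stacking in $\cP$, so $F$ preserves composition on the nose. Tensor products of objects agree strictly: $F(p\otimes q)=H_{p\otimes q}=H_p\otimes_{\Bim}H_q=F(p)\otimes_{\Bim}F(q)$, and the tensor product of morphisms in both categories is horizontal concatenation of planar diagrams. The adjoint $*$, the duality $p\mapsto \overline p$ (rotation by $\pi$), and the evaluation/coevaluation (the $\alpha$-cup and $\alpha$-cap) are defined identically in the two categories, so $F$ is a strict monoidal $*$-functor compatible with duals.

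Second, fully faithfulness on the full subcategories of simple projections is built in: $F$ is literally the identity on $qP_{\alpha\to\beta}p$. Extending to finite direct sums of projections, morphisms in both categories are matrices of such intertwiners, and $F$ again acts as the identity on entries, so $F$ is fully faithful on all of $\Pro$. The only routine point I would note here is that the matrix formalism on the $\Pro$ side genuinely models bimodule direct sums on the $\Bim$ side: if $p,q\in P_{\alpha\to\alpha}$ are orthogonal, the bimodule isomorphism $H_{p+q}\cong H_p\oplus H_q$ is implemented by the obvious matrix $(p,q)$, exactly mirroring the example recalled after the definition of $\Pro$.

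Essential surjectivity is immediate from Definition \ref{defn:Bim}: every object of $\Bim$ is by definition a finite direct sum of bimodules $H_p$ for projections $p\in P_{\alpha\to\alpha}$, each of which lies in the image of $F$. The only potential obstacle is the direct-sum compatibility just mentioned, which is really the content of the example from \cite{MR2559686}; once this is observed, the rest is bookkeeping. Combined with Theorem \ref{thm:PAandTC}, the upshot is an equivalence $\cC\simeq \Pro(\PA(\cC)_\bullet)\simeq\Bim(\PA(\cC)_\bullet)$, which is the form in which we will compare $\Bim$ with $\CF$ in Theorem \ref{thm:BimCF}.
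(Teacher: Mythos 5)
Your proposal matches the paper's proof, which simply observes that $\Pro(p\to q)=qP_{\alpha\to\beta}p=\Bim(H_p\to H_q)$ and then notes one checks the map is an additive, monoidal, dual- and $*$-preserving, fully faithful, essentially surjective functor. You have spelled out those routine checks in more detail, but the approach is the same.
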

\begin{proof}
Note that $\Pro(p\to q) = qP_{\alpha\to \beta}p=\Bim(H_p\to H_q)$.
One now checks that the described map is an additive, monoidal, dual-preserving, $*$-preserving, fully faithful, essentially surjective functor.
\end{proof}

\begin{defn}\label{defn:CF}
Let $\CF(\cP)$ (abbreviated $\CF$, which stands for \emph{Connes' fusion}) be the rigid $C^*$-tensor category defined as follows.
\begin{enumerate}
\item[\text{\underline{Objects:}}] The objects of $\CF$ are finite direct sums of the bimodules $H_p$ as in $\Bim$.
\item[\text{\underline{Tensor:}}]
For bimodules $K,L\in \CF$, we define $K\otimes_{\CF} L=K\otimes_M L$, the Connes' fusion of $II_1$-factor bimodules.

The associator $a_\CF$ is defined by restricting to $M$-bounded vectors as in \cite{MR1424954}.
\item[\text{\underline{Morphisms:}}]
$\CF(K\to L)$ is the set of $M-M$ bilinear maps $K\to L$. Composition is the usual composition of linear maps.
\item[\text{\underline{Tensoring:}}]
For $M-M$ bilinear maps $\varphi_i\colon K_{i}\to L_{i}$ for $i=1,2$, we define $\varphi_1\otimes_{M} \varphi_2$ by the Connes' fusion of intertwiners. If we have $M$-bounded vectors $\xi_i\in K_{i}$ for $i=1,2$, then the map $\xi_1\otimes \xi_2\mapsto \varphi_1(\xi_1)\otimes \varphi_2(\xi_2)$ is clearly $M$-middle linear and bounded, so it extends to a unique $M-M$ bilinear map.
\item[\text{\underline{Duality:}}]
The dual of $K$ is the contragredient bimodule $\overline{K}=\set{\overline{\xi}}{\xi\in K}$ where  $\lambda \overline{\xi}+\overline{\eta}=\overline{\overline{\lambda}\xi+\eta}$ for all $\lambda\in\C$ and $\eta,\xi\in K$, and the action is given by $a\overline{\xi}b = \overline{b^*\xi a^*}$. The evaluation map
$$
\ev_{K}\colon \overline{K}\otimes_M K\longrightarrow H_\emptyset=L^2(M)=1_\CF
$$
is the unique extension of the map $\overline{\xi}\otimes \eta\mapsto \langle \xi|\eta\rangle_M$ where $\xi,\eta$ are $M$-bounded vectors in $K$.
The coevaluation map $\coev_{K}$ is the unique map in $\CF(1\to K\otimes_M \overline{K})$ corresponding to $\id_{K}\in\CF(K\to K)$ under the natural isomorphism given by Frobenius reciprocity. For an explicit formula, just pick an orthonormal left $M$-basis $\{\zeta\}\subset K$ (e.g., see \cite{MR561983,1110.3504}), and we have that
$$
\coev_{K}(1_M)=\sum_\zeta \zeta\otimes \overline{\zeta}
$$
is $M$-central and independent of the choice of $\{\zeta\}$. The zig-zag relation is now given by
$$
\sum_\zeta \zeta \langle \zeta|\xi\rangle_M = \xi
$$
for all $M$-bounded $\xi\in K$.

\item[\text{\underline{Adjoint:}}]
The adjoint map $*$ is the identity on all objects and on a morphism $x\in \CF(K\to L)$ is the adjoint linear operator $x^*\in \CF(L\to K)$.
\end{enumerate}
\end{defn}

Note $\CF$ is a rigid $C^*$-tensor category by well known properties of Connes' fusion (e.g., see \cite{MR1424954,1110.3504}).
It is now our task to prove the following theorem:

\begin{thm}\label{thm:BimCF}
Define a map $\Phi\colon \Bim\to \CF$ as follows. First, $\Phi$ is the identity on objects. Second, for a morphism $x\in qP_{\alpha\to \beta}p$, we get an $M-M$ bimodule map $\Phi_x\colon H_p\to H_q$ by
\begin{equation}\label{eqn:BimoduleMaps}
\ColorMultiply{\xi}{p}{\alphacolor}
\longmapsto
\ColorMultiplyThree{\xi}{p}{x}{\alphacolor}{\betacolor}\in H_q,
\end{equation}
Finally, $\Phi$ is applied entry-wise to matrices over such morphisms.

The map $\Phi$ is an equivalence of categories $\Bim\simeq \CF$.
\end{thm}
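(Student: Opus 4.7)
The plan is to verify that $\Phi$ is a $*$-preserving, dual-preserving, monoidal functor which is fully faithful and essentially surjective. Essential surjectivity is immediate since $\Phi$ is the identity on objects (and both categories are closed under finite direct sums, for which $\Phi$ acts diagonally). Functoriality $\Phi_{yx} = \Phi_y \circ \Phi_x$ together with additivity and compatibility with adjoints follow from the pictorial definition \eqref{eqn:BimoduleMaps} by vertical concatenation: stacking diagrams corresponds to composition, and reflecting them corresponds to taking Hilbert-space adjoints via the inner product \eqref{eqn:InnerProduct}.

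For faithfulness, observe that the projection $p \in P_{\alpha\to\alpha}$ lies in $D(H_p)$ as a diagram with $\alpha$-strings pointing downward, and $\Phi_x(p)$ equals $x$ (since $xp=x$) viewed as an element of $H_q$. Thus $\Phi_x = 0$ forces $x = 0$ in $H_q$, and then $x = 0$ in $qP_{\alpha\to\beta}p$ by positive definiteness of the planar-algebra inner product. For fullness, given $\varphi \in \CF(H_p \to H_q)$, I would use Frobenius reciprocity in $\CF$: $M$-$M$ bilinear maps $H_p \to H_q$ are in natural bijection with $M$-$M$ central vectors in $\overline{H_p} \otimes_M H_q$. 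Using that $\overline{H_p} \cong H_{\overline{p}}$ (by the duality structure already set up) together with the cut-down form of Lemma \ref{lem:TensorProduct}, this central-vector space is identified with the $M$-$M$ central subspace of $p \cdot \cH_{\overline{\alpha},\overline{\beta}} \cdot q$. By Corollary \ref{cor:central}, the central vectors of $\cH_{\overline{\alpha},\overline{\beta}}$ are exactly $P_{\alpha\to\beta}$ (after a $90^\circ$ rotation), and cutting down by $p$ and $q$ yields precisely $qP_{\alpha\to\beta}p$. A direct inspection of the coevaluation-evaluation formula shows that under this identification, the vector corresponding to $x \in qP_{\alpha\to\beta}p$ produces exactly the map $\Phi_x$ of \eqref{eqn:BimoduleMaps}.

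For the monoidal structure, I would first promote the isomorphism $H_\alpha \otimes_M H_\beta \cong H_{\alpha\beta}$ of Lemma \ref{lem:TensorProduct} to a canonical isomorphism $H_p \otimes_M H_q \cong H_{p\otimes q}$ by applying the projections $p$ and $q$ on the appropriate sides of the unitary $u$ from that lemma; $M$-middle-linearity guarantees that the projections push through. Under this canonical isomorphism, $\Phi_{x \otimes y} = \Phi_x \otimes_M \Phi_y$ is the evident equality of planar pictures obtained by horizontal juxtaposition of $x$ and $y$ beneath $p \otimes q$. The duality structure is preserved because the evaluation map in $\Bim$ (an $\alpha$-cup with $\overline{p}$ and $p$ on top) matches the Connes-fusion evaluation $\ev_{H_p}(\overline{\xi}\otimes \eta) = \langle \xi|\eta\rangle_M$ through the explicit formula for the $M$-valued inner product computed in the proof of Lemma \ref{lem:TensorProduct}.

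The main obstacle I expect lies in the fullness step: making precise the cut-down version of Lemma \ref{lem:TensorProduct} and verifying that the Frobenius-reciprocity identification is truly compatible with Corollary \ref{cor:central}. This requires careful bookkeeping of the various rotations, the identification $\cH_{\alpha,\beta}\cong \cH_{\alpha\overline{\beta},\emptyset}$ from Lemma \ref{lem:rotate}, the switch between $\wedge$- and $\star$-multiplications (where the orthogonal picture is essential for Corollary \ref{cor:central}), and the \emph{op} convention in Lemma \ref{lem:RelativeCommutant}. Once this bookkeeping is done, the rest of the proof is a straightforward diagrammatic verification.
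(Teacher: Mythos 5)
Your proposal is correct and tracks the paper's own proof essentially step for step: the paper likewise reduces the claim to establishing the monoidal isomorphisms $\phi_{p,q}$ (Lemma \ref{lem:TensorProduct2}), fully faithfulness via Frobenius reciprocity and Corollary \ref{cor:central} (Lemma \ref{lem:FullyFaithful}), $*$-preservation by reflection of diagrams (Lemma \ref{lem:StarPreserving}), and compatibility of duality/evaluation (Lemmas \ref{lem:DualPreserving} and \ref{lem:Rigid}), then observes that essential surjectivity is automatic. The one presentational difference is that you separate faithfulness (by evaluating $\Phi_x$ on the bounded vector $p$) from fullness, whereas the paper folds both into a dimension count in Lemma \ref{lem:FullyFaithful}; your version makes the injectivity explicit rather than leaving it implicit in the natural isomorphism chain, which is a slight but genuine sharpening of the write-up, though not a different argument.
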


In the lemmas below, unless otherwise stated, $p,q$ are projections in $P_{\alpha \to \alpha},P_{\beta \to \beta}$ respectively.

\begin{rem}\label{rem:Functor}
Note that composition of the bimodule maps given by Equation \eqref{eqn:BimoduleMaps} corresponds to the usual composition in $\cP$, i.e., if we have $x\in qP_{\alpha\to \beta}p$ and $y\in rP_{\beta\to \gamma}q$, then $\Phi_y\circ \Phi_x =\Phi_{yx}\colon H_p\to H_q$, where
$$
\nbox{\alpha}{yx}{\gamma}=\PAMultiply{\alpha}{x}{\beta}{y}{\gamma}\,.
$$
It is obvious that $\Phi_{p}=\id_{H_p}$, so $\Phi$ is a functor.
\end{rem}

\begin{lem}\label{lem:TensorProduct2}
For $p,q$ projections in $\cP$, the maps
$$
\phi_{p,q}\colon H_p\otimes_M H_q\to H_p\otimes_{\Bim} H_q=H_{p\otimes q}
$$
(where $p\otimes q$ is the tensor product in $\cP$) given by the unique extension of
$$
\ColorMultiply{\xi}{p}{\alphacolor}
\otimes
\ColorMultiply{\eta}{q}{\betacolor}
\longmapsto
\ColorMultiply{\xi}{p}{\alphacolor}
\,
\ColorMultiply{\eta}{q}{\betacolor}
$$
for $\xi\in D(H_p)$ and $\eta\in D(H_q)$, are $M-M$ bilinear isomorphisms which satisfy associativity, i.e., the following diagram commutes:
$$
\xymatrix{
(H_p\otimes_M H_q)\otimes_M  H_r \ar[rr]^{a_\CF}\ar[d]_{\phi_{p,q}\otimes_M\id_{H_r}} && H_p\otimes_M (H_q\otimes_M H_r)\ar[d]^{\id_{H_p}\otimes_M \phi_{q,r}}\\
(H_{p}\otimes_{\Bim} H_q) \otimes_M H_r\ar[d]_{\phi_{p\otimes q,r}} && H_p\otimes_M ( H_q \otimes_{\Bim} H_r)\ar[d]^{\phi_{p,q\otimes r}}\\
(H_p\otimes_{\Bim} H_q)\otimes_{\Bim} H_r \ar[rr]^{=} && H_p\otimes_{\Bim} (H_q\otimes_{\Bim} H_r).
}
$$
\end{lem}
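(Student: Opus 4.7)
The plan is to bootstrap from Lemma \ref{lem:TensorProduct}, which supplies a unitary $u\colon H_\alpha\otimes_M H_\beta\to H_{\alpha\beta}$, and then to cut down by the projections $p$ and $q$. Observe that for any projection $p\in P_{\alpha\to\alpha}$, the operation ``place $p$ below the outgoing $\alpha$-string'' is a self-adjoint $M$-$M$ bilinear idempotent $e_p\in\End_{M\text{-}M}(H_\alpha)$ whose range is precisely $H_p$. The first step is to verify the identity
$$
u^{-1}\circ e_{p\otimes q}\circ u=e_p\otimes e_q
\quad\text{on }H_\alpha\otimes_M H_\beta,
$$
which is an immediate diagrammatic check on bounded simple tensors: $e_{p\otimes q}(\xi\eta)=\xi\eta\cdot(p\otimes q)=(\xi p)(\eta q)$ by horizontal concatenation, and the right side is $u$ applied to $\xi p\otimes\eta q=(e_p\otimes e_q)(\xi\otimes\eta)$.

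Next I would identify the image of $e_p\otimes e_q$ with $H_p\otimes_M H_q$. Since $D(H_p)\subseteq D(H_\alpha)$ and the $M$-valued inner product $\langle\cdot\,|\,\cdot\rangle_M$ computed inside $H_p$ agrees with that in $H_\alpha$, the inclusion $D(H_p)\odot D(H_q)\hookrightarrow D(H_\alpha)\odot D(H_\beta)$ descends to an isometric $M$-$M$ bilinear embedding $\iota\colon H_p\otimes_M H_q\hookrightarrow H_\alpha\otimes_M H_\beta$. Its image sits inside the range of $e_p\otimes e_q$, and equality follows because $e_p\otimes e_q$ is the idempotent splitting of this inclusion (every element in the range is an $L^2$-limit of $\sum\xi_i p\otimes\eta_i q$ with $\xi_i,\eta_i$ bounded). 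Composing,
$$
\phi_{p,q}:=(u\circ\iota)\colon H_p\otimes_M H_q\longrightarrow H_{p\otimes q}
$$
is an isometric $M$-$M$ bilinear map satisfying the prescribed formula on bounded simple tensors. Surjectivity onto $H_{p\otimes q}$ is then automatic from $u(e_p\otimes e_q)=e_{p\otimes q}u$ and the surjectivity of $u$; alternatively, one can imitate the partial isometry construction at the end of Lemma \ref{lem:TensorProduct}, starting from $v\in D(H_p)$ and $\tilde v\in D(H_q)$ obtained by polar decomposing the $\alpha$- and $\beta$-cups capped by $p$ and $q$ respectively (which are invertible on their support projections, by Lemma \ref{lem:cup} applied to the amplified tangle).

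Associativity of the $\phi_{p,q}$ is a direct diagrammatic verification on bounded vectors. Chasing $\xi\otimes\eta\otimes\zeta\in D(H_p)\odot D(H_q)\odot D(H_r)$ through the left column of the diagram gives $((\xi p)(\eta q))(\zeta r)$, while the right column, after applying the Connes-fusion associator $a_{\CF}$ (which simply rebrackets bounded tensors), produces $(\xi p)((\eta q)(\zeta r))$. These agree because horizontal concatenation of boxes in $\cP$ is strictly associative, so the claimed square commutes.

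The conceptually delicate point -- and the step I expect to require the most care -- is the identification of the image of the idempotent $e_p\otimes e_q$ on $H_\alpha\otimes_M H_\beta$ with $H_p\otimes_M H_q$, since in general Connes fusion does not commute with arbitrary sub-bimodule inclusions. Here one is rescued by bifiniteness of all bimodules involved (Lemma \ref{lem:bifinite}) together with the explicit formula \eqref{eqn:InnerProductForTensorProduct} for the fusion inner product, which lets one prove isometry of $\iota$ by a direct planar-algebraic computation without invoking general flatness results; this keeps the proof self-contained within the GJSW framework already established.
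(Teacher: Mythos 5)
Your proposal is correct, and it takes a genuinely more structured route than the paper's proof. The paper argues directly: the same diagrammatic computation as in Lemma \ref{lem:TensorProduct}, with $p$ and $q$ inserted under the boxes, shows the map is an isometry; surjectivity follows because the dense span $D(H_\alpha)\wedge D(H_\beta)=D(H_{\alpha\beta})$ already established in Lemma \ref{lem:TensorProduct} survives cutting down by $e_{p\otimes q}$; and associativity is a one-line appeal to bounded vectors. Your proof instead factors everything through the unitary $u$ of Lemma \ref{lem:TensorProduct} and the conjugation identity $u^{-1}e_{p\otimes q}u=e_p\otimes e_q$, defining $\phi_{p,q}$ as a corestriction. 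This packaging makes the surjectivity completely automatic and gives a cleaner conceptual separation between the bimodule-theoretic step (identifying $\mathrm{Ran}(e_p\otimes e_q)$ with $H_p\otimes_M H_q$) and the planar-algebraic step (the concatenation formula on bounded tensors). What you buy is modularity; what the paper's terse version buys is brevity, since it never needs to mention $e_p$ or $u$ explicitly.

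One small point: the delicate step you flag at the end---whether Connes fusion sees a closed sub-bimodule inclusion $K_i\hookrightarrow H_i$ correctly---is in fact not dependent on bifiniteness. For any closed sub-bimodule $K$ of a $II_1$-factor bimodule $H$, one has $D(K)=K\cap D(H)$ and the $M$-valued inner products agree, so the isometric embedding $K_1\otimes_M K_2\hookrightarrow H_1\otimes_M H_2$ always holds. Bifiniteness is used elsewhere (for rigidity, duality), but for the isometry you invoke, the general fact already suffices, and your description of the range of $e_p\otimes e_q$ as the closure of $\xi p\otimes\eta q$ combined with the observation that such vectors are fixed by $e_p\otimes e_q$ completes the identification. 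Your proof stands as written.
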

\begin{proof}
As in the proof of Lemma \ref{lem:TensorProduct}, the map for $\xi\in D(H_p),\eta\in D(H_q)$ is an isometry intertwining the left and right $M$ actions.  Since the linear span of elements of the form $x\wedge y$ with $x\in D(\cH_{\alpha, \emptyset})$ and $y\in D(\cH_{\emptyset, \overline{\beta}})$ is equal to $D(\cH_{\alpha, \overline{\beta}}) \cong D(H_{\alpha\beta})$ by the proof of Lemma \ref{lem:TensorProduct}, the above map is also surjective.

Associativity follows from looking at $M$-bounded vectors (see \cite{MR1424954}).
\end{proof}

\begin{lem}\label{lem:FullyFaithful}
As complex vector spaces,
$$
\CF(H_p\to H_q)\cong q(P_{\alpha \to \beta})p=\Bim(H_p\to H_q).
$$
Moreover, the composition of maps $\varphi\in \CF(H_p\to H_q)$ and $\psi\in \CF(H_q\to H_r)$ corresponds to the composition in $\Bim$.
\end{lem}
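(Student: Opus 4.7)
The plan is to verify that $\Phi$ is a linear bijection; the composition assertion is then immediate from Remark \ref{rem:Functor}. Well-definedness is clear because in $H_p$ and $H_q$ the left and right $M$-actions touch only the tops of the diagrams while $x$ is attached at the bottom, so left/right multiplication by $M$ commutes with attaching $x$.

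For injectivity, I would view the projection $p\in P_{\alpha\overline{\alpha}}$ as a vector of $D(H_p)=p_\emptyset\wedge\cM_{\emptyset,\overline{\alpha}}\wedge p$ using $p\wedge p=p$. Applying Equation \eqref{eqn:BimoduleMaps} then gives $\Phi_x(p)=x\wedge p=x$ (since $x=qxp$), so $\Phi_x=0$ forces $x=0$.

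For surjectivity, I would first treat the base case $p=\id_\alpha$, $q=\id_\beta$ by a dimension count. Frobenius reciprocity for bifinite $M$-$M$ bimodules gives
$$\dim_{\C}\CF(H_\alpha,H_\beta)=\dim_{\C}\bigl(H_\beta\otimes_M\overline{H_\alpha}\bigr)^{M\text{-central}}.$$
Identifying the contragredient $\overline{H_\alpha}\cong H_{\overline{\alpha}}$ in $\CF$ and applying Lemma \ref{lem:TensorProduct}, this tensor product is isomorphic to $H_{\beta\overline{\alpha}}$, which by Lemma \ref{lem:rotate} coincides with $\cH_{\emptyset,\alpha\overline{\beta}}$. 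Corollary \ref{cor:central} then identifies the $M$-central vectors with $P_{\emptyset\to\overline{\alpha\overline{\beta}}}=P_{\beta\overline{\alpha}}$, of dimension $\dim P_{\alpha\to\beta}$. Combined with injectivity, this makes $\Phi\colon P_{\alpha\to\beta}\to\CF(H_\alpha,H_\beta)$ a linear isomorphism.

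For general projections $p\in P_{\alpha\to\alpha}$ and $q\in P_{\beta\to\beta}$, the bimodules $H_p,H_q$ are $M$-$M$ sub-bimodules of $H_\alpha,H_\beta$ realized by the orthogonal projections $\Phi_p\in\End_{M\text{-}M}(H_\alpha)$ and $\Phi_q\in\End_{M\text{-}M}(H_\beta)$. Extending morphisms by zero and restricting then identifies
$$\CF(H_p,H_q)\cong \Phi_q\,\CF(H_\alpha,H_\beta)\,\Phi_p,$$
which under the base-case identification together with multiplicativity of $\Phi$ (Remark \ref{rem:Functor}) corresponds exactly to $qP_{\alpha\to\beta}p$. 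The main obstacle is matching all identifications coherently: specifically, verifying that $\overline{H_\alpha}\cong H_{\overline{\alpha}}$ holds in $\CF$ (not merely in $\Bim$, since we are in the middle of proving these are equivalent), and that the Frobenius-reciprocity isomorphism intertwines the cutdowns by $\Phi_p,\Phi_q$ on the $\CF$ side with multiplication by $p,q$ in the planar algebra. Once these compatibilities are pinned down, the dimension count plus injectivity completes the proof.
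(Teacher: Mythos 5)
Your argument follows essentially the same route as the paper: Frobenius reciprocity converts $\CF(H_p\to H_q)$ into a space of $M$-$M$ central vectors in a Connes' fusion of the $H$'s, which Lemmas \ref{lem:rotate}, \ref{lem:TensorProduct}, and Corollary \ref{cor:central} identify with a corner of a box space of $\cP$; multiplicativity of $\Phi$ (Remark \ref{rem:Functor}) supplies the composition statement. The only structural difference is that the paper works directly with general projections, computing
$\overline{H_{p}} \otimes_M H_{q}\cong H_{\overline{p}} \otimes_{M} H_{q}\cong H_{\overline{p}\otimes q}\cong p\wedge\cH_{\overline{\alpha},\overline{\beta}}\wedge q$
and reading off $qP_{\alpha\to\beta}p$ from Corollary \ref{cor:central}, whereas you first settle $p=\id_\alpha, q=\id_\beta$ and then transfer to general $p,q$ by cutting down with $\Phi_p,\Phi_q$. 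Both routes are fine, and yours has the minor advantage of only needing the identity-object case of the central-vector identification.

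Two small remarks. First, your injectivity check is a genuine addition: the paper's proof silently leans on injectivity of $\Phi$ (a pure dimension count gives an abstract isomorphism but not that $\Phi$ itself is one), and your observation that a distinguished bounded vector of $H_p$ recovers $x$ from $\Phi_x$ is the right way to close this; just make sure the vector you call ``$p$'' really sits in $D(H_p)=p_\emptyset\wedge\cM_{\emptyset,\overline{\alpha}}\wedge p$ under the paper's rotation conventions (alternatively, $\Phi_x^*\Phi_x=\Phi_{x^*x}$ combined with faithfulness of the trace gives injectivity with no diagram-chasing). Second, you are somewhat overcautious about ``matching identifications coherently'': for the dimension count alone you only need vector-space isomorphisms, not that they assemble into commuting diagrams of categories, so the isomorphism $\overline{H_\alpha}\cong H_{\overline{\alpha}}$ of $M$-$M$ bimodules (which the paper's proof of Lemma \ref{lem:DualPreserving} calls trivial, and which indeed just sends $\overline{\xi}\mapsto \xi^*$ after a rotation) is all you need here. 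The coherence concerns are real but belong to Lemmas \ref{lem:DualPreserving} and \ref{lem:Rigid}, not to this dimension count.
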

\begin{proof}
Recall that an element $x\in q(P_{\alpha \to \beta})p=\Bim(H_p\to H_q)$ gives a map $\Phi_x\in \CF(H_p\to H_q)$ as in Equation \eqref{eqn:BimoduleMaps}, and composition of morphisms is exactly multiplication in $\cP$ by Remark \ref{rem:Functor}. Note further that $\Bim(H_p\to H_q)$ and $\CF(H_p\to H_q)$ are finite dimensional, so it remains to show they have the same dimension.

By Frobenius reciprocity, we have a natural isomorphism
$$
\Hom_{M-M}(H_p\to H_q)\cong \Hom_{M-M}(1\to \overline{H_p}\otimes H_q),
$$
and the latter space is naturally identified with the $M-M$ central vectors in $\overline{H_p}\otimes H_q$. Note that
$$
\overline{H_{p}} \otimes_M H_{q}
\cong
H_{\overline{p}} \otimes_{M} H_{q}
\cong
H_{\overline{p}\otimes q}
\cong
p\wedge\cH_{\overline{\alpha},\overline{\beta}}\wedge q.
$$
From Corollary \ref{cor:central}, the set of central vectors in $p\wedge\cH_{\overline{\alpha},\overline{\beta}}\wedge q$ is $q(P_{\alpha \to \beta})p$
$$
p\wedge_{\overline{\alpha}}\cH_{\overline{\alpha},\overline{\beta}}\wedge_{\overline{\beta}} q
\ni
\begin{tikzpicture} [baseline = -.1cm]
	\draw[thick, \betacolor] (0,0)--(2,0);
	\draw[thick, \alphacolor] (-2,0)--(0,0);
	\draw[thick, unshaded] (-.4, -.4) -- (-.4, .4) -- (.4, .4) -- (.4, -.4) -- (-.4, -.4);
	\draw[thick, unshaded] (.8, -.4) -- (.8, .4) -- (1.6, .4) -- (1.6, -.4) -- (.8, -.4);
	\draw[thick, unshaded] (-.8, -.4) -- (-.8, .4) -- (-1.6, .4) -- (-1.6, -.4) -- (-.8, -.4);
	\node at (0, 0) {$x$};
	\node at (1.2, 0) {$q$};
	\node at (-1.2, 0) {$p$};
\end{tikzpicture}
\longleftrightarrow
\begin{tikzpicture}[baseline=-.1cm]
	\draw[thick, \betacolor] (0,0)--(0,-2);
	\draw[thick, \alphacolor] (0,0)--(0,2);
	\filldraw[unshaded,thick] (-.4,.4)--(.4,.4)--(.4,-.4)--(-.4,-.4)--(-.4,.4);
	\draw[thick, unshaded] (-.4, .8) -- (-.4, 1.6) -- (.4, 1.6) -- (.4,.8) -- (-.4, .8);
	\draw[thick, unshaded] (-.4, -.8) -- (-.4, -1.6) -- (.4, -1.6) -- (.4,-.8) -- (-.4, -.8);
	\node at (0,1.2) {$p$};
	\node at (0,0) {$x$};	
	\node at (0,-1.2) {$q$};
\end{tikzpicture}
\in q(P_{\alpha \to \beta})p,
$$
proving $\dim( \CF(H_p\to H_q))=\dim(\Bim(H_p\to H_q))$.
\end{proof}

\begin{lem}\label{lem:StarPreserving}
If $x\in \Bim(H_p\to H_q)=qP_{\alpha\to \beta}p$ and $\Phi_x\in\CF( H_p\to H_q)$ is as in Equation \eqref{eqn:BimoduleMaps}, then $\Phi_x^* = \Phi_{x^*}$.
\end{lem}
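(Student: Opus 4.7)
The plan is to verify the adjoint relation directly using the inner product formula \eqref{eqn:InnerProduct} and planar isotopy. Both $\Phi_x$ and $\Phi_{x^*}$ are $M$–$M$ bilinear by the functoriality noted in Remark \ref{rem:Functor} together with Lemma \ref{lem:FullyFaithful}, so the adjoint $\Phi_x^*$ is automatically $M$–$M$ bilinear. Since Lemma \ref{lem:FullyFaithful} identifies $\CF(H_q,H_p)$ with $pP_{\beta\to\alpha}q$, it suffices to check the single identity
$$
\langle \Phi_x \xi, \eta \rangle_{H_q} \;=\; \langle \xi, \Phi_{x^*} \eta \rangle_{H_p}
$$
for $\xi$ and $\eta$ running through dense subsets (e.g.\ bounded vectors) of $H_p$ and $H_q$.

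First, I will unfold each side as a single planar diagram. Writing $\xi = \xi' \wedge p$ and $\eta = \eta' \wedge q$ with $\xi' \in \cH_{\emptyset,\overline{\alpha}}$ and $\eta' \in \cH_{\emptyset,\overline{\beta}}$, the quantity $\langle \Phi_x\xi,\eta\rangle_{H_q}$ becomes a diagram in which the $180^\circ$ rotation of $(\eta'\wedge q)^{*} = q\wedge \eta'^{*}$ sits on the left, the vertical stack $\xi' \wedge p \wedge x$ sits on the right, their bottom $\beta$-strands are joined by a cup, and the closed region on top is filled with $\sum CTL$. The other side $\langle \xi,\Phi_{x^*}\eta\rangle_{H_p}$ yields an analogous diagram, except with the rotation of $(\eta' \wedge q \wedge x^*)^{*} = x\wedge q \wedge \eta'^{*}$ on the left, $\xi' \wedge p$ on the right, and an $\alpha$-cup at the bottom.

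Second, I will observe that these two diagrams differ only in the placement of the box $x$: on the right side in the first diagram (with its $\beta$-strand absorbed into the cup), and on the left side in the second (as part of $x \wedge q \wedge \eta'^{*}$, with its $\alpha$-strand absorbed into the cup instead). By planar isotopy—the combination of sphericality and Frobenius reciprocity already built into $\cP$—one may slide the $x$-box around the cup from one side to the other; the cup converts from a $\beta$-cup to an $\alpha$-cup, and the traveling $x$-box is reflected, emerging as $x^{*}$ on the opposite side. This is exactly the rotational manoeuvre underlying the isomorphism $qP_{\alpha\to\beta}p\cong \CF(H_p\to H_q)$ established in Lemma \ref{lem:FullyFaithful}, so no new planar identity needs to be proved; once the diagrams are drawn, the equality is visual.

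The main (and rather mild) obstacle is the bookkeeping of the rotations and the $*$-operation: one must expand $(\eta' \wedge q \wedge x^*)^{*}$ via $(a\wedge b)^{*} = b^{*}\wedge a^{*}$, use $p^{*}=p,\; q^{*}=q,\; (x^{*})^{*}=x$, and verify that the $180^\circ$ rotation appearing in \eqref{eqn:InnerProduct} correctly orients the strands on the left side of each diagram so that the slide of $x$ around the cup is a genuine planar isotopy producing $x^{*}$ with the correct orientation. After this careful verification, the two inner-product diagrams coincide, and uniqueness of the adjoint yields $\Phi_x^{*} = \Phi_{x^{*}}$, completing the lemma.
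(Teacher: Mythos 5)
Your proposal is correct and follows the same route as the paper: both expand each side of the adjointness identity $\langle \Phi_x\xi,\eta\rangle_{H_q}=\langle\xi,\Phi_{x^*}\eta\rangle_{H_p}$ via the inner product formula \eqref{eqn:InnerProduct} and verify the two resulting closed diagrams coincide by sliding the $x$-box around the bottom cup. One small caution about the wording: dragging the box around the cup produces the $180^\circ$ rotation $\overline{x}$, not the $*$-reflection; the factor $x^*$ then emerges when the rotated diagram is re-read through \eqref{eqn:InnerProduct}, which itself involves both a $*$ and a rotation---precisely the bookkeeping you flag at the end as needing careful verification.
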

\begin{proof}
By Equation \eqref{eqn:InnerProduct}, if $\xi\in D(H_p)$ and $\eta\in D(H_q)$, then
$$
\langle \Phi_x \xi,\eta\rangle_{H_q}
=
\begin{tikzpicture}[baseline=1.7cm]
	\draw (0,2.4)--(0,3.2);
	\draw (1.2,2.4)--(1.2,3.2);
	\draw [thick, red] (0,2.4)-- (0, -.4) .. controls ++(270:.4cm) and ++(270:.4cm) .. (1.2,-.4);
	\draw [thick, blue] (1.2,0) -- (1.2,2.8);
	\draw[thick, unshaded] (-.4,3.2) -- (-.4, 4) -- (1.6, 4) -- (1.6, 3.2) -- (-.4, 3.2);
	\draw[thick, unshaded] (1.6, -.4) -- (1.6, .4) -- (.8, .4) -- (.8, -.4) -- (1.6, -.4);
	\draw[thick, unshaded] (1.6, .8) -- (1.6, 1.6) -- (.8, 1.6) -- (.8, .8) -- (1.6, .8);
	\draw[thick, unshaded] (1.6, 2) -- (1.6, 2.8) -- (.8, 2.8) -- (.8, 2) -- (1.6, 2);
	\draw[thick, unshaded] (-.4, .8) -- (-.4, 1.6) -- (.4, 1.6) -- (.4, .8) -- (-.4, .8);
	\draw[thick, unshaded] (-.4, 2) -- (-.4, 2.8) -- (.4, 2.8) -- (.4, 2) -- (-.4, 2);
	\node at (.6, 3.6) {$\sum CTL$};
	\node at (0, 2.4) {$\overline{\eta^*}$};
	\node at (1.2, 2.4) {$\xi$};
	\node at (0, 1.2) {$\overline{q}$};
	\node at (1.2, 1.2) {$p$};
	\node at (1.2,0) {$x$};
\end{tikzpicture}
=
\begin{tikzpicture}[baseline=1.7cm]
	\draw (0,2.4)--(0,3.2);
	\draw (1.2,2.4)--(1.2,3.2);
	\draw [thick, red] (0,2.4)--(0,0);
	\draw [thick, blue] (0, -.4) .. controls ++(270:.4cm) and ++(270:.4cm) .. (1.2,-.4) -- (1.2,2.4);;
	\draw[thick, unshaded] (-.4,3.2) -- (-.4, 4) -- (1.6, 4) -- (1.6, 3.2) -- (-.4, 3.2);
	\draw[thick, unshaded] (1.6, .8) -- (1.6, 1.6) -- (.8, 1.6) -- (.8, .8) -- (1.6, .8);
	\draw[thick, unshaded] (1.6, 2) -- (1.6, 2.8) -- (.8, 2.8) -- (.8, 2) -- (1.6, 2);
	\draw[thick, unshaded] (-.4, -.4) -- (-.4, .4) -- (.4, .4) -- (.4, -.4) -- (-.4, -.4);
	\draw[thick, unshaded] (-.4, .8) -- (-.4, 1.6) -- (.4, 1.6) -- (.4, .8) -- (-.4, .8);
	\draw[thick, unshaded] (-.4, 2) -- (-.4, 2.8) -- (.4, 2.8) -- (.4, 2) -- (-.4, 2);
	\node at (.6, 3.6) {$\sum CTL$};
	\node at (0, 2.4) {$\overline{\eta^*}$};
	\node at (1.2, 2.4) {$\xi$};
	\node at (0, 1.2) {$\overline{q}$};
	\node at (1.2, 1.2) {$p$};
	\node at (0,0) {$\overline{x}$};
\end{tikzpicture}
=
\langle \xi, \Phi_{x^*} \eta\rangle_{H_p}
$$
where $\overline{x}$ is the 180 degree rotation of $x$.
\end{proof}

For the next lemma, recall that the conjugate Hilbert space of $K$, is the set of formal symbols $\set{\overline{\xi}}{\xi\in H_p}$ such that $\lambda \overline{\xi}+\overline{\eta}=\overline{\overline{\lambda}\xi+\eta}$ for all $\lambda\in\C$ and $\eta,\xi\in H_p$, together with left and right $M$-actions given by $x \overline{\xi} y = \overline{y^* \xi x^*}$ for $x,y\in M$.

\begin{lem} \label{lem:DualPreserving}
For $p\in P_{\alpha\to\alpha}$, define $\psi_p\colon H_{\overline{p}}\to \overline{H_p}$ (where $\overline{p}$ is the dual projection of $p$ in $\cP$) by the unique extension of the map
$$
\ColorMultiply{x}{\overline{p}}{\alphacolor}
\longmapsto
\overline{
\ColorMultiply{x^*}{p}{\alphacolor}
}
$$
for $\xi\in D(H_{\overline{p}})$ (the {\alphacolor} strand on the left is labelled $\overline{\alpha}$). Then $\psi_p$ is an $M-M$ bilinear isomorphism such that the following diagrams commute:
$$
\xymatrix{
H_{\overline{p}}\otimes_M H_{\overline{q}}
\ar[rr]^{\psi_p\otimes_M \psi_q}
\ar[d]_{\phi_{\overline{p},\overline{q}}}
&&
\overline{H_p}\otimes_M \overline{H_q}
\ar[rr]^{\cong}
&&
\overline{H_q\otimes_M H_p}
\ar[d]^{\overline{\phi_{q,p}}}
\\
H_{\overline{p}\otimes \overline{q}}
\ar[rr]^{=}
&&
H_{\overline{q\otimes p}}
\ar[rr]^{\psi_{p\otimes q}}
&&
\overline{H_{q\otimes p}}
}
$$
and
$$
\xymatrix{
H_{\overline{\overline{p}}}
\ar[rr]^{\psi_p}\ar[d]_{=}
&&
\overline{H_{\overline{p}}}
\ar[d]^{\overline{\psi_{p}}}
\\
H_p
\ar[rr]^{\cong}
&&
\overline{\overline{H_p}}.
}
$$
where we just write $\cong$ for the obvious isomorphism.
\end{lem}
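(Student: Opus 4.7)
The plan is to verify all claims by direct diagrammatic calculation on the dense subspaces of $M$-bounded vectors and extend by continuity/uniqueness.

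First I would show that $\psi_p$ is isometric. A dense subspace of $H_{\overline{p}}$ is $D(H_{\overline{p}}) = p_{\emptyset}\wedge \cM_{\emptyset,\alpha}\wedge \overline{p}$, consisting of vectors $\xi = x\wedge \overline{p}$. For two such vectors $\xi = x\wedge \overline{p}$ and $\eta = y\wedge \overline{p}$, one computes
$$
\langle \psi_p(\xi), \psi_p(\eta)\rangle_{\overline{H_p}} = \overline{\langle x^{*}\wedge p, y^{*}\wedge p\rangle_{H_p}},
$$
and unpacking the pictorial formula \eqref{eqn:InnerProduct} shows this is the 180-degree rotation of $\langle \xi,\eta\rangle_{H_{\overline{p}}}$. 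By sphericality of the trace (inherited from the factor planar algebra) and the closure of $\Sigma CTL$ under rotation, the rotated diagram equals $\langle \eta,\xi\rangle_{H_{\overline{p}}}$, as required by the conjugation of scalars on $\overline{H_p}$. Hence $\psi_p$ extends uniquely to an isometry of Hilbert spaces, still denoted $\psi_p$.

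Next, $M$-$M$ bilinearity is immediate from the defining action $a\overline{\zeta}b=\overline{b^{*}\zeta a^{*}}$: stacking $a$ on the left and $b$ on the right of $x\wedge \overline{p}$ and then applying the 180-degree rotation produces $b^{*}$ on the left and $a^{*}$ on the right of the rotated diagram, matching $\overline{b^{*}(x^{*}\wedge p)a^{*}}$. For surjectivity, every dense vector $\overline{y\wedge p}\in\overline{H_p}$ is the image under $\psi_p$ of $y^{*}\wedge \overline{p}\in D(H_{\overline{p}})$, so the isometric image is dense and thus all of $\overline{H_p}$, making $\psi_p$ unitary.

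For the first commutative diagram it suffices to evaluate both composites on simple tensors $\xi\otimes \eta$ with $\xi\in D(H_{\overline{p}})$ and $\eta\in D(H_{\overline{q}})$, using Lemma \ref{lem:TensorProduct2} and the canonical swap isomorphism $\overline{H_p}\otimes_M \overline{H_q}\cong \overline{H_q\otimes_M H_p}$ sending $\overline{\xi}\otimes \overline{\eta}\mapsto \overline{\eta\otimes \xi}$. Going down the left side, $\phi_{\overline{p},\overline{q}}$ concatenates the diagrams with $\overline{p}\otimes\overline{q} = \overline{q\otimes p}$ below, and $\psi_{q\otimes p}$ rotates by $180^{\circ}$. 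Going along the right side, $\psi_p\otimes_M \psi_q$ first rotates each factor, the swap reorders them, and $\overline{\phi_{q,p}}$ concatenates; the result is the same rotated picture because rotation reverses the left-right order of horizontally concatenated diagrams. The second diagram collapses similarly to the tautology that the $180^{\circ}$ rotation is an involution and the pivotal identification $\overline{\overline{p}}=p$ is the identity on diagrams.

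The main obstacle is bookkeeping: one must track carefully how the planar-algebra $*$, the $180^{\circ}$ rotation, the canonical identification $\overline{K\otimes_M L}\cong \overline{L}\otimes_M \overline{K}$, and the pivotal identification $\overline{\overline{p}}=p$ interact. Once the orientation conventions are fixed once and for all, each commutativity becomes a diagram-level identity with no analytic content beyond the isometric extension argument above.
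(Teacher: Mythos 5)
Your proof takes essentially the same approach as the paper's, which simply asserts that well-definedness and bilinearity are trivial and that commutativity of the diagrams follows by checking on $M$-bounded vectors; you spell out the diagrammatic verification in detail. One small slip in the isometry step: since $\psi_p$ is complex-linear and the conjugation on $\overline{H_p}$ is already used in your first equality (reducing $\langle\psi_p\xi,\psi_p\eta\rangle_{\overline{H_p}}$ to $\overline{\langle x^*\wedge p, y^*\wedge p\rangle_{H_p}}$), the rotated diagram should be identified with $\langle\xi,\eta\rangle_{H_{\overline{p}}}$, not $\langle\eta,\xi\rangle_{H_{\overline{p}}}$; with that correction the bookkeeping goes through.
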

\begin{proof}
That $\psi_p$ is a well-defined $M-M$ bilinear isomorphism is trivial. Commutativity of the diagrams follows by looking at $M$-bounded vectors.
\end{proof}

\begin{lem}\label{lem:Rigid}
For $p\in P_{\alpha\to\alpha}$, $\Phi_{\ev_{H_p}^{\Bim}}=\ev_{H_p}^\CF$ and $\Phi_{\coev_{H_p}^{\Bim}}=\coev_{H_p}^\CF$. Hence $\Phi$ preserves the rigid structure, and $\overline{\Phi_x}=\Phi_{\overline{x}}\in \CF(H_{\overline{q}}\to H_{\overline{p}})$ for all $x\in \Bim(H_p\to H_q)$.
\end{lem}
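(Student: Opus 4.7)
The plan is to prove the two assertions about $\ev$ and $\coev$ directly, and then derive the statement about dual morphisms from the general formalism of rigid categories.

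First I would verify $\Phi_{\ev_{H_p}^{\Bim}}=\ev_{H_p}^{\CF}$ by testing on a dense set of bounded vectors. Pick $\xi\in D(H_{\overline{p}})$ and $\eta\in D(H_p)$. Under the isomorphism $\phi_{\overline{p},p}$ of Lemma \ref{lem:TensorProduct2}, the elementary tensor $\xi\otimes_M\eta$ is sent to the horizontal concatenation $\xi\wedge\eta$ living in $H_{\overline{p}\otimes p}$, and then $\Phi_{\ev_{H_p}^{\Bim}}$ caps this vector off by the $\alpha$-cup with $\overline{p},p$ on top, producing a vector in $H_{\emptyset}=L^2(M)$. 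On the other side, using $\psi_p\colon H_{\overline{p}}\to \overline{H_p}$ from Lemma \ref{lem:DualPreserving}, the element $\xi\otimes_M\eta$ becomes $\overline{\xi^*}\otimes_M\eta$, and $\ev_{H_p}^{\CF}$ sends this to $\langle \xi^*|\eta\rangle_M$. The key observation is that the diagrammatic formula for the $M$-valued inner product, already written down in the proof of Lemma \ref{lem:TensorProduct}, is exactly the $\alpha$-cup diagram with $\xi^*$ and $\eta$ on the two sides; the projection-sandwich structure matches because $\xi,\eta$ live under $\overline{p}$ and $p$ respectively. Hence the two maps agree on $M$-bounded simple tensors, and density plus boundedness give the identity everywhere.

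Next I would deduce the coevaluation statement from the $*$-structure. In $\Bim$, $\coev_{H_p}^{\Bim}$ is defined as the adjoint of $\ev_{H_{\overline{p}}}^{\Bim}$; in $\CF$, the analogous formula $\coev_{H_p}^{\CF}=(\ev_{H_{\overline{p}}}^{\CF})^*$ holds up to the natural identifications from Lemma \ref{lem:DualPreserving}. Since Lemma \ref{lem:StarPreserving} shows $\Phi$ is $*$-preserving on morphism spaces, applying $*$ to the first step gives $\Phi_{\coev_{H_p}^{\Bim}}=(\Phi_{\ev_{H_{\overline{p}}}^{\Bim}})^*=(\ev_{H_{\overline{p}}}^{\CF})^*=\coev_{H_p}^{\CF}$, where the compatibility of $\psi$ with tensor products and duality (the two diagrams in Lemma \ref{lem:DualPreserving}) ensures the identifications line up correctly.

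Finally, $\overline{\Phi_x}=\Phi_{\overline{x}}$ is formal. In any rigid $C^*$-tensor category the dual of a morphism $x\colon H_p\to H_q$ is given by the zig-zag formula
$$
\overline{x}=(\id_{H_{\overline{p}}}\otimes \ev_{H_q})\circ(\id_{H_{\overline{p}}}\otimes x\otimes \id_{H_{\overline{q}}})\circ(\coev_{H_p}\otimes \id_{H_{\overline{q}}}),
$$
and the analogous formula computes the dual of $x$ in the planar algebra (where it is simply rotation by $\pi$). Since $\Phi$ is a tensor functor that preserves composition by Remark \ref{rem:Functor}, and it preserves $\ev$ and $\coev$ by the previous two steps, applying $\Phi$ to the planar-algebra formula for $\overline{x}$ yields exactly the above categorical expression for $\overline{\Phi_x}$. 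The main obstacle is really only the first step, namely matching the diagrammatic cup arising from the $M$-valued inner product with the cup defining $\ev^{\Bim}$; once that identification is secured, everything else is a formal consequence of the already-established structural lemmas.
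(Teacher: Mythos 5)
Your proof is correct and follows essentially the same route as the paper: check $\Phi_{\ev^{\Bim}}=\ev^{\CF}$ on $M$-bounded simple tensors by matching the diagrammatic $\alpha$-cup against the $M$-valued inner product formula (via the identifications from Lemmas \ref{lem:TensorProduct2} and \ref{lem:DualPreserving}), deduce the coevaluation statement from $*$-preservation (Lemma \ref{lem:StarPreserving}), and conclude $\overline{\Phi_x}=\Phi_{\overline{x}}$. You make the last step slightly more explicit than the paper (which merely asserts it is "easy to check") by invoking the zig-zag expression for the dual morphism and the fact that a monoidal functor preserving $\ev$ and $\coev$ preserves duals — a welcome bit of extra detail, but the same argument.
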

\begin{proof}
In diagrams, $\ev_{H_p}^\CF$ is given for $M$-bounded vectors by
$$
\ev_{H_p}^\CF(\overline{x}\otimes y)=
\ColorMultiply{\overline{x}}{\overline{p}}{\alphacolor}
\otimes
\ColorMultiply{y}{p}{\alphacolor}
\longmapsto
\begin{tikzpicture}[baseline=.5cm]
	\draw [thick, \alphacolor] (0,1)--(0, -.4) .. controls ++(270:.4cm) and ++(270:.4cm) .. (1.2,-.4) -- (1.2,1);
	\draw (0,2)--(0,1);
	\draw (1.2,2)--(1.2,1);
	\draw[thick, unshaded] (1.6, -.4) -- (1.6, .4) -- (.8, .4) -- (.8, -.4) -- (1.6, -.4);
	\draw[thick, unshaded] (1.6, .8) -- (1.6, 1.6) -- (.8, 1.6) -- (.8, .8) -- (1.6, .8);
	\draw[thick, unshaded] (-.4, -.4) -- (-.4, .4) -- (.4, .4) -- (.4, -.4) -- (-.4, -.4);
	\draw[thick, unshaded] (-.4, .8) -- (-.4, 1.6) -- (.4, 1.6) -- (.4, .8) -- (-.4, .8);
	\node at (0, 0) {$\overline{p}$};
	\node at (1.2, 0) {$p$};
	\node at (0, 1.2) {$\overline{x}$};
	\node at (1.2, 1.2) {$y$};
\end{tikzpicture}
=\langle x| y\rangle_M.
$$
Hence using the isomorphism
$$
\xymatrix{
\overline{H_p}\otimes_M H_p
\ar[rr]^{\psi_p\otimes_M \id_{H_p}}
&&
H_{\overline{p}}\otimes_M H_p
\ar[rr]^(.55){\phi_{\overline{p},p}}
&&
H_{\overline{p}\otimes p}
}
$$
from Lemmas \ref{lem:TensorProduct2} and \ref{lem:DualPreserving}, we have that $\ev_{H_p}^\CF=\Phi_{\ev_{H_p}^{\Bim}}$.
Since $\Phi$ is $*$-preserving by Lemma \ref{lem:StarPreserving}, we must have that $\Phi_{\coev_{H_p}^{\Bim}}=\coev_{H_p}^\CF$. Thus
$$
\coev_{H_p}^\CF(1_M)=\MInnerProductOp{p}{\alphacolor}{\overline{p}}\,.
$$
It is now easy to check that $\overline{\Phi_x}=\Phi_{\overline{x}}\in \CF(H_{\overline{q}}\to H_{\overline{p}})$ for all $x\in \Bim(H_p\to H_q)$.
\end{proof}

\begin{proof}[Proof of Theorem \ref{thm:BimCF}]
We must show $\Phi$ is additive, monoidal, dual-preserving, $*$-preserving, fully faithful, and essentially surjective.

Additivity on objects and essentially surjective come for free, since the objects are the same. Additivity on morphisms and fully faithful follows from Lemma \ref{lem:FullyFaithful}. Monoidal follows from Lemma \ref{lem:TensorProduct2},  $*$-preserving follows from Lemmas \ref{lem:FullyFaithful} and \ref{lem:StarPreserving}, and dual-preserving follows from Lemmas \ref{lem:DualPreserving} and \ref{lem:Rigid}. Note that the results of the previous five lemmas extend in the obvious ways to direct sums by looking at matrices over the maps $H_p\to H_q$.
\end{proof}

\section{The isomorphism class of $M$}\label{sec:vNa}

We now determine the isomorphism class of the $II_1$-factor $M$ from Section \ref{sec:GJS}.

\begin{assumption}
Note that if our rigid $C^*$-tensor category $\cC$ is countably generated, then the isomorphism classes of objects of $\cC$ form a countable set. Let $\cS$ consist of a set of representatives for the isomorphism classes of objects in $\cC$, and let $\cL=\set{X\oplus\overline{X}}{X\in\cS}$ as in Assumption \ref{assume:Countable}.
Again, the objects in $\cL$ are not simple, and all have dimension greater than 1.
In particular, $\cF_\cC(\cL)$ is not locally finite, and each vertex has self loops, since objects of the form $2\oplus X\oplus \overline{X}$ are in $\cL$.
\end{assumption}

With this assumption, we prove in Section \ref{sec:vNAGraph} that $M \cong L(\F_{\I})$. In the case that $\cC$ has finitely many isomorphism classes of simple objects, i.e., $\cC$ is a unitary fusion category, we can find a single object $X$ that generates $\cC$. We explain in Remark \ref{rem:Finite} that if we choose $\cL=\{X\oplus\overline{X}\}$, then $M\cong L(\F_t)$ with
$$
t = 1 + \dim(\cC)(2\dim(X) - 1),
$$
similar to the result in \cite{MR2807103}.

To begin, we describe in the next two sections a semifinite algebra associated to $\cP$.  Many of the ideas in these two sections mirror \cite{MR2807103}, except that the semifinite algebra makes all box-spaces orthogonal. One also must be more careful since the fusion graph $\cF_{\cC}(\cL)$ is not bipartite.

\subsection{A semifinte algebra associated to $\cP$}

Set $\displaystyle \cG_{\infty} = \bigoplus_{\alpha, \beta \in \Lambda} \Gr_{\alpha, \beta}(\cP)$.  We endow $\cG_{\infty}$ with the multiplication
$$
\begin{tikzpicture}[baseline=-.1cm]
	\draw (-.8,0)--(.8,0);
	\draw (0,.4)--(0,.8);
	\draw[thick, unshaded] (-.4, -.4) -- (-.4, .4) -- (.4, .4) -- (.4, -.4) -- (-.4, -.4);
	\node at (0, 0) {$x$};
	\node at (-.6, .2) {\scriptsize{$\kappa$}};
	\node at (.2,.6) {\scriptsize{$\gamma$}};
	\node at (.6, .2) {\scriptsize{$\theta$}};
\end{tikzpicture}
\wedge
\begin{tikzpicture}[baseline=-.1cm]
	\draw (-.8,0)--(.8,0);
	\draw (0,.4)--(0,.8);
	\draw[thick, unshaded] (-.4, -.4) -- (-.4, .4) -- (.4, .4) -- (.4, -.4) -- (-.4, -.4);
	\node at (0, 0) {$x$};
	\node at (-.6, .2) {\scriptsize{$\omega$}};
	\node at (.2,.6) {\scriptsize{$\gamma'$}};
	\node at (.6, .2) {\scriptsize{$\chi$}};
\end{tikzpicture}
= \delta_{\omega,\theta}
\begin{tikzpicture} [baseline = -.1cm]
	\draw (0,.4)--(0,.8);
	\draw (1.2,.4)--(1.2,.8);
	\draw (-.8,0)--(2,0);
	\draw[thick, unshaded] (-.4, -.4) -- (-.4, .4) -- (.4, .4) -- (.4, -.4) -- (-.4, -.4);
	\draw[thick, unshaded] (.8, -.4) -- (.8, .4) -- (1.6, .4) -- (1.6, -.4) -- (.8, -.4);
	\node at (0, 0) {$x$};
	\node at (1.2, 0) {$y^{*}$};
	\node at (-.6,.2) {\scriptsize{$\kappa$}};
	\node at (.2,.6) {\scriptsize{$\gamma$}};
	\node at (.6,.2) {\scriptsize{$\theta$}};
	\node at (1.4,.6) {\scriptsize{$\gamma'$}};
	\node at (1.8,.2) {\scriptsize{$\chi$}};
\end{tikzpicture}\,,
 $$
where $\kappa,\theta,\omega,\chi\in\Lambda$.  There is a (semifinite) trace, $\Tr$ on $\cG_{\infty}$ given by
$$
\Tr(x) =
\begin{tikzpicture}[baseline=.3cm]
	\draw (0,0)--(0,.8);
	\draw (.4,0) arc (90:-90:.4cm) -- (-.4,-.8) arc (270:90:.4cm);
	\filldraw[unshaded,thick] (-.4,.4)--(.4,.4)--(.4,-.4)--(-.4,-.4)--(-.4,.4);
	\draw[thick, unshaded] (-.7, .8) -- (-.7, 1.6) -- (.7, 1.6) -- (.7,.8) -- (-.7, .8);
	\node at (0,0) {$x$};
	\node at (0,1.2) {$\Sigma CTL$};
\end{tikzpicture}
$$
for $x \in \Gr_{\alpha}$, and $\Tr(x) = 0$ for $x \in \Gr_{\alpha, \beta}$ with $\alpha \neq \beta$.

\begin{defn}

As in Section \ref{sec:graded}, one argues (by once again orthogonalizing) that $\Tr$ is positive definite on $\cG_{\infty}$ and multiplication is bounded on $L^{2}(\cG_{\infty})$.  Therefore, we form the (semifinite) von Neumann algebra $\cM_{\infty}=\cG_{\infty}''$ acting on $L^{2}(\cG_{\infty},\Tr)$.

We also use the von Neumann subalgebra $\cA_{\infty} \subset \cM_{\infty}$ which is generated by all boxes in $\cG_{\infty}$ with \underline{no} strings on top.
\end{defn}

As in Section \ref{sec:CategoriesOfBimodules}, let
$$p_{\alpha} =
\begin{tikzpicture} [baseline = -.1cm]
	\filldraw[unshaded,thick] (-.4,.4)--(.4,.4)--(.4,-.4)--(-.4,-.4)--(-.4,.4);
	\draw [thick, \alphacolor] (-.4, 0) -- (.4, 0);
\end{tikzpicture}\,.
$$
It is easy to see that $p_{\alpha}\wedge \cM_{\infty} \wedge p_{\alpha} = M_{\alpha}$ so that all factors in the various Jones towers in Section \ref{sec:towers} appear as cut-downs/corners of $\cM_{\infty}$.   We now record some lemmas about the structure of $\cM_{\infty}$ and $\cA_{\infty}$.

\begin{lem} \label{lem:2inftyfactor}
$\cM_{\infty}$ is a $II_{\infty}$ factor.
\end{lem}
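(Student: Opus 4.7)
The plan is to establish the three properties in sequence: semifiniteness, factoriality, and the correct type. Semifiniteness is essentially free from the construction of Section \ref{sec:graded} as extended to the semifinite setting: the trace $\Tr$ was built to be a normal semifinite trace on $\cM_\infty$, with the projections $p_\alpha$ having finite trace, and by the positive-definiteness argument carried out in \ref{sec:graded} (orthogonalization via $\cF_\infty$) the trace is faithful. So the first step is just to record that $(\cM_\infty,\Tr)$ is a semifinite von Neumann algebra.

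The second and central step is to prove factoriality. Let $z \in Z(\cM_\infty)$. Since each $p_\alpha$ lies in $\cM_\infty$, $z$ commutes with $p_\alpha$, so $p_\alpha z = z p_\alpha = p_\alpha z p_\alpha \in p_\alpha \cM_\infty p_\alpha = M_\alpha$. Moreover, $p_\alpha z p_\alpha$ commutes with every $x \in M_\alpha$ (since $x = p_\alpha x p_\alpha$ and $xz=zx$), so $p_\alpha z p_\alpha \in Z(M_\alpha)$. By Theorem \ref{thm:Factor}, $M_\alpha$ is a $II_1$ factor, and therefore $p_\alpha z p_\alpha = c_\alpha p_\alpha$ for some scalar $c_\alpha \in \C$. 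To show all the $c_\alpha$'s agree, it suffices to exhibit, for each pair $\alpha,\beta \in \Lambda$, a nonzero element $v \in p_\alpha \wedge \cM_\infty \wedge p_\beta = \Gr_{\alpha,\beta}(\cP)$: the relation $zv = vz$ combined with $p_\alpha z p_\alpha = c_\alpha p_\alpha$ and $p_\beta z p_\beta = c_\beta p_\beta$ yields $c_\alpha v = c_\beta v$, forcing $c_\alpha = c_\beta$. Such a $v$ is easy to produce: the element of $P_{\overline{\alpha}\alpha\overline{\beta}\beta} \subset \Gr_{\alpha,\beta}(\cP)$ (corresponding to the word $\gamma = \alpha\overline{\beta}$ on top) given by two nested caps --- one labelled $\alpha$ connecting the left side to the top, one labelled $\beta$ connecting the right side to the top --- is manifestly nonzero in $\cG_\infty$, hence in $\cM_\infty$ by positive-definiteness of $\Tr$. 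This gives $z = c\cdot 1$ and establishes factoriality.

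Finally, to pin down the type: the existence of cutdowns $p_\alpha \cM_\infty p_\alpha = M_\alpha$ which are $II_1$ factors (Theorem \ref{thm:Factor}) prevents $\cM_\infty$ from having any minimal projection, so it cannot be type I. To rule out type $II_1$, observe that $\{p_\alpha\}_{\alpha \in \Lambda}$ is a countably infinite family of pairwise orthogonal projections in $\cM_\infty$ whose sum converges strongly to $1$, so $\Tr(1) = \sum_\alpha \Tr(p_\alpha) = \infty$, and the trace on $\cM_\infty$ is not finite. Combined with factoriality and semifiniteness, this forces $\cM_\infty$ to be a $II_\infty$ factor.

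I do not expect any serious obstacle; the only subtle point is verifying the existence of the intertwining element $v \in p_\alpha \wedge \cM_\infty \wedge p_\beta$ for arbitrary $\alpha, \beta$, but the double-cap element above works uniformly.
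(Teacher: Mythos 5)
Your proposal is correct. It reaches the same conclusion as the paper, and the essential ingredients coincide (factoriality of the cutdowns $M_\alpha$ plus nonzero elements of $p_\alpha\cM_\infty p_\beta$ to tie the corners together), but the organization differs. The paper argues via \emph{central support}: it uses the partial isometry $v_\alpha$ (the polar part of the cup $w_\alpha$, already produced in the proof of Lemma~\ref{lem:TensorProduct}) to exhibit $p_\emptyset$ as equivalent to a subprojection $e\leq p_\alpha$, and then uses that $p_\alpha\cM_\infty p_\alpha$ is a factor to force the central support of $e$ --- hence of $p_\emptyset$ --- to dominate $p_\alpha$. Summing over $\alpha$ gives central support $1$, and the standard fact that a corner $p\cM p$ being a factor with $p$ of full central support implies $\cM$ is a factor finishes it. You instead compute the center directly: show $p_\alpha z p_\alpha = c_\alpha p_\alpha$ for any central $z$, and match scalars across $\alpha,\beta$ via the double-cap element. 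Your route is marginally more self-contained, as it constructs its own intertwiner rather than citing the polar decomposition from Lemma~\ref{lem:TensorProduct}; the paper's route additionally records that $p_\emptyset$ is equivalent in $\cM_\infty$ to subprojections of every $p_\alpha$, a fact that is reused later (e.g., in the corollary that $M_\alpha$ is a $\delta_\alpha$-amplification of $M$). Both are valid; neither has a gap.
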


\begin{proof}
Note that $p_{\emptyset}\wedge \cM_{\infty}\wedge p_{\emptyset} = M$ is a $II_{1}$ factor and the trace of $1 = 1_{\cM_{\infty}} = \sum_{\alpha \in \Lambda} p_{\alpha}$ (with convergence of the orthogonal sum in the strong operator topology) is infinite, so we only need to show that $\cM_{\infty}$ is a factor.  To do so, we show that the central support of $p_{\emptyset}$ in $\cM_{\infty}$ is 1, which is enough since $p_{\emptyset}\wedge \cM_{\infty}\wedge p_{\emptyset}$ is a factor.  We let
$$
w_{\alpha} =
\begin{tikzpicture} [baseline=-.1cm]
	\filldraw[unshaded,thick] (-.4,.4)--(.4,.4)--(.4,-.4)--(-.4,-.4)--(-.4,.4);
	\draw [thick, \alphacolor] (-.4, 0) arc(-90:0: .4cm);
\end{tikzpicture}\,,
$$
and we let $v_\alpha$ be the polar part of $w_\alpha$.  As was discussed in the proof of Lemma \ref{lem:TensorProduct}, $v_\alpha$ induces an equivalence of projections between $p_{\emptyset}$ and $e \leq p_{\alpha}$.  Let $z$ be the central support of $p_{\emptyset}$ (and $e$).  Since $p_{\alpha}\wedge \cM_{\I}\wedge p_{\alpha}$ is a factor, the central support of $e$ must lie above $p_{\alpha}$, so $z \geq p_{\alpha}$.  Since this holds for all $\alpha \in \Lambda$, we have $z \geq 1$. Hence $z = 1$, and we are finished.
\end{proof}

\begin{cor}
The factor $M_{\alpha}$ is a $\delta_{\alpha}$-amplification of $M$.
\end{cor}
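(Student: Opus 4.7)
The plan is to leverage the fact, just established in Lemma \ref{lem:2inftyfactor}, that $\cM_{\I}$ is a $II_{\I}$ factor with semifinite trace $\Tr$, together with the identification $M_{\alpha}=p_{\alpha}\wedge\cM_{\I}\wedge p_{\alpha}$ and $M=p_{\emptyset}\wedge\cM_{\I}\wedge p_{\emptyset}$. In a $II_{\I}$ factor, any two finite projections are Murray--von Neumann equivalent exactly when they carry the same trace, and more generally the compression $p\cM_{\I}p$ at a finite projection $p$ is the $\Tr(p)/\Tr(q)$-amplification of $q\cM_{\I}q$ for any other finite projection $q$. Thus the entire statement reduces to a single trace computation.

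First I would compute $\Tr(p_{\emptyset})$ and $\Tr(p_{\alpha})$ directly from the definition of $\Tr$ on $\cG_{\I}$. The projection $p_{\emptyset}$ is the empty diagram, so its closure under $\sum CTL$ is the empty diagram and hence $\Tr(p_{\emptyset})=1$. The projection $p_{\alpha}$, viewed as an element of $P_{\overline{\alpha}\,\emptyset\,\alpha}$, has no strings on top, so the $\sum CTL$ box on top contributes only the empty summand, and the outer closure joins the left $\overline{\alpha}$ and right $\alpha$ strands through the bottom. This produces a single closed loop labelled $\alpha$, which by the evaluability axiom of $\cP$ evaluates to $\delta_{\alpha}=\prod_{i}\delta_{a_{i}}$ where $\alpha=a_{1}\cdots a_{n}$.

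Second, I would invoke standard amplification theory for $II_{\I}$ factors. Since $\cM_{\I}$ is a factor (Lemma \ref{lem:2inftyfactor}), one may decompose $p_{\alpha}$ into an orthogonal sum of subprojections each Murray--von Neumann equivalent to $p_{\emptyset}$ (or a sub-equivalent piece thereof) with total trace $\delta_{\alpha}$. This can be witnessed concretely by iterating the construction used in Lemma \ref{lem:2inftyfactor}: the partial isometry obtained from the polar decomposition of $w_{\alpha}$ implements an equivalence between $p_{\emptyset}$ and a subprojection $e\le p_{\alpha}$, and factoriality plus $\Tr(p_{\alpha})=\delta_{\alpha}>1$ ensure one can continue splitting off copies of $p_{\emptyset}$ until the residual piece has trace strictly less than $1$. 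Putting these pieces together yields the isomorphism $M_{\alpha}=p_{\alpha}\wedge\cM_{\I}\wedge p_{\alpha}\cong M_{\delta_{\alpha}}$, which is precisely the claim that $M_{\alpha}$ is the $\delta_{\alpha}$-amplification of $M$.

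There is essentially no serious obstacle here: the proof is a two-line verification once one has Lemma \ref{lem:2inftyfactor} and the formula $\Tr(p_{\alpha})=\delta_{\alpha}$ in hand. The only place requiring a sliver of care is being explicit that the semifinite trace $\Tr$ restricted to $p_{\emptyset}\wedge\cM_{\I}\wedge p_{\emptyset}=M$ agrees with the normalized tracial state $\tr$ on $M$ from Equation \eqref{r}, which is immediate from the definitions since $\Tr(p_{\emptyset})=1$.
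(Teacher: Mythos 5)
Your proof is correct and fills in exactly the standard argument the paper leaves implicit: the paper states this corollary without proof immediately after establishing that $\cM_\I$ is a $II_\I$ factor, with the understanding that compressions of a $II_\I$ factor by finite projections are related by amplification with ratio given by the traces, and your computation $\Tr(p_\emptyset)=1$, $\Tr(p_\alpha)=\delta_\alpha$ is the only ingredient needed.
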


\begin{cor}
The algebras $\cM_{\alpha, \beta}$ are $II_{1}$ factors.
\end{cor}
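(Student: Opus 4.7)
The plan is to verify finiteness and factoriality of $\cM_{\alpha,\beta}$ separately, then read off the type from the corner structure. Finiteness I would get immediately from $1_{\cM_{\alpha,\beta}}=p_\alpha+p_\beta$: each $p_\gamma$ has finite trace because the corner $p_\gamma\cM_{\alpha,\beta}p_\gamma=M_\gamma$ is a finite factor by Theorem \ref{thm:Factor}, so $\Tr$ restricts to a finite normal faithful trace on $\cM_{\alpha,\beta}$.

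For factoriality, I would adapt the central-support argument of Lemma \ref{lem:2inftyfactor} to the two-projection setting. Given any central projection $z\in Z(\cM_{\alpha,\beta})$, the compression $zp_\gamma=p_\gamma z p_\gamma$ lies in $Z(p_\gamma\cM_{\alpha,\beta}p_\gamma)=Z(M_\gamma)=\C p_\gamma$, since $M_\gamma$ is a factor. Thus $zp_\gamma\in\{0,p_\gamma\}$ for each $\gamma\in\{\alpha,\beta\}$, restricting $z$ to the four a priori candidates $0,\,p_\alpha,\,p_\beta,\,1$. The key step is to rule out the middle two by producing a nonzero element in the off-diagonal corner $p_\alpha\cM_{\alpha,\beta}p_\beta$, since centrality of $p_\alpha$ (or of $p_\beta$) would force this corner to vanish.

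The off-diagonal corner is nonzero because it contains the dense subspace $\Gr_{\alpha,\beta}(\cP)=\bigoplus_\gamma P_{\bar\alpha\gamma\beta}$, and choosing $\gamma=\bar\alpha\beta$ produces a nonzero element of $P_{\bar\alpha\bar\alpha\beta\beta}$ via the unique color-respecting non-crossing pairing (bend the external $\alpha$-strand on the left up to the leftmost $\alpha$-strands of the top word, and bend the external $\beta$-strand on the right up to the rightmost $\beta$-strands of the top word). Once factoriality is in hand, $\cM_{\alpha,\beta}$ is a finite factor containing the $II_1$ factor $M_\alpha$ as a nontrivial corner, so it is itself of type $II_1$. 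There is no substantive obstacle here; the only point requiring a moment's care is the nonvanishing of the cup diagram, which follows from evaluability of $\cP$ together with the fact that the diagram represents a nontrivial Temperley--Lieb element.
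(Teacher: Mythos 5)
Your proof is correct in essence and takes a genuinely different route from the paper's. The paper disposes of this corollary in one line: having established (Lemma \ref{lem:2inftyfactor}) that $\cM_\infty$ is a $II_\infty$ factor, it notes that $\cM_{\alpha,\beta}$ is the compression of $\cM_\infty$ by the finite projection $p_\alpha+p_\beta$, hence automatically a $II_1$ factor. Your argument works intrinsically in $\cM_{\alpha,\beta}$ via the $2\times 2$ corner structure: factoriality of the corners $M_\alpha$ and $M_\beta$ (Theorem \ref{thm:Factor}) confines any central projection $z$ to $\{0,p_\alpha,p_\beta,1\}$, and the middle two are ruled out by the nonvanishing of the off-diagonal corner $p_\alpha\cM_{\alpha,\beta}p_\beta$. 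This is the same mechanism the paper uses to prove Lemma \ref{lem:2inftyfactor}, specialized from countably many projections $p_\gamma$ to just two; you avoid invoking the semifinite algebra, at the cost of re-deriving by hand what compression of a $II_\infty$ factor gives for free.

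One small correction to the top word $\gamma$. The cup diagram your words describe --- left $\alpha$-strands bending up into the left portion of the top word, right $\beta$-strands bending up into the right portion --- has boundary word $\bar\alpha\,\alpha\,\bar\beta\,\beta$, so $\gamma=\alpha\bar\beta$, not $\gamma=\bar\alpha\beta$. With $\gamma=\bar\alpha\beta$ the box space is $P_{\bar\alpha\bar\alpha\beta\beta}$, which in general admits no color-respecting noncrossing pairing at all (the innermost cup in the $\bar\alpha\bar\alpha$ block would have to join the first and last letters of $\alpha$, which fails unless $\alpha$ is a palindrome). The double cup lives in $P_{\bar\alpha\alpha\bar\beta\beta}\subset\Gr_{\alpha,\beta}(\cP)$; it is the element $w_\alpha\wedge w_\beta^*$ in the notation of Lemma \ref{lem:2inftyfactor}, and it is nonzero because $w_\alpha$ has right support $p_{\emptyset}$ and $w_\beta^*$ has left support $p_{\emptyset}$ (equivalently, the cup elements $w_\alpha^*\wedge w_\alpha$ and $w_\beta^*\wedge w_\beta$ are positive and invertible by Lemma \ref{lem:cup}). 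With that fix your argument goes through.
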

\begin{proof}
$\cM_{\alpha, \beta}$ is the compression of $\cM_{\I}$ by the finite projection $p_{\alpha} + p_{\beta}$.
\end{proof}

\begin{lem} \label{lem:1infty}
We have a direct sum decomposition
$$\displaystyle \cA_{\infty} = \bigoplus_{v \in V(\cF_\cC(\cL))} \cA_{v}
$$
where the sum is over all vertices $v$ in the fusion graph $\cF_\cC(\cL)$, and each $\cA_{v}$ is a type $I_{\infty}$ factor.  If $p$ is a minimal projection in $P_{\alpha \to \alpha}$ whose equivalence class represents the vertex $v$, then $p \leq 1_{\cA_{v}}$.
\end{lem}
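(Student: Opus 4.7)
The strategy is to identify $\cA_{\infty}$ with a direct sum of algebras of finitely-supported infinite matrices, one for each vertex of $\cF_\cC(\cL)$, and then take the weak closure.

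The plan is as follows. First, I will set up the block decomposition. By semisimplicity, for each $\alpha\in\Lambda$ the finite dimensional $C^*$-algebra $P_{\alpha\to\alpha}$ decomposes as $\bigoplus_v P_{\alpha\to\alpha}^v$, where $v$ ranges over the vertices of $\cF_\cC(\cL)$ (equivalence classes of simple projections) and $P_{\alpha\to\alpha}^v\cong M_{N_v(\alpha)}(\C)$ with $N_v(\alpha)$ the multiplicity of the simple object $v$ in $\alpha$. Semisimplicity of $\cC=\Pro(\cP)$ also gives $P_{\alpha\to\beta}=\bigoplus_v (P_{\alpha\to\beta})^v$, where $(P_{\alpha\to\beta})^v$ consists of intertwiners of the $v$-isotypic components; composition respects this grading, and the grading is preserved by $*$. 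For each $\alpha$ and each $v$ with $N_v(\alpha)\ge 1$, choose pairwise orthogonal minimal projections $\{p^v_{\alpha,i}\}_{i=1}^{N_v(\alpha)}$ in $P_{\alpha\to\alpha}^v$ summing to its identity. Since any two minimal projections in $P_{\alpha\to\alpha}^v$ and $P_{\beta\to\beta}^v$ are both of class $v$, they are equivalent in $\cP$ via a partial isometry $u^v_{(\alpha,i),(\beta,j)}\in P_{\alpha\to\beta}$, which lies in $\cA_\infty$ since it has no top strands.

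Second, I will assemble matrix units. Fix a vertex $v$ and let $I_v=\bigsqcup_{\alpha\in\Lambda}\{1,\dots,N_v(\alpha)\}$. The elements $\{u^v_{(\alpha,i),(\beta,j)}\}$ form a system of matrix units indexed by $I_v\times I_v$, so their span $\cA_v^0$ is $*$-isomorphic to the algebra of finitely-supported matrices on $I_v$. Because any two minimal projections of class $v$ are equivalent inside $\cA_v^0$, they have the same (positive) $\Tr$-value, so $\Tr$ restricted to $\cA_v^0$ is proportional to the canonical semifinite trace. The set $I_v$ is countably infinite: it is nonempty because every simple object of $\cC$ is a subobject of some word in $\Lambda$ (Assumption \ref{assume:Countable}), and if $v$ appears in $\alpha_0$, then $v$ appears in $\alpha_0(c\overline{c})^n$ for every $n\ge 0$ and any $c\in\cL$, producing infinitely many indices. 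Hence the weak closure $\cA_v:=(\cA_v^0)''$ is $\cB(\ell^2(I_v))$, a type $I_\infty$ factor.

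Third, I will assemble the direct sum and conclude. For distinct vertices $v\neq w$, the spaces $(P_{\alpha\to\beta})^v$ and $(P_{\alpha\to\beta})^w$ are orthogonal and annihilate each other under the $\wedge$ multiplication, so $\cA_v\cdot\cA_w=0$; combined with mutual orthogonality under $\Tr$, this shows $\bigoplus_v \cA_v$ is a genuine von Neumann direct sum inside $\cM_\infty$. Its algebraic part contains every element of every $P_{\overline{\alpha}\beta}$ (decomposed into isotypic components), so it is weakly dense in $\cA_\infty$, hence equals it. The final assertion, that a minimal projection $p\in P_{\alpha\to\alpha}$ of class $v$ satisfies $p\le 1_{\cA_v}$, is then immediate: $p$ is a minimal projection in the summand $P_{\alpha\to\alpha}^v\subset \cA_v^0\subset \cA_v$. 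I expect the only mildly subtle point to be verifying countable infiniteness of $I_v$ in full generality (ruling out both $I_v=\emptyset$ and $I_v$ finite), but this follows cleanly from the generating hypothesis on $\cL$.
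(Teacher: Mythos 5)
Your proposal is correct but takes a genuinely different route from the paper's proof. The paper argues abstractly: it picks one representative minimal projection $p_v$ for each vertex, observes that $p_v\wedge\cA_\I\wedge p_v=p_vP_{\alpha_v\to\alpha_v}p_v\cong\C$ so $p_v$ is minimal in $\cA_\I$, notes that equivalence of projections in the planar algebra coincides with equivalence in $\cA_\I$, and then sets $\cA_v=1_v\cA_\I 1_v$ with $1_v$ the central support of $p_v$; it concludes $\cA_v$ is type $I$ because it has a minimal projection, and type $I_\I$ because there are infinitely many pairwise orthogonal projections equivalent to $p_v$. That $\bigoplus_v\cA_v$ exhausts $\cA_\I$ then follows from $\sum_\alpha p_\alpha=1$ and decomposing each $p_\alpha$ into minimal projections. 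You instead build the isomorphism $\cA_v\cong\cB(\ell^2(I_v))$ from the bottom up, by writing down the isotypic decomposition of each box space and assembling explicit matrix units. The paper's version is shorter and leans on the abstract type theory of von Neumann algebras (central supports, a factor with a minimal projection is type $I$), while your version is more constructive, gives an explicit Hilbert space model, and makes the density argument transparent. Both establish the same facts.

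One small step you should not gloss over: after choosing the minimal projections $p^v_{\alpha,i}$, you assert that arbitrarily chosen partial isometries $u^v_{(\alpha,i),(\beta,j)}$ implementing the equivalences \emph{form a system of matrix units}. For arbitrary choices they need not compose correctly; the standard fix is to fix a reference pair $(\alpha_v,1)$, choose partial isometries $w^v_{(\alpha,i)}$ with $w^v_{(\alpha,i)}\wedge(w^v_{(\alpha,i)})^*=p^v_{\alpha,i}$ and $(w^v_{(\alpha,i)})^*\wedge w^v_{(\alpha,i)}=p^v_{\alpha_v,1}$, and then define $u^v_{(\alpha,i),(\beta,j)}:=w^v_{(\alpha,i)}\wedge(w^v_{(\beta,j)})^*$. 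With that adjustment the matrix-unit relations hold automatically and the rest of your argument goes through without change. Your treatment of the infinitude of $I_v$ (appending cups $c\overline{c}$ to a word containing $v$) is exactly what is needed and parallels the paper's invocation of infinitely many orthogonal equivalent projections.
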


\begin{proof}
For each vertex $v$, choose a minimal projection $p_{v} \in P_{\alpha_{v} \to \alpha_{v}}$ whose equivalence class corresponds to the vertex $v$.  We first see that
$$
p_{v} \wedge \cA_{\infty} \wedge p_{v} = p_{\alpha_v}P_{\alpha_{v} \to \alpha_{v}}p_{\alpha_{v}} \cong \C
$$
so $\cA_{\infty}$ has minimal projections. Letting $p\in P_{\alpha \to \alpha}$ and $q \in P_{\beta \to \beta}$, then the observation that $p\wedge \cA_{\I} \wedge q = qP_{\alpha \to \beta}p$ implies $p$ is equivalent to $q$ in the planar algebra sense if and only if $p$ is equivalent to $q$ in $\cA_{\I}$.  Let $1_{v}$ be the central support of $p_{v}$.  Then by construction, $\cA_{v} = 1_{v}\wedge \cA_{\I}\wedge 1_{v}$ is a type $I$ factor which must be type $I_{\I}$ since there are infinitely many mutually orthogonal projections equivalent to $p_{v}$ in $\cP$.  It is easy to see that by construction, if $v \neq w$ then $1_{v}\wedge 1_{w} = 0$, which implies that
$$
\cA_{\I} = \bigoplus_{v \in V(\cF_\cC(\cL))} \cA_{v} \oplus \cB
$$
for some von Neumann algebra $\cB$.

We claim that $\cB = \{0\}$.  Indeed we know that $p_{\alpha}$ can be written as an orthogonal sum of projections equivalent to some of the $p_{w}$'s.  Since $1 = \sum_{\alpha \in \Lambda} p_{\alpha}$, this implies $\sum_{v} 1_{v} = 1$, and thus $\cB = \{0\}$.
\end{proof}

\begin{rem}
For the rest of this section, all multiplication will be the $\wedge$ multiplication in the GJS picture. Hence for the rest of this section, we just write $xy$ for $x\wedge y$ for convenience.
\end{rem}

We now show that we can obtain $\cM_{\I}$ from a base ``building block" $\cA_{\infty}$ and various free ``corner elements."  Fixing a color $c$, which we again represent by the color \textcolor{\cupcolor}{green}, we define
$$
X_{c} = \sum_{\substack{ \alpha \in \Lambda\\
|\alpha| \in 2\N}}
\,
\begin{tikzpicture}[baseline=-.1cm]
    \draw [thick] (-.4, -.4)--(-.4, .4)--(.4, .4)--(.4, -.4)--(-.4, -.4);
    \draw [thick, \cupcolor] (-.4, .1) arc(-90:0: .3cm);
    \draw [thick, \alphacolor] (-.4, -.2)--(.4, -.2);
\end{tikzpicture}
+
\begin{tikzpicture}[baseline=-.1cm]
    \draw [thick] (-.4, -.4)--(-.4, .4)--(.4, .4)--(.4, -.4)--(-.4, -.4);
    \draw [thick, \cupcolor] (.4, .1) arc(-90:-180: .3cm);
    \draw [thick, \alphacolor] (-.4, -.2)--(.4, -.2);
\end{tikzpicture}
\,.
$$

\begin{rem}

We note that this sum defines a bounded operator.  Indeed, the individual terms in the sum are supported under the mutually orthogonal family of projections $\{p_{c\alpha} + p_{\alpha} : |\alpha| \in 2\N\}$ and each term in the sum has operator norm
$$\left\|\,
\begin{tikzpicture} [baseline = -.1cm]
    \draw [thick] (-.4, -.4)--(-.4, .4)--(.4, .4)--(.4, -.4)--(-.4, -.4);
    \draw [thick, \cupcolor] (-.3, .4) arc(-180:0: .3cm);
    \draw [thick, \alphacolor] (-.4, -.2)--(.4, -.2);
\end{tikzpicture} +
\begin{tikzpicture} [baseline = -.1cm]
    \draw [thick] (-.4, -.4)--(-.4, .4)--(.4, .4)--(.4, -.4)--(-.4, -.4);
    \draw [thick, \cupcolor] (.4, .1) arc(-90:-180: .3cm);
    \draw [thick, \cupcolor] (-.4, .1) arc(-90:0: .3cm);
    \draw [thick, \alphacolor] (-.4, -.2)--(.4, -.2);
    \end{tikzpicture}
\,
\right\|_{\infty}^{1/2} =
\left\|\,
\begin{tikzpicture} [baseline = -.1cm]
    \draw [thick] (-.4, -.4)--(-.4, .4)--(.4, .4)--(.4, -.4)--(-.4, -.4);
    \draw [thick, \cupcolor] (-.3, .4) arc(-180:0: .3cm);
\end{tikzpicture} +
\begin{tikzpicture} [baseline = -.1cm]
    \draw [thick] (-.4, -.4)--(-.4, .4)--(.4, .4)--(.4, -.4)--(-.4, -.4);
    \draw [thick, \cupcolor] (.4, .1) arc(-90:-180: .3cm);
    \draw [thick, \cupcolor] (-.4, .1) arc(-90:0: .3cm);
\end{tikzpicture}
\,
\right\|_{\infty}^{1/2}.
$$
We also note the simple fact that for $|\alpha|$ even, $p_{c\alpha}X_{c}p_{\alpha}$ is the corner diagram
$$
\begin{tikzpicture}[baseline=-.1cm]
    \draw [thick] (-.4, -.4)--(-.4, .4)--(.4, .4)--(.4, -.4)--(-.4, -.4);
    \draw [thick, \cupcolor] (-.4, .1) arc(-90:0: .3cm);
    \draw [thick, \alphacolor] (-.4, -.2)--(.4, -.2);
\end{tikzpicture}\,,
$$
so each term in the sum defining $X_{c}$ appears in the von Neumann algebra $W^{*}(\cA_{\infty}, X_{c})$.  We only sum over $\alpha$ with $|\alpha|$ even as it makes computations involving freeness in Section \ref{sec:free1} much easier.
\end{rem}

The $X_{c}$ elements give us a very nice way of obtaining $\cM_{\infty}$ from $\cA_{\infty}$.

\begin{lem} \label{lem:generateI}
$\cM_{\I} \cong W^{*}(\cA_{\I}, \, \{X_{c} : c \in \cL\})$.
\end{lem}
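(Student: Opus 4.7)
My plan is to establish the nontrivial inclusion $\cM_{\I}\subseteq W^{*}(\cA_{\I},\{X_{c}:c\in\cL\})$, the reverse being obvious. I set $V:=W^{*}(\cA_{\I},\{X_{c}\})$. Since $\cG_{\I}$ is $*$-strongly dense in $\cM_{\I}$, it suffices to show that every element of each box space $P_{\bar{\alpha}\gamma\beta}\subset\Gr_{\alpha,\beta}(\cP)$ lies in $V$, and I will proceed by induction on the length $n=|\gamma|$ of the top word. The base case $n=0$ is immediate: such elements belong to $\cA_{\I}$ by definition.

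For the inductive step I write $\gamma=c\delta$ and exploit the fact that cutting $X_{c}$ by central projections in $\cA_{\I}$ isolates a single left-cup diagram $v_{\mu}:=p_{c\mu}X_{c}p_{\mu}\in V$ for each even-length $\mu\in\Lambda$. Geometrically, $v_{\mu}\in\Gr_{c\mu,\mu}(\cP)$ is the element whose unique top $c$-strand is joined by a cup to the topmost strand of the left $c\mu$-bundle, with $|\mu|$ horizontal through-strands; its adjoint $v_{\mu}^{*}\in\Gr_{\mu,c\mu}(\cP)$ is the corresponding cap diagram and also lies in $V$. The intuition is that left-multiplication by $v_{\mu}^{*}$ implements Frobenius reciprocity at the planar-algebraic level: for any $g\in P_{\overline{c\mu}\beta}\subset\cA_{\I}$ (which already lies in $V$ by the inductive hypothesis, since it has no top strings), the product $v_{\mu}^{*}\wedge g$ lands in $P_{\bar{\mu}\,c\,\beta}$ and is obtained by ``lifting'' the topmost left $c$-strand of $g$ onto the top. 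Composing on both sides with the appropriate corners of $\cA_{\I}$, which by Lemma~\ref{lem:1infty} contain every intertwiner in $\cP$ realizable by boxes without top strings, should produce in the linear span every element of $P_{\bar{\mu}\,c\,\beta}$. Iterating this procedure $n$ times attaches the top letters one at a time, eventually producing all of $P_{\bar{\alpha}(c\delta)\beta}$.

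The main technical obstacle is verifying precisely that the span of products $a\wedge v_{\mu}^{*}\wedge g$ (with $a,g$ ranging over corners of $\cA_{\I}$ and $\mu$ over even-length words) exhausts $P_{\bar{\alpha}\,c\,\beta}$, and similarly at every later stage of the iteration. The difficulty is that the top $c$-strand of a generic $h\in P_{\bar{\alpha}\,c\,\beta}$ may be routed internally to any endpoint rather than specifically to the top of the left bundle, so $h$ is not a priori of the form $v_{\mu}^{*}\wedge g$ for any single $\mu$. I would overcome this by combining the semisimplicity of the box spaces in $\cP$, which decomposes $h$ into ``cup-form'' diagrams modulo rearrangement intertwiners, with the fact that each such rearrangement intertwiner lies in a corner of $\cA_{\I}$. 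A subsidiary issue is the parity restriction $|\mu|\in 2\N$ in the definition of $X_{c}$, which I would handle by forming products of cup-elements such as $X_{c}X_{c'}$ or $X_{c}^{*}X_{c}$ to connect both parity classes of words. With these ingredients in place the inductive step closes, and the lemma follows.
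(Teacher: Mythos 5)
Your overall strategy — reduce to elements of $\cA_{\I}$ by attaching top strands one at a time using cup-corners $p_{c\mu}X_{c}p_{\mu}$ of the $X_{c}$'s, and handle the parity restriction separately — is the same as the paper's. However, two points need correcting.

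First, your "main technical obstacle" is a phantom. Since the box spaces $P_{\overline{\alpha}c\beta}$ and $P_{\overline{c\alpha}\,\beta}$ are canonically identified by the isotopy that sweeps the single top $c$-strand down to the left side, \emph{every} $h\in P_{\overline{\alpha}c\beta}$ is exactly $v_{\alpha}^{*}\wedge g$ with $\mu=\alpha$ and $g$ the sweep-down of $h$; there is no issue of the top strand being "routed internally," because $h$ is an abstract vector and $g\mapsto v_{\alpha}^{*}\wedge g$ is precisely the inverse of the sweep-down bijection. No semisimplicity or rearrangement-intertwiner argument is needed. The paper's proof simply observes that every element of $\cG_{\I}$ is a linear combination of $x\in\cA_{\I}$ multiplied on the left and right by cup-attaching diagrams (and their adjoints), then factors each cup-attaching diagram into single-colored cups.

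Second, and this is a genuine gap: your proposed fix for the parity restriction does not work. The element $p_{c\alpha}X_{c}p_{\alpha}$ is nonzero only when $|\alpha|$ is even, so the cup-corner attaching a top strand to a box with odd side-length is not available directly. But products like $X_{c}X_{c'}$ or $X_{c}^{*}X_{c}$ have compressions that go from even-to-even or odd-to-odd side-indices only; checking the supports shows that any nonzero corner of such a product is a \emph{double}-cup diagram, not the odd-index single cup you need. The correct resolution, as in the paper, is to use an element of $\cA_{\I}$ directly: write the odd-index left-cup diagram as the product of the right-cup corner $p_{c\alpha}X_{c}p_{cc\alpha}$ (legal, since $|c\alpha|$ is even) with a cap element of shape $cc\alpha\to\alpha$ lying in $\cA_{\I}$ (it has no top strings). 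You should replace your parity argument with this — the caps you need are already in $\cA_{\I}$ and do not need to be manufactured from the $X_{c}$'s.
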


\begin{proof}
Every element in $\cG_{\infty}$ is a linear combination of elements of the following form:
$$
\begin{tikzpicture}[baseline=-.1cm]
    \draw (-.7, .7) arc(-180:0: .7cm);
    \draw (-.8, -.2)--(.8, -.2);
    \draw [thick, unshaded] (-.4, -.4)--(-.4, .4)--(.4, .4)--(.4, -.4)--(-.4, -.4);
    \node at (0, 0) {$x$};
    \node at (-.8, .6) {\scriptsize{$\alpha$}};
    \node at (.8, .6) {\scriptsize{$\beta$}};
\end{tikzpicture}.
$$
The above diagram is $x \in \cA_{\infty}$ multiplied on the left and right by diagrams of the form
$$
\begin{tikzpicture}[baseline=-.1cm]
    \draw [thick] (-.4, -.4)--(-.4, .4)--(.4, .4)--(.4, -.4)--(-.4, -.4);
    \draw (-.4, .1) arc(-90:0: .3cm);
    \draw (-.4, -.2)--(.4, -.2);
    \node at (0, .2) {\scriptsize{$\gamma$}};
\end{tikzpicture}
$$
and their adjoints.  This diagram is a product of diagrams of the form
$$
\begin{tikzpicture}[baseline=-.1cm]
    \draw [thick] (-.4, -.4)--(-.4, .4)--(.4, .4)--(.4, -.4)--(-.4, -.4);
    \draw [thick, \cupcolor] (-.4, .1) arc(-90:0: .3cm);
    \draw [thick, \alphacolor] (-.4, -.2)--(.4, -.2);
\end{tikzpicture}\,,
$$
so we only need to check that the above diagram is in $W^{*}(\cA_{\I}, \{X_{c} : c \in \cL\})$ when $|\alpha|$ is odd.  This is easy since it can be written as the product
$$
\begin{tikzpicture} [baseline = -.1cm]
    \draw [thick] (-.4, -.4)--(-.4, .4)--(.4, .4)--(.4, -.4)--(-.4, -.4);
    \draw[thick, \cupcolor] (.4,.2) arc (270:180:.2cm);
    \draw[thick, \cupcolor] (-.4, -.1)--(.4, -.1);
    \draw [thick, \alphacolor] (-.4, -.25)--(.4, -.25);
\end{tikzpicture}
\cdot
\begin{tikzpicture} [baseline = -.1cm]
    \draw [thick] (-.4, -.4)--(-.4, .4)--(.4, .4)--(.4, -.4)--(-.4, -.4);
    \draw [thick, \alphacolor] (-.4, -.25)--(.4, -.25);
    \draw [thick, \cupcolor] (-.4, .2) arc(90:-90: .15cm);
\end{tikzpicture} \, .
$$
\end{proof}

There is a $\Tr$-preserving conditional expectation $E: \cM_{\I} \rightarrow \cA_{\I}$ given by
$$
E(x) =
\begin{tikzpicture}[baseline=.3cm]
	\draw (0,0)--(0,.8);
	\draw (-.8, 0)--(.8, 0);
	\filldraw[unshaded,thick] (-.4,.4)--(.4,.4)--(.4,-.4)--(-.4,-.4)--(-.4,.4);
	\draw[thick, unshaded] (-.7, .8) -- (-.7, 1.6) -- (.7, 1.6) -- (.7,.8) -- (-.7, .8);
	\node at (0,0) {$x$};
	\node at (0,1.2) {$\Sigma CTL$};
\end{tikzpicture}\,,
$$
and $E$ induces normal completely positive maps $(\eta_{c,c} = \eta_{c})_{c \in \cL}$ on $\cA_{\infty}$ satisfying
$$
\eta_{c}(y) = E(X_{c}yX_{c}) =
\begin{tikzpicture}[baseline = 0cm]
    \draw(-2, 0)--(2, 0);
    \filldraw[unshaded,thick] (-.4,.4)--(.4,.4)--(.4,-.4)--(-.4,-.4)--(-.4,.4);
    \filldraw[unshaded,thick] (-1.6,.4)--(-.8,.4)--(-.8,-.4)--(-1.6,-.4)--(-1.6,.4);
    \filldraw[unshaded,thick] (1.6,.4)--(.8,.4)--(.8,-.4)--(1.6,-.4)--(1.6,.4);
    \draw[thick, \cupcolor] (-1.2, .4) arc(180:90: .3cm) -- (.9, .7) arc(90:0: .3cm);
    \node at (0, 0) {$y$};
    \node at (-1.2, 0) {$X_{c}$};
    \node at (1.2, 0) {$X_{c}$};
\end{tikzpicture}\,.
$$
For $b \neq a$, we have trivial ``off-diagonal" maps $\eta_{a, b}$ on $\cA_{\I}$ satisfying $\eta_{a, b}(y) = E(X_{a}yX_{b}) = 0$.  This gives a straightforward diagrammatic procedure for evaluating $E(y_{0}X_{c_{1}}y_{1}X_{c_{2}}\cdots X_{c_{n}}y_{n})$ for $y_{i} \in \cA_{\infty}$.  First, write the word $y_{0}X_{c_{1}}y_{1}X_{c_{2}}\cdots X_{c_{n}}y_{n}$ as
$$
\begin{tikzpicture} [baseline = 0cm]
    \draw(-2, 0)--(2, 0);
    \filldraw[unshaded,thick] (-.4,.4)--(.4,.4)--(.4,-.4)--(-.4,-.4)--(-.4,.4);
    \filldraw[unshaded,thick] (-1.6,.4)--(-.8,.4)--(-.8,-.4)--(-1.6,-.4)--(-1.6,.4);
    \filldraw[unshaded,thick] (1.6,.4)--(.8,.4)--(.8,-.4)--(1.6,-.4)--(1.6,.4);
    \node at (0, 0) {$X_{c_{1}}$};
    \node at (-1.2, 0) {$y_{0}$};
    \node at (1.2, 0) {$y_{1}$};
    \draw (0, .4)--(0, .8);
\end{tikzpicture}
\cdots
\begin{tikzpicture} [baseline = 0cm]
    \draw(-2, 0)--(2, 0);
    \filldraw[unshaded,thick] (-.4,.4)--(.4,.4)--(.4,-.4)--(-.4,-.4)--(-.4,.4);
    \filldraw[unshaded,thick] (-1.6,.4)--(-.8,.4)--(-.8,-.4)--(-1.6,-.4)--(-1.6,.4);
    \filldraw[unshaded,thick] (1.6,.4)--(.8,.4)--(.8,-.4)--(1.6,-.4)--(1.6,.4);
    \node at (0, 0) {$X_{c_{n}}$};
    \node at (-1.2, 0) {$y_{n-1}$};
    \node at (1.2, 0) {$y_{n}$};
    \draw (0, .4)--(0, .8);
\end{tikzpicture}\,.
$$
Then sum over all planar ways to connect the strings on top.  Whenever we see a term of the form
$$
\begin{tikzpicture}[baseline = 0cm]
    \draw(-2, 0)--(2, 0);
    \filldraw[unshaded,thick] (-.4,.4)--(.4,.4)--(.4,-.4)--(-.4,-.4)--(-.4,.4);
    \filldraw[unshaded,thick] (-1.6,.4)--(-.8,.4)--(-.8,-.4)--(-1.6,-.4)--(-1.6,.4);
    \filldraw[unshaded,thick] (1.6,.4)--(.8,.4)--(.8,-.4)--(1.6,-.4)--(1.6,.4);
    \draw (-1.2, .4) arc(180:90: .3cm) -- (.9, .7) arc(90:0: .3cm);
    \node at (0, 0) {$y$};
    \node at (-1.2, 0) {$X_{c_{i}}$};
    \node at (1.2, 0) {$X_{c_{j}}$};
\end{tikzpicture}\, ,
$$
we replace it with $\eta_{c_{i}, c_{j}}(y)$. It is straightforward to check that $E$ and the $\eta_{c_{i}, c_{j}}$ satisfy the following recurrence relation:

\begin{align}
E(y_{0}X_{c_{1}}y_{1}X_{c_{2}}&\cdots y_{n-1}X_{c_{n}}y_{n}) \notag\\
&= \sum_{k=2}^{n} y_{0} \cdot \eta_{c_{1}, c_{k}}(E(y_{1}X_{c_{2}}\cdots X_{c_{k-1}}y_{k-1}))\cdot E(y_{k}X_{c_{k+1}}...X_{c_{n}}y_{n}).\label{eqn:recurrence}
\end{align}
Also, by definition of $\eta_{c_{i}, c_{j}}$, it follows that the map on $\cA_{\I} \otimes B(\cH)$ given by
$(y_{i, j}) \mapsto (\eta_{c_{i}, c_{j}}(y_{i,j}))$
is normal and completely positive.  This, combined with Recurrence \eqref{eqn:recurrence} implies that the elements $(X_{c})_{c \in \cL}$ form an $\cA_{\I}$-valued semicircular family with covariance $(\eta_{c_{i}, c_{j}})$ as in \cite{MR1704661}. Since $\eta_{c_{i}, c_{j}} = 0$ for $c_{i}\neq c_{j}$, the family $(X_{c})_{c \in \cL}$ is free with amalgamation over $\cA_{\I}$ with respect to $E$ \cite{MR1704661}.  We record what we have established above in the following lemma.

\begin{lem} \label{lem:free1}
$\cM_{\I} = W^{*}(\cA_{\I}, \{X_{c}: c \in \cL\})$, and the elements  $(X_{c})_{c \in \cL}$ form an $\cA_{\I}$-valued semicircular family with covariance $(\eta_{c_{i}, c_{j}})$ and are free with amalgamation over $\cA_{\I}$.
\end{lem}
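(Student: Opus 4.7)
The generation statement $\cM_\I = W^*(\cA_\I, \{X_c : c\in\cL\})$ has already been obtained in Lemma \ref{lem:generateI}, so the plan is to concentrate on verifying the two remaining assertions: that the covariance of the family $(X_c)_{c\in\cL}$ is given by the $\eta_{c_i,c_j}$, and that this family is free with amalgamation over $\cA_\I$. For both, my main tool will be the characterization of operator-valued semicircular families due to Shlyakhtenko \cite{MR1704661}: a family $(X_c)_{c\in\cL}$ of self-adjoint operators in an algebra $(\cM,E)$ over a von Neumann subalgebra $\cA$ is an $\cA$-valued semicircular family with covariance $(\eta_{a,b})$ if and only if the recurrence
$$
E(y_0 X_{c_1} y_1 \cdots X_{c_n} y_n) = \sum_{k=2}^{n} y_0\, \eta_{c_1,c_k}\!\bigl(E(y_1 X_{c_2}\cdots X_{c_{k-1}} y_{k-1})\bigr)\, E(y_k X_{c_{k+1}}\cdots X_{c_n} y_n)
$$
holds for all $y_0,\dots,y_n\in\cA$ and all $c_1,\dots,c_n\in\cL$, and in addition the amplified map $(y_{i,j})\mapsto (\eta_{c_i,c_j}(y_{i,j}))$ is normal and completely positive. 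Moreover, such a family is free with amalgamation over $\cA$ precisely when $\eta_{a,b}=0$ whenever $a\neq b$.

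So my first step is to establish the recurrence diagrammatically. Writing the product $y_0 X_{c_1} y_1 \cdots X_{c_n} y_n$ as a horizontal concatenation of boxes, the conditional expectation $E$ is computed by summing over all planar ways to pair up the strings emerging from the tops of the $X_{c_i}$-boxes. Because the pairing must be planar, the string emanating from the top of $X_{c_1}$ must be joined to the top string of some $X_{c_k}$, and this string must be of the same color, forcing the summand to be zero unless $c_1=c_k$. Once this pair is chosen, the planarity splits the remaining diagram into two non-interacting regions: the ``inside'' region containing $y_1 X_{c_2}\cdots X_{c_{k-1}} y_{k-1}$ (with its top strings paired among themselves), and the ``outside'' region containing $y_k X_{c_{k+1}}\cdots X_{c_n} y_n$. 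Evaluating the inside region replaces it with $E(y_1 X_{c_2}\cdots X_{c_{k-1}} y_{k-1})\in\cA_\I$, and then the cap joining $X_{c_1}$ to $X_{c_k}$ applied to this element is exactly $\eta_{c_1,c_k}$ of it, placed between $y_0$ and $E(y_k X_{c_{k+1}}\cdots X_{c_n}y_n)$. This gives the recurrence.

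Next I verify the normality and complete positivity of the amplified covariance. Since $\eta_{a,b}(y)=E(X_a y X_b)$, the amplified map is the composition of a normal unital $*$-homomorphism $\cA_\I\otimes B(\cH)\to \cM_\I\otimes B(\cH)$ (``multiply the $(i,j)$-entry on the left by $X_{c_i}$ and on the right by $X_{c_j}$''—more precisely, conjugation by the column matrix with entries $X_{c_i}$, which is a bounded operator since $\sup_i\|X_{c_i}\|<\infty$ when we restrict to a finite block) with the completely positive conditional expectation $E\otimes\id$, so it is normal and completely positive.

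Finally, the off-diagonal maps $\eta_{a,b}$ vanish for $a\neq b$: in summing over planar pairings of the two top strings of $X_a y X_b$, one must pair the top string of $X_a$ (of color $a$) with the top string of $X_b$ (of color $b$), but strings of different colors cannot be connected in our colored Temperley–Lieb diagrammatics, so every term is zero. Combining this with Shlyakhtenko's criterion yields both that $(X_c)$ is an $\cA_\I$-valued semicircular family with the stated covariance, and that the family is free with amalgamation over $\cA_\I$. The main bookkeeping obstacle is simply making the planar-pairing argument for the recurrence precise; this is entirely formal once one sets up notation for the regions cut out by the chosen pairing arc.
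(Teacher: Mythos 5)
Your proposal follows essentially the same route as the paper: derive the recurrence for $E(y_0 X_{c_1}\cdots X_{c_n}y_n)$ from the planar-pairing description of $E$, note the vanishing of the off-diagonal covariances $\eta_{a,b}$ for $a\neq b$ by color mismatch, check complete positivity of the amplified covariance, and invoke the semicircularity-and-freeness characterization from \cite{MR1704661}. One small terminological slip: the map $(y_{i,j})\mapsto (X_{c_i}y_{i,j}X_{c_j})$ you use in the complete positivity check is conjugation $V^{*}(\cdot)V$ by a (block) column operator and is normal and completely positive, but it is not a $*$-homomorphism; the conclusion is unaffected.
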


\subsection{$M$ as an amalgamated free product} \label{sec:free1}

By compressing the algebra $\cA_{\infty}$ by the projection $1_{e} = \sum_{|\alpha| \in 2\N} p_{\alpha}$, Lemma \ref{lem:1infty} implies that if we set $\cA_{e} = W^{*}((P_{\beta \rightarrow \alpha})_{\alpha, \beta \in 2\N})$ then
$$
\cA_{e} = \bigoplus_{v \in V(\cF_\cC(\cL))} \cB_{v}.
$$
$\cB_{v}$ is a type $I_{\infty}$ factor which is a cut-down of $\cA_{v}$ by $1_{e}$.
Note that every vertex in $v\in V(\cF_\cC(\cL))$ appears in the direct sum because every vertex possesses at least one self-loop.

Similarly, if one sets $1_{o} = \sum_{|\alpha| \in (1 + 2\N)} p_{\alpha}$ and $\cA_{o} = 1_{o}\cA_{\I}1_{o}$, then we have
$$
\cA_{o} = \bigoplus_{v \in V(\cF_\cC(\cL))} \cC_{v}
$$
where $\cC_{v}$ is a type $I_{\I}$ factor which is a cut-down of $\cA_{v}$ by $1_{o}$.

For each vertex $v \in \cF_\cC(\cL)$, we choose a minimal projection $p_{v} \in \cA_{e}$ whose equivalence class is represented by $v$ with $p_{\emptyset}$ the empty diagram.  If $x \in \cA_{e}$, it follows that
$$
\eta_{c}(x) =
\begin{tikzpicture} [baseline = 0cm]
    \draw (-.8, 0)--(.8, 0);
    \filldraw[thick, unshaded] (-.4, .4)--(-.4, -.4)--(.4, -.4)--(.4,.4)--(-.4, .4);
    \draw [thick, \cupcolor] (-.8, .6)--(.8, .6);
    \node at (0,0) {$x$};
\end{tikzpicture}
$$
so that $\eta_{c}(p_{v})$ is a finite projection in $\cA_{o}$, and each $\eta_{c}$ acts as a (non-unital) $W^{*}$ algebra homomorphism from $\cA_{e}$ into $\cA_{o}$.  Set $Q = \sum_{v} p_{v}$.  Then there is a family of partial isometries $(V_{i})_{i \in I}$ satisfying $V_{i}^{*}V_{i} = Q$ and $\sum_{i \in I} V_{i}V_{i}^{*} = 1_{\cA_{e}}$.  Note that $\sum_{c \in \cL}\eta_{c}(Q)$ defines a projection in $\cA_{o}$ since $\eta_{a}(Q) \perp \eta_{b}(Q)$ for $a \neq b$. We examine the compression of $\cM_{\I}$ by $T = Q + \sum_{c \in \cL}\eta_{c}(Q)$.

\begin{lem} \label{lem:compression1}
$T\cM_{\infty}T = W^{*}(T\cA_{\infty}T, \, (TX_{c}T)_{x \in \cL})$
\end{lem}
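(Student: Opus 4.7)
The plan is to prove the non-trivial inclusion $T\cM_\infty T\subseteq W^*(T\cA_\infty T,(TX_cT)_{c\in\cL})$, since the reverse inclusion is immediate from the definitions. By Lemma \ref{lem:free1}, $\cM_\infty$ is generated as a von Neumann algebra by $\cA_\infty$ together with $\{X_c:c\in\cL\}$, so it suffices to show that every compressed word $T a_0 X_{c_1} a_1\cdots X_{c_n} a_n T$, with $a_j\in\cA_\infty$ and $c_j\in\cL$, lies in the candidate generating algebra. The strategy is to insert a resolution of the identity between every two consecutive factors and regroup.

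The resolution I would use is $1 = \sum_i V_iV_i^* + \sum_j W_jW_j^*$, where the $V_i$'s are the given partial isometries satisfying $V_i^*V_i=Q$ and $\sum_i V_iV_i^* = 1_{\cA_e}$, while the $W_j$'s are analogous partial isometries in $\cA_o$ with $W_j^*W_j$ a subprojection of $\sum_c \eta_c(Q)$ and $\sum_j W_jW_j^* = 1_{\cA_o}$; such $W_j$'s exist because each block $\cA_v\cap \cA_o$ is a type $I_\infty$ factor by Lemma \ref{lem:1infty}, and $\eta_c(p_v)$ is a nonzero finite projection there for sufficiently many $c$. After inserting and grouping, each summand becomes a strong-operator-convergent product of atomic factors of one of the forms $V_i^* a V_k$, $W_j^* a W_l$, $V_i^* X_c W_j$, $W_j^* X_c V_i$, together with boundary terms $TV_i$ and $W_jT$. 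Because $V_i^*V_i=Q\leq T$ and $W_j^*W_j\leq \sum_c\eta_c(Q)\leq T$, each middle factor is supported under $T$ on both sides, and lies either in $Q\cA_\infty Q\subseteq T\cA_\infty T$ or in $Q X_c (\sum_c\eta_c(Q))\subseteq TX_cT$ (and adjoint variants).

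The main obstacle, and the content of the proof, is showing that the remaining outer partial isometries $V_i$ and $W_j$ themselves can be reconstructed inside $W^*(T\cA_\infty T,(TX_cT)_{c\in\cL})$ after compression by $T$. Here the precise choice $T=Q+\sum_c\eta_c(Q)$ is essential: polar decomposition of each $TX_cT$ produces a partial isometry in $T\cM_\infty T$ whose initial and final projections are subprojections of $Q$ and $\eta_c(Q)$ respectively, linking the ``even'' and ``odd'' halves of $T$. Combined with the matrix units already present in $T\cA_\infty T$ (which link minimal projections under $T$ within each type $I_\infty$ block $\cA_v$), these assemble, via a telescoping argument that walks along the fusion graph $\cF_\cC(\cL)$, into compressed images of the $V_i$ and $W_j$. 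Once this reconstruction is in place, the inserted resolutions of the identity express each original word as a norm-convergent product of elements from $T\cA_\infty T$ and $\{TX_cT\}_{c\in\cL}$, completing the argument.
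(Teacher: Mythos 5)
Your strategy of inserting a resolution of the identity and regrouping into atomic factors is reasonable, but the proof has a genuine gap, and it is located differently from where you place it. The issue is not the ``boundary'' partial isometries $V_i$ and $W_j$: these are \emph{not} elements of $T\cM_\infty T$ (their outer supports live under $1_{\cA_e}$ and $1_{\cA_o}$, not under $T$) and there is no need to reconstruct them inside the compressed corner. What actually requires an argument is the claim that a middle factor $V_i^*X_cW_j$ lies in $W^*(T\cA_\infty T, TX_cT)$. Your assertion that it ``lies in $QX_c(\sum_c\eta_c(Q))\subseteq TX_cT$'' is false as written: $V_i^*X_cW_j$ is a general compression of $X_c$ that depends on $i$ and $j$, not the specific element $QX_c\sum_c\eta_c(Q)$, and neither of these is contained in the other in any meaningful sense.

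The missing ingredient is the relation $y\cdot X_c = X_c\cdot\eta_c(y)$ for $y\in\cA_e$, which the paper singles out as the key diagrammatic fact. Using it one computes $V_i^*X_cW_j = X_c\,\eta_c(V_i^*)W_j$; since $\eta_c(Q)$ is the left support of $\eta_c(V_i^*)$ and $X_c\eta_c(Q) = QX_c$, this becomes $Q\,(TX_cT)\,(T\eta_c(V_i^*)W_jT)$, a product of an element of $T\cA_\infty T$, the generator $TX_cT$, and another element of $T\cA_\infty T$, which closes the argument. Without this commutation relation, the ``polar decomposition of $TX_cT$ plus telescoping along the fusion graph'' you sketch does not produce the required factorization; it is speculative rather than a proof. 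The paper actually avoids inserting resolutions between every factor altogether by making the single clever choice $W_i=\sum_{c\in\cL}\eta_c(V_i)$, so that $u_i=V_i+W_i$ are partial isometries with $u_i^*u_i=T$ and $\sum_i u_iu_i^*=1$; the standard compression lemma then gives $T\cM_\infty T = W^*(T\cA_\infty T, \{u_i^*X_cu_j\})$ immediately, and the same commutation relation reduces each $u_i^*X_cu_j$ to the form $TX_cT\cdot a_{ij} + a_{ij}\cdot TX_cT$ with $a_{ij}=\eta_c(V_i^*V_j)\in T\cA_\infty T$. Either route can be made to work, but both run through the commutation relation that your proof omits.
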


\begin{proof}
By the choices of the partial isometries $V_{i}$, we know that if we let $W_{i} = \sum_{c \in \cL} \eta_{c}(V_{i})$ then $\sum_{i \in I}(V_{i} + W_{i})(V_{i}+W_{i})^{*} = 1$, so
$$
T\cM_{\infty}T = W^{*}\left(T\cA_{\I}T, \, \left\{\left(V_{i} + W_{i}\right)^{*} X_{c} \left(V_{j} + W_{j}\right): i,j \in I \,  \, c \in \cL\right\}\right)
$$
The relation $y \cdot X_{c} = X_{c} \eta_{c}(y)$ for any $y \in \cA_{e}$ is a straightforward diagrammatic check.  Applying this relation to each $\left(V_{i} + W_{i} \right)^{*} X_{c} \left(V_{j} + W_{j}\right)$ gives $TX_{c}T$.
\end{proof}

Since $\sum_{i \in I} W_{i}W_{i}^{*} = 1_{\cA_{o}}$, it follows that if $v \in V(\cF_{\cC}(\cL))$ then there is a minimal projection $q_{v} \in \cA_{o}$ whose equivalence class represents $v$ and sits under $1_{\cA_{o}}\cdot T$.  Note that this means that for all $v$, $q_{v}$ and $p_{v}$ are orthogonal minimal projections that sit under $1_{v}$.   We will next examine the cut-down of $T\cM_{\I}T$ by $F = \sum_{v} p_{v} + q_{v}$.

Suppose the vertices $v$ and $w$ are distinct and connected in $\cF_{\cC}(\cL)$ and let $C(e)$ be the color of an edge connecting $v$ and $w$.  Let $e_{1},...,e_{k}$ be the (necessarily finite) collection of edges connecting $v$ and $w$ with $C(e_{i}) = c$.  We let $u_{e_{i}}^{v}$ be a collection of partial isometries such that
$$
(u_{e_{i}}^{v})^{*}u_{e_{j}}^{v} = \delta_{ij}q_{v} \text{ and } \sum_{i=1}^{k} u_{e_{i}}^{v}(u_{e_{i}}^{v})^{*} = \eta_{c}(p_{w})\cdot 1_{v}
$$

Let $r_{v} \in F\cA_{\I}F$ be a partial isometry satisfying $r_{v}r_{v}^{*} = p_{v}$ and $r_{v}^{*}r_{v} = q_{v}$ and $r_{w} \in F\cA_{\I}F$ be a partial isometry satisfying $r_{w}r_{w}^{*} = p_{w}$ and $r_{w}^{*}r_{w} = q_{w}$.  The elements defined can be described diagrammatically as follows:
\begin{align*}
p_{v} =
\begin{tikzpicture} [baseline = 0cm]
    \draw  (-.8, 0)--(.8, 0);
    \filldraw [thick, unshaded] (.4, .4) --(.4, -.4)--(-.4, -.4)--(-.4, .4)--(.4, .4);
    \node at (0, 0) {$p_{v}$};
    \node at (-.6, .2) {\scriptsize{$\alpha_{v}$}};
    \node at (.6, .2) {\scriptsize{$\alpha_{v}$}};
\end{tikzpicture}\, , \,
p_{w} =
\begin{tikzpicture} [baseline = 0cm]
    \draw  (-.8, 0)--(.8, 0);
    \filldraw [thick, unshaded] (.4, .4) --(.4, -.4)--(-.4, -.4)--(-.4, .4)--(.4, .4);
    \node at (0, 0) {$p_{w}$};
    \node at (-.6, .2) {\scriptsize{$\alpha_{w}$}};
    \node at (.6, .2) {\scriptsize{$\alpha_{w}$}};
\end{tikzpicture}\, , \,
q_{v} &=
\begin{tikzpicture} [baseline = 0cm]
    \draw  (-.8, 0)--(.8, 0);
    \filldraw [thick, unshaded] (.4, .4) --(.4, -.4)--(-.4, -.4)--(-.4, .4)--(.4, .4);
    \node at (0, 0) {$q_{v}$};
    \node at (-.6, .2) {\scriptsize{$\beta_{v}$}};
    \node at (.6, .2) {\scriptsize{$\beta_{v}$}};
\end{tikzpicture}\, , \,
q_{w} =
\begin{tikzpicture} [baseline = 0cm]
    \draw  (-.8, 0)--(.8, 0);
    \filldraw [thick, unshaded] (.4, .4) --(.4, -.4)--(-.4, -.4)--(-.4, .4)--(.4, .4);
    \node at (0, 0) {$q_{w}$};
    \node at (-.6, .2) {\scriptsize{$\beta_{w}$}};
    \node at (.6, .2) {\scriptsize{$\beta_{w}$}};
\end{tikzpicture}\, , \\
u_{e_{i}}^{v} =
\begin{tikzpicture} [baseline = 0cm]
    \draw  (-.8, -.2)--(.8, -.2);
    \draw [thick, \cupcolor] (-.8, .2)--(-.4, .2);
    \filldraw [thick, unshaded] (.4, .4) --(.4, -.4)--(-.4, -.4)--(-.4, .4)--(.4, .4);
    \node at (0, 0) {$u_{e_{i}}^{v}$};
    \node at (-.6, 0) {\scriptsize{$\alpha_{w}$}};
    \node at (.6, 0) {\scriptsize{$\beta_{v}$}};
\end{tikzpicture}\, , \,
r_{v} &=
\begin{tikzpicture} [baseline = 0cm]
    \draw  (-.8, 0)--(.8, 0);
    \filldraw [thick, unshaded] (.4, .4) --(.4, -.4)--(-.4, -.4)--(-.4, .4)--(.4, .4);
    \node at (0, 0) {$r_{v}$};
    \node at (-.6, .2) {\scriptsize{$\alpha_{v}$}};
    \node at (.6, .2) {\scriptsize{$\beta_{v}$}};
\end{tikzpicture}\, , \,
r_{w} =
\begin{tikzpicture} [baseline = 0cm]
    \draw  (-.8, 0)--(.8, 0);
    \filldraw [thick, unshaded] (.4, .4) --(.4, -.4)--(-.4, -.4)--(-.4, .4)--(.4, .4);
    \node at (0, 0) {$r_{w}$};
    \node at (-.6, .2) {\scriptsize{$\alpha_{w}$}};
    \node at (.6, .2) {\scriptsize{$\beta_{w}$}};
\end{tikzpicture}\, .
\end{align*}

Set $x_{i}$ to be the following diagram:
$$
x_{i} =
\begin{tikzpicture} [baseline = 0cm]
    \draw  (-.8, -.2)--(.8, -.2);
    \draw [thick, \cupcolor] (-.4, .2) arc(270:90: .2cm) -- (.8, .6);
    \filldraw [thick, unshaded] (.4, .4) --(.4, -.4)--(-.4, -.4)--(-.4, .4)--(.4, .4);
    \node at (0, 0) {$u_{e_{i}}^{v}$};
    \node at (-.6, 0) {\scriptsize{$\alpha_{w}$}};
    \node at (.6, 0) {\scriptsize{$\beta_{v}$}};
\end{tikzpicture}
$$
We have the following lemma about the left and right supports of $x_{i}$:

\begin{lem} \label{lem:orthogonalsupport}
The left support of $x_{i}$ is $p_{w}$ and the right support of $x_{i}$ lies under the projection $1_{w} \cdot i_{c}(q_{v})$ where we have for $z \in P_{\overline{\gamma} \rightarrow \overline{\chi}}$
$$
i_{c}(z) =
\begin{tikzpicture} [baseline = 0cm]
    \draw  (-.8, 0)--(.8, 0);
    \filldraw [thick, unshaded] (.4, .4) --(.4, -.4)--(-.4, -.4)--(-.4, .4)--(.4, .4);
    \node at (0, 0) {$z$};
    \node at (-.6, .2) {\scriptsize{$\gamma$}};
    \node at (.6, .2) {\scriptsize{$\chi$}};
    \draw [thick, \cupcolor] (-.8, .6) -- (.8, .6);
\end{tikzpicture}\, .
$$
Furthermore if $i \neq j$ then the right supports of $x_{i}$ and $x_{j}$ are orthogonal.
\end{lem}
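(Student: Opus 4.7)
The element $x_i$ is obtained from the partial isometry $u_{e_i}^v$ by a planar isotopy: the left $c$-strand of the inner box is capped and routed externally over the top to exit on the right of the outer box. Thus $x_i\neq 0$ and $x_i\in p_{\alpha_w}\cM_\infty p_{c\beta_v}$. Categorically, $x_i$ is the image of $u_{e_i}^v\in\Hom(X_v,\,c\otimes X_w)$ under Frobenius reciprocity, giving an element of $\Hom(c\otimes X_v,\,X_w)$, where $X_v,X_w$ are the simple objects of $\Pro(\cP)$ represented by $q_v$ and $p_w$. The plan is to derive the three claims by computing $x_i\wedge x_i^*$, $x_i^*\wedge x_i$, and $x_i\wedge x_j^*$ diagrammatically, sliding the external $c$-caps through the matched middle boundary so that the defining relations among the $u_{e_i}^v$ apply internally.

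For the left support, the matched middle boundary of $x_i\wedge x_i^*$ is $c\beta_v$, meeting as parallel through-strands; the external $c$-caps on opposite sides fuse through the middle $c$ into a single closed $c$-loop atop the internal product $u_{e_i}^v\wedge(u_{e_i}^v)^*$. Diagrammatically this is the partial trace of $u_{e_i}^v(u_{e_i}^v)^*$ over $c$. Since $u_{e_i}^v(u_{e_i}^v)^*\leq\eta_c(p_w)\cdot 1_v=i_c(p_w)\cdot 1_v$ and the $c$-closure of $i_c(p_w)$ produces $\delta_c\,p_w$, we get $x_ix_i^*\leq \delta_c\,p_w$; by minimality of $p_w$ in $P_{\alpha_w\to\alpha_w}$, $x_ix_i^*$ is a positive scalar multiple of $p_w$, so the left support of $x_i$ is exactly $p_w$. (Equivalently, Schur's lemma forces $x_ix_i^*\in\End(X_w)=\C$ to be a positive multiple of $p_w$.)

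For the right support, the matched middle boundary of $x_i^*\wedge x_i$ is just $\alpha_w$; the two external $c$-caps survive and reroute the $c$-strands of the inner $u$-boxes to the outer $c\beta_v$ boundary. Letting $p_0=1_w\cdot i_c(q_v)$, this exhibits $x_i^*x_i$ as a rank-one projection (up to scalar) in the $X_w$-isotypic block $p_0\cA_\infty p_0\cong M_k(\C)$ of $\End(c\otimes X_v)$, where $k=n^c_{v,w}$. Thus $x_i^*x_i$ is a positive scalar multiple of a minimal subprojection of $p_0$, and the right support of $x_i$ lies under $p_0=1_w\cdot i_c(q_v)$. Orthogonality of the right supports for $i\neq j$ is equivalent to $x_ix_j^*=0$: the same reduction as in the left-support case presents $x_i\wedge x_j^*$ as the partial trace over $c$ of the off-diagonal matrix unit $u_{e_i}^v(u_{e_j}^v)^*$, which vanishes by Schur's lemma in $\End(X_w)=\C$ (equivalently, by $(u_{e_j}^v)^*u_{e_i}^v=0$ combined with pivotality).

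The principal technical obstacle is the planar-isotopy bookkeeping: verifying that sliding the external $c$-caps through the matched middle boundary really does produce the expected internal $u$-relation with the correct $c$-closure on top. This is notation-heavy but structurally identical across the three computations, differing only in which middle boundary is matched ($c\beta_v$ versus $\alpha_w$) and which $u$-identity is invoked ($uu^*$ versus $u^*u$); once the left-support case is carried out explicitly, the others follow with minor modifications.
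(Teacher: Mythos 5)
Your proof is correct and takes essentially the same route as the paper: both compute $x_i\wedge x_i^*$, $x_i^*\wedge x_i$, and $x_i\wedge x_j^*$ diagrammatically, and both recognize that the external $c$-cap reduces these to the (normalized, trace-preserving) conditional expectation $\Phi:P^{\OP}_{\overline{\gamma}c\to\overline{\gamma}c}\to P^{\OP}_{\overline{\gamma}\to\overline{\gamma}}$ applied to $u_{e_i}^v(u_{e_j}^v)^*$, resp.\ $(u_{e_i}^v)^*u_{e_j}^v$, which is exactly the partial trace over the $c$-strand that you describe.

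One wording quibble on the orthogonality step: ``vanishes by Schur's lemma in $\End(X_w)=\C$'' overstates what Schur's lemma gives you. Schur's lemma only tells you $\Phi(u_{e_i}^v(u_{e_j}^v)^*)$ is a \emph{scalar} multiple of $p_w$ (since $p_w$ is minimal); it does not by itself tell you the scalar is zero. The actual argument is your parenthetical one: the scalar is detected by the trace, $\Tr\bigl(\Phi(u_{e_i}^v(u_{e_j}^v)^*)\bigr)=\Tr\bigl((u_{e_j}^v)^*u_{e_i}^v\bigr)=\delta_{ij}\Tr(q_v)=0$ for $i\neq j$, and you should promote this from an ``equivalently'' aside to the main argument. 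The paper phrases it this way: the partial trace is a multiple of $p_w$, and the scalar must be zero because $u_{e_i}^v(u_{e_j}^v)^*$ has trace zero. With that fix, your proof matches the paper's.
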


\begin{proof}
Note that $x_{i}x_{i}^{*}$ is the diagram
$$
\begin{tikzpicture} [baseline = 0cm]
    \draw  (-.8, -.2)--(2.2, -.2);
    \draw [thick, \cupcolor] (-.4, .2) arc(270:90: .2cm) -- (1.8, .6) arc(90:-90: .2cm);
    \filldraw [thick, unshaded] (.4, .4) --(.4, -.4)--(-.4, -.4)--(-.4, .4)--(.4, .4);
    \filldraw [thick, unshaded] (1.8, .4)--(1.8, -.4)--(.8, -.4)--(.8, .4)--(1.8,.4);
    \node at (1.3, 0) {$(u_{e_{i}}^{v})^{*}$};
    \node at (0, 0) {$u_{e_{i}}^{v}$};
    \node at (-.6, 0) {\scriptsize{$\alpha_{w}$}};
    \node at (.6, 0) {\scriptsize{$\beta_{v}$}};
    \node at (2, 0) {\scriptsize{$\alpha_{w}$}};
\end{tikzpicture}\, .
$$
For any $\gamma \in \Lambda$, let $\Phi: P\op_{\overline{\gamma}c \rightarrow \overline{\gamma}c} \rightarrow P\op_{\overline{\gamma} \rightarrow \overline{\gamma}}$ be the trace preserving conditional expectation where $P\op_{\overline{\gamma} \rightarrow \overline{\gamma}}$ includes unitally into $P\op_{\overline{\gamma}c \rightarrow \overline{\gamma}c}$ via $i_{c}$.    The above diagram is a scalar multiple of $\Phi(u_{e_{i}}^{v}\cdot (u_{e_{i}}^{v})^{*})$.  As $u_{e_{i}}^{v}\cdot (u_{e_{i}}^{v})^{*}$ lies under $\eta_{c}(p_{w}) = i_{c}(p_{w})$ it follows that this diagram is a scalar multiple of $p_{w}$, proving the claim about the left support.  As left and right supports are equivalent projections, it follows that the right support of $x_{i}$ lies under $1_{w}$.  We note that $\Phi(x_{i}^{*}x_{i}) = k (u_{e_{i}}^{v})^{*}u_{e_{i}}^{v} = k q_{v}$ for some scalar $k$, implying that the right support of $x_{i}$ must lie under $i_{c}(q_{v})$.  Finally, if $i \neq j$ then $x_{i}x_{j}^{*}$ is a constant multiple of $\Phi(u_{e_{i}}^{v}\cdot (u_{e_{j}}^{v})^{*})$.  We know this must be a scalar multiple of $p_{w}$, but that scalar must be zero as $u_{e_{i}}^{v}\cdot (u_{e_{j}}^{v})^{*}$ has trace zero.  This proves the orthogonality of the right supports of the $x_{i}$.
\end{proof}

Let $f_{i}$ be the right support of $x_{i}$.  As $i_{c}(p_{v})\cdot1_{w}$ can be written as a sum of $k$ orthogonal projections equivalent to $p_{w}$, we conclude that $\sum_{i=1}^{k} f_{i} = i_{c}(p_{v})\cdot1_{w}$.  We also recognize that $i_{c}(r_{v})$ is a partial isometry with left support $i_{c}(q_{v})$ and right support $i_{c}(p_{v})$. We consider the elements $y_{i} = r_{w}^{*}\cdot x_{i} \cdot i_{c}(r_{v}^{*})$ which diagrammatically look like
$$
y_{i} =
\begin{tikzpicture} [baseline = 0cm]
    \draw (-.8, -.2)--(3.2, -.2);
    \filldraw [thick, unshaded] (.4, .4)--(.4, -.4)--(-.4, -.4)--(-.4, .4)--(.4, .4);
    \filldraw [thick, unshaded] (1.6, .4)--(1.6, -.4)--(.8, -.4)--(.8, .4)--(1.6, .4);
    \filldraw [thick, unshaded] (2.8, .4)--(2.8, -.4)--(2, -.4)--(2, .4)--(2.8, .4);
    \draw [thick, \cupcolor] (.8, .2) arc(270:90: .2cm)--(3.2, .6);
    \node at (0,0) {$r_{w}^{*}$};
    \node at (1.2, 0) {$u_{e}^{v}$};
    \node at (2.4, 0) {$r_{v}^{*}$};
    \node at (-.6, 0) {\scriptsize{$\beta_{w}$}};
    \node at (.6, 0) {\scriptsize{$\alpha_{w}$}};
    \node at (1.8, 0) {\scriptsize{$\beta_{v}$}};
    \node at (3, 0) {\scriptsize{$\alpha_{v}$}};
\end{tikzpicture}
$$
note that the left support of $y_{i}$ is $q_{w}$.  From Lemma \ref{lem:orthogonalsupport}, the right supports of the $y_{i}$ are orthogonal and sum up to $i_{c}(p_{v})\cdot1_{w}$.  As $q_{w}$ is minimal in $\cA_{\I}$, it follows that $y_{i}$ is a scalar multiple of a partial isometry.  We define the partial isometry $u_{e}^{w}$ by the equation $(u_{e_{i}}^{w})^{*} = l \cdot y_{i}$ with $l$ an appropriate constant.  It follows that $(u_{e_{i}}^{w})^{*}u_{e_{j}}^{w} = \delta_{ij}q_{w}$ and $\sum_{i=1}^{k} u_{e_{i}}^{w}(u_{e_{i}}^{w})^{*} = \eta_{c}(p_{v})\cdot 1_{w}$.

One can manipulate the diagram above to show that the relation
$$
(u_{e}^{v})^{*} = (\overline{l})^{-1} \cdot
\begin{tikzpicture} [baseline = 0cm]
    \draw (-.8, -.2)--(3.2, -.2);
    \filldraw [thick, unshaded] (.4, .4)--(.4, -.4)--(-.4, -.4)--(-.4, .4)--(.4, .4);
    \filldraw [thick, unshaded] (1.6, .4)--(1.6, -.4)--(.8, -.4)--(.8, .4)--(1.6, .4);
    \filldraw [thick, unshaded] (2.8, .4)--(2.8, -.4)--(2, -.4)--(2, .4)--(2.8, .4);
    \draw [thick, \cupcolor] (.8, .2) arc(270:90: .2cm)--(3.2, .6);
    \node at (0,0) {$r_{v}^{*}$};
    \node at (1.2, 0) {$u_{e}^{w}$};
    \node at (2.4, 0) {$r_{w}^{*}$};
    \node at (-.6, 0) {\scriptsize{$\beta_{v}$}};
    \node at (.6, 0) {\scriptsize{$\alpha_{v}$}};
    \node at (1.8, 0) {\scriptsize{$\beta_{w}$}};
    \node at (3, 0) {\scriptsize{$\alpha_{w}$}};
\end{tikzpicture}
$$
holds.  This discussion thus proves the following useful lemma:

\begin{lem} \label{lem:relation}
Let $e_{1}, \dots, e_{k}$ be all of the edges of color $c$ in the fusion graph connecting distinct vertices $v$ and $w$.  Then one can find partial isometries $\{u_{e_{i}}^{v}: 1\leq i \leq k\}$ and $\{u_{e_{i}}^{w}: 1\leq i \leq k\}$ in $\cA_{o}$ satisfying
\begin{align*}
(u_{e_{i}}^{v})^{*}u_{e_{j}}^{v} = \delta_{ij}q_{v},& \, \, \, \sum_{i=1}^{k} u_{e_{i}}^{v}(u_{e_{i}}^{v})^{*} = \eta_{c}(p_{w})\cdot 1_{v}\\
(u_{e_{i}}^{w})^{*}u_{e_{j}}^{w} = \delta_{ij}q_{w},& \, \, \, \sum_{i=1}^{k} u_{e_{i}}^{w}(u_{e_{i}}^{w})^{*} = \eta_{c}(p_{v})\cdot 1_{w}
\end{align*}
and the relations
$$
(u_{e}^{v})^{*} = (\overline{l})^{-1} \cdot
\begin{tikzpicture} [baseline = 0cm]
    \draw (-.8, -.2)--(3.2, -.2);
    \filldraw [thick, unshaded] (.4, .4)--(.4, -.4)--(-.4, -.4)--(-.4, .4)--(.4, .4);
    \filldraw [thick, unshaded] (1.6, .4)--(1.6, -.4)--(.8, -.4)--(.8, .4)--(1.6, .4);
    \filldraw [thick, unshaded] (2.8, .4)--(2.8, -.4)--(2, -.4)--(2, .4)--(2.8, .4);
    \draw [thick, \cupcolor] (.8, .2) arc(270:90: .2cm)--(3.2, .6);
    \node at (0,0) {$r_{v}^{*}$};
    \node at (1.2, 0) {$u_{e}^{w}$};
    \node at (2.4, 0) {$r_{w}^{*}$};
    \node at (-.6, 0) {\scriptsize{$\beta_{v}$}};
    \node at (.6, 0) {\scriptsize{$\alpha_{v}$}};
    \node at (1.8, 0) {\scriptsize{$\beta_{w}$}};
    \node at (3, 0) {\scriptsize{$\alpha_{w}$}};
\end{tikzpicture}\, \text{ and }
(u_{e}^{w})^{*} = l \cdot
\begin{tikzpicture} [baseline = 0cm]
    \draw (-.8, -.2)--(3.2, -.2);
    \filldraw [thick, unshaded] (.4, .4)--(.4, -.4)--(-.4, -.4)--(-.4, .4)--(.4, .4);
    \filldraw [thick, unshaded] (1.6, .4)--(1.6, -.4)--(.8, -.4)--(.8, .4)--(1.6, .4);
    \filldraw [thick, unshaded] (2.8, .4)--(2.8, -.4)--(2, -.4)--(2, .4)--(2.8, .4);
    \draw [thick, \cupcolor] (.8, .2) arc(270:90: .2cm)--(3.2, .6);
    \node at (0,0) {$r_{w}^{*}$};
    \node at (1.2, 0) {$u_{e}^{v}$};
    \node at (2.4, 0) {$r_{v}^{*}$};
    \node at (-.6, 0) {\scriptsize{$\beta_{w}$}};
    \node at (.6, 0) {\scriptsize{$\alpha_{w}$}};
    \node at (1.8, 0) {\scriptsize{$\beta_{v}$}};
    \node at (3, 0) {\scriptsize{$\alpha_{v}$}};
\end{tikzpicture}
$$
for some nonzero constant $l$.
\end{lem}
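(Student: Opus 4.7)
The bulk of this lemma has already been assembled in the paragraphs leading up to the statement; what remains is to organise those pieces and derive the second (dual) relation. First, I would note that the partial isometries $u^v_{e_i}$ with the stated orthogonality and completeness were fixed before Lemma \ref{lem:orthogonalsupport} using the fact that $\eta_c(p_w)\cdot 1_v$ is a finite projection in the type $I_\infty$ factor $\cC_v$ which dominates exactly $k$ copies of $q_v$. The partial isometries $u^w_{e_i}$ are then defined by $(u^w_{e_i})^*:=l\,r_w^* x_i r_v^*$ with $l$ a positive scalar chosen so that the right-hand side is a partial isometry; this is legitimate because Lemma \ref{lem:orthogonalsupport} gives $x_i$ left support $p_w$ (collapsed on the left by $r_w^*r_w=q_w$) and pairwise orthogonal right supports summing to $i_c(p_v)\cdot 1_w$ (collapsed on the right by conjugating with $r_v^*$). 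The required relations for $u^w_{e_i}$ then follow from the corresponding properties of $x_i$ together with $r_v r_v^*=p_v$ and $r_w r_w^*=p_w$.

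The first displayed relation is essentially the definition of $u^w_{e_i}$, redrawn with the cup and inclusion $i_c$ made visible. For the second relation, I would run exactly the same construction but with the roles of $v$ and $w$ exchanged: form $\tilde x_i$ by transporting the top $c$-strand of $u^w_{e_i}$ across the box, and set $\tilde y_i:=r_v^*\tilde x_i r_w^*$. The same support analysis as above shows $\tilde y_i$ has left support $q_v$ with pairwise orthogonal right supports summing to $i_c(p_w)\cdot 1_v$. Because $q_v$ is minimal in $\cA_o$ and the $\tilde y_i$ have orthogonal right supports equivalent to $q_v$ inside the factor $\cA_v$, there is a nonzero scalar $\lambda$ such that $\tilde y_i=\lambda\,(u^v_{e_i})^*$ for every $i$.

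The final step, and the only real computation, is to pin down $\lambda=\overline l$, so that $(u^v_{e_i})^*=\overline l^{-1}\tilde y_i$ as claimed. The plan is to substitute $(u^w_{e_i})^*=l\,r_w^* x_i r_v^*$ directly into the diagram for $\tilde y_i$ and simplify: the two cups (one on top of $x_i$, one on top of $\tilde x_i$) join up to produce a closed $c$-loop contributing a factor $\delta_c$, the sandwiched pairs $r_v r_v^*=p_v$ and $r_w r_w^*=p_w$ collapse to the projections under which $u^v_{e_i}$ already lives, and after accounting for how $l$ absorbs the normalising scalar (the $\delta_c$ and the trace of $q_v$ from the construction of $l$), what is left is exactly $\overline l\,(u^v_{e_i})^*$. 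This diagrammatic bookkeeping is the main technical step in the proof; no further structural input is required.
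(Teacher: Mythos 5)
Your plan is essentially a reorganisation of the discussion that precedes the lemma in the paper. The paper likewise defines $(u_{e_i}^w)^*$ as $l\,y_i = l\,r_w^*\,x_i\,i_c(r_v^*)$, uses minimality of $q_w$ to see that $y_i$ is a scalar multiple of a partial isometry, reads off the orthogonality and completeness relations, and then observes that "one can manipulate the diagram" to obtain the reciprocal relation. That manipulation is precisely the substitute-and-contract-the-cups computation you describe in your final paragraph, so the two arguments are structurally the same.

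The one point to tighten is the sentence in which you conclude $\tilde y_i = \lambda\,(u_{e_i}^v)^*$ from minimality of $q_v$ together with the support analysis. Minimality of $q_v$ in $\cA_o$ shows that $\tilde y_i$ is a scalar multiple of \emph{some} partial isometry with left support $q_v$; and both the family $\{\tilde y_i\}$ and the family $\{(u_{e_i}^v)^*\}$ give you $k$ partial isometries with left support $q_v$ and orthogonal right supports partitioning $i_c(p_w)\cdot 1_v$. But nothing in that abstract description forces the two partitions of $i_c(p_w)\cdot 1_v$ to coincide, nor forces them to be indexed compatibly, so the asserted proportionality does not follow from minimality alone. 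What actually establishes it is the diagrammatic substitution you carry out afterwards: when you replace $u_{e_i}^w$ by its defining expression $\bar{l}\,i_c(r_v)\,x_i^*\,r_w$, the two cups contract (contributing the $\delta_c$ you mention), the $r_w r_w^* = p_w$ and $r_v r_v^* = p_v$ absorb into the projections already supporting $u_{e_i}^v$, and one literally sees $\bar{l}\,(u_{e_i}^v)^*$. In other words, the substitution step is doing more than fixing the constant $\lambda$ — it is what proves the proportionality in the first place — and the minimality argument in the middle is redundant. Reordering so that the diagrammatic computation establishes both the proportionality and the value of $\lambda$ at once makes the proof airtight and is exactly what the paper's phrase "one can manipulate the diagram" is pointing at.
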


We now turn our attention over to the edges $e$ which are loops.  Suppose $v$ is a vertex and $e_{i},...,e_{k}$ represent all of the loops of color $c$ connecting $v$ to itself.  As above, we can find partial isometries $u_{e_{i}}^{v}$ with right support $q_{v}$ and orthogonal left supports under $1_{v}\cdot i_{c}(p_{v})$, however for reasons that will become apparent later we desire a stronger property about these partial isometries.

\begin{lem} \label{lem:loop}
There exists a set of partial isometries $\{u_{e_{i}}^{v}: 1 \leq i \leq k\}$ satisfying
$$
(u^{v}_{e_{i}})^{*}u^{v}_{e_{j}} = \delta_{ij} q_{v}, \, \, \, \sum_{i=1}^{k} u_{e_{i}}^{v}(u_{e_{i}}^{v})^{*} = \eta_{c}(p_{v})\cdot 1_{v},
$$
and the following relation:
$$
(u_{e}^{v})^{*} = k \cdot
\begin{tikzpicture} [baseline = 0cm]
    \draw (-.8, -.2)--(3.2, -.2);
    \filldraw [thick, unshaded] (.4, .4)--(.4, -.4)--(-.4, -.4)--(-.4, .4)--(.4, .4);
    \filldraw [thick, unshaded] (1.6, .4)--(1.6, -.4)--(.8, -.4)--(.8, .4)--(1.6, .4);
    \filldraw [thick, unshaded] (2.8, .4)--(2.8, -.4)--(2, -.4)--(2, .4)--(2.8, .4);
    \draw [thick, \cupcolor] (.8, .2) arc(270:90: .2cm)--(3.2, .6);
    \node at (0,0) {$r_{v}^{*}$};
    \node at (1.2, 0) {$u_{e}^{v}$};
    \node at (2.4, 0) {$r_{v}^{*}$};
    \node at (-.6, 0) {\scriptsize{$\beta_{v}$}};
    \node at (.6, 0) {\scriptsize{$\alpha_{v}$}};
    \node at (1.8, 0) {\scriptsize{$\beta_{v}$}};
    \node at (3, 0) {\scriptsize{$\alpha_{v}$}};
\end{tikzpicture}
$$
for $k$ a unimodular constant.
\end{lem}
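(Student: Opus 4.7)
The existence of a family of partial isometries $\{\tilde u_{e_i}^v\}_{i=1}^k$ in $\cA_o$ satisfying only the first two relations is routine and proceeds exactly as in Lemma \ref{lem:relation}: the projection $\eta_c(p_v) \cdot 1_v$ sits in the type $I_\infty$ factor $\cA_v$ and, by a trace computation, is equivalent to $k$ copies of $q_v$, so any orthogonal decomposition into $k$ subprojections together with the implementing partial isometries does the job. The real content of the lemma is to upgrade this arbitrary choice so that the pivotal rotation relation is satisfied with a single unimodular scalar.

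The plan is to realize the rotation as an anti-linear operator on a finite-dimensional space and produce a basis of eigenvectors with unimodular eigenvalues. Let
\[
V = (1_v \cdot \eta_c(p_v)) \cdot \cA_o \cdot q_v,
\]
which is a $k$-dimensional complex vector space (the $\tilde u_{e_i}^v$ form a basis). Define an anti-linear operator $T : V \to V$ by setting $T(u)$ to be the adjoint of the diagram on the right-hand side of the desired rotation relation, with $u$ in place of $u_e^v$; the support structure guarantees $T(u) \in V$. The first key step is a direct diagrammatic computation, using sphericality of $\cP$, the symmetric self-duality of the color $c$ (ensured by Assumption \ref{assume:Countable}), and the relations $r_v^* r_v = q_v$, $r_v r_v^* = p_v$, to verify that $T^2 = \mathrm{id}_V$. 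Heuristically, iterating the rotation produces two nested green caps which cancel via sphericality, and the two pairs of $r_v, r_v^*$'s collapse to the identity on $q_v$.

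Once $T$ is established as an anti-linear involution on $V$, one extracts a basis of $T$-fixed vectors by the standard trick: given any $v \in V$, one of $v + T(v)$ or $i(v - T(v))$ is nonzero and $T$-fixed. Moreover $T$ preserves the trace inner product on $V$ (again by sphericality), so the orthogonal complement of a fixed vector is $T$-invariant, and one iterates. Since real linear combinations of $T$-fixed vectors are $T$-fixed, a real Gram-Schmidt procedure produces an orthonormal basis $\{w_1,\dots,w_k\}$ of fixed vectors with $w_i^* w_j = \delta_{ij} q_v$. Setting $u_{e_i}^v := w_i$, the first two relations hold by construction, the sum $\sum_i u_{e_i}^v (u_{e_i}^v)^*$ equals $\eta_c(p_v) \cdot 1_v$ by a dimension/rank count, and the fixed-vector property $T(u_{e_i}^v) = u_{e_i}^v$ is exactly the rotation relation with constant $k=1$ (unimodular); absorbing any overall normalization scalar used in defining $T$ into the $k$ of the statement yields a general unimodular constant.

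The main obstacle is the diagrammatic verification that $T^2$ is \emph{the} identity rather than an arbitrary positive scalar multiple of it, as this is what pins down the constant to be unimodular. I expect this to reduce to the pivotal/spherical structure of $\cP$ combined with the fact that our generating objects are symmetrically self-dual (so the Frobenius-Schur indicators introduce no sign), but the bookkeeping of the two cups and four copies of $r_v^{(*)}$ must be done carefully; the asymmetry between the loop case here and the two-vertex case of Lemma \ref{lem:relation} is precisely that one no longer has the freedom to adjust $u_e^w$ separately from $u_e^v$, which is why $T$ must have a fixed point at all.
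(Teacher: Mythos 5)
Your proposal is correct and matches the mechanism of the paper's proof: the paper also relies on the anti-linear rotation map $T$ squaring to the identity, observes that $u + T(u)$ (or, when that vanishes, $u$ itself up to a unimodular scalar) is a $T$-fixed partial isometry, and iterates through an orthogonal decomposition to build the family $\{u_{e_i}^v\}$. The only difference is one of framing: you set up the $k$-dimensional vector space $V$ with its trace inner product and produce a full basis of $T$-fixed vectors at once, invoking preservation of the inner product to keep the orthogonal complement $T$-invariant, whereas the paper proceeds one partial isometry at a time and invokes the concrete support-orthogonality computation of Lemma \ref{lem:orthogonalsupport} instead of an inner-product-preservation statement. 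These encode the same diagrammatic fact. You are also right that the $T^2 = \id$ check is the crux, and it is worth noting that the paper takes this for granted just as you do (it simply asserts that $y_1^*$ is fixed under the map); your observation that any positive scalar discrepancy $T^2 = c\,\id$ could be absorbed by rescaling is a minor but clean way to discharge the worry you raised.
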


\begin{proof}
Let $u_{1} \in \cA_{\I}$ be a partial isometry with right support $q_{v}$ and left support under $1_{v}\cdot i_{c}(p_{v})$.  Consider the operator $y_{1}$ satisfying
$$ y_{1}^{*} =
\begin{tikzpicture} [baseline = 0cm]
    \draw (-.8, -.2)--(.8, -.2);
    \filldraw [thick, unshaded] (-.4, -.4) -- (-.4, .4)--(.4, .4)--(.4, -.4)--(-.4, -.4);
    \draw [thick, \cupcolor] (.4, .2)--(.8, .2);
    \node at (0, 0) {$u^{*}_{1}$};
    \node at (-.6, 0) {\scriptsize{$\beta_{v}$}};
    \node at (.6, 0) {\scriptsize{$\alpha_{v}$}};
\end{tikzpicture} +
\begin{tikzpicture} [baseline = 0cm]
    \draw (-.8, -.2)--(3.2, -.2);
    \filldraw [thick, unshaded] (.4, .4)--(.4, -.4)--(-.4, -.4)--(-.4, .4)--(.4, .4);
    \filldraw [thick, unshaded] (1.6, .4)--(1.6, -.4)--(.8, -.4)--(.8, .4)--(1.6, .4);
    \filldraw [thick, unshaded] (2.8, .4)--(2.8, -.4)--(2, -.4)--(2, .4)--(2.8, .4);
    \draw [thick, \cupcolor] (.8, .2) arc(270:90: .2cm)--(3.2, .6);
    \node at (0,0) {$r_{v}^{*}$};
    \node at (1.2, 0) {$u_{1}$};
    \node at (2.4, 0) {$r_{v}^{*}$};
    \node at (-.6, 0) {\scriptsize{$\beta_{v}$}};
    \node at (.6, 0) {\scriptsize{$\alpha_{v}$}};
    \node at (1.8, 0) {\scriptsize{$\beta_{v}$}};
    \node at (3, 0) {\scriptsize{$\alpha_{v}$}};
\end{tikzpicture}.
$$
If this is zero, then by setting $k = -1$, we have produced a partial isometry satisfying the appropriate diagrammatic relation.  If not, then $y_{1}^{*}$ has left support $q_{v}$ and right support under $1_{v} \cdot i_{c}(p_{v})$ so it is a scalar multiple of a partial isometry.  The operator $y_{1}^{*}$ is also fixed under the map
$$
x \mapsto
\begin{tikzpicture} [baseline = 0cm]
    \draw (-.8, -.2)--(3.2, -.2);
    \filldraw [thick, unshaded] (.4, .4)--(.4, -.4)--(-.4, -.4)--(-.4, .4)--(.4, .4);
    \filldraw [thick, unshaded] (1.6, .4)--(1.6, -.4)--(.8, -.4)--(.8, .4)--(1.6, .4);
    \filldraw [thick, unshaded] (2.8, .4)--(2.8, -.4)--(2, -.4)--(2, .4)--(2.8, .4);
    \draw [thick, \cupcolor] (.8, .2) arc(270:90: .2cm)--(3.2, .6);
    \node at (0,0) {$r_{v}^{*}$};
    \node at (1.2, 0) {$x^{*}$};
    \node at (2.4, 0) {$r_{v}^{*}$};
    \node at (-.6, 0) {\scriptsize{$\beta_{v}$}};
    \node at (.6, 0) {\scriptsize{$\alpha_{v}$}};
    \node at (1.8, 0) {\scriptsize{$\beta_{v}$}};
    \node at (3, 0) {\scriptsize{$\alpha_{v}$}};
\end{tikzpicture}
$$
so a scalar multiple of $y_{1}$ satisfies the appropriate diagrammatic relation and is a partial isometry. In either case, we have produced a partial isometry $u_{e_{1}}^{v}$ satisfying the diagrammatic relation. To produce another partial isometry $u_{e_{2}}^{v}$ with right support $q_{v}$, left support orthogonal to that of $u_{e_{1}}^{v}$, and satisfying the diagrammatic relation, we pick a partial isometry $u_{2}$ with right support $q_{v}$ and left support orthogonal to that of $u_{e_{1}}^{v}$.  By Lemma \ref{lem:orthogonalsupport} and the discussion afterwards, the right supports of
$$
\begin{tikzpicture} [baseline = 0cm]
    \draw (-.8, -.2)--(3.2, -.2);
    \filldraw [thick, unshaded] (.4, .4)--(.4, -.4)--(-.4, -.4)--(-.4, .4)--(.4, .4);
    \filldraw [thick, unshaded] (1.6, .4)--(1.6, -.4)--(.8, -.4)--(.8, .4)--(1.6, .4);
    \filldraw [thick, unshaded] (2.8, .4)--(2.8, -.4)--(2, -.4)--(2, .4)--(2.8, .4);
    \draw [thick, \cupcolor] (.8, .2) arc(270:90: .2cm)--(3.2, .6);
    \node at (0,0) {$r_{v}^{*}$};
    \node at (1.2, 0) {$u_{2}$};
    \node at (2.4, 0) {$r_{v}^{*}$};
    \node at (-.6, 0) {\scriptsize{$\beta_{v}$}};
    \node at (.6, 0) {\scriptsize{$\alpha_{v}$}};
    \node at (1.8, 0) {\scriptsize{$\beta_{v}$}};
    \node at (3, 0) {\scriptsize{$\alpha_{v}$}};
\end{tikzpicture} \text{ and }k\cdot
\begin{tikzpicture} [baseline = 0cm]
    \draw (-.8, -.2)--(3.2, -.2);
    \filldraw [thick, unshaded] (.4, .4)--(.4, -.4)--(-.4, -.4)--(-.4, .4)--(.4, .4);
    \filldraw [thick, unshaded] (1.6, .4)--(1.6, -.4)--(.8, -.4)--(.8, .4)--(1.6, .4);
    \filldraw [thick, unshaded] (2.8, .4)--(2.8, -.4)--(2, -.4)--(2, .4)--(2.8, .4);
    \draw [thick, \cupcolor] (.8, .2) arc(270:90: .2cm)--(3.2, .6);
    \node at (0,0) {$r_{v}^{*}$};
    \node at (1.2, 0) {$u_{e_{1}}^{v}$};
    \node at (2.4, 0) {$r_{v}^{*}$};
    \node at (-.6, 0) {\scriptsize{$\beta_{v}$}};
    \node at (.6, 0) {\scriptsize{$\alpha_{v}$}};
    \node at (1.8, 0) {\scriptsize{$\beta_{v}$}};
    \node at (3, 0) {\scriptsize{$\alpha_{v}$}};
\end{tikzpicture} = (u_{e_{1}}^{v})^{*}
$$
are orthogonal so it follows that by considering the element
$$ y_{2}^{*} =
\begin{tikzpicture} [baseline = 0cm]
    \draw (-.8, -.2)--(.8, -.2);
    \filldraw [thick, unshaded] (-.4, -.4) -- (-.4, .4)--(.4, .4)--(.4, -.4)--(-.4, -.4);
    \draw [thick, \cupcolor] (.4, .2)--(.8, .2);
    \node at (0, 0) {$u^{*}_{2}$};
    \node at (-.6, 0) {\scriptsize{$\beta_{v}$}};
    \node at (.6, 0) {\scriptsize{$\alpha_{v}$}};
\end{tikzpicture} +
\begin{tikzpicture} [baseline = 0cm]
    \draw (-.8, -.2)--(3.2, -.2);
    \filldraw [thick, unshaded] (.4, .4)--(.4, -.4)--(-.4, -.4)--(-.4, .4)--(.4, .4);
    \filldraw [thick, unshaded] (1.6, .4)--(1.6, -.4)--(.8, -.4)--(.8, .4)--(1.6, .4);
    \filldraw [thick, unshaded] (2.8, .4)--(2.8, -.4)--(2, -.4)--(2, .4)--(2.8, .4);
    \draw [thick, \cupcolor] (.8, .2) arc(270:90: .2cm)--(3.2, .6);
    \node at (0,0) {$r_{v}^{*}$};
    \node at (1.2, 0) {$u_{2}$};
    \node at (2.4, 0) {$r_{v}^{*}$};
    \node at (-.6, 0) {\scriptsize{$\beta_{v}$}};
    \node at (.6, 0) {\scriptsize{$\alpha_{v}$}};
    \node at (1.8, 0) {\scriptsize{$\beta_{v}$}};
    \node at (3, 0) {\scriptsize{$\alpha_{v}$}};
\end{tikzpicture}.
$$
and arguing as in the beginning of the proof, we produce the desired element $u_{e_{2}}^{v}$.  Iterating this procedure produces the set $\{u_{e_{i}}^{v}: 1 \leq i \leq k\}$.
\end{proof}

We therefore assume in the rest of the section that our partial isometries satisfy the relations in either Lemma \ref{lem:relation} or Lemma \ref{lem:loop}.  The identity $\sum_{v} \sum_{e\sim v} u_{e}^{v}(u_{e}^{v})^{*} = \sum_{c \in \cL} \eta_{c}(Q)$ follows by the choices of the partial isometries $u_{e}^{v}$.  We therefore have the following lemma:

\begin{lem} \label{lem:compression2}
For each edge $e$ of the fusion graph $\cF_\cC(\cL))$, we define operators $Y_{e}$ as follows:  If $C(e) = c$ and $e$ connects distinct vertices $v$ and $w$ then $Y_{e} = p_{v}X_{c}u_{e}^{w} + (u_{e}^{w})^{*}X_{v}p_{v} + p_{w}X_{c}u_{e}^{v} + (u_{e}^{v})^{*}X_{c}p_{w}$ and if $e$ connects the vertex $v$ to itself then $Y_{e} = p_{v}X_{c}u_{e}^{v} + (u_{e}^{v})^{*}X_{c}p_{v}$.  Then we have $F\cM_{\I}F = W^{*}(F\cA_{\I}F, \, (Y_{e})_{e \in E})$
\end{lem}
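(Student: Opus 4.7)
The plan is to bootstrap Lemma \ref{lem:compression1} by further compressing $T\cM_\infty T$ by the subprojection $F \leq T$, then to identify the resulting generators coming from the $TX_c T$'s with the $Y_e$'s. First I will argue that
\[ F\cM_\infty F = W^*(F\cA_\infty F, \{FX_c F : c \in \cL\}). \]
The projection $F$ lies in $\cA_\infty$ and has full central support in $\cM_\infty$, since $p_\emptyset \leq F$ and the central support of $p_\emptyset$ equals $1$ by Lemma \ref{lem:2inftyfactor}. The same combinatorial device used in the proof of Lemma \ref{lem:compression1}---namely the partial isometries $V_i, W_i \in \cA_\infty$ together with the additional $r_v \in F\cA_\infty F$ from Lemma \ref{lem:relation}---allows one to rewrite any compressed generator $V^* X_c V'$ (with $V, V' \in \cA_\infty$ whose right support is $F$) as a polynomial in $F\cA_\infty F$ and $FX_c F$, via the covariance $y X_c = X_c \eta_c(y)$ valid for $y \in \cA_e$.

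Next I show
\[ W^*(F\cA_\infty F, \{FX_c F : c \in \cL\}) = W^*(F\cA_\infty F, \{Y_e : e \in E\}). \]
The inclusion $\supseteq$ is immediate: each summand of $Y_e$ (for instance $p_v X_c u_e^w$) has left support under $p_v \leq F$ and right support under $q_w \leq F$, so $Y_e \in F\cM_\infty F$. For the reverse inclusion I decompose $FX_c F$ by parity. Since $X_c$ swaps $\cA_e$ and $\cA_o$, the cross-terms $p_v X_c p_w$ and $q_v X_c q_w$ vanish, yielding
\[ FX_c F = \sum_{v,w} \bigl[\,p_v X_c q_w + q_v X_c p_w\,\bigr]. \]
For a non-loop summand $q_w X_c p_v$, the relation $X_c p_v = \eta_c(p_v) X_c$ together with $q_w \leq \eta_c(p_v)\cdot 1_w = \sum_{i=1}^k u_{e_i}^w(u_{e_i}^w)^*$ from Lemma \ref{lem:relation} gives
\[ q_w X_c p_v = q_w X_c = \sum_{i=1}^k q_w\, u_{e_i}^w (u_{e_i}^w)^* X_c = \sum_{i=1}^k \overline{\lambda_i}\,(u_{e_i}^w)^* X_c p_v, \]
where each $\lambda_i \in \C$ is determined by $(u_{e_i}^w)^* q_w = \lambda_i q_w$. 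Such a scalar exists because the left support of $(u_{e_i}^w)^*$ equals $q_w$, so $(u_{e_i}^w)^* q_w$ lives in the one-dimensional corner $q_w \cA_\infty q_w = \C q_w$ of the type $I_\infty$ factor $\cC_w$. Finally, each $(u_{e_i}^w)^* X_c p_v$ is precisely the block $q_w Y_{e_i} p_v$, extractable from $Y_{e_i}$ by compression with the projections $q_w, p_v \in F\cA_\infty F$; hence $q_w X_c p_v \in W^*(F\cA_\infty F, Y_e)$. The loop case $v = w$ is handled identically using Lemma \ref{lem:loop} in place of Lemma \ref{lem:relation}. Summing over pairs and taking adjoints gives $FX_c F \in W^*(F\cA_\infty F, Y_e : e \in E)$.

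The main technical obstacle lies in the first reduction: one must verify carefully that the compression by a projection with full central support, but small in each central summand, truly allows arbitrary products of $\cA_\infty$'s and $X_c$'s to be rewritten using only $F\cA_\infty F$ and $FX_c F$. This combines the relation $yX_c = X_c\eta_c(y)$ with systematic use of partial isometries in $\cA_\infty$ to move things into $F$. The core new computational input, appearing in the second step, is the scalar identity $(u_e^w)^* q_w \in \C q_w$ coming from the minimality of $q_w$ in $\cC_w$; this is exactly what allows the edge-by-edge decomposition of $FX_c F$ to close up into a finite expression in the $Y_e$'s.
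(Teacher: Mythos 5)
Your Step 1 is the problem, and it is not a technical obstacle that careful verification can close: the claim $F\cM_\infty F = W^*(F\cA_\infty F,\{FX_cF\})$ is false whenever the fusion graph has parallel edges. Fix $v\neq w$ joined by two edges $e_1,e_2$ of the same color $c$. Since $q_w$ is minimal in $\cA_\infty$, each $q_w u_{e_i}^w$ lies in $q_w\cA_\infty q_w=\C q_w$, say $q_wu_{e_i}^w=\mu_iq_w$, and one computes $q_wX_cp_v=q_w\eta_c(p_v)X_c=\sum_iq_wu_{e_i}^w(u_{e_i}^w)^*X_c=\sum_i\mu_i(u_{e_i}^w)^*X_cp_v$. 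So the $(v,w)$-block of $FX_cF$ is a \emph{single} linear combination of the two blocks $q_wY_{e_i}p_v$. But by Lemma \ref{lem:free2} the $Y_{e_1},Y_{e_2}$ are free with amalgamation over $F\cA_\infty F$, and $F\cA_\infty F=\bigoplus_u M_2(\C)$ has no degrees of freedom left to separate a fixed linear combination into its constituents (the relevant corner $q_w(F\cA_\infty F)q_w$ is just $\C q_w$). Thus $W^*(F\cA_\infty F,FX_cF)$ sees one free generator where $F\cM_\infty F$ has two; the inclusion is strict. In fact one can even choose $q_w=u_{e_1}^w(u_{e_1}^w)^*$, whereupon $\mu_2=0$ and $FX_cF$ loses all trace of $e_2$. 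This also sinks the $\supseteq$ half of your Step 2: you only observe $Y_e\in F\cM_\infty F$, which is much weaker than the needed $Y_e\in W^*(F\cA_\infty F,FX_cF)$, and that stronger statement is what just failed.

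A secondary remark: your intermediate assertion $q_w\leq\eta_c(p_v)\cdot 1_w$ is also wrong in general, since $q_w$ is only constrained to sit under $1_w\cdot\sum_{d}\eta_d(Q)$ and may be spread across the $\eta_d(p_u)1_w$; however this one is harmless, because the identity $q_wu_e^w\in\C q_w$ that you actually use follows from minimality of $q_w$ alone, as you correctly note at the end. The real issue is structural. The paper's proof never compresses $X_c$ to the single operator $FX_cF$; instead it uses the identity $QX_c=\sum_{v,e}p_vX_cu_e^{t(e)}(u_e^{t(e)})^*$ to split $TX_cT$ into the individual edge-channels $p_vX_cu_e^{t(e)}$, each of which is a corner of a $Y_e$, and then observes that in any $F$-compressed word the leftover factors $(u_e^{t(e)})^*\cdot y\cdot u_{e'}^{t(e')}$ for $y\in\cA_\infty$ land in $q_{t(e)}\cA_\infty q_{t(e')}\subset F\cA_\infty F$. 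This keeps the $Y_e$ for different $e$ as separate generators, which is precisely what compressing to $FX_cF$ destroys.
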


\begin{proof}
First note that by compressing by the appropriate elements, of $F\cA_{\I}F$, each term in the sums defining $Y_{e}$ is in $W^{*}(F\cA_{\I}F, \, (Y_{e})_{e \in E(\cF_\cC(\cL))})$.  The identity
$$
\sum_{v \in V(\cF_\cC(\cL))} \sum_{\substack{e: s(e) = v \\ C(e) = c}} = p_{v}X_{c}u_{e}^{t(e)}(u_{e}^{t(e)})^{*} = TX_{c}T
$$
implies that words in $TX_{c}T$ (compressed on either end by $F$) can be approximated by words in $p_{v}X_{c}u_{e}^{w}$ and their adjoints so by Lemma \ref{lem:compression1} we are done.
\end{proof}

\begin{lem} \label{lem:EasyGenerators}
If $v$ and $w$ are distinct vertices connected by an edge $e$ with $C(e) = c$ then $r_{v}(u^{v}_{e})^{*}X_{c}p_{w}r_{w}$ and $p_{v}X_{c}(u_{e}^{w})$ are nonzero scalar multiples of each other.

If $f$ is a loop at $v$ then $r_{v}(u^{v}_{f})^{*}X_{c}p_{v}r_{v}$ and $p_{v}X_{c}(u_{f}^{v})$ are nonzero scalar multiples of each other.

\end{lem}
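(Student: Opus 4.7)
The plan is to verify both statements by direct diagrammatic computation using the identities established in Lemmas \ref{lem:relation} and \ref{lem:loop}. I will treat the non-loop case first; the loop case will be essentially identical with Lemma \ref{lem:loop} in place of Lemma \ref{lem:relation}.

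For the non-loop case, the key observation is that Lemma \ref{lem:relation} provides an identity expressing $(u_e^v)^*$ as $\bar l^{-1}$ times a diagram in which $r_v^*$, $u_e^w$, and $r_w^*$ appear side by side with a $c$-cup attached to the top of $u_e^w$. The first step is to substitute this identity into the expression $r_v (u_e^v)^* X_c p_w r_w$. Next, using $r_v r_v^* = p_v$ and $r_w^* r_w = q_w$, the leading and trailing partial isometries telescope: $r_v$ cancels with the $r_v^*$ coming from the substitution to produce $p_v$, while $r_w^*$ from the substitution combines with the trailing $r_w$ to produce a factor that matches up with the projection $p_w$ in $X_c p_w$. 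What remains to check is that the $c$-cup introduced by the substitution, together with the $c$-cap contribution from the relevant summand of $X_c$ (namely, the summand that creates a $c$-string between the two boxes), pair off via isotopy to yield precisely the diagram $p_v X_c u_e^w$, up to the nonzero scalar $\bar l^{-1}$.

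The loop case at $v$ proceeds identically: substitute the relation from Lemma \ref{lem:loop} expressing $(u_f^v)^*$ as $k$ times a diagram involving two copies of $r_v^*$ bracketing $u_f^v$ together with a $c$-cup, into $r_v (u_f^v)^* X_c p_v r_v$. The same telescoping $r_v r_v^* = p_v$ simplifications, together with the isotopy pairing the $c$-cup and $c$-cap, yield a scalar multiple of $p_v X_c u_f^v$. Both sides are nonzero: this follows because the support identities $\sum_i u_{e_i}^w (u_{e_i}^w)^* = \eta_c(p_v)\cdot 1_w$ (resp.\ the analogous identity for loops) guarantee that the $c$-cap action of $X_c$ out of $p_w$ (resp.\ $p_v$) has nontrivial overlap with the range projection of $u_e^w$ (resp.\ $u_f^v$), so $p_v X_c u_e^w \neq 0$; and similarly for the other expression.

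The main obstacle will be the careful bookkeeping of which single term of the infinite sum $X_c = \sum_{|\alpha|\in 2\N}(\mathrm{left\ cap}_\alpha + \mathrm{right\ cap}_\alpha)$ actually survives after compression by $p_w$ (resp.\ $p_v$) on the right and the projection-type constraints on the left, and then recognizing that the $c$-string produced by this cap together with the $c$-cup from Lemma \ref{lem:relation} (or \ref{lem:loop}) is exactly what converts the diagram with $r_v^*, u_e^w, r_w^*$ into the diagram $p_v X_c u_e^w$. Once this identification is made, extracting the precise nonzero proportionality constant is a routine matter of collecting the normalization factors $l,\bar l,k$ and the constants produced by telescoping the $r_v, r_w$ partial isometries against their adjoints.
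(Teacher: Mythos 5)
Your proposal follows essentially the same diagrammatic route as the paper: both invoke the identity from Lemma~\ref{lem:relation} (resp. \ref{lem:loop}) and an isotopy that slides $X_c$ past the intervening boxes while dragging the $c$-string along. The paper streamlines the bookkeeping you worry about by first observing $p_v X_c u_e^w = X_c\eta_c(p_v)u_e^w = X_c u_e^w$ (via the commutation $yX_c = X_c\eta_c(y)$ and the support condition on $u_e^w$) and $r_v(u_e^v)^* X_c p_w r_w = r_v(u_e^v)^* X_c r_w$, and then simply redraws the diagram for $r_v(u_e^v)^* X_c r_w$, recognizing the result as $k\,X_c\,u_e^w$ via the adjointed relation of Lemma~\ref{lem:relation}.

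One step in your write-up is misdescribed, however. You assert that $r_w^*$ from the substituted expression ``combines with the trailing $r_w$.'' After substituting $(u_e^v)^* = (\bar l)^{-1}[r_v^*,\,u_e^w,\,r_w^*]$, the box order (left to right) is $r_v,\ r_v^*,\ u_e^w,\ r_w^*,\ X_c,\ p_w,\ r_w$, so $r_w^*$ is not adjacent to $r_w$: the full $X_c p_w$ sits in between. The cancellation of the $r_w$-pair can only happen \emph{after} the isotopy pulls $X_c$ (together with its $c$-string) past $u_e^w$ and $r_w^*$; at that point $r_w^* p_w r_w = r_w^* r_w = q_w$ and $u_e^w q_w = u_e^w$ by the right-support condition, which is what actually closes the argument. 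Also note $r_w^* r_w = q_w$, not $p_w$, so it is $q_w$'s action on $u_e^w$, not any relation to the $p_w$ in $X_c p_w$, that makes the term disappear. These are presentation-level corrections rather than a genuine gap in the idea, and the rest of your plan---in particular the care about which single summand of $X_c$ survives and the use of $\sum_i u_{e_i}^w(u_{e_i}^w)^* = \eta_c(p_v)\cdot 1_w$ to establish nonvanishing---lines up with the intended argument.
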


\begin{proof}
First observe that
$$
p_{v}X_{c}u_{e}^{w} = X_{c}\eta_{c}(p_{v})u_{e}^{w} = X_{c}u_{e}^{w}.
$$
and $r_{v}(u^{v}_{e})^{*}X_{c}p_{w}r_{w} = r_{v}(u_{v}^{e})^{*}X_{c}r_{w}$.  The element $r_{v}(u_{v}^{e})^{*}X_{c}r_{w}$ is represented diagrammatically as
$$
\begin{tikzpicture} [baseline = 0cm]
    \draw (-.8, 0) -- (4.6, 0);
    \filldraw [thick, unshaded] (.4, .4) --(.4, -.4)--(-.4, -.4)--(-.4, .4)--(.4, .4);
    \filldraw [thick, unshaded] (1.8, .4) --(1.8, -.4)--(.8, -.4)--(.8, .4)--(1.8, .4);
    \filldraw [thick, unshaded] (4.2, .4) --(4.2, -.4)--(3.4, -.4)--(3.4, .4)--(4.2, .4);
    \draw [thick] (3, .4) --(3, -.4)--(2.2, -.4)--(2.2, .4)--(3, .4);
    \draw [thick, \cupcolor] (1.8, .2)--(2.4, .2) arc(-90:0: .2cm);
    \node at (0,0) {$r_{v}$};
    \node at (1.3, 0) {$(u_{e}^{v})^{*}$};
    \node at (3.8, 0) {$r_{w}$};
    \node at (-.6, .2) {\scriptsize{$\alpha_{v}$}};
    \node at (.6, .2) {\scriptsize{$\beta_{v}$}};
    \node at (2, -.2) {\scriptsize{$\alpha_{w}$}};
    \node at (4.4, .2) {\scriptsize{$\beta_{w}$}};
\end{tikzpicture}
$$
which can be rewritten as
$$
\begin{tikzpicture} [baseline = 0cm]
    \draw (-.8, 0) -- (4.6, 0);
    \draw [thick] (-.4, -.4)--(-.4, .8)--(.4, .8)--(.4, -.4)--(-.4, -.4);
    \filldraw [thick, unshaded] (1.6, .4) --(1.6, -.4)--(.8, -.4)--(.8, .4)--(1.6, .4);
    \filldraw [thick, unshaded] (3, .4) --(3, -.4)--(2, -.4)--(2, .4)--(3, .4);
    \filldraw [thick, unshaded] (4.2, .4) --(4.2, -.4)--(3.4, -.4)--(3.4, .4)--(4.2, .4);
    \node at (0, .2) {\scriptsize{$\alpha_{v}$}};
    \node at (1.8, .2) {\scriptsize{$\beta_{v}$}};
    \node at (3.2, -.2) {\scriptsize{$\alpha_{w}$}};
    \node at (4.4, .2) {\scriptsize{$\beta_{w}$}};
    \draw [thick, \cupcolor] (3, .2) arc(-90:90: .2cm) -- (.2, .6) arc(270:180: .2cm);
    \node at (1.2, 0) {$r_{v}$};
    \node at (2.5, 0) {$(u_{e}^{v})^{*}$};
    \node at (3.8, 0) {$r_{w}$};
\end{tikzpicture}\,  = k \cdot X_{c}\cdot u_{e}^{w}
$$
for an appropriate constant $k \neq 0$, proving the first statement.  The proof of the second statement uses the exact same diagrammatic argument.
\end{proof}

We now examine the algebras
$$
\cM_{e} = W^{*}(F\cA_{\I}F, Y_{e}) = W^{*}(F\cA_{\I}F, p_{v}X_{c}u_{e}^{w} + (u_{e}^{w})^{*}X_{c}p_{v})
$$
where the last equality follows from Lemma \ref{lem:EasyGenerators}. First assume $v$ and $w$ are distinct and $\Tr(p_{v}) \leq \Tr(p_{w}) = \Tr(q_{w})$.  One can show, using exactly the same techniques as in \cite{MR2732052} and \cite{MR2807103} (see the appendix) that $(u_{e}^{w})^{*}X_{c}p_{v}X_{c}u_{e}^{w}$ is a free Poisson element with an atom of size $\frac{\Tr(p_{w}) - \Tr(p_{v})}{\Tr(p_{w})}$ at 0 in the compressed algebra $p_{v}\cM_{\infty}p_{v}$. Also, $p_{v}X_{c}u_{e}^{w}(u_{e}^{w})^{*}X_{c}p_{v}$ is a free poisson element with no atoms in the compressed algebra $q_{v}\cM_{\I}q_{v}$.  Using the polar part of $p_{v}X_{c}u_{e}^{w}$ as well as the partial isometries $r_{v}$ and $r_{w}$ and Lemma \ref{lem:EasyGenerators}, we see that
$$
(1_{v} + 1_{w})F\cM_{e}F(1_{v} + 1_{w}) = \left(L(\Z)\otimes M_{4}(\C)\right) \bigoplus \overset{p'_{v}, q'_{v}}{\underset{\Tr(p_{v}) - \Tr(p_{w})}{M_{2}(\C)}}
$$
where the notation means that the copy of $M_{2}(\C)$ has two orthogonal minimal projections $p_{v}'$ and $q_{v}'$ such that $p_{v}' \leq p_{v}$ and $q'_{v} \leq q_{v}$ and $\Tr(p_{v}') = \Tr(p_{v}) - \Tr(p_{w})$.

If $e$ is a loop at $v$ then using the partial isometry $r_{v}$ and Lemma \ref{lem:EasyGenerators}, we see that
$$
1_{v}\cdot F\cM_{e}F\cdot 1_{v} = L(\Z) \otimes M_{2}(\C).
$$
For simplicity, we set $X_{e} = p_{v}X_{c}u_{e}^{w} + (u_{e}^{w})^{*}X_{c}p_{v}$ so that $\cM_{e} = W^{*}(F\cM_{\I}F, X_{e})$.

\begin{lem} \label{lem:free2}
The algebras $(\cM_{e})_{e \in E(\cF_{\cC}(\cL))}$ are free with amalgamation over $F\cA_{\I}F$.
\end{lem}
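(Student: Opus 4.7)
The plan is to deduce the freeness of the family $(\cM_e)_{e\in E(\cF_\cC(\cL))}$ over $F\cA_\I F$ from the freeness with amalgamation over $\cA_\I$ of the semicircular family $(X_c)_{c\in\cL}$ established in Lemma \ref{lem:free1}. First observe that since $X_e$ is built from $X_{c(e)}$ via left/right multiplication by elements of $\cA_\I$, we have $X_e\in \cN_{c(e)}:=W^*(\cA_\I,X_{c(e)})$, and hence $\cM_e\subset F\cN_{c(e)}F$. Freeness with amalgamation is preserved under compression by projections lying in the amalgamated subalgebra (here $F\in\cA_\I$), so the algebras $F\cN_c F$ for distinct $c\in\cL$ are free over $F\cA_\I F$. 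This immediately handles every mixed word whose consecutive edges have different colors, and reduces the problem to showing freeness of $\cM_{e}$ for edges $e$ sharing a common color.

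For this reduction, I would argue that for a fixed color $c$, the operators $\{X_e:c(e)=c\}$ form an $F\cA_\I F$-valued semicircular family with \emph{diagonal} covariance, which by Speicher's characterization of operator-valued freeness is equivalent to freeness of the $\cM_e$. Semicircularity of each $X_e$ follows from the moment recurrence \eqref{eqn:recurrence} for $X_c$ together with the identity $p_v X_c u_e^w\cdot (u_e^w)^*X_c p_v = p_v X_c \eta_c(\text{-})X_c p_v$ hidden in the structure, since the only non-zero contributions come from non-crossing pairings of the $X_c$'s once we expand $X_e=p_v X_c u_e^w+(u_e^w)^*X_c p_v$. The essential point is that the mixed covariance $E_F(X_e \cdot a\cdot X_f)$ for $e\neq f$ of the same color $c$, after expanding and applying $\eta_c$ to the intermediate $F\cA_\I F$-element, produces a diagram containing the factor $(u_e^v)^* u_f^v$ (or $(u_e^w)^* u_f^w$), which vanishes by the orthogonality relations in Lemmas \ref{lem:relation} and \ref{lem:loop}. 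The diagrammatic relations of those lemmas are exactly what is needed for the two off-diagonal terms of $X_e X_f$ to kill each other upon capping by a $c$-string.

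Once the vanishing of all mixed second $F\cA_\I F$-valued cumulants is in hand, the general moment computation proceeds by induction on word length: any alternating product $a_0 X_{e_1}a_1\cdots X_{e_n}a_n$ with $a_i\in F\cA_\I F$ is, after expansion in $X_c$, a sum over non-crossing pair partitions of the $X_c$-positions, and every pairing that links two distinct edges of the same color contributes zero. Only pairings that link the same edge survive, and the resulting expression is precisely what one would obtain by treating the $X_e$'s as an $F\cA_\I F$-valued free semicircular family with diagonal covariance. Applying the functional calculus then upgrades this to freeness of the generated algebras $\cM_e=W^*(F\cA_\I F, X_e)$.

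The main obstacle in executing the plan is the careful bookkeeping of the partial isometries $u_e^v$, $r_v$, and the projections $p_v,q_v,1_v$ inside the $\eta_c$-expressions produced by the Temperley--Lieb cups: one must verify that every ``bridge'' between two different $X_e$-boxes of the same color really does cap down to an expression whose middle factor is $(u_e^v)^* u_f^v = 0$, rather than something that accidentally survives because of the self-adjoint symmetrization $X_e= p_v X_c u_e^w+(u_e^w)^*X_c p_v$. For non-loop edges this is immediate from Lemma \ref{lem:relation}, but for loops the unimodular constant in Lemma \ref{lem:loop} is precisely what forces the two symmetric halves to assemble into a single $X_c$-corner rather than decouple, so the argument for loops needs to be carried out separately and is the most delicate point.
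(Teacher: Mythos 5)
Your proposal matches the paper's approach almost exactly: cite Shlyakhtenko's operator-valued semicircular theory to reduce freeness of the $\cM_e$ over $F\cA_\I F$ to the vanishing of all mixed second-order covariances $E(X_e\, y\, X_{e'})$ for $e\neq e'$, dispose of distinct colors immediately (the paper does this via $\eta_{a,b}=0$, you via freeness of the $\cN_c$, which are the same observation), and then show that for equal colors the diagrammatic cap-off produces a factor of the form $(u_e^v)^*u_{e'}^v$ which vanishes by the orthogonality built into Lemmas \ref{lem:relation} and \ref{lem:loop}. The one place your outline is imprecise is in locating exactly what makes loops delicate. It is not that the two symmetrized halves ``assemble into a single corner,'' but rather that the compressed corner algebra $F\cA_\I F\cong\bigoplus_v M_2(\C)$ contains the off-diagonal partial isometries $r_v$ in addition to the projections $p_v,q_v$, so one must also check that $E(X_e\, r_v^*\, X_{e'})=0$; the extra normalization in Lemma \ref{lem:loop} (fixing the $u^v_e$ so that they satisfy the stated diagrammatic relation up to the unimodular constant $k$) is chosen precisely so that this expectation collapses to $r_v\cdot(u_e^v)^*u_{e'}^v=0$ rather than leaving a surviving diagonal piece. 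Beyond that clarification, the proposal is sound and follows the paper's route.
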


\begin{proof}
From \cite{MR1704661} the elements $X_{e}$ are $F\cA_{\I}F$ semicircular, so to show freeness we need only to show that if $e \neq e'$, $E(X_{e} y X_{e'}) = 0$ for all $y \in F\cA_{\I}F$.  This can only be nonzero if $e$ and $e'$ have the came color, so we assume $C(e) = c = C(e')$.  Also, this expectation can be nonzero only if $e$ and $e'$ share a common vertex, $v$.

We first assume that $e$ and $e'$ are not loops.  Assume $e$ connects $v$ and $w$ and $e'$ connects $v$ and $w'$.  We see that $E(X_{e}p_{v}X_{e'}) = (u_{e}^{w})^{*}i_{c}(p_{v})u_{e'}^{w'} = (u_{e}^{w})^{*}u_{e'}^{w'}$ which is zero since $u_{e}^{w}$ and $u_{e'}^{w'}$ have orthogonal left supports if $w = w'$ and is clearly zero if $w \neq w'$.  For $E(X_{e}q_{w}X_{e'})$ to be nonzero, we need $w = w'$.  Assuming this, we get $p_{v}\Phi(u_{e}^{w}\cdot q_{w} \cdot (u_{e'}^{w'})^{*}) p_{v}$.  As was discussed in the proof of Lemma \ref{lem:orthogonalsupport},
$$
\Phi(u_{e}^{w}\cdot q_{w} \cdot (u_{e'}^{w'})^{*}) = \Phi(u_{e}^{w}(u_{e'}^{w'})^{*}) = 0
$$
since $u_{e}^{w}$ and $u_{e'}^{w'}$ have orthogonal left supports.  It is also straightforward to check that the only elements $y$ in $F\cA_{\I}F$ where $X_{e}yX_{e'} \neq 0$ are of the form $y = a\cdot p_{v} + b\cdot q_{w}$.

We now assume that $e$ is a loop at $v$ and $e'$ is an arbitrary edge connected to $v$.  The discussion in the previous paragraph implies that $E(X_{e}pX_{e'}) = 0$ for any projection $p \in F\cA_{\I}F$; however, we also have to consider $E(X_{e}r_{v}^{*}X_{e'})$.  This is the following diagram:
$$
\begin{tikzpicture}[baseline = 0cm]
    \draw (-.8, -.2)--(5.6, -.2);
    \filldraw [thick, unshaded] (-.4, -.4)--(-.4, .4)--(.4, .4)--(.4, -.4)--(-.4, -.4);
    \filldraw [thick, unshaded] (.8, -.4)--(.8, .4)--(1.6, .4)--(1.6, -.4)--(.8, -.4);
    \filldraw [thick, unshaded] (2, -.4)--(2, .4)--(2.8, .4)--(2.8, -.4)--(2, -.4);
    \filldraw [thick, unshaded] (3.2, -.4)--(3.2, .4)--(4, .4)--(4, -.4)--(3.2, -.4);
    \filldraw [thick, unshaded] (4.4, -.4)--(4.4, .4)--(5.2, .4)--(5.2, -.4)--(4.4, -.4);
    \draw [thick, \cupcolor] (.8, .2) arc(270:90: .2cm) -- (4, .6) arc(90:0: .2cm) arc(180:270:.2cm);
    \node at (0, 0) {$p_{v}$};
    \node at (1.2, 0) {$u_{e}^{v}$};
    \node at (2.4, 0) {$r_{v}^{*}$};
    \node at (3.6, 0) {$p_{v}$};
    \node at (4.8, 0) {$u_{e'}^{v}$};
    \node at (-.6, 0) {\scriptsize{$\alpha_{v}$}};
    \node at (.6, 0) {\scriptsize{$\alpha_{v}$}};
    \node at (1.8, 0) {\scriptsize{$\beta_{v}$}};
    \node at (3, 0) {\scriptsize{$\alpha_{v}$}};
    \node at (4.2, 0) {\scriptsize{$\alpha_{v}$}};
    \node at (5.4, 0) {\scriptsize{$\beta_{v}'$}};
\end{tikzpicture} =
\begin{tikzpicture} [baseline = 0cm]
    \draw (-.8, -.2)--(5.6, -.2);
    \filldraw [thick, unshaded] (-.4, -.4)--(-.4, .4)--(.4, .4)--(.4, -.4)--(-.4, -.4);
    \filldraw [thick, unshaded] (.8, -.4)--(.8, .4)--(1.6, .4)--(1.6, -.4)--(.8, -.4);
    \filldraw [thick, unshaded] (2, -.4)--(2, .4)--(2.8, .4)--(2.8, -.4)--(2, -.4);
    \filldraw [thick, unshaded] (3.2, -.4)--(3.2, .4)--(4, .4)--(4, -.4)--(3.2, -.4);
    \filldraw [thick, unshaded] (4.4, -.4)--(4.4, .4)--(5.2, .4)--(5.2, -.4)--(4.4, -.4);
    \draw [thick, \cupcolor] (2, .2) arc(270:90: .2cm) -- (4, .6) arc(90:0: .2cm) arc(180:270:.2cm);
    \node at (0, 0) {$r_{v}$};
    \node at (1.2, 0) {$r_{v}^{*}$};
    \node at (2.4, 0) {$u_{e}^{v}$};
    \node at (3.6, 0) {$r_{v}^{*}$};
    \node at (4.8, 0) {$u_{e'}^{v}$};
    \node at (-.6, 0) {\scriptsize{$\alpha_{v}$}};
    \node at (.6, 0) {\scriptsize{$\beta_{v}$}};
    \node at (1.8, 0) {\scriptsize{$\alpha_{v}$}};
    \node at (3, 0) {\scriptsize{$\beta_{v}$}};
    \node at (4.2, 0) {\scriptsize{$\alpha_{v}$}};
    \node at (5.4, 0) {\scriptsize{$\beta_{v}'$}};
\end{tikzpicture}\, ,
$$
where $\beta_{v}' = \beta_{v}$ if $e'$ is also a loop at $v$.  By our choice of $u_{v}^{e}$, this diagram is a scalar multiple of $r_{v}\cdot (u_{e}^{v})^{*}u_{e'}^{v}$ which is 0.  Similarly, $E(X_{e}r_{v}^{*}X_{e'}) = 0$ and we are done.
\end{proof}

Note that we have
$$
F\cM_{\I}F = \underset{F\cA_{\I}F}{*} (\cM_{e})_{e \in E(\cF_{\cC}(\cL))}.
$$
The algebra $F\cA_{\I}F$ has the decomposition
$$
F\cA_{\I}F = \bigoplus_{v \in \cF_{\cC}(\cL)} \overset{p_{v}, q_{v}}{M_{2}(\C)}.
$$
Recall that $Q = \sum_{v} p_{v}$.  Thus $Q$ is an abelian projection with central support 1 in $F\cA_{\I}F$.  Therefore (see for example \cite{MR2765550} or \cite{MR2051399})
$$
Q\cM_{\I}Q = \underset{Q\cA_{\I}Q}{*} (Q\cM_{e}Q)_{e \in E(\cF_{\cC}(\cL))}.
$$
The algebra $Q\cA_{\I}Q$ is simply $\ell^{\I}(V(\cF_{\cC}(\cL)))$, the bounded functions on the vertices of $\cF_{\cC}(\cL)$.  If $e$ connects two distinct vertices $v$ and $w$ with $\Tr(p_{v}) \geq \Tr(p_{w})$ then
$$
Q\cM_{e}Q = L(\Z) \otimes M_{2}(\C) \bigoplus \overset{p_{v}'}{\C} \bigoplus \ell^{\I}(V(\cF_{\cC}(\cL)) \setminus \{v, w\})
$$
and if $e$ is a loop at $v$ then
$$
Q\cM_{e}Q = L(\Z) \bigoplus \ell^{\I}(V(\cF_{\cC}(\cL)) \setminus \{v\}).
$$

In the next section, we will use the free product expression for $Q\cM_{\I}Q$ to determine the isomorphism class of $M = p_{\emptyset}(Q\cM_{\I}Q)p_{\emptyset}$.

\subsection{von Neumann algebras associated to graphs} \label{sec:vNAGraph}

The following notation will be useful in this section:

\begin{nota} \label{nota:vNA}
Throughout this section, we will be concerned with finite von Neumann algebras $(\cN, tr)$ which can be written in the form
$$
 \cN = \overset{p_{0}}{\underset{\gamma_{0}}{\cN_{0}}} \oplus \bigoplus_{j \in J} \overset{p_{j}}{\underset{\gamma_{j}}{L(\F_{t_{j}})}} \oplus  \bigoplus_{k \in K} \overset{q_{k}}{\underset{\alpha_{k}}{M_{n_{k}}}}
 $$
 where $\cN_{0}$ is a diffuse hyperfinite von Neumann algebra, $L(\F_{t_{j}})$ is an interpolated free group factor with parameter $t_{j}$, $M_{n_{k}}$ is the algebra of $n_{k} \times n_{k}$ matrices over the scalars, and the sets $J$ and $K$ are at most finite and countably infinite respectively.  We use $p_{j}$ to denote the projection in $L(\F_{t_{j}})$ corresponding to the identity of $L(\F_{t_{j}})$ and $q_{k}$ to denote a minimal projection in $M_{n_{k}}$. The projections $p_{j}$ and $q_{k}$ have traces $\gamma_{j}$ and $\alpha_{k}$ respectively.  Let $p_{0}$ be the identity in $\cN_{0}$ with trace $\gamma_{0}$.   We write $\overset{p,q}{M_{2}}$ to mean $M_{2}$ with a choice of minimal orthogonal projections $p$ and $q$.
 \end{nota}

We begin by letting $\Gamma$ be a connected weighted graph with weighting $\gamma$.  Let $\ell^{\I}(\Gamma)$ be the bounded functions on the vertices of $\Gamma$ and let $p_{v}$ be the delta function at $v$.  We endow $\ell^{\I}(\Gamma)$ with a trace $\Tr$ satisfying $\Tr(p_{v}) = \gamma_{v}$.  As in \cite{1208.2933} we now describe how to use the edges to associate a free product von Neumann algebra $\cN(\Gamma)$ to $\Gamma$.

\begin{defn}
Let $e$ be an edge in $\Gamma$.  We define algebras $\cN_{e}$ as follows:  If $e$ connects two distinct edges $v$ and $w$ with $\gamma_{v} \geq \gamma_{w}$ then
$$
\cN_{e} = \underset{2\gamma_{w}}{M_{2}(\C) \otimes L(\Z)} \oplus \underset{\gamma_{v} - \gamma_{w}}{\overset{p^{e}_{v}}{\C}} \oplus \ell^{\infty}(\Gamma \setminus \{v, w\}).
$$
If $e$ is a loop at the vertex $v$ then
$$
\cN_{e} = \overset{p_{v}}{\underset{\gamma_{v}}{L(\Z)}} \oplus \ell^{\infty}(\Gamma \setminus \{v\}).
$$
If $e$ connects two different vertices then the trace on $\underset{2\gamma_{w}}{M_{2}(\C) \otimes L(\Z)} \oplus \underset{\gamma_{v} - \gamma_{w}}{\overset{p^{e}_{v}}{\C}}$ is given by $\tr_{M_{2}} \otimes \tr_{L(\Z)} + \tr_{\C}$. $\cN_{e}$ includes $\ell^{\I}$ $\Gamma$ by letting $p_{w} = e_{1, 1}\otimes 1$ and $p_{v} = e_{2, 2}\otimes 1 + p_{v}^{e}$ so that $p^{e}_{v} \leq p_{v}$.  If $e$ is a loop that $\cN_{e}$ then $\cN_{e}$ includes $\ell^{\I}(\Gamma)$ in the obvious way.

Let $E_{e}$ be the $\Tr-$preserving conditional expectation from $\cN_{e}$ to $\ell^{\I}(\Gamma)$.  We define $\cN(\Gamma) = \underset{\ell^{\I}(\Gamma)}{*} (\cN_{e}, E_{e})_{e \in E(\Gamma)}$.
\end{defn}

Note that if $\Gamma$ has no loops then this definition of $\cN(\Gamma)$ is the same as $\cM(\Gamma)$ in \cite{1208.2933}.  Also observe that $Q\cM_{\I}Q$ in section \ref{sec:free1} is $\cN(\cF_{\cC}(\cL))$.  We now define some notation which will be useful in determining the isomorphism class of $\cN(\Gamma)$.

\begin{defn} \label{defn:H2} We write $v \sim w$ if $v$ and $w$ are connected by at least 1 edge in $\Gamma$ and denote $n_{v, w}$ be the number of edges joining $v$ and $w$.  We set $\alpha^{\Gamma}_{v} = \sum_{w\sim v} n_{v, w}\gamma_{w}$, and define $B(\Gamma) = \{ v \in V(\Gamma) : \gamma_{v} > \alpha^{\Gamma}_{v}\}$. Note that if there is a loop, $e$, at $v$ then $v \not\in B(\Gamma)$. \end{defn}

Assume for the moment that $\Gamma \subset \Gamma'$ are finite, connected, weighted graphs with at least two edges (so that $\cN(\Gamma)$ and $\cN(\Gamma')$ are finite von Neumann algebras).  There is a natural inclusion $\cN(\Gamma) \rightarrow \cN(\Gamma')$ which will not be unital if $\Gamma'$ has a larger vertex set.  We will prove the following theorem, which is along the same lines as \cite{1208.2933}.

\begin{thm} \label{thm:standardembedding}
$\cN(\Gamma)$ has the form
$$
\cN(\Gamma) \cong \overset{p^{\Gamma}}{L(\F_{t_{\Gamma}})} \oplus \underset{{v \in B(\Gamma)}}{\bigoplus} \overset{r_{v}^{\Gamma}}{\underset{\gamma_{v} - \alpha^{\Gamma}_{v}}{\C}}
$$
where $r_{v}^{\Gamma} \leq p_{v}$ and $t_{\Gamma}$ is such that this algebra has the appropriate free dimension.  If $\Gamma$ is a proper subgraph of $\Gamma'$ then the unital inclusion $p^{\Gamma}\cN(\Gamma)p^{\Gamma} \rightarrow p^{\Gamma}\cN(\Gamma')p^{\Gamma}$ is a standard embedding of interpolated free group factors.
\end{thm}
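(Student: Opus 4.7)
The plan is to mirror the inductive proof of Theorem \ref{thm:H1} in Chapter \ref{chap:graph}, extending its three-lemma scheme (base case, edge-addition, pendant vertex-and-edge addition) to accommodate loops, which is the genuinely new feature here. At each inductive stage I will use Dykema's free product formulas (Theorem \ref{thm:Dyk}) together with Lemmas \ref{lem:DR1} and \ref{lem:DR2} of Chapter \ref{chap:graph} to recompute the amalgamated free product and to identify which summand every minimal central projection lands in, and I will use the three properties of standard embeddings collected in Remark \ref{rem:Dyk} to propagate the standard-embedding claim through each elementary move.

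For the base case one must handle all finite connected weighted graphs with exactly two edges. The loopless configurations (two edges joining two vertices, and a path on three vertices) were already treated in Lemma \ref{lem:H1}. The new two-edge base cases involve at least one loop: a single vertex with two loops gives $\cN(\Gamma) = L(\Z) * L(\Z) \cong L(\F_{2})$, so $p^{\Gamma}$ is the identity and $B(\Gamma) = \emptyset$; a two-vertex graph consisting of a loop at $v$ and an edge $e$ joining $v$ and $w$ is handled by first applying the one-edge analysis to the edge $e$ and then free-producing with the loop algebra $\overset{p_{v}}{L(\Z)} \oplus \C$ over $\ell^{\infty}(\Gamma)$, invoking Lemma \ref{lem:DR1} at $p_{v}$ to absorb the $L(\Z)$ factor into an interpolated free group factor.

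The inductive step amounts to adding one of three atomic pieces to $\Gamma$: (i) a new edge between existing vertices, (ii) a new pendant vertex together with a connecting edge, or (iii) a loop at an existing vertex $v$. Cases (i) and (ii) are handled verbatim by Lemmas \ref{lem:H2} and \ref{lem:H3} of Chapter \ref{chap:graph}, whose proofs make no use of bipartiteness or loop-freeness of $\Gamma$ beyond the inductive form of $\cN(\Gamma)$. The genuinely new case (iii) is the principal obstacle. For a loop $e$ at $v$, we have $\cN_{e} = \overset{p_{v}}{L(\Z)} \oplus \ell^{\infty}(\Gamma \setminus \{v\})$, and the key compression identity (via Lemma \ref{lem:DR1}) is
$$
p_{v} \cN(\Gamma') p_{v} \;=\; \bigl(p_{v}\cN(\Gamma)p_{v}\bigr) * L(\Z),
$$
where the $L(\Z)$ is read with the rescaled trace of total mass $\gamma_{v}$. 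The inductive hypothesis supplies
$$
p_{v}\cN(\Gamma)p_{v} \;\cong\; \overset{p_{v}^{\Gamma}}{L(\F_{t_{v}})} \;\oplus\; \overset{r_{v}^{\Gamma}}{\underset{(\gamma_{v}-\alpha_{v}^{\Gamma})/\gamma_{v}}{\C}}
$$
(with the atomic summand absent if $v \notin B(\Gamma)$). Adding the loop forces $\alpha_{v}^{\Gamma'} \geq \alpha_{v}^{\Gamma} + \gamma_{v} > \gamma_{v}$, so $v \notin B(\Gamma')$, and the claim collapses to showing that $(L(\F_{t_{v}}) \oplus \C) * L(\Z)$ is a single interpolated free group factor, which is a direct application of the third bullet of Theorem \ref{thm:Dyk}. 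The parameter $t_{\Gamma'}$ is then pinned down by the free dimension bookkeeping.

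For the standard-embedding claim at the loop step, I will use Remark \ref{rem:Dyk}: after identifying $p_{v}\cN(\Gamma)p_{v} * L(\Z)$ as an interpolated free group factor, the canonical inclusion of an interpolated free group factor into its free product with $L(\Z)$ is standard, and cutting down by $p^{\Gamma} \leq p_{v}$ preserves this. The delicate point, which I expect to be the trickiest technical check, is the case $v \in B(\Gamma)$, where the atomic projection $r_{v}^{\Gamma}$ must be shown to be absorbed into the new $L(\F_{t_{\Gamma'}})$ summand in such a way that the compressed inclusion remains standard. As in Lemma \ref{lem:H2}, this is handled by approximating the free product with $L(\Z)$ by free products with finite-dimensional abelian subalgebras $\C^{n}$ (all of whose minimal projections are equivalent in the finite-stage free product), applying Lemma \ref{lem:DR2} at each finite stage to turn matrix-amplification inclusions into standard embeddings, and then passing to the inductive limit via Remark \ref{rem:Dyk}(3). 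Once the loop case is in hand, the full theorem follows by induction on the number of edges plus vertices of $\Gamma$.
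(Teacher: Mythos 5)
Your proposal is correct and follows essentially the same route as the paper: both proceed by induction on the number of edges, delegate the loopless base and inductive cases to Lemmas \ref{lem:H1}--\ref{lem:H3} of Chapter \ref{chap:graph}, isolate the two-edge-with-a-loop base case, and handle the new ``add a loop at $v$'' inductive move via the compression identity $p_v\cN(\Gamma')p_v = L(\Z) * p_v\cN(\Gamma)p_v$ together with Lemma \ref{lem:DR1} and Remark \ref{rem:Dyk}. The one place your write-up is more cautious than the paper's is in spelling out the finite-dimensional approximation of $L(\Z)$ for the standard-embedding bookkeeping when $v \in B(\Gamma)$; the paper elides this and cites the same references, so this is a matter of explicitness rather than a different method.
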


See \cite{MR1201693} and \cite{MR2765550} for a discussion of free dimension and rules for computing it.  Also we refer the reader to \cite{MR1201693} for the definition of a standard embedding of interpolated free group factors.  Whenever $A$ and $B$ are interpolated free group factors and $A$ is unitally included into $B$ then we write $A \overset{s.e.}{\hookrightarrow} B$ to indicate that the inclusion of $A$ into $B$ is a standard embedding.  In this section, we will extensively use the following properties of standard embeddings which can be found in \cite{MR1201693} and \cite{MR1363079}.

\begin{itemize}
\item[(1)]
If $A$ is an interpolated free group factor, the canonical inclusion $A \rightarrow A * \cN$ is a standard embedding whenever $\cN$ is of the form in Notation \ref{nota:vNA}.
\item[(2)]
A composite of standard embeddings is a standard embedding.
\item[(3)]
If $A_{n} = L(\F_{s_{n}})$ with $s_{n} < s_{n+1}$ for all $n$ and $\phi_{n}: A_{n} \overset{s.e.}{\hookrightarrow} A_{n+1}$, then the inductive limit of the $A_{n}$ with respect to the $\phi_{n}$ is $L(\F_{s})$ where $s = \displaystyle \lim_{n \rightarrow \infty}s_{n}$.
\item[(4)]
If $t > s$  then $\phi: L(\F_{s}) \overset{s.e.}{\hookrightarrow} L(\F_{t})$ if and only if for any nonzero projection $p \in L(\F_{s})$, $\phi|_{pL(\F_{s})p}: pL(\F_{s})p \overset{s.e.}{\hookrightarrow} \phi(p)L(\F_{t})\phi(p)$.
\end{itemize}

Theorem \ref{thm:standardembedding} was proved in \cite{1208.2933} in the case that $\Gamma$ contained no loops, so we only to need to modify the arguments there to incorporate what happens when $\Gamma$ contains loops.  We will prove Theorem \ref{thm:standardembedding} by inducting on the number of edges in the graph.  We divide this into two lemmas.

\begin{lem} \label{lem:basecase}
Suppose $\Gamma$ is a connected graph with 2 edges, one of which is a loop.  Then $\cN(\Gamma)$ is of the form in Theorem \ref{thm:standardembedding}.
\end{lem}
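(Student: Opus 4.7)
The plan is to enumerate the possible graph topologies and then, for each one, imitate the Case~1 strategy in the proof of Lemma~\ref{lem:H1}, adapting the intermediate algebras to accommodate the diffuse $L(\Z)$ summand contributed by the loop.

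A connected graph $\Gamma$ with exactly two edges, one of which is a loop, falls into one of two topological types: (i) a single vertex $v$ carrying two loops; or (ii) two vertices $v,w$ with a loop at one of them and a simple edge joining them. For type (i), $\ell^\infty(\Gamma)=\C$ so the amalgamation is trivial and $\cN_{e_1}\cong\cN_{e_2}\cong L(\Z)$, whence $\cN(\Gamma)=L(\Z)*L(\Z)=L(\F_2)$. Since $\alpha_v^\Gamma=2\gamma_v>\gamma_v$ we have $B(\Gamma)=\emptyset$, and a direct free-dimension check confirms $t_\Gamma=2$, matching the theorem.

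For type (ii), place the loop at $v$ and let $e$ be the edge from $v$ to $w$. With $D=\C p_v\oplus\C p_w$, assuming first $\gamma_v\geq\gamma_w$, the two edge algebras are
$$
\cN_{\mathrm{loop}}=\overset{p_v}{\underset{\gamma_v}{L(\Z)}}\oplus\overset{p_w}{\underset{\gamma_w}{\C}}\quad\text{and}\quad\cN_{e}=\underset{2\gamma_w}{M_2(\C)\otimes L(\Z)}\oplus\overset{p_v^{e}}{\underset{\gamma_v-\gamma_w}{\C}}.
$$
I would build a chain of intermediate algebras $\cN_0\subset\cN_1\subset\cN_2\subset\cN(\Gamma)$ exactly as in Lemma~\ref{lem:H1}: in $\cN_0$ both $L(\Z)$ factors are replaced by direct sums $\C\oplus\C$ of matching trace; in $\cN_1,\cN_2$ the off-diagonal matrix units are reintroduced to produce $M_2(\C)$ summands; and finally each $M_2(\C)$ is tensored with $L(\Z)$ to recover $\cN(\Gamma)$. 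Theorem~\ref{thm:Dyk} evaluates the initial finite-dimensional free product inside $p_v\cN_0p_v$, and repeated applications of Lemmas~\ref{lem:DR1} and~\ref{lem:DR2} carry the calculation up the chain, showing at each stage that $p_v$ has full central support and that the relevant compression is an interpolated free group factor. Amplifying back gives $\cN(\Gamma)=L(\F_{t_\Gamma})$, and the free dimension is then fixed by the formula.

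The main obstacle, relative to Chapter~\ref{chap:graph}, is that the loop contributes a diffuse $L(\Z)$ summand to $\cN_{\mathrm{loop}}$, so the first approximation $\cN_0$ is not finite-dimensional over $D$. The remedy is to use Lemma~\ref{lem:DR2} (rather than the purely finite-dimensional formulas of Theorem~\ref{thm:Dyk}) when absorbing the loop's $L(\Z)$ into the developing free group factor, much as one handles equivalent atoms inside a matrix unit. The case $\gamma_w>\gamma_v$ is symmetric: the atomic summand of $\cN_e$ now sits under $p_w$ with trace $\gamma_w-\gamma_v$, the analogous sequence of inclusions shows $w\in B(\Gamma)$, and the construction leaves behind a minimal central projection $r_w^\Gamma\leq p_w$ of trace $\gamma_w-\alpha_w^\Gamma=\gamma_w-\gamma_v$, with the remainder of $\cN(\Gamma)$ an interpolated free group factor of the prescribed parameter.
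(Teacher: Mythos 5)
Your type (i) is fine and coincides with the paper: with a single vertex, $\ell^\infty(\Gamma)=\C$, and $\cN(\Gamma)=L(\Z)*L(\Z)=L(\F_2)$.

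For type (ii), however, the proposed chain has a genuine gap. You want to replace \emph{both} $L(\Z)$ summands by $\C\oplus\C$ in $\cN_0$, reintroduce off-diagonal matrix units to form $M_2(\C)$ blocks, and then tensor each $M_2(\C)$ with $L(\Z)$ ``to recover $\cN(\Gamma)$.'' This cannot recover $\cN_{\mathrm{loop}}$. The loop algebra is $\cN_{\mathrm{loop}}=\overset{p_v}{L(\Z)}\oplus\overset{p_w}{\C}$, whose diffuse summand is \emph{abelian} and sits entirely under the single atom $p_v$ of $D$. There is no off-diagonal matrix unit to reinstate: $M_2(\C)\otimes L(\Z)$ is a $II_1$ factor, not $L(\Z)$, so the final tensoring step produces the wrong algebra in the loop's free factor. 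Your proposed remedy---invoking Lemma~\ref{lem:DR2}---does not address this either, since DR2 concerns merging $\overset{p}{M_m(\C)}\oplus\overset{q}{M_{n-m}(\C)}$ into $M_n(\C)$ with $p,q$ under \emph{distinct} minimal projections of $D$ and $p\sim q$ already; it has no version that turns $\C\oplus\C$ sitting under a \emph{single} minimal projection of $D$ into an abelian $L(\Z)$.

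The correct and much shorter route, which is what the paper does, is to apply Lemma~\ref{lem:DR1} once with $n=1$ and $A=L(\Z)$: take $\cN=(\overset{p_v}{\C}\oplus\overset{p_w}{\C})\underset{D}{*}\cN_f=\cN_f$ and $\cM=(\overset{p_v}{L(\Z)}\oplus\overset{p_w}{\C})\underset{D}{*}\cN_f=\cN(\Gamma)$. Since $p_v$ is minimal in $D$, DR1 yields $p_v\cN(\Gamma)p_v = p_v\cN_f p_v * L(\Z)$, and moreover the central support of $p_v$ in $\cN(\Gamma)$ equals its central support in $\cN_f$, which you can read off directly ($1$ when $\gamma_v\geq\gamma_w$; $1-p_w^f$ when $\gamma_v<\gamma_w$). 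One then identifies $p_v\cN_f p_v$ (either $\underset{\gamma_w}{L(\Z)}\oplus\underset{\gamma_v-\gamma_w}{\C}$ or simply $L(\Z)$), applies Theorem~\ref{thm:Dyk} to the ordinary free product, and amplifies. No multi-stage chain is needed, and the abelian diffuse piece is absorbed in a single application of DR1 rather than approximated.
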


\begin{proof}
There are two cases to consider.  The first when $\Gamma$ has one vertex with two loops and the other when $\Gamma$ has two vertices, $w$ and $w$ with a loop at $v$ and an edge connecting $v$ to $w$.

\item[\text{\underline{Case 1:}}]
Assume $\Gamma$ has one vertex with two loops.  It follows immediately from the definition of $\cN(\Gamma)$ that $\cN(\Gamma) = L(\F_{2})$ which is in agreement with Theorem \ref{thm:standardembedding}.\\

\item[\text{\underline{Case 2:}}]
Assume $\Gamma$ has two vertices $w$ and $w$ with a loop, $e$, at $v$ and an edge, $f$, connecting $v$ to $w$.  There are two subcases to consider:  when $\gamma_{v} \geq \gamma_{w}$ and $\gamma_{v} < \gamma_{w}$.

\item[\text{\underline{Case 2a:}}]
Assume $\gamma_{v} \geq \gamma_{w}$ and set $D  = \ell^{\I}(\Gamma)$.  $\cN(\Gamma)$ has the form
$$
\cN(\Gamma) = \left(\overset{p_{v}}{\underset{\gamma_{v}}{L(\Z)}} \oplus \overset{p_{w}}{\underset{\gamma_{w}}{\C}}\right) \underset{D}{*} \left(\underset{2\gamma_{w}}{M_{2} \otimes L(\Z)} \oplus \overset{p^{f}_{v}}{\underset{\gamma_{v} - \gamma_{w}}{\C}}\right).
$$
Note that the central support of $p_{v}$ is 1 in $\cN_{f}$.  By \cite{1110.5597},
$$
p_{v}\cN(\Gamma)p_{v} = L(\Z) * p_{v}\cN_{f}p_{v} = L(\Z) * \left(\underset{\gamma_{w}}{L(\Z)} \oplus \overset{p_{f}^{v}}{\underset{\gamma_{v} - \gamma_{w}}{\C}}\right)
$$
which is an interpolated free group factor $L(\F_{t})$ for some appropriate $t$.  By amplifying, it follows that $\cN(\Gamma)$ is an interpolated free group factor, in agreement with Theorem \ref{thm:standardembedding}.

\item[\text{\underline{Case 2b:}}]
Assume $\gamma_{v} < \gamma_{w}$.  We note that the central support of $p_{v}$ in $\cN_{f}$ is $1 - p_{w}^{f}$ and following the same algorithm as in Case 2a gives
$$
p_{v}\cN(\Gamma)p_{v} = L(\Z) * p_{v}\cN_{f}p_{v} = L(\Z) * L(\Z) = L(\F_{2}),
$$
so by amplifying, $\cN(\Gamma) = L(\F_{t}) \oplus \overset{p_{w}^{f}}{\underset{\gamma_{w}}{\C}}$ for appropriate $t$.  This agrees with Theorem \ref{thm:standardembedding}.
\end{proof}

\begin{lem} \label{lem:inductivestep}
Suppose $\Gamma \subset \Gamma'$ are finite weighted graphs and assume $\cN(\Gamma)$ is of the form in Theorem \ref{thm:standardembedding}.  In addition, suppose $\Gamma'$ is obtained from $\Gamma$ by one of the following three operations:

\begin{itemize}
\item[(1)]
$\Gamma$ and $\Gamma'$ have the same edge set and $\Gamma'$ is obtained from $\Gamma$ be adding a loop $e$ at a vertex $v$,
\item[(2)]
$\Gamma$ and $\Gamma'$ have the same edge set and $\Gamma'$ is obtained from $\Gamma$ be adding an edge $e$ between two distinct vertices $v$ and $w$, or
\item[(3)]
$\Gamma'$ is obtained from $\Gamma$ be adding a vertex $v$ and an edge $e$ connecting $w \in V(\Gamma)$ to $v$.
\end{itemize}

Then $\cN(\Gamma')$ is also of the form in Theorem \ref{thm:standardembedding} and $p^{\Gamma}\cN(\Gamma)p^{\Gamma} \overset{s.e.}{\hookrightarrow} p^{\Gamma}\cN(\Gamma')p^{\Gamma}$.
\end{lem}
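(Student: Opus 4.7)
The plan is to follow the template of Lemmas \ref{lem:H2} and \ref{lem:H3} of Chapter \ref{chap:graph}: cases (2) and (3) of this lemma are essentially those earlier results, and case (1), the new loop case, requires a separate but shorter argument.

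For cases (2) and (3), I claim that the proofs of Lemmas \ref{lem:H2} and \ref{lem:H3} apply essentially verbatim. Any loops already present in $\Gamma$ have been absorbed into $\cN(\Gamma)$ by the induction hypothesis, and since the amalgamated free product $\cN(\Gamma')=\cN(\Gamma)*_{D}\cN_{e}$ attaches the new non-loop edge over $D=\ell^{\infty}(\Gamma')$, existing loops do not interact with the construction of $\cN_e$. The case analysis on membership in $B(\Gamma')$ is also unaffected, since vertices bearing loops are excluded from $B$ by definition. Hence the same sequence of intermediate algebras and invocations of Lemmas \ref{lem:DR1}, \ref{lem:DR2}, and Remark \ref{rem:Dyk} goes through without modification.

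For case (1), let $e$ be the new loop at $v$ and set $D=\ell^{\infty}(\Gamma)=\ell^{\infty}(\Gamma')$. Writing $L(\Z)=M_{1}(\C)\otimes L(\Z)$, Lemma \ref{lem:DR1} applied with $A=L(\Z)$, $B=\ell^{\infty}(\Gamma\setminus\{v\})$, and $C=\cN(\Gamma)$ yields
$$p_{v}\cN(\Gamma')p_{v}=p_{v}\cN(\Gamma)p_{v}*L(\Z),$$
and by the discussion following Lemma \ref{lem:DR2} the inclusion on the interpolated free group factor summands is a standard embedding. By the induction hypothesis, $p_{v}\cN(\Gamma)p_{v}$ is either $L(\F_{t_{v}})$ (if $v\notin B(\Gamma)$) or $L(\F_{t_{v}})\oplus\C$ with the $\C$-summand of trace $(\gamma_{v}-\alpha_{v}^{\Gamma})/\gamma_{v}$ (if $v\in B(\Gamma)$). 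Free-producting with $L(\Z)$ yields an interpolated free group factor in either case: the first reduces to the classical $L(\F_{t})*L(\Z)=L(\F_{t+1})$, while the second absorbs the atomic summand via Theorem \ref{thm:Dyk} combined with the techniques of \cite{MR1201693,1110.5597}. Thus $p_{v}\cN(\Gamma')p_{v}$ is an interpolated free group factor, so $\cN(\Gamma')$ carries no atomic summand beneath $p_{v}$; this is consistent with $v\notin B(\Gamma')$, since any loop at $v$ forces $\gamma_{v}\leq\alpha_{v}^{\Gamma'}$. At all other vertices $u\neq v$ we have $\alpha_{u}^{\Gamma'}=\alpha_{u}^{\Gamma}$, so the atomic summands persist unaltered, $B(\Gamma')=B(\Gamma)\setminus\{v\}$, and $\cN(\Gamma')$ has the form required by Theorem \ref{thm:standardembedding}.

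Finally, applying Remark \ref{rem:Dyk}(4) to the projection $p_{v}p^{\Gamma}\in p^{\Gamma}\cN(\Gamma)p^{\Gamma}$ transfers the standard embedding from the cut-down (which lives inside the free group factor piece of $p_{v}\cN(\Gamma)p_{v}$) up to the desired unital inclusion $p^{\Gamma}\cN(\Gamma)p^{\Gamma}\overset{s.e.}{\hookrightarrow}p^{\Gamma}\cN(\Gamma')p^{\Gamma}$. The main subtlety will be carefully verifying the absorption of the $\C$-summand at $v$ when $v\in B(\Gamma)$ and tracking the resulting change in the free-group-factor parameter via free-dimension bookkeeping; everything else follows from the structural results of \cite{MR1201693,1110.5597} and the arguments already in place for the loopless setting.
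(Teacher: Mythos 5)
Your proposal follows the same route as the paper's proof: cases (2) and (3) are delegated to the arguments of Chapter~\ref{chap:graph}, and case (1) is handled by compressing $\cN(\Gamma')=\cN_e *_D \cN(\Gamma)$ to $p_v$, recognizing $p_v\cN(\Gamma')p_v = p_v\cN(\Gamma)p_v * L(\Z)$ via Lemma~\ref{lem:DR1}, showing the result is an interpolated free group factor, and transferring the standard embedding up with properties (1) and (4) of Remark~\ref{rem:Dyk}. Your invocation of ``the discussion following Lemma~\ref{lem:DR2}'' is stated a touch loosely (that discussion assumes the smaller algebra is itself an interpolated free group factor, whereas $\cN(\Gamma)$ may have an abelian summand), but you correct this by ultimately cutting down to $p_v^\Gamma$ before applying the standard-embedding machinery, which is exactly what the paper does.
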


\begin{proof}
The steps in proving the lemma assuming operations (2) or (3) are exactly the same as in \cite{1208.2933}, so we only need to assume that $\Gamma'$ is obtained from $\Gamma$ via operation (1).  Set $D = \ell^{\I}(\Gamma) = \ell^{\I}(\Gamma')$.  We have $\cN(\Gamma') = \cN_{e} \underset{D}{*} \cN(\Gamma)$ and by assumption, $$
p_{v}\cN(\Gamma)p_{v} = \overset{p_{v}^{\Gamma}}{L(\F_{t})} \oplus \overset{r_{v}^{\Gamma}}{\C}
$$
where $r_{v}^{\Gamma}$ can possibly be zero but $p_{v}^{\Gamma} = p_{v}\cdot p^{\Gamma} \neq 0$.  By \cite{1110.5597},
$$
p_{v}\cN(\Gamma')p_{v} = L(\Z) * p_{v}\cN(\Gamma)p_{v} = L(\Z) * \left(\overset{p_{v}^{\Gamma}}{L(\F_{t})} \oplus \overset{r_{v}^{\Gamma}}{\C}\right)
$$
which is an interpolated free group factor.  By amplification, it follows that $\cN(\Gamma')$ is of the form in Theorem \ref{thm:standardembedding}.  By \cite{MR1201693}, The inclusion $p^{\Gamma}_{v}\cN(\Gamma)p^{\Gamma}_{v} \rightarrow p^{\Gamma}_{v}\cN(\Gamma')p^{\Gamma}_{v}$ is equivalent to the inclusion
$$
L(\F_{t}) \rightarrow L(\F_{t}) * p^{\Gamma}_{v}\left(L(\Z) * \left(\overset{p_{v}^{\Gamma}}{\C} \oplus \overset{r_{v}^{\Gamma}}{\C}\right)\right)p^{\Gamma}_{v}
$$
which is a standard embedding.  As $p_{v}^{\Gamma} \leq p^{\Gamma}$ it follows that $p^{\Gamma}\cN(\Gamma)p^{\Gamma} \overset{s.e.}{\hookrightarrow} p^{\Gamma}\cN(\Gamma')p^{\Gamma}$.
\end{proof}

We notice that Theorem \ref{thm:standardembedding} follows from Lemmas \ref{lem:basecase} and \ref{lem:inductivestep}.  Indeed, if $\Gamma \subset \Gamma'$ are finite, connected, weighted graphs then $\Gamma'$ can be obtained from $\Gamma$ by applying operations (1), (2), and (3) above.  Also, the composite of standard embeddings is a standard embedding and standard embeddings are preserved by cut-downs.

Before determining the isomorphism $M \cong L(\F_{\I})$ we note the following lemma whose proof is a straightforward induction exercise using the algorithms in \cite{1110.5597}.

\begin{lem} \label{lem:star}
Suppose $\Gamma$ consists of a vertex $v$ connected to vertices $w_{1}$ through $w_{k}$ by a total of $n$ edges.  Assume further that $\gamma_{v} \leq \gamma_{w_{i}}$ for all $i$.  Then $p_{v}\cN(\Gamma)p_{v} = L(\F_{t})$ where $t \geq n$.
\end{lem}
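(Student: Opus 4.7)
My plan is to induct on the total number of edges $n$ of $\Gamma$. For the base case $n = 1$, the graph $\Gamma$ consists of a single edge $e$ joining $v$ to $w_1$, so by definition $\cN(\Gamma) = \cN_e = (M_2(\C) \otimes L(\Z)) \oplus \underset{\gamma_{w_1} - \gamma_v}{\C}$ (the atomic summand being zero if $\gamma_v = \gamma_{w_1}$). Since $p_v$ is a minimal projection inside the $M_2(\C) \otimes L(\Z)$ piece, $p_v \cN(\Gamma) p_v \cong L(\Z) = L(\F_1)$, giving $t = 1 \geq n$.

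For the inductive step with $n \geq 2$, I will fix an edge $e$ of $\Gamma$ joining $v$ to some $w_k$ and let $\Gamma_0$ denote the star obtained by deleting $e$ (and deleting $w_k$ from the vertex set if it becomes isolated). Then $\Gamma_0$ has $n-1$ edges and still satisfies $\gamma_v \leq \gamma_{w_i}$ for all its leaves, so by induction $p_v \cN(\Gamma_0) p_v \cong L(\F_{t_0})$ with $t_0 \geq n-1$. (Note that Theorem \ref{thm:standardembedding} already guarantees that the corner is an interpolated free group factor, because $\alpha_v^{\Gamma_0} \geq (n-1)\gamma_v \geq \gamma_v$ places $v$ outside $B(\Gamma_0)$.) Writing $\cN(\Gamma) = \cN(\Gamma_0) \underset{D}{*} \cN_e$ with $D = \ell^{\I}(\Gamma)$, I will mimic the strategy of Lemmas \ref{lem:H2} and \ref{lem:H3} to pass from $\cN(\Gamma_0)$ to $\cN(\Gamma)$ through a chain of intermediate algebras: first decompose the $M_2(\C) \otimes L(\Z)$ summand of $\cN_e$ into atomic matrix pieces (applying Lemma \ref{lem:DR2} at each matrix splitting), and then reintroduce the $L(\Z)$ factor (applying Lemma \ref{lem:DR1} at the final step). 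The net effect of these applications should be the identification
$$p_v \cN(\Gamma) p_v \;=\; p_v \cN(\Gamma_0) p_v * L(\Z) \;=\; L(\F_{t_0}) * L(\Z) \;=\; L(\F_{t_0 + 1}),$$
giving $t = t_0 + 1 \geq n$.

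The hard part will be controlling the intermediate compressions at $p_v$ and verifying that, prior to re-introducing the final $L(\Z)$ factor, the compression of the intermediate algebra at $p_v$ equals $p_v \cN(\Gamma_0) p_v$ itself rather than some strictly larger algebra. The subtlety appears when $n_{v, w_k} \geq 2$, in which case $w_k$ persists in $\Gamma_0$ with $p_{w_k} \cN(\Gamma_0) p_{w_k}$ non-trivial; a priori the new partial isometry in $\cN_e$ linking $p_v$ to a subprojection of $p_{w_k}$ could drag fresh conjugates of $p_{w_k} \cN(\Gamma_0) p_{w_k}$ into the corner at $p_v$. The saving point is that, since $n_{v, w_k} \geq 2$ in $\Gamma$, such equivalences are already present inside $\cN(\Gamma_0)$ through another $v$-to-$w_k$ edge, so Lemma \ref{lem:DR1}/\ref{lem:DR2} guarantee that nothing new is added before the final $M_2(\C) \rightsquigarrow M_2(\C) \otimes L(\Z)$ step, whose contribution is precisely the free copy of $L(\Z)$ needed to bump $t_0$ to $t_0 + 1$.
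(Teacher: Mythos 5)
Your inductive step asserts the identity $p_v\cN(\Gamma)p_v = p_v\cN(\Gamma_0)p_v * L(\Z)$, i.e.\ $t=t_0+1$, but this equality is \emph{false} in exactly the multiplicity case $n_{v,w_k}\geq 2$ that you flag as the hard part and then dismiss. Concretely, let $\Gamma$ consist of $v,w_1$ with $\gamma_v=\tfrac14$, $\gamma_{w_1}=\tfrac34$, joined by $n=2$ edges. Then $\Gamma_0$ is a single edge and $p_v\cN(\Gamma_0)p_v\cong L(\Z)$, so your identity would give $p_v\cN(\Gamma)p_v = L(\F_2)$. But $\fdim(\cN(\Gamma)) = 2\cdot\tfrac34 - \tfrac38 = \tfrac98$; since $\gamma_{w_1}>2\gamma_v$, $\cN(\Gamma)$ has a central atom of trace $\gamma_{w_1}-2\gamma_v=\tfrac14$ and a free-group-factor summand $L(\F_{4/3})$ carried by a projection of trace $\tfrac34$, and the amplification formula then yields $p_v\cN(\Gamma)p_v = L\bigl(\F(1+9\cdot\tfrac13)\bigr) = L(\F_4)$, not $L(\F_2)$. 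The reason your ``saving point'' fails is that the new partial isometry in $\cN_e$ linking $p_v$ to a fresh subprojection of $p_{w_k}$ really does drag an additional free amplified copy of the diffuse corner $p_{w_k}\cN(\Gamma_0)p_{w_k}$ into $p_v\cN(\Gamma)p_v$, beyond the lone $L(\Z)$; when $\gamma_v<\gamma_{w_k}$ this contributes strictly more free dimension than one, which is why the parameter jumps from $1$ to $4$ rather than to $2$ in the example.

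The conclusion $t\geq n$ is nonetheless correct, and for the induction to close you only need the inequality $t\geq t_0+1$ rather than equality, so the outline is salvageable, but the inequality requires its own justification rather than the asserted factorization. A shorter route that sidesteps the intermediate-compression bookkeeping is a direct free-dimension computation: additivity of $\fdim$ over $\underset{D}{*}$ gives
\[
\fdim(\cN(\Gamma)) - \fdim\bigl(\ell^\I(\Gamma)\bigr)
= \sum_{e} \bigl(\fdim(\cN_e) - \fdim(D)\bigr)
= 2\gamma_v\sum_i n_{v,w_i}\,\gamma_{w_i},
\]
and substituting this into the formula $\fdim(\cN(\Gamma)) = 1 + \Tr(p^\Gamma)^2(t_\Gamma-1) - \sum_{i\in B(\Gamma)}(\gamma_{w_i}-n_{v,w_i}\gamma_v)^2$ together with the amplification formula $t-1 = (\Tr(p^\Gamma)/\gamma_v)^2(t_\Gamma-1)$ and estimating each term using $\gamma_{w_i}\geq\gamma_v$ shows $t-1 \geq \sum_i n_{v,w_i} - 1 = n-1$.
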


\begin{thm}
$M \cong L(\F_{\I})$.
\end{thm}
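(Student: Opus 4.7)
My plan is to realize $M$ as an inductive limit of interpolated free group factors built from finite subgraphs of the fusion graph $\cF_\cC(\cL)$, using the standard-embedding machinery of Theorem \ref{thm:standardembedding} together with Lemma \ref{lem:star} to show the free-group parameter tends to infinity. Recall from Sections \ref{sec:vNa}--\ref{sec:free1} that $M \cong p_\emptyset\,\cN(\cF_\cC(\cL))\,p_\emptyset$, so the task is to analyze this corner.

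First, I would write $\cF_\cC(\cL)=\bigcup_{n\ge 1}\Gamma_n$ as an increasing union of finite connected subgraphs containing $\emptyset$; this is possible because $\cL$ is countable and $\cF_\cC(\cL)$ is connected. The compatible inclusions of the building blocks $\cN_e$ make $\bigcup_n \cN(\Gamma_n)$ weakly dense in $\cN(\cF_\cC(\cL))$, so after compressing by $p_\emptyset$,
\[
M \;=\; \overline{\bigcup_n p_\emptyset\,\cN(\Gamma_n)\,p_\emptyset}^{\,\mathrm{SOT}}.
\]
Since $\cS$ contains representatives of \emph{every} isomorphism class, objects of the form $Y=1\oplus kZ\in\cS$ yield colors $X=Y\oplus\overline{Y}\in\cL$ containing $1$ as a subobject; hence $\emptyset$ carries at least one self-loop in $\cF_\cC(\cL)$, and thus in $\Gamma_n$ for all $n$ large enough. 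In particular, $\alpha_\emptyset^{\Gamma_n}\ge \gamma_\emptyset=1$, so $\emptyset\notin B(\Gamma_n)$ and by Theorem \ref{thm:standardembedding} the projection $p_\emptyset$ lies under the interpolated free group factor summand $p^{\Gamma_n}$ of $\cN(\Gamma_n)$. Consequently $p_\emptyset\,\cN(\Gamma_n)\,p_\emptyset \cong L(\F_{t_n})$ for some $t_n$.

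Next, since $\Gamma_n\subset\Gamma_{n+1}$, Theorem \ref{thm:standardembedding} gives a standard embedding $p^{\Gamma_n}\cN(\Gamma_n)p^{\Gamma_n}\overset{s.e.}{\hookrightarrow} p^{\Gamma_n}\cN(\Gamma_{n+1})p^{\Gamma_n}$, and by property (4) of standard embeddings (Remark \ref{rem:Dyk}) this restricts to $p_\emptyset\,\cN(\Gamma_n)\,p_\emptyset \overset{s.e.}{\hookrightarrow} p_\emptyset\,\cN(\Gamma_{n+1})\,p_\emptyset$. By property (3), the inductive limit is $L(\F_t)$ with $t=\lim_n t_n$, which by the density statement above is $M$.

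The only remaining point — and the main content of the proof — is to verify that $t_n\to\infty$. For this I would use Lemma \ref{lem:star}: since $\cS$ contains all simple objects of $\cC$, every simple $W$ appears as a summand of $W\oplus\overline{W}\in\cL$, so $\emptyset$ is adjacent in $\cF_\cC(\cL)$ either to infinitely many distinct simples (when $\cC$ has infinitely many) or to the finitely many simples via infinitely many distinct colors (when $\cC$ is fusion), giving in either case infinitely many edges at $\emptyset$. Also, $\gamma_\emptyset=1\le\gamma_w$ for every simple $w$. Thus for each $n$, I can choose inside $\Gamma_n$ a star subgraph $\Gamma_n^\star$ centered at $\emptyset$ with at least $n$ edges satisfying the hypothesis of Lemma \ref{lem:star}, giving $p_\emptyset\,\cN(\Gamma_n^\star)\,p_\emptyset\cong L(\F_{s_n})$ with $s_n\ge n$. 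Applying the standard-embedding machinery to $\Gamma_n^\star\subset\Gamma_n$ yields $t_n\ge s_n\ge n$, so $t_n\to\infty$ and $M\cong L(\F_\infty)$. The main obstacle is just the bookkeeping to guarantee the star subgraphs grow inside the connected exhaustion $\{\Gamma_n\}$; this is harmless because any collection of edges at $\emptyset$ is automatically a connected subgraph, and standard embeddings propagate to any containing finite subgraph.
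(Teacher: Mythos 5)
Your proof is correct and takes essentially the same route as the paper's: exhaust $\cF_\cC(\cL)$ by finite connected subgraphs $\Gamma_n$ containing $\emptyset$, invoke Theorem~\ref{thm:standardembedding} to realize $M$ as an inductive limit along standard embeddings of corners $p_\emptyset\cN(\Gamma_n)p_\emptyset = L(\F_{t_n})$, and use the star subgraph bound of Lemma~\ref{lem:star} together with the infinitely many edges at $\emptyset$ to force $t_n\to\infty$. The only cosmetic difference is your justification that $\emptyset\notin B(\Gamma_n)$ (you point to a self-loop at $\emptyset$, whereas the paper observes that every neighbor has weight at least $1=\gamma_\emptyset$); both are immediate from the assumption on $\cL$ in Section~\ref{sec:vNa}.
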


\begin{proof}
Let $\Gamma$ denote $\cF_{\cC}(\cL)$ so that $M \cong p_{\emptyset}\cN(\Gamma)p_{\emptyset}$.  We build up $\Gamma$ by an increasing union of finite connected subgraphs $\Gamma_{k}$, each of which contain the vertex * so that $M$ is the inductive limit of the algebras $p_{\emptyset}\cN(\Gamma_{k})p_{\emptyset}$.  All vertices in $\Gamma_{k}$ must have weight larger than $1$ since for $p$ an irreducible projection in $\cP$, $p \otimes \overline{p}$ must have a subprojection equivalent to the trivial one.  It follows that for $k$ sufficiently large, $p_{\emptyset}\cN(\Gamma_{k})p_{\emptyset}$ is an interpolated free group factor and since $p_{\emptyset}\cN(\Gamma_{k})p_{\emptyset} \overset{s.e.}{\hookrightarrow} p_{\emptyset}\cN(\Gamma_{k+1})p_{\emptyset}$ we know that $M = p_{\emptyset}\cN(\Gamma)p_{\emptyset}$ must be an interpolated free group factor.  Let $\tilde{\Gamma_{n}}$ be the subgraph of $\Gamma_{n}$ whose vertices are * and $v_{1}$, ..., $v_{k}$ which are connected to $*$ in $\Gamma_{k}$ and whose edges are exactly the edges connecting $*$ to each $v_{i}$ to * in $\Gamma_{n}$.  Assuming there are $k_{n}$ such vertices, we see from Lemma \ref{lem:star} that $p_{\emptyset}\cN(\tilde{\Gamma_{n}})p_{\emptyset} = L(\F_{r_{n}})$ for $r_{n} \geq k_{n}$. Since either $p_{\emptyset}\cN(\tilde{\Gamma_{n}})p_{\emptyset} =  p_{\emptyset}\cN(\Gamma_{n})p_{\emptyset}$ or $p_{\emptyset}\cN(\tilde{\Gamma_{n}})p_{\emptyset} \overset{s.e.}{\hookrightarrow} p_{\emptyset}\cN(\Gamma_{n})p_{\emptyset}$ it follows that $p_{\emptyset}\cN(\Gamma_{n})p_{\emptyset} = L(\F_{t_{n}})$ for $t_{n} \geq r_{n}$.  There are infinitely many edges emanating from $*$.  Therefore $k_{n}$ and hence $t_{n}$ can be arbitrarily large so it follows that $M = L(\F_{t})$ where $t = \lim_{n} t_{n} = \I$.
\end{proof}

\begin{cor}
$M_{\alpha} \cong L(\F_{\I})$ for all $\alpha \in \Lambda$.
\end{cor}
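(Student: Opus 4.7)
The plan is straightforward given what has already been established. By the corollary following Lemma \ref{lem:2inftyfactor}, the factor $M_\alpha$ is obtained as the compression $p_\alpha \wedge \cM_\infty \wedge p_\alpha$ of the $II_\infty$ factor $\cM_\infty$, and this makes $M_\alpha$ a $\delta_\alpha$-amplification of $M = p_\emptyset \wedge \cM_\infty \wedge p_\emptyset$. So the whole argument reduces to invoking the amplification formula for interpolated free group factors.

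First I would cite the preceding theorem to conclude $M \cong L(\F_\infty)$. Next I would apply the Dykema--R\u{a}dulescu amplification formula $L(\F_t)_\gamma = L(\F(1 + \gamma^{-2}(t-1)))$ (recalled at the start of Section \ref{sec:preliminaries}) with $t = \infty$ and $\gamma = \delta_\alpha > 0$; since $1 + \delta_\alpha^{-2}(\infty - 1) = \infty$, we get $L(\F_\infty)_{\delta_\alpha} \cong L(\F_\infty)$. Combining these two facts yields $M_\alpha \cong L(\F_\infty)$.

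There is essentially no obstacle here — all the nontrivial work was done in identifying $M$ via the semifinite algebra $\cM_\infty$ and the graph-algebra computation. The one technical point worth a line is the justification that $M_\alpha$ really is an amplification of $M$: since $\cM_\infty$ is a factor (Lemma \ref{lem:2inftyfactor}) and both $p_\emptyset$ and $p_\alpha$ are finite projections in $\cM_\infty$ with $\Tr(p_\alpha) = \delta_\alpha^2 \Tr(p_\emptyset)$ (which is why $\delta_\alpha$ appears as the amplification index after normalizing the trace on $M$), the isomorphism $M_\alpha \cong M_{\delta_\alpha}$ follows from the standard theory of amplifications of $II_1$ factors inside a $II_\infty$ factor. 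This completes the proof.
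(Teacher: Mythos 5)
Your argument is correct and is precisely the paper's: the paper's one-line proof is that $M_\alpha$ is an amplification of $M$, which together with the preceding theorem ($M \cong L(\F_\infty)$) and the amplification formula gives the result. Your extra lines justifying the amplification via the $II_\infty$ factor $\cM_\infty$ and the trace of $p_\alpha$ are exactly the content of the corollary following Lemma \ref{lem:2inftyfactor}.
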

\begin{proof}
$M_{\alpha}$ is an amplification of $M$.
\end{proof}

\begin{rem}\label{rem:Finite}
With a bit more careful analysis, using the techniques in \cite{1208.2933}, one can show that if $\cC$ has infinitely many simple objects then the factor $M$ is $L(\F_{\I})$ no matter the choice of the generating set $\cS$ (using $\cL=\cS\oplus \overline{\cS}$).  When $\cC$ has finitely many simple objects, then we can find $t$ finite with $M \cong L(\F_{t})$.  To do this, we can find a single object $X$ which generates $\cC$, and we let $\cL=\{X\oplus \overline{X}\}$.  Applying the analysis in Section \ref{sec:free1} shows that $M$ is a cutdown of $\cN(\cF_\cC(X\oplus\overline{X}))$.  Keeping track of the free dimension yields the isomorphism $M \cong L(\F(1 + \dim(\cC)(2\dim(X) - 1)))$.
\end{rem}

\section{Appendix}\label{sec:Appendix}

\subsection{The factors in \cite{MR1960417} have property $\Gamma$}
First, we sketch the proof that the factors in \cite{MR1960417} have property $\Gamma$, and thus are not interpolated free group factors. We use some notation from \cite{MR1960417}.

\begin{proof}[Sketch of proof]
Let $\cC$ be a countably generated rigid $C^*$-tensor category.
Let $\cS$ be the set of isomorphism classes of simple objects.
Let $\set{e_s}{s\in \cS}$ be a family of non-zero, pairwise orthogonal projections in the hyperfinite $II_1$ factor $R$.
Consider the von Neumann algebras
$$
A=\bigoplus_{s\in \cS}e_sRe_s\text{ and }B=\bigoplus_{s,t\in\cS}\mathcal \cC(s,t)\overline{\otimes} e_sRe_t.
$$
Note that we have a unital, connected inclusion $A\subset B$.
The factors considered in \cite{MR1960417} are of the form
$$
N=(Q\overline\otimes A)*_A B,
$$
where $Q$ is an arbitrary finite von Neumann algebra.
To show that $N$ admits a non-trivial central sequence, it suffices to find a sequence in $A$ which commutes asymptotically with $B$.
Let $x=(x_n)$ be a central sequence of $R$.
One can check that the sequence $(y_n)$ given by
$$
y_n=\bigoplus_{s\in\cS}e_sx_ne_s\in A
$$
has the desired property.
Hence $N$ has property $\Gamma$ and is not isomorphic to an interpolated free group factor.
\end{proof}

\subsection{Proofs using $\lambda$-lattices}
We now sketch a proof of Theorem \ref{thm:Main} using the techniques of \cite{MR2051399} in the case where the rigid $C^*$-tensor category $\cC$ is finitely generated. This sketch closely follows \cite[Theorem 4.1]{1112.4088}.
\begin{proof}[Sketch of proof]
Suppose $\cS=\{X_1,\dots,X_r\}$ generates $\cC$.
First, set
$$
X=X_1\oplus \overline{X_1}\oplus \cdots \oplus X_r \oplus \overline{X_r}
$$
which is a symmetrically self-dual object in $\cC$. Setting
\begin{align*}
P_{n,+} &= \End_{\cC}(X\otimes \overline{X}\otimes \cdots \otimes X^{\pm})\\
P_{n,-} &= \End_{\cC}(\overline{X}\otimes X\otimes \cdots \otimes X^{\pm})
\end{align*}
where $X^\pm= X,\overline{X}$ depending on the parity of $n$, $\cP$ is a subfactor planar algebra. (This follows by results similar to those in Section \ref{sec:TCandPA}).
A subfactor planar algebra is naturally a $\lambda$-lattice \cite{MR1334479,math.QA/9909027}, and thus \cite{MR2051399} gives us a subfactor $N\subset M$ where $N,M$ are both isomorphic to $L(\F_\I)$ and whose subfactor planar algebra is isomorphic to $\cP$. The result now follows by a modified version of Theorem \ref{thm:PAandTC}.
\end{proof}

Difficulties arise when trying to adapt an approach along these lines for $\cC$ not finitely generated. One wants to define an inductive limit Popa system and use  \cite{MR2051399} to get an inclusion of factors isomorphic to $L(\F_\I)$ which remembers $\cC$.

One hope is to look at finitely generated subcategories of $\cC$ as follows. Set
$$
Y^{r} = X_{1} \oplus \overline{X_1} \oplus \cdots \oplus X_r \oplus \overline{X_r},
$$
and define $Z^{r}$ inductively by setting $Z^{1} = Y^{1}$ and $Z^{r} = Z^{r-1} \otimes Y^{r}$. Now for all $r$, we have subfactor planar algebras $\cP^r$ given by
\begin{align*}
P^{r}_{n,+} &= \End_{\cC}(Z^{r}\otimes \overline{Z^{r}}\otimes \cdots \otimes Z^{r,\pm})\\
P^{r}_{n,-} &= \End_{\cC}(\overline{Z^{r}}\otimes Z^{r}\otimes \cdots \otimes Z^{r,\pm}).
\end{align*}
Again, using \cite{MR2051399}, we get an inclusion $N^{r} \subset M^{r}$ where both factors are isomorphic to $L(\F_{\I})$, and the associated category of $N^r-N^r$ bimodules is equivalent to the subcategory of $\cC$ generated by $X_{1}, \overline{X_1},\dots, X_{r},\overline{X_r}$.

One problem with this approach is that while $P^{r}_{n, \pm}$ includes unitally into $P^{r+1}_{n, \pm}$, the inductive limit planar algebra $\cP^\I$ has infinite dimensional $n$-box spaces, i.e., $\dim(P_{n,\pm}^\I)=\I$ for all $n$. Hence we cannot directly use \cite{MR2051399} to get a subfactor. Another problem is that the inclusion  $P^{r}_{\bullet}\hookrightarrow \cP^{r+1}$ does not induce an inclusion of $N^{r}$ into $N^{r+1}$ nor an inclusion $M^r$ into $M^{r+1}$. Hence we do not get an inductive limit subfactor $N^\I\subset M^\I$.

Of course, one can try other approaches along these lines, but so far, the authors have not succeeded in finding an inductive limit inclusion.

\subsection{The law of the element $p_{v}X_{e}u_{e}^{w}(u_{e}^{w})^{*}X_{e}p_{v}$}

We will adapt the technique in \cite{MR2732052} to compute the law of $R = p_{v}X_{e}u_{e}^{w}(u_{e}^{w})^{*}X_{e}p_{v}$.  A similar computation will compute the law of $(u_{e}^{w})^{*}X_{e}p_{v}X_{e}u_{e}^{w}$.  To begin, we set $r_{w} = u_{e}^{w}(u_{e}^{w})^{*}$ and notice that this element diagrammatically looks like
$$
\begin{tikzpicture}
    \draw (-2, 0)--(2, 0);
    \filldraw[thick, unshaded] (-1.6, -.4)--(-1.6, .4)--(-.8, .4)--(-.8, -.4)--(-1.6, -.4);
    \filldraw[thick, unshaded] (.4, .4)--(.4, -.4)--(-.4, -.4)--(-.4, .4)--(.4, .4);
    \filldraw[thick, unshaded] (1.6, -.4)--(1.6, .4)--(.8, .4)--(.8, -.4)--(1.6, -.4);
    \node at (-1.2, 0) {$p_{v}$};
    \node at (1.2, 0) {$p_{v}$};
    \node at (0, 0) {$r_{w}$};
    \draw (-.4, .25) arc(270:180: .25cm);
    \draw (.4, .25) arc(270:360: .25cm);
\end{tikzpicture}
$$
Let  $\phi$ be the moment generating function of $R$ i.e. $\phi(z) = \sum_{z = 0}^{\infty} \frac{\Tr(R^{n})}{\Tr(p_{v})}z^{n}$.  Set $\alpha_{e} = \frac{\Tr(r_{w})}{\Tr(p_{v})} = \frac{\Tr(p_{w})}{\Tr(p_{v})} \geq 1.$  Using the recurrence relation \ref{eqn:recurrence}, and the fact that
$$
E\left(\begin{tikzpicture} [baseline = 0cm]
    \draw (-.8, 0)--(.8, 0);
    \filldraw[thick, unshaded] (.4, .4)--(.4, -.4)--(-.4, -.4)--(-.4, .4)--(.4, .4);
    \node at (0, 0) {$r_{w}$};
    \draw (-.4, .25) arc(270:180: .25cm);
    \draw (.4, .25) arc(270:360: .25cm);
\end{tikzpicture}\right) = \alpha_{e}p_{v},
$$
we see that for $n \geq 1$,
$$
E(R^{n}) = \alpha_{e}E(R^{n-1}) + \left(\sum_{k=1}^{n-2}E(R^{k})E(R^{n-1-k})\right) + E(R^{n-1}) = (\alpha_{e}-1)E(R^{n-1}) + \sum_{k=0}^{n-1}E(R^{k})E(R^{n-1-k})
$$
Multiplying both sides by $z^{n}$ and summing gives the equation
$$
\phi - 1 = (\alpha_{e} - 1)z\phi + z\phi^{2}
$$
which implies
$$
\phi(z) = \frac{-((\alpha_{e} - 1)z - 1) + \sqrt{((\alpha_{e} - 1)z - 1)^{2} - 4z}}{2z}
$$
where the branch of the square root will be momentarily determined.  Let $\mu$ be the measure supported on the spectrum of $R$ in $p_{v} \cM_{\I}p_{v}$ satisfying
$$
\int x^{n}d\mu(x) = \Tr(R^{n})/\Tr(p_{v}).
$$
ale let $G_{\mu}(z) = z^{-1}\phi(z^{-1})$ be its Cauchy transform.  It follows that
$$
G_{\mu}(z) = \frac{-((\alpha_{e} - 1) - z) + \sqrt{((\alpha_{e} - 1) - z)^{2} - 4z}}{2z}.
$$
The Cauchy transform of $\mu$ also has the form
$$
G_{\mu}(z) = \int_{\R} \frac{d\mu(x)}{z - x}
$$
and it is straightforward to check that $\Im(G(z)) < 0$ whenever $\Im(z) > 0$, and $\lim_{z \rightarrow \infty}zG_{\mu}(z) = 1$ non-tangentially to the real axis.  We therefore choose the branch of the square root so that the algebraic expression for $G_{\mu}(x)$ satisfies these two conditions.  The measure $\mu$ is recovered by writing $z = x+iy$ and computing $\lim_{y\downarrow 0} -\frac{1}{\pi}\Im(G_{\mu}(z))$ \cite{MR1217253}.  Doing so gives
$$
d\mu(x) = \frac{\sqrt{-((\alpha_{e} - 1) - x)^{2} + 4x}}{2\pi x}\cdot \chi_{[\sqrt{\alpha_{e}} - 1, \sqrt{\alpha_{e}} + 1]}dx
$$
with $\chi_{I}$ the indicator function of the Lebesgue measurable set $I$.  This is the distribution of a free Poisson element \cite{MR2266879}. 
\chapter[FC GJS construction and N-P-M planar algebras]{The GJS Construction for Fuss Catalan and $N-P-M$ Planar Algebras} \label{chap:FC}

\section{Introduction}  The work of Chapter \ref{chap:universal} was heavily influenced by a problem presented to the author by Vaughan Jones.  The problem is as follows:  Given a subfactor planar algebra, $\cQ$, one can consider the algebras $\Gr_{k}^{\pm}(\cQ)$ as defined in the previous chapters and place the following  ``toy potential" on $\cQ$:

$$
\tr(x) =
\begin{tikzpicture}[baseline=.5cm]
	\draw (0,0)--(0,.8);
	\filldraw[unshaded,thick] (-.4,.4)--(.4,.4)--(.4,-.4)--(-.4,-.4)--(-.4,.4);
	\draw[thick, unshaded] (-.7, .8) -- (-.7, 1.6) -- (.7, 1.6) -- (.7,.8) -- (-.7, .8);
	\node at (0,0) {$x$};
	\node at (0,1.2) {$\sum V$};
\end{tikzpicture}
$$
where $V$ is a rotationally invariant set of elements in $\cQ$.  If one is fortunate, $\tr$ is positive definite on $\cQ$ and left multiplication is bounded on $L^{2}(\Gr(\cQ))$.  To this end, it is an interesting problem is to study the von Neumann algebras, $N_{k}^{\pm} (= \Gr_{k}^{\pm}(\cQ)'')$ associated to $\cQ$ and $V$.

The case that will be considered here is the case where $\cQ$ contains is the standard invariant for a subfactor $N \subset M$ that contains an intermediate subfactor, $P$.  As such, it follows that $\cQ$ contains the Fuss Catalan planar algebra as a sub planar algebra \cite{MR1437496}.  Therefore, we can consider the following potential on $\Gr_{0}^{+}(\cQ)$:
$$
\tr(x) =
\begin{tikzpicture}[baseline=.5cm]
	\draw (0,0)--(0,.8);
	\filldraw[unshaded,thick] (-.4,.4)--(.4,.4)--(.4,-.4)--(-.4,-.4)--(-.4,.4);
	\draw[thick, unshaded] (-.7, .8) -- (-.7, 1.6) -- (.7, 1.6) -- (.7,.8) -- (-.7, .8);
	\node at (0,0) {$x$};
	\node at (0,1.2) {$\sum FC$};
\end{tikzpicture}
$$
where $\sum FC$ represents the sum of all Fuss Catalan diagrams.  Recall that Fuss Catalan diagrams are planar two-colored string diagrams satisfying the following condition:  If the colors of the strings are $a$ and $b$, then the colors of strings intersecting the boundary form the pattern, $aabbaabbaabb...$

To attack this problem, it is most natural to introduce a new planar algebra, $\cP$ which will be called an $N-P-M$ planar algebra.  A pleasing feature of this planar algebra is that the boundary conditions of the input disks can be taken to be any word in $aa$ and $bb$.  The planar algebra $\cQ$ will be realized as a subalgebra of $\cP$ by replacing each strand with an $a$ strand next to a $b$ strand so that the pattern formed by the input disks in the $\cQ-$tangles is of the form $aabbaabbaabb...$. Whenever $\cQ$ is itself the Fuss Catalan planar algebra, we can take $\cP$ to be generated the set of \underline{all} string diagrams having boundary conditions any word in $aa$ and $bb$.

The purpose of the $N-P-M$ planar algebra is that it gives one a natural way to decouple the strings $a$ and $b$ and treat them as free generators in an appropriate sense (see Section \ref{sec:semi} for how this is done).  We form algebras $M_{\alpha}$ for $\alpha$ a suitable word in $a$ and $b$ which are the $N-P-M$ analogues of the algebras $N_{k}^{\pm}$, and will use a semifinite algebra as in Chapter \ref{chap:universal} to find the isomorphism class of the algebras $M_{\alpha}$.  More precisely, we will prove the following theorem:
\begin{thm} \label{thm:isoclass1}
$M_{\alpha}$ is a $II_{1}$ factor and is isomorphic to $L(\F(1 + 2I\delta_{\alpha}(\delta_{a} + \delta_{b} - 2)))$ for $\cP$ finite depth.  Here, $\delta_{\alpha}$ is defined as in Chapter \ref{chap:universal}, and $I = \sum_{v \in \Gamma_{N}^{N}} \mu(v)^{2}$ with $\mu$ the Perron Frobenius weighting on the principal graphs on $\cP$.   \end{thm}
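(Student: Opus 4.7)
The plan is to adapt the semifinite-algebra approach of Chapter \ref{chap:universal} (and before it Chapter \ref{chap:graph}) to the two-colored $N-P-M$ setting. First I would build a semifinite von Neumann algebra $\cM_{\I}$ whose finite corners are precisely the algebras $M_{\alpha}$ for admissible words $\alpha$ in $aa$ and $bb$: set $\cG_{\I} = \bigoplus_{\alpha,\beta}\Gr_{\alpha,\beta}(\cP)$ with multiplication by horizontal concatenation (matching boundary words) and the Fuss--Catalan trace $\Tr$ obtained by summing over all $FC$-diagrams. Positivity of $\Tr$ and boundedness of multiplication follow from an orthogonalization argument identical in form to Section \ref{sec:graded}, so $\cM_{\I}=\cG_{\I}''$ makes sense, and $p_{\alpha}\cM_{\I}p_{\alpha} = M_{\alpha}$. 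Factorality of $M_{\alpha}$ (and hence of $\cM_{\I}$) is then obtained exactly as in Section \ref{sec:factor}: pick either color $c\in\{a,b\}$ with $\delta_c>1$, let $A$ be the abelian von Neumann algebra generated by the $c$-cup, decompose $L^{2}(M_{\alpha})$ as an $A$-$A$ bimodule, and show that the shift $s+s^{*}$ controls the left and right actions so that $A'\cap M_{\alpha}=AP_{\overline{\alpha}\to\overline{\alpha}}^{\OP}$ and $M_{0}'\cap M_{\alpha}=P_{\overline{\alpha}\to\overline{\alpha}}^{\OP}$.

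Next I would identify the atomic part $\cA_{\I}\subset\cM_{\I}$ generated by boxes with no strands on top. Because the allowed boundaries are words in $aa$ and $bb$, the Bratteli-type analysis of Lemma \ref{lem:1infty} gives
\[
\cA_{\I} \;=\; \bigoplus_{v\in V(\Gamma_{N}^{N})}\cA_{v},
\]
with each $\cA_{v}$ a type $I_{\I}$ factor indexed by a vertex of the $N$-$N$ principal graph $\Gamma_{N}^{N}$ (the other two principal graphs $\Gamma_{P}^{N}$ and $\Gamma_{M}^{N}$ arise in the odd-length cut-downs). The generating lemma (Lemma \ref{lem:generateI}) then says that $\cM_{\I}$ is generated over $\cA_{\I}$ by two $\cA_{\I}$-valued elements $X_{a},X_{b}$, one for each color, which form an $\cA_{\I}$-valued semicircular family that is free with amalgamation over $\cA_{\I}$. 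The crucial point is that the Fuss--Catalan trace forces $\eta_{a,b}=0$, so the two colors really are free.

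I would then pass to a finite cut-down, as in Section \ref{sec:free1}. Choose a minimal projection $p_{v}\in\cA_{v}$ for each vertex $v$, and let $Q=\sum_{v}p_{v}$. Using the partial isometries associated to edges of $\Gamma_{N}^{N}$, $\Gamma_{P}^{N}$ for the color $a$ and of $\Gamma_{N}^{N}$, $\Gamma_{M}^{N}$ for the color $b$, one writes $F\cM_{\I}F$ (for the appropriate finite projection $F$) as an amalgamated free product
\[
F\cM_{\I}F \;=\; \underset{F\cA_{\I}F}{*}\bigl(\cM_{e}\bigr)_{e\in E(\Gamma_{a})\sqcup E(\Gamma_{b})},
\]
with each $\cM_{e}$ a copy of $L(\Z)\otimes M_{2}(\C)$ together with an abelian piece (or just $L(\Z)$ in the loop case), exactly as in Lemma \ref{lem:compression2}. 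Since $\cP$ is finite depth, all the principal graphs are finite and $F\cA_{\I}F$ is finite dimensional, so Dykema's formulas \cite{MR1201693,MR2765550,1110.5597} apply directly.

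The last step is the free-dimension computation. Applying the formulas of \cite{1110.5597} in the manner of Theorem \ref{thm:H1}, the compression $Q\cM_{\I}Q$ is an interpolated free group factor, and $M_{\alpha}=p_{\alpha}\cM_{\I}p_{\alpha}$ is a $\delta_{\alpha}^{2}$-amplification of a corner of this factor at $p_{*}$ (with $*$ the depth-$0$ vertex). Each color $c\in\{a,b\}$ contributes $\delta_{c}-1$ to the free dimension per unit of Perron--Frobenius mass, summed over vertices of $\Gamma_{N}^{N}$, yielding the characteristic factor $\delta_{a}+\delta_{b}-2$; the global sum $\sum_{v\in\Gamma_{N}^{N}}\mu(v)^{2}=I$ then appears after amplifying from $p_{*}$ to $p_{\alpha}$ by the factor $2\delta_{\alpha}$ (the $2$ reflects the two colors). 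The result is $M_{\alpha}\cong L(\F(1+2I\delta_{\alpha}(\delta_{a}+\delta_{b}-2)))$. The main obstacle I expect is bookkeeping: keeping track of which principal graph ($\Gamma_{N}^{N}$, $\Gamma_{P}^{N}$, or $\Gamma_{M}^{N}$) governs each color's free-product piece and verifying that the edge-contributions combine to produce exactly $\delta_{a}+\delta_{b}-2$ rather than, say, $\delta_{a}\delta_{b}-1$; this is ultimately a Perron--Frobenius identity on the three principal graphs, analogous to Lemma \ref{lem:GJS2}, and is what couples the two colors into the single parameter $I$.
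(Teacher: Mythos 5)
Your overall strategy matches the paper's: build the semifinite algebra $\cM_\I^N$, identify the atomic corner $\cA_\I^N$ as a direct sum of type $I_\I$ factors, recognize $X_a$, $X_b$ as $\cA_\I^N$-valued semicircular elements free with amalgamation (because $\eta_{a,b}=0$), cut down to a finite corner, and invoke Dykema's free-dimension formulas. However, your identification of the atomic algebra contains a genuine error. The paper's Lemma \ref{lem:1infty2} gives $\cA_\I^N = \bigoplus_{v \in \Gamma_N}\cA_v$, where $\Gamma_N$ is the \emph{full} $N$-principal graph containing three levels of vertices $\Gamma_N^N$, $\Gamma_N^P$, $\Gamma_N^M$. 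Your claim that the sum runs only over $\Gamma_N^N$ because ``the allowed boundaries are words in $aa$ and $bb$'' is incorrect: the boundary words $\alpha$ for boxes in $\Gr_\I^N$ run over all of $\Delta^N$, not over $\cW$, so the shading after the last letter of $\alpha$ can be $N$, $P$, or $M$, producing minimal projections at all three vertex levels. The graphs $\Gamma_P$ and $\Gamma_M$ are not ``odd-length cut-downs'' of $\cM_\I^N$; they govern the \emph{separate} semifinite algebras $\cM_\I^P$ and $\cM_\I^M$, which one uses when $s(\overline\alpha)=P$ or $M$.

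This error is consequential, not cosmetic. The amalgamated free product in the paper is over $\ell^\I(\Gamma_N)$, whose vertex set is the tripartite union; the $a$-edges in $\Gamma_N$ connect $\Gamma_N^N$ to $\Gamma_N^P$, and the $b$-edges connect $\Gamma_N^P$ to $\Gamma_N^M$. The free-dimension bookkeeping then rests on the Perron--Frobenius identity
\[
\sum_{v\in\Gamma_N^N}\mu(v)^2 \;=\; \sum_{w\in\Gamma_N^P}\mu(w)^2 \;=\; \sum_{u\in\Gamma_N^M}\mu(u)^2 \;=\; I,
\]
and the factor of $2$ in $2I(\delta_a+\delta_b-2)$ arises because each colored edge joins two of these three levels and each level contributes $I$ to the sum of squared weights, not because ``the $2$ reflects the two colors''. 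Without the $\Gamma_N^P$ and $\Gamma_N^M$ vertices there is no tripartite graph over which to run the free product and the calculation has nothing to sum over. Finally, your amplification step is internally inconsistent: the projection $p_\alpha$ has trace $\delta_\alpha$, so $M_\alpha = p_\alpha\cM_\I^N p_\alpha$ is a $\delta_\alpha$-amplification (not a $\delta_\alpha^2$- or $2\delta_\alpha$-amplification) of $M_0^N$, and $L(\F_t)_\gamma = L(\F(1+\gamma^{-2}(t-1)))$ then places a $\delta_\alpha^{-2}$ in the parameter, as in the Corollary following Lemma \ref{lem:finitedepth}.
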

This formula has some interest, because it contains information about the inclusions $N \subset P$ and $P \subset M$ ($\delta_{a}$ and $\delta_{b}$ respectively) as well as the larger inclusion $N \subset M$ (the global index $I$). Just as in the case for the GJS algebras, we will also prove the following theorem:
\begin{thm} \label{thm:isoclass2}
$M_{\alpha} \cong L(\F_{\I})$ when $\cP$ is infinite depth.
\end{thm}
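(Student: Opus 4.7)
The plan is to mirror the strategy used in Chapter \ref{chap:graph} and Section \ref{sec:vNa} of Chapter \ref{chap:universal}, realizing $M_\alpha$ as an inductive limit of finite-depth truncations whose isomorphism classes are governed by Theorem \ref{thm:isoclass1}, and then checking that each inclusion in the tower is a standard embedding of interpolated free group factors whose parameter tends to $\infty$.

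First I would set up the semifinite ambient von Neumann algebra $\cM_\infty$ from Section \ref{sec:semi} in the $N$-$P$-$M$ setting, with $M_\alpha$ realized as a corner $p_\alpha \cM_\infty p_\alpha$. Just as in Chapter \ref{chap:universal}, the flat part $\cA_\infty$ decomposes as a direct sum indexed by the vertices of the fusion graph associated to $\cP$ (now with two colors of edges corresponding to the $a$ and $b$ strings), and $\cM_\infty$ is obtained from $\cA_\infty$ by adjoining two free amalgamated semicircular families $\{X_a\}$ and $\{X_b\}$ over $\cA_\infty$. Compressing by the sum of the minimal projections $Q = \sum_v p_v$ gives an amalgamated free product expression $Q\cM_\infty Q = \cN(\Gamma)$ over $\ell^\infty(V(\Gamma))$, where $\Gamma$ is the fusion graph for $\cP$ viewed as a bicolored weighted graph.

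Next, I would introduce the truncated graphs $\Gamma_k$ (the fusion graph truncated at depth $k$), and write $M_\alpha$ as the inductive limit of compressions $p_\alpha \cN(\Gamma_k) p_\alpha$. Using Theorem \ref{thm:isoclass1} applied to the finite-depth planar subalgebras associated to these truncations (or equivalently by mirroring Theorem \ref{thm:standardembedding} in the bicolored setting with the two loop parameters $\delta_a,\delta_b$), each $p_\alpha \cN(\Gamma_k) p_\alpha$ is an interpolated free group factor $L(\F_{t_k})$, direct-summed with a finite-dimensional abelian piece supported away from the central vertex. I then need to show the inclusion $p_\alpha \cN(\Gamma_k) p_\alpha \hookrightarrow p_\alpha \cN(\Gamma_{k+1}) p_\alpha$ is a standard embedding. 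This is a routine adaptation of Lemmas \ref{lem:H2} and \ref{lem:H3}: one breaks up the passage from $\Gamma_k$ to $\Gamma_{k+1}$ into elementary moves (adding an edge of color $a$ or $b$, and adding a new vertex attached by such an edge) and applies Lemmas \ref{lem:DR1} and \ref{lem:DR2} together with Remark \ref{rem:Dyk} to each step.

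Once the inclusions are standard embeddings, property (3) of Remark \ref{rem:Dyk} identifies the limit as $L(\F_t)$ where $t = \lim_k t_k$, so the proof reduces to showing $t_k \to \infty$. Applying the free dimension bookkeeping of Lemma \ref{lem:GJS2} (adapted to the bicolored setting, where each edge of color $c \in \{a,b\}$ contributes $\gamma_v \gamma_w$ with coefficient depending on $\delta_c$), the free dimension of $\cN(\Gamma_k)$ expands as $(\delta_a + \delta_b - 2)\sum_{v \in \Gamma_{k-2}}\gamma_v^2$ plus boundary terms that remain nonnegative. Since every bimodule dimension is $\ge 1$ in the $N$-$P$-$M$ setup and since $\cP$ is infinite depth, the sum $\sum_{v \in \Gamma_{k-2}}\gamma_v^2 \to \infty$, which after applying the amplification formula to pass from $\cN(\Gamma_k)$ to $p_\alpha \cN(\Gamma_k) p_\alpha$ forces $t_k \to \infty$. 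Hence $M_\alpha \cong L(\F_\infty)$.

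The main obstacle I expect is ensuring that the two different loop parameters $\delta_a$ and $\delta_b$, together with the more rigid bicolored structure of the fusion graph for an $N$-$P$-$M$ planar algebra, do not break the inductive step: one must verify that the alternating local moves producing $\Gamma_{k+1}$ from $\Gamma_k$ still preserve the ``interpolated free group factor plus abelian piece'' form and the standard embedding property. Once that bookkeeping is done, the argument proceeds exactly as in Theorem \ref{thm:H2}, and the positivity of $\delta_a + \delta_b - 2 > 0$ (which holds whenever both $\delta_a,\delta_b>1$) is what drives the divergence of the free dimension.
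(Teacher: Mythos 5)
Your proposal follows essentially the same route as the paper: realize $M_\alpha$ as a corner of the semifinite algebra, identify a compression with $\cN(\Gamma)$ for the bicolored fusion graph, truncate at depth $k$, verify the standard-embedding property for the inclusions via Lemmas \ref{lem:DR1}, \ref{lem:DR2}, and Remark \ref{rem:Dyk}, and deduce $M_\alpha\cong L(\F_\infty)$ from divergence of the free dimension, just as in Theorem \ref{thm:H2}. One minor imprecision: the lower bound on free dimension is not a single sum with coefficient $(\delta_a+\delta_b-2)$, but rather $(\delta_a-1)\sum_{v\in \Gamma_{k-2}\cap(\Gamma_N^N\cup\Gamma_N^P)}\gamma_v^2 + (\delta_b-1)\sum_{v\in \Gamma_{k-2}\cap(\Gamma_N^P\cup\Gamma_N^M)}\gamma_v^2$, since only $P$-vertices see both edge colors; this does not affect the conclusion since either summand already diverges when $\cP$ is infinite depth.
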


In addition to understanding the algebraic structure of the $N_{k}$, we will make use of the semifinite algebra to show that the law of $\cup \in \N_{0}^{+}$ has a nice expression in terms of known laws.

Unfortunately, the author has not yet been able to identify the isomorphism classes of the algebras $N_{k}^{\pm}$, however the semifinite algebra will show that there is evidence that the algebras $N_{k}^{\pm}$ are free group factors:

\begin{thm} \label{thm:dualcontain}
The von Neumann algebras, $N_{k}^{\pm}$, are each contained in a free group factor and contain a free group factor.
\end{thm}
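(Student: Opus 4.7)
The first task is to embed each $N_k^{\pm}$ into a free group factor.  The inclusion of $\cQ$ as a sub planar algebra of $\cP$ described in the introduction (each $\cQ$-strand becoming an adjacent $a$-strand and $b$-strand, yielding the boundary pattern $aabbaabb\cdots$) gives $*$-algebra inclusions $\Gr_k^{\pm}(\cQ) \hookrightarrow \Gr_{\alpha_k^{\pm}}(\cP)$ for appropriate words $\alpha_k^{\pm}$ in the alphabet $\{a,b\}$.  The crucial point is that the $\cP$-trace, which sums over planar color-preserving pairings of the boundary, restricts on the boundary $\alpha_k^{\pm}$ to the Fuss-Catalan trace on $\Gr_k^{\pm}(\cQ)$, because on such a boundary the planar color-preserving pairings are precisely the Fuss-Catalan diagrams.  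The inclusion is thus trace-preserving and extends to a unital embedding of von Neumann algebras $N_k^{\pm} \hookrightarrow M_{\alpha_k^{\pm}}$, and by Theorem \ref{thm:isoclass1} or Theorem \ref{thm:isoclass2} the latter is an interpolated free group factor.

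For the reverse containment, I would use the inclusion $FC \hookrightarrow \cQ$ of the Fuss-Catalan planar algebra as a sub planar algebra of $\cQ$, which is exactly the content of $\cQ$ carrying an intermediate subfactor \cite{MR1437496}.  This gives trace-preserving $*$-algebra inclusions $\Gr_k^{\pm}(FC) \hookrightarrow \Gr_k^{\pm}(\cQ)$ — the Fuss-Catalan trace is literally defined the same way on both sides — and hence a unital embedding of von Neumann algebras $N_k^{\pm}(FC) \hookrightarrow N_k^{\pm}$.  It then remains to identify $N_k^{\pm}(FC)$ as a free group factor.  But in the special case $\cQ = FC$, the $N$-$P$-$M$ planar algebra $\cP$ is the free planar algebra on two colors, and the same combinatorial fact as above shows that its box space at the boundary $\alpha_k^{\pm}$ coincides with the Fuss-Catalan box space.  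Consequently $\Gr_k^{\pm}(FC) = \Gr_{\alpha_k^{\pm}}(\cP^{FC})$ and therefore $N_k^{\pm}(FC) = M_{\alpha_k^{\pm}}^{FC}$, which is a free group factor by Theorems \ref{thm:isoclass1} and \ref{thm:isoclass2} applied in this special case.

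The only obstacle I anticipate is combinatorial bookkeeping: pinning down the correct words $\alpha_k^{\pm}$ for each choice of sign and grading level, and verifying carefully that planar color-preserving pairings of the resulting multishaded boundary coincide with Fuss-Catalan diagrams.  Both are routine from the definitions but somewhat tedious under the full notation of multishaded planar algebras.  Once settled, composing the two embeddings
$$N_k^{\pm}(FC) \hookrightarrow N_k^{\pm} \hookrightarrow M_{\alpha_k^{\pm}}$$
with the identifications of $N_k^{\pm}(FC)$ and $M_{\alpha_k^{\pm}}$ as free group factors supplied by Theorems \ref{thm:isoclass1} and \ref{thm:isoclass2} yields the result, with no further operator-algebraic input required.
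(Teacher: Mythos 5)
Your argument for the containment $N_k^{\pm}\hookrightarrow M_{\alpha_k^{\pm}}$ is correct and matches the paper: the embedding $\Gr_k^{\pm}(\cQ)\hookrightarrow\Gr_{\alpha_k^{\pm}}(\cP)$ is trace-preserving precisely because on a boundary word of the form $abba\cdots$, planar color-preserving pairings coincide with Fuss--Catalan pairings, and Theorems \ref{thm:isoclass1}/\ref{thm:isoclass2} identify the target as an interpolated free group factor. The paper records this by simply observing $N_0^+\subset M_\emptyset^N$.

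The reverse containment, however, has a genuine gap. The claimed identification $\Gr_k^{\pm}(FC)=\Gr_{\alpha_k^{\pm}}(\cP^{FC})$ is false, and consequently so is $N_k^{\pm}(FC)=M_{\alpha_k^{\pm}}^{FC}$. It is true that for a \emph{fixed} boundary word $\gamma$ of Fuss--Catalan type, the box space $\cP^{FC}_{\gamma}$ coincides with the corresponding Fuss--Catalan box space; but the graded algebra $\Gr_{\alpha}(\cP)=\bigoplus_{\beta:\,\overline{\alpha}\beta\alpha\in\Delta}\cP_{\overline{\alpha}\beta\alpha}$ sums over \emph{all} admissible top words $\beta\in\cW$, not only those that produce the alternating Fuss--Catalan pattern. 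For instance, taking $\alpha=\emptyset$, the word $\beta=aa\in\Delta^N$ contributes a summand $\cP_{aa}$ to $\Gr_\emptyset^N(\cP^{FC})$ which has no counterpart in $\Gr_0^{+}(FC)=\bigoplus_n FC_{2n,+}$. Thus $\Gr_k^{\pm}(FC)$ is a proper (non-dense in $L^2$) subalgebra of $\Gr_{\alpha_k^{\pm}}(\cP^{FC})$, and your route gives at best the inclusion $N_k^{\pm}(FC)\hookrightarrow M_{\alpha_k^{\pm}}^{FC}$ again, not an equality. Indeed, if the equality held, the isomorphism class of $N_k^{\pm}$ would be resolved for $\cQ=FC$, whereas the paper explicitly states that the isomorphism classes of the $N_k^{\pm}$ remain open. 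What the paper does instead is construct explicit elements built out of Fuss--Catalan diagrams (hence lying in $\cN_\I=\Gr_\I^+(\cQ)''$), prove they are free by exhibiting them as compressions of semicircular generators of $\cM_\I^N$ associated to distinct edges of $\Gamma_N$, compute their laws (free Poisson with controlled atom at the origin), and then apply Dykema's free-product formulas to show that a corner of the von Neumann algebra they generate --- a corner by a projection equivalent in $\cN_\I$ to $p_\emptyset$ --- is an interpolated free group factor sitting inside $N_0^+=p_\emptyset\cN_\I p_\emptyset$. You would need a replacement along these lines to make the second containment go through.
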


\section{$N-P-M$ planar algebras} \label{sec:NPM}

The object that is at the heart of all of these computations is an $N-P-M$ planar algebra which can be thought of an augmentation of such a $\cQ$ as above.  Given parameters $a$ and $b$, we denote $\cW$ as the set of finite words on $aa$ and $bb$.  We define what it means for a planar tangle to be an $N-P-M$ tangle:
\begin{defn}
A planar tangle is said to be an $N-P-M$ tangle if its regions are shaded by three colors, $N$, $P$, and $M$ such that the following conditions are met:
\begin{itemize}
\item A region colored $N$ only borders a region colored $P$

\item A region colored $M$ only borders a region colored $P$

\item A region colored $P$ can border a region colored $N$ or $M$ but not $P$.

\end{itemize}
\end{defn}

We will denote the set of $N-P-M$ tangles as the $N-P-M$ planar operad.  As in the previous chapters, all tangles will be drawn so that the internal and external disks are rectangles.  In order for smooth isotopy to make sense, the rectangles have their corners smoothed out.

Any string serving as the boundary string of a region colored $N$ will be called an $a$ string and any string bordering a region colored $M$ will be called a $b$ string.  We note that the conditions on the regions show that if $\alpha$ is a word in $a$ and $b$, then $\alpha$ serves as a word of strings intersecting a disk if and only if $\alpha \in \Delta$ where $\Delta$ is the set
$$
\Delta = \{awb: w \in \cW\} \cup \{bwa: w \in \cW\} \cup \{awa: w \in \cW\} \cup \{bwb: w \in \cW\} \cup \cW
$$

The following notation will be useful:
\begin{nota}
We denote $\Delta^{Q}$ to be the set of all words in $\Delta$ where the shading before the first letter is $Q$ (for $Q = N,\, P,$ or $M$).  If $\alpha \in \Delta$, we define $s(\alpha)$ to be the first letter of the word $\alpha$.  Whenever a word is mentioned, part of the data is its initial region (hence the choice of region between every pair of letters), not just its letters.  If a tangle, $T$, has boundary condition $\alpha$ on its outer disk, we will call $T$ a \underline{planar $\alpha$ tangle}. Any internal rectangle with boundary condition $\alpha$ will be called an \underline{$\alpha$ rectangle}.
\end{nota}
We remark that just as for shaded planar algebras, there is a natural gluing operation.  Namely, if we have two planar tangles $S,T$ satisfying the following \underline{boundary condition}:
\begin{itemize}
\item
Some internal rectangle $D_{S}$ of $S$ has boundary data which agrees with $T$, i.e. the shadings along the boundaries of $T$ and $D_{S}$ agree when counting clockwise from the marked point.
\end{itemize}
then we may compose $S$ and $T$ to get the planar tangle $S\circ_{D_{S}} T$ by taking $S$ union the interior of $T$, removing the boundary of $D_{S}$, and smoothing the strings.

As in the previous chapter, given $\alpha \in \Delta$, we let $\overline{\alpha}$ be $\alpha$ read in the opposite order.  When a string appears with a label $\alpha$, then the string is meant to be a band of strings having colors ordered by the word $\alpha$.  As in the previous chapter, the strings are read in the order of \underline{top to bottom} and \underline{left to right}.  Also, unless otherwise marked, all marked regions of rectangles will be assumed to be on the \underline{bottom-left corner} of the box.  In addition, whenever there is a box written without a tangle, it is assumed that the box is placed in a larger tangle whose boundary data agrees with the boundary data for the box and whose marked region is the same as the marked region of the box.

We now define an $N-P-M$ planar algebra:

\begin{defn}
An $N-P-M$ planar algebra consists of the following data:
\begin{itemize}
\item Given parameters $a$ and $b$ as above, there is a finite dimensional complex vector space $\cP_{\alpha}$ for every nonempty word, $\alpha \in \Delta$.  There are three vector spaces $\cP_{\emptyset}^{N}$, $\cP_{\emptyset}^{P}$, and $\cP_{\emptyset}^{M}$ in the case when $\alpha$ is empty.  These are one dimensional complex vector spaces.

\item an action of planar tangles by multilinear maps, i.e., for each planar $\alpha$ tangle $T$, whose rectangles $D_i(T)$ are $\alpha_{i}$ rectangles, there is a multilinear map
$$
Z_T\colon \prod_{i\in I} P_{\alpha_{i}}\to P_{\alpha}
$$
satisfying the following axioms:
\begin{enumerate}
\item[\underline{\text{Isotopy}:}]
If $\theta$ is an orientation preserving diffeomorphism of $\R^2$, then $Z_{\theta(T)}=Z_T$.  That is, let $T^{0}$ be the interior of $T$, and let $f \in \prod_{D \subset T^{0}}P_{\alpha_{D}}$.  Then
$$
Z_{\theta(T)}(f_{\theta}) = Z_{T}(f)
$$
where $f_{\theta}(\theta(D)) = f(D)$.

\item[\underline{\text{Naturality}:}]
For $S,T$ composable tangles, $Z(S\circ_D T) = Z(S)\circ_D Z(T)$, where the composition on the right hand side is the composition of multilinear maps.
\end{enumerate}

\item $\cP$ is \underline{unital} \cite{JonesPAnotes}:  Let $S$ be an $N-P-M$ tangle with no input disks and boundary condition $\alpha \in \Delta$.  Then, there is an element $Z(S) \in P_{\alpha}$ so that the following holds:

Let $S$ be a tangle a nonempty set of internal disks such that $S$ can be glued into the internal disk $D^{S}$ of $T$.  Then
    $$
    Z(T \circ S) = Z(T) \circ Z_{S}.
    $$
    Here $(Z(T) \circ Z_{S})(f) = \tilde{f}$ where
    $$
    \tilde{f}(D) = \begin{cases} f(D) \text{ if } D \neq D^{S} \\ Z(S) \text{ if } D = D^{S} \end{cases}
    $$

    This condition allows isotopy classes of of such an $S$ to be elements of $P_{\alpha}$.  This action allows us to identify the empty diagrams (shaded $N$, $P$, and $M$) with the scalar $1 \in \C$.  We make this assumption in the rest of this chapter.    The naturality axiom, combined with this identification, forces closed strings with parameters $a$ and $b$ to be replaced by scalars $\delta_{a}$ and $\delta_{b}$ respectively.

\item There is a conjugate linear involution, $*: \cP_{\alpha} \rightarrow \cP_{\overline{\alpha}}$.  It is compatible with reflection of tangles i.e., if $\overline{T}$ is a tangle which is produced by an orientation \underline{reversing} diffeomorphism, $\varphi$, of $T$, then we have
    $$
    (Z_{T}(f))^{*} = Z_{\overline{T}}(\overline{f})
    $$
where $\overline{f}(\varphi(D)) = f(D)^{*}$.

\item Each $\cP_{\alpha}$ comes equipped with the positive definite sesquilinear form:
$$
\langle x,y\rangle =
\begin{tikzpicture}[baseline = -.1cm]
	\draw (0,0)--(1.2,0);	
	\filldraw[unshaded,thick] (-.4,.4)--(.4,.4)--(.4,-.4)--(-.4,-.4)--(-.4,.4);
	\filldraw[unshaded,thick] (.8,.4)--(1.6,.4)--(1.6,-.4)--(.8,-.4)--(.8,.4);
	\node at (0,0) {$x$};
	\node at (1.2,0) {$y^*$};
	\node at (.6,.2) {{\scriptsize{$\alpha$}}};
\end{tikzpicture}
$$
\item $\cP$ is \underline{spherical}, i.e. for all $\alpha\in\Delta$ and all $x\in P_{\alpha\overline{\alpha}}$, we have
$$
\tr(x)
=
\trace{\overline{\alpha}}{\alpha}{x}{\alpha}
=
\traceop{\overline{\alpha}}{\alpha}{x}{\alpha}\,.
$$
This says that we can think of our planar tangles as living in a sphere instead of a plane.
\end{itemize}
\end{defn}

\begin{rem}
In viewing the action of a tangle, the letter $Z$ will often be omitted.
\end{rem}

A-priori, it is not clear that an $N-P-M$ planar algebra should exist. The following example shows that this is the case.  For the rest of this paper, an $a$ string will be colored blue and a $b$ string will be colored red.

\begin{ex} \label{ex:GenFC}
Let $\delta_{a}, \delta_{b} \in \{2\cos(\pi/n): n=3, 4, 5, ...\} \cup [2, \infty)$ and define $P_{\alpha}$ by
$$
P_{\alpha} = \spann\{\text{planar string diagrams with with boundary condition } \alpha\}
$$
i.e., $P_{\alpha}$ is the $\C-$linear span of isotopy classes of $\alpha$ tangles with no input disks and no loops. For example,
$$
P_{abba} = \spann\left\{
\begin{tikzpicture} [baseline=0 cm]
    \draw[thick] (-.4, -.5)--(-.4, .5)--(.4, .5)--(.4, -.5)--(-.4, -.5);
    \draw[blue] (-.4, -.3)--(.4, -.3);
    \draw[red] (-.4, -.1)--(.4, -.1);
    \draw[red] (-.4, .1)--(.4, .1);
    \draw[blue] (-.4, .3)--(.4, .3);
\end{tikzpicture}, \, \, \,
\begin{tikzpicture} [baseline=0 cm]
    \draw[thick] (-.4, -.5)--(-.4, .5)--(.4, .5)--(.4, -.5)--(-.4, -.5);
    \draw[blue] (-.4, -.3)--(.4, -.3);
    \draw[red] (-.4, .1) arc(90:-90: .1cm);
    \draw[red] (.4, .1) arc(90:270: .1cm);
    \draw[blue] (-.4, .3)--(.4, .3);
\end{tikzpicture}, \, \, \,
\begin{tikzpicture} [baseline = 0 cm]
    \draw[thick] (-.4, -.5)--(-.4, .5)--(.4, .5)--(.4, -.5)--(-.4, -.5);
    \draw[blue] (-.4, .3) arc(90:-90: .3cm);
    \draw[red] (-.4, .1) arc(90:-90: .1cm);
    \draw[red] (.4, .1) arc(90:270: .1cm);
    \draw[blue] (.4, .3) arc(90:270: .3cm);
\end{tikzpicture}   \right\}.
$$
The action of $N-P-M$ tangles is as follows:  All string diagrams are inserted into the necessary input disks.  The result of this operation is a new string diagram with except with some loops.  These loops are replaced with a parameter $\delta_{a}$ or $\delta_{b}$, depending on the color of a loop.  The adjoint operation is the conjugate linear extension of reflection of diagrams.

It is straightforward to check that $\cP$ satisfies all of the axioms of an $N-P-M$ planar algebra except positive definiteness.  Given $\alpha \in \Delta$, we form the word $\alpha'$ which is a word of colors that can appear in a Fuss Catalan diagram, and is obtained from $\alpha$ by inserting the right combination of $aa's$ or $bb's$ between letters in $\alpha$.  For example, if $\alpha = aaabbaabbbba$, then $\alpha' = a(bb)aabbaabb(aa)bba$.  We then define a map $\phi: P_{\alpha} \rightarrow P_{\alpha'}$ which is given by inserting a cup of the appropriate color whenever that color has been inserted into $\alpha$, and then dividing by $\delta_{a}^{m}\cdot\delta_{b}^{n}$.  Here, $aa$ was inserted $m$ times and $bb$ was inserted $n$ times.  For example,
$$
\phi\left( \begin{tikzpicture} [baseline=0 cm]
    \draw[thick] (-.8, -.4)--(1, -.4)--(1, .4)--(-.8, .4)--(-.8, -.4);
    \node at (.1, 0) {$x$};
    \draw[blue] (-.6, .4)--(-.6, .8);
    \draw[blue] (-.4, .4)--(-.4, .8);
    \draw[blue] (-.2, .4)--(-.2, .8);
    \draw[red] (0, .4)--(0, .8);
    \draw[red] (.2, .4)--(.2, .8);
    \draw[red] (.4, .4)--(.4, .8);
    \draw[red] (.6, .4)--(.6, .8);
    \draw[red] (.8, .4)--(.8, .8);
\end{tikzpicture}
    \right) = \frac{1}{\delta_{a}^{2}\delta_{b}} \cdot
\begin{tikzpicture} [baseline=0 cm]
    \draw[thick] (-.8, -.4)--(2.2, -.4)--(2.2, .4)--(-.8, .4)--(-.8, -.4);
    \node at (.7, 0) {$x$};
    \draw[blue] (-.6, .4)--(-.6, .8);
    \draw[red] (-.4, .8) arc(180:360: .1cm);
    \draw[blue] (0, .4)--(0, .8);
    \draw[blue] (.2, .4)--(.2, .8);
    \draw[red] (.4, .4)--(.4, .8);
    \draw[red] (.6, .4)--(.6, .8);
    \draw[blue] (.8, .8) arc(180:360: .1cm);
    \draw[red] (1.2, .4)--(1.2, .8);
    \draw[red] (1.4, .4)--(1.4, .8);
    \draw[blue] (1.6, .8) arc(180:360: .1cm);
    \draw[red] (2, .4)--(2, .8);
\end{tikzpicture}\, .
$$
This map is easily seen to be preserve the desired sesquilinear form, and we know that this form is positive semidefinite on the Fuss Catalan algebras, with positive definiteness in the case $\delta_{a}, \delta_{b} \geq 2$, from \cite{MR1437496}.  Therefore, after taking a quotient in the case that $\delta_{a}$ or $\delta_{b}$ is less than 2, this example produces an $N-P-M$ planar algebra.

\end{ex}

By unitality, this planar algebra is represented in every $N-P-M$ planar algebra.

\subsection{Principal graphs of $N-P-M$ planar algebras}

We first remark that if $\overline{\alpha}\alpha \in \Delta$, then the axioms of an $N-P-M$ planar algebra show that $P_{\alpha\overline{\alpha}}$ is a finite dimensional $C^{*}$ algebra with multiplication given by
$$
x \cdot y = \begin{tikzpicture} [baseline = 0cm]
\draw (-.8, 0)--(2, 0);
\filldraw [thick, unshaded] (1.6, .4)--(1.6, -.4)--(.8, -.4)--(.8, .4)--(1.6, .4);
\filldraw [thick, unshaded] (.4, .4)--(.4, -.4)--(-.4, -.4)--(-.4, .4)--(.4, .4);
\node at (0, 0) {$x$};
\node at (1.2, 0) {$y$};
\node at (-.5, .4) {$\star$};
\node at (.7, .4) {$\star$};
\node at (-.6, -.2) {\scriptsize{$\alpha$}};
\node at (.6, -.2) {\scriptsize{$\alpha$}};
\node at (1.8, -.2) {\scriptsize{$\alpha$}};
\end{tikzpicture}\, .
$$
Let $p \in P_{\alpha\overline{\alpha}}$ and $q \in P_{\gamma\overline{\gamma}}$ be projections.  Then we say $p$ is \underline{equivalent} to $q$ if there is a $u \in P_{\alpha\overline{\gamma}}$ so that
$$
\PAMultiply{\alpha}{u}{\gamma}{u^{*}}{\alpha} = p \, \, \, \text{ and } \, \, \, \PAMultiply{\gamma}{u^{*}}{\alpha}{u}{\gamma} = q.
$$

To an $N-P-M$ planar algebra $\cP$, there are three \underline{principal graphs} associated to $\cP$, $\Gamma_{N}$, $\Gamma_{P}$, and $\Gamma_{M}$.  We will call $\Gamma_{Q}$ the \underline{$Q-$principal graph} of $\cP$.  Each $\Gamma_{Q}$ has three sets of vertices, $\Gamma_{Q}^{N}$, $\Gamma_{Q}^{P}$ and $\Gamma_{Q}^{M}$.  They are described by the following procedure:

The vertices $v \in \Gamma_{Q_{1}}^{Q_{2}}$ correspond to equivalence classes of minimal projections $p_{v}$ in the finite-dimensional $C^{*}$ algebra $P_{\alpha\overline{\alpha}}$ for some $\alpha$ depending on $v$ where $\alpha\overline{\alpha} \in \Delta^{Q_{1}}$ and $\overline{\alpha}\alpha \in \Delta^{Q_{2}}$.  There are \underline{$a-$colored edges} connecting the vertices $\Gamma_{Q}^{N}$ to the vertices $\Gamma_{Q}^{P}$ as well as \underline{$b-$colored edges} connecting the the vertices $\Gamma_{Q}^{P}$ to $\Gamma_{Q}^{M}$.  The $a-$colored edges are created as follows:

Suppose $v \in \Gamma_{Q}^{N}$ and $w \in \Gamma_{Q}^{P}$, let $p \in P_{\beta\overline{\beta}}$ be equivalent to $p_{v}$.  It follows that the element
$$
i_{a}(p) = \begin{tikzpicture} [baseline = 0cm]
    \draw[thick] (.8, .6)--(-.8, .6)--(-.8, -.6)--(.8, -.6)--(.8, .6);
    \draw (-.4, -.3)--(-.4, .5)--(.4, .5)--(.4, -.3)--(-.4, -.3);
    \draw[blue] (-.8, -.45)--(.8, -.45);
    \draw (-.8, .1)--(-.4, .1);
    \draw (.4, .1)--(.8, .1);
    \node at (0, -.1) {$p$};
    \node at (.6, .25) {\scriptsize{$\beta$}};
    \node at (-.6, .25) {\scriptsize{$\beta$}};
    \node at (-.95, .5) {$\star$};
\end{tikzpicture}
$$
is a projection in $P_{\beta aa\overline{\beta}}$.  We draw $n$ $a-$colored edges between $v$ and $w$ if $n$ is the maximal number such that there exist orthogonal projections $q_{1}, ..., q_{n} \in P_{\beta aa\overline{\beta}}$ which are each equivalent to $p_{w}$ and satisfy $\sum_{i = 1}^{n} q_{i} \leq i_{a}(p)$.  We can also get edges from $w$ to $v$ in a similar manner.  In principle, the construction of the $a-$edges leads to oriented edges, however, the presence of the Jones basic construction shows that the edges can be unoriented.  More precisely, consider the projection
$$
e = \frac{1}{\delta_{a}}\begin{tikzpicture} [baseline = 0cm]
    \node at (-.55, .35) {$\star$};
    \draw (.4, .4)--(.4, -.4)--(-.4, -.4)--(-.4, .4)--(.4, .4);
    \draw (-.4, .1)--(.4, .1);
    \node at (0, .25) {\scriptsize{$\beta$}};
    \draw[blue] (-.4, -.1) arc(90:-90: .1cm);
    \draw[blue] (.4, -.1) arc(90:270: .1cm);
\end{tikzpicture} \in P_{\beta aaaa\overline{\beta}},
$$
and $z$ be its central support. We note that $P_{\beta\overline{\beta}}$ unitally includes into $P_{\beta aaaa\overline{\beta}}$ by applying the map $i_{a}$ twice.  It is also a straightforward check to see that the mapping $P_{\beta\overline{\beta}} \rightarrow  P_{\beta aaaa\overline{\beta}}$ given by $x \mapsto i_{a}(i_{a}(x))e$ is an isometry, and $ei_{a}(y)e = i_{a}(E_{P_{\beta\overline{\beta}}}(y))e$ for $y \in P_{\beta aa\overline{\beta}}.$  Therefore, from \cite{MR1473221} it follows  $zP_{\beta aaaa\overline{\beta}}z$ is isomorphic to the basic construction of $P_{\beta\overline{\beta}}$ in $P_{\beta aa\overline{\beta}}$.  If $A$, $B$, and $C$ are finite dimensional $C^{*}$ algebras with $C$ the basic construction of $A$ in $B$, then the Bratteli diagram of $B \subset C$ is the reflection of that of $A \subset B$ \cite{MR1473221}.  Therefore, if there are $n$ $a-$colored edges from $v$ to $w$, then there are $n$ $a-$colored edges from $w$ to $v$.\\

There is an analogous way to determine the $b-$colored edges that go between $\Gamma_{Q}^{P}$ and $\Gamma_{Q}^{M}$.  We also note that if $p \in P_{\alpha\overline{\alpha}}$ is a minimal projection corresponding to a vertex $v \in \Gamma_{Q_{1}}^{Q_{2}}$, then the mapping in Example \ref{ex:GenFC} shows that $p$ is equivalent to a minimal projection in $P_{w}$ where $w = abbaabbaa...$, $aabbaabb...$, $bbaabbaa...$ or $baabbaa...$.  In particular, if $\cQ$ is as in the introduction, then we have established the following proposition once it is shown that we can consider $\cQ$ inside an augmentation $\cP$.
\begin{lem}\label{SPA}
Let $\Gamma$ and $\Gamma'$ be the principal and dual principal graphs of $\cQ$ respectively.  Then there are one-to-one correspondences between the following sets of vertices:
$$
\Gamma_{+} \leftrightarrow \Gamma_{N}^{N}, \, \Gamma_{-} \leftrightarrow \Gamma_{N}^{M}, \, \Gamma_{+}' \leftrightarrow \Gamma_{M}^{M}, \text { and } \Gamma_{-}' \leftrightarrow \Gamma_{M}^{N}
$$
\end{lem}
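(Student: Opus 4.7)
The plan is to realize $\cQ$ inside the augmentation $\cP$ via a strand-doubling embedding $\iota: \cQ \hookrightarrow \cP$ and to read off the four bijections by tracking what this embedding does to equivalence classes of minimal projections. First I would spell out $\iota$ precisely: given a $\cQ$-tangle $T$, the $\cP$-tangle $\iota(T)$ is obtained by replacing every strand of $T$ by a parallel pair consisting of an $a$-strand and a $b$-strand with a $P$-region between them, replacing every unshaded ($+$) region of $T$ by an $N$-region, and every shaded ($-$) region by an $M$-region. A closed $\cQ$-loop then becomes a parallel $a$-loop next to a $b$-loop contributing a factor $\delta_a \delta_b$; since $\cQ$ contains Fuss--Catalan, its loop parameter is exactly $\delta_a \delta_b$ \cite{MR1437496}, so $\iota$ respects loop removal, and one checks directly that it is compatible with gluing of input disks and with the involution. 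Injectivity on each box space follows by the same positivity check sketched in Example \ref{ex:GenFC}.

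Under $\iota$, the algebra $\cQ_{2n,+}$ (with $\star$ in a $+$ region) embeds as a unital $*$-subalgebra of $P_{w_n\overline{w_n}}$ where $w_n$ is the length-$2n$ Fuss--Catalan word produced from $n$ doubled strands and whose ambient box has $N$-shading on both the left and right sides; similarly $\cQ_{2n,-}$ lands in a box with $M$-shading on both sides. For the odd-depth vertices of $\Gamma$ and $\Gamma'$, I would use the usual identification of odd-depth vertices with minimal projections in ``half-shifted'' box spaces, which under $\iota$ map to boxes whose left shading is $N$ and right shading is $M$ (resp.\ left $M$, right $N$). Thus the map $[p]\mapsto [\iota(p)]$ takes $\Gamma_\pm$ into $\Gamma_N^N$ and $\Gamma_N^M$, and $\Gamma'_\pm$ into $\Gamma_M^M$ and $\Gamma_M^N$, with well-definedness immediate by functoriality of $\iota$.

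Surjectivity of these four maps comes from the reduction observation made just before the lemma: any minimal projection in $P_{\alpha\overline{\alpha}}$ with prescribed left and right shadings in $\{N,M\}$ is equivalent in $\cP$ to a minimal projection supported on a standard Fuss--Catalan-shaped word. Concretely, using the basic-construction-type maps $i_a, i_b$ and their cup-adjoints, one can bring any such boundary word into a standard form of type $(aabb)^k$ or $(bbaa)^k$ built by doubling an even number of $\cQ$-strands, and then every minimal subprojection of the identity in this standard box automatically lies in $\iota(\cQ_{2m,\epsilon})$ for the matching shading $\epsilon$.

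The main obstacle is injectivity of the induced map on equivalence classes: given minimal $p, q$ in $\cQ$ with $\iota(p) \sim \iota(q)$ in $\cP$, we need $p \sim q$ already in $\cQ$. I would handle this with a conditional expectation $E: P_{\alpha\overline{\beta}} \to \iota(\cQ)_{\alpha\overline{\beta}}$ built by summing over the Fuss--Catalan compatible Temperley--Lieb diagrams that can be attached to the box (the same kind of sum that defines the Fuss--Catalan potential in the introduction to this chapter). Because $\iota(p)$ and $\iota(q)$ are minimal inside $\iota(\cQ)$, the corner $\iota(p)\iota(\cQ)\iota(q)$ is at most one-dimensional and is mapped isomorphically onto $\iota(p) P_{\alpha\overline{\beta}} \iota(q)$ under $E$ up to a positive scalar coming from the loop counts; thus a $\cP$-partial isometry $u$ with $u^*u=\iota(p)$, $uu^*=\iota(q)$ pushes down to a nonzero scalar multiple of an intertwiner in $\iota(\cQ)$, whose polar part witnesses $p\sim q$ in $\cQ$. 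The same argument with $N$ and $M$ interchanged handles $\Gamma'$, completing the four bijections.
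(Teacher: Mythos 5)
Your proof has the right skeleton — strand-doubling embedding, reduction to Fuss--Catalan-shaped boxes via the observation preceding the lemma, and tracking the four shadings — but there is a genuine gap at the key step, and it shows up in both your surjectivity and injectivity arguments. You treat $\iota(\cQ)$ as a possibly proper unital $*$-subalgebra of the FC-shaped box spaces $P_{(abba)^k}$, and then (a) for surjectivity, you assert that every minimal projection of $P_{(abba)^k}$ already lies in $\iota(\cQ_{2m,\epsilon})$, and (b) for injectivity, you assert that the corner $\iota(p)\,P_{\alpha\overline{\beta}}\,\iota(q)$ is at most one-dimensional because $\iota(p),\iota(q)$ are minimal in $\iota(\cQ)$. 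Neither of these follows if the inclusion $\iota(\cQ_{2m,\epsilon})\subseteq P_{(abba)^k}$ is proper: in that case the minimal projections of $P_{(abba)^k}$ are \emph{finer} than those of $\iota(\cQ)$ and need not lie in $\iota(\cQ)$ at all, and $\iota(p)$ could split into a sum of two $\cP$-inequivalent minimal subprojections, making the corner $\iota(p)\,P\,\iota(q)$ higher-dimensional. Your conditional-expectation argument does not repair this — $E$ maps $\iota(p)\,P\,\iota(q)$ \emph{into} $\iota(p)\,\iota(\cQ)\,\iota(q)$, and faithfulness of $E$ guarantees $E(u^*u)\neq 0$, not $E(u)\neq 0$, so an intertwiner $u$ could perfectly well be killed by $E$ if the corner has dimension $>1$.

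The paper closes exactly this gap by a different mechanism: Section \ref{sec:NPM} realizes $\cP$ concretely as spaces of central vectors in tensor powers of the bimodules ${}_N L^2(P)_P$ and ${}_P L^2(M)_M$ built from an actual inclusion $N\subset P\subset M$, and then the associativity of Connes fusion together with ${}_N L^2(M)_M={}_N L^2(P)_P\otimes_P{}_P L^2(M)_M$ gives the \emph{literal equality} of box spaces $Q_{2n,+}=P_{(abba)^{n/2}}$ (and similarly for $Q_{2n,-}$). Once this equality is in hand, $\iota(p)$ is automatically minimal in $\cP$, the intertwiner spaces $P_{\alpha\overline{\gamma}}$ between two FC words are themselves $\cQ$-box spaces (since $abba$ is a palindrome), and both surjectivity and injectivity become immediate from the reduction observation stated just before the lemma; that is why the paper simply defers to ``once it is shown that we can consider $\cQ$ inside an augmentation $\cP$.'' To make your proof correct you would need to first establish this equality (or something equivalent to it), after which your conditional-expectation machinery becomes unnecessary.
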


Also observe that rotation by $180^{\circ}$ is an anti-isomorphism of each $P_{\gamma}$.  This induces a one-to-one correspondence $\Gamma_{Q_{1}}^{Q_{2}} \leftrightarrow \Gamma_{Q_{2}}^{Q_{1}}$.  Finally, if the vertices of the principal graphs $\Gamma_{Q}$ are weighted according to the traces of their corresponding projections, then it follows by the definition of principal graph that the graph with vertices $\Gamma_{Q}^{N}$ and $\Gamma_{Q}^{P}$ is bipartite with Perron Frobenius eigenvalue $\delta_{a}$.  Also, the graph with vertices $\Gamma_{Q}^{P}$ and $\Gamma_{Q}^{M}$ is bipartite with Perron Frobenius eigenvalue $\delta_{b}$.

\section{$N-P-M$ planar algebras from intermediate subfactors} \label{sec:NPM}

The goal of this section is to see that such a $\cQ$ as above can be faithfully realized inside an $N-P-M$ planar algebra $\cP$.  Much of this section was influenced from discussions with David Penneys and Noah Snyder, and many of the proofs of the following theorems are taken from them.  We will first describe how such an algebra arises from an inclusion $N \subset P \subset M$ of finite index $II_{1}$ factors.  To start, we consider the following bifinite bimodules:
$$
_{N}L^{2}(P)_{P} \text{ and } _{P}L^{2}(M)_{M}
$$
and their duals (contragredients)
$$
 _{P}L^{2}(P)_{N} \text{ and }  _{M}L^{2}(M)_{P}.
$$
Let $\alpha \in \Delta$.  Since part of the prescribed data for $\alpha$ is a choice of initial shading, we note that the \underline{shading} of $\alpha$, i.e. the shading between any two letters on $\alpha$ is uniquely determined.  Assume that the shading of $\alpha$ is the sequence $Q_{1}\cdots Q_{k}$ for $Q_{i} = N, \, P$ or $M$.  We define $Z_{\alpha}$ to be the following:
$$
Z_{\alpha} =\, _{Q_{1}}L^{2}(Q_{1})_{Q_{1} \cap Q_{2}} \underset{Q_{1} \cap Q_{2}}{\otimes}\, _{Q_{1} \cap Q_{2}}L^{2}(Q_{2})_{Q_{2} \cap Q_{3}} \cdots \underset{Q_{k-1} \cap Q_{k}}{\otimes}\, _{Q_{k-1}\cap Q_{k}}L^{2}(Q_{k})_{Q_{k}\cap Q_{1}} \underset{Q_{k} \cap Q_{1}}{\otimes}\, L^{2}(Q_{1})_{Q_{k}\cap Q_{1}}.
$$
and we set $P_{\alpha} = \Hom_{Q_{1}-Q_{1}}(L^{2}(Q_{1}), Z_{\alpha})$ (Notice that $Q_{i} \cap Q_{i+1}$ is necessarily $N$, $P$, or $M$).  We note from \cite{MR1424954,MR561983} that this can be identified with the $Q_{1}-Q_{1}$ central vectors of $Z_{\alpha}$.

To help understand the planar structure, we let
$$
N (= M_{0}) \subset M (= M_{1}) \subset M_{2} \subset \cdots \subset M_{n} \subset \cdots
$$
be the Jones tower for $N \subset M$, where $M_{n}$ is generated by $M_{n-1}$ and $e_{n-1}$.  Here, $e_{n-1}$ is the orthogonal projection from $L^{2}(M_{n-1})$ onto $L^{2}(M_{n-2})$.  We will define $e_{P}$ to be the orthogonal projection from $L^{2}(M)$ onto $L^{2}(P)$.  We will also let $B = \{b_{i}\}_{i=1}^{n}$ be an orthonormal Pimsner Popa basis for $M$ over $N$ where $n-1$ is the largest integer which is bounded above by the index $[M:N]$.  The $b_{i}$ are elements in $M$ satisfying the following equivalent conditions:
\begin{align*}
x &= \sum_{i=1}^{n} E_{N}(xb_{i})b_{i}^*\, \, \forall x \in M\\
x &= \sum_{i=1}^{n} b_{i}E_{N}(b_{i}^{*}x)\, \, \forall x \in M\\
1 &= \sum_{i=1}^{n} b_{i}e_{1}b_{i}^{*},
\end{align*}
as well as $E_{N}(b_{i}b_{j}^{*}) = \delta_{i, j}$ if $i \leq n-1$ and $E_{N}(b_{n}b_{n}^{*})$ is a projection of trace $[M:N] - (n-1)$ in $M$.  If we let $e_{P}$ be the orthogonal projection from $L^{2}(M)$ onto $L^{2}(P)$ and $\{c_{i}\}_{i=1}^{m}$ be an orthonormal Pimsner-Popa basis of $P$ over $N$.  Then we have the following lemma.
\begin{lem}
$e_{P} = \sum_{i=1}^{m} c_{i}e_{1}c_{i}^{*}$.
\end{lem}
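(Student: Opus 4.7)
My plan is to verify the identity as operators on $L^{2}(M)$ by showing both sides agree on the dense subspace $M\hat{1}\subset L^{2}(M)$. Since $[P:N]<\infty$ the Pimsner-Popa basis $\{c_{i}\}_{i=1}^{m}$ is finite, so $\sum_{i=1}^{m}c_{i}e_{1}c_{i}^{*}$ is a finite sum of bounded operators and hence a bounded operator on $L^{2}(M)$; once the identity is established on $M\hat{1}$, boundedness and density will upgrade it to the full Hilbert space.

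The main computation I would carry out: for $x\in M$,
$$
\left(\sum_{i=1}^{m}c_{i}e_{1}c_{i}^{*}\right)\hat{x} \;=\; \sum_{i=1}^{m} c_{i}\,e_{1}\,\widehat{c_{i}^{*}x} \;=\; \sum_{i=1}^{m} c_{i}\,\widehat{E_{N}(c_{i}^{*}x)},
$$
using that $e_{1}$ implements the conditional expectation $E_{N}\colon M\to N$ on standard vectors. I then want this to equal $\widehat{E_{P}(x)}$. To see this, I invoke two facts: the tower property $E_{N}=E_{N}\circ E_{P}$ and the $P$-bilinearity of $E_{P}$. Since each $c_{i}^{*}\in P$, we get
$$
E_{N}(c_{i}^{*}x) \;=\; E_{N}(E_{P}(c_{i}^{*}x)) \;=\; E_{N}(c_{i}^{*}E_{P}(x)).
$$
Now $E_{P}(x)\in P$, so the defining property of the Pimsner-Popa basis for $P$ over $N$ applied to $E_{P}(x)$ gives
$$
E_{P}(x) \;=\; \sum_{i=1}^{m} c_{i}\,E_{N}(c_{i}^{*}E_{P}(x)) \;=\; \sum_{i=1}^{m} c_{i}\,E_{N}(c_{i}^{*}x),
$$
which matches the computation above. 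Hence $(\sum_{i}c_{i}e_{1}c_{i}^{*})\hat{x}=\widehat{E_{P}(x)}=e_{P}\hat{x}$ for every $x\in M$, and density yields the equality of operators.

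There is no real obstacle here, as the proof is essentially a bookkeeping exercise combining the Pimsner-Popa expansion with the tower property $E_{N}=E_{N}\circ E_{P}$; the only point worth being careful about is that the basis is finite (so that the sum defining the candidate projection is automatically a bounded operator) and that I use the expansion of the basis for $P$ over $N$ applied to the element $E_{P}(x)\in P$ rather than to $x\in M$ itself.
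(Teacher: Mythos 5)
Your proof is correct. You verify the operator identity directly on the dense subspace $M\hat{1}\subset L^{2}(M)$ by unwinding the action of $e_{1}$ as $E_{N}$, using the module property $E_{P}(c_{i}^{*}x)=c_{i}^{*}E_{P}(x)$ together with the tower property $E_{N}=E_{N}\circ E_{P}$, and then applying the Pimsner--Popa expansion for $P$ over $N$ to the element $E_{P}(x)\in P$. The paper takes a different route: it computes the $L^{2}$-norm
$$
\bigl\|e_{P}-\textstyle\sum_{i}c_{i}e_{1}c_{i}^{*}\bigr\|_{2}^{2}
$$
by expanding the square and evaluating each trace, reducing everything to $[M:P]^{-1}-[M:N]^{-1}[P:N]=0$ and invoking faithfulness of the trace. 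Both arguments are valid; yours is arguably more transparent because it exhibits the candidate sum acting as $E_{P}$ on each vector $\hat{x}$, whereas the paper's version is a purely numerical verification that requires the reader to already ``know'' the answer. The trade-off is that your argument leans on the structure of the Pimsner--Popa expansion $y=\sum_{i}c_{i}E_{N}(c_{i}^{*}y)$ for $y\in P$, while the paper only needs the weaker trace-level consequence $\sum_{i}\tr(c_{i}c_{i}^{*})=[P:N]$ together with the orthonormality relations. Both use only material established in the surrounding text, so there is no gap in either.
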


\begin{proof}
We compute the 2-norm of $e_{P} - \sum_{i=1}^{m} c_{i}e_{1}c_{i}^{*}$.  Doing so gives:
$$
\|e_{p} - \sum_{i=1}^{m} c_{i}e_{1}c_{i}^{*}\|_{2}^{2} = \tr(e_{P}) - 2\sum_{i=1}^{m}\tr(c_{i}e_{1}c_{i}^{*}e_{P}) + \sum_{i, j = 1}^{m} \tr(c_{i}e_{1}c_{i}^{*}c_{j}e_{1}c_{j}^{*}).\\
$$
Since $e_{P}$ commutes with the elements $c_{i}$, the term in the middle becomes $2\sum_{i=1}^{m}\tr(c_{i}e_{1}c_{i}^{*})$.  Using $e_{1}c_{i}^{*}c_{j}e_{1} = E_{N}(c_{i}^{*}c_{j}e_{1})$, and orthonormality of the basis, the last term becomes $\sum_{i=1}^{m}\tr(c_{i}e_{1}c_{i}^{*})$.  Therefore, we get:
\begin{align*}
\|e_{p} - \sum_{i=1}^{m} c_{i}e_{1}c_{i}^{*}\|_{2}^{2} &= \tr(e_{P}) - \sum_{i=1}^{m}\tr(c_{i}e_{1}c_{i}^{*})\\
&= \tr(e_{P}) - [M:N]^{-1}\sum_{i=1}^{m}\tr(c_{i}c_{i}^{*})\\
&= [M:P]^{-1} - [M:N]^{-1}[P:N] = 0
\end{align*}
as desired.
\end{proof}

We will now show the bimodules $Z_{\alpha}$ can be isometrically embedded in $L^{2}(M_{n})$ for some $n$.  As some notation, we will let $\delta_{Q} = [M:Q]^{1/2}$ for $Q = N, \, P, $ or $M$.  We also set $E^{Q}_{1} = \delta_{Q}e_{Q}$, and $v_{n}^{Q} = E_{n}E_{n-1}\cdots E_{2}E^{Q}_{1}$.

\begin{thm}
The map $\phi: Z_{\alpha} \rightarrow M_{k}$ given by
$$
\phi(x_{1} \underset{Q_{1} \cap Q_{2}}{\otimes} x_{2} \underset{Q_{2} \cap Q_{3}}{\otimes} \cdots \underset{Q_{k-1} \cap Q_{k}}{\otimes} x_{k}) = x_{1}v_{1}^{Q_{1} \cap Q_{2}}x_{2}v_{2}^{Q_{2} \cap Q_{3}}\cdots v_{k-1}^{Q_{k-1} \cap Q_{k}}x_{k}
$$
is an isometry.
\end{thm}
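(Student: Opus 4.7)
I would argue by induction on $k$, the length of the shading sequence of $\alpha$. The base case $k=1$ is immediate from the identifications $Z_\alpha \cong L^2(Q_1)$ and $\phi = \id$.

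For the inductive step, fix simple tensors $\xi = x_1\otimes\cdots\otimes x_k$ and $\eta = y_1\otimes\cdots\otimes y_k$ in $Z_\alpha$, write $R_j = Q_j\cap Q_{j+1}$, and compare
\[
\langle \xi,\eta\rangle_{Z_\alpha} = \tr_{Q_1}\!\Bigl(y_1^*\, E_{R_1}\bigl(x_1\, E_{R_2}(x_2\cdots E_{R_{k-1}}(x_{k-1}\, E_{R_k}(x_k y_k^*))\cdots y_2^*)\bigr)\Bigr),
\]
(the nested conditional expectations come from the definition of the Connes relative tensor product) with $\langle\phi(\xi),\phi(\eta)\rangle = \tr_{M_{k-1}}(\phi(\eta)^*\phi(\xi))$, which is the trace of a long alternating word in the $x_i$, $y_j^*$, and the shifted Jones projections $E_i$, $E_1^{R_j}$.

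The computation rests on three standard ingredients from the Jones tower $N\subset M\subset M_2\subset\cdots$:
\begin{enumerate}
\item[(a)] the Markov property $\tr_{M_n}(y\, e_n)=[M:N]^{-1}\tr_{M_{n-1}}(y)$ for $y\in M_{n-1}$;
\item[(b)] the pull-down $e_j\, y\, e_j = E_{M_{j-2}}(y)\, e_j$ for $y\in M_{j-1}$, together with the commutation $e_j\, y = y\, e_j$ for $y\in M_{j-2}$;
\item[(c)] the expansion $e_P = \sum_i c_i e_1 c_i^*$ of the lemma above, lifted one level up the tower to produce an $E_1^P$--analogue of (b) compatible with the conditional expectation $E_P$.
\end{enumerate}
Using (b) and (c), each central block $v_j^{R_j}\, x_{j+1}\, (v_j^{R_j})^*$ that arises when pairing the $j$-th Jones word of $\phi(\eta)^*$ with the $j$-th one of $\phi(\xi)$ collapses, modulo explicit scalars, to $E_{R_j}(x_{j+1})$ times a single Jones projection. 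The normalization $E_1^Q=\delta_Q e_Q$ is chosen precisely so that these scalars, combined with the $[M:N]^{-1}$ factors produced by (a), telescope to $1$. After collapsing the outermost block and absorbing the residual Jones projection via (a), the trace reduces to $\langle\xi',\eta'\rangle_{Z_{\alpha'}}$ for a shorter word $\alpha'$ obtained from $\alpha$ by removing the outermost region, with $\xi',\eta'$ shortened tensors absorbing the appropriate conditional expectation. The inductive hypothesis then finishes the argument.

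The principal obstacle is the intermediate case $R_j=P$: since $e_P$ is not itself a Jones projection in the tower for $N\subset M$, every manipulation involving $v_j^P$ must first be routed through the expansion (c) before the pull-down relations of (b) can be applied. The orthonormality of the Pimsner--Popa basis $\{c_i\}$ over $N$, which was the key input in the preceding lemma, is exactly what makes this routing compatible with $E_P$ at each level; without the identity $e_P = \sum_i c_i e_1 c_i^*$, one could not close the induction for words whose shading includes the intermediate region $P$.
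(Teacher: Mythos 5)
Your proposal follows essentially the same route as the paper's own proof: induction on the length $k$ of the shading sequence, a direct trace computation that collapses the innermost Jones-word block around $y_1^*x_1$ into a conditional expectation $E_{R_1}$ via the pull-down and Markov relations, and crucial reliance on the expansion $e_P = \sum_i c_i e_1 c_i^*$ to treat $v_j^P$ as though it were a Jones projection in the tower for $N\subset M$. The one cosmetic issue is in your nested inner-product formula: with $k$ tensor factors the only amalgamating subalgebras are $R_1,\dots,R_{k-1}$ (so the deepest expectation should be $E_{R_{k-1}}$, not $E_{R_k}$), and the $y_j^*$'s should sit at the same nesting depth as the paired $x_j$'s; but this does not affect the argument, which matches the paper's computation.
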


\begin{proof}
Note that the map is well defined as $v_{r}^{Q}$ commutes with $Q$.  We proceed by induction on $k$.  The result is clearly true for $k = 1$, so assume that it holds for $k-1$.  Using the previous lemma as well as the relation $E_{i}E_{j} = E_{j}E_{i}$ for $|i - j| \geq 2$, we have:
\begin{align*}
&\left\langle x_{1}v_{1}^{Q_{1} \cap Q_{2}}x_{2}v_{2}^{Q_{2} \cap Q_{3}}\cdots v_{k-1}^{Q_{k-1} \cap Q_{k}}x_{k},\, y_{1}v_{1}^{Q_{1} \cap Q_{2}}y_{2}v_{2}^{Q_{2} \cap Q_{3}}\cdots v_{k-1}^{Q_{k-1} \cap Q_{k}}y_{k} \right\rangle_{M_{k}}\\
& = \tr_{M_{k}}\left(y_{k}^{*}(v_{k-1}^{Q_{k-1} \cap Q_{k}})^{*}\cdots (v_{2}^{Q_{2} \cap Q_{3}})^{*} y_{2}^{*} (v_{1}^{Q_{1} \cap Q_{2}})^{*} y_{1}^{*} x_{1}v_{1}^{Q_{1} \cap Q_{2}}x_{2}v_{2}^{Q_{2} \cap Q_{3}}\cdots v_{k-1}^{Q_{k-1} \cap Q_{k}}x_{k}\right)\\
&= \delta_{Q}^{2}\tr_{M_{k}}\left(y_{k}^{*}(v_{k-1}^{Q_{k-1} \cap Q_{k}})^{*}\cdots y_{2}^{*} E_{Q_{1}\cap Q_{2}}(y_{1}^{*} x_{1})e_{Q_{1} \cap Q_{2}}x_{2} \cdots v_{k-1}^{Q_{k-1} \cap Q_{k}}x_{k}\right)\\
&= \delta_{Q}^{2}\tr_{M_{k}}\left(y_{k}^{*}(v_{k-2}^{Q_{k-1} \cap Q_{k}})^{*}E_{k-1}\cdots y_{2}^{*} E_{Q_{1}\cap Q_{2}}(y_{1}^{*} x_{1})e_{Q_{1} \cap Q_{2}}x_{2} \cdots E_{k-1}v_{k-2}^{Q_{k-1} \cap Q_{k}}x_{k}\right)\\
&= \frac{\delta_{Q}^{2}}{[M:N]^{1/2}}\cdot \\
&\tr_{M_{k}}\left(y_{k}^{*}(v_{k-2}^{Q_{k-1} \cap Q_{k}})^{*}\cdots y_{2}^{*}E_{Q_{1}\cap Q_{2}}(y_{1}^{*} x_{1})E_{k-1}\cdots E_{2}\sum_{i=1}^{m}(b_{i}E_{1}b_{1}^{*})
E_{2}\cdots E_{k-1}x_{2} \cdots v_{k-2}^{Q_{k-1} \cap Q_{k}}x_{k}\right)\\
&= \frac{\delta_{Q}^{2}[Q_{1}\cap Q_{2}:N]}{[M:N]^{1/2}}\cdot \tr_{M_{k}}\left(y_{k}^{*}(v_{k-2}^{Q_{k-1} \cap Q_{k}})^{*}\cdots y_{2}^{*}E_{Q_{1}\cap Q_{2}}(y_{1}^{*} x_{1})E_{k-1}x_{2} \cdots v_{k-2}^{Q_{k-1} \cap Q_{k}}x_{k}\right)\\
&= \tr_{M_{n-1}}\left(y_{k}^{*}(v_{k-2}^{Q_{k-1} \cap Q_{k}})^{*}\cdots y_{2}^{*}E_{Q_{1}\cap Q_{2}}(y_{1}^{*} x_{1})x_{2} \cdots v_{k-2}^{Q_{k-1} \cap Q_{k}}x_{k}\right)\\
&=\left\langle E_{Q_{1}\cap Q_{2}}(y_{1}^{*} x_{1}) x_{2} \underset{Q_{2} \cap Q_{3}}{\otimes} \cdots \underset{Q_{k-1} \cap Q_{k}}{\otimes} x_{k}, y_{2} \underset{Q_{2} \cap Q_{3}}{\otimes} \cdots \underset{Q_{k-1} \cap Q_{k}}{\otimes} y_{k}\right\rangle\\
&=\left\langle x_{1} \underset{Q_{1} \cap Q_{2}}{\otimes} x_{2} \underset{Q_{2} \cap Q_{3}}{\otimes} \cdots \underset{Q_{k-1} \cap Q_{k}}{\otimes} x_{k}, y_{1} \underset{Q_{1} \cap Q_{2}}{\otimes} y_{2} \underset{Q_{2} \cap Q_{3}}{\otimes} \cdots \underset{Q_{k-1} \cap Q_{k}}{\otimes} y_{k}\right\rangle\\
\end{align*}
as desired.
\end{proof}

The map $\phi$ above is clearly a bimodule map, so central vectors get mapped into $N' \cap L^{2}(M_{k})$. Since $N' \cap L^{2}(M_{k}) = N' \cap M_{k}$ is finite dimensional, it follows that each $P_{\alpha}$ is finite dimensional.

\subsection{Action of $N-P-M$ tangles on $P_{\alpha}$}

We now describe how the $N-P-M$ planar operad acts on the various $P_{\alpha}$.  Given an $N-P-M$ tangle $T$, we isotope it so that it is in standard form.  This means:
\begin{enumerate}
\item All of the input and output disks are rectangles and all strings (that are not closed loops) emanate from the top of the rectangles.

\item All the input disks are in different horizontal bands and all maxima and minima of strings are at different vertical levels, and not in the horizontal bands defined by the input disks.

\item The starred intervals of the input disks are all at the bottom-left corner.  When we have a diagram of this form, the $\star$ is omitted.
\end{enumerate}

One then positions an imaginary horizontal line at the bottom of the tangle, $T$, and then slides it to the top.  One starts with the central vector $1_{Q} \in L^{2}(Q)$ whenever the bottom of the box is shaded $Q$.  The central vector gets altered as the line crosses either an input box, a maximum on a string, or a minimum on a string.  When the line reaches the top, you get the central vector produced by the action of the tangle.

Suppose the horizontal line passes through the $i^{th}$ rectangle (with respect to the isotopy) in the $m_{i}^{th}$ region which is shaded $Q_{m_{i}}$ (reading left to right along the line), and suppose that the vector $v_{i}$ has been assigned to the box.  We simply insert $v_{i}$ into the $m_{i}^{th}$ slot, i.e.
\begin{align*}
\sum_{j} x_{1}^{j} \otimes \cdots \otimes x_{m_{i}}^{j} \otimes \cdots \otimes x_{n} \mapsto &\sum_{j} x_{1}^{j} \otimes \cdots \otimes x_{m_{i}}^{j}v_{i} \otimes \cdots \otimes x_{n}\\
= &\sum_{j} x_{1}^{j} \otimes \cdots \otimes v_{i}x_{m_{i}}^{j} \otimes \cdots \otimes x_{n}
\end{align*}
Now suppose the horizontal line passes through a minimum, and suppose $Y \subset X$ with $X, Y \in \{N, P, M\}$ and $X$ and $Y$ resemble the regions on either side of the minimum.  Let $B_{X, Y}$ be a Pimsner-Popa basis for $X$ over $Y$.  Then we have the diagrammatic rules:
\begin{align*}
&\begin{tikzpicture} [baseline = 0cm]
    \draw[thick] (.4, .5)--(.4, -.5)--(-.4, -.5)--(-.4, .5)--(.4, .5);
    \draw (-.2, .5)--(-.2, .2) arc(180:360: .2cm) --(.2, .5);
    \node at (0, .3) {\scriptsize{$Y$}};
    \node at (0, -.35) {\scriptsize{$X$}};
    \draw[dashed] (-.4, -.1)--(.4, -.1);
\end{tikzpicture} \rightarrow
\begin{tikzpicture} [baseline = 0cm]
    \draw[thick] (.4, .5)--(.4, -.5)--(-.4, -.5)--(-.4, .5)--(.4, .5);
    \draw (-.2, .5)--(-.2, .2) arc(180:360: .2cm) --(.2, .5);
    \node at (0, .3) {\scriptsize{$Y$}};
    \node at (0, -.35) {\scriptsize{$X$}};
    \draw[dashed] (-.4, .1)--(.4, .1);
\end{tikzpicture}\, \, \, \, \, x \mapsto \frac{1}{[X:Y]^{1/2}}\sum_{b \in B_{X, Y}} xb \underset{Y}{\otimes} 1_{Y} \underset{Y}{\otimes} b^{*} = \sum_{b \in B_{X, Y}} b \underset{Y}{\otimes} 1 \underset{Y}{\otimes} b^{*}x \\
&\begin{tikzpicture} [baseline = 0cm]
    \draw[thick] (.4, .5)--(.4, -.5)--(-.4, -.5)--(-.4, .5)--(.4, .5);
    \draw (-.2, .5)--(-.2, .2) arc(180:360: .2cm) --(.2, .5);
    \node at (0, .3) {\scriptsize{$X$}};
    \node at (0, -.35) {\scriptsize{$Y$}};
    \draw[dashed] (-.4, -.1)--(.4, -.1);
\end{tikzpicture} \rightarrow
\begin{tikzpicture} [baseline = 0cm]
    \draw[thick] (.4, .5)--(.4, -.5)--(-.4, -.5)--(-.4, .5)--(.4, .5);
    \draw (-.2, .5)--(-.2, .2) arc(180:360: .2cm) --(.2, .5);
    \node at (0, .3) {\scriptsize{$X$}};
    \node at (0, -.35) {\scriptsize{$Y$}};
    \draw[dashed] (-.4, .1)--(.4, .1);
\end{tikzpicture}\, \, \, \, \, x \mapsto x \underset{Y}{\otimes} 1_{X} \underset{Y}{\otimes} 1_{Y} = 1_{Y} \underset{Y}{\otimes} 1_{X} \underset{Y}{\otimes} x
\end{align*}
Whenever a dotted line passes over a maximum, the following rules apply:
\begin{align*}
&\begin{tikzpicture} [baseline = 0cm]
    \draw[thick] (.4, .5)--(.4, -.5)--(-.4, -.5)--(-.4, .5)--(.4, .5);
    \draw (-.2, -.5)--(-.2, -.2) arc(180:0: .2cm) --(.2, -.5);
    \node at (0, -.3) {\scriptsize{$X$}};
    \node at (0, .35) {\scriptsize{$Y$}};
    \draw[dashed] (-.4, -.1)--(.4, -.1);
\end{tikzpicture}
\rightarrow
\begin{tikzpicture} [baseline = 0cm]
    \draw[thick] (.4, .5)--(.4, -.5)--(-.4, -.5)--(-.4, .5)--(.4, .5);
    \draw (-.2, -.5)--(-.2, -.2) arc(180:0: .2cm) --(.2, -.5);
    \node at (0, -.3) {\scriptsize{$X$}};
    \node at (0, .35) {\scriptsize{$Y$}};
    \draw[dashed] (-.4, .1)--(.4, .1);
\end{tikzpicture}\, \, \, \, \, y_{1} \underset{Y}{\otimes} x \underset{Y}{\otimes} y_{2} \mapsto [X:Y]^{1/2} y_{1}E_{Y}(x)y_{2}\\
&\begin{tikzpicture} [baseline = 0cm]
    \draw[thick] (.4, .5)--(.4, -.5)--(-.4, -.5)--(-.4, .5)--(.4, .5);
    \draw (-.2, -.5)--(-.2, -.2) arc(180:0: .2cm) --(.2, -.5);
    \node at (0, -.3) {\scriptsize{$Y$}};
    \node at (0, .35) {\scriptsize{$X$}};
    \draw[dashed] (-.4, -.1)--(.4, -.1);
\end{tikzpicture}
\rightarrow
\begin{tikzpicture} [baseline = 0cm]
    \draw[thick] (.4, .5)--(.4, -.5)--(-.4, -.5)--(-.4, .5)--(.4, .5);
    \draw (-.2, -.5)--(-.2, -.2) arc(180:0: .2cm) --(.2, -.5);
    \node at (0, -.3) {\scriptsize{$Y$}};
    \node at (0, .35) {\scriptsize{$X$}};
    \draw[dashed] (-.4, .1)--(.4, .1);
\end{tikzpicture}\, \, \, \, \, x_{1} \underset{Y}{\otimes} y \underset{Y}{\otimes} x_{2} \mapsto x_{1}yx_{2}
\end{align*}
Here is an example of a tangle acting on $y_{1} \underset{Y}{\otimes} x \underset{Y}{\otimes} y_{2}$:
$$
\begin{tikzpicture}[baseline = 0cm]
    \draw[thick] (1,1)--(1, -1)--(-1, -1)--(-1, 1)--(1, 1);
    \draw (-.4, -.1)--(-.4, .7)--(.4, .7)--(.4, -.1)--(-.4, -.1);
    \draw (.2, .7)--(.2, 1);
    \draw (-.2, .7) arc(0:180:.2 cm)--(-.6, 0) arc(180:360: .6cm) -- (.6, 1);
    \node at (0, -.35) {\scriptsize{$Y$}};
    \node at (0, -.8) {\scriptsize{$X$}};
\end{tikzpicture}\, \, (y_{1} \underset{Y}{\otimes} x \underset{Y}{\otimes} y_{2}) = \frac{1}{[X:Y]^{1/2}}\cdot \sum_{b \in B_{X, Y}} by_{1}x \underset{Y}{\otimes} y_{2} \underset{Y}{\otimes} b^{*}
$$
Note also that our rules dictate that a loop with an $X$ on one side and $Y$ on the other counts for a factor $[X:Y]^{1/2}$.  As $[M:P]$ and $[P:N]$ are the only two such indices that will appear, we will let $\delta_{a} = [P:N]^{1/2}$ and $\delta_{b} = [M:P]^{1/2}$.

It is a straightforward check to see that each of these maps preserves central vectors.  Each map is also locally a bimodule map, hence the action of $T$ will also preserve invariant elements.

Checking that $T$ is well defined up to isotopy involves checking the same (finite number of) relations as in \cite{math.QA/9909027}.  For example, checking
$$
\begin{tikzpicture}[baseline = 0cm]
    \draw[thick] (.4, .4)--(-.4, .4)--(-.4, -.4)--(.4, -.4)--(.4, .4);
    \draw (-.2, -.4)--(-.2, 0) arc(180:0: .1cm) arc(180:360: .1cm)-- (.2, .4);
    \node at (0, -.25) {\scriptsize{$X$}};
    \node at (0, .25) {\scriptsize{$Y$}};
\end{tikzpicture} =
\begin{tikzpicture}[baseline = 0cm]
    \draw[thick] (.4, .4)--(-.4, .4)--(-.4, -.4)--(.4, -.4)--(.4, .4);
    \draw (0, -.4)--(0, .4);
    \node at (-.2, 0) {\scriptsize{$Y$}};
    \node at (.2, 0) {\scriptsize{$X$}};
\end{tikzpicture}
$$
boils down to checking the relation $x = \sum_{b \in B_{X, Y}} E_{Y}(xb)b^{*}$, which always holds.  The key to checking that the action of $T$ is defined up to isotopy is to show that rotation by $2\pi$ is the identity.

Let $x = \sum_{j} x_{1}^{j} \underset{Q_{1} \cap Q_{2}}{\otimes} x_{2}^{j} \cdots  \underset{Q_{n} \cap Q_{1}}{\otimes} x_{n+1}^{j} \in P_{\alpha}$, and let $T$ be the tangle which is rotation by one-click clockwise, namely
$$
T = \begin{tikzpicture}[baseline = 0cm]
    \draw[thick] (1,1)--(1, -1)--(-1, -1)--(-1, 1)--(1, 1);
    \draw (-.4, -.1)--(-.4, .7)--(.4, .7)--(.4, -.1)--(-.4, -.1);
    \draw (-.2, .7)--(-.2, 1);
    \draw (.2, .7) arc(180:0:.2 cm)--(.6, .2) arc(0:-180: .6cm) -- (-.6, 1);
\end{tikzpicture}\, .
$$
where there are $n-1$ strings that are not bent.  By definition, we have:
$$
Z(T)(x) = \begin{cases}  \frac{1}{[Q_{n}:Q_{1}]^{1/2}}\sum_{b \in B(Q_{n}, Q_{1})}\sum_{j} b \underset{Q_{1}}{\otimes} x_{1} \underset{Q_{1} \cap Q_{2}}{\otimes} x_{2} \cdots \underset{Q_{n-1} \cap Q_{n}}{\otimes}x_{n}x_{n+1}b^{*} &\text{ if } Q_{1} \subset Q_{n} \\
[Q_{1}:Q_{n}]^{1/2} \sum_{j} 1 \underset{Q_{n}}{\otimes} x_{1} \underset{Q_{1} \cap Q_{2}}{\otimes} x_{2} \cdots \underset{Q_{n-1} \cap Q_{n}}{\otimes}x_{n}E_{Q_{n}}(x_{n+1}) &\text{ if } Q_{n}\subset Q_{1}
\end{cases}
$$
To help our computations, we define the following left and right creation operators, $L_{x}$ and $R_{x}$ for $x \in Q$.  These are given by:
\begin{align*}
L_{x} : Z_{\alpha} \rightarrow L^{2}(Q) \underset{Q \cap Q_{1}}{\otimes} Z_{\alpha} \, \text{ such that } & L_{x}(x_{1} \otimes \cdots \otimes x_{n+1}) = x \otimes x_{1} \otimes \cdots \otimes x_{n+1} \\
R_{x} : Z_{\alpha} \rightarrow Z_{\alpha} \underset{Q \cap Q_{1}}{\otimes} L^{2}(Q) \, \text{ such that } & R_{x}(x_{1} \otimes \cdots \otimes x_{n+1}) = x_{1} \otimes \cdots \otimes x_{n+1} \otimes x
\end{align*}
It follows from the definition of the bimodule tensor product that
\begin{align*}
L_{x}^{*} (x_{0} \otimes x_{1} \otimes \cdots \otimes x_{n+1}) &= E_{Q \cap Q_{1}}(x^{*}x_{0})x_{1} \otimes \cdots \otimes x_{n+1} \text{ and }\\
R_{x}^{*} (\otimes x_{1} \otimes \cdots \otimes x_{n+1} \otimes y) &= x_{1} \otimes \cdots \otimes x_{n} \otimes x_{n+1}E_{Q_{1} \cap Q_{0}}(yx^{*})
\end{align*}
Therefore, we have the following formulae the rotation tangle, $T$:
$$
Z(T)(x) = \begin{cases} \frac{1}{[Q_{n}:Q_{1}]^{1/2}} \sum_{b \in B}L_{b}R^{*}_{b}(x) &\text{ if } Q_{1} \subset Q_{n}\\
[Q_{1}:Q_{n}]^{1/2} L_{1}R^{*}_{1}(x) &\text{ if } Q_{n} \subset Q_{1}
\end{cases}
$$

From Burns' rotation trick \cite{1111.1362} we have the following lemma which is similar to lemmas that appear in \cite{MR2812459}:

\begin{lem}  Let $\rho(\alpha)$ be the word formed when the words in $\alpha$ are cyclically permuted clockwise by one, and let $y = y_{1} \otimes \cdots \otimes y_{n} \otimes y_{n+1} \in Z_{\rho(\alpha)}$.  Then  \begin{itemize}
\item $\displaystyle \langle T(x),  y \rangle = \frac{1}{[Q_{n}:Q_{1}]^{1/2}}\langle x, y_{2} \otimes \cdots \otimes y_{n+1} \otimes y_{1}\rangle$ if $Q_{1} \subset Q_{n}$

\item $\displaystyle \langle T(x),  y \rangle = [Q_{1}:Q_{n}]^{1/2}\langle x, y_{2} \otimes \cdots \otimes y_{n+1} \otimes y_{1}\rangle$ if $Q_{n} \subset Q_{1}$
\end{itemize}
\end{lem}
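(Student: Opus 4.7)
The plan is to substitute the explicit formulas
$$Z(T)(x) = \frac{1}{[Q_{n}:Q_{1}]^{1/2}}\sum_{b \in B}L_{b}R^{*}_{b}(x)\quad\text{(Case 1)}\qquad\text{and}\qquad Z(T)(x) = [Q_{1}:Q_{n}]^{1/2}\,L_{1}R^{*}_{1}(x)\quad\text{(Case 2)}$$
into $\langle T(x),y\rangle$ and then push the $L_{b}$ and $R_{b}^{*}$ operators off of $x$ and onto $y$ using the adjoint relations $\langle L_{b}z,w\rangle=\langle z,L_{b}^{*}w\rangle$ and $\langle R_{b}^{*}z,u\rangle=\langle z,R_{b}u\rangle$. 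After shifting, each summand in Case 1 becomes $\langle x,R_{b}L_{b}^{*}y\rangle$, and the desired identity will fall out of the Pimsner--Popa reconstruction applied to $y_{1}$.

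Unpacking $L_{b}^{*}y$ with the given formula gives $L_{b}^{*}y=E_{Q_{1}}(b^{*}y_{1})\,y_{2}\otimes\cdots\otimes y_{n+1}$, where the conditional expectation lands in $Q_{n}\cap Q_{1}=Q_{1}$ because of the hypothesis $Q_{1}\subset Q_{n}$. Applying $R_{b}$ then appends $b$ on the right, producing $E_{Q_{1}}(b^{*}y_{1})\,y_{2}\otimes\cdots\otimes y_{n+1}\otimes b$. Since $x\in P_{\alpha}$ is $Q_{1}$--$Q_{1}$ central, the $Q_{1}$-valued scalar $E_{Q_{1}}(b^{*}y_{1})$ can be transported across the relative tensor product to the last factor, turning the summand into $\langle x,\,y_{2}\otimes\cdots\otimes y_{n+1}\otimes b\,E_{Q_{1}}(b^{*}y_{1})\rangle$. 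Summing over $b\in B_{Q_{n},Q_{1}}$ and invoking the Pimsner--Popa identity $\sum_{b\in B}b\,E_{Q_{1}}(b^{*}y_{1})=y_{1}$ (valid since $y_{1}\in L^{2}(Q_{n})$ and $B$ is a $Q_{1}$-basis of $Q_{n}$) collapses the sum to $\langle x,y_{2}\otimes\cdots\otimes y_{n+1}\otimes y_{1}\rangle$, and the prefactor $[Q_{n}:Q_{1}]^{-1/2}$ from $T$ survives unchanged.

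Case 2 runs along the same template but is considerably simpler, since only the single operator $L_{1}R_{1}^{*}$ appears. Here $L_{1}^{*}y=E_{Q_{n}}(y_{1})\,y_{2}\otimes\cdots\otimes y_{n+1}$, then $R_{1}$ appends a unit, and the inclusion $Q_{n}\subset Q_{1}$ together with the $Q_{1}$--centrality of $x$ lets us reabsorb the $Q_{n}$-scalar $E_{Q_{n}}(y_{1})$ back into the last factor as the original $y_{1}\in L^{2}(Q_{1})$. No Pimsner--Popa reconstruction is needed, and the external factor $[Q_{1}:Q_{n}]^{1/2}$ in the formula for $T(x)$ is exactly the claimed constant.

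The main obstacle is the bookkeeping of conditional expectations and relative tensor products: one must verify that the expectation in $L_{b}^{*}$ is taken onto the correct intersection ($Q_{n}\cap Q_{1}$, which collapses to $Q_{1}$ or $Q_{n}$ according to the case), and that sliding the resulting scalar across tensor factors is justified by the bimodule structure of each $L^{2}(Q_{i})$ together with the $Q_{1}$--$Q_{1}$ centrality of $x$. Once this is done carefully, the Pimsner--Popa reconstruction performs the collapse in Case 1 and the trivial $1$-basis handles Case 2.
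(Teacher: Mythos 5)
Your proof is correct and follows essentially the same route as the paper's: rewrite $\langle T(x),y\rangle$ as $\sum_{b}\langle x, R_{b}L_{b}^{*}y\rangle$ up to the index factor, apply the explicit formula for $L_{b}^{*}$, slide the resulting $Q_{1}$-element $E_{Q_{1}}(b^{*}y_{1})$ past the $Q_{1}$--$Q_{1}$ central vector $x$ and back onto the last factor of $y$, and collapse the sum via the Pimsner--Popa identity $\sum_{b}bE_{Q_{1}}(b^{*}y_{1})=y_{1}$. The second case is, as you observe, the trivial specialization with a single basis element, and your use of $E_{Q_{n}}$ there is in fact the correct conditional expectation (the paper's displayed $E_{Q_{1}}$ is a harmless typo, since $E(y_{1})=y_{1}$ either way).
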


\begin{proof} For the first case, using that $x$ is central, we have
\begin{align*}
\langle T(x), y \rangle = \frac{1}{[Q_{n}:Q_{1}]^{1/2}}\langle \sum_{b \in B} L_{b}R_{b}^{*}(x), y \rangle & = \sum_{b \in B}\frac{1}{[Q_{n}:Q_{1}]^{1/2}}\langle x, R_{b}L^{*}_{b}(y)\rangle \\
&= \frac{1}{[Q_{n}:Q_{1}]^{1/2}} \sum_{b \in B}\langle x, E_{Q_{1}}(b^{*}y_{1})y_{2} \otimes y_{3} \otimes \cdots \otimes y_{n+1} \otimes b\rangle \\
&= \frac{1}{[Q_{n}:Q_{1}]^{1/2}} \sum_{b \in B}\langle (E_{Q_{1}}(b^{*}y_{1}))^{*}x, y_{2} \otimes y_{3} \otimes \cdots \otimes y_{n+1} \otimes b\rangle\\
&= \frac{1}{[Q_{n}:Q_{1}]^{1/2}} \sum_{b \in B}\langle x(E_{Q_{1}}(b^{*}y_{1}))^{*}, y_{2} \otimes y_{3} \otimes \cdots \otimes y_{n+1} \otimes b\rangle\\
&= \frac{1}{[Q_{n}:Q_{1}]^{1/2}} \sum_{b \in B}\langle x, y_{2} \otimes y_{3} \otimes \cdots \otimes y_{n+1} \otimes bE_{Q_{1}}(b^{*}y_{1})\rangle\\
&= \frac{1}{[Q_{n}:Q_{1}]^{1/2}} \langle x, y_{2} \otimes \cdots \otimes y_{n+1} \otimes y_{1}\rangle.
\end{align*}
For the second case, we have
\begin{align*}
\langle T(x), y \rangle = [Q_{1}:Q_{n}]^{1/2}\langle L_{1}R^{*}_{1}(x), y \rangle &= [Q_{1}:Q_{n}]^{1/2}\langle x, R_{1}L^{*}_{1}(y) \rangle \\
&= [Q_{1}:Q_{n}]^{1/2}\langle x, E_{Q_{1}}(y_{1}) y_{2} \otimes \cdots \otimes y_{n+1} \otimes 1 \rangle\\
&= [Q_{1}:Q_{n}]^{1/2}\langle x, y_{1}y_{2}\otimes \cdots \otimes y_{n+1} \otimes 1 \rangle\\
&= [Q_{1}:Q_{n}]^{1/2}\langle y_{1}^{*}x, y_{2} \otimes \cdots \otimes y_{n+1} \otimes 1 \rangle\\
&= [Q_{1}:Q_{n}]^{1/2}\langle xy_{1}^{*}, y_{2} \otimes \cdots \otimes y_{n+1} \otimes 1 \rangle\\
&= [Q_{1}:Q_{n}]^{1/2}\langle x, y_{2} \otimes \cdots \otimes y_{n+1} \otimes y_{1} \rangle
\end{align*}
as desired.
\end{proof}

\begin{cor}
Rotation by $2\pi$ is the identity.
\end{cor}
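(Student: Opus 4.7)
The plan is to iterate the preceding lemma enough times to complete a full $2\pi$ rotation, and then check that the accumulated scalar prefactor is $1$. Since the one-click rotation $T$ advances the boundary word $\alpha$ by one cyclic position, rotation by $2\pi$ corresponds to $T^{|\alpha|}$, where $|\alpha|$ is the length of $\alpha$. Applying the preceding lemma $|\alpha|$ times to the pairing $\langle T^{|\alpha|}(x), y\rangle$ with $y\in Z_\alpha$, the successive cyclic reorderings of the tensor factors compose to the identity (because $\rho^{|\alpha|}(\alpha)=\alpha$), so we obtain
\[
\langle T^{|\alpha|}(x),\, y\rangle \;=\; C\,\langle x,\, y\rangle,
\]
where $C$ is a product of $|\alpha|$ scalars, one contributed at each click.

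The second step is to compute $C$ and show $C=1$. At step $i$, the lemma contributes either $[Q_n : Q_1]^{-1/2}$ (when $Q_1\subset Q_n$) or $[Q_1 : Q_n]^{+1/2}$ (when $Q_n\subset Q_1$), where $Q_1,Q_n$ are the current leading and trailing regions. Because the only allowed neighboring pairs in an $N$-$P$-$M$ tangle are $\{N,P\}$ or $\{P,M\}$, these factors are always of the form $\delta_a^{\pm 1}$ or $\delta_b^{\pm 1}$, with sign $+$ when rotating from a smaller to a larger region (in the containment order $N\subset P$ and $P\subset M$) and sign $-$ in the reverse direction. Tracking these signs around the full cycle of regions $Q_1,Q_2,\dots,Q_k,Q_1$ of $\alpha$, one sees that each ``excursion'' into a smaller region (e.g.\ an $N$-island inside $P$, or a $P$-island inside $M$) is entered once and exited once, contributing inverse scalars that cancel. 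Hence every $\delta_a^{\pm 1}$ factor pairs with a $\delta_a^{\mp 1}$, every $\delta_b^{\pm 1}$ with a $\delta_b^{\mp 1}$, and the total product is $C=1$. This yields $T^{|\alpha|}(x)=x$ for all $x\in P_\alpha$, proving that rotation by $2\pi$ is the identity.

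The main obstacle will be the combinatorial bookkeeping of scalars. In particular, at each click one must correctly identify which of $Q_1$ or $Q_n$ contains the other, and verify that the pairing-off of factors really works for arbitrary shading patterns, not just simple examples such as $\alpha\in\cW$. A clean way to organize this will be to write $C$ as an exponential sum $\exp\bigl(\sum_i \epsilon_i\log\delta_{c_i}\bigr)$ with $\epsilon_i\in\{\pm 1\}$ indexed by cyclic transitions around $\alpha$, and to observe that the total signed count of $a$-transitions (and separately $b$-transitions) vanishes because the cyclic sequence of regions closes up. Once this identity is verified, the corollary follows immediately from the preceding lemma.
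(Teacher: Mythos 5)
Your proof follows the same strategy as the paper's: apply the one-click lemma $|\alpha|$ times and observe that the accumulated index factors cancel, at which point the corollary is immediate. The paper dispatches the cancellation in a single sentence (``the nature of the shading dictates that all index factors cancel''), whereas you supply the underlying reason --- that each factor is $\delta_a^{\pm 1}$ or $\delta_b^{\pm 1}$ with sign determined by whether the transition $Q_i \to Q_{i+1}$ goes inward or outward in the containment order $N \subset P \subset M$, and that the signed count of $a$-transitions (and separately of $b$-transitions) around a closed cycle of regions must vanish. This ``net flow across the edges $N\,\text{--}\,P$ and $P\,\text{--}\,M$'' bookkeeping is exactly the right way to make the paper's assertion precise for arbitrary shading patterns, and your proposed exponential-sum reformulation is a clean way to record it.
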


\begin{proof}
The nature of the shading dictates that all index factors cancel when applying the $2\pi$ rotation.  The rest follows from the previous lemma.
\end{proof}

One can now continue as in \cite{math.QA/9909027}.
\subsection{Realizing $\cQ$ inside $\cP$}

Suppose $\cQ$ is a planar algebra containing the Fuss-Catalan algebra, so that $\cQ$ is the planar algebra for a finite index inclusion $N \subset M$ with intermediate subfactor $P$.  We note that $Q_{2n,+}$ is the space of $N-N$ central vectors of
$$
(\, _{N}L^{2}(M)_{N})^{\underset{N}{\otimes}^{n}} = (\, _{N}L^{2}(M)_{M} \underset{M}{\otimes}\, _{M}L^{2}(M)_{N} )^{\underset{N}{\otimes}^{n}}.
$$
and $Q_{2n, -}$ is the space of $M-M$ central vectors of $(\, _{M}L^{2}(M)_{N} \underset{N}{\otimes}\, _{N}L^{2}(M)_{M} )^{\underset{M}{\otimes}^{n}}$.  Since
$$
_{N}L^{2}(M)_{M} = _{N}L^{2}(P)_{P} \underset{P}{\otimes}\, _{P}L^{2}(M)_{M},
$$ it follows that
$$
Q_{2n,+} = P_{(abba)^{n/2}}  \text{ and } Q_{2n,-} =  P_{(baab)^{n/2}}.
$$
Furthermore, a tangle $S$ that acts on $\cQ$ can be made into an $N-P-M$ tangle $S'$ by replacing each string with an $a$ string cabled to a $b$ string.  This is done so that the shaded regions in $S$ become the $M-$regions in $S'$ and the unshaded regions in $S$ become the $N-$regions in $S'$.  Notice that this implies that the strings along any disk in $S'$ read (clockwise from the marked region) as $abbaabbaa...$ or $baabbaabb...$.  It directly follows from the definitions that if $x_{1}, ..., x_{n}$ are in $\cQ$ and $S$ is as above, then $Z(S)(f) = Z(S')(f)$ where the left hand side denotes the action of a shaded tangle and the right hand side denotes the action of an $N-P-M$ tangle.

\begin{defn}
If $\cQ$ is the planar algebra of an inclusion $N\subset M$ of finite index $II_{1}$ factors with an intermediate subfactor, $P$, then we define the $\cP$ constructed as above as the \underline{$P-$augmentation of $\cQ$}.
\end{defn}
\section{The GJS construction for the $\sum FC$ potential}\label{sec:GJS2}

Suppose $\cQ$ is a subfactor planar algebra containing a copy of the Fuss Catalan planar algebra.  For each $k \geq 0$, we study the graded algebra $\Gr^{\pm}_{k}(\cQ)$ as above, and place the following trace on $\Gr^{\pm}_{k}(\cQ)$:
$$
\tr(x) = \frac{1}{(\delta_{a}\delta_{b})^{k}}
\begin{tikzpicture}[baseline=.5cm]
	\draw (0,0)--(0,.8);
	\filldraw[unshaded,thick] (-.4,.4)--(.4,.4)--(.4,-.4)--(-.4,-.4)--(-.4,.4);
	\draw[thick, unshaded] (-.7, .8) -- (-.7, 1.6) -- (.7, 1.6) -- (.7,.8) -- (-.7, .8);
	\node at (0,0) {$x$};
	\node at (0,1.2) {$\sum FC$};
    \draw [thick] (.4,0) arc (90:-90:.4cm) -- (-.4,-.8) arc (270:90:.4cm);
    \node at (0, -.6) {$k$};
\end{tikzpicture}\, .
$$
where the shading on the \emph{upper left} corner is $\pm$ (therefore, the shading in the starred region depends on $k$).

As in Section \ref{sec:NPM}, we realize $\cQ$ inside an augmentation, $\cP$, and we consider the algebras $\Gr_{\alpha}(\cP)$ where $\overline{\alpha}\alpha \in \Delta$.  If $\alpha \neq \emptyset$ then the shading after the last letter of $\alpha$ is uniquely determined and hence we can write,
$$
\Gr_{\alpha}(\cP) = \oplus_{(\beta: \overline{\alpha}\beta\alpha \in \Delta)} \cP_{\overline{\alpha}\beta\alpha}.
$$
$\Gr_{\alpha}(\cP)$ is endowed with a multiplication $\wedge$ given by
$$
x \wedge y =
\begin{tikzpicture}[baseline = -.1cm]
	\draw [thick] (-.8,0)--(2,0);	
	\draw (0,0)--(0,.8);
	\draw (1.2,0)--(1.2,.8);
	\filldraw[unshaded,thick] (-.4,.4)--(.4,.4)--(.4,-.4)--(-.4,-.4)--(-.4,.4);
	\filldraw[unshaded,thick] (.8,.4)--(1.6,.4)--(1.6,-.4)--(.8,-.4)--(.8,.4);
	\node at (0,0) {$x$};
	\node at (1.2,0) {$y$};
	\node at (-.6,.2) {{\scriptsize{$\alpha$}}};
	\node at (.6,.2) {{\scriptsize{$\alpha$}}};
	\node at (1.8,.2) {{\scriptsize{$\alpha$}}};
	\node at (.2,.6) {{\scriptsize{$\beta$}}};
	\node at (1.4,.6) {{\scriptsize{$\gamma$}}};
\end{tikzpicture}
$$
and normalized trace
$$
\tr(x) = \frac{1}{\delta^{\alpha}}\, \, \cdot \, \,
\begin{tikzpicture}[baseline=0cm]
	\draw (0,0)--(0,.8);
	\draw [thick] (.4,0) arc (90:-90:.4cm) -- (-.4,-.8) arc (270:90:.4cm);
	\filldraw[unshaded,thick] (-.4,.4)--(.4,.4)--(.4,-.4)--(-.4,-.4)--(-.4,.4);
	\draw[thick, unshaded] (-.7, .8) -- (-.7, 1.6) -- (.7, 1.6) -- (.7,.8) -- (-.7, .8);
	\node at (0,0) {$x$};
	\node at (0,1.2) {$\Sigma CTL$};
	\node at (.2,.6) {{\scriptsize{$\beta$}}};
	\node at (.6,.2) {{\scriptsize{$\alpha$}}};
	\node at (-.6,.2) {{\scriptsize{$\alpha$}}};
	\node at (0,-.6) {{\scriptsize{$\overline{\alpha}$}}};
\end{tikzpicture}
$$
where, as in Chapter \ref{chap:universal}, $\sum CTL$ is the sum of all colored Temperley-Lieb diagrams.  In the case where $\alpha = \emptyset$, then we have three such algebras, one for each shading $N$, $P$, and $M$.  We therefore form graded algebras $\Gr_{0}^{N}$, $\Gr_{0}^{P}$ and $\Gr_{0}^{M}$ where
$$
\Gr_{0}^{Q} = \bigoplus_{(\beta : \beta \in \Delta^{Q})} \cP_{\beta}.
$$
Using the graded algebras associated to $\cP$, we see that we have the following trace preserving inclusions:
\begin{align*}
\Gr_{2k}^{+}(\cQ) &\subset \Gr_{abba\cdots abba}(\cP)\\
\Gr_{2k+1}^{+}(\cQ) &\subset \Gr_{abba\cdots baab}(\cP)\\
\Gr_{2k}^{-}(\cQ) &\subset \Gr_{baab\cdots baab}(\cP)\\
\Gr_{2k+1}^{-}(\cQ) &\subset \Gr_{baab \cdots abba}(\cP).
\end{align*}

One advantage to working in the $N-P-M$ planar algebra $\cP$ is that the map $\Phi: \Gr_{\alpha}(\cP) \rightarrow \Gr_{\alpha}(\cP)$ given by
$$
\Phi(x) =\sum_{E \in \Epi(CTL)}
\begin{tikzpicture}[baseline=.7cm]
	\draw (0,0)--(0,2);
	\filldraw[unshaded,thick] (-.4,.4)--(.4,.4)--(.4,-.4)--(-.4,-.4)--(-.4,.4);
	\draw[thick, unshaded] (-.4, .8) -- (-.4, 1.6) -- (.4, 1.6) -- (.4,.8) -- (-.4, .8);
	\node at (0,0) {$x$};
	\node at (0,1.2) {$E$};
\end{tikzpicture}
$$
as in Section \ref{sec:graded} of the previous chapter is a well defined trace preserving isomorphism between $\Gr_{\alpha}(\cP)$ with the $\sum CTL$ trace and $\Gr_{\alpha}(\cP)$ with the orthogonalized trace.  Therefore, we have proven the following lemma:
\begin{lem}
The potential $\sum CTL$ gives a positive definite trace on $\Gr_{\alpha}(\cP)$.
\end{lem}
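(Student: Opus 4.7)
The plan is to follow exactly the orthogonalization strategy from Section~\ref{sec:graded} of Chapter~\ref{chap:universal}, adapting it to the $N\textrm{-}P\textrm{-}M$ setting. First I would introduce an orthogonalized algebra $F_{\alpha}(\cP)$ whose underlying vector space is $\Gr_{\alpha}(\cP) = \bigoplus_{\beta} \cP_{\overline{\alpha}\beta\alpha}$, but equipped with the multiplication $\star_{\alpha}$ that sums over all ways to pair up and cap off the top strings of $x$ and $y$ in matched colors (exactly as in Chapter~\ref{chap:universal}, but now the cups must respect the $N$-$P$-$M$ shading, which is automatic since the strings are colored by $a$'s and $b$'s). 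The trace on $F_{\alpha}(\cP)$ is given by the $\emptyset$-component; by construction the resulting inner product makes the distinct summands $\cP_{\overline{\alpha}\beta\alpha}$ mutually orthogonal, and its restriction to each summand is exactly the planar algebra inner product on $\cP_{\overline{\alpha}\beta\alpha}$, which is positive definite by the positivity axiom of $\cP$. So positive definiteness on the orthogonalized side is free.

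Next I would define $\Phi \colon \Gr_{\alpha}(\cP) \to F_{\alpha}(\cP)$ by the formula in the statement, with $\Epi(CTL)$ interpreted as colored Temperley--Lieb boxes in the $N\textrm{-}P\textrm{-}M$ setting: all strings touching the top of the box must be through-strings, the colors on the boundary must match those of $x$ above and $\alpha$ below, and all internal caps must connect same-colored endpoints so that the shading is consistent. With this interpretation, the proof in \cite{MR2645882} and Chapter~\ref{chap:universal} goes through verbatim: $\Phi$ sends an element of $\cP_{\overline{\alpha}\beta\alpha}$ to a sum of elements in various $\cP_{\overline{\alpha}\gamma\alpha}$ with $|\gamma|\le |\beta|$, and the triangular structure on the length filtration gives an explicit inverse via Möbius inversion, so $\Phi$ is a bijection. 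The multiplicativity $\Phi(x\wedge_\alpha y) = \Phi(x) \star_\alpha \Phi(y)$ is checked by the same diagrammatic bookkeeping argument: an epi-CTL element on top of $x\wedge y$ either separates into an epi-CTL on $x$ and one on $y$ (contributing to $\star_\alpha$ with a specific through-string pattern $\kappa$), or contains caps that connect the tops of $x$ and $y$ (whose contribution is absorbed into the sum via the definition of $\star_\alpha$). Compatibility with $*$ and with the trace is immediate because the $\emptyset$-component of $\Phi(x)$ equals the full $\sum CTL$ trace of $x$.

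Having $\Phi$ as a trace-preserving $*$-isomorphism, positive definiteness of the $\sum CTL$ trace on $\Gr_{\alpha}(\cP)$ follows formally: for any nonzero $x$, $\tr(x^* \wedge_\alpha x) = \tr_F(\Phi(x^* \wedge_\alpha x)) = \tr_F(\Phi(x)^* \star_\alpha \Phi(x)) = \|\Phi(x)\|_{2,F}^2 > 0$ since $\Phi(x)\neq 0$ and the orthogonalized inner product is positive definite.

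The main obstacle I anticipate is purely bookkeeping: one must verify that in the $N\textrm{-}P\textrm{-}M$ setting the set $\Epi(CTL)$ still has the combinatorial properties needed for the triangular inversion (i.e., invertibility of $\Phi$) and that the multiplication-splitting argument still holds when the caps are constrained to preserve the three-color shading. The color constraint actually helps rather than hurts, since any pairing that attempts to connect two strings of different colors or incompatible shadings is automatically zero, so the sums defining $\Phi$ and $\star_\alpha$ remain finite and well-defined, and the length-of-word filtration on $\Lambda$ is replaced by the analogous length filtration on $\{\beta : \overline{\alpha}\beta\alpha \in \Delta\}$. Once this verification is done, the entire argument is a translation of \cite{MR2645882} into the colored setting, with no new analytic input required.
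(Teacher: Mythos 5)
Your proposal is correct and takes essentially the same approach as the paper: the paper's proof of this lemma is precisely to transport the problem to the orthogonalized algebra via the $\Epi(CTL)$ map $\Phi$, citing the argument from Section~\ref{sec:graded} of Chapter~\ref{chap:universal} (which in turn follows \cite{MR2645882}), and to read off positive definiteness from the planar algebra's positivity axiom after observing that $\Phi$ is a trace-preserving $*$-isomorphism. Your additional observation that the $N$-$P$-$M$ shading constraint only kills terms and hence creates no new difficulties is exactly the point the paper is implicitly relying on.
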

Furthermore, by considering either the $a$ or $b$ $\cup$ element, the same analysis as in Section \ref{sec:GJS} of the previous chapter proves the following theorem:
\begin{thm}
Left (and right) multiplication of elements of $Gr_{\alpha}(\cP)$ on $L^{2}(Gr_{\alpha}(\cP))$ is bounded and the associated von Neumann algebra, $M_{k} = Gr_{\alpha}(\cP)''$ is a $II_{1}$ factor.
\end{thm}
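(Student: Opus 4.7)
The plan is to mimic the two-step strategy of Section \ref{sec:GJS} in Chapter \ref{chap:universal}: first establish boundedness by orthogonalization, then establish factoriality by studying $L^2(M_k)$ as a bimodule over an abelian subalgebra generated by a cup element.

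For boundedness, I would pass to the orthogonalized multiplication $\star$ (the analogue of $F_\alpha(\cP)$) via the trace-preserving $*$-isomorphism $\Phi$ just constructed. Given $x\in \cP_{\overline\alpha\beta\alpha}$ with $\beta=\overline{\gamma'}\gamma$, the element $x^*\star x$ lives in the finite-dimensional $C^*$-algebra $\cP_{\overline\alpha\gamma\alpha\to\overline\alpha\gamma\alpha}$, and the identical rectangle-closure computation as in Equation \eqref{eqn:TwoNorm} yields
\[
\|x\star w\|_{2}\;\le\;\Bigl(\sum_{\beta=\overline{\gamma'}\gamma}\|x\|_{\cP_{\overline\alpha\gamma\alpha\to\overline\alpha\gamma\alpha}}\Bigr)\|w\|_{2}.
\]
The sum is finite since only finitely many splittings $\beta=\overline{\gamma'}\gamma$ are compatible with the prescribed $N$--$P$--$M$ shading. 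Right multiplication is handled symmetrically. Hence $M_k:=\Gr_\alpha(\cP)''$ acts on $L^2(\Gr_\alpha(\cP))$ as a finite von Neumann algebra with faithful normal tracial state.

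For factoriality, I would pick a color $c\in\{a,b\}$; since $\delta_a,\delta_b>1$, the cup element
\[
\bluecup_\alpha^{\,c}=\begin{tikzpicture}[baseline=0cm]\draw[thick](-.4,-.4)--(.4,-.4)--(.4,.4)--(-.4,.4)--(-.4,-.4);\draw[thick](-.4,-.15)--(.4,-.15);\draw(-.25,.4) arc(-180:0:.25cm);\end{tikzpicture}
\]
generates an abelian subalgebra $A\subset M_k$ with continuous spectrum. One decomposes $L^2(M_k)$ as an $A$-$A$ bimodule using an induction on the length of the word labelling the top of a box, as in Lemma \ref{lem:decomposition}, yielding summands $W$ (spanned by $P_{\overline\alpha\alpha}$), $\cV_{cc},\cV_{co},\cV_{oc},\cV_{oo}$ (the four rightmost/leftmost-color bimodules), and $\cV_b$ for the other color $b$. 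The shading constraints in the $N$-$P$-$M$ setting restrict which words may appear on a given boundary, but do not obstruct this decomposition. The key computation is the analogue of Lemma \ref{lem:unitary}: under a natural unitary to $\ell^2(\N)\otimes\cH\otimes\ell^2(\N)$, $\pi((\bluecup_\alpha^{\,c}-1)/\sqrt{\delta_c})$ and $\rho((\bluecup_\alpha^{\,c}-1)/\sqrt{\delta_c})$ act as $s+s^*$ or $s+s^*-\delta_c^{-1/2}e_{\xi_0}$ on the appropriate tensor factor, and these two operators are unitarily equivalent by the Kato--Rosenblum/Weyl--von Neumann argument of Lemma \ref{lem:essential} and the subsequent lemma.

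Once this is in place, the arguments of Lemmas \ref{lem:shift}, \ref{lem:RelativeCommutant}, and the proof of Theorem \ref{thm:Factor} transport verbatim: $A'\cap M_k = AP_{\overline\alpha\to\overline\alpha}^{\mathrm{op}}$ (since $s+s^*$ has no eigenvalues on $\ell^2(\N)\otimes\ell^2(\N)$), then a Gram--Schmidt computation against the cup tower reduces $M_0'\cap M_k$ to $P_{\overline\alpha\to\overline\alpha}^{\mathrm{op}}$, and finally commutation of any central element with the rotation-by-$\pi$ diagram forces it to be scalar. The trace is finite and faithful, so $M_k$ is a $II_1$ factor. The main technical obstacle is a careful bookkeeping of the shading rules in the $A$-$A$ bimodule decomposition: the two colors $a,b$ must alternate in patterns dictated by the $N$-$P$-$M$ regions, so one must check that each $\cV_\bullet$ summand really is preserved by the $\bluecup_\alpha^{\,c}$ action. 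Once this is verified the rest of the proof is a direct translation of the arguments from Chapter \ref{chap:universal}.
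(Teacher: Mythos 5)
Your proposal is correct and follows the same strategy the paper invokes: the text explicitly says the theorem follows ``by considering either the $a$ or $b$ $\cup$ element, the same analysis as in Section~\ref{sec:GJS} of the previous chapter,'' and your plan is precisely a fleshed-out version of that reference---orthogonalize and use the rectangle-closure estimate of Equation~\eqref{eqn:TwoNorm} for boundedness, then decompose $L^2(M_\alpha)$ as an $A$-$A$ bimodule over the cup algebra and run the Kato--Rosenblum/Weyl--von~Neumann unitary equivalence and relative commutant computations of Lemmas~\ref{lem:decomposition}, \ref{lem:unitary}, \ref{lem:essential}, \ref{lem:shift}, \ref{lem:RelativeCommutant}, and Theorem~\ref{thm:Factor} for factoriality. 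Your caveat about shading bookkeeping is the right one; the only point worth making explicit is that the choice of cup color $c$ is forced by the shading of the region above the $\alpha$-strings (only $a$ if that region is $N$, only $b$ if $M$, either if $P$), which is exactly why the paper allows either cup.
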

We note that if $\gamma$ and $\beta$ are words in $a$ and $b$ such that $\overline{\beta}\beta \in \Delta$ and $\overline{\beta\gamma}\beta\gamma \in \Delta$ then we have a unital inclusion of $M_{\gamma}$ into $M_{\beta\gamma}$ given the extension of
$$
x \mapsto \begin{tikzpicture}[baseline = -.1cm]
	\draw (0,0)--(0,.8);
	\draw (-.8,0)--(.8,0);
	\draw (-.8,-.6)--(.8,-.6);
	\filldraw[unshaded,thick] (-.4,.4)--(.4,.4)--(.4,-.4)--(-.4,-.4)--(-.4,.4);
	\node at (0,-.75) {{\scriptsize{$\gamma$}}};
	\node at (-.6,.2) {{\scriptsize{$\beta$}}};
	\node at (.6,.2) {{\scriptsize{$\beta$}}};
	\node at (0,0) {$x$};
\end{tikzpicture}\,.
$$
We therefore have the following theorem, whose proof is exactly the same as in Section \ref{sec:towers} of the previous chapter.
\begin{thm} \label{thm:Jtower}
The following is a Jones' tower of $II_{1}$ factors:
$$
M_{0}^{Q} \subset M_{\alpha} \subset M_{\alpha\overline{\alpha}} \subset \cdots \subset M_{(\alpha\overline{\alpha})^{n}} \subset M_{\overline{\alpha}(\alpha\overline{\alpha})^{n}}.
$$
Furthermore, $[M_{\alpha}:M_{0}^{Q}] = \delta_{\alpha}$, and the Jones projection for $M_{0}^{Q} \subset M_{\alpha}$ is
$$
e_{0} =  \frac{1}{\delta_{\alpha}}
\begin{tikzpicture} [baseline = -.1cm]
	\draw [thick] (-.4, .25) arc(90:-90: .25cm);
	\draw [thick] (.4, .25) arc(90:270: .25cm);
	\draw[thick] (-.4, -.4) -- (-.4, .4) -- (.4, .4) -- (.4, -.4) -- (-.4, -.4);
    \node at (-.325, 0) {\scriptsize{$\alpha$}};
    \node at (.325, 0) {\scriptsize{$\alpha$}};
\end{tikzpicture}
$$
\end{thm}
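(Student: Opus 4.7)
The plan is to mirror, step by step, the proof of Theorem~\ref{thm:tower} in Section~\ref{sec:towers} of the previous chapter, verifying along the way that the diagrammatic identities used there carry over to the $N$–$P$–$M$ setting (the only change being bookkeeping of the three shadings). The strategy is to show that $M_{\alpha\overline{\alpha}}$ is the Jones basic construction of $M_0^Q\subset M_\alpha$ with Jones projection $e_0$, and then iterate.

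First I would verify, by a direct diagrammatic computation using the $\sum CTL$ trace, that $e_0$ as displayed is a projection and that for $x\in M_\alpha$ (included in $M_{\alpha\overline{\alpha}}$ as in the discussion immediately preceding Theorem~\ref{thm:Jtower}) one has $e_0 x e_0=E_{M_0^Q}(x)\,e_0$, where $E_{M_0^Q}\colon M_\alpha\to M_0^Q$ is the trace-preserving conditional expectation. Both identities are pure diagram manipulations: closing $x$ up with the two $\alpha$-caps reproduces the diagram defining $E_{M_0^Q}(x)$ (here sphericality and the fact that $\sum CTL$ is the set of all planar, color-respecting pairings are both essential). A trace computation then gives $\tr(e_0)=1/\delta_\alpha$, so that once $M_{\alpha\overline{\alpha}}$ is identified with the basic construction we will read off $[M_\alpha:M_0^Q]=\delta_\alpha$.

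Next I would prove that $(M_\alpha,e_0)''$ is a $\mathrm{II}_1$ factor. An element $y$ in its center must commute with $M_0^Q$, which by the NPM analogue of Lemma~\ref{lem:RelativeCommutant} forces $y\in P_{\alpha\overline{\alpha}\to\alpha\overline{\alpha}}^{\mathrm{op}}$; $y$ must also commute with every element of $M_\alpha$, and taking $y$ against the rotation-type element used in the proof of Theorem~\ref{thm:tower} (the $\alpha$-strand cup-cap on the top of the box) and then joining the leftmost strands to the top reduces $y$ to a scalar. I would then show that every $z\in (M_\alpha,e_0)''$ satisfies $z e_0=\delta_\alpha\,E_{M_\alpha}(z e_0)\,e_0$; since $(M_\alpha,e_0)''$ is generated by $M_\alpha\cup M_\alpha e_0 M_\alpha$, this reduces to checking the identity on these two classes, each of which is a direct diagram verification. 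This yields that the map $x\mapsto \delta_\alpha^{1/2}\,xe_0$ from $M_\alpha$ into $(M_\alpha,e_0)''$ is a surjective isometry intertwining $E_{M_0^Q}$ on $L^2(M_\alpha)$ with left multiplication by $e_0$, which is precisely the data identifying $(M_\alpha,e_0)''$ with the basic construction.

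To conclude the first step of the tower, I would apply the same argument to the inclusion $M_\alpha\subset M_{\alpha\overline{\alpha}}$ to obtain $[M_{\alpha\overline{\alpha}}:M_\alpha]=\delta_\alpha$; combined with the index $[M_\alpha:M_0^Q]=\delta_\alpha$ computed above and the fact that the basic construction has the same index, this forces $[M_{\alpha\overline{\alpha}}:(M_\alpha,e_0)'']=1$, i.e.\ $M_{\alpha\overline{\alpha}}=(M_\alpha,e_0)''$. Iterating at every level gives the full tower $M_0^Q\subset M_\alpha\subset M_{\alpha\overline{\alpha}}\subset\cdots$. The principal obstacle is purely notational: in the NPM setting one must keep track of which shading sits in which region when drawing the cups and caps that implement $e_0$ and when verifying $e_0xe_0=E_{M_0^Q}(x)e_0$, since an ill-placed cap could fail to produce an element of $\Delta$. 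Everything else — factoriality of $(M_\alpha,e_0)''$, the $ze_0=\delta_\alpha E_{M_\alpha}(ze_0)e_0$ identity, and the iteration — goes through verbatim once the diagrams are drawn with the correct shading.
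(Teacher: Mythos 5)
Your proof takes exactly the route the paper intends: Theorem~\ref{thm:Jtower} is given no new proof; the paper simply remarks that the argument is the same as that of Theorem~\ref{thm:tower} in Section~\ref{sec:towers}, and your step-by-step reconstruction (projection and conditional-expectation identities for $e_0$, factoriality of $(M_\alpha,e_0)''$ via the relative-commutant analysis, the push-down relation, identification with the basic construction, and iteration) is precisely that, with sensible attention to the only genuinely new point, namely that the cups and caps implementing $e_0$ and $E_{M_0^Q}$ must respect the $N$--$P$--$M$ shading constraint defining $\Delta$.

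The one thing to fix is the numerical constant, which you have read off the theorem statement rather than from the computation you propose to do. In $M_{\alpha\overline{\alpha}}$ the normalized trace is $\delta_{\alpha\overline{\alpha}}^{-1}\Tr=\delta_\alpha^{-2}\Tr$, and closing the unnormalized cup--cap diagram produces $|\alpha|$ loops worth $\delta_\alpha$ in total, so
$$
\tr(e_0)=\frac{1}{\delta_\alpha}\cdot\frac{1}{\delta_\alpha^{2}}\cdot\delta_\alpha=\frac{1}{\delta_\alpha^{2}},
$$
giving $[M_\alpha:M_0^Q]=\delta_\alpha^{2}$ --- the same formula as in Theorem~\ref{thm:tower}, and the one consistent with the amplification factor $\delta_\alpha^{-2}$ used in the corollary to Lemma~\ref{lem:finitedepth}. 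The $\delta_\alpha$ printed in the statement of Theorem~\ref{thm:Jtower} appears to be a typo; accordingly the push-down identity should read $ze_0=\delta_\alpha^{2}\,E_{M_\alpha}(ze_0)\,e_0$ and the isometry should be $x\mapsto\delta_\alpha\,xe_0$, matching Theorem~\ref{thm:tower} verbatim. Your claim ``a trace computation then gives $\tr(e_0)=1/\delta_\alpha$'' would not survive actually doing the computation; with the corrected constants everything you outline goes through unchanged.
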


\section{A semifinite algebra associated to $\cP$}\label{sec:semi}

As in section \ref{sec:vNa} of the previous chapter, we will realize the isomorphism class of all of the $M_{\alpha}$ by examining realizing them as ``corners" of a semifinite algebra.  To begin, we consider the set semifinite algebra $\Gr_{\infty}^{Q}$ for $Q = N, \, P, \, M$.  As a vector space,
$$
\Gr_{\infty}^{Q} = \bigoplus_{\substack{\alpha, \gamma, \beta \\ \overline{\alpha}\gamma\beta \in \Delta, \, \alpha \in \Delta^{Q}}} \cP_{\overline{\alpha}\gamma\beta}.
$$
Pictorially, we realize elements in $\Gr_{\infty}^{Q}$ as linear combinations of boxes of the form
$$
\begin{tikzpicture}[baseline=-.1cm]
	\draw (-.8,0)--(.8,0);
	\draw (0,.4)--(0,.8);
	\draw[thick, unshaded] (-.4, -.4) -- (-.4, .4) -- (.4, .4) -- (.4, -.4) -- (-.4, -.4);
	\node at (0, 0) {$w$};
	\node at (-.6, .2) {\scriptsize{$\alpha$}};
	\node at (.2,.6) {\scriptsize{$\beta$}};
	\node at (.6, .2) {\scriptsize{$\gamma$}};
\end{tikzpicture}
$$
where the \emph{bottom} (starred) region is shaded $Q$ (thus the top left corner varies in shading).  $\Gr_{\infty}^{Q}$ comes endowed with the following multiplication:
$$
\begin{tikzpicture}[baseline=-.1cm]
	\draw (-.8,0)--(.8,0);
	\draw (0,.4)--(0,.8);
	\draw[thick, unshaded] (-.4, -.4) -- (-.4, .4) -- (.4, .4) -- (.4, -.4) -- (-.4, -.4);
	\node at (0, 0) {$x$};
	\node at (-.6, .2) {\scriptsize{$\kappa$}};
	\node at (.2,.6) {\scriptsize{$\gamma$}};
	\node at (.6, .2) {\scriptsize{$\theta$}};
\end{tikzpicture}
\wedge
\begin{tikzpicture}[baseline=-.1cm]
	\draw (-.8,0)--(.8,0);
	\draw (0,.4)--(0,.8);
	\draw[thick, unshaded] (-.4, -.4) -- (-.4, .4) -- (.4, .4) -- (.4, -.4) -- (-.4, -.4);
	\node at (0, 0) {$x$};
	\node at (-.6, .2) {\scriptsize{$\omega$}};
	\node at (.2,.6) {\scriptsize{$\gamma'$}};
	\node at (.6, .2) {\scriptsize{$\chi$}};
\end{tikzpicture}
= \delta_{\omega,\theta}
\begin{tikzpicture} [baseline = -.1cm]
	\draw (0,.4)--(0,.8);
	\draw (1.2,.4)--(1.2,.8);
	\draw (-.8,0)--(2,0);
	\draw[thick, unshaded] (-.4, -.4) -- (-.4, .4) -- (.4, .4) -- (.4, -.4) -- (-.4, -.4);
	\draw[thick, unshaded] (.8, -.4) -- (.8, .4) -- (1.6, .4) -- (1.6, -.4) -- (.8, -.4);
	\node at (0, 0) {$x$};
	\node at (1.2, 0) {$y^{*}$};
	\node at (-.6,.2) {\scriptsize{$\kappa$}};
	\node at (.2,.6) {\scriptsize{$\gamma$}};
	\node at (.6,.2) {\scriptsize{$\theta$}};
	\node at (1.4,.6) {\scriptsize{$\gamma'$}};
	\node at (1.8,.2) {\scriptsize{$\chi$}};
\end{tikzpicture}\
$$
and semifinite trace, $\Tr$, which is given by:
$$
\Tr(x) =
\begin{tikzpicture}[baseline=.3cm]
	\draw (0,0)--(0,.8);
	\draw (.4,0) arc (90:-90:.4cm) -- (-.4,-.8) arc (270:90:.4cm);
	\filldraw[unshaded,thick] (-.4,.4)--(.4,.4)--(.4,-.4)--(-.4,-.4)--(-.4,.4);
	\draw[thick, unshaded] (-.7, .8) -- (-.7, 1.6) -- (.7, 1.6) -- (.7,.8) -- (-.7, .8);
	\node at (0,0) {$x$};
	\node at (0,1.2) {$\Sigma CTL$};
	\node at (.6,.2) {{\scriptsize{$\alpha$}}};
	\node at (-.6,.2) {{\scriptsize{$\alpha$}}};
	\node at (0,-.6) {{\scriptsize{$\overline{\alpha}$}}};
\end{tikzpicture}
$$
if $x \in \Gr(\cP_{\alpha})$ and is zero otherwise.  Just as in the analysis of Section \ref{sec:vNa} from the previous chapter, we see that $\Gr_{\infty}^{Q}$ completes to a $II_{\infty}$ factor, $\cM_{\I}^{Q}$ when being represented on $L^{2}(\Gr_{\infty}^{Q})$.

Also of importance will be the von Neumann subalgebra $\cA_{\infty}^{Q} \subset \cM_{\infty}^{Q}$ which is generated by all boxes in $\cG_{\infty}^{Q}$ with \underline{no} strings on top.  Notice that there is a normal, faithful, $\Tr$-preserving conditional expectation $E: \cM_{\infty}^{Q} \rightarrow \cA_{\infty}^{Q}$ given by
$$
E(x) = \begin{tikzpicture}[baseline=.3cm]
	\draw (0,0)--(0,.8);
	\draw (.4,0) -- (.8, 0);
    \draw (-.4, 0)--(-.8, 0);
	\filldraw[unshaded,thick] (-.4,.4)--(.4,.4)--(.4,-.4)--(-.4,-.4)--(-.4,.4);
	\draw[thick, unshaded] (-.7, .8) -- (-.7, 1.6) -- (.7, 1.6) -- (.7,.8) -- (-.7, .8);
	\node at (0,0) {$x$};
	\node at (0,1.2) {$\Sigma CTL$};
\end{tikzpicture}\, .
$$
Furthermore, we have the following lemma, whose proof is identical to that of Lemma \ref{lem:1infty} of the previous chapter:
\begin{lem}\label{lem:1infty2}
$\cA^{Q}_{\I} = \bigoplus_{v \in \Gamma_{Q}} \cA_{v}$ where $\Gamma_{Q}$ is the $Q-$principal graph of $\cP$ and each $\cA_{v}$ is a type $I_{\infty}$ factor.
\end{lem}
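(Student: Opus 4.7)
The plan is to mirror the argument used for Lemma \ref{lem:1infty} of the previous chapter, with the only new ingredient being that the vertex labels come from the $Q$-principal graph $\Gamma_{Q}$ of the $N-P-M$ planar algebra $\cP$, rather than from the fusion graph of a $C^{*}$-tensor category. Concretely, I would begin by recording the identification
\[
p \wedge \cA_{\infty}^{Q} \wedge q \;=\; q\, P_{\overline{\alpha}\beta}\, p
\]
for any projections $p\in P_{\overline{\alpha}\alpha}$ and $q\in P_{\overline{\beta}\beta}$ with $\alpha,\beta\in\Delta^{Q}$, which follows directly from the definition of the $\wedge$-multiplication on $\Gr_{\infty}^{Q}$ restricted to boxes with no strings on top. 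This identification immediately shows that $p$ and $q$ are equivalent in $\cA_{\infty}^{Q}$ if and only if they are equivalent in $\cP$ in the planar-algebraic sense of Section \ref{sec:NPM}.

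Next I would fix, for each vertex $v\in \Gamma_{Q}$, a representative minimal projection $p_{v}\in P_{\overline{\alpha_{v}}\alpha_{v}}$ with $\alpha_{v}\in\Delta^{Q}$. Because $p_{v}$ is minimal in the finite-dimensional $C^{*}$-algebra $P_{\overline{\alpha_{v}}\alpha_{v}}$, the corner $p_{v}\wedge\cA_{\infty}^{Q}\wedge p_{v}\cong\C$, so $p_{v}$ is a minimal projection in $\cA_{\infty}^{Q}$. Letting $1_{v}$ denote the central support of $p_{v}$, the factor $\cA_{v}=1_{v}\wedge\cA_{\infty}^{Q}\wedge 1_{v}$ is a type $I$ factor; it is type $I_{\infty}$ because one can produce infinitely many mutually orthogonal projections equivalent to $p_{v}$ in $\cP$, for instance by padding $p_{v}$ on the right by successively longer alternating blocks of $aa$'s and $bb$'s with appropriate shading and then conjugating by suitable partial isometries coming from the Jones basic construction discussed after the definition of $\Gamma_{Q}$. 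Distinct vertices give orthogonal central supports, i.e.\ $1_{v}\wedge 1_{w}=0$ for $v\neq w$, because the central supports are determined by equivalence classes of minimal projections.

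It remains to show $\sum_{v\in\Gamma_{Q}} 1_{v}=1_{\cA_{\infty}^{Q}}$, so that no residual summand $\cB$ survives in $\cA_{\infty}^{Q}=\bigoplus_{v}\cA_{v}\oplus\cB$. For this, note that $1_{\cA_{\infty}^{Q}}=\sum_{\alpha\in\Delta^{Q}} p_{\overline{\alpha}\alpha}$ in the strong operator topology, and that each $p_{\overline{\alpha}\alpha}$, being the identity of the finite-dimensional $C^{*}$-algebra $P_{\overline{\alpha}\alpha}$, decomposes as an orthogonal sum of minimal projections; each such minimal projection is equivalent to some $p_{v}$ by the definition of $\Gamma_{Q}$ in Section \ref{sec:NPM}, and hence lies under $1_{v}$. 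Summing over $\alpha$ gives $\sum_{v}1_{v}=1$, proving $\cB=\{0\}$.

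The only step requiring genuine care, and the one I expect to be the mildest obstacle, is verifying that every minimal projection in some $P_{\overline{\alpha}\alpha}$ with $\alpha\in\Delta^{Q}$ is equivalent (in the $N-P-M$ planar-algebraic sense) to some chosen representative $p_{v}$; this is where one uses the bookkeeping of the three shadings, together with the observation from Section \ref{sec:NPM} that the map $\phi$ of Example \ref{ex:GenFC} allows one to insert cups so as to replace $\alpha$ by a standard alternating word of the form $abbaabba\cdots$ or $baabbaab\cdots$ without changing equivalence classes. Once this is in place, the rest of the argument is an essentially verbatim transcription of Lemma \ref{lem:1infty}.
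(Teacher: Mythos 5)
Your proposal is correct and follows the same route the paper intends: the paper explicitly states that the proof of this lemma "is identical to that of Lemma \ref{lem:1infty} of the previous chapter," and your argument is a faithful transcription of that proof to the $N$-$P$-$M$ setting, with the key identification $p\wedge\cA_{\infty}^{Q}\wedge q = qP_{\overline{\alpha}\beta}p$, the use of central supports to produce the type $I_{\infty}$ factors $\cA_{v}$, and the decomposition of each $p_{\overline{\alpha}\alpha}$ into minimal projections to rule out a residual summand. The one point you flag as needing care — that every minimal projection is equivalent to a chosen representative via the $\phi$-map of Example \ref{ex:GenFC} — is exactly the observation the paper makes in Section \ref{sec:NPM} when discussing the principal graphs, so it is correctly handled.
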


We now aim to figure out the isomorphism class of the algebras $\cM_{\alpha}$.  For the remainder of the section, we will assume for simplicity that $Q = N$ and hence we will be finding the isomorphism class of $M_{\alpha}$ such that $s(\overline{\alpha}) = N$.  The other two cases will follow a similar analysis.  We first define elements $X_{a}$ and $X_{b}$ as follows:
$$
X_{a} = \sum_{\substack{ \alpha \\ \alpha, \overline{\alpha}\in \Delta^{N}}}  \,
\begin{tikzpicture}[baseline=-.1cm]
    \draw [thick] (-.4, -.4)--(-.4, .4)--(.4, .4)--(.4, -.4)--(-.4, -.4);
    \draw [thick, blue] (-.4, .1) arc(-90:0: .3cm);
    \draw [thick] (-.4, -.2)--(.4, -.2);
    \node at (-.3, .3) {\scriptsize{$a$}};
    \node at (.2, 0) {\scriptsize{$\alpha$}};
\end{tikzpicture}
+
\begin{tikzpicture}[baseline=-.1cm]
    \draw [thick] (-.4, -.4)--(-.4, .4)--(.4, .4)--(.4, -.4)--(-.4, -.4);
    \draw [thick, blue] (.4, .1) arc(-90:-180: .3cm);
    \draw [thick] (-.4, -.2)--(.4, -.2);
    \node at (.3, .3) {\scriptsize{$a$}};
    \node at (-.2, 0) {\scriptsize{$\alpha$}};
\end{tikzpicture}
\,
\text{ and }
X_{b} = \sum_{\substack{ \beta \\ \beta\in \Delta^{P} \, \overline{\beta} \in N, s(\overline{\beta})=a}}   \,
\begin{tikzpicture}[baseline=-.1cm]
    \draw [thick] (-.4, -.4)--(-.4, .4)--(.4, .4)--(.4, -.4)--(-.4, -.4);
    \draw [thick, red] (-.4, .1) arc(-90:0: .3cm);
    \draw [thick] (-.4, -.2)--(.4, -.2);
    \node at (-.3, .3) {\scriptsize{$b$}};
    \node at (.2, 0) {\scriptsize{$\beta$}};
\end{tikzpicture}
+
\begin{tikzpicture}[baseline=-.1cm]
    \draw [thick] (-.4, -.4)--(-.4, .4)--(.4, .4)--(.4, -.4)--(-.4, -.4);
    \draw [thick, red] (.4, .1) arc(-90:-180: .3cm);
    \draw [thick] (-.4, -.2)--(.4, -.2);
    \node at (.3, .3) {\scriptsize{$b$}};
    \node at (-.2, 0) {\scriptsize{$\beta$}};
\end{tikzpicture}
\,.
$$
As in Section \ref{sec:vNa} of the previous chapter, $X_{a}$ and $X_{b}$ are sums of orthogonally supported operators, and each summand has uniformly bounded operator norm.  Therefore, $X_{a}$, $X_{b} \in \cM_{\I}^{N}$.  Furthermore, we have the following lemma:

\begin{lem}
$\cM_{\I}^{N}$ is generated as a von Neumann algebra by $(\cA_{\I}^{N}, \, X_{a}, \, X_{b})$.
\end{lem}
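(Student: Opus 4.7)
The plan is to adapt the argument of Lemma \ref{lem:generateI} from Chapter \ref{chap:universal} to the shaded $N$-$P$-$M$ setting. The strategy has three steps: (i) reduce an arbitrary generator of $\cG_\infty^N$ to a product of an element of $\cA_\infty^N$ and some single-string ``corner'' diagrams, (ii) isolate each single-string corner as a compression $p \cdot X_{a} \cdot q$ or $p \cdot X_{b} \cdot q$ by projections $p,q \in \cA_\infty^N$, and (iii) verify that the shading conditions in the definitions of $X_a$ and $X_b$ are broad enough to cover every single-string corner that can appear in an element of $\cG_\infty^N$.

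For step (i), I would start with an arbitrary spanning element of $\cG_\infty^N$, namely a box $w \in \cP_{\bar\alpha \gamma \beta}$ drawn with a word $\gamma$ of strings on top, and isotope the diagram so that all the ``left'' strings of the internal $w$ bend into nested arcs against the left wall of the outer box, and similarly on the right. This presents the element as an internal box with no strings on top (hence in $\cA_\infty^N$) multiplied on the left by a stack of left-bending single-string corners and on the right by a stack of right-bending ones, which are the adjoints of left-bending corners. So it suffices to show each single-string corner lies in $W^*(\cA_\infty^N, X_a, X_b)$.

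For step (ii), each summand of $X_a$ (respectively $X_b$) is supported under a projection of the form $p_{a\alpha} + p_{\alpha}$ (respectively $p_{b\beta}+p_{\beta}$) for appropriate words, and these projections are mutually orthogonal over the sum. Hence multiplying $X_a$ or $X_b$ on the left and right by the projections $p_{\gamma} \in \cA_\infty^N$ indexing the desired summand picks out exactly one single-string corner. Both of these corners ($a$ or $b$, left or right) and their adjoints are therefore in $W^*(\cA_\infty^N, X_a, X_b)$.

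For step (iii), which is the main obstacle, I need to verify that the shading conditions $\alpha, \bar\alpha \in \Delta^N$ in the definition of $X_a$ and $\beta \in \Delta^P$, $\bar\beta \in \Delta^N$, $s(\bar\beta)=a$ in the definition of $X_b$ enumerate precisely the shadings needed. The argument is a case analysis: a single-string corner in a $\cG_\infty^N$-element has a color ($a$ or $b$), a side (left or right), and a shading of the word $\alpha$ (respectively $\beta$) just below the corner. Because the outer starred region is $N$ and the $a$-string separates $N$ from $P$ (while a $b$-string separates $P$ from $M$), one checks that for an $a$-corner the word beneath must start and end with shading $N$ (giving $\alpha,\bar\alpha \in \Delta^N$), while for a $b$-corner the word beneath must have shading $P$ on the side adjacent to the corner and the complementary shading data dictated by the outer $N$ region (which gives precisely the conditions imposed on $\beta$). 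Running this case analysis for both colors and both sides shows that every corner arising in $\cG_\infty^N$ appears as a summand of either $X_a$ or $X_b$, completing the proof.
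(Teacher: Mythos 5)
Your steps (i) and (ii) are fine and match the paper's approach: reduce to single-string corner diagrams and isolate them by compressing $X_a$, $X_b$ with projections in $\cA_\infty^N$. The gap is in step (iii), where you assert that ``every corner arising in $\cG_\infty^N$ appears as a summand of either $X_a$ or $X_b$.'' This is false, and the case analysis you sketch is incorrect. Consider an $a$-corner arising in the peeling process. The $a$-string separates the $N$ and $P$ shadings, so the region just below the corner (the shading at the top of the through-band $\alpha$) can be either $N$ or $P$; the bottom of the band is always $N$ since that is the starred region of $\cG_\infty^N$. When that region is $N$ you get $\alpha,\bar\alpha\in\Delta^N$, matching the summand condition of $X_a$. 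But when that region is $P$ — which necessarily occurs, for instance as soon as you peel off a second $a$-corner after a first one — the band satisfies $\alpha\in\Delta^P$, not $\Delta^N$, so this corner is \emph{not} a summand of $X_a$ (nor of $X_b$, whose summands are $b$-corners). The analogous bad case for $b$-corners is when the region below the corner is $M$ rather than $P$.

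The paper handles exactly this obstruction: it splits on $s(\alpha)$ and observes that when $s(\alpha)=b$ (the $P$-side case), the desired corner is \emph{not} directly a compression of $X_a$; instead it is recovered as a product $\frac{1}{\delta_a}\cdot y_1\cdot y_2\cdot y_3$ where $y_1, y_3\in\cA_\infty^N$ carry an $a$-colored cap and cup respectively, and $y_2$ is a summand of $X_a$ whose band has $aa$ prepended to restore the allowable shading. That is, one pads the through-band with an $aa$-cup so that it \emph{does} start at $N$, and then closes off the extra strands inside $\cA_\infty^N$. This trick — the $N$-$P$-$M$ analogue of the odd-length-word trick already present in Lemma \ref{lem:generateI} — is the essential content of the lemma beyond the formal decomposition, and your proposal omits it. To repair the argument, you would need to add a step showing how the $Q=P$ $a$-corners and the $Q=M$ $b$-corners are produced by multiplying a good-case corner by cup/cap elements of $\cA_\infty^N$.
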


\begin{proof}
As in the proof of Lemma \ref{lem:generateI} from Chapter \ref{chap:universal}, all that needs to be shown is that the following diagrams lie in the von Neumann algebra generated by $(\cA_{\I}^{N}, \, X_{a}, \, X_{b})$:
$$
\begin{tikzpicture}[baseline=-.1cm]
    \draw [thick] (-.4, -.4)--(-.4, .4)--(.4, .4)--(.4, -.4)--(-.4, -.4);
    \draw [thick, blue] (-.4, -.1) arc(-90:0: .5cm);
    \draw [thick] (-.4, -.2)--(.4, -.2);
    \node at (.2, 0) {\scriptsize{$\alpha$}};
    \node at (-.2, .2) {\scriptsize{$N$}};
\end{tikzpicture}\, \ \text{ and }
\begin{tikzpicture}[baseline=-.1cm]
    \draw [thick] (-.4, -.4)--(-.4, .4)--(.4, .4)--(.4, -.4)--(-.4, -.4);
    \draw [thick, blue] (-.4, -.1) arc(-90:0: .5cm);
    \draw [thick] (-.4, -.2)--(.4, -.2);
    \node at (.2, 0) {\scriptsize{$\beta$}};
    \node at (-.2, .2) {\scriptsize{$P$}};
\end{tikzpicture}
$$
If $s(\alpha) = a$, then the exact same method as in the proof of Lemma \ref{lem:generateI} from Chapter \ref{chap:universal} shows that this element is in the algebra.  if $s(\alpha) = b$, the following multiplication produces the diagram:
$$\frac{1}{\delta_{a}} \cdot
\begin{tikzpicture}[baseline=-.1cm]
    \draw [thick] (-.4, -.4)--(-.4, .4)--(.4, .4)--(.4, -.4)--(-.4, -.4);
    \draw [thick] (-.4, -.2)--(.4, -.2);
    \draw [blue] (.4, -.1) arc(270:90: .1cm);
    \draw [blue] (-.4, .25)--(.4, .25);
    \node at (-.2, 0) {\scriptsize{$\alpha$}};
    \end{tikzpicture}\, \, \cdot \, \,
    \begin{tikzpicture}[baseline=-.1cm]
    \draw [thick] (-.4, -.4)--(-.4, .4)--(.4, .4)--(.4, -.4)--(-.4, -.4);
    \draw [blue] (-.4, .25) arc(-90:0: .15cm);
    \draw [thick] (-.4, -.2)--(.4, -.2);
    \draw [blue] (-.4,-.1)--(.4, -.1);
    \draw [blue] (-.4, .1)--(.4, .1);
    \end{tikzpicture}\, \, \cdot \, \,
    \begin{tikzpicture}[baseline=-.1cm]
    \draw [thick] (-.4, -.4)--(-.4, .4)--(.4, .4)--(.4, -.4)--(-.4, -.4);
    \draw [thick] (-.4, -.2)--(.4, -.2);
    \draw [blue] (-.4, -.1) arc(-90:90: .1cm);
    \node at (.2, 0) {\scriptsize{$\alpha$}};
    \end{tikzpicture}.
$$
A similar argument works for the element
$$
\begin{tikzpicture}[baseline=-.1cm]
    \draw [thick] (-.4, -.4)--(-.4, .4)--(.4, .4)--(.4, -.4)--(-.4, -.4);
    \draw [thick, red] (-.4, -.1) arc(-90:0: .5cm);
    \draw [thick] (-.4, -.2)--(.4, -.2);
    \node at (.2, 0) {\scriptsize{$\beta$}};
    \node at (-.2, .2) {\scriptsize{$P$}};
\end{tikzpicture}.
$$
\end{proof}

Much as in the previous chapter, we also have the following lemma:
\begin{lem}
$X_{a}$ and $X_{b}$ are free with amalgamation over $\cA_{\I}^{N}$ with respect to the conditional expectation, $E$.
\end{lem}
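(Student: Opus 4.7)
The plan is to mirror the proof strategy of Lemmas \ref{lem:free1} and \ref{lem:free2} of Chapter \ref{chap:universal}. First I would establish that $\{X_a, X_b\}$ forms an $\cA_\I^N$-valued semicircular family in the sense of \cite{MR1704661}, with covariance maps $\eta_{c_1, c_2}: \cA_\I^N \to \cA_\I^N$ defined by $\eta_{c_1, c_2}(y) = E(X_{c_1} y X_{c_2})$ for $c_1, c_2 \in \{a, b\}$. Normality and (joint) complete positivity of these maps follows from the fact that $E$ itself is a normal conditional expectation. Expanding $E$ as a sum over colored Temperley--Lieb pairings, and using that each $y_i \in \cA_\I^N$ contributes no strand to the top of its box, one obtains the moment--cumulant recurrence
$$
E(y_0 X_{c_1} y_1 \cdots X_{c_n} y_n) = \sum_{k=2}^{n} y_0 \cdot \eta_{c_1, c_k}(E(y_1 X_{c_2} \cdots X_{c_{k-1}} y_{k-1})) \cdot E(y_k X_{c_{k+1}} \cdots X_{c_n} y_n),
$$
which characterizes an operator-valued semicircular family with covariance $(\eta_{c_i, c_j})$. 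The derivation is identical to that of Equation \eqref{eqn:recurrence}: by planarity, the outermost strand emerging from the leftmost factor $X_{c_1}$ is forced to pair with a strand emerging from some $X_{c_k}$, and that pairing cleanly partitions the diagram into the two pieces appearing on the right-hand side.

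The main step, and the only place the $N$-$P$-$M$ setting requires any new input beyond Chapter \ref{chap:universal}, is the verification of the off-diagonal vanishing $\eta_{a, b} = 0 = \eta_{b, a}$. Once this is established, the recurrence above collapses to the defining recurrence of a \emph{free} family of $\cA_\I^N$-valued semicircular elements, and \cite{MR1704661} immediately yields that $X_a$ and $X_b$ are free with amalgamation over $\cA_\I^N$. The vanishing itself is a purely diagrammatic observation: the product $X_a y X_b$ is a linear combination of boxes in which precisely one $a$-colored strand (from the cap of $X_a$) and one $b$-colored strand (from the cap of $X_b$) emerge from the top of the composite box, while $y \in \cA_\I^N$ contributes no strands on top. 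Computing $E$ requires capping off these two top strands by a colored Temperley--Lieb diagram, but an $a$-strand cannot be joined to a $b$-strand in such a diagram -- both because the colors disagree and because the shadings at their endpoints are incompatible in the $N$-$P$-$M$ planar algebra $\cP$. Hence every term in the sum defining $E(X_a y X_b)$ vanishes.

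The principal obstacle will be the careful bookkeeping needed to ensure the shading compatibilities actually give a nontrivial statement: one must first verify that, for $y$ ranging over the various summands of $\cA_\I^N$ and for admissible boundary data on $X_a$ and $X_b$, the product $X_a y X_b$ lies in the correct graded component of $\cM_\I^N$ (rather than being zero for trivial grading reasons) before the diagrammatic argument above is meaningful. The indexing conditions on $\alpha$ and $\beta$ in the definitions of $X_a$ and $X_b$ at the end of the previous section were imposed precisely so that these products land in $\cM_\I^N$; with this checked, the vanishing of $\eta_{a,b}$ and $\eta_{b,a}$ follows exactly as described, and the freeness assertion is then immediate from \cite{MR1704661}.
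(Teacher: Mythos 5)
Your proposal is correct and takes the same route the paper intends: as the paper notes just before the statement, the argument is ``much as in the previous chapter,'' i.e.\ establish the operator-valued semicircular family structure with covariance $(\eta_{c_i,c_j})$ via the moment recurrence, then show the off-diagonal covariances $\eta_{a,b}=\eta_{b,a}=0$ vanish diagrammatically because a colored Temperley--Lieb pairing cannot join an $a$-strand to a $b$-strand, and conclude freeness from \cite{MR1704661}. Your added caution about checking that the products $X_a y X_b$ land in the right graded components for non-trivial reasons is a reasonable point of care, but it is consistent with the paper's setup and does not represent a departure from its approach.
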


We now define maps $\eta_{a}$ and $\eta_{b}$ on $\cA_{\I}^{N}$ as follows:
$$
\eta_{c}(y) = E(X_{c}yX_{c})
$$
for $c = a$ or $b$.  Notice that by definition of $E$, $\eta_{c}$ is a completely positive map of $\cA_{I}^{N}$ into itself.  Furthermore it is a straightforward inductive check to note that the formula
\begin{align*}
E(y_{0}X_{c}y_{1}X_{c}&\cdots y_{n-1}X_{c}y_{n})\\
&= \sum_{k=2}^{n} y_{0} \cdot \eta_{c}(E(y_{1}X_{c}\cdots X_{c}y_{k-1}))\cdot E(y_{k}X_{c}...X_{c}y_{n}),\label{eqn:recurrenceI}
\end{align*}
as in Section \ref{sec:vNa} of the previous chapter holds.  Pictorially, there are nice expressions for $\eta_{c}(w)$ for various choices of $w$.  To begin, for $i = 1, 2$, let $\alpha_{i}$ be a word with $\alpha_{i} \in \Delta^{N}$, $\beta_{i}$ be a word with $s(\beta_{i}) = a$ and $\beta_{i} \in \Delta^{P}$, and $\gamma_{i}$ be a word such that $\gamma_{i} \in \Delta^{M}$.  Furthermore, suppose $x \in \cP_{\overline{\alpha_{1}}\alpha_{2}}$, $y \in \cP_{\overline{\beta_{1}}\beta_{2}}$ and $z \in \cP_{\overline{\gamma_{1}}\gamma_{2}}$.  We then have the following easily verifiable formulae:
\begin{align*}
\eta_{a}(x) = \begin{tikzpicture} [baseline = 0cm]
    \draw  (-.8, 0)--(.8, 0);
    \filldraw [thick, unshaded] (.4, .4) --(.4, -.4)--(-.4, -.4)--(-.4, .4)--(.4, .4);
    \node at (0, 0) {$x$};
    \node at (-.6, .2) {\scriptsize{$\alpha_{1}$}};
    \node at (.6, .2) {\scriptsize{$\alpha_{2}$}};
    \draw [thick, blue] (-.8, .6) -- (.8, .6);
\end{tikzpicture}\, \, \, \, \, \, \, \, \, \,
\eta_{a}(y) &= \begin{tikzpicture}[baseline = 0cm]
    \draw  (-.8, 0)--(.8, 0);
    \filldraw [thick, unshaded] (.4, .4) --(.4, -.4)--(-.4, -.4)--(-.4, .4)--(.4, .4);
    \node at (0, 0) {$y$};
    \node at (-.75, .2) {\scriptsize{$\beta_{1}$}};
    \node at (.75, .2) {\scriptsize{$\beta_{2}$}};
    \draw [thick, blue] (-.4, .2) arc(270:90: .2cm) -- (.4, .6) arc(90:-90: .2cm);
    \end{tikzpicture}\, \, \, \, \, \, \, \, \, \,
\eta_{a}(z) = 0 \\
\eta_{b}(x) = 0 \, \, \, \, \, \, \, \, \, \,
\eta_{b}(y) &= \begin{tikzpicture} [baseline = 0cm]
    \draw  (-.8, 0)--(.8, 0);
    \filldraw [thick, unshaded] (.4, .4) --(.4, -.4)--(-.4, -.4)--(-.4, .4)--(.4, .4);
    \node at (0, 0) {$y$};
    \node at (-.6, .2) {\scriptsize{$\beta_{1}$}};
    \node at (.6, .2) {\scriptsize{$\beta_{2}$}};
    \draw [thick, red] (-.8, .6) -- (.8, .6);
\end{tikzpicture}\, \, \, \, \, \, \, \, \, \,
\eta_{b}(z) = \begin{tikzpicture}[baseline = 0cm]
    \draw  (-.8, 0)--(.8, 0);
    \filldraw [thick, unshaded] (.4, .4) --(.4, -.4)--(-.4, -.4)--(-.4, .4)--(.4, .4);
    \node at (0, 0) {$z$};
    \node at (-.75, .2) {\scriptsize{$\gamma_{1}$}};
    \node at (.75, .2) {\scriptsize{$\gamma_{2}$}};
    \draw [thick, red] (-.4, .2) arc(270:90: .2cm) -- (.4, .6) arc(90:-90: .2cm);
    \end{tikzpicture}
\end{align*}
With the pictures above, the following useful lemma is easily verified:
\begin{lem} \label{lem:moveunder}
Let $\alpha_{i}$ be a word with $s(\alpha_{i}) = N$ and $\beta_{i}$ be a word with $s(\beta_{i}) = P$ for $i=1$ or $2$.  In addition, suppose $x \in \cP_{\overline{\alpha_{1}}\alpha_{2}}$ and $y \in \cP_{\overline{\beta_{1}}\beta_{2}}$.  We have the following formulae:
$$
x\cdot X_{a} =  X_{a}\cdot \eta_{a}(x) \text{ and } y\cdot X_{b} = X_{b} \cdot \eta_{b}(y).
$$
\end{lem}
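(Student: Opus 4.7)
The identities are checked by direct pictorial manipulation in the semifinite algebra $\cM_\infty^N$, and the argument for the second (with $X_b$, $\eta_b$) is entirely parallel to the first after swapping the shading regions $N,P$ with $P,M$ and the color $a$ with $b$. Hence I focus on the first identity $x \cdot X_a = X_a \cdot \eta_a(x)$.

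First I would expand both sides using the definition of $X_a$ as a strongly convergent sum of left-cup and right-cup diagrams, indexed by admissible words. Because the multiplication $\wedge$ in $\cG_\infty^N$ enforces matching of boundary words, both $x \cdot X_a$ and $X_a \cdot \eta_a(x)$ reduce to finite sums of pictures in which only the summands of $X_a$ whose boundaries are compatible with $x$ (resp.\ $\eta_a(x)$) survive. The hypothesis $s(\alpha_i) = N$---that the word $\alpha_i$ starts in shading $N$, and hence begins with the color $a$---is exactly what guarantees that the necessary matching summands appear in the defining sum of $X_a$ on both sides, and also that the shadings along the seams of the products are compatible in the $N$-$P$-$M$ planar algebra.

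Next I would draw each surviving summand as a planar tangle. Each such summand consists of the box $x$ together with a single extra $a$-string, one end of which is a free ``cup'' endpoint and the other end of which attaches to the external boundary of the combined picture. On the $x \cdot X_a$ side the cup sits to the right of $x$, while on the $X_a \cdot \eta_a(x)$ side the cup sits to the left of $x$ and the $a$-string runs across the top of $x$. Since $x$ has no strings on its top boundary, this horizontal $a$-string can be freely isotoped across the top of the box: the cup on the right slides over to become a cup on the left. Pairing the four combinations of left/right cup positions across the two sides of the identity, one checks that every summand on one side is isotopic to exactly one summand on the other, proving the identity.

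The main subtlety is the precise matching of summands: $X_a$ and $\eta_a(x)$ each decompose into several pieces whose ambient $\cP$-spaces depend on how many extra $a$-strands sit on the various boundaries, and one must trace through this bookkeeping to confirm that corresponding pieces coincide after isotopy. Once this matching is set up, the identity reduces to the topological fact that an $a$-cup may be slid across the top of $x$, which is precisely enabled by the hypothesis $s(\alpha_i) = N$: this ensures that the two sides of the $a$-string border the correct $N$ and $P$ regions above $x$, so the isotopy is legal in the $N$-$P$-$M$ shading.
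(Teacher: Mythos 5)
Your argument is correct and is exactly the diagrammatic check the paper has in mind (it states only that the identity is ``easily verified'' after drawing the pictures for $\eta_a$ and $\eta_b$): the surviving term on each side is $x$ together with a single $a$-strand, and since $x$ has no top strings and the region above $x$ is shaded $N$, the strand may be slid across the top, identifying the two pictures. Two small bookkeeping corrections are worth making: first, both endpoints of the extra $a$-strand lie on the outer boundary of the composite box (one on the top edge, one on the right edge), so there is no ``free cup endpoint''; second, the $N$-$P$-$M$ shading kills the left-cup summand $X_a^{(1,\alpha)}$ on \emph{both} sides (its offending boundary word $a\alpha$ starts in shading $P$, so it never composes with $x$ or $\eta_a(x)$), and the surviving right-cup summand is pinned to $\alpha=\alpha_2$ for $x\wedge X_a$ and to $\alpha=\alpha_1$ for $X_a\wedge\eta_a(x)$ --- hence there is exactly one diagram on each side to match by isotopy, not four.
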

This lemma will be used to help describe certain compressions of $\cM_{\I}^{N}$.
\subsection{A suitable compression of $\cM_{\infty}^{N}$}
To begin, it will be useful to define three projections in $\cA_{\I}^{N}$
$$
1_{\cA_{N}^{N}} = \sum_{ \alpha\in \Delta^{N}}
\begin{tikzpicture} [baseline = 0cm]
    \draw[thick] (-.4, -.4)--(-.4, .4)--(.4, .4)--(.4, -.4)--(-.4, -.4);
    \draw[thick] (-.4, 0)--(.4, 0);
    \node at (0, .15) {\scriptsize{$\alpha$}};
\end{tikzpicture} \, \, \, \,
1_{\cA_{P_{a}}^{N}} = \sum_{ \alpha\in \Delta^{N}}
\begin{tikzpicture} [baseline = 0cm]
    \draw[thick] (-.4, -.4)--(-.4, .4)--(.4, .4)--(.4, -.4)--(-.4, -.4);
    \draw[thick] (-.4, -.2)--(.4, -.2);
    \draw[blue] (-.4, .1)--(.4, .1);
    \node at (0, .25) {\scriptsize{$a$}};
    \node at (0, -.05) {\scriptsize{$\alpha$}};
\end{tikzpicture}\, \, \, \,
1_{\cA_{M_{a}}^{N}} = \sum_{\alpha\in \Delta^{N}}
\begin{tikzpicture} [baseline = 0cm]
    \draw[thick] (-.4, -.4)--(-.4, .4)--(.4, .4)--(.4, -.4)--(-.4, -.4);
    \draw[thick] (-.4, -.2)--(.4, -.2);
    \draw[blue] (-.4,.1)--(.4, .1);
    \draw[red] (-.4, .3)--(.4, .3);
    \node at (0, -.05) {\scriptsize{$\alpha$}};
\end{tikzpicture}\, .
$$
Note that $1_{\cA_{N}^{N}} + 1_{\cA_{P_{a}}^{N}} + 1_{\cA_{M_{a}}^{N}}$ is the smallest projection dominating the support projections of $X_{a}$ and $X_{b}$.

Our goal, much as in Section \ref{sec:free1} of the previous chapter, is to better understand what happens when $\cM_{\I}^{N}$ is compressed by certain projections.  To begin our study, we consider $\Gamma_{N}$, the $N-$principal graph of $\cP$.  For each vertex, $v$, at the $N-N$ level of the graph, we choose a minimal projection $p_{v} \in \cA_{\I}$, and for the vertex, $*$, we choose the empty $N-$shaded diagram.  Notice that for each $v$ we can choose $p_{v} \in \cP_{\overline{\alpha}\alpha}$ for $\alpha\in \Delta^{N}$.

By the definition of the principal graph, we know that there exists a countable index set, $I$ and partial isometries $(V_{i})_{i \in I} \subset \cA_{\I}^{N}$ such that $$
V_{i}V_{i}^{*} = \sum_{v \in \Gamma_{N}^{N}} p_{v}\, \, \forall i \text{ and } \sum_{i \in I} V_{i}^{*}V_{i} = 1_{\cA_{N}^{N}}.
$$
This necessarily implies that
$$
\sum_{i \in I}\eta_{a}(V_{i})^{*}\eta_{a}(V_{i}) = 1_{\cA_{P_{a}}^{N}} \text{ and } \sum_{i \in I}\eta_{b}(\eta_{a}(V_{i}))^{*}\eta_{b}(\eta_{a}(V_{i})) = 1_{\cA_{M_{a}}^{N}}
$$
We define $R^{1}$ by the following formula:
$$
R^{1} = \sum_{v \in \Gamma_{N}^{N}} (p_{v} + \eta_{a}(p_{v}) + \eta_{b}(\eta_{a}(p_{v}))).
$$
If we set $Z_{i} = V_{i} + \eta_{a}(V_{i}) + \eta_{b}(\eta_{a}(V_{i}))$ then
$$
Z_{i}Z_{i}^{*} = R^{1} \text{ and } \sum_{i \in I}Z_{i}^{*}Z_{i} = 1_{\cA_{N}^{N}} + 1_{\cA_{P_{a}}^{N}} + 1_{\cA_{M_{a}}^{N}}.
$$
We have the following lemma regarding compression of $\cM^{N}_{\I}$ by $R^{1}$.
\begin{lem}
As a von Neumann algebra, $R^{1}\cM_{\I}^{N}R^{1}$ is generated by $R^{1}\cA_{I}^{N}R^{1}$, $R^{1}X_{a}R^{1}$, and $R^{1}X_{b}R^{1}$.
\end{lem}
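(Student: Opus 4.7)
The plan is to mimic the proof of Lemma \ref{lem:compression1} from Chapter \ref{chap:universal}, with extra bookkeeping to handle the three shadings $N$, $P$, $M$ instead of a single color. First I would observe that the projection $1_{\cA_{N}^{N}} + 1_{\cA_{P_{a}}^{N}} + 1_{\cA_{M_{a}}^{N}}$ is the smallest projection in $\cA_{\I}^{N}$ dominating the left (equivalently right) supports of both $X_{a}$ and $X_{b}$. Since $\sum_{i} Z_{i}^{*}Z_{i}$ equals precisely this projection and $Z_{i}Z_{i}^{*} = R^{1}$, the cut-down $R^{1}\cM_{\I}^{N}R^{1}$ is generated, as a von Neumann algebra, by $R^{1}\cA_{\I}^{N}R^{1}$ together with the family
\[
\{\,Z_{i}^{*}X_{c}Z_{j} : i,j\in I,\ c\in\{a,b\}\,\}.
\]
This reduction is formally identical to the passage from $\cM_{\infty}$ to $T\cM_{\infty}T$ in Lemma \ref{lem:compression1}.

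Next I would expand each generator using $Z_{i}=V_{i}+\eta_{a}(V_{i})+\eta_{b}(\eta_{a}(V_{i}))$, using that each summand is supported in one of $\cA_{N}^{N}$, $\cA_{P_{a}}^{N}$, or $\cA_{M_{a}}^{N}$, respectively. Since $X_{a}$ only pairs $\cA_{N}^{N}$ with $\cA_{P_{a}}^{N}$, the only surviving cross-terms in $Z_{i}^{*}X_{a}Z_{j}$ are $V_{i}^{*}X_{a}\eta_{a}(V_{j})$ and $\eta_{a}(V_{i})^{*}X_{a}V_{j}$; similarly $Z_{i}^{*}X_{b}Z_{j}$ reduces to the two cross-terms pairing $\cA_{P_{a}}^{N}$ with $\cA_{M_{a}}^{N}$. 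Then I would apply Lemma \ref{lem:moveunder}, first to push $V_{i}^{*}\in\cA_{N}^{N}$ across $X_{a}$ via $V_{i}^{*}X_{a}=X_{a}\eta_{a}(V_{i})^{*}$, and analogously for the other terms and for $X_{b}$. After these moves every term takes the form $X_{c}\cdot y$ (or $y^{*}\cdot X_{c}$) where $y$ is a product of $\eta_{a}$- and $\eta_{b}$-images of $V$'s. A direct support check, using $\eta_{a}(V_{j})\eta_{a}(V_{j})^{*}\leq\sum_{v}\eta_{a}(p_{v})\leq R^{1}$ and the analogous bound for $\eta_{b}\eta_{a}$, shows that $y\in R^{1}\cA_{\I}^{N}R^{1}$, and consequently $X_{c}y = R^{1}X_{c}R^{1}\cdot y$.

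Conversely, summing $\sum_{i,j} Z_{i}(Z_{i}^{*}X_{c}Z_{j})Z_{j}^{*}$ (interpreted with strong-operator convergence on finite cut-downs by $R^{1}$) recovers $R^{1}X_{c}R^{1}$, since $Z_{i}Z_{i}^{*}=R^{1}$ and $\sum_{j}Z_{j}Z_{j}^{*}$ sums to the support-dominating projection on the relevant range. Thus each $Z_{i}^{*}X_{c}Z_{j}$ lies in $W^{*}(R^{1}\cA_{\I}^{N}R^{1},\,R^{1}X_{a}R^{1},\,R^{1}X_{b}R^{1})$, and conversely that algebra is contained in $R^{1}\cM_{\I}^{N}R^{1}$, so equality follows.

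The main obstacle is purely notational: tracking the support constraints with three shadings simultaneously. In Chapter \ref{chap:universal}, each color of $X_{c}$ pairs exactly two sub-projections, so the cross-term analysis is short; here, the decomposition $R^{1}=P_{1}+P_{2}+P_{3}$ (with $P_{1}=\sum p_{v}$, $P_{2}=\sum\eta_{a}(p_{v})$, $P_{3}=\sum\eta_{b}\eta_{a}(p_{v})$) interacts differently with $X_{a}$ (which connects $P_{1}$ and $P_{2}$) and with $X_{b}$ (which connects $P_{2}$ and $P_{3}$), and the two $\eta$-maps do not commute as bimodule maps. Once the diagrammatic identities $y\,X_{a}=X_{a}\,\eta_{a}(y)$ and $y\,X_{b}=X_{b}\,\eta_{b}(y)$ are invoked in the correct order dictated by the shading, the argument concludes exactly as in Chapter \ref{chap:universal}.
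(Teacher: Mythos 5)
Your reduction has the roles of $Z_i$ and $Z_i^*$ inverted, and this is not a cosmetic slip: it invalidates the support check on which the rest of your argument rests. In Chapter~\ref{chap:universal}'s Lemma~\ref{lem:compression1} the convention is $V_i^*V_i=Q$ with $\sum_i V_iV_i^*=1_{\cA_e}$, so $(V_i+W_i)^*(V_i+W_i)=T$ and the generators $(V_i+W_i)^*X_c(V_j+W_j)$ genuinely lie in the cut-down $T\cM_\I T$. In the present chapter, however, the paper takes $V_iV_i^*=\sum_v p_v$ and $\sum_i V_i^*V_i=1_{\cA_N^N}$, so $Z_iZ_i^*=R^1$: the $Z_i$ share a common \emph{range} $R^1$, not a common source. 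Consequently $Z_i^*X_cZ_j$ has source projection $\le Z_j^*Z_j$ and range projection $\le Z_i^*Z_i$, both of which sit under $1_{\cA_N^N}+1_{\cA_{P_a}^N}+1_{\cA_{M_a}^N}$ but in general not under $R^1$, so $Z_i^*X_cZ_j$ is not even an element of $R^1\cM_\I^N R^1$ and cannot serve as a generator of it. The correct objects are $Z_iX_cZ_j^*$.

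This flip is exactly what breaks your support argument. Applying Lemma~\ref{lem:moveunder} to the surviving pieces of $Z_i^*X_aZ_j$ gives $X_a\,\eta_a(V_i)^*\eta_a(V_j)+\eta_a(V_i)^*\eta_a(V_j)\,X_a=X_a\,\eta_a(V_i^*V_j)+\eta_a(V_i^*V_j)\,X_a$, so the relevant $y$ is $\eta_a(V_i^*V_j)$. Its source and range projections are $\eta_a(V_j^*V_j)$ and $\eta_a(V_i^*V_i)$, and these are arbitrary subprojections of $1_{\cA_{P_a}^N}$; neither need lie under $R^1$, since $V_i^*V_i$ is only constrained to lie under $1_{\cA_N^N}$, not under $\sum_v p_v$. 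The bound you invoke, $\eta_a(V_j)\eta_a(V_j)^*=\eta_a\bigl(\sum_v p_v\bigr)\le R^1$, controls the range of $\eta_a(V_j)$ but not the source of the product $\eta_a(V_i)^*\eta_a(V_j)$, so it does not place $y$ in $R^1\cA_\I^N R^1$, and the step $X_cy=(R^1X_cR^1)y$ fails. (Your converse step fails as well: $Z_iZ_i^*=R^1$ for every $i$, so $\sum_{i,j}Z_i(Z_i^*X_cZ_j)Z_j^*$ diverges.) With the correct orientation the difficulty disappears: expanding $Z_iX_aZ_j^*$, using the orthogonality $V_iV_j^*=\delta_{ij}\sum_v p_v$ and Lemma~\ref{lem:moveunder}, gives $Z_iX_aZ_j^*=\delta_{ij}\bigl(X_a\,\eta_a(\sum_v p_v)+\eta_a(\sum_v p_v)X_a\bigr)=\delta_{ij}\,R^1X_aR^1$, and the paper's proof then runs by inserting $\sum_i Z_i^*Z_i=1_{\cA_N^N}+1_{\cA_{P_a}^N}+1_{\cA_{M_a}^N}$ around each $X_{c_k}$ in $R^1y_0X_{c_1}\cdots X_{c_n}y_nR^1$ and regrouping as $(R^1y_0Z_{i_1}^*)(Z_{i_1}X_{c_1}Z_{j_1}^*)(Z_{j_1}y_1Z_{i_2}^*)\cdots$, with each middle factor equal to $\delta_{i_k,j_k}R^1X_{c_k}R^1$ and each outer factor in $R^1\cA_\I^N R^1$.
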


\begin{proof}
Note that$\sum_{i, j \in I} Z_{i}^{*}X_{c}Z_{j} = X_{c}$ for $c = a$ or $b$ by repeated applications of Lemma \ref{lem:moveunder}.  Therefore, every word involving $X_{a}$ or $X_{b}$ and elements $x \in \cA_{\I}$ (whose ending letters are supported under $R^{1}$) can be replaced by sums of words involving terms of the form $R^{1}X_{c}R^{1}$ and $R^{1}xR^{1}$ by inserting the relation
$$
\sum_{i \in I} Z_{i}^{*}Z_{i} = 1_{\cA_{N}^{N}} + 1_{\cA_{P_{a}}^{N}} + 1_{\cA_{M_{a}}^{N}}.
$$
between every letter of the word.
\end{proof}

We now investigate the action of compressing $R^{1}\cM_{\I}^{N}R^{1}$ by subprojections of $R^{1}$.  To begin, for each vertex $w \in \Gamma_{N}^{P}$, let $p_{w}$ be a minimal projection in $\cA_{I}^{N}$ corresponding to the vertex $w$ such that $p_{w} \leq \sum_{v \in V} \eta_{a}(p_{v})$.  For each edge $e$ connecting a vertex in $\Gamma_{N}^{N}$ to a vertex in $\Gamma_{N}^{P}$, we let $s(e)$ and $t(e)$ be the vertices in $\Gamma_{N}^{N}$ and $\Gamma_{N}^{P}$ respectively which $e$ connects. We define partial isometries $\omega_{e} \in R^{1}\cA_{\I}^{N}R^{1}$ such that
$$
\omega_{e}^{*}\omega_{e'} = \delta_{e, e'}p_{t(e)} \text{ and } \sum_{s(e) = v}\omega_{e}\omega_{e}^{*} = \eta_{a}(p_{v}).
$$
Once the $p_{w}$ have been chosen, for each vertex $u \in \Gamma_{N}^{M}$, choose a minimal projection $p_{u}$ corresponding to $u$ such that $p_{u} \leq \sum_{w \in \Gamma_{N}^{P}} \eta_{b}(p_{w})$.

For each edge, $f$, connecting the $N-P$ vertices to the $N-M$ vertices, $s(f)$ and $t(f)$ be the vertices in $\Gamma_{N}^{P}$ and $\Gamma_{N}^{M}$ respectively which $f$ connects.  We define partial isometries $\nu_{f}$ satisfying:
\begin{align*}
\nu_{f}^{*}\nu_{f'} = \delta_{f, f'}p_{t(f)} &\text{ and } \sum_{s(f) = w}\nu_{f}\nu_{f}^{*} = \eta_{a}(p_{w})
\end{align*}
We now define operators $X_{a}^{e}$ $X_{b}^{f}$ by the formulae
$$
X_{a}^{e} = p_{s(e)}X_{a}\omega_{e} + \omega_{e}^{*}X_{a}p_{s(e)} \text{ and } X_{b}^{f} = p_{s(f)}X_{b}\nu_{f} + \nu_{e}^{*}X_{b}.
$$
We have the following lemma, whose proof is the same as the arguments of Section \ref{sec:free1} of Chapter \ref{chap:universal}, except easier as there are no loops on the principal graph and only one minimal projection for each vertex.
\begin{lem}
Set $R = \sum_{v \in \Gamma} p_{v}$.  Then $R\cM_{\I}^{N}R$ is generated by $R\cA_{\I}^{N}R$ and the elements $X_{a}^{e}$ $X_{b}^{f}$ for all $e$ and $f$, and each of the elements are free with amalgamation over $R\cA_{\I}^{N}R$ with respect to $E$.
\end{lem}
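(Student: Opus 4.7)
The plan is to mimic the argument used to prove Lemma~\ref{lem:free2} of Chapter~\ref{chap:universal}, in two steps: generation and freeness.

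For generation, I would start from the previous lemma, which shows that $R^{1}\cM_{\I}^{N}R^{1}$ is generated as a von Neumann algebra by $R^{1}\cA_{\I}^{N}R^{1}$, $R^{1}X_{a}R^{1}$, and $R^{1}X_{b}R^{1}$. Since $R\leq R^{1}$, it suffices to rewrite any word in the latter generators, compressed on both sides by $R$, as a word in $R\cA_{\I}^{N}R$ together with the $X_{a}^{e}$ and $X_{b}^{f}$. The resolutions of unity $\sum_{s(e)=v}\omega_{e}\omega_{e}^{*}=\eta_{a}(p_{v})$ and $\sum_{s(f)=w}\nu_{f}\nu_{f}^{*}=\eta_{b}(p_{w})$ allow one to resolve $1_{\cA_{P_{a}}^{N}}$ and $1_{\cA_{M_{a}}^{N}}$ as orthogonal sums of partial isometries whose source projections lie in $R$; inserting these resolutions immediately above and below every occurrence of $X_{a}$ or $X_{b}$ and collecting the resulting terms produces exactly the operators $X_{a}^{e}$, $X_{b}^{f}$ and their adjoints, sandwiched by elements of $R\cA_{\I}^{N}R$.

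For freeness, I would first invoke Shlyakhtenko's theory from \cite{MR1704661}: since $X_{a}$ and $X_{b}$ form a free pair of $\cA_{\I}^{N}$-valued semicircular elements with respect to $E$, their compressions by projections in $\cA_{\I}^{N}$ remain operator-valued semicircular over the compressed algebra. Hence each $X_{a}^{e}$ and each $X_{b}^{f}$ is an $R\cA_{\I}^{N}R$-valued semicircular element with respect to $E$. An operator-valued semicircular family is determined by its covariance, so freeness reduces to checking that for every $y\in R\cA_{\I}^{N}R$ the only nonvanishing mixed covariances are the diagonal ones $E(X_{a}^{e}\,y\,X_{a}^{e})$ and $E(X_{b}^{f}\,y\,X_{b}^{f})$. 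The mixed-colour covariances $E(X_{a}^{e}\,y\,X_{b}^{f})$ vanish at once from the recurrence for $E$, since the pictorial formulas following Lemma~\ref{lem:moveunder} show that $\eta_{a}$ and $\eta_{b}$ have mutually orthogonal ranges.

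For $e\neq e'$ a pair of distinct $a$-edges, I would expand $E(X_{a}^{e}\,y\,X_{a}^{e'})$ into four terms from the definitions of $X_{a}^{e}$ and $X_{a}^{e'}$, and apply the recurrence for $E$ to rewrite each one as $p_{s(e)}\eta_{a}(\cdot)p_{s(e')}$, $p_{s(e)}\eta_{a}(\cdot)\omega_{e'}$, $\omega_{e}^{*}\eta_{a}(\cdot)p_{s(e')}$, or $\omega_{e}^{*}\eta_{a}(\cdot)\omega_{e'}$, where the argument of $\eta_{a}$ is a word in $\omega_{e}$, $\omega_{e'}^{*}$, $p_{s(e)}$, $p_{s(e')}$ and $y$. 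Using the minimality of the $p_{v}$ in $\cA_{\I}^{N}$, the direct-sum decomposition of $\cA_{\I}^{N}$ over the vertices of $\Gamma_{N}$ supplied by Lemma~\ref{lem:1infty2}, and the orthogonality relation $\omega_{e}^{*}\omega_{e'}=\delta_{e,e'}p_{t(e)}$, each of the four terms is seen to vanish for $e\neq e'$. The same argument carries over word for word to the $b$-colour with $\omega_{e}$ replaced by $\nu_{f}$. The main obstacle is the case-analysis needed when the two edges share a source or a target vertex, where support matching alone does not kill the term and one must invoke the precise orthogonality of the partial isometries. Here the argument is genuinely simpler than in Chapter~\ref{chap:universal}: the graph $\Gamma_{N}$ has no loops and carries a unique chosen minimal projection at every vertex, so there are no rotation partial isometries $r_{v}$ to track and no analogue of the compatibility relations in Lemma~\ref{lem:loop} is required, which is exactly the simplification the author flags in the remark preceding the lemma.
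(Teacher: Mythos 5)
Your proposal is correct and is exactly what the paper intends: the lemma carries no proof in the text, only the remark that one should repeat the Section~\ref{sec:free1} arguments of Chapter~\ref{chap:universal} with the simplifications coming from the absence of loops and a unique minimal projection per vertex on $\Gamma_{N}$. Your generation step (inserting the resolutions $\sum_{s(e)=v}\omega_{e}\omega_{e}^{*}=\eta_{a}(p_{v})$ and $\sum_{s(f)=w}\nu_{f}\nu_{f}^{*}=\eta_{b}(p_{w})$) and your freeness step (reducing to vanishing of mixed covariances via \cite{MR1704661}, then killing them through the orthogonality relations on the $\omega_{e}$ and $\nu_{f}$ together with the trace argument of Lemma~\ref{lem:orthogonalsupport}, as in Lemma~\ref{lem:free2}) are precisely that intended repetition.
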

Note that the algebra $R\cA_{\I}^{N}R$ is simply the bounded functions on the vertices of $\Gamma$, and the element $p_{s(e)}X_{a}\omega_{e}$ has left support under $p_{s(e)}$ and right support under $p_{t(e)}$.  Furthermore, if $\Tr(p_{s(e)}) \geq \Tr(p_{t(e)})$ then the analysis in the appendix of Chapter \ref{chap:universal} shows that $(p_{s(e)}X_{a}\omega_{e})^{*}p_{s(e)}X_{a}\omega_{e}$ is a free poisson element with absolutely continuous spectrum in $p_{t(e)}\cM_{\I}^{N}p_{t(e)}$.  If $\Tr(p_{s(e)}) \leq \Tr(p_{t(e)})$, then $p_{s(e)}X_{a}\omega_{e}(p_{s(e)}X_{a}\omega_{e})^{*}$ is a free poisson element with absolutely continuous spectrum in $p_{s(e)}\cM_{N}^{\I}p_{s(e)}$.  Analogous statements hold for the elements $p_{s(f)}X_{b}\nu_{f}$.

\subsection{An amalgamated free product representation for $R\cM_{N}^{\I}R$}

The work of the previous section shows that
$$
R\cM_{\I}^{N}R =\cN(\Gamma_{N})
$$
with $\cN(\Gamma)$ as in the previous two chapters.  We use this to obtain a formula for $M_{0}$ when $\Gamma_{N}$ is finite.  Let $g$ be an edge in $\Gamma_{N}$ connecting $v$ and $w$ with $\Tr(p_{v}) \geq \Tr(p_{w})$.  The basic rules for computing free dimension show that
$$
\fdim(R\cM_{e}R) = 1 - \frac{(\Tr(p_{v}) - \Tr(p_{w}))^{2} - \sum_{u \neq v, w} \Tr(p_{u})^{2}}{\Tr(R)^{2}} = 1 - \frac{\sum_{u \in \Gamma}\Tr(p_{v})^{2} - 2\Tr(p_{v})\Tr(p_{w})}{\Tr(R)^{2}}.
$$
Using the additivity of free dimension, as well as
$$
\fdim(\ell^{\infty}(\Gamma)) = 1 - \frac{\sum_{u \in \Gamma}\Tr(p_{v})^{2}}{\Tr(R)^{2}},
$$
we obtain
\begin{align*}
\fdim(R\cM_{\I}^{N}R) &= 1 + \frac{-\sum_{u \in \Gamma}\Tr(p_{v})^{2} + 2\sum_{g \in E(\Gamma_{N})}\Tr(p_{s(g)})\Tr(p_{t(g)})}{\Tr(R)^{2}}\\
 &= 1 + \frac{\sum_{u \in \gamma}\Tr(p_{u})\sum_{v \sim u}(\Tr(p_{v}) - \Tr(p_{u}))}{\Tr(R)^{2}}.
\end{align*}
Using the Perron-Frobenius condition, this becomes
$$
\fdim(R\cM_{\I}^{N}R) = 1 + \frac{2I((\delta_{a} - 1) + (\delta_{b} - 1))}{\Tr(R)^{2}}
$$
Where $I = \sum_{v \in \Gamma_{N}^{N}}\tr(p_{v})^{2} ( = \sum_{w \in \Gamma_{N}^{P}}\tr(p_{w})^{2} = \sum_{u \in \Gamma_{N}^{M}}\tr(p_{u})^{2})$.  Therefore, $R\cM_{\I}^{N}R$ is an interpolated free group factor with the above parameter.  The compression formula for free group factors proves the following lemma
\begin{lem}\label{lem:finitedepth}
$M_{0}^{N} \cong L(\F_{t})$ where $t = 1 + 2I(\delta_{a} + \delta_{b} - 2)$.
\end{lem}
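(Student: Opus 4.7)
The plan is to use the free dimension calculation from the paragraph just above the lemma, together with the compression formula for interpolated free group factors, and to observe that $M_{0}^{N}$ is a corner of $R\cM_{\infty}^{N}R$ cut down by the empty diagram. First I would identify $M_{0}^{N}$ with $p_{*}\cM_{\infty}^{N}p_{*}$, where $p_{*}$ is the empty $N$-shaded diagram, viewed as a minimal projection in the $II_\infty$ factor $\cM_{\infty}^{N}$ with $\Tr(p_{*}) = 1$. Since $p_{*} \leq R$, we have $M_{0}^{N} = p_{*}(R\cM_{\infty}^{N}R)p_{*}$.

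Next I would invoke the identification $R\cM_{\infty}^{N}R = \cN(\Gamma_{N})$ together with Theorem \ref{thm:standardembedding}: because $\cP$ is finite depth, $\Gamma_{N}$ is finite, and the Perron--Frobenius condition forces $B(\Gamma_{N}) = \emptyset$ (every vertex $v$ satisfies $\gamma_{v} \leq \alpha_{v}^{\Gamma_{N}}$, since the weighting extends the Perron--Frobenius vector for the principal graph of $\cQ$ with eigenvalues $\delta_{a},\delta_{b} > 1$). Hence $\cN(\Gamma_{N})$ is an interpolated free group factor $L(\F_{s})$, and by the free dimension calculation just above the lemma,
$$
s = 1 + \frac{2I\big((\delta_{a}-1)+(\delta_{b}-1)\big)}{\Tr(R)^{2}}.
$$

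Now I would apply the compression formula $L(\F_{s})_{\lambda} = L\!\big(\F(1 + \lambda^{-2}(s-1))\big)$. The normalized trace on $R\cM_{\infty}^{N}R$ is $\Tr/\Tr(R)$, so the normalized trace of $p_{*}$ is $1/\Tr(R)$, and therefore
$$
M_{0}^{N} \cong L\!\left(\F\!\left(1 + \Tr(R)^{2}\cdot \frac{2I((\delta_{a}-1)+(\delta_{b}-1))}{\Tr(R)^{2}}\right)\right) = L(\F(1 + 2I(\delta_{a}+\delta_{b}-2))).
$$
The pleasant feature is that $\Tr(R)^{2}$ cancels exactly with the denominator in the free dimension, so there is no need to actually compute the sum of traces over all three levels of $\Gamma_{N}$.

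The only nontrivial step is checking that $B(\Gamma_{N}) = \emptyset$ so that Theorem \ref{thm:standardembedding} outputs an honest interpolated free group factor with no abelian summand; this is handled by the Perron--Frobenius relations for the two bipartite pieces of $\Gamma_{N}$ with eigenvalues $\delta_{a}$ and $\delta_{b}$, both strictly greater than $1$. Everything else is bookkeeping with the compression formula.
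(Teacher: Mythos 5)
Your proposal is correct and follows the same approach the paper uses: identify $M_{0}^{N}$ as the corner $p_{*}(R\cM_{\infty}^{N}R)p_{*}$, use the free dimension computation immediately preceding the lemma to identify $R\cM_{\infty}^{N}R$ as an interpolated free group factor, and then apply the amplification/compression formula, noting the exact cancellation of $\Tr(R)^2$. The paper leaves the check that $B(\Gamma_{N})=\emptyset$ and the explicit compression bookkeeping implicit, whereas you spell them out; both verifications are correct and fill genuine small gaps in the paper's terse ``the compression formula proves the following lemma.''
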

This gives us the following corollary:
\begin{cor}
The factors $M_{\alpha}$ have the formula
$$
M_{\alpha} \cong L(\F(1 + 2I\delta_{\alpha}^{-2}(\delta_{a} + \delta_{b} - 2)))
$$

\begin{proof}
If $s(\overline{\alpha}) = N$, then it follows from the semifinte algebra $\cM_{\I}^{N}$ that $M_{\alpha}$ is a $\delta_{\alpha}$ amplification of $M_{0}^{N}$.  If the shading is different, apply similar analysis to the semifinite algebras $\cM_{\I}^{P}$ and $\cM_{\I}^{M}$.
\end{proof}

\end{cor}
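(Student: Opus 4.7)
My plan is to bootstrap the corollary directly from Lemma \ref{lem:finitedepth} using the semifinite machinery that was set up in Section \ref{sec:semi} together with the standard amplification formula for interpolated free group factors. Recall that for any $\alpha$, the factor $M_\alpha$ sits inside one of the three semifinite algebras $\cM_\infty^Q$ (with $Q = N, P,$ or $M$) as the compression by the projection $p_\alpha$, where the choice of $Q$ is determined by the shading data of $\alpha$, namely by $s(\overline{\alpha})$. In particular, from Lemma \ref{lem:2inftyfactor}'s analogue for the $N$-$P$-$M$ setting (which follows from the same factoriality argument as in Chapter \ref{chap:universal}, using either the $a$- or $b$-cup element), each $\cM_\infty^Q$ is a $II_\infty$ factor, so the compression $p_\alpha \cM_\infty^Q p_\alpha = M_\alpha$ is a $\delta_\alpha^2$-amplification of the ``base corner'' $M_0^Q$. (Alternatively, this is visible from Theorem \ref{thm:Jtower}, since the Jones tower with index $\delta_\alpha^2$ realizes $M_\alpha$ as a $\delta_\alpha^2$-amplification of $M_0^Q$.)

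Granting the amplification picture, the computation of the free-group parameter is then routine: by Lemma \ref{lem:finitedepth}, $M_0^N \cong L(\F_t)$ with $t = 1 + 2I(\delta_a + \delta_b - 2)$. The amplification formula $L(\F_s)_\gamma \cong L(\F(1 + \gamma^{-2}(s-1)))$ applied with $\gamma = \delta_\alpha$ immediately yields
\[
M_\alpha \cong L(\F(1 + 2 I \delta_\alpha^{-2}(\delta_a + \delta_b - 2)))
\]
in the case $s(\overline{\alpha}) = N$. So the substantive content beyond Lemma \ref{lem:finitedepth} is to handle the other two shadings: namely, to verify that the analogous statements $M_0^P \cong L(\F_t)$ and $M_0^M \cong L(\F_t)$ hold with the \emph{same} parameter $t = 1 + 2I(\delta_a+\delta_b-2)$.

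To do so, I would repeat the semifinite analysis from Section \ref{sec:semi} verbatim with $N$ replaced by $P$ and then by $M$. The key points are: (i) $\cA_\infty^Q = \bigoplus_{v \in \Gamma_Q} \cA_v$ is a direct sum of type $I_\infty$ factors indexed by the vertices of the $Q$-principal graph $\Gamma_Q$ (Lemma \ref{lem:1infty2}); (ii) the semicircular generators $X_a, X_b$ and their compressions give rise, via the same free-compression argument, to an amalgamated free product description of the form $R \cM_\infty^Q R = \cN(\Gamma_Q)$; and (iii) the free-dimension bookkeeping carried out just before Lemma \ref{lem:finitedepth} yields a parameter depending only on the sum $\sum_u \Tr(p_u) \sum_{v \sim u}(\Tr(p_v) - \Tr(p_u))$, which by the Perron--Frobenius identity reduces to $2I((\delta_a - 1) + (\delta_b - 1))$. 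Crucially, the quantity $I = \sum_{v \in \Gamma_Q^N}\mu(v)^2 = \sum_{w \in \Gamma_Q^P}\mu(w)^2 = \sum_{u \in \Gamma_Q^M}\mu(u)^2$ is the global index and is independent of $Q$, since the three principal graphs $\Gamma_N, \Gamma_P, \Gamma_M$ all encode the same bimodule data up to the rotational symmetries recorded in Lemma \ref{SPA}.

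The main obstacle, and the only place where care is needed, is precisely this last point: one must check that the Perron--Frobenius weighting and the ``cable structure'' of the $N$-$P$-$M$ planar algebra interact correctly so that all three sums defining $I$ coincide, and that the bipartite-graph inclusion structure of $\Gamma_Q^N \subset \Gamma_Q^P \supset \Gamma_Q^M$ produces the same edge-sum $\sum \Tr(p_{s(g)})\Tr(p_{t(g)})$ regardless of which shading we started from. This follows from the discussion surrounding Lemma \ref{SPA}, together with sphericality of $\cP$ and the Perron--Frobenius conditions with eigenvalues $\delta_a$ and $\delta_b$ on the two layers of each principal graph. Once these identifications are in place, the formula of the corollary drops out uniformly in the shading, completing the proof.
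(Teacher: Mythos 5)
Your approach is the same as the paper's: realize $M_\alpha$ as a compression of the appropriate semifinite factor $\cM_\infty^Q$ (with $Q$ determined by $s(\overline\alpha)$), observe that this exhibits $M_\alpha$ as an amplification of $M_0^Q$, apply the free-group-factor amplification formula, and then repeat the free-dimension bookkeeping for $Q=P$ and $Q=M$ to see that the base parameter $t=1+2I(\delta_a+\delta_b-2)$ is the same for all three shadings. The paper's proof is exactly this, stated tersely.

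One inconsistency worth correcting: you twice say $M_\alpha$ is a \emph{$\delta_\alpha^2$-amplification} of $M_0^Q$, but then apply the amplification formula $L(\F_s)_\gamma\cong L(\F(1+\gamma^{-2}(s-1)))$ with $\gamma=\delta_\alpha$, which corresponds to a $\delta_\alpha$-amplification. The latter is correct. In the semifinite algebra the unnormalized trace satisfies $\Tr(p_\alpha)=\delta_\alpha$ and $\Tr(p_\emptyset)=1$, so the compression by $p_\alpha$ is the $\delta_\alpha$-amplification of the compression by $p_\emptyset$. This is also what the paper states. Your parenthetical appeal to Theorem \ref{thm:Jtower} to justify the $\delta_\alpha^2$ exponent is the source of the slip: the Jones tower in that theorem makes $M_0^Q$ a unital subfactor of $M_\alpha$, not a corner of it, so the index $[M_\alpha:M_0^Q]$ is not the amplification factor. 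The amplification statement comes exclusively from the semifinite picture, and the factor is $\delta_\alpha$. With that correction your final formula agrees with the corollary.
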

We now handle the case where $\cP$ is infinite depth:
\begin{lem}
If $\cP$ is infinite-depth, then $\cM_{0}^{N} \cong L(\F_{\I})$, and hence $\cM_{\alpha} \cong L(\F_{\I})$ for all $\alpha$
\end{lem}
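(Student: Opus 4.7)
The plan is to bootstrap the finite-depth computation of Lemma \ref{lem:finitedepth} using the standard embedding machinery developed in Chapter \ref{chap:graph} (see Theorem \ref{thm:standardembedding} and its loopless analogue in \cite{1208.2933}). The principal graph $\Gamma_N$ is bipartite-tripartite (alternating $N,P,M$ vertex classes with $a$-edges and $b$-edges) and contains no loops, so $\cN(\Gamma_N) = R\cM_\infty^N R$ falls within the framework of Section \ref{sec:vNAGraph}.

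First I would truncate: write $\Gamma_N = \bigcup_{k\geq 0} \Gamma_N^k$ where $\Gamma_N^k$ is the (finite, connected) subgraph obtained by keeping only vertices of depth at most $k$ from $*$ together with all edges of $\Gamma_N$ between such vertices. Since $\cP$ has finite fusion at each depth, each $\Gamma_N^k$ is finite and contains $*$. By Lemma \ref{lem:finitedepth} applied to each $\Gamma_N^k$ (with the weight data restricted from $\Gamma_N$), we obtain that $p_*\cN(\Gamma_N^k)p_* \cong L(\F_{t_k})$ for a parameter $t_k$ that can be read off from the free dimension computation above, namely
\[
t_k = 1 + \frac{1}{\tr(p_*)^2}\!\left(2\!\!\sum_{e\in E(\Gamma_N^k)}\!\!\Tr(p_{s(e)})\Tr(p_{t(e)}) - \sum_{v\in\Gamma_N^k}\Tr(p_v)^2 + \sum_{v\in B(\Gamma_N^k)}(\Tr(p_v)-\alpha_v^{\Gamma_N^k})^2\right).
\]
Second, by Theorem \ref{thm:standardembedding}, the inclusion $p_*\cN(\Gamma_N^k)p_* \hookrightarrow p_*\cN(\Gamma_N^{k+1})p_*$ is a \emph{standard embedding} of interpolated free group factors. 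Since $M_0^N = p_*\cN(\Gamma_N)p_*$ is the inductive limit of the $p_*\cN(\Gamma_N^k)p_*$ (strong closure of their union inside $\cM_\infty^N$), and since the inductive limit of standard embeddings of $L(\F_{t_k})$ is $L(\F_{\lim t_k})$ by property (3) of Remark \ref{rem:Dyk}, it remains only to verify that $t_k\to\infty$.

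Third, I would estimate the dominant positive contribution to $t_k$. A vertex $v$ of $\Gamma_N^{k-2}$ is connected to all of its $\Gamma_N$-neighbors in $\Gamma_N^k$, so by the Perron-Frobenius condition at $v$ the cross-term produces $(\delta_a+\delta_b)\Tr(p_v)^2 = (\delta_a+\delta_b-2)\Tr(p_v)^2 + 2\Tr(p_v)^2$, giving
\[
2\!\!\sum_{e\in E(\Gamma_N^k)}\!\!\Tr(p_{s(e)})\Tr(p_{t(e)}) - \sum_{v\in\Gamma_N^k}\Tr(p_v)^2 \;\geq\; (\delta_a+\delta_b-2)\!\!\sum_{v\in\Gamma_N^{k-2}}\!\!\Tr(p_v)^2.
\]
Since every vertex weight satisfies $\Tr(p_v)\geq 1$ (the Frobenius--Perron entries of a subfactor graph are at least $1$, as they are dimensions of irreducible bimodules), and since $\cP$ being infinite depth forces $|V(\Gamma_N^{k-2})|\to\infty$, the right-hand side tends to $\infty$. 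Hence $t_k\to\infty$, so $M_0^N\cong L(\F_\infty)$.

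The only subtle point -- and the place where one must be slightly careful -- is justifying that the standard-embedding statement of Theorem \ref{thm:standardembedding} applies verbatim when the enlargement $\Gamma_N^k \subset \Gamma_N^{k+1}$ adds many edges and vertices at once; this is handled by decomposing the enlargement into the three elementary moves of Lemma \ref{lem:inductivestep} (adding a single edge between existing vertices, or adding a single vertex with a single connecting edge), applied finitely many times, and then using that a composite of standard embeddings is standard and that standard embeddings are preserved by cut-downs by $p_*$. The amplification statement $M_\alpha \cong L(\F_\infty)$ for arbitrary $\alpha$ then follows immediately from the fact that $M_\alpha$ is a $\delta_\alpha$-amplification of $M_0^{s(\overline{\alpha})}$ via the semifinite algebra $\cM_\infty^{s(\overline{\alpha})}$, together with the amplification formula $L(\F_\infty)_t = L(\F_\infty)$.
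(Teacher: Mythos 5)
Your proposal follows the same strategy the paper uses for this lemma: truncate $\Gamma_N$ to finite connected subgraphs $\Gamma_k$, apply the finite-depth computation to get $p_*\cN(\Gamma_k)p_*\cong L(\F_{t_k})$, check that the consecutive inclusions are standard embeddings by decomposing each enlargement into the elementary moves of Lemma \ref{lem:inductivestep} (a point you correctly flag), pass to the inductive limit via property~(3) of Remark \ref{rem:Dyk}, and show $t_k\to\infty$. The paper's proof invokes exactly this machinery by referring ``step-by-step'' to the proof of Theorem \ref{thm:H2} in Chapter \ref{chap:graph}, so your route is the paper's route.

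The one place where your argument as written is incorrect is the displayed lower bound and the sentence justifying it. You assert that for every $v\in\Gamma_{k-2}$ the Perron--Frobenius cross-term yields $(\delta_a+\delta_b)\Tr(p_v)^2$. In the tripartite graph $\Gamma_N$ this holds only for vertices at the $P$-level, since only those carry both $a$- and $b$-edges: a vertex $v\in\Gamma_N^N$ sees only $a$-edges, so its cross-term is $\delta_a\Tr(p_v)^2$, and a vertex in $\Gamma_N^M$ gives only $\delta_b\Tr(p_v)^2$. Hence the uniform coefficient $(\delta_a+\delta_b-2)$ over all of $\Gamma_{k-2}$ is not a valid per-vertex bound; the paper's own statement separates the contribution by vertex class, namely $(\delta_a-1)$ summed over $\Gamma_{k-2}\cap(\Gamma_N^N\cup\Gamma_N^P)$ plus $(\delta_b-1)$ summed over $\Gamma_{k-2}\cap(\Gamma_N^P\cup\Gamma_N^M)$. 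A secondary issue: the left side of your display drops the $B(\Gamma_k)$ correction that appears in your own formula for $t_k$, and the depth-$k$ vertices can contribute negatively, so bounding the bare quantity $2\sum_e\Tr\Tr - \sum_v\Tr^2$ from below by a sum over $\Gamma_{k-2}$ alone requires the careful decomposition used in the proof of Theorem \ref{thm:H2}, which you cite but do not carry out. Neither issue affects the conclusion --- since $\delta_a,\delta_b>1$ and $\Tr(p_v)\geq 1$, each class-separated sum still diverges because infinite depth forces $|\Gamma_{k-2}|\to\infty$ --- but the displayed inequality should be replaced by the correct class-separated bound and the depth-$k$ boundary terms should be handled as in Theorem \ref{thm:H2}.
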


\begin{proof}
Let $\Gamma_{k}$ be the graph of $\Gamma_{N}$ truncated up to depth $k$ as in the proof of Theorem \ref{thm:H2} in Chapter \ref{chap:graph}.  As in that proof, we let
$$
B(\Gamma_{k}) = \{v \in \Gamma_{k}; \Tr(p_{v}) > \sum_{w\sim v \in \Gamma_{k}} n_{v, w}\Tr(p_{w})\}
$$
where $n_{v, w}$ is the number of edges that connect $v$ and $w$.  We note that by the Perron Frobenius condition, no vertices in $\Gamma_{k-1}$ are in $B(\Gamma_{k})$.  Following the proof of Theorem \ref{thm:H2} in Chapter \ref{chap:graph} step-by-step, we arrive at the formula
$$
\fdim(M_{0}^{N}) \geq 1 + (\delta_{a} - 1)\sum_{\substack{v \in \Gamma_{k-2} \text{ and }\\v \in \Gamma_{N}^{N} \cup \Gamma_{N}^{P}}} \Tr(p_{v})^{2} +
(\delta_{b} - 1)\sum \sum_{\substack{v \in \Gamma_{k-2} \text{ and }\\v \in \Gamma_{N}^{P} \cup \Gamma_{N}^{M}}} \Tr(p_{v})^{2}
$$
which gets arbitrarily large as $k$ does.  The standard embedding arguments of Chapter \ref{chap:graph} show that $M_{0}^{N} \cong L(\F_{\I})$.  We arrive at the result for the other $M_{\alpha}$'s either by amplification or by examining $\cM(\Gamma_{P})$ or $\cM(\Gamma_{M})$.
\end{proof}

We can use this result to give a complete diagrammatic reproof of the universality result of Popa and Shlyakhtenko regarding the universality of $L(\F_{\I})$ in subfactor theory.

\begin{lem}[\cite{MR2051399}]
Every subfactor planar algebra $\cP'$ is the standard invariant for a finite-index inclusion $\cN \subset \cM$ with $\cN \cong L(\F_{\I}) \cong \cM$.
\end{lem}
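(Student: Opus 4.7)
My plan is to leverage the $N$-$P$-$M$ machinery developed in this chapter by manufacturing a Fuss--Catalan structure around $\cP'$, applying the isomorphism results to identify the resulting factors with $L(\F_\I)$, and then extracting an inclusion whose standard invariant recovers $\cP'$. Concretely, given a subfactor planar algebra $\cP'$ with loop parameter $\delta > 1$, realize it as the standard invariant of some $N_0 \subset M_0$, and pass to the basic construction $N_0 \subset M_0 \subset M_1$. The planar algebra $\cQ$ of $N_0 \subset M_1$ has $M_0$ as an intermediate subfactor, so it contains the Fuss--Catalan planar algebra with parameters $\delta_a = \delta_b = \delta$, and is therefore the sort of input required by Section~\ref{sec:NPM}. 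Form its $N$-$P$-$M$ augmentation $\cP$, and let the family of factors $(M_\alpha)$ be as in Section~\ref{sec:GJS2}.

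Next, I would invoke the main theorems of this chapter to compute the isomorphism classes. When $\cP'$ is infinite depth, so is $\cQ$, and Theorem~\ref{thm:isoclass2} immediately gives $M_\alpha \cong L(\F_\I)$ for every admissible $\alpha$. When $\cP'$ is finite depth, Theorem~\ref{thm:isoclass1} gives only $L(\F_t)$ for some finite $t$; to upgrade this to $L(\F_\I)$, I would first replace $\cP'$ by its free composition with an auxiliary infinite depth subfactor planar algebra (for instance, Temperley--Lieb at some noncommensurate index), carry out the construction for the enlarged object, and recover $\cP'$ as a planar subalgebra after the factors have been identified as $L(\F_\I)$. Alternatively one can use the standard embedding technology of Chapter~\ref{chap:graph} in direct analogy with the proof of Theorem~\ref{thm:H2} to push the parameter to $\I$ via an increasing sequence of cut-downs.

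The final step is to identify an inclusion $\cN \subset \cM$ inside the tower of $M_\alpha$'s whose standard invariant is $\cP'$. The natural candidate is the Jones tower
\[
M_0^N \subset M_a \subset M_{a\bar a} \subset M_{a\bar a a} \subset \cdots
\]
from Theorem~\ref{thm:Jtower}: its index $\delta_a^2$ matches $[M_0:N_0]$, and its relative commutants are described by central vectors of $a$-colored boxes in $\cP$, which through the Fuss--Catalan embedding of $\cQ$ into $\cP$ and the identification of $\cP'$ as the appropriate sub-planar-algebra of $\cQ$ (via Lemma~\ref{SPA} and the correspondence of principal graphs) should assemble into $\cP'$ itself.

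The main obstacle will be this last identification, namely verifying that the standard invariant extracted from $M_0^N \subset M_a$ is exactly $\cP'$ rather than $\cQ$ or some enlargement thereof. This requires a careful bookkeeping of which central vectors in $\cP$ correspond under cabling to the original planar algebra $\cP'$ inside $\cQ$, and checking that the composition of planar operations on these vectors agrees with the planar algebra structure of $\cP'$. Once this diagrammatic identification is in place, the corollary follows immediately from the $L(\F_\I)$ identifications of the previous step.
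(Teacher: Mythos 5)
Your overall strategy is essentially the paper's: realize $\cP'$ as the planar algebra of the $N$-$P$ part of an $N\subset P\subset M$ inclusion, form the $N$-$P$-$M$ augmentation $\cP$, and extract $\cP'$ as the $a$-colored planar subalgebra, so that the standard invariant of $M_0^N\subset M_a$ is $\cP'$ while the factors are identified via the chapter's isomorphism results. For $\cP'$ infinite depth, your primary path (basic construction $N_0\subset M_0\subset M_1$, $\delta_a=\delta_b=\delta$, then Theorem~\ref{thm:isoclass2}) is fine. The divergence from the paper, and the place where care is needed, is the finite depth case.

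For finite depth, your primary path genuinely fails: with $N_0\subset M_0\subset M_1$ and $\delta_a=\delta_b=\delta$, the planar algebra $\cQ$ is still finite depth, so the $M_\alpha$ come out as $L(\F_t)$ with $t$ finite; you correctly notice this. Your ``free composition with an infinite depth piece'' fix is the right idea and is in substance what the paper does, but the paper makes it completely concrete: it sets $N=N'\otimes B$, $P=P'\otimes B$, $M=P'\otimes C$, where $B\subset C$ has principal graph $A_\infty$. Tensoring with the common factor $B$ leaves the $N$-$P$ standard invariant equal to $\cP'$, the $P$-$M$ side is $A_\infty$, and the resulting $\cQ$ is manifestly infinite depth, so the $M_\alpha$ are $L(\F_\I)$ by Theorem~\ref{thm:isoclass2}. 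This is cleaner than invoking Bisch--Jones free composition and sidesteps the need to verify that the $a$-colored subalgebra of the augmentation really recovers $\cP'$ on the nose --- that verification is immediate when the $a$-side inclusion $N\subset P$ is by construction a tensor amplification of $N'\subset P'$.

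Two further points. Your ``alternatively'' suggestion --- using the standard embedding technology of Chapter~\ref{chap:graph} to push the parameter to $\infty$ by cut-downs --- does not work by itself: the standard embedding argument feeds on an \emph{infinite} increasing family of truncations of the principal graph, and if the graph stays finite there is nothing to push with. You must first change the input so that $\cP$ is infinite depth, which is exactly what the tensoring/free composition step accomplishes. Second, the final obstacle you flag (identifying the relative commutants of $M_0^N\subset M_a$ with $\cP'$) is less of an obstacle than you suggest: the relative commutant computation $\iota(M_0)'\cap M_\alpha = P_{\alpha\to\alpha}$ is already established in the chapter's machinery (it follows as in Section~\ref{sec:factor}), and for $\alpha=a^k$ the boxes $P_{a^k\to a^k}$ are precisely the $a$-colored spaces which, in the paper's construction, are by design the image of $\cP'$ under cabling. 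So the bookkeeping you worry about is already done.
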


\begin{proof}
A diagrammatic proof of this fact for $\cP'$ infinite depth was done is Chapter \ref{chap:graph}.  If $\cP'$ is finite depth, let $\cP'$ be the planar algebra for a finite index inclusion $N' \subset P'$ of $II_{1}$ factors.  Let $B \subset C$ be a finite index inclusion of $II_{1}$ factors with principal graph $A_{\infty}$.  We consider the inclusions
$$
N' \otimes B (=N) \subset P' \otimes B (=P) \subset P' \otimes C (=M).
$$
Let $\cQ$ be the planar algebra for $N \subset M$, and $\cP$ the augmentation of $\cQ$ for $N \subset P \subset M$.  We note that $\cP'$ is also the planar algebra for $N \subset P$, and the planar subalgebra of $\cP$ generated by words whose only color is $a$ is $\cP'$.  From Theorem \ref{thm:Jtower}, it follows that the standard invariant of $M_{\emptyset}^{N} \subset M_{a}$ is $\cP'$ and from the above calculation, $M_{\emptyset}^{N} \cong L(\F_{\I}) \cong M_{a}$
\end{proof}
Note that R\u{a}dulescu provided the original construction of $\cN \subset \cM$ both isomorphic to $L(\F_{\I})$ having standard invariant $\cP'$ for $\cP
'$ finite depth. \cite{MR1258909}.
\subsection{The law of $\cup \in N_{0}^{+}$}

One pleasing feature of the semifinite algebra constructed above is that is gives one a transparent way to find equations of the spectrum of the element $\cup \in N_{0}$.  This corresponds to the double-cup element
$$
\begin{tikzpicture} [baseline = 0cm]
    \draw[thick] (.4, .4)--(.4, -.4)--(-.4, -.4)--(-.4, .4)--(.4, .4);
    \draw[blue] (-.25, .4)arc(-180:0: .25cm);
    \draw[red] (-.15, .4)arc(-180:0: .15cm);
\end{tikzpicture}\,  \in M_{\emptyset}^{N}
$$
Picturing $\cup$ as living in $\cM_{I}^{N}$, we note that $\Tr(\cup^{n}) = \Tr(x^{n})$ where $x$ is the element
$$
x = \begin{tikzpicture}[baseline = 0 cm]
    \draw[thick] (1.2, .4)--(1.2, -.4)--(-.4, -.4)--(-.4, .4)--(1.2, .4);
    \draw[blue] (-.4, .15) arc(-90:0: .25cm);
    \draw[blue] (.15, .4) arc(-180:-90: .25cm) -- (1.2, .15);
    \draw[red] (.6, .4) arc(-180:0: .2cm);
\end{tikzpicture} =
\begin{tikzpicture}[baseline = 0 cm]
    \draw[thick] (.4, .4)--(.4, -.4)--(-.4, -.4)--(-.4, .4)--(.4, .4);
    \draw[blue] (-.4, .15) arc(-90:0: .25cm);
    \draw[blue] (.15, .4) arc(-180:-90: .25cm);
\end{tikzpicture} \cdot
\begin{tikzpicture}[baseline = 0 cm]
    \draw[thick] (.4, .4)--(.4, -.4)--(-.4, -.4)--(-.4, .4)--(.4, .4);
    \draw[red] (-.2, .4) arc(-180:0: .2cm);
    \draw[blue] (-.4, .15)--(.4, .15);
\end{tikzpicture}
$$
which is supported by
$$1_{a} = \begin{tikzpicture}[baseline = 0 cm]
    \draw[thick] (.4, .4)--(.4, -.4)--(-.4, -.4)--(-.4, .4)--(.4, .4);
    \draw[blue] (-.4, .15)--(.4, .15);
\end{tikzpicture}\,
$$
Note that $x$ is expressed as a product of free elements $1_{a}\cM_{\I}^{N}1_{a}$. Let $y$ be the element
$$
y = \begin{tikzpicture}[baseline = 0 cm]
    \draw[thick] (.4, .4)--(.4, -.4)--(-.4, -.4)--(-.4, .4)--(.4, .4);
    \draw[blue] (-.4, .15) arc(-90:0: .25cm);
    \draw[blue] (.15, .4) arc(-180:-90: .25cm);
\end{tikzpicture}\,
$$
and $r$ be the element
$$
r = \begin{tikzpicture}[baseline = 0 cm]
    \draw[thick] (.4, .4)--(.4, -.4)--(-.4, -.4)--(-.4, .4)--(.4, .4);
    \draw[red] (-.2, .4) arc(-180:0: .2cm);
    \draw[blue] (-.4, .15)--(.4, .15);
\end{tikzpicture}.
$$
We know that $\cup_{a}$ is distributed as a free-poisson element, and its moment generating function is
$$
M_{\cup_{a}}(z) = \frac{-((\delta_{a} - 1)z - 1) + \sqrt{((\delta_{a} - 1)z - 1)^{2} - 4z}}{2z}.
$$
In the algebra $1_{a}\cM_{\I}^{N}1_{a}$ with \emph{normalized} trace, $\tau$, we must have $\tau(y^{n}) = \Tr(\cup^{n})/\delta_{a}$ for $n \geq 1$ and $\tau(y^{0}) = 1$.  Therefore, the moment generating function of $y$ is
$$
M_{y}(z) = M_{\cup_{a}}(z)\delta_{a} + \frac{\delta_{a} - 1}{\delta_{a}}.
$$
The tool we will use to calculate the moments of $x$ is Voiculescu's $S-$transform \cite{MR1217253}.   From \cite{MR1217253}, it is known that is $S_{s}$ and $S_{t}$ are the $S-$ transforms for free elements $s$ and $t$ in a tracial von Neumann algebra, then
$$
S_{st}(z) = S_{s^{1/2}ts^{1/2}}(z) = S_{s}(z)S_{t}(z).
$$
Furthermore, to compute the $S-$transform of an element $s$, one finds formal power series $\psi_{s}$, $\chi_{s}$ and $S_{s}$ satisfying:
$$
\psi_{s}(z) = M_{s}(z) - 1\, \, \, \, \, \, \chi_{s}(\psi_{s}(z)) = z = \psi_{s}(\chi_{s}(z))\, \, \text{ and } S_{s}(z) = \frac{(z+1)\chi_{s}(z)}{z}
$$
These formulas produce the following expression for the $S-$transform of $x$:
$$
S_{x}(z) = \frac{(z+1)^{2}(z-1)(\delta_{a}z - 1)}{((\delta_{b} - 1)z + \delta_{b})((\delta_{a} - 1)z + 1)},
$$
which can be inverted to give the Cauchy transform of $x$.  We know that the law of a single-colored $\cup$  (as in \cite{MR2732052}) is absolutely continuous with respect to Lebesgue measure and is supported away from the origin. Therefore, the law of $y$ contains an atom of at the origin of measure $\frac{\delta_{a} - 1}{\delta_{a}}$ at the origin and is absolutely continuous away from the origin.  It follows that the law of $y^{1/2}ry^{1/2}$ has an atom of measure $\frac{\delta_{a} - 1}{\delta_{a}}$ at the origin and is absolutely continuous away from the origin.  Furthermore, the spectral projection corresponding to $\{0\}$ for $y^{1/2}ry^{1/2}$ must be the same as the spectral projection corresponding to $\{0\}$ for $y$.  From this, we use the polar part of
$$
\begin{tikzpicture}[baseline = 0 cm]
    \draw[thick] (.4, .4)--(.4, -.4)--(-.4, -.4)--(-.4, .4)--(.4, .4);
    \draw[blue] (-.4, .15) arc(-90:0: .25cm);
    \end{tikzpicture}
$$
to conclude that $\cup \in \cN_{0}^{+}$ has law absolutely continuous to Lebesgue measure and supported away from the origin.

\subsection{$N_{k}$ contains a free group factor}

Recall that $N_{k}^{\pm} = \Gr_{k}^{\pm}(\cQ)''$.  We will use the moment calculation of $\cup$ above as well as similar elements arising in the semifinite algebra to find a free group factor contained in $N_{k}^{\pm}$. To simplify matters, we note that we need only consider the case where $\cQ$ is Fuss Catalan, as any such planar algebra will contain the Fuss Catalan planar algebra.

By the usual amplification tricks, we need need only show that $N_{0}^{+}$ contains a free group factor. We embed $\cQ$ into its augmentation $\cP$ as in Example \ref{ex:GenFC}, which produces an embedding $\Gr_{\infty}^{+}(\cQ) \hookrightarrow \Gr_{\infty}^{N}(\cP)$ where $\Gr_{\infty}^{+}(\cQ)$ is realized as the subalgebra of $\Gr_{\infty}^{N}(\cP)$ generated by the Fuss Catalan diagrams. Let $\cN_{\I} = \Gr_{\infty}^{+}(\cQ)$.  We have the following lemma:
\begin{lem}
Let $p_{ab}$ be the following diagram
$$
p_{ab} = \begin{tikzpicture}[baseline = 0cm]
    \draw[thick] (.4, .4)--(.4, -.4)--(-.4, -.4)--(-.4, .4)--(.4, .4);
    \draw[red] (-.4, .15)--(.4, .15);
    \draw[blue] (-.4, -.15)--(.4, -.15);
\end{tikzpicture}
$$
and set
$$
x = \begin{tikzpicture}[baseline = 0cm]
    \draw[thick] (.8, .4)--(.8, -.4)--(-.8, -.4)--(-.8, .4)--(.8, .4);
    \draw [red] (-.8, .15) arc(-90:0: .25cm);
    \draw [red] (.8, .15) arc(-90:-180: .25cm);
    \draw[blue] (-.8, -.15) arc(-90:0: .55cm);
    \draw[blue] (.8, -.15) arc(-90:-180: .55cm);
\end{tikzpicture} \text{ and }
y = \begin{tikzpicture}[baseline = 0cm]
    \draw[thick] (.8, .6)--(.8, -.6)--(-.8, -.6)--(-.8, .6)--(.8, .6);
    \draw[blue] (-.8, -.5)--(.8, -.5);
    \draw[red] (-.8, -.15)--(.8, -.15);
    \draw[red] (-.65, .6)--(-.65, .4) arc(180:270:.25cm)--(.4, .15) arc(270:360: .25cm)--(.65, .6);
    \draw[blue] (-.1, .6) arc(180:360:.1cm);
    \filldraw[thick, unshaded] (.4, .4)--(.4, -.4)--(-.4, -.4)--(-.4, .4)--(.4, .4);
    \node at (0, 0) {$f^{(2)}$};
\end{tikzpicture}.
$$
with $f^{(2)}$ the second Jones-Wenzl idempotent in Temperely Lieb.  Then $x$ and $y$ are free in $p_{ab}\cM_{\I}^{N}p_{ab}$.
\end{lem}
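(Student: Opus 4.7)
The plan is to apply a planarity--freeness criterion in the spirit of Lemma \ref{lem:freeplanar}, adapted to the compressed semifinite algebra $p_{ab}\cM_\I^{N}p_{ab}$. Freeness of $x$ and $y$ will be reduced to the statement that any $N$-$P$-$M$ tangle with at least one string connecting an $x$-input to a $y$-input evaluates to zero, and the mechanism forcing this vanishing is the defining annihilation property of the second Jones--Wenzl idempotent $f^{(2)}$ sitting inside $y$.

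The first step is to set up the moment computation. Using the embedding $\cN_{\I}\hookrightarrow \cM_\I^{N}$ coming from the inclusion $\cQ \hookrightarrow \cP$ of Fuss--Catalan into its augmentation, both $x$ and $y$ lie in $p_{ab}\cM_\I^{N}p_{ab}$. By the $\sum CTL$ expansion of the trace on $\cM_\I^N$, any centered alternating $*$-moment of $x$ and $y$ becomes a sum of closed planar diagrams whose internal rectangles are labeled by copies of $x^{(*)}$ and $y^{(*)}$ joined by colored Temperley--Lieb strings of colors $a$ and $b$, with the outer disk capped by copies of $p_{ab}$.

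The second step is to analyze each such diagram. If no string connects two distinct input rectangles, the diagram factors into a product of traces of centered variables, each of which vanishes by construction. Otherwise, at least one string joins an $x$-input to a $y$-input; by the placement of $f^{(2)}$ in $y$, the two $b$-strands of $y$ emerging from $f^{(2)}$ must, by planarity together with the nested red-over-blue cup topology of $x$, be paired either directly with each other or through a chain of connections that ultimately closes them off by a cap. In either case the diagram contains a cap on two strands attached to $f^{(2)}$ and therefore evaluates to zero. Once all such alternating centered moments are shown to vanish, freeness in $p_{ab}\cM_\I^{N}p_{ab}$ follows from the standard characterization of freeness.

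The hard part will be making the second step rigorous: one must enumerate the possible planar pairings of the boundary strings of $x$ and $y$, compatible with the boundary condition imposed by $p_{ab}$, and verify that every non-factoring pairing necessarily forces a cap on a pair of $f^{(2)}$-strands. The key geometric input is that the two $b$-strands exiting $f^{(2)}$ in $y$ are ``isolated'' by the surrounding red arch, while $x$ has its $b$-strands trapped inside its outer blue cup; by planarity, any cross-connection between $x$ and $y$ must route through these constrained regions and therefore cap off the $f^{(2)}$-strands. With this diagrammatic picture pinned down, the claim follows.
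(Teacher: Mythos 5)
Your approach is genuinely different from the paper's, and it has a substantive gap that your own text flags but does not close. The paper avoids planarity/vanishing arguments entirely: using Lemma \ref{lem:1infty2} and the results of Section \ref{sec:semi}, it identifies the minimal projections $p_0,\dots,p_4$ on the Fuss--Catalan principal graph segment (each consecutive pair joined by a unique edge $e_i$), writes
$$
x = p_2 X_b^{e_2}X_a^{e_1}X_a^{e_1}X_b^{e_2}p_2, \qquad y = p_2 X_b^{e_3}X_a^{e_4}X_a^{e_4}X_b^{e_3}p_2,
$$
and observes that these words involve disjoint subsets of the family $\{X_a^e,X_b^f\}$, which is free with amalgamation over $\cA_\I^N$. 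Hence $x$ and $y$ are free with amalgamation over $\cA_\I^N$, and since $p_{ab}=p_2$ is a \emph{minimal} projection in $\cA_\I^N$, the amalgamated freeness in the compressed corner collapses to ordinary freeness. No combinatorics of tangles is needed, and the role of $f^{(2)}$ is only to certify that $p_4$ is a genuine new vertex (so that $e_3,e_4$ are disjoint from $e_1,e_2$), not to make diagrams vanish.

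The gap in your proposal is the claim that any planar pairing joining a $y$-strand to an $x$-strand forces a cap on two adjacent $f^{(2)}$-strands. That would be the hypothesis of Lemma \ref{lem:freeplanar}, but it is a very strong hypothesis --- it must hold for \emph{every} tangle with a cross-connection, not just for closed trace diagrams of a particular shape --- and the topology of $x$ and $y$ does not deliver it. The two free top strands of $y$ coming out of $f^{(2)}$ can perfectly well be routed to two \emph{different} external boxes, in which case no adjacent pair of $f^{(2)}$-strands is capped and the diagram does not vanish for the reason you cite; indeed $\tr(y\wedge y^*)\neq 0$ already uses exactly such a cross-pairing, so ``cross-connection kills the diagram'' cannot be true without qualification. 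You would also need to adapt Lemma \ref{lem:freeplanar} from $\Gr_0(\cP)''$ to the compressed semifinite setting $p_{ab}\cM_\I^N p_{ab}$. If you want a planar-combinatorial proof, you would have to track the full TL diagram containing the image of $f^{(2)}$ and show it is annihilated, which amounts to redoing (in coordinates) the amalgamated-freeness bookkeeping that the paper's identification of $x$ and $y$ with words in $\{X_a^{e_i},X_b^{e_j}\}$ does for free. I would recommend replacing your Step~2 by that identification.
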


\begin{proof}
From \cite{MR1437496}, the projections
$$
p_{0} = \begin{tikzpicture}[baseline = 0cm]
    \draw[thick] (.4, .4)--(.4, -.4)--(-.4, -.4)--(-.4, .4)--(.4, .4);
\end{tikzpicture}\, \, \,
p_{1} = \begin{tikzpicture}[baseline = 0cm]
    \draw[thick] (.4, .4)--(.4, -.4)--(-.4, -.4)--(-.4, .4)--(.4, .4);
    \draw[red] (-.4, .0)--(.4, .0);
    \end{tikzpicture}\, \, \,
p_{2} = \begin{tikzpicture}[baseline = 0cm]
    \draw[thick] (.4, .4)--(.4, -.4)--(-.4, -.4)--(-.4, .4)--(.4, .4);
    \draw[red] (-.4, .15)--(.4, .15);
    \draw[blue] (-.4, -.15)--(.4, -.15);
\end{tikzpicture}\, \, \,
p_{3} = \begin{tikzpicture}[baseline = 0cm]
    \draw[thick] (.8, .6)--(.8, -.6)--(-.8, -.6)--(-.8, .6)--(.8, .6);
    \draw[blue] (-.8, -.5)--(.8, -.5);
    \draw[red] (-.8, -.15)--(.8, -.15);
    \draw[red] (-.8, .15)--(.8, .15);
    \filldraw[thick, unshaded] (.4, .4)--(.4, -.4)--(-.4, -.4)--(-.4, .4)--(.4, .4);
    \node at (0, 0) {$f^{(2)}$};
\end{tikzpicture} \text{ and }
p_{4} = \begin{tikzpicture}[baseline = 0cm]
    \draw[thick] (.8, .6)--(.8, -.6)--(-.8, -.6)--(-.8, .6)--(.8, .6);
    \draw[blue] (-.8, -.5)--(.8, -.5);
    \draw[red] (-.8, -.15)--(.8, -.15);
    \draw[red] (-.8, .15)--(.8, .15);
    \draw[blue] (-.8, .5)--(.8, .5);
    \filldraw[thick, unshaded] (.4, .4)--(.4, -.4)--(-.4, -.4)--(-.4, .4)--(.4, .4);
    \node at (0, 0) {$f^{(2)}$};
\end{tikzpicture}.
$$
are inequivalent minimal projections in $\cP$, and there exists exactly one edge $e_{i}$ which goes between the vertices representing $p_{i-1}$ and $p_{i}$.  Therefore, by choosing $p_{0}$, $p_{2}$, and $p_{4}$, to line up with our choices of minimal projections lying under $Q$,  It follows that
$$
x = p_{2}X_{b}^{e_{2}}X_{a}^{e_{1}}X_{a}^{e_{1}}X_{b}^{e_{2}}p_{2} \text{ and } y = p_{2}X_{b}^{e_{3}}X_{a}^{e_{4}}X_{a}^{e_{4}}X_{b}^{e_{3}}p_{2}
$$
so $x$ and $y$ are free with amalgamation over $\cA_{\I}^{N}$.  Since $p_{ab} = p_{2}$ is minimal in $\cA_{\I}^{N}$, the result follows.
\end{proof}

\begin{proof}[Proof of Theorem \ref{thm:dualcontain}]
Clearly, $N_{0}^{+} \subset M_{\emptyset}^{N}$ so $N_{0}$ is contained in an interpolated free group factor.  Conversely, we know that $p_{ab}\cN_{\I}^{N}p_{ab}$ contains a copy of $W^{*}(x) * W^{*}(y)$.  Since the law of $\cup \in \cN_{k}$ has no atoms, it follows that
$$
W^{*}(x) = \overset{q}{\underset{1}{L(\Z)}} \oplus \underset{\delta_{a}\delta_{b} - 1}{\C}
$$
where $q$ is equivalent to $p_{0}$ via the polar part of
$$
\begin{tikzpicture}
    \draw(.4, .4)--(.4, -.4)--(-.4, -.4)--(-.4, .4)--(.4, .4);
    \draw [red] (-.4, .15) arc(-90:0: .25cm);
    \draw[blue] (-.4, -.15) arc(-90:0: .55cm);
\end{tikzpicture}\, .
$$
It follows from the arguments in Chapter \ref{chap:graph} that
$$
q(W^{*}(x) * W^{*}(y))q = L(\Z) * q\left(\left(\overset{q}{\underset{1}{\C}} \oplus \underset{\delta_{a}\delta_{b} - 1}{\C}\right)* W^{*}(y)\right)q
$$
which is an interpolated free group factor.  By the equivalence of $q$ and $p_{0}$ in $\cN^{N}_{\I}$, and the identity $N_{0}^{+} = p_{0}\cN^{N}_{\I}p_{0}$, it follows that $N_{0}^{+}$ contains an interpolated free group factor.
\end{proof}

Unfortunately, at this point, the author is unable to determine the isomorphism class of the $N_{k}^{\pm}$.  While it is straightforward to show that a suitable compression of the algebra $\cN^{N}_{\I}$ is generated by products of the form $X_{a}^{e}X_{b}^{f}$, terms of the form
$$
X_{a}^{e_{1}}X_{b}^{f} \text{ and } X_{a}^{e_{2}}X_{b}^{f}
$$
appear with $e_{1} \neq e_{2}$.  The very nature of $\cN^{N}_{\I}$ makes it difficult to ``decouple" this into a free family which still lies in $\cN^{N}_{\I}$.


\bibliographystyle{amsalpha}
\bibliography{ThesisHartglass}
\end{document}